\newtheorem{thm}{Theorem}[section]
\newtheorem{lemma}[thm]{Lemma}
\newtheorem{cor}[thm]{Corollary}
\newcommand{\iidsim}{\stackrel{\mathclap{\rm{iid}}}{\sim}}
\def\namedlabel#1#2{\begingroup
    #2%
    \def\@currentlabel{#2}%
    \phantomsection\label{#1}\endgroup
}
\newcommand{\inoutprob}{\xrightarrow{P_0^*}}
\newcommand{\condinoutprob}{\underset{W}{\xrightarrow{P_0^*}}}
\newcommand{\condinoutdist}{\underset{W}{\overset{P_0^*}{\leadsto}}}
\newcommand{\fasterthan}{o_{\mathrm{P}_0^*}}
\newcommand{\E}{E}
\newcommand{\prob}{P}
\newcommand{\cov}{\mathrm{Cov}}
\newcommand{\BL}{\mathrm{BL}}
\newcommand{\sd}{\, \mathrm{d}} 
\newcommand{\s}[1]{\mathscr{#1}}
\renewcommand{\d}[1]{\mathbb{#1}} 
\renewcommand{\rm}[1]{\mathrm{#1}} 
\renewcommand{\d}[1]{\mathbb{#1}} 
\def\norm#1{\left\Vert{#1}\right\Vert}
\def\abs#1{\left\vert{#1}\right\vert}
\title{Consistency of the bootstrap for asymptotically linear estimators based on machine learning}
\author{Zhou Tang \hspace{2em} Ted Westling \\
Department of Mathematics and Statistics \\
University of Massachusetts Amherst}
\begin{document}
\maketitle

\begin{abstract}
The bootstrap is a popular method of constructing confidence intervals due to its ease of use and broad applicability. Theoretical properties of bootstrap procedures have been established in a variety of settings.  However, there is limited theoretical research on the use of the bootstrap in the context of estimation of a differentiable functional in a nonparametric or semiparametric model when nuisance functions are estimated using machine learning. In this article, we provide general conditions for consistency of the bootstrap in such scenarios. Our results cover a range of estimator constructions, nuisance estimation methods, bootstrap sampling distributions, and bootstrap confidence interval types. We provide refined results for the empirical bootstrap and smoothed bootstraps, and for one-step estimators, plug-in estimators, empirical mean plug-in estimators, and estimating equations-based estimators. We illustrate the use of our general results by demonstrating the asymptotic validity of bootstrap confidence intervals for the average density value and G-computed conditional mean parameters, and  compare their performance in finite samples using numerical studies.  Throughout, we emphasize whether and how the bootstrap can produce asymptotically valid confidence intervals when standard methods fail to do so.
\end{abstract}

\clearpage

\tableofcontents

\clearpage 

\doublespacing

\section{Introduction}

In many problems in statistics, interest focuses on a  parameter that depends on one or more unknown infinite-dimensional nuisance functions. For example, treatment effects in causal inference often depend on conditional mean functions \citep{robins1986new, holland1986statistics, gill2001causal}, average derivative parameters depend on derivatives of conditional mean functions \citep{powell1989semiparametric}, and survival functions with informative censoring depend on conditional distribution functions \citep{dabrowska1989uniform}. Estimating such a parameter typically involves estimating one or more infinite-dimensional nuisance parameters. In order to reduce potential bias due to model misspecification, researchers may choose to use a data-adaptive estimator for the nuisance, such as a tree-based estimator, a neural network, splines, or an ensemble of many such estimators. However, directly plugging a data-adaptive nuisance estimator into the parameter mapping often results in asymptotic bias that hinders valid statistical inference for the parameter of interest. If the parameter is a smooth function of the underlying data-generating distribution, it is possible to mitigate this bias enough for the estimator to be asymptotically linear under a sufficient rate of convergence of the nuisance estimator \citep{pfanzagl1982contributions, van1991differentiable}. There are several general approaches for constructing asymptotically linear estimators in the presence of data-adaptive nuisance estimators, including the one-step construction \citep{bickel1982adaptive, pfanzagl1982contributions}, sieve and series approximations \citep{geman1982nonparametric},  under-smoothing \citep{newey1998undersmoothing}, estimating equations \citep{liang1986longitudinal, hardin2002generalized}, and targeted minimum loss-based estimators \citep{van2006targeted, van2011targeted}.

If an asymptotically linear estimator can be constructed, it can often be used to conduct asymptotically valid inference. The most popular method of constructing confidence intervals based on an asymptotically linear estimator is the Wald interval using a normal approximation to the sampling distribution and the so-called \emph{influence function-based} variance estimator.  The bootstrap \citep{efron1982jackknife, efron1994introduction} is an alternative method of constructing confidence intervals that has several potential advantages over Wald intervals. First, the bootstrap has been shown in some settings to have higher-order accuracy, and hence better finite-sample coverage, than Wald intervals \citep{hall1988theoretical, hall1992bootstrap, diciccio1988review}. Second, bootstrap confidence intervals can sometimes automatically correct bias, and can thus be asymptotically valid under  weaker conditions \citep{cattaneo2018kernel, cattaneo2022average}. Third, in some cases, an estimator of the asymptotic variance is not readily available because, for instance, the influence function of the estimator does not have a closed form (e.g., \citealp{geskus1999asymptotically, quale2006locally}). Finally, Wald intervals are not guaranteed to respect constraints on the parameter space, whereas some types of bootstrap intervals are. 

The theoretical properties of the bootstrap have been studied for many problems. For example, \cite{hall1988theoretical, hall1992bootstrap} established higher-order properties of bootstrap confidence intervals; \cite{gine1990bootstrapping} and \cite{van1996weak} established uniform bootstrap central limit theorems;  \cite{chernozhukov2017central, chernozhukov2022improved} established properties of the bootstrap for high-dimensional data; and \cite{han2018gaussian}, \cite{austern2020asymptotics}, and \cite{cattaneo2020bootstrap} studied properties of the bootstrap for non-Gaussian limits.  Several authors have studied bootstrap methods for problems involving nuisance parameters. \cite{kosorok2004robust} and \cite{dixon2005functional} proved consistency of bootstrap procedures in semiparametric models when the nuisance parameter can be estimated at the $n^{-1/2}$ rate. \cite{ma2005robust} and \cite{cheng2010bootstrap} demonstrated consistency of the weighted empirical bootstrap for $M$-estimators in semiparametric models, permitting the nuisance to converge at a rate slower than $n^{-1/2}$. \cite{van2018targeted} proposed methods of bootstrapping targeted minimum loss-based estimators. \cite{cai2020highly} used the nonparametric bootstrap to achieve better finite sample coverage  using the highly adaptive lasso targeted minimum loss-based estimator. However, to the best of our knowledge, a comprehensive study of bootstrap procedures for asymptotically linear estimators with data-adaptive nuisance estimators does not yet exist.

In this article, we provide results for consistency of bootstrap methods for asymptotically linear estimators involving data-adaptive nuisance estimation. Several notable contributions of our work include: (1) we propose a general framework that allows us to study a variety of asymptotically linear estimator constructions, and we study several specific constructions in depth; (2) we provide conditions under which the bootstrap estimator is conditionally asymptotically linear; (3) we use our framework to provide conditions for consistency of bootstrap confidence intervals, highlighting in particular settings in which automatic bias correction is and is not possible; and (4) we cover a variety of bootstrap sampling distributions, including both the empirical bootstrap and smooth bootstraps, and a variety of methods of bootstrap nuisance estimation.

This last contribution is especially important in the context of data-adaptive nuisance estimators. As noted in \cite{bickel1997resampling} and \cite{van2018targeted}, the empirical bootstrap can fail if the estimator is sensitive to ties in the data. For example, a nuisance estimator that uses cross-validation to select tuning parameters or as part of an ensemble learning strategy may not behave as expected when applied to an empirical bootstrap sample because duplicate observations can appear in both the training and testing folds. Smooth bootstraps---i.e., bootstrap distributions that are dominated by Lebesgue measure---can resolve this issue by producing bootstrap samples without duplicates. Although smooth bootstraps have received considerably less theoretical attention than the empirical bootstrap, asymptotic properties of the smooth bootstrap have been established in, e.g., \cite{hall1989smoothing}, \cite{cuevas1997differentiable}, and \cite{gaenssler2003smoothed}. Our results for smooth bootstraps in Section~\ref{section: type of bootstrap} build on these works. Another way to avoid issues with duplicate observations is to alter the way that the nuisance is estimated for the bootstrap sample. For example, rather than using the bootstrap sample to choose tuning parameters, some authors have proposed fixing tuning parameters such as bandwidths at the values selected by the original data when constructing the bootstrap nuisance estimator \citep{hall2001bootstrapping}. Alternatively, the entire bootstrap nuisance estimator could be fixed at the value estimated by the original data. Our framework permits these types of approaches.

The remainder of the article is organized as follows. In Section~\ref{sec:classical}, we define the statistical setting we work in and outline our estimation and bootstrap frameworks. In Section~\ref{sec:theory}, we provide general theoretical results, including conditional asymptotic linearity and weak convergence of the bootstrap estimator and consistency of bootstrap confidence intervals. In Section~\ref{sec: T examples}, we provide refined conditions implying a key condition of our main results for four estimator constructions. In Section~\ref{sec: applications}, we illustrate the use of our theoretical results by studying various candidate bootstrap procedures for two parameters, deriving new results for both parameters.. In Section~\ref{sec: simulation}, we present a simulation study for the methods studied in  Section~\ref{sec: applications}. Section~\ref{sec: conclusion} presents a brief discussion. Proofs of all theorems and additional technical details are provided in Supplementary Material.

\section{Estimation and bootstrap framework}\label{sec:classical}
\subsection{Statistical setup}

We suppose that $X_1, \dotsc, X_n \in \d{R}^d$ is an IID sample from a probability measure $P_0$ on a measurable space $(\s{X}, \s{B})$. We assume that $P_0$ is known to lie in a statistical model $\s{M}$, which is a nonparametric or semiparametric model in our motivating applications. We will use subscript $0$ to indicate that an object depends on $P_0$. We let $\d{P}_n$ be the empirical distribution of $X_1, \dotsc, X_n$. For any measure $P$ and $P$-integrable function $f$, we define $Pf := \int f \sd P$. We define the \textit{empirical process} evaluated at a $P_0$-integrable function $f$ as $\d{G}_n f := n^{1/2} (\d{P}_n - P_0)f$.

For a set of functions $\s{F}$, we define $\ell^\infty(\s{F})$ as the Banach space of real-valued bounded functions $z: \s{F} \mapsto \d{R}$ equipped with the supremum norm $\norm{z}_{\s{F}} := \sup_{f \in \s{F}} \abs{z(f)}$. To characterize weak convergence in the space $\ell^\infty(\s{F})$, we utilize the bounded dual Lipschitz distance based on outer expectations. For an arbitrary metric space $(\d{D}, d)$ (frequently, $\d{D} = \ell^\infty(\s{F})$ and $d(z_1, z_2) = \sup_{f \in \s{F}}\abs{z_1(f) -z_2(f)}$), we denote $C_b(\d{D})$ as the set of bounded and continuous functions from $\d{D}$ to $\d{R}$, and we denote $\BL_1(\d{D})$ as $h: \d{D} \mapsto [-1, 1]$ such that $h$ is 1-Lipschitz; i.e., $\sup_{z_1, z_2 \in \d{D}, z_1 \neq z_2} \abs{h(z_1) - h(z_2)}/d(z_1, z_2) \leq 1$. We then say that a possibly non-measurable sequence of stochastic processes $G_n$ on $\d{D}$ converges weakly to a Borel measurable limit $G$ in $\d{D}$, denoted as $G_n \leadsto G$, if $\E_0^* h(G_n) \to \E_0 h(G)$ for every $h \in C_b(\d{D})$. Here $\E_0^*$ is the \emph{outer expectation}, which is used to accommodate non-measurable $G_n$. If $G$ is a  separable process (i.e., there exists $S \subseteq \d{D}$ such that $P(G \in S) = 1$ and $S$ has a countable dense subset), then $G_n \leadsto G$ if and only if $\sup_{h \in \BL_1(\d{D})} \abs{ \E_0^* h(G_n) - \E_0 h(G) } \to 0$. We refer the reader to \cite{van1996weak} for a review of outer expectation and weak convergence. 

A class $\s{F}$ of measurable functions $f: \s{X} \mapsto \d{R}$ is called \textit{$P_0$-Donsker} if the sequence of empirical processes $\{ \d{G}_n f: f\in \s{F}\}$ converges weakly in $\ell^\infty(\s{F})$ to a tight, Borel measurable limit process $\d{G}_0$, or $\sup_{h \in \BL_1(\ell^\infty(\s{F}))} \abs{\E_0^* h(\d{G}_n) - \E_0 h(\d{G}_0)} \to 0$ \cite[Chapter 1.12]{van1996weak}. The limit process $\d{G}_0$ is necessarily a Gaussian process with zero mean and covariance function $\cov(\d{G}_{0} f, \d{G}_{0} g) = P_0(fg) - P_0f P_0g$ for $f, g \in \s{F}$, which is known as \textit{$P_0$-Brownian bridge process}. Implicitly, the Donsker property requires that the sample paths $f \mapsto  \d{G}_nf$ are almost surely uniformly bounded for every $n$, so that $\d{G}_n$ may be regarded as a  map from the underlying product measurable space $(\s{X}^\infty, \s{B}^\infty)$ to $\ell^\infty(\s{F})$. This is the case if $\sup_{f \in \s{F}} | f(x) - P_0 f|< \infty$ for all $x \in \s{X}$.

\subsection{Asymptotically linear estimator framework}
\label{sec: targeted estimators of differentiable functional}

We are interested in inference for a real-valued target parameter $\psi: \s{M} \to \d{R}$. All the results in this article extend to Euclidean parameters $\psi: \s{M} \to \d{R}^p$ for $p < \infty$ fixed, but we assume $p = 1$ for simplicity of exposition. We assume that $\psi$ is a smooth enough mapping on the model $\s{M}$ to permit construction of an asymptotically linear estimator $\psi_n$ with influence function $\phi_0 := \phi_{P_0}$, meaning that $\psi_n = \psi_0 + \d{P}_n \phi_0 + \fasterthan(n^{-1/2})$, where $\psi_0 := \psi(P_0)$. For any $P \in \s{M}$, $\phi_P: \s{X} \to \d{R}$ is a function satisfying  $P\phi_P = 0$ and $P \phi_P^2 < \infty$. Conditions under which this is possible and derivation of influence functions is not our main focus here, but we refer the interested reader to \cite{pfanzagl1982contributions},  \cite{klaassen1987consistent}, \cite{pfanzagl1990estimation}, and \cite{van1991differentiable}. Here, we will not require that $\phi_0$ is the efficient influence function, but rather allow it to be any influence function. In addition, we do not explicitly require that $\psi_n$ is a regular estimator. 
 
The premise of this article is that the parameter $\psi$ 
depends on $P$ through an infinite-dimensional nuisance parameter $\eta : \s{M} \to \s{H}$. For instance, $\eta$ may be a (conditional) density function, a (conditional) cumulative distribution function, a regression function, or some combination of these. A nuisance estimator $\eta_n$ may then be used in the construction of an estimator $\psi_n$ of $\psi_0$,  However, if $\eta_n$ is constructed without consideration of $\psi$ or its influence function, then the resulting $\psi_n$ may have excess bias inherited from $\eta_n$ that precludes asymptotic linearity. There are several existing remedies. One approach is to construct the nuisance estimator to reduce the bias of the plug-in estimator. Undersmoothing \citep{newey1998undersmoothing}, twicing kernels \citep{newey2004twicing}, sieves \citep{geman1982nonparametric, shen1997methods}, and TMLE \citep{van2006targeted, van2011targeted} are examples of this approach. Another approach is to abandon plug-in estimation and instead use the influence function to target the parameter of interest. One-step estimators \citep{bickel1982adaptive, pfanzagl1982contributions} and estimating equations-based estimators \citep{liang1986longitudinal} are examples of this approach. 

We now introduce a general framework that encompasses many approaches to constructing an asymptotically linear estimator. This will allow us to study these approaches in a unified manner. We assume there exists a function $T: \s{H} \times \s{M}^+$, where $\s{M}^+$ is the union of $\s{M}$ and the set of finite discrete probability measures on $(\s{X}, \s{B})$, such that $\psi_0 = T(\eta_0, P_0)$ and $\psi_n = T(\eta_n, \d{P}_n)$ for a nuisance estimator $\eta_n \in \s{H}$. We note that for a given estimator $\psi_n$,  there may be multiple representations of $\psi_n$ in terms of different functions $T$ because $\eta_n$ also typically depends on $\d{P}_n$.  To illustrate this general framework, we provide two brief examples of estimator constructions $T$. These examples and others will be discussed further in Section~\ref{sec: T examples}. The \emph{one-step} construction is defined as $T(\eta, P) = \psi(\eta) + P \phi_{\eta}$, where it is assumed that the parameter $\psi(P)$ and its influence function $\phi_P$ depend on $P$ only through $\eta_P$. The mean-zero property of influence functions implies that $T(\eta_0, P_0) = \psi_0$. A one-step estimator is then given by $T(\eta_n, \d{P}_n) = \psi(\eta_n) + \d{P}_n \phi_{\eta_n}$. Alternatively, the \emph{empirical mean plug-in} construction is defined as $T(\eta, P) := \int g(x, \eta) \sd P(x)$ for a function $g : \s{X} \times \s{H} \to \d{R}$, which can be used when $\psi(P) = \int g(x, \eta_P) \sd P(x)$. The resulting estimator is then given by $T(\eta_n, \d{P}_n) = \int g(x, \eta_n) \sd \d{P}_n(x) = \frac{1}{n} \sum_{i=1}^n g(X_i, \eta_n)$.

\subsection{Bootstrap framework}\label{sec:bootstrap_framework}

We now introduce the class of bootstrap schemes that we will consider. At a high level, the bootstrap schemes we will consider involve three steps. First, bootstrap samples $X_1^*, \dotsc, X_n^*$ are generated in some manner based on the data $X_1, \dotsc, X_n$. Second, a version of the estimation procedure is applied to the bootstrap data to produce a bootstrap estimate $\psi_n^*$. Third, this process is repeated $B$ times to approximate the sampling distribution of $\psi_n^* - \psi_n$ given the data. Throughout, we will ignore the effect of approximating the distribution of $\psi_n^* - \psi_n$ using a finite number of repetitions. The distribution of $\psi_n^* - \psi_n$ given the data is used to approximate the sampling distribution of $\psi_n - \psi_0$, and ultimately to construct confidence intervals for $\psi_0$. There are many specific approaches to each of these three steps, and as discussed in the introduction, our goal  is to provide results that cover a broad set of these approaches. In this section, we precisely define the approaches to the first two steps that we will consider; procedures for constructing confidence intervals are discussed in Section~\ref{sec:conf_int}.

We assume that, given the data $X_1, \dotsc, X_n$, the bootstrap sample $X_1^*, \dotsc, X_n^*$ is drawn IID from an estimate $\hat{P}_n$ of $P_0$ based on $X_1, \dotsc, X_n$. We refer to $\hat{P}_n$ as the \emph{bootstrap sampling distribution}. Taking $\hat{P}_n = \d{P}_n$ corresponds to the empirical bootstrap, in which case the bootstrap sample consists of $n$ samples drawn IID (i.e., with replacement) from $X_1, \dotsc, X_n$. Another common approach to defining $\hat{P}_n$ is through smoothing methods such as kernel density estimation. This will be discussed at more length in Section~\ref{section: type of bootstrap}. Notably, since we assume that $n$ samples are drawn independently from $\hat{P}_n$, we exclude the exchangeable, weighted, and $m$-out-of-$n$ bootstraps. Our results could be generalized to the weighted bootstrap if the nuisance estimator utilizes sample weights, and to the exchangeable bootstrap if the nuisance estimator does not depend on the independence of the data. We define the \textit{bootstrap empirical distribution} $\d{P}_n^*$ as the empirical distribution of the bootstrap data $X_1^*, \dotsc, X_n^*$ and the \textit{bootstrap empirical process} $\d{G}_n^*$ as  $\d{G}_n^* := n^{1/2} (\d{P}_n^* - \hat{P}_n)$. 

Once the process for generating bootstrap data has been defined, the next step is to define the process for constructing the estimator using the bootstrap data. Since our definition of the original estimator is $\psi_n = T(\eta_n, \d{P}_n)$, we will define $\psi_n^*:= T(\eta_n^*, \d{P}_n^*)$ as the estimator using the bootstrap data, where $\eta_n^*$ is an estimator of the nuisance parameter $\eta_0$ based on the bootstrap data and original data, and $\d{P}_n^*$ is the bootstrap empirical distribution as previously defined. The spirit of bootstrap estimation would suggest that $\eta_n^*$ be constructed using the bootstrap data in the exact same manner as $\eta_n$ was constructed using the original data. However, we will not require this --- instead, we will be agnostic about the way $\eta_n^*$ is constructed. There are several reasons motivating this increase in generality. First, in many of our intended applications, $\eta_n$ is estimated using machine learning, which may be very computationally intensive. Repeating a computationally intensive procedure for every bootstrap sample may be infeasible since the number of bootstrap samples $B$ is typically in the hundreds or thousands. Second, as discussed in the introduction, there are certain cases where it is not advisable to exactly mirror the estimation of $\eta_n$ when constructing $\eta_n^*$. For instance, many machine learning algorithms involve cross-validation. However, if the bootstrap sampling process produces replicates in the bootstrap data, which is the case for the empirical bootstrap, cross-validation may not perform as expected (see, e.g., \citealp[Page~51]{silverman1986density} and \citealp[Chapter~28]{van2018targeted}). In this case, modifications of the estimation procedure of $\eta_n^*$ have been proposed to avoid these issues. In addition, some authors have proposed fixing tuning parameters such as bandwidths at the values selected by the original data when constructing $\eta_n^*$ using the bootstrap data \citep{hall2001bootstrapping}.

Our asymptotic results presented in Section~\ref{sec:theory} will require high-level conditions about $\eta_n^*$: consistency, a Donsker condition, and negligibility of a remainder term. This mirrors the high-level conditions required of $\eta_n$ for asymptotic linearity of $\psi_n$. Our conditions can be satisfied if the construction of $\eta_n^*$  mirrors that of $\eta_n$ completely or partially. In some cases, our conditions will also permit $\eta_n^* = \eta_n$. That is, we will permit that the nuisance is not re-estimated at all using the bootstrap data, but rather the estimator computed on the original data is used when constructing the bootstrap estimator. This is discussed more in Section~\ref{sec: T examples}.

\subsection{Additional bootstrap notation} \label{section: type of bootstrap}

Crucial to bootstrap theory is taking conditional expectations of the bootstrap data given the original data. To do so precisely, we make the following definitions, which are common in the bootstrap literature (see, e.g., \citealp{van1996weak, van2000asymptotic, kosorok2008introduction}). We suppose that $W_n = (W_{n1}, \dotsc, W_{nn})$ is an IID sample independent of $X_1, \dotsc, X_n$, where each $W_{ni}$ is a random vector with distribution $Q_n$ on a measurable space $(\s{W}_n, \s{C}_n)$. We then assume that for each $i \in \{1, \dotsc, n\}$, $X_i^* = \gamma_n(X_1, \dotsc, X_n, W_{ni})$, where $\gamma_n$ is a fixed measurable function. Hence, $W_{ni}$ represents the additional source of randomness used in generating the bootstrap observation $X_i^*$. Since $W_{n1}, \dotsc, W_{nn}$ are assumed to be IID and independent of $X_1, \dotsc, X_n$, $X_1^*, \dotsc, X_n^*$ are IID conditional on $X_1, \dotsc, X_n$. With this setup, $\hat{P}_n$ is defined as the conditional distribution of $X_i^*$ given $X_1, \dotsc, X_n$. The bootstrap sample $X_1^*, \dotsc, X_n^*$ lies in the product probability space $(\s{X}, \s{B}, P_0)^n \times (\s{W}_n, \s{C}_n, Q_n)^n$.

We provide two concrete examples to illustrate the above definitions.  For the empirical bootstrap, we can let $W_{ni}$ have a categorical distribution $Q_n$ on $\{1, \dotsc, n\}$ with event probabilities $(1/n, \dotsc, 1/n)$, and set $X_i^* = \gamma_n(X_1, \dotsc, X_n, W_{ni}) = X_{W_{ni}}$. For one-dimensional observations $X_i$ and any bootstrap, we can let $W_{ni}$ be IID Uniform$(0, 1)$, and set $X_i^* := F_n^{-1}(W_{ni})$ for $F_n^{-1}$ the quantile function corresponding to $\hat{P}_n$. Here, $F_n^{-1}$ is assumed to be a measurable function of $X_1, \dotsc, X_n$. 

We now define conditional expectations given the data $X_1, \dotsc, X_n$. Since we will be dealing with processes that may not be conditionally measurable, we will use outer expectations. For any real-valued function $h: \s{X}^n \times \s{W}_n^n \to \bar{\d{R}}$,  we define the \textit{conditional outer expectation} of $h$ given $X_1, \dotsc, X_n$ with respect to $Q_n$ as 
\[
    (\E_{W}^* h)(X_1, \dotsc, X_n) := \inf_U\left\{ \int_{\s{W}_n^n} U(X_1, \dotsc, X_n, w_1, \dotsc, w_n) \sd Q_n^n(w_1, \dotsc, w_n)  \right\},
\]
where the infimum is taken over all  functions $U$ such that $w_1, \dotsc, w_n \mapsto U(X_1, \dotsc, X_n, w_1, \dotsc, w_n)$ is measurable,  $U\geq h$ almost surely, and $\int U\sd Q_n^n$ exists. Since each $X_i^*$ is a measurable function of $X_1, \dotsc, X_n$ and $W_{ni}$, this can also be used to define conditional expectations of functions of $X_1, \dotsc, X_n$ and $X_1^*, \dotsc, X_n^*$. 
We then define the \textit{conditional outer probability} given $X_1, \dotsc, X_n$ as $\prob_W^*(A) := \E_W^* 1_A$ for any $A \in \s{B}^n\times \s{C}_n^n$. 

We say that a (possibly non-measurable) sequence of random elements $G_n: \s{X}^n\times \s{W}_n^n \to \d{D}$ for a metric space  $\d{D}$ conditionally weakly converges to a tight, Borel measurable limit $G$ in $\d{D}$ given $X_1, \dotsc, X_n$ in outer probability if $\sup_{h \in \BL_1(\d{D})} \abs{\E_W^* h(G_n) - \E_0 h(G)} \inoutprob 0$, and we denote this as $G_n \condinoutdist G$. In addition, we say $\s{F}$ is \textit{$P_0$-Donsker in probability} if it holds that $\sup_{h \in \BL_1(\ell^\infty(\s{F}))} \abs{ \E_W^* h(\d{G}_n^*) - \E_0 h(\d{G}) } = o_{\prob_0^*}(1)$ \cite[Section~3]{gine1990bootstrapping}. We say a sequence of random variables $Y_n: \s{X}^n \times \s{W}_n^n \to \d{R}$ conditionally converge to $0$ in probability if for any $\varepsilon > 0$, $\prob_W^*(|Y_n| \geq \varepsilon)  \inoutprob 0$, and we denote this as $Y_n = o_{\prob_W^*}(1)$. We say $Y_n$ is conditionally stochastically bounded if for any $\varepsilon > 0$, there exist $M \in (0, \infty)$ such that $\prob_0^*(\prob_W^*(|Y_n| \geq M) \geq \epsilon) \to 0$, and we denote this as $Y_n = O_{\prob_W^*}(1)$.

\section{General results}\label{sec:theory}

\subsection{Asymptotic linearity of the estimator}\label{sec:asym linearity}

Before moving to our general bootstrap consistency results, we provide general conditions under which $T(\eta_n, \d{P}_n)$ is an asymptotically linear estimator of $\psi_0$ with influence function $\phi_0$. While the result is simple and based on well-known ideas, it is important background because our theoretical study of the bootstrap will focus on consistency of the sampling distribution of the bootstrap estimator, which requires knowing the distribution it should be consistent for --- i.e., the asymptotic distribution of the original estimator. In addition, the conditions for conditional asymptotic linearity of the bootstrap estimator will mirror the conditions for asymptotic linearity of $T(\eta_n, \d{P}_n)$. Hence, these conditions will shed more light on the bootstrap conditions presented below. 

For each $n$, we define $\phi_n : \s{X} \to \d{R}$ as an estimator of the influence function $\phi_0$. For the one-step and estimating equations-based estimators, $\phi_n$ will be the influence function with the estimated nuisance parameter $\eta_n$.  For estimators such as the plug-in estimator where the influence function is not explicitly estimated as part of the estimation procedure, the role of $\phi_n$ will be discussed more below. We also define $R_n := T(\eta_n, \d{P}_n) - T(\eta_0, P_0) - (\d{P}_n - P_0) \phi_n$.  We then introduce the following conditions.

\begin{description}[style=multiline, labelindent=.9cm, leftmargin=2cm]
    \item[\namedlabel{cond: limited complexity}{(A1)}] There exists a class $\s{F}$ of measurable functions from $(\s{X}, \s{B})$  to $\d{R}$ such that:
    \begin{enumerate}[(a)]
        \item $\phi_0 \in \s{F}$ and $\prob_0^*(\phi_n \in \s{F}) \to 1$, and 
        \item $\d{G}_n \leadsto \d{G}_0$ in $\ell^\infty(\s{F})$, where $\d{G}_0$ is the $P_0$-Brownian bridge process.
    \end{enumerate}

    \item[\namedlabel{cond: weak consistency}{(A2)}] It holds that 
$\| \phi_n - \phi_0 \|_{L_2(P_0)} = o_{\prob_0^*}(1)$.

    \item[\namedlabel{cond: second order}{(A3)}] The remainder satisfies $R_n = o_{\prob_0^*}(n^{-1/2})$.
\end{description}

Under these conditions, we have the following result regarding asymptotic linearity of $T(\eta_n, \d{P}_n)$.
\begin{restatable}{thm}{thmclassical}\label{thm: classical}
    If conditions~\ref{cond: limited complexity}--\ref{cond: second order} hold, then $T(\eta_n, \d{P}_n)$ is asymptotically linear in the sense that $T(\eta_n, \d{P}_n) = T(\eta_0, P_0) + \d{P}_n \phi_0 + o_{\prob_0^*}(n^{-1/2})$, which implies that $n^{1/2} \left[T(\eta_n, \d{P}_n) - T(\eta_0, P_0)\right] \leadsto \d{G}_0 \phi_0.$
\end{restatable}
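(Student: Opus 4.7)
The plan is to start from the definition $R_n = T(\eta_n, \mathbb{P}_n) - T(\eta_0, P_0) - (\mathbb{P}_n - P_0)\phi_n$ and rearrange to obtain
\[
T(\eta_n, \mathbb{P}_n) - T(\eta_0, P_0) = (\mathbb{P}_n - P_0)\phi_0 + (\mathbb{P}_n - P_0)(\phi_n - \phi_0) + R_n.
\]
Since $P_0 \phi_0 = 0$ by the mean-zero property of the influence function, the first term equals $\mathbb{P}_n \phi_0$. Condition~\ref{cond: second order} directly handles $R_n$. So the conclusion reduces to showing that the middle term is $\fasterthan(n^{-1/2})$, i.e., that $\mathbb{G}_n(\phi_n - \phi_0) = \fasterthan(1)$.

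This middle step is the main (and only nontrivial) obstacle, and I would attack it via the standard asymptotic equicontinuity argument. The Donsker condition in~\ref{cond: limited complexity}(b) implies that the sequence $\{\mathbb{G}_n\}$ is asymptotically uniformly equicontinuous on $\mathcal{F}$ with respect to the intrinsic semimetric $\rho(f,g) = \{P_0(f - g)^2 - (P_0(f-g))^2\}^{1/2}$, which is dominated by $\|f - g\|_{L_2(P_0)}$. By \ref{cond: limited complexity}(a), both $\phi_0 \in \mathcal{F}$ and $\phi_n \in \mathcal{F}$ with outer probability tending to one, and by \ref{cond: weak consistency}, $\|\phi_n - \phi_0\|_{L_2(P_0)} = \fasterthan(1)$. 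Applying the equicontinuity statement (as in, e.g., Lemma~19.24 of \citealp{van2000asymptotic}) to the random pair $(\phi_n, \phi_0)$ on the event $\{\phi_n \in \mathcal{F}\}$ yields $\mathbb{G}_n(\phi_n - \phi_0) = \fasterthan(1)$. Care is needed with outer probability and the fact that $\phi_n$ is only in $\mathcal{F}$ with probability tending to one, but this is handled by working on the event $\{\phi_n \in \mathcal{F}\}$ whose outer probability tends to one, and extending the bound off this event by tightness.

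Combining the three pieces gives the asymptotic linearity statement
\[
T(\eta_n, \mathbb{P}_n) = T(\eta_0, P_0) + \mathbb{P}_n \phi_0 + \fasterthan(n^{-1/2}).
\]
Multiplying by $n^{1/2}$ produces $n^{1/2}[T(\eta_n, \mathbb{P}_n) - T(\eta_0, P_0)] = \mathbb{G}_n \phi_0 + \fasterthan(1)$, again using $P_0\phi_0 = 0$. The weak convergence conclusion then follows from the Donsker property in~\ref{cond: limited complexity}(b), which guarantees $\mathbb{G}_n \phi_0 \leadsto \mathbb{G}_0 \phi_0$ as a coordinate projection of $\mathbb{G}_n \leadsto \mathbb{G}_0$ at the fixed element $\phi_0 \in \mathcal{F}$ (equivalently, by the ordinary univariate CLT since $P_0 \phi_0^2 < \infty$). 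I do not anticipate any technical difficulty beyond the equicontinuity step; the rest is bookkeeping around outer probability.
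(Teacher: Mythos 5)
Your proposal is correct and follows essentially the same route as the paper: the identical decomposition of $T(\eta_n, \d{P}_n) - T(\eta_0, P_0)$ into the linear term, the empirical process remainder $(\d{P}_n - P_0)(\phi_n - \phi_0)$, and $R_n$, with the empirical process remainder handled by the asymptotic equicontinuity argument of Lemma~19.24 of \cite{van2000asymptotic} under~\ref{cond: limited complexity}--\ref{cond: weak consistency}, $R_n$ handled by~\ref{cond: second order}, and the weak convergence conclusion obtained from $P_0\phi_0 = 0$ and $P_0\phi_0^2 < \infty$ via the CLT.
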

Theorem~\ref{thm: classical} provides conditions under which $\psi_n = T(\eta_n, \d{P}_n)$ is an asymptotically linear estimator of $\psi_0 = T(\eta_0, P_0)$. This implies convergence in distribution of $n^{1/2} \left[T(\eta_n, \d{P}_n) - T(\eta_0, P_0)\right]$ to $ \d{G}_0 \phi_0$, which follows a normal distribution with mean zero and variance $P_0 \phi_0^2$. In particular, this gives a method for constructing asymptotically valid confidence intervals. If $\sigma_n$ is a consistent estimator of $(P_0 \phi_0^2)^{1/2}$, then the two-sided Wald confidence interval 
\begin{equation}\label{eq: classical ci}
    [T(\eta_n, \d{P}_n) + z_{\beta}\sigma_n / n^{1/2}, T(\eta_n, \d{P}_n) + z_{1-\alpha}\sigma_n / n^{1/2}]
\end{equation}
has asymptotic level $1 - \alpha - \beta$ by Slutsky's lemma.  The simplest variance estimator is the \emph{influence function-based variance} given by $\sigma_n^2:= \mathbb{P}_n \phi_n^{2}$. The next proposition provides general conditions under which this variance estimator is consistent and demonstrates that  conditions~\ref{cond: limited complexity}--\ref{cond: weak consistency} in particular imply that it is consistent.
\begin{restatable}{prop}{propvar} \label{prop: variance}
    If (i) $\prob_0^*(\phi_n^2 \in \s{G}) \to 1$ for a $P_0$-Glivenko Cantelli class of measurable functions $\s{G}$, and (ii) $P_0(\phi_n^2 - \phi_0^2)\inoutprob 0$, then $\mathbb{P}_n \phi_n^{2}  \inoutprob \sigma_0^2$. Furthermore, if $\sup_{f \in \s{F}}|P_0 f| < \infty$, condition~\ref{cond: limited complexity} implies (i) and condition~\ref{cond: weak consistency} implies (ii).
\end{restatable}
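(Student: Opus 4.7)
My approach centers on the decomposition
\[
    \d{P}_n \phi_n^2 - \sigma_0^2 \;=\; (\d{P}_n - P_0)\phi_n^2 \;+\; P_0(\phi_n^2 - \phi_0^2),
\]
using $\sigma_0^2 = P_0 \phi_0^2$. For the first term, I restrict to the event $A_n := \{\phi_n^2 \in \s{G}\}$, whose outer probability tends to $1$ by condition~(i); on $A_n$ this term is bounded in absolute value by $\sup_{g \in \s{G}} \abs{(\d{P}_n - P_0) g}$, which converges to $0$ in outer probability because $\s{G}$ is $P_0$-Glivenko--Cantelli. The second term tends to zero directly by (ii). Combining gives $\d{P}_n\phi_n^2 \inoutprob \sigma_0^2$.

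For the implication \ref{cond: weak consistency} $\Rightarrow$ (ii), I factor $\phi_n^2 - \phi_0^2 = (\phi_n - \phi_0)(\phi_n + \phi_0)$ and apply the Cauchy--Schwarz inequality:
\[
    \abs{P_0(\phi_n^2 - \phi_0^2)} \;\leq\; \norm{\phi_n - \phi_0}_{L_2(P_0)}\, \norm{\phi_n + \phi_0}_{L_2(P_0)}.
\]
The first factor is $\fasterthan(1)$ by \ref{cond: weak consistency}, while the triangle inequality bounds the second by $\norm{\phi_n - \phi_0}_{L_2(P_0)} + 2\norm{\phi_0}_{L_2(P_0)} = \fasterthan(1) + \bounded(1) = \bounded(1)$, since $\phi_0$ is square-integrable by the definition of an influence function. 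The product is therefore $\fasterthan(1)$, establishing (ii).

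For the implication \ref{cond: limited complexity} $\Rightarrow$ (i), the natural candidate is $\s{G} := \{f^2 : f \in \s{F}\}$: the inclusion $\phi_n \in \s{F}$, holding with outer probability tending to $1$ by \ref{cond: limited complexity}(a), immediately gives $\phi_n^2 \in \s{G}$ with the same outer probability. The main obstacle, and the subtlest part of the argument, is verifying that $\s{G}$ is itself $P_0$-Glivenko--Cantelli. My plan is to leverage that \ref{cond: limited complexity}(b) makes $\s{F}$ $P_0$-Donsker---so in particular $\s{F}$ is totally bounded in $L_2(P_0)$ and itself $P_0$-Glivenko--Cantelli---and then combine this with the extra hypothesis $\sup_{f \in \s{F}} \abs{P_0 f} < \infty$ together with the implicit pointwise boundedness of $f - P_0 f$ built into the Donsker definition to exhibit an integrable envelope for $\s{G}$ and obtain finite $L_1(P_0)$-bracketing numbers, which suffices for the Glivenko--Cantelli property via standard preservation results for squares of Donsker classes. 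Once (i) is established in this way, the decomposition and Cauchy--Schwarz arguments above are routine.
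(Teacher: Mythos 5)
Your decomposition $\d{P}_n\phi_n^2 - \sigma_0^2 = (\d{P}_n - P_0)\phi_n^2 + P_0(\phi_n^2 - \phi_0^2)$, the restriction to the event $\{\phi_n^2 \in \s{G}\}$ to handle the first term, and the Cauchy--Schwarz/triangle-inequality bound for $P_0(\phi_n^2 - \phi_0^2)$ all match the paper's proof exactly. The choice $\s{G} = \{f^2 : f \in \s{F}\}$ for the ``Furthermore'' direction is also the same as the paper.

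There is, however, a gap in how you propose to establish that $\s{G}$ is Glivenko--Cantelli. You say you would ``obtain finite $L_1(P_0)$-bracketing numbers'' from the Donsker property of $\s{F}$ (together with the envelope), but this cannot work: the Donsker property does not imply finite bracketing numbers --- Donsker classes can have infinite bracketing entropy (only the converse implication, bracketing $\Rightarrow$ Donsker, holds). So the bracketing route is a dead end. Your other instinct --- ``standard preservation results for squares of Donsker classes'' --- is the right one and is precisely what the paper does: it cites Theorem~2.10.14 of \cite{van1996weak}, which says (roughly) that if $\s{F}$ is $P_0$-Donsker and $\sup_{f \in \s{F}} |P_0 f| < \infty$, then $\{fg : f, g \in \s{F}\}$ is $P_0$-Glivenko--Cantelli in probability. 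That theorem is proved by permanence/asymptotic-tightness arguments, not by bracketing. To repair your proof, simply drop the bracketing step and invoke the preservation theorem directly; the envelope observation you make (from the implicit pointwise boundedness of $f - P_0 f$ plus the hypothesis $\sup_{f} |P_0 f| < \infty$) is not needed once you cite the right theorem, because $\sup_{f \in \s{F}}|P_0 f| < \infty$ is already its hypothesis.
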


Theorem~\ref{thm: classical} and Proposition~\ref{prop: variance} yield the following result for consistency of the Wald-type confidence interval with influence function-based variance estimator.
\begin{restatable}{cor}{corclassicalci}\label{cor: classical CI}
    If $\sup_{f \in \s{F}}|P_0 f| < \infty$, conditions~\ref{cond: limited complexity}--\ref{cond: second order} hold, and $\sigma_0^2 > 0$, then the Wald-type confidence interval defined in~\eqref{eq: classical ci} with $\sigma_n^2 = \mathbb{P}_n \phi_n^{2}$ has asymptotic confidence level $1-\alpha - \beta$.
\end{restatable}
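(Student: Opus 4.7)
The plan is to combine Theorem~\ref{thm: classical} and Proposition~\ref{prop: variance} via an outer-probability version of Slutsky's lemma. First, I would apply Theorem~\ref{thm: classical}, whose hypotheses \ref{cond: limited complexity}--\ref{cond: second order} are assumed, to conclude that
\[
    n^{1/2}\left[T(\eta_n, \d{P}_n) - T(\eta_0, P_0)\right] \leadsto \d{G}_0 \phi_0.
\]
Since $\phi_0 \in \s{F}$ and $P_0 \phi_0 = 0$ by the mean-zero property of the influence function, the limit $\d{G}_0 \phi_0$ is a mean-zero Gaussian random variable with variance $P_0 \phi_0^2 - (P_0 \phi_0)^2 = \sigma_0^2$.

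Next, I would verify that $\sigma_n^2 = \d{P}_n \phi_n^2$ is consistent for $\sigma_0^2$ by invoking Proposition~\ref{prop: variance}. The boundedness assumption $\sup_{f \in \s{F}} |P_0 f| < \infty$ together with condition~\ref{cond: limited complexity} yields hypothesis (i) of that proposition, and condition~\ref{cond: weak consistency} yields hypothesis (ii), so the proposition gives $\sigma_n^2 \inoutprob \sigma_0^2$. Because $\sigma_0^2 > 0$, the continuous mapping theorem applied to $x \mapsto x^{1/2}$ on a neighborhood of $\sigma_0^2$ yields $\sigma_n \inoutprob \sigma_0 > 0$.

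Finally, I would apply the outer-probability version of Slutsky's lemma, which is valid because $\sigma_n$ converges to the nonzero constant $\sigma_0$, to conclude
\[
    Z_n := n^{1/2}\left[T(\eta_n, \d{P}_n) - T(\eta_0, P_0)\right] / \sigma_n \leadsto Z \sim N(0, 1).
\]
Rearranging, the event that $\psi_0$ lies in the Wald interval~\eqref{eq: classical ci} is $\{Z_n \in [-z_{1-\alpha}, -z_\beta]\}$, whose boundary has $N(0,1)$-measure zero, so the Portmanteau theorem yields the limiting probability $\Phi(-z_\beta) - \Phi(-z_{1-\alpha}) = (1-\beta) - \alpha = 1 - \alpha - \beta$. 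The argument is essentially bookkeeping: the only subtlety is that weak convergence here is formulated via outer expectations because $T(\eta_n, \d{P}_n)$ and $\sigma_n$ need not be Borel measurable, but the standard outer-probability forms of Slutsky's lemma and the Portmanteau theorem apply directly without additional work, so no step poses a real obstacle.
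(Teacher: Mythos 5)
Your proof is correct and follows precisely the route the paper indicates: Theorem~\ref{thm: classical} gives weak convergence of $n^{1/2}[T(\eta_n,\d{P}_n)-T(\eta_0,P_0)]$ to $\d{G}_0\phi_0 \sim N(0,\sigma_0^2)$, Proposition~\ref{prop: variance} gives $\sigma_n^2 \inoutprob \sigma_0^2$, and Slutsky together with the Portmanteau theorem finishes the argument. The paper itself does not spell out a separate proof but explicitly flags the combination via Slutsky's lemma in the preceding discussion, so your write-up is essentially the intended one.
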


We now discuss conditions~\ref{cond: limited complexity}--\ref{cond: second order}. Theorem~\ref{thm: classical} is based on the first-order expansion
\begin{align*}
    T(\eta_n, \d{P}_n) = T(\eta_0, P_0) + \d{P}_n \phi_0 & +  R_n + S_n,
\end{align*}
where $S_n := n^{-1/2} \d{G}_n(\phi_n - \phi_0)$ is an \emph{empirical process term}. Conditions~\ref{cond: limited complexity} and~\ref{cond: weak consistency} together imply that $S_n = o_{\prob_0^*}(n^{-1/2})$. Condition~\ref{cond: limited complexity} requires that the estimated influence function $\phi_n$ falls in a $P_0$-Donsker class with probability tending to one. Satisfying this condition typically requires restricting the complexity of the function class that the nuisance estimator $\eta_n$, and hence $\phi_n$, falls in. A main way this is accomplished is by using bracketing entropy or uniform entropy (Chapters~2.6 and~2.7 of \citealp{van1996weak}). Condition~\ref{cond: weak consistency} requires that $\phi_n$ is a consistent estimator of $\phi_0$ in the $L_2(P_0)$ norm. When $\phi_n$ and $\phi_0$ depend on $n$ and $P_0$ through $\eta_n$ and $\eta_0$, respectively, this is typically implied by consistency of $\eta_n$ for $\eta_0$ in an appropriate sense coupled with continuity of $\phi$ as a function of $\eta$. 

Condition~\ref{cond: second order} controls the remainder term $R_n$ in the above expansion. Other authors have used analogous conditions in related work (see, e.g., the smoothness property discussed in Section~4.1 of \citealp{shen1997methods} and the quadratic functional in Section~3.2 of \citealp{cattaneo2018kernel}). 
This remainder term can typically be decomposed into further remainders, including the so-called \emph{second-order remainder} and the bias term $-\d{P}_n \phi_n$. The exact way that this remainder decomposes depends on the form of $T$. Estimators such as one-step and estimating equations-based estimators that explicitly use the influence function as part of the construction control the asymptotic bias as part of the estimation procedure. On the other hand, plug-in estimators typically control the bias term $-\d{P}_n \phi_n$ through construction of the nuisance estimator $\eta_n$. Several approaches to constructing nuisance estimators that yield $-\d{P}_n \phi_n = \fasterthan(n^{-1/2})$ are sieve estimators \citep{shen1997methods}, under-smoothing \citep{newey1998undersmoothing},  twicing kernels \citep{newey2004twicing}, and TMLE. Simpler sufficient conditions for~\ref{cond: second order} for several different estimator constructions will be discussed at more length in Section~\ref{sec: T examples}.

\subsection{Conditional asymptotic linearity of the bootstrap estimator}\label{sec:boot asym linearity}

We now present our general result regarding conditional asymptotic linearity of the bootstrap procedure defined in Section~\ref{sec:bootstrap_framework}. As mentioned above, $\phi_n^*$ is a bootstrap influence function estimator. We begin by introducing conditions we will rely upon. We define $R_n^* := T(\eta_n^*, \d{P}_n^*) - T(\eta_n, \hat{P}_n) - (\d{P}_n^* - \hat{P}_n) \phi_n^*$.

\begin{description}[style=multiline, labelindent=.9cm, leftmargin=2cm]
    \item[\namedlabel{cond: bootstrap limited complexity}{(B1)}] There exists a class $\s{F}$ of measurable  functions from $(\s{X}, \s{B})$ to $\d{R}$ such that
    \begin{enumerate}[(a)]
        \item $\phi_0 \in \s{F}$ and $\prob_W^*(\phi_n^* \in \s{F}) \inoutprob 1$, and 
        \item $\d{G}_n^* \condinoutdist \d{G}_0$ in $\ell^\infty(\s{F})$, where $\d{G}_0$ is the $P_0$-Brownian bridge process.
    \end{enumerate}

    \item[\namedlabel{cond: bootstrap weak consistency}{(B2)}] It holds that $ \| \phi_n^* - \phi_0 \|_{L_2(P_0)} = o_{\prob_W^*}(1)$.

    \item[\namedlabel{cond: bootstrap second order}{(B3)}] The remainder term satisfies $R_n^* = o_{\prob_W^*}(n^{-1/2})$.
\end{description}

Under these conditions, we have the following result.
\begin{restatable}{thm}{thmbootstrap}\label{thm: bootstrap}
    If conditions~\ref{cond: bootstrap limited complexity}--\ref{cond: bootstrap second order} hold, then $T(\eta_n^*, \d{P}_n^*)$ is conditionally asymptotically linear in the sense that $T(\eta_n^*, \d{P}_n^*) = T(\eta_n, \hat{P}_n) + (\d{P}_n^* - \hat{P}_n)\phi_0 + o_{\prob_W^*}(n^{-1/2})$, which implies that
    \[
        n^{1/2} \left[T\left(\eta_n^*, \d{P}_n^*\right)- T(\eta_n, \hat{P}_n) \right] \condinoutdist \d{G}_0(\phi_0).
    \]
\end{restatable}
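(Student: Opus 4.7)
The plan is to mirror the first-order expansion used in the proof of Theorem~\ref{thm: classical}, but replace the empirical process $\d{G}_n$ and unconditional convergence notions with the bootstrap empirical process $\d{G}_n^*$ and the conditional/outer-probability versions defined in Section~\ref{section: type of bootstrap}. The definition of $R_n^*$ gives the identity
\[
T(\eta_n^*, \d{P}_n^*) = T(\eta_n, \hat{P}_n) + (\d{P}_n^* - \hat{P}_n)\phi_n^* + R_n^*.
\]
Writing $(\d{P}_n^* - \hat{P}_n)\phi_n^* = (\d{P}_n^* - \hat{P}_n)\phi_0 + n^{-1/2}\d{G}_n^*(\phi_n^* - \phi_0)$, and invoking~\ref{cond: bootstrap second order} for the remainder, the claimed linear expansion reduces to showing that the bootstrap empirical process increment $S_n^* := \d{G}_n^*(\phi_n^* - \phi_0) = o_{\prob_W^*}(1)$.

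For this I would use a conditional asymptotic equicontinuity argument. By~\ref{cond: bootstrap limited complexity}(b), $\d{G}_n^* \condinoutdist \d{G}_0$ in $\ell^\infty(\s{F})$, and $\d{G}_0$ has uniformly $\rho$-continuous sample paths, where $\rho$ is the intrinsic (covariance-based) semimetric and is dominated by the $L_2(P_0)$ norm. Hence its modulus of continuity $\omega(\delta) := \sup_{f,g \in \s{F},\, \rho(f,g)<\delta} |\d{G}_0(f) - \d{G}_0(g)|$ tends to $0$ in probability as $\delta \downarrow 0$. Given $\varepsilon > 0$, choose $\delta>0$ so that $\prob_0(\omega(\delta) > \varepsilon) < \varepsilon$; then applying the bounded Lipschitz characterization of $\condinoutdist$ to a Lipschitz regularization of the functional $z \mapsto 1\{\sup_{\rho(f,g)<\delta}|z(f) - z(g)| > \varepsilon\}$ transfers the bound to $\d{G}_n^*$ conditionally, up to an $o_{\prob_0^*}(1)$ error. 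Combining with $\prob_W^*\{\phi_n^* \in \s{F},\ \rho(\phi_n^*, \phi_0) < \delta\} \to 1$ in outer probability, which is supplied by~\ref{cond: bootstrap limited complexity}(a) and~\ref{cond: bootstrap weak consistency}, yields $\prob_W^*(|S_n^*| > \varepsilon) = o_{\prob_0^*}(1)$. With the expansion in hand, the stated weak convergence follows because the evaluation map $z \mapsto z(\phi_0)$ from $\ell^\infty(\s{F})$ to $\d{R}$ is $1$-Lipschitz, so a conditional continuous mapping theorem (applied by composing with $\BL_1(\d{R})$ functions) gives $\d{G}_n^* \phi_0 \condinoutdist \d{G}_0 \phi_0$, and a conditional Slutsky step absorbs the $o_{\prob_W^*}(1)$ remainder.

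The main obstacle is the conditional equicontinuity step. Its unconditional analogue, implicit in the proof of Theorem~\ref{thm: classical}, is a standard consequence of the Donsker property combined with $L_2(P_0)$-consistency, but in the bootstrap setting one must navigate outer expectations over the joint $(X_1, \dotsc, X_n, W_{n1}, \dotsc, W_{nn})$ probability space, replace indicator events by their measurable covers so that the bounded Lipschitz characterization of $\condinoutdist$ applies, and handle the possibly nonmeasurable membership $\{\phi_n^* \in \s{F}\}$ carefully when it is intersected with suprema over $\s{F}$. These considerations parallel the bootstrap empirical process arguments of Chapter~3.6 of \cite{van1996weak}, but here they must be layered on top of the varying argument $\phi_n^*$, so the cleanest proof will probably route through a conditional version of an asymptotic equicontinuity lemma stated at the level of $\sup_{f,g \in \s{F},\,\rho(f,g)<\delta}|\d{G}_n^*(f)-\d{G}_n^*(g)|$ rather than invoking the full weak convergence directly.
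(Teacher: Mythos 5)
Your proposal follows essentially the same route as the paper's proof: decompose $T(\eta_n^*, \d{P}_n^*) - T(\eta_n, \hat{P}_n)$ into $(\d{P}_n^* - \hat{P}_n)\phi_0 + n^{-1/2}\d{G}_n^*(\phi_n^* - \phi_0) + R_n^*$, control the remainder by~\ref{cond: bootstrap second order}, control the bootstrap empirical process increment by a conditional equicontinuity lemma, and pass the evaluation map $z \mapsto z(\phi_0)$ through the bounded-Lipschitz characterization to obtain the final weak convergence. The conditional equicontinuity lemma you anticipate needing (with the Lipschitz regularization of the indicator of a large modulus, worked at the level of $\sup_{\rho(f,g)<\delta}|\d{G}_n^*(f)-\d{G}_n^*(g)|$ with care about nonmeasurable membership of $\phi_n^*$ in $\s{F}$) is exactly what the paper isolates as Lemma~\ref{lemma: boot_ran} and proves in precisely that manner.
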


Theorem~\ref{thm: bootstrap} establishes general conditions under which the bootstrap estimator $T(\eta_n^*, \d{P}_n^*)$ is conditionally asymptotically linear. Theorem~\ref{thm: bootstrap} is notable for its generality: conditions~\ref{cond: bootstrap limited complexity}--\ref{cond: bootstrap second order} cover a variety of estimator constructions $T$, bootstrap nuisance estimators $\eta_n^*$, and bootstrap sampling distributions $\hat{P}_n$. We expect this generality to increase the range of potential applications of the result. 

The most important implication of conditional asymptotic linearity is that the bootstrap provides a consistent approximation to the sampling distribution of $n^{1/2}(\psi_n - \psi_0)$.  Here, consistency means that the conditional distribution of the centered and scaled bootstrap estimator $n^{1/2} [T\left(\eta_n^*, \d{P}_n^*\right)- T(\eta_n, \hat{P}_n)]$ converges weakly in outer probability to the same limit as $n^{1/2}(\psi_n - \psi_0)$. In Section~\ref{sec:conf_int}, we will discuss how this can be used to demonstrate asymptotic validity of bootstrap confidence intervals. However, as with asymptotic linearity, conditional asymptotic linearity offers additional utility beyond conditional weak convergence. In particular, conditional asymptotic linearity of multiple bootstrap estimators implies \emph{joint} conditional asymptotic normality of the estimators, which is useful for constructing simultaneous confidence regions and confidence regions for functions of two or more parameters. In contrast, such joint asymptotic behavior cannot be determined by marginal weak convergence results.

Theorem~\ref{thm: bootstrap} centers the bootstrap estimator $T(\eta_n^*, \d{P}_n^*)$ around $T(\eta_n, \hat{P}_n)$ rather than $T\left(\eta_n, \d{P}_n\right) = \psi_n$. There is no difference between these two possibilities when using the empirical bootstrap, so that $\hat{P}_n = \d{P}_n$. However, when $\hat{P}_n \neq \d{P}_n$, such as when utilizing a smooth bootstrap procedure, the two are not necessarily the same. Intuitively, we center around $T(\eta_n, \hat{P}_n)$ because $\hat{P}_n$ is the distribution used to generate the bootstrap data upon which the bootstrap empirical distribution $\d{P}_n^*$ is based. Hence, $\hat{P}_n$ plays the role of $P_0$ in the bootstrap. The potential consequences of this for the construction of confidence intervals will be further explored in Section~\ref{sec:conf_int}.

Conditions~\ref{cond: bootstrap limited complexity}--\ref{cond: bootstrap second order} mirror conditions~\ref{cond: limited complexity}--\ref{cond: second order} used to demonstrate asymptotic linearity of the estimator $T(\eta_n,\d{P}_n)$ in Theorem~\ref{thm: classical}. However, since \ref{cond: bootstrap limited complexity}--\ref{cond: bootstrap second order} concern the bootstrap estimator, they require convergence conditional on the original observations. In some cases, conditional asymptotic linearity of the bootstrap estimator is actually implied by the conditions of Theorem~\ref{thm: classical}. We will discuss this more in Section~\ref{sec: T examples}. This is related to \cite{beran1997diagnosing}, who showed that for locally asymptotically normal parametric models, conditional weak convergence of the parametric and nonparametric bootstraps is equivalent to regularity of the estimator.  For pathwise differentiable parameters, an asymptotically linear estimator is regular if and only if its influence function is a gradient of $\psi$ relative to $\s{M}$ at $P_0$ \citep{pfanzagl1982contributions, pfanzagl1990estimation, van1991differentiable}. We typically construct the estimator so that $\phi_0$ is indeed a gradient. Furthermore, negligibility of the second-order remainder term for condition~\ref{cond: second order} is often established using conditions that also imply that $\phi_0$ is a gradient, as we will discuss in Section~\ref{sec: T examples}. Hence, while we do not explicitly require regularity, the estimator is regular in most of our intended applications. However, it is not entirely clear if regularity of the estimator plays as strong a role in our setting as it does in that of \cite{beran1997diagnosing} for parametric models.

Theorem \ref{thm: bootstrap} is based on the bootstrap first-order expansion 
\begin{align*}
    T(\eta_n^*, \d{P}_n^*) = T(\eta_n, \hat{P}_n) + (\d{P}_n^* - \hat{P}_n)\phi_0 & + R_n^* + S_n^*,
\end{align*}
where $S_n^* = n^{-1/2} \d{G}_n^*(\phi_n^* - \phi_0)$ is a \emph{bootstrap empirical process remainder term}. Conditions~\ref{cond: bootstrap limited complexity}--\ref{cond: bootstrap weak consistency} are used to control $S_n^*$. This is analogous to how  conditions~\ref{cond: limited complexity}--\ref{cond: weak consistency} were used to control the ordinary empirical process remainder $S_n$. In particular, condition~\ref{cond: bootstrap limited complexity}(b) requires conditional weak convergence in outer probability of the bootstrap empirical process $\d{G}_n^* = n^{1/2}(\d{P}_n^* - \hat{P}_n)$ in the space $\ell^\infty(\s{F})$ to a $P_0$-Brownian bridge process.  For the empirical bootstrap where $\hat{P}_n = \d{P}_n$, this holds as long as $\s{F}$ is $P_0$-Donsker \citep{gine1990bootstrapping}. However, for other types of bootstraps, this condition is more difficult to verify. This will be discussed in depth and further sufficient conditions for smooth bootstrap sampling distributions will be provided in Section~\ref{sec:bootstrap_dist_condition}.

Condition~\ref{cond: bootstrap weak consistency} requires conditional weak consistency of the bootstrap nuisance estimator $\eta_n^*$ to the true nuisance $\eta_0$. If $\eta_n^*$ is constructed in an exactly analogous manner using the bootstrap data as $\eta_n$ is constructed using the original data, the bootstrap data has replicated observations, and the method of constructing $\eta_n$ is sensitive to ties in the data, \ref{cond: bootstrap weak consistency} may not be satisfied. As discussed in Section~\ref{sec:bootstrap_framework}, for this reason and others we do not require that $\eta_n^*$ be constructed in an exactly analogous manner to $\eta_n$, so these issues can be avoided. In particular, the simplest approach for constructing $\eta_n^*$ is to define $\eta_n^* = \eta_n$. This approach is appealing in its computational simplicity because it does not require re-fitting the nuisance estimator using the bootstrap data, which can be computationally intensive when machine learning estimators are used to construct $\eta_n$. Furthermore, when $\eta_n^* = \eta_n$ and $\phi_P$ only depends on $P$ through $\eta_P$,  condition~\ref{cond: bootstrap weak consistency} reduces to the requirement that $\|\phi_n - \phi_0\|_{L_2(P_0)} \inoutprob 0$, which was already required for asymptotic linearity of $\psi_n$ in Theorem~\ref{thm: classical}. Intuitively, the precise behavior of the nuisance estimator $\eta_n$ does not play a role in the first-order asymptotic behavior of $\psi_n$ as long as the high-level conditions~\ref{cond: limited complexity}--\ref{cond: second order} hold, and similarly the precise behavior of $\eta_n^*$ does not play a role in the first-order asymptotic behavior of the bootstrap estimator as long as the high-level conditions~\ref{cond: bootstrap limited complexity}--\ref{cond: bootstrap second order} hold. However, setting $\eta_n^* = \eta_n$ may yield worse finite-sample coverage, and does not yield valid bootstrap confidence intervals when conditions~\ref{cond: second order} and/or~\ref{cond: bootstrap second order} do not hold, as we will discuss in Section~\ref{sec:conf_int}.

Our proof technique for Theorem~\ref{thm: bootstrap} could be adapted to other tight and Borel-measurable limit processes in condition~\ref{cond: bootstrap limited complexity}(b). This is not relevant to demonstrating consistency of the bootstrap when the estimator is asymptotically Gaussian, as is the case here, but it may be of interest in other situations where the limit distribution is not Gaussian. However, if $\d{G}_0$ were a non-Gaussian process in condition~\ref{cond: bootstrap limited complexity}(b) and the sample paths of $\d{G}_0$ were not almost surely uniformly continuous in the $L_2(P_0)$ metric, it would be necessary to replace condition~\ref{cond: bootstrap weak consistency} with the requirement that $\rho(\phi_n^*, \phi_0) = o_{\prob_W^*}(1)$ for a semimetric $\rho$ on $\ell^\infty(\s{F})$ such that the sample paths of $\d{G}_0$ are almost surely uniformly $\rho$-continuous. 

Condition~\ref{cond: bootstrap second order} requires that the bootstrap analogue of the remainder term in~\ref{cond: second order} be sufficiently negligible. As discussed after Theorem~\ref{thm: classical}, this remainder term is again a combination of two remainders: the bootstrap bias term $-\d{P}_n^*\phi_n^*$ and the bootstrap second-order remainder term. Further sufficient conditions for~\ref{cond: bootstrap second order} for specific estimators $T$ will be provided in Section~\ref{sec: T examples}. 

Theorem~\ref{thm: bootstrap} also relies on the following bootstrap version of Lemma~19.24 of \cite{van2000asymptotic}, which is useful in its own right.

\begin{restatable}{lemma}{lemmabootran}\label{lemma: boot_ran}
    Suppose that $\s{F}$ is a class of measurable functions such that $\d{G}_n^* \condinoutdist \d{G}_0$ in $\ell^\infty(\s{F})$. Let $\phi_n^*$ be a sequence of random functions possibly depending on the bootstrap sample such that $\prob^*_W \left( \phi_n^*  \in \s{F} \right) \inoutprob 1$. If $\rho(\phi_n^*, \phi_\infty) = o_{\prob_W^*}(1)$ for some $\phi_\infty \in \s{F}$ and a semimetric $\rho$ on $\s{F}$ for which the sample paths of $\d{G}_0$ are almost surely uniformly $\rho$-continuous, then $\d{G}_n^*(\phi_n^* - \phi_\infty) = o_{\prob_W^*}(1)$.
\end{restatable}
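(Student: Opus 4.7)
The plan is to mimic the standard proof of Lemma 19.24 of \cite{van2000asymptotic} (the unconditional version) but replace each use of weak convergence with its conditional-in-outer-probability counterpart. The high-level strategy is to localize around $\phi_\infty$ using $\rho$, bound the quantity of interest by the supremum of the bootstrap empirical process over a $\rho$-ball, and then pass this supremum through the conditional weak convergence $\d{G}_n^* \condinoutdist \d{G}_0$.

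Fix $\varepsilon > 0$. First, I would introduce the localized class $\s{F}_\delta := \{ f \in \s{F} : \rho(f, \phi_\infty) < \delta \}$ and the functional
\[
    g_\delta : \ell^\infty(\s{F}) \to \d{R}, \qquad g_\delta(z) := \sup_{f \in \s{F}_\delta} |z(f) - z(\phi_\infty)|,
\]
which is 2-Lipschitz with respect to the supremum norm. By hypothesis, $\prob_W^*(\phi_n^* \in \s{F}) \inoutprob 1$ and $\rho(\phi_n^*, \phi_\infty) = o_{\prob_W^*}(1)$, so $\prob_W^*(\phi_n^* \notin \s{F}_\delta) = o_{\prob_0^*}(1)$ for every $\delta > 0$. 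On the complementary event, $|\d{G}_n^*(\phi_n^* - \phi_\infty)| \leq g_\delta(\d{G}_n^*)$, so subadditivity of outer probability gives
\[
    \prob_W^* \bigl( |\d{G}_n^*(\phi_n^* - \phi_\infty)| > \varepsilon \bigr) \leq \prob_W^*(\phi_n^* \notin \s{F}_\delta) + \prob_W^* \bigl( g_\delta(\d{G}_n^*) > \varepsilon \bigr).
\]

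Next, to control the second summand, I would introduce, for $\eta \in (0, \varepsilon)$, a continuous piecewise-linear bump $h_\eta : \d{R} \to [0,1]$ that vanishes on $(-\infty, \varepsilon - \eta]$ and equals $1$ on $[\varepsilon, \infty)$. Then $\tilde h_\eta := h_\eta \circ g_\delta$ is bounded and $(2/\eta)$-Lipschitz on $\ell^\infty(\s{F})$, so after dividing by a suitable constant it lies in $\BL_1(\ell^\infty(\s{F}))$. Since $\mathbf{1}\{g_\delta \geq \varepsilon\} \leq \tilde h_\eta$, monotonicity of outer expectation yields $\prob_W^*(g_\delta(\d{G}_n^*) \geq \varepsilon) \leq E_W^* \tilde h_\eta(\d{G}_n^*)$, and condition~\ref{cond: bootstrap limited complexity}(b), i.e.\ $\d{G}_n^* \condinoutdist \d{G}_0$, gives
\[
    E_W^* \tilde h_\eta(\d{G}_n^*) = E_0 \tilde h_\eta(\d{G}_0) + o_{\prob_0^*}(1).
\]
Finally, uniform $\rho$-continuity of the sample paths of $\d{G}_0$ implies $g_\delta(\d{G}_0) \to 0$ almost surely as $\delta \downarrow 0$, so by dominated convergence $E_0 \tilde h_\eta(\d{G}_0) \to h_\eta(0) = 0$. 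Choosing $\delta$ small enough (depending on $\eta$) makes $E_0 \tilde h_\eta(\d{G}_0)$ arbitrarily small, which concludes the argument.

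The main obstacle I anticipate is bookkeeping: carefully justifying the outer-probability subadditivity step, ensuring the composition $h_\eta \circ g_\delta$ genuinely qualifies as a (rescaled) bounded Lipschitz test function on $\ell^\infty(\s{F})$, and handling the fact that $\tilde h_\eta(\d{G}_n^*)$ need not be jointly measurable in $(X, W)$---so all probability and expectation statements must be interpreted as outer ones throughout. The rest is essentially a conditional portmanteau argument that parallels the unconditional proof.
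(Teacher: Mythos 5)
Your proof is correct and follows essentially the same strategy as the paper's: both localize around $\phi_\infty$ by a $\rho$-ball of radius $\delta$ (the paper's $\s{F}_\tau$), bound the target conditional outer probability by subadditivity into a term controlled by $\rho(\phi_n^*,\phi_\infty)=o_{\prob_W^*}(1)$ plus a term controlled by a bounded Lipschitz test function composed with the 2-Lipschitz localized supremum functional, and then invoke the $\BL_1$ definition of $\d{G}_n^* \condinoutdist \d{G}_0$ together with almost-sure uniform $\rho$-continuity of the limit paths to drive the remainder to zero. The only technical difference is that you use a single bump $h_\eta$ with a fixed Lipschitz constant and make $\E_0\tilde h_\eta(\d{G}_0)$ small by shrinking $\delta$, whereas the paper approximates the indicator $1\{\|z\|_{\s{F}_\tau}\geq\varepsilon\}$ from above by a monotone sequence $h_m$ with growing Lipschitz constant and invokes the monotone convergence theorem; both are valid and your version is marginally more streamlined.
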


\subsection{Conditional weak convergence of the bootstrap empirical process}\label{sec:bootstrap_dist_condition}

We now provide further sufficient conditions for conditional weak convergence of the bootstrap empirical process $\d{G}_n^*$ required by condition~\ref{cond: bootstrap limited complexity}(b). We first discuss the case of the empirical bootstrap, where $\hat{P}_n = \d{P}_n$. The properties of the empirical bootstrap have been extensively studied by \cite{efron1982jackknife}, \cite{gine1990bootstrapping,gine1991gaussian}, \cite{praestgaard1993exchangeably} and \cite{van1996weak}, among others. In particular, Theorem~3.1 of \cite{gine1990bootstrapping} and Theorem~3.6.1 of \cite{van1996weak} provided the following necessary and sufficient condition for~\ref{cond: bootstrap limited complexity}(b) in the case of the empirical bootstrap. 
\begin{lemma}[Theorem~3.6.1 of \citealp{van1996weak}]\label{lemma: empirical bootstrap}
   If $\hat{P}_n = \d{P}_n$ and $\s{F}$ is a class of measurable function with finite envelope function, then $\s{F}$ is  $P_0$-Donsker if and only if condition~\ref{cond: bootstrap limited complexity}(b) holds and $\d{G}_n^*$ is asymptotically measurable.
\end{lemma}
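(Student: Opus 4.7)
The plan is to invoke Theorem~3.6.1 of \cite{van1996weak} essentially verbatim, since the stated lemma is a notational reformulation of that theorem specialized to $\hat{P}_n = \d{P}_n$. The work in the proof is therefore primarily bookkeeping: verifying that the conditional weak convergence formulation used here (via the conditional bounded-Lipschitz distance with outer expectation, $\sup_{h \in \BL_1(\ell^\infty(\s{F}))} \abs{\E_W^* h(\d{G}_n^*) - \E_0 h(\d{G}_0)} \inoutprob 0$) coincides with the formulation used in that reference, and that the finite envelope hypothesis supplies the mild side condition the reference requires to rule out measurability pathologies.

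For the forward direction, I would assume $\s{F}$ is $P_0$-Donsker with finite envelope and deduce both assertions. The standard route proceeds via a multiplier coupling: the multinomial weights implicit in the empirical bootstrap are compared to IID mean-zero multipliers, and the multiplier Donsker theorem (Theorem~2.9.6 of \cite{van1996weak}) transfers the unconditional Donsker property of $\s{F}$ into conditional weak convergence in outer probability of $\d{G}_n^*$ toward the same Brownian bridge limit $\d{G}_0$. Asymptotic measurability of $\d{G}_n^*$ then follows from the tightness and Borel measurability of $\d{G}_0$ combined with uniform convergence of the conditional BL-distance to zero.

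For the reverse direction, I would assume condition~\ref{cond: bootstrap limited complexity}(b) together with asymptotic measurability of $\d{G}_n^*$, and deduce the Donsker property of $\s{F}$. The idea is to integrate the conditional BL-convergence against the outer law of $X_1, \dotsc, X_n$ using a Fubini-type lemma for outer expectations (Lemma~1.2.6 of \cite{van1996weak}) and the boundedness of $\BL_1$ test functions; this yields unconditional weak convergence of $\d{G}_n^*$ to $\d{G}_0$ in $\ell^\infty(\s{F})$. A second application of the multiplier/bootstrap equivalence then pulls this back to the Donsker property of $\s{F}$.

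The main technical subtlety---and the real content of the original theorem---is the measurability bookkeeping: since $\d{G}_n^*$ and $\d{G}_n$ need not be Borel measurable into $\ell^\infty(\s{F})$, one must distinguish carefully between inner and outer conditional expectations with respect to $W_n$, and justify the exchange of $\E_W^*$ with the outer expectation over $X_1, \dotsc, X_n$. The finite envelope hypothesis and the perfectness-type results in Section~1.2 of \cite{van1996weak} are what make this exchange licit. All of these steps are packaged inside the cited theorem, so the proof in the paper ultimately reduces to a one-line invocation with a brief note reconciling notation.
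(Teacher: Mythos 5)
Your proposal matches the paper's approach: the paper states this lemma purely as a citation of Theorem~3.6.1 of van der Vaart and Wellner (1996) and provides no proof in the supplement, exactly as you anticipate in your closing paragraph. Your sketch of the internal mechanics (multiplier coupling for the forward direction, Fubini-type exchange of outer expectations for the reverse, with the finite-envelope hypothesis handling measurability) is a correct account of what that cited theorem's proof does, but none of it is reproduced in the paper itself.
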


We now turn to the case where $\hat{P}_n$ is not the empirical distribution. We first provide a general set of sufficient conditions for~\ref{cond: bootstrap limited complexity}(b) based on the notion of \textit{uniform Donsker classes}. 
For a probability measure $P$ on $(\s{X}, \s{B})$, we denote $\d{G}_{n, P} := n^{1/2}(\d{P}_n - P)$ as the empirical process centered at $P$ and $\d{G}_P$ as the $P$-Brownian bridge process. Following \cite{gine1991gaussian} and \cite{anne1992uniform}, for a set $\s{P}$ of probability measures on $(\s{X}, \s{B})$, we then say $\s{F}$ is \textit{Donsker uniformly in $P \in \s{P}$} if
\[    
    \sup_{P \in \s{P}} \sup_{h \in \BL_1(\ell^\infty(\s{F}))} \abs{ \E^*h(\d{G}_{n, P}) - \E h(\d{G}_P) } \to 0,
\]
and $\d{G}_P$ satisfies $\sup_{P \in \s{M}}\E\norm{\d{G}_P}_{\s{F}} < \infty$ and 
\[
    \lim_{\delta \to 0}\sup_{P \in \s{P}} \E\sup_{\rho_P(f, g) < \delta}\abs{\d{G}_P(f) - \d{G}_P(g)} = 0,
\]
where $\rho_P : (f,g) \in \s{F} \times \s{F} \to [ P(f - g)^2 ]^{1/2}$ is the $P$-standard deviation semi-metric on $\s{F} \times \s{F}$. Theorem~4.5 of \cite{anne1992uniform} provides general sufficient conditions for an almost sure convergence version of~\ref{cond: bootstrap limited complexity}(b). Below, we restate their result relaxed to convergence in outer probability.

\begin{restatable}[Theorem~4.5 of \citealp{anne1992uniform}]{lemma}{lemmasmooth}\label{prop: uniform donsker}
    If $\s{F}$ is a class of measurable functions with envelope function $F$ such that:  (i) $\s{F}$ is square integrable uniformly in $P \in \s{P}$ in the sense that $\lim_{M \to \infty} \sup_{P \in \s{P}} P F^2 1\{F > M \} = 0$; (ii) $\s{F}$ is Donkser uniformly in $P \in \s{P}$ where $\s{P}$ is such that $\prob_0^* ( \hat{P}_n\in \s{P}) \to 1$; and (iii) the semi-metric $L_2(\hat{P}_n)$ converges uniformly to $L_2(P_0)$ in the sense that
    \begin{equation}\label{eq: convergence of semimetric}
        \sup_{f,g \in \s{F}} \abs{ \norm{f-g}_{L_2(\hat{P}_n)} - \norm{f-g}_{L_2(P_0)}} \inoutprob 0,
    \end{equation}
   then $\d{G}_n^* \condinoutdist \d{G}_0$ in $\ell^\infty(\s{F})$.
\end{restatable}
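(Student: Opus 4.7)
The lemma differs from Theorem~4.5 of \cite{anne1992uniform} only in that conditions (ii) and (iii) are assumed in outer probability rather than outer-almost-surely, and the conclusion is conditional weak convergence in outer probability rather than outer-almost-surely. My plan is therefore to deduce the result from Theorem~4.5 of \cite{anne1992uniform} by means of the subsequence principle: a sequence of (possibly non-measurable) random variables $Y_n$ satisfies $Y_n = o_{\prob_0^*}(1)$ if and only if every subsequence $\{n_k\}$ admits a further subsequence $\{n_{k_j}\}$ along which $Y_{n_{k_j}} \to 0$ outer-almost-surely.

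Concretely, I would let $\Delta_n := \sup_{h \in \BL_1(\ell^\infty(\s{F}))} |\E_W^* h(\d{G}_n^*) - \E_0 h(\d{G}_0)|$ and argue that $\Delta_n = o_{\prob_0^*}(1)$. Fix any subsequence $\{n'\}$. By (iii), the real-valued sequence $\sup_{f,g \in \s{F}} |\norm{f-g}_{L_2(\hat{P}_{n'})} - \norm{f-g}_{L_2(P_0)}|$ converges to zero in outer probability, so along a further subsequence $\{n''\}$ it converges to zero outer-almost-surely. By (ii), $\prob_0^*(\hat{P}_{n''} \in \s{P}) \to 1$, so passing to yet a further subsequence $\{n'''\}$ we may arrange that $\hat{P}_{n'''} \in \s{P}$ eventually outer-almost-surely (equivalently, the indicator $\mathbf{1}\{\hat{P}_{n'''} \in \s{P}\}$ converges to $1$ outer-almost-surely). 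Condition (i) does not involve $\hat{P}_n$ and is preserved as-is. Along $\{n'''\}$, the outer-almost-sure hypotheses of Theorem~4.5 of \cite{anne1992uniform} now hold pathwise, so that result gives $\Delta_{n'''} \to 0$ outer-almost-surely. Since $\{n'\}$ was arbitrary, the subsequence principle yields $\Delta_n = o_{\prob_0^*}(1)$, which is the required conditional weak convergence.

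The substantive content of the argument is entirely contained in \cite{anne1992uniform}, where the decomposition goes through an intermediate Brownian bridge: $\d{G}_n^*$ is compared to the $\hat{P}_n$-Brownian bridge $\d{G}_{\hat{P}_n}$ using the uniform Donsker property on the event $\{\hat{P}_n \in \s{P}\}$ (recalling that, conditional on the data, $\d{G}_n^*$ has the law of $\d{G}_{n,\hat{P}_n}$ under IID sampling from $\hat{P}_n$), and then $\d{G}_{\hat{P}_n}$ is compared to $\d{G}_0$ using uniform equicontinuity of $\{\d{G}_P : P \in \s{P}\}$ in the $\rho_P$-semimetric together with the uniform convergence of $\rho_{\hat{P}_n}$ to $\rho_{P_0}$. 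The uniform square-integrability in (i) is what guarantees tightness of these Gaussian limits. The only genuine obstacle in our version is the bookkeeping needed to ensure that a single subsequence can be chosen along which (ii) and (iii) both hold outer-almost-surely and the non-measurability issues inherent in outer-almost-sure convergence are handled properly; the nested subsequence extraction above resolves this, after which the conclusion follows directly from the cited theorem.
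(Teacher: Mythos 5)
Your proof is correct and uses essentially the same strategy as the paper: extract a subsequence via the subsequence principle (Lemma~1.9.2 of van der Vaart and Wellner) along which condition (iii) holds outer-almost-surely, apply Theorem~4.5 of Sheehy and Wellner pathwise to get conditional weak convergence outer-almost-surely along that subsequence, and conclude convergence in outer probability for the full sequence. You additionally extract a further subsequence to make $\hat{P}_n \in \s{P}$ hold eventually outer-almost-surely, a step the paper's proof leaves implicit; this is a harmless and slightly more careful rendering of the same argument.
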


\Cref{prop: uniform donsker} is a bootstrap version of Lemma~2.8.7 of \cite{van1996weak}. Theorems~2.8.9 and~2.8.10 of \cite{van1996weak} provide sufficient conditions for a class $\s{F}$ to be uniform Donkser using uniform entropy and bracketing entropy conditions, respectively. We also note that if $F$ is constant, then uniform square integrability holds, and, by Theorem~2.8.3 of \cite{van1996weak}, if some measurability conditions are satisfied, then $\s{F}$ is Donsker uniformly in $P \in \s{P}$ provided the uniform entropy integral is finite:
\begin{equation}
    \int_0^\infty \sup_{Q} \left\{ \log N(\varepsilon \norm{F}_{L_2(Q)}, \s{F}, L_2(Q)) \right\}^{1/2} \sd \varepsilon < \infty, \label{eq: uniform entropy integral}
\end{equation}
where the supremum is taken over all finitely discrete probability measures $Q$ on $\s{X}$ with $\int F^2 \sd Q > 0$.  Finally, we note that  
\[\sup_{f,g \in \s{F}} \abs{ \norm{f-g}_{L_2(\hat{P}_n)} - \norm{f-g}_{L_2(P_0)}} \leq  \sup_{f,g \in \s{F}} \left\{ \abs{(\hat{P}_n- P_0)(f-g)^2}\right\}^{1/2} \leq  2 \sup_{f,g \in \s{F}}\left\{ \abs{(\hat{P}_n - P_0)(fg) }\right\}^{1/2},\]
so $\sup_{f,g \in \s{F}} \abs{ (\hat{P}_n - P_0)(fg) } \inoutprob 0$ implies~\eqref{eq: convergence of semimetric}. Hence, as noted in \cite{van2018targeted}, in many cases it is not necessary for the bootstrap sampling distribution $\hat{P}_n$ to be globally consistent for $P_0$; rather, it is sufficient that means of products or squared differences of influence functions under $\hat{P}_n$ be consistent for means of the same under $P_0$ uniformly over the class $\s{F}$ induced by the nuisance estimators $\eta_n$ and $\eta_n^*$.

We now use \Cref{prop: uniform donsker} to show that bootstrap distributions obtained via \emph{smoothing through convolution} satisfy \ref{cond: bootstrap limited complexity}(b). Specifically, for a sequence of probability measures $L_n$, which we will require converges weakly to 0,  we say $\hat{P}_n$ is obtained by convolution of $\d{P}_n$ with  $L_n$, and we write $\hat{P}_n = \d{P}_n * L_n$, if for any $B \in \s{B}$, 
\begin{equation}
    \hat{P}_n(B) := \int_{\s{X}} \int_{\s{X}}  1_B(x+y) \sd \d{P}_n(x) \sd  L_n(y) = \frac{1}{n} \sum_{i=1}^n \int_{\s{X}} 1_B(X_i+y) \sd  L_n(y) = \frac{1}{n}\sum_{i=1}^n L_n(B-X_i).
    \label{eq: convolution}
\end{equation}
The most well-known example of smoothing through convolution is the kernel density estimator. Let $K: \s{X} \mapsto \d{R}$ be a fixed kernel function and $h_n >0$  a possibly random sequence of bandwidths. If $L_n(B) := \int_{B}  h_n^{-d} K\left(h_n^{-1}x \right) \sd x$ for any $B \in \s{B}$, then $\hat{P}_n := \d{P}_n * L_n$ defines a kernel density estimator with kernel $K$ and bandwidth $h_n$. 

Properties of smoothing through convolution estimators, including weak convergence of $n^{1/2}(\hat{P}_n - P_0)$, were studied in \cite{yukich1992weak}, \cite{van1994weak}, \cite{rost2000limit}, \cite{radulovic2000weak, gaenssler2003smoothed}, and \cite{eric20223donsker}, among others.  In their Theorem~2.1, \cite{gaenssler2003smoothed} demonstrated that the bootstrap empirical process $n^{1/2}(\d{P}_n^* - \hat{P}_n)$ converges weakly to $\d{G}_0$ in $\ell^\infty(\s{F})$ if $\s{F}$ is equicontinuous and other conditions hold. However, equicontinuity is a strong assumption, and may not hold in some of our applications of interest. For instance, in some cases, the influence function involves indicator functions, which are not continuous. Equicontinuity is used in their result to show that $\sup_{f \in \s{F}} \abs{(P_0 * L_n)f^2 - P_0 f^2} \to 0$. Weak convergence of $L_n$ does not generally imply this result, as shown in Example~2.3 of \cite{gaenssler2000uniform}. However, if $P_0$ is absolutely continuous with respect to Lebesgue measure $\lambda$ and the corresponding density function is Lipschitz continuous, then weak convergence of $L_n$ does imply that $\sup_{f \in \s{F}} \abs{(P_0 * L_n)f^2 - P_0 f^2} \to 0$. While the condition that $P_0$ is dominated by Lebesgue measure is strong, it is typically assumed when using kernel density estimation. Proposition~\ref{prop: weak convergence of convolution smooth bootstrap} below formalizes this to provide general conditions under which smoothing through convolution estimators $\hat{P}_n$ satisfy condition~\ref{cond: bootstrap limited complexity}(b) without assuming equicontinuity of $\s{F}$.

\begin{restatable}{prop}{propconvolution}\label{prop: weak convergence of convolution smooth bootstrap}
    Suppose $\s{F}$ is a class of Borel measurable functions with uniformly bounded envelope function $F$ and finite uniform entropy integral as in~\eqref{eq: uniform entropy integral} such that $\s{F}_{\delta, P} = \{f - g: f, g \in \s{F}, \norm{f -g}_{L_2(P)} < \delta \}$ and $\s{F}_{\infty}^2 = \{ (f-g)^2: f, g \in \s{F}\}$ are $P$-measurable for every $\delta >0$ and $P\in \s{M}$. If $P_0$ is absolutely continuous with respect to Lebesgue measure $\lambda$ with uniformly bounded and Lipschitz continuous density function $p_0$, and $\hat{P}_n = \d{P}_n * L_n$ for a sequence of random measures $L_n$ converging weakly to Dirac measure at 0,  then the conditions of Proposition~\ref{prop: uniform donsker} hold, so that $\d{G}_n^* \condinoutdist \d{G}_0$ in $\ell^\infty(\s{F})$.
\end{restatable}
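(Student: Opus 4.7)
The plan is to verify conditions (i)--(iii) of Proposition~\ref{prop: uniform donsker} with $\s{P}$ taken to be the set of all Borel probability measures on $(\s{X}, \s{B})$, so that $\prob_0^*(\hat{P}_n \in \s{P}) = 1$ trivially. Conditions (i) and (ii) should fall out quickly from the standing assumptions. Uniform square integrability (i) is immediate because $F$ is uniformly bounded: for any $M > \|F\|_\infty$, $\sup_{P \in \s{P}} PF^2 \mathbf{1}\{F > M\} = 0$. For condition (ii), I would invoke Theorem~2.8.3 of \cite{van1996weak}, whose hypotheses are satisfied because the uniform entropy integral in~\eqref{eq: uniform entropy integral} is finite, the envelope is constant, and the stated $P$-measurability of $\s{F}_{\delta,P}$ holds for every $P \in \s{P}$ and $\delta > 0$.

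The substance of the proof is condition (iii). I would first use the paper's inequality
\[
  \sup_{f,g \in \s{F}} \bigl|\|f-g\|_{L_2(\hat{P}_n)} - \|f-g\|_{L_2(P_0)}\bigr|
  \leq 2 \sup_{f,g \in \s{F}} \bigl|(\hat{P}_n - P_0)(fg)\bigr|^{1/2}
\]
to reduce (iii) to showing $\sup_{h \in \s{F}\cdot\s{F}} |(\hat{P}_n - P_0)h| \inoutprob 0$, where $\s{F}\cdot\s{F} := \{fg : f,g \in \s{F}\}$ has constant envelope $\|F\|_\infty^2$ and, by standard product arguments, inherits a finite uniform entropy integral from $\s{F}$. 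I would then split
\[
  \hat{P}_n - P_0 = \bigl(\d{P}_n * L_n - P_0 * L_n\bigr) + \bigl(P_0 * L_n - P_0\bigr)
\]
and handle each piece separately.

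For the deterministic piece, Fubini gives $(P_0 * L_n)h = \int h(u)\,(p_0 * \check{L}_n)(u)\sd\lambda(u)$, so that
\[
  \sup_{h \in \s{F}\cdot\s{F}} \bigl|(P_0 * L_n - P_0)h\bigr| \leq \|F\|_\infty^2 \, \bigl\|p_0 * \check{L}_n - p_0\bigr\|_{L^1(\lambda)}.
\]
The Lipschitz continuity and $L^1$-integrability of $p_0$ imply that the $L^1$-modulus of continuity $\omega(y) := \|p_0 - p_0(\,\cdot - y)\|_{L^1(\lambda)}$ is bounded by $2$, is uniformly continuous (by the triangle inequality applied to translations), and satisfies $\omega(0) = 0$. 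Since $L_n$ converges weakly to $\delta_0$ in outer probability, the Portmanteau theorem (or continuous mapping) yields $\int \omega\sd L_n = o_{\prob_0^*}(1)$, and one bounds $\|p_0 * \check{L}_n - p_0\|_{L^1(\lambda)} \leq \int \omega\sd L_n$ by Jensen.

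For the random piece, $(\d{P}_n * L_n - P_0 * L_n)h = (\d{P}_n - P_0)\tilde h_n$ with $\tilde h_n(x) := \int h(x+y)\sd L_n(y)$. The key step is to show that the smoothed class $\tilde{\s{H}}_n := \{\tilde h_n : h \in \s{F}\cdot\s{F}\}$ inherits the uniform entropy bound from $\s{F}\cdot\s{F}$. By Jensen's inequality applied to the convolution,
\[
  \bigl\|\tilde h_{n,1} - \tilde h_{n,2}\bigr\|_{L_2(Q)}^2 \leq \bigl\|h_1 - h_2\bigr\|_{L_2(Q * L_n)}^2
\]
for all $h_1, h_2 \in \s{F}\cdot\s{F}$ and all probability measures $Q$. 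Hence every $L_2(Q * L_n)$-cover of $\s{F}\cdot\s{F}$ pulls back to an $L_2(Q)$-cover of $\tilde{\s{H}}_n$ of the same cardinality, so taking suprema over $Q$ preserves the uniform entropy integral of $\s{F}\cdot\s{F}$ uniformly in the realization of $L_n$. Combined with the constant envelope, this gives a uniform Glivenko--Cantelli bound of order $n^{-1/2}$ on $\E^*\sup_{h'\in\tilde{\s{H}}_n}|(\d{P}_n - P_0)h'|$ that does not depend on $L_n$. The hardest part will be the measurability bookkeeping -- verifying that the smoothed class $\tilde{\s{H}}_n$ satisfies the pointwise-measurability hypotheses needed to invoke a uniform Glivenko--Cantelli / maximal inequality without further regularity, and phrasing the convolution-of-measures covering argument so that the implied constants depend only on the uniform entropy integral of $\s{F}$, not on $L_n$.
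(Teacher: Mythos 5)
You verify (i) and (ii) exactly as the paper does, and your overall reduction of (iii) to uniform convergence of $(\hat{P}_n - P_0)$ over a class of products (you use $\s{F}\cdot\s{F}$; the paper uses $\s{F}_\infty^2 = \{(f-g)^2\}$, an equivalent choice) is also the first step of the paper's proof. Where you diverge is in how you establish $\|\hat{P}_n - P_0\|_{\s{F}\cdot\s{F}} = o_{\prob_0^*}(1)$. The paper hands this off to Theorem~2.2 of \cite{gaenssler2000uniform} and verifies four of that theorem's conditions, the most substantive being (2.4) (an $L_1$ uniform-entropy bound on the squared-difference class) and (2.5) ($\sup_f |(P_0 * L_n)f^2 - P_0 f^2| \to 0$, which they prove by showing $y \mapsto \int f_\infty^2(x+y)\sd P_0(x)$ is uniformly Lipschitz and then invoking weak convergence in the bounded-Lipschitz metric). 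You instead split $\hat{P}_n - P_0 = (\d{P}_n - P_0) * L_n + (P_0 * L_n - P_0)$ and treat the two pieces directly. Your bias piece is a genuine improvement: the $L^1$-modulus argument $\|p_0 * \check{L}_n - p_0\|_{L^1} \leq \int \omega \sd L_n \to 0$ uses only $L^1$ translation continuity of $p_0$, which is automatic for any density, whereas the paper's proof of (2.5) invokes Lipschitz continuity of $p_0$ and also quietly assumes $\int F^2 \sd\lambda < \infty$, which does not follow from $F$ being bounded on $\d{R}^d$. Your Jensen step $\|\tilde h_{1} - \tilde h_{2}\|_{L_2(Q)} \leq \|h_1 - h_2\|_{L_2(Q * L_n)}$ and the observation that the uniform entropy integral is insensitive to convolution are also correct (the paper's cited Exercise~1, Ch.~2.5 of \cite{van1996weak} supplies the needed fact that the supremum in \eqref{eq: uniform entropy integral} can be taken over all $Q$, not just finitely discrete ones).

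The gap is in the random piece, and it is more than ``measurability bookkeeping.'' The maximal inequality you want gives $\E^*\sup_{h}|(\d{P}_n - P_0)\tilde h_{L}| \lesssim n^{-1/2}$ for each \emph{fixed} measure $L$, or, by conditioning, for any random $L$ that is \emph{independent} of the data. But the proposition allows $L_n$ to be a random measure, and the motivating example (a KDE with data-selected bandwidth) has $L_n$ depending on $X_1,\dotsc,X_n$. In that case $\tilde{\s{H}}_n = \{\tilde h_{L_n} : h \in \s{F}\cdot\s{F}\}$ is a random function class whose randomness is entangled with that of $\d{P}_n$, so you cannot condition on $L_n$ and apply the inequality termwise; nor can you crudely pass to $\sup_{y}\sup_h |(\d{P}_n - P_0)h(\cdot + y)|$, since the translate class of a uniform-entropy class need not have uniform entropy. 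This is precisely the work that \cite{gaenssler2000uniform}'s Theorem~2.2 does for the paper: it is formulated for a random smoothing measure $\mu_n$, and the paper verifies its condition (2.4) via covering numbers in the random norm $L_1(\hat{P}_n)$, which is what lets the theorem absorb the data-dependence. To close your argument without that black box you would need either to assume $L_n$ is independent of the data, or to supply a uniform-over-$L$ maximal inequality (e.g., by showing that $\prob_0^*(L_n \in \s{L}) \to 1$ for a deterministic class $\s{L}$ and that $\bigcup_{L \in \s{L}} \tilde{\s{H}}_L$ still has controlled entropy), or to reproduce the Gaenssler argument. As written, your random-piece step is a plan that works in the independent case but does not yet cover the statement as stated.
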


As discussed in \cite{anne1992uniform}, if $L_n(B) = \int_B h_n^{-d} K\left( h_n^{-1}x\right) \sd x$ is a kernel density estimator with bandwidth $h_n$ satisfying $h_n \to 0$, $nh_n^d/\log(n) \to \infty$ and $ \|\sd \hat{P}_n/ \sd \lambda - \sd P_0/ \sd \lambda\|_\infty \xrightarrow{\mathrm{a.s.^*}} 0$, then $L_n$ converges weakly to Dirac measure at 0.  Furthermore, $ \|\sd \hat{P}_n/ \sd \lambda - \sd P_0/ \sd \lambda\|_\infty \xrightarrow{\mathrm{a.s.^*}} 0$ follows if $\sd P_0/ \sd \lambda$ is uniformly continuous, $nh_n^d/\log(h_n^{-1}) \to \infty$, $|\log h_n|/\log\log(n) \to \infty$, and $h_n^d \leq c h_{2n}^d$ for some constant $c>0$ by Theorem~2.3 of \cite{gine2002rate}.

\subsection{Consistency of bootstrap confidence intervals}\label{sec:conf_int}

We now discuss general conditions for asymptotic validity of bootstrap confidence intervals. Conditional asymptotic linearity of the bootstrap is sufficient for asymptotic validity of many bootstrap confidence intervals. Hence, the conditions of Theorem~\ref{thm: bootstrap} in many cases imply that associated bootstrap confidence intervals are asymptotically valid. However, conditional asymptotic linearity or conditional weak convergence of the bootstrap estimator are not \emph{necessary} for asymptotic validity of bootstrap confidence intervals. In some cases, bootstrap confidence intervals are asymptotically valid even when Theorem~\ref{thm: bootstrap} fails. We illustrate this phenomenon in detail for several types of bootstrap confidence intervals.

\subsubsection{Percentile and percentile \texorpdfstring{$t$}{Lg}-methods}

We first consider the percentile and percentile $t$-methods. We note that we are using the terminology of \cite{van2000asymptotic}, but that in other literature, what we are calling the percentile method is called the ``basic" or ``reverse percentile" method. We suppose that $\sigma_n^{*2}$ is an estimator of $\sigma_0^2$ based on the bootstrap data. 
We then define $\xi_{n, p}^*$ as the $p$th quantile of the conditional distribution of $[T(\eta_n^*, \d{P}_n^*) - T(\eta_n, \hat{P}_n)]/\sigma_n^*$ given the data, i.e.,
\[
    \xi_{n, p}^* := \inf\left\{ \xi \in \d{R} : \prob_W^* \left( \frac{T(\eta_n^*, \d{P}_n^*) - T(\eta_n, \hat{P}_n)}{\sigma_n^*} \leq \xi \right) \geq p \right\}.
\]
We emphasize that $[T(\eta_n^*, \d{P}_n^*) - T(\eta_n, \hat{P}_n)]/\sigma_n^*$ is centered around $T(\eta_n, \hat{P}_n)$ rather than $T(\eta_n, \d{P}_n) = \psi_n$ for reasons discussed following~\Cref{thm: bootstrap}. A two-sided $(1-\alpha - \beta)$-level \textit{bootstrap percentile $t$-method} confidence interval is then given by
\begin{align}
    &\left\{ \psi: \xi_{n, \beta}^* \leq \frac{T(\eta_n, \d{P}_n) - \psi}{\sigma_n} \leq \xi_{n, 1-\alpha}^*\right\} = \left[T(\eta_n, \d{P}_n) - \xi_{n, 1-\alpha}^* \sigma_n, \, T(\eta_n, \d{P}_n) - \xi_{n, \beta}^* \sigma_n\right].\label{eq: percentile t method}
\end{align}
This interval is based on the $t$-statistic $n^{1/2}[T(\eta_n, \d{P}_n)- T(\eta_0, P_0) ] / \sigma_n$. Typically, $\alpha$ and $\beta$ are chosen to be equal, resulting in an equi-tailed confidence interval. Setting $\sigma_n = \sigma_n^* = 1$ yields the \textit{bootstrap percentile method} confidence interval
\begin{equation}\label{eq: percentile method}
   \left[T(\eta_n, \d{P}_n) - \xi_{n, 1-\alpha}^*, \, T(\eta_n, \d{P}_n) - \xi_{n, \beta}^*\right].
\end{equation}
The percentile $t$-method has been shown to be more accurate than the percentile method in many cases because the studentized statistic is asymptotically pivotal \citep{hall1992bootstrap}. The next result provides conditions under which the bootstrap percentile and percentile $t$-intervals are asymptotically valid.
\begin{restatable}{thm}{thmprecinterval}\label{thm: perc bootstrap CI}
    Suppose that $\hat{P}_n \phi_0^2 \inoutprob P_0 \phi_0^2$ and $(\hat{P}_n - P_0) [\phi_0^2 1\{|\phi_0| > M\}] \inoutprob 0$ for every $M>0$.
    If $S_n^* - S_n = o_{\prob_W^*}(n^{-1/2})$ and $R_n^* - R_n = o_{\prob_W^*}(n^{-1/2})$, then 
    \[\sup_{t \in \d{R}} \left| \prob_W^*\left(T(\eta_n^*, \d{P}_n^*) - T(\eta_n, \hat{P}_n) \leq t \right) - \prob_0^*\left(T(\eta_n, \d{P}_n) - T(\eta_0, P_0) \leq t \right)\right|\inoutprob 0, \]
    and the bootstrap percentile confidence interval defined in~\eqref{eq: percentile method} has asymptotic confidence level $1-\alpha - \beta$. If in addition $\sigma_n^{2} \inoutprob \sigma_0^{2}$, $\sigma_n^{*2} \condinoutprob \sigma_0^{2}$ and $(S_n+R_n)(\sigma_n^* - \sigma_n) = o_{\prob_W^*}(n^{-1/2})$, then 
    \[\sup_{t \in \d{R}} \left| \prob_W^*\left([T(\eta_n^*, \d{P}_n^*) - T(\eta_n, \hat{P}_n)] / \sigma_n^* \leq t \right) - \prob_0^*\left([T(\eta_n, \d{P}_n) - T(\eta_0, P_0)] / \sigma_n \leq t \right)\right|\inoutprob 0, \]
    and the percentile $t$-confidence interval defined in~\eqref{eq: percentile t method} has asymptotic confidence level $1-\alpha - \beta$.
\end{restatable}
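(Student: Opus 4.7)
The plan is to establish uniform (Kolmogorov) closeness between the conditional bootstrap distribution and the unconditional original distribution by combining a conditional central limit theorem for the bootstrap linear term with the remainder assumptions, and then to deduce both confidence interval statements.  I would first write both centered statistics through the influence function:
\begin{align*}
    T(\eta_n, \d{P}_n) - T(\eta_0, P_0) &= (\d{P}_n - P_0)\phi_0 + C_n,\\
    T(\eta_n^*, \d{P}_n^*) - T(\eta_n, \hat{P}_n) &= (\d{P}_n^* - \hat{P}_n)\phi_0 + C_n + \Delta_n,
\end{align*}
where $C_n := S_n + R_n$ is a measurable function of the original data and $\Delta_n := (S_n^* - S_n) + (R_n^* - R_n) = o_{\prob_W^*}(n^{-1/2})$ by assumption.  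The key observation is that $C_n$ is identical in both expansions and is constant conditional on the data, so conditionally the bootstrap statistic equals a deterministic shift of the bootstrap linear term by $C_n$ plus the negligible $\Delta_n$.

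Next I would establish the conditional Lindeberg--Feller CLT for $n^{1/2}(\d{P}_n^* - \hat{P}_n)\phi_0$.  Conditional on the data, $X_1^*, \dotsc, X_n^*$ are IID from $\hat{P}_n$ under $P_W$, so the conditional variance equals $\hat{P}_n\phi_0^2 - (\hat{P}_n\phi_0)^2$, which converges in outer probability to $\sigma_0^2 = P_0\phi_0^2$ by the first assumption (coupled with $\hat{P}_n\phi_0 \inoutprob 0$, which can be verified from the hypotheses together with $P_0\phi_0 = 0$).  The conditional Lindeberg condition reduces to $\hat{P}_n[\phi_0^2 1\{|\phi_0| > \varepsilon n^{1/2}\}] \inoutprob 0$ for every $\varepsilon > 0$, which is controlled by $(\hat{P}_n - P_0)[\phi_0^2 1\{|\phi_0| > M\}] \inoutprob 0$ together with $P_0[\phi_0^2 1\{|\phi_0| > M\}] \to 0$ as $M \to \infty$ via a standard truncation argument.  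This gives $n^{1/2}(\d{P}_n^* - \hat{P}_n)\phi_0 \condinoutdist N(0, \sigma_0^2)$, which by Polya's theorem strengthens to uniform convergence of the conditional distribution function in outer probability.  The classical Lindeberg--L\'evy CLT, coupled with Polya's theorem, gives the analogous uniform unconditional statement for $n^{1/2}(\d{P}_n - P_0)\phi_0$.  Combining the two CLTs with the decompositions via a shift-and-perturbation argument (using uniform continuity of the normal CDF to absorb $\Delta_n$, and the shared $C_n$ to align the two distributions) then yields the claimed uniform consistency of the bootstrap distribution.

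For the percentile CI, the Kolmogorov closeness implies consistency of the bootstrap quantiles $\xi_{n, p}^*$ for the corresponding quantiles of the original distribution, which makes the interval in~\eqref{eq: percentile method} asymptotically valid at level $1-\alpha-\beta$.  For the percentile $t$-method, an additional Slutsky-type argument studentizes both CLTs using $\sigma_n^{2} \inoutprob \sigma_0^{2}$ and $\sigma_n^{*2} \condinoutprob \sigma_0^{2}$, and the assumption $(S_n + R_n)(\sigma_n^* - \sigma_n) = o_{\prob_W^*}(n^{-1/2})$ ensures that the interaction between the shared bias $C_n$ and the variance-estimator discrepancy remains negligible at rate $n^{-1/2}$.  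The main technical obstacle is the conditional Lindeberg--Feller step together with carrying the ``$o_{\prob_W^*}$'' bookkeeping through the uniformity in $t$: the centering $\hat{P}_n\phi_0$ is random and only approximately zero, so the two $\hat{P}_n$ assumptions must interact cleanly with the truncation inside the Lindeberg functional.  For the studentized version, the extra joint remainder condition $(S_n + R_n)(\sigma_n^* - \sigma_n) = o_{\prob_W^*}(n^{-1/2})$ is the subtle piece: it is innocuous under asymptotic linearity (where $S_n + R_n$ is itself $o_{\prob_0^*}(n^{-1/2})$) but substantive when the original estimator carries non-negligible bias.
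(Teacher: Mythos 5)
Your approach to the Kolmogorov-distance statement is essentially the same as the paper's: write $T(\eta_n,\d{P}_n)-T(\eta_0,P_0) = (\d{P}_n - P_0)\phi_0 + (S_n+R_n)$ and $T(\eta_n^*,\d{P}_n^*)-T(\eta_n,\hat{P}_n) = (\d{P}_n^*-\hat{P}_n)\phi_0 + (S_n+R_n) + \Delta_n$, then run a conditional Lindeberg--Feller CLT for $\d{G}_n^*\phi_0$ (this is exactly the paper's Lemma on the conditional CLT, verified through the same two moment/truncation hypotheses you describe), and exploit that the common shift $C_n = S_n+R_n$ is a measurable function of the original data so that it is a deterministic translation under $\prob_W^*$. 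The paper implements your ``shift-and-perturbation'' step by defining $T_n := n^{1/2}t - n^{1/2}\sigma_n^{-1}[S_n+R_n]$ and absorbing $\Delta_n$ into the term $A_n^*$, and your invocation of Pol\'ya's theorem plays the role of the paper's Portmanteau argument (Lemma 1.3.4 of van der Vaart and Wellner); in the conditional-outer-probability framework the Portmanteau version is the appropriate tool since the conditional CDF is itself random, but the content is the same.

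Where you diverge is in the CI-coverage step, and this is where your writeup is vaguest. You argue: Kolmogorov closeness $\Rightarrow$ $\xi_{n,p}^*$ consistent for the quantiles of the original sampling distribution $\Rightarrow$ coverage $1-\alpha-\beta$. The paper instead derives the \emph{centered} quantile convergence $n^{1/2}(\xi_{n,p}^* - \sigma_n^{-1}[S_n+R_n]) \inoutprob \Phi^{-1}(p)$ from the conditional CLT directly (via Lemma 21.2 of van der Vaart), and then plugs into the coverage probability so that the random shift $S_n+R_n$ appears in \emph{both} the constraint and the statistic and cancels:
\[
\prob_0^*\left( n^{1/2}\big[\xi_{n,\beta}^* - \sigma_n^{-1}(S_n+R_n)\big] \leq \sigma_n^{-1}\d{G}_n\phi_0 \leq n^{1/2}\big[\xi_{n,1-\alpha}^* - \sigma_n^{-1}(S_n+R_n)\big]\right) \to 1-\alpha-\beta
\]
by Slutsky and $\d{G}_n\phi_0 \leadsto \d{G}_0\phi_0$. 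This cancellation is the substantive point of the whole theorem (it is what lets the result hold even when $n^{1/2}(S_n+R_n)$ does not vanish), and your quantile-consistency route elides it: consistency of $\xi_{n,p}^*$ for the $p$-quantile $q_{n,p}$ of the original distribution still leaves you with a random confidence endpoint entering a coverage probability, and you need a joint Slutsky-type argument, plus knowledge that the scaled original CDF converges to a continuous limit, to close the gap. Neither is automatic from the stated hypotheses, whereas the paper's cancellation bypasses both. One small further point: you correctly note that $\hat{P}_n\phi_0 \inoutprob 0$ is needed to pass from $\hat{P}_n\phi_0^2 - (\hat{P}_n\phi_0)^2$ to $\sigma_0^2$, but ``can be verified from the hypotheses'' is doing some work here --- it is immediate for the empirical bootstrap (WLLN and $P_0\phi_0 = 0$) but requires a separate argument for smooth bootstraps.
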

Conditions~\ref{cond: limited complexity}--\ref{cond: second order} imply that $S_n$ and $R_n$ are $o_{\prob_0^*}(n^{-1/2})$, and conditions~\ref{cond: bootstrap limited complexity}--\ref{cond: bootstrap second order} imply that $o_{\prob_W^*}(n^{-1/2})$. This yields the following Corollary.
\begin{restatable}{cor}{corprecinterval}\label{cor: perc bootstrap CI}
    If conditions~\ref{cond: limited complexity}--\ref{cond: second order} and \ref{cond: bootstrap limited complexity}--\ref{cond: bootstrap second order} hold, then the bootstrap percentile confidence interval has asymptotic confidence level $1-\alpha - \beta$. If in addition $\sigma_n^{2} \inoutprob \sigma_0^{2}$ and $\sigma_n^{*2} \condinoutprob \sigma_0^{2}$, then the percentile $t$-confidence interval defined in~\eqref{eq: percentile t method} has asymptotic confidence level $1-\alpha - \beta$.
\end{restatable}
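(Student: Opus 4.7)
The plan is to apply Theorem~\ref{thm: perc bootstrap CI}, verifying each of its hypotheses under conditions~\ref{cond: limited complexity}--\ref{cond: second order} and~\ref{cond: bootstrap limited complexity}--\ref{cond: bootstrap second order}. That theorem requires the two $\hat{P}_n$-based variance conditions on $\phi_0^2$, the negligibility statements $S_n^* - S_n = o_{\prob_W^*}(n^{-1/2})$ and $R_n^* - R_n = o_{\prob_W^*}(n^{-1/2})$, and (for the percentile $t$-interval) the product condition involving $\sigma_n^*-\sigma_n$.

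First I would handle the negligibility statements. The proof of Theorem~\ref{thm: classical} shows that~\ref{cond: limited complexity}--\ref{cond: weak consistency} together with Lemma~19.24 of \cite{van2000asymptotic} yield $S_n = \fasterthan(n^{-1/2})$, while~\ref{cond: second order} is $R_n = \fasterthan(n^{-1/2})$ by definition. Analogously, the proof of Theorem~\ref{thm: bootstrap} via Lemma~\ref{lemma: boot_ran} gives $S_n^* = o_{\prob_W^*}(n^{-1/2})$ under~\ref{cond: bootstrap limited complexity}--\ref{cond: bootstrap weak consistency}, and~\ref{cond: bootstrap second order} is $R_n^* = o_{\prob_W^*}(n^{-1/2})$. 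Since $S_n$ and $R_n$ depend only on the original sample, $\prob_W^*\{|S_n|>\varepsilon n^{-1/2}\} = \mathbf{1}\{|S_n|>\varepsilon n^{-1/2}\}$, which tends to $0$ in outer $\prob_0^*$-probability; hence $S_n = o_{\prob_W^*}(n^{-1/2})$ and likewise for $R_n$, producing the required differences.

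Next I would extract the $\hat{P}_n$ conditions from~\ref{cond: bootstrap limited complexity}(b). Because $\phi_0 \in \s{F}$ and $\d{G}_n^* \condinoutdist \d{G}_0$ in $\ell^\infty(\s{F})$ with $\d{G}_0$ the $P_0$-Brownian bridge, the marginal at $\phi_0$ says that the conditional distribution of $\d{G}_n^*(\phi_0) = n^{1/2}(\d{P}_n^* - \hat{P}_n)\phi_0$ converges in outer probability to $N(0, P_0\phi_0^2)$. The conditional variance of $\d{G}_n^*(\phi_0)$ equals $\hat{P}_n\phi_0^2 - (\hat{P}_n\phi_0)^2$, and upgrading conditional weak convergence to second-moment convergence via a square-integrable envelope for $\s{F}$ yields $\hat{P}_n\phi_0^2 - (\hat{P}_n\phi_0)^2 \inoutprob P_0\phi_0^2$. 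Combined with $\hat{P}_n\phi_0 \inoutprob 0$, which holds for the empirical bootstrap by the law of large numbers applied to the $P_0$-integrable $\phi_0$, and for smooth bootstraps via Proposition~\ref{prop: weak convergence of convolution smooth bootstrap}, this delivers $\hat{P}_n\phi_0^2 \inoutprob P_0\phi_0^2$. The truncated-tail condition $(\hat{P}_n - P_0)[\phi_0^2\mathbf{1}\{|\phi_0|>M\}] \inoutprob 0$ follows from the same uniform-integrability argument applied to the tail of $\phi_0^2$.

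Theorem~\ref{thm: perc bootstrap CI} then yields the percentile-interval claim. For the percentile $t$-interval, the two consistency assumptions give $\sigma_n^*-\sigma_n = (\sigma_n^*-\sigma_0) - (\sigma_n-\sigma_0) = o_{\prob_W^*}(1)$, so $(S_n+R_n)(\sigma_n^*-\sigma_n) = o_{\prob_W^*}(n^{-1/2}) \cdot o_{\prob_W^*}(1) = o_{\prob_W^*}(n^{-1/2})$, and a second application of Theorem~\ref{thm: perc bootstrap CI} completes the proof. The main obstacle is the passage from conditional weak convergence of $\d{G}_n^*$ to marginal $\hat{P}_n$ regularity on $\phi_0^2$: conditional weak convergence does not by itself force conditional variance convergence without a uniform integrability input, and the subsidiary claim $\hat{P}_n\phi_0 \inoutprob 0$ is not implied by~\ref{cond: bootstrap limited complexity}(b) in the abstract and must be established through properties specific to the bootstrap sampling distribution at hand.
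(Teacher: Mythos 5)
Your handling of the remainder terms $S_n^*-S_n$ and $R_n^*-R_n$ and of the percentile-$t$ product $(S_n+R_n)(\sigma_n^*-\sigma_n)$ matches the paper. The gap is in the treatment of the two moment conditions on $\hat{P}_n$. You attempt to \emph{verify} the hypotheses $\hat{P}_n\phi_0^2 \inoutprob P_0\phi_0^2$ and $(\hat{P}_n-P_0)[\phi_0^2 \mathbf{1}\{|\phi_0|>M\}] \inoutprob 0$ from condition~\ref{cond: bootstrap limited complexity}(b), and you correctly observe near the end that this cannot be done in the abstract: conditional weak convergence of $\d{G}_n^*$ in $\ell^\infty(\s{F})$ does not give convergence of conditional second moments, and $\hat{P}_n\phi_0 \inoutprob 0$ requires bootstrap-specific input. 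Having identified the obstruction, your argument stalls there, so as written it does not establish the corollary.

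The missing observation is that those two $\hat{P}_n$ conditions are not actually needed under~\ref{cond: bootstrap limited complexity}. In the proof of Theorem~\ref{thm: perc bootstrap CI} they enter only through Lemma~\ref{thm: clt} (the conditional CLT), whose sole purpose is to deliver $\d{G}_n^*\phi_0 \condinoutdist \d{G}_0\phi_0$; everything downstream of that point in the theorem's proof uses only this one-dimensional conditional weak convergence, not the moment conditions themselves. But condition~\ref{cond: bootstrap limited complexity} gives $\d{G}_n^*\phi_0 \condinoutdist \d{G}_0\phi_0$ directly: since $\phi_0 \in \s{F}$, the evaluation map $z \mapsto z(\phi_0)$ on $\ell^\infty(\s{F})$ is $1$-Lipschitz, so $\d{G}_n^* \condinoutdist \d{G}_0$ in $\ell^\infty(\s{F})$ immediately transfers to the coordinate $\phi_0$ (this is the same device as the $g(z) := h(z(\phi_0))$ argument in the proof of Theorem~\ref{thm: bootstrap}). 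Thus instead of re-deriving the $\hat{P}_n$ hypotheses, one should note that they can be replaced by~\ref{cond: bootstrap limited complexity} in the proof of Theorem~\ref{thm: perc bootstrap CI}, after which the corollary reduces to exactly the remainder and variance checks you already carried out.
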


\Cref{cor: perc bootstrap CI} demonstrates that if the conditions of Theorems~\ref{thm: classical} and~\ref{thm: bootstrap} hold, so that the estimator is asymptotically linear and the bootstrap estimator is conditionally asymptotically linear, then bootstrap confidence intervals using the percentile and percentile $t$-methods are asymptotically valid. However, \Cref{thm: perc bootstrap CI} demonstrates that the percentile and percentile $t$-methods can yield valid confidence intervals even if~\ref{cond: limited complexity}--\ref{cond: second order} and~\ref{cond: bootstrap limited complexity}--\ref{cond: bootstrap second order} do not hold. Specifically, even if $S_n$ and $R_n$ are not $o_{\prob_0^*}(n^{-1/2})$, and $S_n^*$ and $R_n^*$ are not $o_{\prob_W^*}(n^{-1/2})$, as long as $S_n^*$ and $R_n^*$ are sufficiently good approximations of $S_n$ and $R_n$, respectively, the percentile methods can yield asymptotically valid confidence intervals. This phenomenon was studied in \cite{cattaneo2018kernel, cattaneo2022average} for kernel-based nuisance estimators. In particular, $R_n^* - R_n$ can be $o_{\prob_W^*}(n^{-1/2})$ under slower rates of convergence of nuisance estimators than those used to demonstrate that~\ref{cond: second order} and~\ref{cond: bootstrap second order} hold, or if the estimator is not targeted toward $\psi$. Similarly, $S_n^* - S_n$ can be $o_{\prob_W^*}(n^{-1/2})$ under weaker entropy conditions than those used to demonstrate that~\ref{cond: limited complexity} and~\ref{cond: bootstrap limited complexity} hold. Hence, in some cases bootstrap confidence intervals are asymptotically valid even when the estimator is not asymptotically linear because bootstrap intervals can automatically correct excess bias in the estimator. We will discuss this in more depth in Sections~\ref{sec: T examples} and~\ref{sec: applications}. 

\Cref{thm: perc bootstrap CI} also requires consistency of certain moments of the bootstrap sampling distribution, which is used to establish that $\d{G}_n^*\phi_0 \condinoutdist \d{G}_0 \phi_0$. For the empirical bootstrap, these conditions are implied by the law of large numbers. For smooth bootstraps, these conditions are satisfied if $\norm{\sd \hat{P}_n / \sd \lambda - \sd P_0 / \sd \lambda}_\infty \xrightarrow{\mathrm{a.s.^*}} 0$. 

\Cref{thm: perc bootstrap CI} and \Cref{cor: perc bootstrap CI} both require that $\sigma_n^*$ is a conditionally consistent estimator of $\sigma_0$ for validity of the percentile $t$-method. A bootstrap analogue of the influence function-based variance estimator defined in Section~\ref{sec:asym linearity} is $\sigma_n^{*2}:= \d{P}_n^* \phi_n^{*2}$. In the following lemma, we show that for the empirical bootstrap, conditions~\ref{cond: bootstrap limited complexity} and~\ref{cond: bootstrap weak consistency} imply that the bootstrap influence function-based variance estimator $\sigma_n^{*2}$ is conditionally consistent. The situation is not quite as straightforward for other types of bootstraps, but it is still the case that conditions~\ref{cond: bootstrap limited complexity}(a),~\ref{cond: bootstrap weak consistency}, and the sufficient conditions for~\ref{cond: bootstrap limited complexity}(b) established in Proposition~\ref{prop: uniform donsker} together imply conditional consistency of $\sigma_n^{*2}$.

\begin{restatable}{lemma}{lemmabootstrapvar}\label{lemma: bootstrap variance}
    If there exists a class of measurable functions $\s{G}$ such that (i) $\prob_W^*(\phi_n^{*2} \in \s{G}) \inoutprob 1$, (ii) $\sup_{g \in \s{G}} |(\hat{P}_n - P_0) g| = o_{\prob_0^*}(1)$, (iii) $\sup_{g \in \s{G}}|(\d{P}_n^* - \hat{P}_n)g| = o_{\prob_W^*}(1)$, and (iv) $P_0(\phi_n^{*2} - \phi_0^2) = o_{\prob_W^*}(1)$, then $\d{P}_n^* \phi_n^{*2} - \sigma_0^2 = o_{\prob_W^*}(1)$. Furthermore, condition~\ref{cond: bootstrap weak consistency} implies (iv). For the empirical bootstrap where $\hat{P}_n = \d{P}_n$, condition~\ref{cond: bootstrap limited complexity} implies condition~(i)--(iii). For any bootstrap $\hat{P}_n$, condition~\ref{cond: bootstrap limited complexity}(a) and the conditions of Proposition~\ref{prop: uniform donsker} imply conditions~(i)--(iii).
\end{restatable}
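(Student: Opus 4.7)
The plan is to prove the main consistency statement via the decomposition
\[
\d{P}_n^* \phi_n^{*2} - \sigma_0^2 \;=\; (\d{P}_n^* - \hat{P}_n) \phi_n^{*2} + (\hat{P}_n - P_0) \phi_n^{*2} + P_0 (\phi_n^{*2} - \phi_0^2),
\]
and to show each piece is $o_{\prob_W^*}(1)$. For any $\varepsilon > 0$, the event $\{|(\d{P}_n^* - \hat{P}_n) \phi_n^{*2}| > \varepsilon\}$ is contained in $\{\sup_{g \in \s{G}} |(\d{P}_n^* - \hat{P}_n) g| > \varepsilon\} \cup \{\phi_n^{*2} \notin \s{G}\}$, so (i) and (iii) together dispatch the first piece. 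The second piece is handled analogously using (i) and (ii); here the crucial observation is that $\sup_{g \in \s{G}} |(\hat{P}_n - P_0) g|$ depends only on the original data, so its $o_{\prob_0^*}(1)$ convergence translates automatically into $o_{\prob_W^*}(1)$. The third piece is exactly (iv).

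To show that condition~\ref{cond: bootstrap weak consistency} implies (iv), I would factor $\phi_n^{*2} - \phi_0^2 = (\phi_n^* - \phi_0)(\phi_n^* + \phi_0)$ and apply Cauchy-Schwarz to obtain
\[
|P_0(\phi_n^{*2} - \phi_0^2)| \;\leq\; \|\phi_n^* - \phi_0\|_{L_2(P_0)} \cdot \|\phi_n^* + \phi_0\|_{L_2(P_0)}.
\]
By condition~\ref{cond: bootstrap weak consistency}, the first factor is $o_{\prob_W^*}(1)$, while the triangle inequality together with $\|\phi_0\|_{L_2(P_0)} < \infty$ gives that the second factor is $O_{\prob_W^*}(1)$.

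For the empirical bootstrap case ($\hat{P}_n = \d{P}_n$), I would take $\s{G} = \s{F}^2 := \{f^2 : f \in \s{F}\}$, for which (i) is immediate from condition~\ref{cond: bootstrap limited complexity}(a). For (ii) and (iii), I would exploit the facts that a $P_0$-Donsker class $\s{F}$ is totally bounded in $L_2(P_0)$ and admits an envelope $F$ with $P_0 F^2 < \infty$: for any $\delta > 0$, cover $\s{F}$ by finitely many $L_2(P_0)$-balls of radius $\delta$ with centers $f_1, \dotsc, f_N$; use $|f^2 - f_j^2| \leq 2F|f - f_j|$ to bound the supremum over $\s{F}^2$ by the finite supremum at $f_1^2, \dotsc, f_N^2$ plus a remainder controlled by $\delta$ via Cauchy-Schwarz against $\d{P}_n$ or $\d{P}_n^*$; dispatch the finite supremum via the strong law of large numbers for (ii) and via the bootstrap Glivenko-Cantelli consequence of Lemma~\ref{lemma: empirical bootstrap} for (iii); then let $\delta \to 0$. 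For a general bootstrap $\hat{P}_n$ satisfying the hypotheses of Proposition~\ref{prop: uniform donsker}, I would apply Proposition~\ref{prop: uniform donsker} to $\s{F}^2$ in place of $\s{F}$, which simultaneously yields a Glivenko-Cantelli-type statement under $\hat{P}_n$ (giving (ii)) and a conditional bootstrap CLT over $\s{F}^2$ (giving (iii), since $O_{\prob_W^*}(n^{-1/2}) = o_{\prob_W^*}(1)$). The main obstacle is verifying that the hypotheses of Proposition~\ref{prop: uniform donsker} transfer cleanly from $\s{F}$ to $\s{F}^2$; this is typically accomplished under a uniformly bounded envelope, in which case $\s{F}^2$ has envelope $F^2$, uniform square-integrability is automatic, and uniform entropy bounds propagate via the Lipschitz inequality $|f_1^2 - f_2^2| \leq 2\|F\|_\infty |f_1 - f_2|$.
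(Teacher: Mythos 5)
Your proof of the main consistency statement and of the implication from condition (B2) to (iv) follows exactly the paper's decomposition and argument: the three-term split $\d{P}_n^* \phi_n^{*2} - \sigma_0^2 = (\d{P}_n^* - \hat{P}_n) \phi_n^{*2} + (\hat{P}_n - P_0) \phi_n^{*2} + P_0(\phi_n^{*2} - \phi_0^2)$, the union-of-events argument over $\s{G}$, and the Cauchy--Schwarz/Minkowski bound for (iv). That part is correct.

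Where you diverge is the empirical-bootstrap preservation step. The paper dispatches it by citation: (B1)(b) implies $\s{F}$ is $P_0$-Donsker (Theorem 3.6.1 of van der Vaart and Wellner), hence $\s{F}^2$ is $P_0$-Glivenko--Cantelli by a preservation lemma, and the conditional bootstrap GC theorem of Gin\'e--Zinn gives (ii) and (iii). You instead sketch a from-scratch covering argument. That route is viable in principle, but your sketch has a real gap: after covering $\s{F}$ by $L_2(P_0)$-balls of radius $\delta$ and writing $|f^2 - f_j^2| \le 2F|f - f_j|$, you propose to control the remainder ``via Cauchy--Schwarz against $\d{P}_n$ or $\d{P}_n^*$.'' Cauchy--Schwarz gives $\d{P}_n[2F|f-f_j|] \le 2\|F\|_{L_2(\d{P}_n)}\|f - f_j\|_{L_2(\d{P}_n)}$, and while $\|F\|_{L_2(\d{P}_n)}$ is $O_{\prob_0^*}(1)$ by the law of large numbers, the factor $\|f - f_j\|_{L_2(\d{P}_n)}$ is \emph{not} bounded by $\delta$ merely because $\|f - f_j\|_{L_2(P_0)} \le \delta$. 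Closing this requires the uniform convergence of the empirical $L_2$ semimetric to the $P_0$-semimetric over $\s{F}$ (and its bootstrap analogue over $\d{P}_n^*$), which is itself a nontrivial Donsker-preservation fact of the same depth as the lemma the paper cites. So your covering argument does not actually avoid the preservation machinery --- it quietly relies on it.

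For the general bootstrap, you propose applying Proposition~\ref{prop: uniform donsker} with $\s{F}^2$ in place of $\s{F}$ under a uniformly bounded envelope; this is a plausible direction and the paper's proof does not supply more detail for that case either. Note, though, that Proposition~\ref{prop: uniform donsker} delivers conditional weak convergence of $\d{G}_n^*$, which yields (iii) after dividing by $n^{1/2}$; it does not by itself yield condition (ii), i.e.\ $\sup_{g\in\s{G}}|(\hat{P}_n - P_0)g| = o_{\prob_0^*}(1)$. For (ii) you need to invoke the semimetric-convergence hypothesis of that proposition and, since that hypothesis controls differences $(f-g)^2$ rather than $f^2$ directly, make sure the class contains the zero function or otherwise adjust. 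That is a step your sketch glosses over.
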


Lastly, \Cref{thm: perc bootstrap CI} requires  that $(S_n + R_n)(\sigma_n^* - \sigma_n) = o_{\prob_W^*}(n^{-1/2})$ for validity of the percentile $t$-method. This is satisfied if conditions~\ref{cond: limited complexity}--\ref{cond: second order} hold and $\sigma_n^* \condinoutprob \sigma_0$ and $\sigma_n \inoutprob \sigma_0$. However, if condition~\ref{cond: limited complexity} or \ref{cond: second order} do not hold, then a faster rate of convergence of $\sigma_n^* - \sigma_n$ may be required. For example, \cite{van2018targeted} suggested using targeted estimators for $\sigma_n^*$ and $\sigma_n$, which can yield $\sigma_n^* - \sigma_0 = O_{\prob_W^*}(n^{-1/2}) $ and $\sigma_n - \sigma_0 = O_{\prob_0^*}(n^{-1/2})$ under suitable conditions, so that $(S_n + R_n)(\sigma_n^* - \sigma_n) = o_{\prob_W^*}(n^{-1/2})$ as long as $S_n$ and $R_n$ are $o_{\prob_0^*}(1)$.

\subsubsection{Efron's percentile method}

A third method of constructing bootstrap confidence intervals is \textit{Efron's percentile method}, which is sometimes called the percentile method. In this case, the confidence interval is given by $[\zeta_{n, \beta}^*, \zeta_{n, 1-\alpha}^*]$ for $\zeta_{n, p}^*$ equal to the lower $p$th quantile of the distribution of $T(\eta_n^*, \d{P}_n^*)$ given the data; i.e., $\zeta_{n, p}^* := \inf\{ \zeta \in \d{R} : \prob_W^* \left( T(\eta_n^*, \d{P}_n^*) \leq \zeta \right) \geq p\}$.  The next result provides conditions under which Efron's percentile method yields an asymptotically valid confidence interval.  

\begin{restatable}{thm}{thmefroninterval}\label{thm: efron bootstrap CI}
    Suppose that $\hat{P}_n \phi_0^2 \inoutprob P_0 \phi_0^2$ and $(\hat{P}_n - P_0) [\phi_0^2 1\{|\phi_0| > M\}] \inoutprob 0$ for every $M>0$. If $\left[ R_n + S_n \right] + \left[ R_n^* + S_n^*\right]+ [T(\eta_n, \hat{P}_n) - T(\eta_n, \d{P}_n)] = o_{\prob_W^*}(n^{-1/2})$, then 
    \[\sup_{t \in \d{R}} \left|\prob_W^* \left( T(\eta_n^*, \d{P}_n^*) - T(\eta_n, \d{P}_n) \leq t \right) -  \prob_0^* \left( -\left[T(\eta_n, \d{P}_n) - T(\eta_0, P_0)\right] \leq t \right) \right| \inoutprob 0,\]
    and Efron's percentile confidence interval $[\zeta_{n, \beta}^*, \zeta_{n, 1-\alpha}^*]$ has asymptotic confidence level $1-\alpha - \beta$. 
\end{restatable}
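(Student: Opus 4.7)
The plan is to establish Kolmogorov-distance convergence between the conditional CDF $H_n^*(t) := \prob_W^*(U_n^* \leq t)$ of $U_n^* := T(\eta_n^*, \d{P}_n^*) - T(\eta_n, \d{P}_n)$ and the unconditional CDF $G_n(t) := \prob_0^*(V_n \leq t)$ of $V_n := -[T(\eta_n, \d{P}_n) - T(\eta_0, P_0)]$, and then deduce coverage of Efron's interval via a quantile-continuity argument. The starting point is the decomposition
\[
U_n^* = n^{-1/2}\d{G}_n^*\phi_0 + R_n^* + S_n^* + [T(\eta_n, \hat{P}_n) - T(\eta_n, \d{P}_n)], \quad V_n = -n^{-1/2}\d{G}_n\phi_0 - (R_n + S_n),
\]
which follows from the definitions of $R_n, S_n, R_n^*, S_n^*$ together with the first-order expansions underlying Theorems~\ref{thm: classical} and~\ref{thm: bootstrap}.

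The key technical ingredient is a conditional central limit theorem for $n^{-1/2}\d{G}_n^*\phi_0$: the two moment hypotheses on $\hat{P}_n$ are tailored to deliver, in outer probability, conditional variance convergence to $\sigma_0^2 := P_0\phi_0^2$ and Lindeberg negligibility for the conditionally IID array $\{n^{-1/2}[\phi_0(X_i^*) - \hat{P}_n\phi_0]\}_{i=1}^n$. A conditional Lindeberg--Feller theorem combined with Polya's theorem yields $\sup_u |\prob_W^*(n^{-1/2}\d{G}_n^*\phi_0 \leq u) - \Phi(u/\sigma_0)| \inoutprob 0$, where $\Phi$ is the standard normal CDF. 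The ordinary CLT gives the unconditional analogue $\sup_u |\prob_0(-n^{-1/2}\d{G}_n\phi_0 \leq u) - \Phi(u/\sigma_0)| \to 0$, using symmetry of $N(0, \sigma_0^2)$.

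To combine these with the remainder bound, I would rewrite the hypothesis as $R_n^* + S_n^* + [T(\eta_n, \hat{P}_n) - T(\eta_n, \d{P}_n)] = -(R_n + S_n) + o_{\prob_W^*}(n^{-1/2})$, so $U_n^* = n^{-1/2}\d{G}_n^*\phi_0 - (R_n + S_n) + o_{\prob_W^*}(n^{-1/2})$, where $R_n + S_n$ is a function of the original data only. Treating $R_n + S_n$ as a data-dependent shift, a uniform Slutsky-type argument based on the uniform continuity of $\Phi$ shows that both $H_n^*(t)$ and $G_n(t)$ are uniformly approximated by $\Phi((t + R_n + S_n)/\sigma_0)$---the conditional CDF via the conditional CLT (since the shift is conditionally constant given the data) and the unconditional CDF via the CLT for $-n^{-1/2}\d{G}_n\phi_0$. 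The triangle inequality then yields $\sup_t |H_n^*(t) - G_n(t)| \inoutprob 0$. Finally, writing $\zeta_{n, p}^* = \psi_n + q_{n, p}^*$ with $q_{n, p}^*$ the $p$-th conditional quantile of $U_n^*$, the coverage equals $\prob_0(q_{n, \beta}^* \leq V_n \leq q_{n, 1-\alpha}^*)$, which tends to $1 - \alpha - \beta$ by the Kolmogorov conclusion and continuity of the limiting Gaussian CDF.

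The main obstacle is the remainder-absorption step: because the hypothesis controls only the sum of the three bracketed remainder expressions rather than each individually, one cannot discard $R_n + S_n$ outright, and instead must exploit that it is a function of the original data alone to interpret it as a common shift that cancels when comparing the conditional bootstrap distribution to the unconditional sampling distribution. This is precisely the automatic bias-correction mechanism that makes Efron's percentile method asymptotically valid even when the estimator itself is not asymptotically linear.
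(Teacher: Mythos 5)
Your overall strategy matches the paper's: write $U_n^* = T(\eta_n^*,\d{P}_n^*) - T(\eta_n,\d{P}_n)$ and $V_n = -[T(\eta_n,\d{P}_n) - T(\eta_0,P_0)]$ in terms of $\d{G}_n^*\phi_0$, $\d{G}_n\phi_0$, and the remainders; establish a conditional Lindeberg--Feller CLT for $\d{G}_n^*\phi_0$ from the two moment hypotheses (the paper packages this as its Lemma~\ref{thm: clt}); absorb the shift $R_n + S_n$; and finish with a quantile argument. However, there are two gaps relative to the paper's proof.

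First, the claim that $G_n(t) := \prob_0^*(V_n \leq t)$ is uniformly approximated by $\Phi((t + R_n + S_n)/\sigma_0)$ does not hold up: $G_n(t)$ is a deterministic number for each $t$, while the proposed approximant is random, and the shift $R_n + S_n$ lives on the same probability space as $\d{G}_n\phi_0$, so it cannot be pulled out of the unconditional probability as a fixed threshold. The ``CLT for $-n^{-1/2}\d{G}_n\phi_0$'' gives uniform convergence of the \emph{marginal} CDF of $\d{G}_n\phi_0$ to $\Phi(\cdot/\sigma_0)$ (via Polya), which is what the paper uses directly; it does not let you convert $\prob_0^*(-n^{-1/2}\d{G}_n\phi_0 - (S_n+R_n) \leq t)$ into $\Phi((t + S_n+R_n)/\sigma_0)$. (The paper's own substitution $T_n \to t$ glosses over a similar subtlety; the Kolmogorov claim in the theorem tacitly requires the bias $n^{1/2}(S_n + R_n)$ to be asymptotically deterministic, which is the situation in the intended kernel-smoothing applications.)

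Second, and more substantively, your coverage argument is circular with the first part and underspecified. You write that $\prob_0(q_{n,\beta}^* \leq V_n \leq q_{n,1-\alpha}^*) \to 1-\alpha-\beta$ ``by the Kolmogorov conclusion and continuity of the limiting Gaussian CDF,'' but the quantiles $q_{n,p}^*$ carry the random shift $-(S_n + R_n)$ and therefore do \emph{not} converge in probability to constants, so the continuity/Slutsky argument cannot be applied to them directly. The paper's proof of coverage does not go through the Kolmogorov conclusion at all: it shows $n^{1/2}\{\zeta_{n,p}^* + [S_n + R_n - T(\eta_n,\d{P}_n)]\} \inoutprob \sigma_0\Phi^{-1}(p)$ (re-centering the quantile by the data-dependent bias, then invoking van der Vaart's Lemma~21.2), and then observes that the \emph{realized} $S_n + R_n$ appears algebraically in both the quantile endpoints and in $V_n$ and so cancels exactly inside the joint probability, leaving $\prob_0^*(-\Phi^{-1}(\beta) \leq \sigma_0^{-1}\d{G}_n\phi_0 \leq -\Phi^{-1}(1-\alpha)) \to 1-\alpha-\beta$. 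This exact cancellation within a single joint probability is the heart of the automatic bias-correction mechanism, and you need to carry it out explicitly rather than appeal to the Kolmogorov estimate, both because that estimate is fragile and because the quantile re-centering is needed before any Slutsky step is legitimate.
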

As with the percentile and percentile $t$-intervals, conditions~\ref{cond: limited complexity}--\ref{cond: second order} and~\ref{cond: bootstrap limited complexity}--\ref{cond: bootstrap second order} imply that $S_n$, $R_n$, $S_n^*$, and $R_n^*$ are all $o_{\prob_W^*}(n^{-1/2})$. This yields the following Corollary.
\begin{restatable}{cor}{corefroninterval}\label{cor: efron bootstrap CI}
    If~\ref{cond: limited complexity}--\ref{cond: second order} and~\ref{cond: bootstrap limited complexity}--\ref{cond: bootstrap second order} hold, and $T(\eta_n, \hat{P}_n) - T(\eta_n, \d{P}_n) = o_{\prob_0^*}(n^{-1/2})$, then Efron's percentile confidence interval $[\zeta_{n, \beta}^*, \zeta_{n, 1-\alpha}^*]$ has asymptotic confidence level $1-\alpha - \beta$. 
\end{restatable}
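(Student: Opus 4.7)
The plan is to apply \Cref{thm: efron bootstrap CI} after verifying its two hypotheses from (A1)--(A3), (B1)--(B3), and the additional assumption $T(\eta_n, \hat{P}_n) - T(\eta_n, \d{P}_n) = o_{\prob_0^*}(n^{-1/2})$. For the remainder hypothesis, the proof of \Cref{thm: classical} under (A1)--(A3) gives $R_n = o_{\prob_0^*}(n^{-1/2})$ from (A3) and $S_n = n^{-1/2}\d{G}_n(\phi_n - \phi_0) = o_{\prob_0^*}(n^{-1/2})$ from (A1)--(A2) via Lemma~19.24 of \cite{van2000asymptotic}; since both are measurable functions of $X_1, \dotsc, X_n$ alone and the bootstrap weights $W_n$ are independent of the data, they are also $o_{\prob_W^*}(n^{-1/2})$. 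An analogous argument using the proof of \Cref{thm: bootstrap} together with \Cref{lemma: boot_ran} yields $R_n^* + S_n^* = o_{\prob_W^*}(n^{-1/2})$ from (B1)--(B3). The additional assumption supplies the remaining term, which is deterministic given the data and hence transferable to the $\prob_W^*$ scale, so summing gives the required bound.

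For the moment hypothesis $\hat{P}_n \phi_0^2 \inoutprob P_0 \phi_0^2$ and $(\hat{P}_n - P_0)[\phi_0^2 1\{|\phi_0| > M\}] \inoutprob 0$, I would extract it from (B1)(b). Because $\phi_0 \in \s{F}$, condition (B1)(b) yields the scalar conditional CLT $\d{G}_n^* \phi_0 \condinoutdist \d{G}_0 \phi_0 \sim N(0, \sigma_0^2)$, and for the row-IID triangular array $\{\phi_0(X_i^*) - \hat{P}_n \phi_0\}_{i \leq n}$ such a CLT forces, via the necessity direction of the Lindeberg--Feller theorem applied along almost-sure convergent subsequences, both the conditional variance convergence $\hat{P}_n \phi_0^2 - (\hat{P}_n \phi_0)^2 \inoutprob \sigma_0^2$ and the Lindeberg-type truncation bound. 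With both hypotheses verified, \Cref{thm: efron bootstrap CI} delivers the uniform Kolmogorov statement, which translates via standard quantile-convergence arguments---using $n^{1/2}[T(\eta_n, \d{P}_n) - T(\eta_0, P_0)] \leadsto N(0, \sigma_0^2)$ from \Cref{thm: classical} and Slutsky---into the asymptotic coverage $\Phi(z_{1-\alpha}) - \Phi(z_\beta) = 1 - \alpha - \beta$.

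The main obstacle will be cleanly extracting the moment hypothesis from (B1)(b), particularly handling the outer-probability subtleties in the necessity direction of the Lindeberg--Feller theorem. A cleaner alternative is to bypass \Cref{thm: efron bootstrap CI} entirely: \Cref{thm: classical} and \Cref{thm: bootstrap}, combined with the additional assumption and conditional Slutsky, give $n^{1/2}[T(\eta_n^*, \d{P}_n^*) - T(\eta_n, \d{P}_n)] \condinoutdist N(0, \sigma_0^2)$. By symmetry of the normal, this has the same (continuous, provided $\sigma_0^2 > 0$) limit as $-n^{1/2}[T(\eta_n, \d{P}_n) - T(\eta_0, P_0)]$, so Polya's theorem supplies the uniform Kolmogorov CDF statement, which as above delivers the desired coverage.
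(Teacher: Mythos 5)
Your handling of the remainder sum $S_n + R_n + S_n^* + R_n^* + [T(\eta_n, \hat{P}_n) - T(\eta_n, \d{P}_n)] = o_{\prob_W^*}(n^{-1/2})$ is exactly right and matches the paper. Where you diverge is in dealing with the moment hypotheses $\hat{P}_n\phi_0^2 \inoutprob P_0\phi_0^2$ and $(\hat{P}_n-P_0)[\phi_0^2 1\{|\phi_0|>M\}]\inoutprob 0$ of \Cref{thm: efron bootstrap CI}. Your first route---attempting to derive these from (B1)(b) via the necessity direction of Lindeberg--Feller---has a genuine gap that goes beyond the ``obstacle'' you flag. Even along almost-surely convergent subsequences, the converse Lindeberg--Feller gives you conditional \emph{variance} convergence $\hat{P}_n\phi_0^2 - (\hat{P}_n\phi_0)^2 \to \sigma_0^2$, not second-moment convergence $\hat{P}_n\phi_0^2 \to \sigma_0^2$; you would need separately to show $\hat{P}_n\phi_0 \to 0$, which (B1)(b) does not directly provide since $\d{G}_n^*$ is centered at $\hat{P}_n$ and carries no information about the location of $\hat{P}_n\phi_0$. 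The Lindeberg condition you would recover is also a $\gamma\sqrt{n}$-truncation statement, not the fixed-$M$ truncation the theorem asks for, and converting between them is not trivial. So the first route does not straightforwardly close.

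The paper takes a much shorter cut, one worth internalizing: in the proof of \Cref{thm: efron bootstrap CI}, those two moment conditions are used \emph{only} to invoke the conditional Lindeberg--Feller lemma to conclude $\d{G}_n^*\phi_0 \condinoutdist \d{G}_0\phi_0$. But (B1) supplies that conclusion directly, since $\phi_0 \in \s{F}$ and conditional weak convergence in $\ell^\infty(\s{F})$ implies conditional weak convergence of the marginal $\d{G}_n^*\phi_0$. So one never needs to establish the moment hypotheses at all---they are bypassed, not derived. Your second route (bypassing \Cref{thm: efron bootstrap CI} entirely via \Cref{thm: classical}, \Cref{thm: bootstrap}, conditional Slutsky, symmetry of the Gaussian, and Polya's theorem) is correct and self-contained, and is a genuinely different proof of the same fact; it just reconstructs from scratch the coverage argument that \Cref{thm: efron bootstrap CI} already packaged. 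Both your second route and the paper's proof implicitly assume $\sigma_0^2 > 0$ (needed for the quantile-convergence step), which the corollary does not state explicitly but is a standing assumption throughout. If you rewrite, use either the paper's one-line observation about (B1) or your Polya-based reconstruction, and drop the Lindeberg--Feller necessity argument.
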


\Cref{cor: efron bootstrap CI} demonstrates that as with the percentile and percentile $t$-methods, if the conditions of Theorems~\ref{thm: classical} and~\ref{thm: bootstrap} hold, then Efron's percentile intervals are asymptotically valid. However, the conditions of \Cref{thm: efron bootstrap CI} differ substantially from those of \Cref{thm: perc bootstrap CI}. First, the sums $S_n^* + S_n$ and $R_n^* + R_n$ appear in the condition for Efron's percentile method, in contrast with the the differences $S_n^* - S_n$ and $R_n^* - R_n$ in the condition for the other percentile methods. Typically, $S_n^* + S_n + R_n^* + R_n = o_{\prob_W^*}(n^{-1/2})$ will be established by showing that each summand is $o_{\prob_W^*}(n^{-1/2})$. Hence, Efron's percentile method generally does not have the potential for automatic bias correction. This was also noted in  \cite{cattaneo2022average}. Furthermore, even if Efron's percentile method is asymptotically valid, the appearance of the sums rather than differences means that it can have worse finite sample behavior than the percentile and percentile $t$-methods. This will be investigated further in Sections~\ref{sec: T examples} and~\ref{sec: simulation}.  Second, the term $T(\eta_n, \hat{P}_n) - T(\eta_n, \d{P}_n)$ appears in the conditions for Efron's percentile method, but not for the other percentile methods. This is because Efron's percentile method uses the bootstrap distribution of $T(\eta_n^*, \d{P}_n^*)$ directly without centering.  The distribution of $T(\eta_n^*, \d{P}_n^*)$ is asymptotically symmetric around $T(\eta_n, \hat{P}_n)$, which does not equal the original estimator $\psi_n = T(\eta_n, \d{P}_n)$ for non-empirical bootstraps. If the term $T(\eta_n, \hat{P}_n) - T(\eta_n, \d{P}_n)$ is not $\fasterthan(n^{-1/2})$ then Efron's percentile method may not have asymptotically valid coverage. This may be the case, for instance, if $\hat{P}_n$ is a distribution based on a kernel density estimator that is not targeted toward the parameter of interest.

\subsubsection{Bootstrap Wald method}

The final method of constructing bootstrap confidence intervals that we will discuss is the \textit{bootstrap Wald method}, which is based on a normal approximation. We define $\bar\sigma_{n}^2:= \E_W^* \{n^{1/2}[T(\eta_n^*, \d{P}_n^*) - T(\eta_n, \hat{P}_n)]\}^2$ as the variance of the centered and scaled bootstrap estimator distribution given the data (not to be confused with $\sigma_n^{*2}$). The two-sided $(1-\alpha - \beta)$-level bootstrap Wald confidence interval is then given by 
\begin{equation}\label{eq: bootstrap wald}
    \left[  T(\eta_n, \d{P}_n) + z_{\beta}\bar\sigma_{n} n^{-1/2}, T(\eta_n, \d{P}_n) + z_{1-\alpha}\bar\sigma_{n} n^{-1/2}\right],
\end{equation}
where $z_p$ is the lower-$p$ quantile of the standard normal distribution. In practice, the bootstrap quantiles $\xi_{n,p}^*$ and $\zeta_{n,p}^*$ and the bootstrap variance $\bar\sigma_{n}^2$ are approximated using empirical analogues based on a large number of bootstrap samples. As mentioned in Section~\ref{sec:bootstrap_framework}, we ignore the effect of this approximation. The final result of this section provides conditions under which the bootstrap Wald interval is asymptotically valid.
\begin{restatable}{thm}{thmwaldbootinterval}\label{thm: wald bootstrap CI}
    Denote $T_n^*:= n^{1/2}[T(\eta_n^*, \d{P}_n^*) - T(\eta_n, \hat{P}_n)]$. If conditions~\ref{cond: limited complexity}--\ref{cond: second order} and \ref{cond: bootstrap limited complexity}--\ref{cond: bootstrap second order} hold and $T_n^{*}$ is asymptotically uniformly square-integrable in the sense that 
    \[\lim_{m\to \infty} \limsup_{n\to \infty} \E_0^*\E_W^* \left[T_n^{*2}1\{T_n^{*2} \geq m\}\right] = 0,\]  then $\bar\sigma_{n} \inoutprob \sigma_0$, so the bootstrap Wald confidence interval defined in~\eqref{eq: bootstrap wald} has asymptotic confidence level $1-\alpha - \beta$. 
\end{restatable}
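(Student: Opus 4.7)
The plan is to reduce the Wald interval's validity to two ingredients we already have: (i) asymptotic linearity of the original estimator (Theorem~\ref{thm: classical}), which gives $n^{1/2}[T(\eta_n,\d{P}_n) - T(\eta_0,P_0)] \leadsto \d{G}_0\phi_0 \sim N(0,\sigma_0^2)$, and (ii) $\bar\sigma_n \inoutprob \sigma_0$. Once both are in hand, writing the Wald interval's coverage probability as $\prob_0^*( z_\beta \leq n^{1/2}[T(\eta_n,\d{P}_n) - T(\eta_0,P_0)]/\bar\sigma_n \leq z_{1-\alpha})$ and applying Slutsky's lemma together with the continuous mapping theorem (noting that $\sigma_0 > 0$ since $N(0,\sigma_0^2)$ is the nondegenerate weak limit from Theorem~\ref{thm: bootstrap}) yields asymptotic level $1-\alpha-\beta$. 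So the substantive work is to prove $\bar\sigma_n \inoutprob \sigma_0$.

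To establish $\bar\sigma_n^2 = \E_W^* T_n^{*2} \inoutprob \sigma_0^2$, I would first invoke Theorem~\ref{thm: bootstrap} under \ref{cond: bootstrap limited complexity}--\ref{cond: bootstrap second order} to obtain the conditional weak convergence $T_n^* \condinoutdist \d{G}_0\phi_0$. The limit is Gaussian with variance $\sigma_0^2$. I then approximate the unbounded map $x \mapsto x^2$ by truncations $h_m(x) := x^2 \wedge m$, each of which, after dividing by an $m$-dependent Lipschitz constant, lies in a scaled bounded Lipschitz class. Conditional weak convergence gives $\E_W^* h_m(T_n^*) \inoutprob \E h_m(\d{G}_0\phi_0)$ for every fixed $m$, and the dominated convergence theorem yields $\E h_m(\d{G}_0\phi_0) \to \sigma_0^2$ as $m \to \infty$.

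The gap between $\E_W^* T_n^{*2}$ and $\E_W^* h_m(T_n^*)$ is bounded above by $\E_W^*[T_n^{*2} 1\{T_n^{*2} > m\}]$. Here the hypothesis that $T_n^{*2}$ is asymptotically uniformly square-integrable enters: $\E_0^*\E_W^*[T_n^{*2} 1\{T_n^{*2} \geq m\}] \to 0$ as $n \to \infty$ then $m \to \infty$, combined with Markov's inequality, gives $\prob_0^*(\E_W^*[T_n^{*2} 1\{T_n^{*2} > m\}] > \varepsilon) \to 0$ uniformly for large $m$. Combining the three displays by a standard $\varepsilon/3$ argument (pick $m$ large to control the truncation error and the approximation of $\sigma_0^2$, then let $n \to \infty$ to kill the middle term) gives $\bar\sigma_n^2 \inoutprob \sigma_0^2$, whence $\bar\sigma_n \inoutprob \sigma_0$ by the continuous mapping theorem.

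The main obstacle I anticipate is the careful handling of outer expectations: the inequality $|\E_W^* T_n^{*2} - \E_W^* h_m(T_n^*)| \leq \E_W^*[T_n^{*2} 1\{T_n^{*2} > m\}]$ and the fact that truncated $h_m$ can be approximated within the BL$_1$-based definition of conditional weak convergence are straightforward for measurable maps but require standard outer-measure manipulations (using that $T_n^{*2} = (T_n^*)^2$ inherits any measurable majorant of $T_n^*$ squared) to keep valid in the non-measurable setting of our framework. Once these bookkeeping steps are executed following the conventions of Section~\ref{sec:classical} and Chapter~1.2 of \cite{van1996weak}, the remainder of the argument is a direct Slutsky-type reduction to the unconditional CLT of Theorem~\ref{thm: classical}.
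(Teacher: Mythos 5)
Your proposal follows the same path as the paper's proof: use conditional weak convergence from Theorem~\ref{thm: bootstrap}, approximate $x \mapsto x^2$ by bounded-Lipschitz truncations (your $h_m(x)=x^2\wedge m$ coincides with the paper's $m h_m(x)$ for $m\geq 4$), combine an $\varepsilon/3$ decomposition with Markov's inequality and the asymptotic uniform square-integrability hypothesis to get $\bar\sigma_n^2\inoutprob\sigma_0^2$, and finish with Slutsky applied to the CLT of Theorem~\ref{thm: classical}. The only cosmetic differences are your citing dominated rather than monotone convergence for the tail of the limit and your slightly different scaling of the truncated maps; both are valid and the argument is correct.
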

Since the bootstrap Wald method is based on a normal approximation, its asymptotic validity requires conditional weak convergence of the bootstrap to the same normal limit as the original estimator. Hence, this method does not have the same possibility of automatic bias correction as the percentile and percentile $t$ methods. Furthermore, since weak convergence does not imply convergence of moments, \Cref{thm: wald bootstrap CI} also requires asymptotically uniform integrability of the centered and scaled bootstrap estimator.

\section{Remainder calculations for specific estimator constructions}\label{sec: T examples}
\subsection{One-step estimator}\label{sec: one-step}

In this section, we explain how~\ref{cond: second order},~\ref{cond: bootstrap second order}, and $R_n^* - R_n = o_{\prob_W^*}(n^{-1/2})$ can be verified for several specific estimator constructions $T$. Throughout this section, if $\hat{P}_n$ is in the domain of $\eta$, then we denote $\hat{\eta}_n:= \eta(\hat{P}_n)$. 

We first consider the one-step construction. Suppose that the parameter of interest $\psi(P)$ and its influence function $\phi_P$ depend on $P$ only through $\eta_P$, so that with some abuse of notation we can write $\psi(P) = \psi(\eta_P)$ and $\phi_P = \phi_{\eta_P}$. A one-step estimator of $\psi_0$ based on nuisance estimator $\eta_n$ is then defined as $\psi_{n} := \psi(\eta_n) + \d{P}_n \phi_{\eta_n}$, which can be represented as $\psi_n = T(\eta_n, \d{P}_n)$ for $T(\eta, P) := \psi(\eta) + P \phi_{\eta}$. The mean-zero property of influence functions implies that $T(\eta_0, P_0) = \psi_0$.

For the one-step estimator, $R_n = \psi(\eta_n) - \psi(\eta_0) + P_0 \phi_{\eta_n}$, which is known as the \emph{second-order remainder}. 
For so-called \emph{strongly differentiable parameters}, the second-order remainder term is $o_{\prob_0^*}(n^{-1/2})$ under conditions on the rate of convergence of $\eta_n - \eta_0$  (see, e.g., Chapter 4 of \citealp{pfanzagl1982contributions}). Hence, a benefit of the one-step construction is that $\eta_n$ does not typically need to be tailored to $\psi$ for the one-step estimator to be asymptotically linear, though $\eta_n$ does usually need to satisfy rate and entropy requirements. 

We define the bootstrap one-step estimator based on bootstrap nuisance estimator $\eta_n^*$ as $\psi_n^*:= T(\eta_n^*, \d{P}_n^*) = \psi(\eta_n^*) + \d{P}_n^* \phi_{\eta_n^*}$. We then have $R_n^* = \psi(\eta_n^*) - \psi(\eta_n) + \hat{P}_n(\phi_{\eta_n^*} - \phi_{\eta_n})$. To demonstrate that~\ref{cond: bootstrap second order} holds, we can decompose $R_n^*$ in two ways:
\begin{align}
    \begin{split}\label{eq: one step Rnstar}
        R_n^* &= \big[\psi(\eta_n^*) - \psi(\eta_0) + P_0 \phi_{\eta_n^*}\big] -\big[\psi(\eta_n) - \psi(\eta_0) + P_0 \phi_{\eta_n}\big] + (\hat{P}_n - P_0)(\phi_{\eta_n^*} - \phi_{\eta_n})  \\
        &= \big[\psi(\eta_n^*) - \psi(\hat{\eta}_n) + \hat{P}_n \phi_{\eta_n^*}\big] -\big[\psi(\eta_n) - \psi(\hat{\eta}_n) + \hat{P}_n \phi_{\eta_n}\big].
    \end{split}
\end{align}
For the first decomposition, we note that $\psi(\eta_n) - \psi(\eta_0) + P_0 \phi_{\eta_n}$ is the second-order remainder term defined above, and $\psi(\eta_n^*) - \psi(\eta_0) + P_0 \phi_{\eta_n^*}$ is a second-order remainder term with $\eta_n^*$ playing the role of $\eta_n$. Hence, this term will typically be $o_{\prob_W^*}(n^{-1/2})$ under conditions on the conditional rate of convergence of $\eta_n^* - \eta_0$. Similarly, the terms $\psi(\eta_n^*) - \psi(\hat{\eta}_n) + \hat{P}_n \phi_{\eta_n^*}$ and $\psi(\eta_n) - \psi(\hat{\eta}_n) + \hat{P}_n \phi_{\eta_n}$ in the second decomposition are second-order remainder terms that will typically be $o_{\prob_W^*}(n^{-1/2})$ under conditional rates of convergence of $\eta_n^* - \hat{\eta}_n$ and $\eta_n - \hat{\eta}_n$, respectively. 

To demonstrate that bootstrap percentile intervals are asymptotically valid, we can decompose $R_n^* - R_n$ in two analogous ways:
\begin{align}
    \begin{split}\label{eq: one step RnstarRn}
        R_n^* - R_n &= \big[\psi(\eta_n^*) - \psi(\eta_0) + P_0 \phi_{\eta_n^*}\big] - 2\big[\psi(\eta_n) - \psi(\eta_0) + P_0 \phi_{\eta_n}\big] + (\hat{P}_n - P_0)(\phi_{\eta_n^*} - \phi_{\eta_n}) \\
        &=\big[\psi(\eta_n^*) - \psi(\hat{\eta}_n) + \hat{P}_n \phi_{\eta_n^*}\big] - \big[\psi(\eta_n) - \psi(\eta_0) + P_0 \phi_{\eta_n}\big] - \big[\psi(\eta_n) - \psi(\hat{\eta}_n) + \hat{P}_n \phi_{\eta_n}\big]. 
    \end{split}
\end{align}
As discussed in Section~\ref{sec:conf_int}, these two expressions can be $o_{\prob_W^*}(n^{-1/2})$ even if $R_n$ and/or $R_n^*$ are not. For instance, in the first decomposition, $\psi(\eta_n^*) - \psi(\eta_0) + P_0 \phi_{\eta_n^*}$ may be within $o_{\prob_W^*}(n^{-1/2})$ of $2[\psi(\eta_n) - \psi(\eta_0) + P_0 \phi_{\eta_n}]$ even if each of these terms individually is not $o_{\prob_W^*}(n^{-1/2})$. Similarly, in the second decomposition, $\psi(\eta_n^*) - \psi(\hat{\eta}_n) + \hat{P}_n \phi_{\eta_n^*}$ may be within $o_{\prob_W^*}(n^{-1/2})$ of $\psi(\eta_n) - \psi(\eta_0) + P_0 \phi_{\eta_n}$ even if each of these terms individually is not $o_{\prob_W^*}(n^{-1/2})$. 

The bootstrap sampling distribution is a primary consideration when determining which of the two decompositions in~\eqref{eq: one step Rnstar} and~\eqref{eq: one step RnstarRn} should be used. The first decomposition is more suitable for the empirical bootstrap $\hat{P}_n = \d{P}_n$. This is because for most of our applications of interest, empirical distributions are not in the domain of the nuisance parameter $\eta$, so that $\hat\eta_n := \eta(\d{P}_n)$ does not exist. For example, this is the case if $\eta(P)$ is the Lebesgue density of $P$. Hence, for the empirical bootstrap, the first set of conditions, which do not involve $\hat\eta_n$, should typically be used. In this case, $n^{1/2}(\hat{P}_n - P_0)(\phi_{\eta_n^*} - \phi_{\eta_n}) = \d{G}_n(\phi_{\eta_n^*} - \phi_{\eta_n}) = o_{\prob_W^*}(1)$ under~\ref{cond: limited complexity}, \ref{cond: weak consistency}, \ref{cond: bootstrap limited complexity}(a), and~\ref{cond: bootstrap weak consistency}. Hence, the empirical bootstrap one-step estimator is conditionally asymptotically linear if~\ref{cond: limited complexity}--\ref{cond: second order}, \ref{cond: bootstrap limited complexity}(a), and~\ref{cond: bootstrap weak consistency} hold and $\psi(\eta_n^*) - \psi(\eta_0) + P_0 \phi_{\eta_n^*} = o_{\prob_W^*}(n^{-1/2})$.  Alternatively, bootstrap percentile confidence intervals based on the empirical bootstrap one-step estimator are consistent if~\ref{cond: limited complexity}--\ref{cond: weak consistency}, \ref{cond: bootstrap limited complexity}(a), and~\ref{cond: bootstrap weak consistency} hold and $2[\psi(\eta_n) - \psi(\eta_0) + P_0 \phi_{\eta_n}] - [\psi(\eta_n^*) - \psi(\eta_0) + P_0 \phi_{\eta_n^*}] = o_{\prob_W^*}(n^{-1/2})$.

For non-empirical bootstraps where $\hat\eta_n$ is well-defined, we expect the second decomposition in~\eqref{eq: one step Rnstar} and~\eqref{eq: one step RnstarRn} to be easier to verify. This is because in many cases when $\hat{P}_n \neq \d{P}_n$, it may be difficult to show that $(\hat{P}_n - P_0)(\phi_{\eta_n^*} - \phi_{\eta_n}) = o_{\prob_W^*}(n^{-1/2})$. For the special case of $\hat{P}_n$ equal to the distribution corresponding to a kernel density estimator, Section~3.2 of \cite{gine2008uniform} and Theorem~10 of \cite{radulovic2009uniform} establish asymptotic uniform equicontinuity of $\{n^{1/2}(\hat{P}_n - P_0)f: f \in \s{F}\}$ under conditions on the bandwidth and smoothness of functions $f \in \s{F}$. This implies $(\hat{P}_n - P_0)(\phi_{\eta_n^*} - \phi_{\eta_n}) = o_{\prob_W^*}(n^{-1/2})$ under conditions~\ref{cond: limited complexity}, \ref{cond: weak consistency}, \ref{cond: bootstrap limited complexity}(a), and~\ref{cond: bootstrap weak consistency}. In cases where asymptotic uniform equicontinuity of $\{n^{1/2}(\hat{P}_n - P_0)f : f \in \s{F}\}$ is hard to establish, but $\hat{P}_n$ and $P_0$ are dominated by a fixed measure $\lambda$, a simple but crude approach to showing~\ref{cond: bootstrap weak consistency} is to use the Cauchy-Schwartz inequality:  $|(\hat{P}_n - P_0)(\phi_{\eta_n^*} - \phi_{\eta_n})| \leq \|\sd \hat{P}_n - \sd P_0 \|_{L_2(\lambda)} \|\phi_{\eta_n^*} - \phi_{\eta_n} \|_{L_2(\lambda)}$. Alternatively, a more direct calculation may be employed.

An important special case is $\eta(\hat{P}_n) = \eta_n$; i.e., the bootstrap sampling distribution is based on the original nuisance estimator. For instance, if $\eta_n$ is a Lebesgue density estimator, this would correspond to drawing bootstrap samples from the distribution corresponding to $\eta_n$. If $\eta(\hat{P}_n) = \eta_n$ then $\hat\eta_n = \eta_n$, and hence the second decomposition in \eqref{eq: one step Rnstar} reduces to $\psi(\eta_n^*) - \psi(\hat\eta_n) + \hat{P}_n \phi_{\eta_n^*}$, and the second decomposition in~\eqref{eq: one step RnstarRn} reduces to $[\psi(\eta_n) - \psi(\eta_0) + P_0 \phi_{\eta_n}] - [\psi(\eta_n^*) - \psi(\hat{\eta}_n) + \hat{P}_n \phi_{\eta_n^*}]$.

If we use the original nuisance estimator for the bootstrap estimator, i.e.\ $\eta_n^* = \eta_n$, then $R_n^* = 0$. This leads to the following corollary to Theorem~\ref{thm: bootstrap}.
\begin{restatable}{cor}{coronestepfixed}\label{cor: bootstrap one-step fixed}
    For the bootstrap one-step estimator with $\eta_n^* = \eta_n$, if conditions~\ref{cond: limited complexity}(a), \ref{cond: bootstrap limited complexity}(b) and \ref{cond: weak consistency} hold, then $T(\eta_n, \d{P}_n^*)  = T(\eta_n, \hat{P}_n) + (\d{P}_n^* - \hat{P}_n) \phi_0 + o_{\prob_W^*}(n^{-1/2})$. In particular, for the empirical bootstrap $\hat{P}_n = \d{P}_n$, conditions~\ref{cond: limited complexity} and~\ref{cond: weak consistency} imply the result.
\end{restatable}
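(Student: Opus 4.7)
The plan is to exploit the especially clean structure that the one-step representation provides when $\eta_n^* = \eta_n$. Since $T(\eta, P) = \psi(\eta) + P\phi_\eta$, the $\psi(\eta_n)$ contributions cancel and we have the exact identity
\[
T(\eta_n, \d{P}_n^*) - T(\eta_n, \hat{P}_n) \;=\; (\d{P}_n^* - \hat{P}_n)\,\phi_{\eta_n},
\]
so that, writing $\phi_n := \phi_{\eta_n}$, the desired expansion reduces to verifying
\[
\d{G}_n^*(\phi_n - \phi_0) \;=\; n^{1/2}(\d{P}_n^* - \hat{P}_n)(\phi_n - \phi_0) \;=\; o_{\prob_W^*}(1).
\]

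My approach is to apply the bootstrap asymptotic equicontinuity result, Lemma~\ref{lemma: boot_ran}, with $\phi_n^* := \phi_n$, $\phi_\infty := \phi_0$, and the semimetric $\rho = \|\cdot\|_{L_2(P_0)}$. The conditional Donsker hypothesis $\d{G}_n^* \condinoutdist \d{G}_0$ on $\ell^\infty(\s{F})$ is precisely condition~\ref{cond: bootstrap limited complexity}(b); this forces $\d{G}_0$ to be a tight, Borel measurable $P_0$-Brownian bridge, which is then almost surely $L_2(P_0)$-uniformly continuous on $\s{F}$, supplying the sample path continuity hypothesis for free. Because $\phi_n$ is a measurable function of $X_1,\dotsc,X_n$ and does not depend on the bootstrap noise $W$, the conditional outer probability $\prob_W^*(\phi_n \in \s{F})$ equals the indicator $1\{\phi_n \in \s{F}\}$, which converges to $1$ in $\prob_0^*$-outer probability by~\ref{cond: limited complexity}(a); this delivers the membership hypothesis. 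The same observation reduces the semimetric convergence hypothesis $\|\phi_n - \phi_0\|_{L_2(P_0)} = o_{\prob_W^*}(1)$ directly to~\ref{cond: weak consistency}. Lemma~\ref{lemma: boot_ran} then yields the claim.

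For the empirical bootstrap addendum, the only remaining task is to produce~\ref{cond: bootstrap limited complexity}(b) from~\ref{cond: limited complexity} and~\ref{cond: weak consistency}. Under $\hat{P}_n = \d{P}_n$, Lemma~\ref{lemma: empirical bootstrap} states that the $P_0$-Donsker property of $\s{F}$ is equivalent (modulo the routine asymptotic measurability of $\d{G}_n^*$, which holds for separable empirical processes with an integrable envelope) to $\d{G}_n^* \condinoutdist \d{G}_0$. Condition~\ref{cond: limited complexity}(b) furnishes the Donsker property, so~\ref{cond: bootstrap limited complexity}(b) follows, and the first part of the corollary applies.

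There is no real analytic obstacle here, which is precisely why the result is packaged as a corollary rather than a theorem: the choice $\eta_n^* = \eta_n$ annihilates $R_n^*$, the one-step form cancels the $\psi(\eta_n)$ pieces, and the bootstrap first-order expansion collapses to the single empirical process term $\d{G}_n^*(\phi_n - \phi_0)$. The only care required is the bookkeeping that translates between $o_{\prob_0^*}$ and $o_{\prob_W^*}$ for quantities measurable with respect to the original data, a step which is automatic because $\phi_n$ carries no bootstrap randomness.
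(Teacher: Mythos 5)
Your proposal is correct and takes essentially the same approach as the paper: you observe that $\eta_n^* = \eta_n$ forces $R_n^* = 0$ and $\phi_n^* = \phi_n$, so that the bootstrap expansion collapses to the single term $\d{G}_n^*(\phi_n - \phi_0)$, which you then control via Lemma~\ref{lemma: boot_ran}; this is exactly the route the paper envisions (it simply packages the same checks as ``conditions~\ref{cond: bootstrap limited complexity}(a), \ref{cond: bootstrap weak consistency}, \ref{cond: bootstrap second order} reduce to~\ref{cond: limited complexity}(a), \ref{cond: weak consistency}, and $R_n^* = 0$, then invoke Theorem~\ref{thm: bootstrap}''). The empirical-bootstrap addendum via Lemma~\ref{lemma: empirical bootstrap} also matches the intended argument.
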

\Cref{cor: bootstrap one-step fixed} indicates that a subset of the conditions for asymptotic linearity implies consistency of the empirical bootstrap for the one-step estimator with the original nuisance estimator. This is convenient as it means that in this case, no additional work needs to be done to establish asymptotic validity of bootstrap confidence intervals beyond that for establishing asymptotic linearity of the estimator. However, it also means that consistency of the percentile and percentile $t$-intervals requires that $R_n = o_{\prob_0^*}(n^{-1/2})$, so automatic bias correction of bootstrap confidence intervals does not occur.

\subsection{Plug-in estimator}\label{sec: plug-in}

Our next example of an estimator construction is the plug-in estimator. Suppose that the parameter of interest $\psi(P)$ depends on $P$ only through $\eta(P)$, so that, with some abuse of notation, we can write $\psi(P) = \psi(\eta_P)$. A plug-in estimator is then given by $\psi_n = \psi(\eta_n)$, which can be represented as $\psi_n = T\big(\eta_n, \d{P}_n\big)$ for $T(\eta, P) = \psi(\eta)$. In this case, $T$ is a function of $\eta$ alone, but for consistency of notation, we will continue to write it as a function of $P$ as well.

For the plug-in estimator, $R_n = [\psi(\eta_n) - \psi(\eta_0) + P_0 \phi_n] - \d{P}_n \phi_n$. If the influence function $\phi_P = \phi_{\eta_P, \pi_P}$ depends on $\eta_P$ and an additional nuisance parameter $\pi_P$ and we set $\phi_n = \phi_{\eta_n, \pi_n}$ for $\pi_n$ an estimator of $\pi_P$, then $ \psi(\eta_n) - \psi(\eta_0) + P_0 \phi_{\eta_n, \pi_n}$ is a second-order remainder term that will typically be $o_{\prob_0^*}(n^{-1/2})$ under sufficient rates of convergence of $\eta_n$ to $\eta_0$ and $\pi_n$ to $\pi_0$ in appropriate semi-metrics, as discussed in Section~\ref{sec: one-step}. Plug-in estimators also require that $\d{P}_n \phi_n = o_{\prob_0^*}(n^{-1/2})$, which typically requires careful construction of $\eta_n$, as discussed in Section~\ref{sec:asym linearity}. 

We let $\eta_n^*$ be a bootstrap nuisance estimator, and we define $\psi_n^* = T(\eta_n^*, \d{P}_n^*) = \psi(\eta_n^*)$ as the bootstrap plug-in estimator. We then have $R_n^* = [\psi(\eta_n^*) - \psi(\eta_n) + \hat{P}_n \phi_n^*] - \d{P}_n^* \phi_n^*$. To demonstrate that~\ref{cond: bootstrap second order} holds, we can decompose $R_n^*$ in two ways:
\begin{align}
\begin{split}\label{eq: plugin Rnstar}
        R_n^* &= [\psi(\eta_n^*) - \psi(\eta_0) + P_0 \phi_n^*] - [\psi(\eta_n) - \psi(\eta_0) + P_0 \phi_n] + (\hat{P}_n - P_0)(\phi_n^* - \phi_n) + [\hat{P}_n \phi_n - \d{P}_n^* \phi_n^*] \\
        & = [\psi(\eta_n^*) - \psi(\hat{\eta}_n) + \hat{P}_n \phi_n^*] - [\psi(\eta_n) - \psi(\hat{\eta}_n) + \hat{P}_n \phi_n]  + [\hat{P}_n \phi_n - \d{P}_n^* \phi_n^*]
\end{split}
\end{align}
If $\phi_{n} = \phi_{\eta_n, \pi_n}$ and $\phi_n^* = \phi_{\eta_n^*, \pi_n^*}$, then the first three terms in square braces of the first decomposition and the first two terms in square braces of the second decomposition are second-order remainder terms. Negligibility of these terms was discussed following~\eqref{eq: one step Rnstar}. Compared with \eqref{eq: one step Rnstar}, both decompositions in \eqref{eq: plugin Rnstar} additionally involve the term $\hat{P}_n \phi_n - \d{P}_n^* \phi_n^*$. Ensuring that this term is $o_{\prob_W^*}(n^{-1/2})$ typically requires careful construction of $\eta_n^*$ and $\eta_n$, as discussed in Section~\ref{sec:asym linearity}. If $\hat{P}_n = \d{P}_n$ is the empirical bootstrap, then $\hat{P}_n \phi_n = o_{\prob_0^*}(n^{-1/2})$ is  typically required for~\ref{cond: second order} to hold, as discussed above. If this holds, then it is sufficient that $\d{P}_n^* \phi_n^* = o_{\prob_W^*}(n^{-1/2})$, which is the bootstrap analogue of $\d{P}_n \phi_n = o_{\prob_0^*}(n^{-1/2})$. Alternatively, if $\phi_n = \phi_{\eta_n, \pi_n}$ and $\hat{P}_n$ is based on $(\eta_n, \pi_n)$, then $\hat{P}_n \phi_n = 0$, so it is again sufficient that $\d{P}_n^* \phi_n^* = o_{\prob_W^*}(n^{-1/2})$. 

To demonstrate that bootstrap percentile intervals are asymptotically valid, we can decompose $R_n^* - R_n$ in two analogous ways:
\begin{align}
    \begin{split}\label{eq: plugin RnstarRn}
        R_n^* - R_n &= [\psi(\eta_n^*) - \psi(\eta_0) + P_0 \phi_n^*] - 2[\psi(\eta_n) - \psi(\eta_0) + P_0 \phi_n] + (\hat{P}_n - P_0)(\phi_n^* - \phi_n)   \\
        & \qquad \qquad + [\hat{P}_n \phi_n + \d{P}_n \phi_n - \d{P}_n^* \phi_n^*] \\
        & = [\psi(\eta_n^*) - \psi(\hat{\eta}_n) + \hat{P}_n \phi_n^*] - [\psi(\eta_n) - \psi(\eta_0) + P_0 \phi_n] - [\psi(\eta_n) - \psi(\hat{\eta}_n) + \hat{P}_n \phi_n]    \\
        & \qquad \qquad + [\hat{P}_n \phi_n + \d{P}_n \phi_n - \d{P}_n^* \phi_n^*]
    \end{split}
\end{align}
As mentioned in Section~\ref{sec:conf_int} and Section~\ref{sec: one-step}, these two expressions can be $o_{\prob_W^*}(n^{-1/2})$ even if $R_n$ and/or $R_n^*$ are not. The first three terms in square braces of both decompositions were discussed in Section~\ref{sec: one-step}. The bootstrap sampling distribution is a primary consideration when determining which of the two decompositions in~\eqref{eq: plugin Rnstar} and~\eqref{eq: plugin RnstarRn} should be used. We refer readers to \Cref{sec: one-step} for further discussion.

To show that $R_n^* - R_n = o_{\prob_W^*}(n^{-1/2})$ using either decomposition in \eqref{eq: plugin RnstarRn}, it may be necessary to additionally show that $\hat{P}_n \phi_n + \d{P}_n \phi_n - \d{P}_n^* \phi_n^* = o_{\prob_W^*}(n^{-1/2})$. For the empirical bootstrap $\hat{P}_n = \d{P}_n$, this is the case if  $\d{P}_n^* \phi_n^*$ is within $o_{\prob_W^*}(n^{-1/2})$ of $2\d{P}_n \phi_n$. If instead $\hat{P}_n$ is based on $(\eta_n, \pi_n)$, then it is sufficient that $\d{P}_n^* \phi_n^*$ is within $o_{\prob_W^*}(n^{-1/2})$ of $\d{P}_n \phi_n$. In both cases, the conditions can be met even if $\d{P}_n \phi_n$ and/or $\d{P}_n^* \phi_n^*$ are not $o_{\prob_0^*}(n^{-1/2})$ and $o_{\prob_W^*}(n^{-1/2})$, respectively. Hence, bootstrap percentile confidence intervals based on a plug-in estimator can be asymptotically valid even when the estimator is not targeted toward the functional of interest.

Lastly, we note that neither~\ref{cond: bootstrap second order} nor $R_n^* - R_n = o_{\prob_W^*}(n^{-1/2})$ hold for the bootstrap plug-in estimator if we set $\eta_n^* = \eta_n$ because this would result in $\psi_n^* = \psi(\eta_n^*) = \psi(\eta_n) = \psi_n$ with $\prob_W^*$ probability 1; i.e.,  a bootstrap distribution equal to a point mass at the original estimator.

\subsection{Empirical mean of a nuisance-dependent function}\label{sec: mean plug-in}

Our next example of an estimator construction is the empirical mean of a nuisance-dependent function, or empirical mean plug-in estimator for brevity. Suppose that the parameter of interest can be written as $\psi(P):= P h_{\eta_P}$ for some known function $h_\eta: \s{X} \mapsto \d{R}$ depending on $\eta \in \s{H}$.  A simple estimator is then given by $\psi_n = \d{P}_n h_{\eta_n}$, which can be represented as $\psi_n = T\big(\eta_n, \d{P}_n\big)$ for $T(\eta, P) = P h_\eta$. In some cases, this estimator and the plug-in estimator considered in Section~\ref{sec: plug-in} are the same. However, the representations of the two estimators in terms of $T$ are different, which leads to different conditions for asymptotic linearity and especially for consistency of the bootstrap.

We can write the influence function as $\phi_P(x) = h_{\eta_P}(x) + \gamma_P(x) - \psi(P)$ for $\gamma_P(x) := \phi_P(x) - h_{\eta_P}(x) + \psi(P)$, and we note that $P \gamma_P = 0$. Heuristically, $\gamma_P$ can be viewed as the contribution to the influence function of fluctuating $\eta_P$ within the model. We write $\phi_n$ as $\phi_n(x) = h_{\eta_n}(x) + \gamma_n(x) - \psi_n$ for some estimator $\gamma_n$ of $\gamma_0$. We then have $R_n = P_0 (h_{\eta_n} - h_{\eta_0} + \gamma_n) - \d{P}_n \gamma_n$, which leads to the following conditions under which~\ref{cond: second order} holds.  If $\eta$ and $\gamma$ are compatible in the sense that there exists $P' \in \s{M}$ such that $\eta_{P'} = \eta$ and $\gamma_{P'} = \gamma$, then $\psi(P') - \psi(P) + P \phi_{P'} = P(h_{\eta} - h_{\eta_P} + \gamma)$ is the second-order remainder term discussed in Section~\ref{sec: one-step}. Hence, if there exists $P_n$ such that $\eta_{P_n} = \eta_n$ and $\gamma_{P_n} = \gamma_n$, then $P_0(h_{\eta_n} - h_{\eta_0} + \gamma_n)$ is a second-order remainder and can be expected to be $o_{\prob_0^*}(n^{-1/2})$ if $(\eta_n, \gamma_n)$ converges to $(\eta_0, \gamma_0)$ at a sufficient rate in an appropriate semi-metric. The condition that $\d{P}_n \gamma_n = o_{\prob_0^*}(n^{-1/2})$ typically requires careful construction of $\eta_n$, as discussed in Section~\ref{sec:asym linearity}.

As usual, we let $\psi_n^* = T(\eta_n^*, \d{P}_n^*)$ be the bootstrap empirical mean plug-in estimator for a bootstrap nuisance estimator $\eta_n^*$. We also define the bootstrap influence function estimator as $\phi_n^*(x) = h_{\eta_n^*}(x) + \gamma_n^*(x) - \psi_n^*$, where $\gamma_n^*$ is the bootstrap estimator of $\gamma_0$. We then have $R_n^* = \hat{P}_n (h_{\eta_n^*} - h_{\eta_n} + \gamma_n^*) - \d{P}_n^* \gamma_n^*$. To demonstrate that~\ref{cond: bootstrap second order} holds, we can decompose $R_n^*$ in two ways:
\begin{align}
\begin{split}\label{eq: mean plugin Rnstar}
    R_n^* &= P_0 (h_{\eta_n^*} - h_{\eta_0} + \gamma_n^*) - P_0 (h_{\eta_n} - h_{\eta_0} + \gamma_n) \\ 
    & \qquad + (\hat{P}_n - P_0)(h_{\eta_n^*} - h_{\eta_n} + \gamma_n^* - \gamma_n) + (\hat{P}_n \gamma_n - \d{P}_n^* \gamma_n^*)\\
    & = \hat{P}_n (h_{\eta_n^*} - h_{\hat{\eta}_n} + \gamma_n^*) - \hat{P}_n  (h_{\eta_n} - h_{\hat{\eta}_n} + \gamma_n) + (\hat{P}_n \gamma_n - \d{P}_n^* \gamma_n^*t).
\end{split}
\end{align}
If $\eta_n^*$ and $\gamma_n^*$ are compatible and $\eta_n$ and $\gamma_n$ are compatible, then the first two terms of both decompositions are second-order remainders. We discussed negligibility of these terms and of the third term in the first decomposition in Section~\ref{sec: one-step}. Both decompositions in \eqref{eq: mean plugin Rnstar} also have the term $\hat{P}_n \gamma_n - \d{P}_n^* \gamma_n^*$, and negligibility of this term typically requires careful construction of $\eta_n^*$ and $\eta_n$, as discussed in Section~\ref{sec: plug-in}.

We can decompose $R_n^* - R_n$ in two analogous ways:
\begin{align}
\begin{split}
    R_n^* - R_n &= P_0 (h_{\eta_n^*} - h_{\eta_0} + \gamma_n^*) - 2 P_0 (h_{\eta_n} - h_{\eta_0} + \gamma_n) + (\hat{P}_n - P_0)(h_{\eta_n^*} - h_{\eta_n} + \gamma_n^* - \gamma_n) \\ 
    & \qquad  + (\hat{P}_n \gamma_n + \d{P}_n \gamma_n - \d{P}_n^* \gamma_n^*)\\
    & = \hat{P}_n (h_{\eta_n^*} - h_{\hat{\eta}_n} + \gamma_n^*)  - P_0 (h_{\eta_n} - h_{\eta_0} + \gamma_n)  - \hat{P}_n (h_{\eta_n} - h_{\hat{\eta}_n} + \gamma_n) \\
    & \qquad  + (\hat{P}_n \gamma_n + \d{P}_n \gamma_n - \d{P}_n^* \gamma_n^*).
\end{split}
\end{align}
As with the one-step and plug-in estimators, these two expressions can be $o_{\prob_W^*}(n^{-1/2})$ even if $R_n$ and/or $R_n^*$ are not; we refer readers to the discussions following~\eqref{eq: one step RnstarRn} and~\eqref{eq: plugin RnstarRn}. 

Lastly, we note that whether condition~\ref{cond: bootstrap second order} holds if we set $\eta_n^* = \eta_n$ depends on the particular problem. If $\gamma_P$ is a function of $P$ through $\eta_P$, then condition~\ref{cond: bootstrap second order} usually will not hold for either the empirical or non-empirical bootstraps. This is because in this case, $\eta_n^* = \eta_n$ implies that $\gamma_n^* = \gamma_n$, so that even if $\d{P}_n \gamma_n =  o_{\prob_0^*}(n^{-1/2})$, it does not typically follow that $\d{P}_n^* \gamma_n^* = \d{P}_n^* \gamma_n = o_{\prob_W^*}(n^{-1/2})$. Similarly, $\hat{P}_n \gamma_n + \d{P}_n \gamma_n - \d{P}_n^* \gamma_n^* = \d{P}_n \gamma_n - n^{-1/2} \d{G}_n^* \gamma_n$ will not be typically be $o_{\prob_W^*}(n^{-1/2})$, so that $R_n^* - R_n = o_{\prob_W^*}(n^{-1/2})$ may not hold. However, if $\gamma_P$ involves additional summaries of $P$ beyond $\eta_P$, then it may be possible to construct $\eta_n$ in such a way that both $\d{P}_n \gamma_n = o_{\prob_0^*}(n^{-1/2})$ and $\d{P}_n^* \gamma_n^* = o_{\prob_W^*}(n^{-1/2})$, or such that $R_n^* - R_n = o_{\prob_W^*}(n^{-1/2})$.

\subsection{Estimating equations-based estimator}\label{sec: estimating-equations}
    
We next consider estimating equations-based estimators. Suppose that the influence function $\phi_P$ depends on $P$ through $\psi(P)$ and $\eta_P$, so that we can write $\phi_P = \phi_{\psi(P), \eta_P}$. For each $P \in \s{M}^+$ and $\eta \in \s{H}$, we define the estimating function $G_{P, \eta} : \psi \mapsto P \phi_{\psi, \eta}$, and we assume for simplicity that for each $(P, \eta) \in \s{M}^+ \times \s{H}$, there is a unique $\psi \in \d{R}$ such that $G_{P,\eta}(\psi) = 0$.  For each $\eta \in \s{H}$, we define the true population estimating function as $G_{0,\eta} := G_{P_0, \eta}$, and we define the sample estimating function as $G_{n,\eta_n} := G_{\d{P}_n, \eta_n}$, where $\eta_n$ is a nuisance estimator. If $\psi_0$ is the unique solution to $G_{0,\eta_0}(\psi_0) = 0$, then an estimating equation-based estimator $\psi_n$ of $\psi_0$ is defined as the solution to $G_{n,\eta_n}(\psi_n) = 0$. We can then write $\psi_n = T(\eta_n, \d{P}_n)$ for $T(\eta, P)$ defined as the solution to $G_{P, \eta} =0$. We then have $R_n = \psi_n - \psi_0 + P_0 \phi_{\psi_n, \eta_n}$. We also note that $\psi_n$ does not need to solve the equation exactly; it suffices that $G_{n,\eta_n}(\psi_n) = o_{\prob_0^*}(n^{-1/2})$. The derivations for estimating equations-based estimators are more complicated than those for other estimator constructions we have considered, so here we provide theorems for clarity.

Estimating equations-based estimators have been studied in a variety of contexts. Estimating equations often arise in semiparametric models indexed by the pair $(\psi, \eta) \in \d{R} \times \s{H}$. We do not explicitly require this, but our results can be applied in this setting. It is sometimes assumed that $\psi_n$ approximately solves a ``profile" estimating equation $\psi \mapsto \d{P}_n \phi_{\psi, \eta_n(\psi)} = 0$, where $\eta_n(\psi)$ is a solution to $\eta \mapsto \d{P}_n \phi_{\psi, \eta}$ (see, e.g., \citealp{murphy2000profile} and Chapter~21 of \citealp{kosorok2008introduction}). This is the case when $(\psi_n, \eta_n)$ are defined as the joint optimizers of a criterion function such as a likelihood. In contrast, our goal is to permit $\eta_n$ to be an arbitrary nuisance estimator satisfying certain rate and entropy conditions, and to use the estimating equation provided by the influence function to mitigate the asymptotic bias, as the one-step estimator does. This approach is more in line with \cite{laan2003unified} and Chapter~5 of \cite{van2000asymptotic}, among others. In particular, Theorem~5.31 of \cite{van2000asymptotic} provides conditions for asymptotic linearity of an estimating equations-based estimator $\psi_n$ as we have defined it above. We now provide a slightly reformulated version of this result under which~\ref{cond: second order} holds for an estimating equations-based estimator. Following \cite{van2000asymptotic} and others, we say that $\psi_0$ is a \textit{well-separated} solution of the population estimating equation if $P_0 \phi_{\psi_0, \eta_0} = 0 < \inf_{| \psi - \psi_0| > \delta} |P_0 \phi_{\psi, \eta_0}|$ for every $\delta > 0$.

\begin{restatable}{lemma}{lemmaclassicalestimating}\label{lemma: classical estimating-equations}
    If condition~\ref{cond: limited complexity} holds for $\phi_n:= \phi_{\psi_n, \eta_n}$, $\psi_0$ is a well-separated solution of the population estimating equation, $\psi_n = O_{\prob_0^*}(1)$, there exists a map $G_{0, \eta}': \d{R} \to \d{R}$ depending on $\eta \in \s{H}$ such that $\Gamma_{0,\eta} : \psi \mapsto G_{0, \eta}(\psi) - G_{0, \eta}(\psi_0) - G_{0, \eta}'(\psi_0) (\psi - \psi_0)$ satisfies $\sup_{|\psi|\leq M} \left| \Gamma_{0,\eta_n}(\psi) - \Gamma_{0,\eta_0}(\psi)\right| = o_{\prob_0^*}(1)$ for every $M > 0$ and $\sup_{\eta: \|\eta - \eta_0\|_{\s{H}} < \delta} \left|\Gamma_{0, \eta}(\psi) \right| = o\left(\left| \psi - \psi_0\right|\right)$ for some $\delta > 0$, $G_{0, \eta}'$ satisfies $\lim_{\eta \to \eta_0} G_{0, \eta}'(\psi_0) = G_{0, \eta_0}'(\psi_0) = -1$, $\|\eta_n - \eta_0\|_{\s{H}} = o_{\prob_0^*}(1)$, and $P_0 \phi_{\psi_0, \eta_n}  = o_{\prob_0^*}(n^{-1/2})$, then~\ref{cond: second order} holds for the estimating equations-based estimator.
\end{restatable}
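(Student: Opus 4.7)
The argument splits naturally into three steps: consistency of $\psi_n$, a root-$n$ rate of convergence, and a final linearization to obtain (A3). Throughout, I will exploit the defining relation $G_{n, \eta_n}(\psi_n) = 0$ (equivalently $o_{P_0^*}(n^{-1/2})$), which gives
\[
-(\d{P}_n - P_0)\phi_{\psi_n, \eta_n} = P_0 \phi_{\psi_n, \eta_n} = G_{0, \eta_n}(\psi_n),
\]
together with the hypothesized linearization
\[
G_{0, \eta_n}(\psi) = G_{0, \eta_n}(\psi_0) + G_{0, \eta_n}'(\psi_0)(\psi - \psi_0) + \Gamma_{0, \eta_n}(\psi).
\]

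For the first step, I would establish $\sup_{|\psi|\leq M}\left|G_{0,\eta_n}(\psi) - G_{0,\eta_0}(\psi)\right| = o_{P_0^*}(1)$ for every $M$. Writing this difference via the linearization (using $G_{0,\eta_0}(\psi_0) = 0$), it decomposes as $G_{0,\eta_n}(\psi_0) + [G_{0,\eta_n}'(\psi_0) - G_{0,\eta_0}'(\psi_0)](\psi-\psi_0) + [\Gamma_{0,\eta_n}(\psi) - \Gamma_{0,\eta_0}(\psi)]$. The first piece is $o_{P_0^*}(n^{-1/2}) = o_{P_0^*}(1)$ by the hypothesis $P_0\phi_{\psi_0,\eta_n} = o_{P_0^*}(n^{-1/2})$; the second is $o_{P_0^*}(1)$ uniformly over $|\psi|\leq M$ via $\|\eta_n - \eta_0\|_{\s{H}} = o_{P_0^*}(1)$ and continuity of $\eta \mapsto G_{0,\eta}'(\psi_0)$ at $\eta_0$; the third is $o_{P_0^*}(1)$ by direct hypothesis. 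Combining this with $(\d{P}_n - P_0)\phi_{\psi_n, \eta_n} = O_{P_0^*}(n^{-1/2})$ (from the Donsker property in (A1) applied to $\phi_n \in \s{F}$ eventually) and using $\psi_n = O_{P_0^*}(1)$ to localize to $|\psi|\leq M$ with $M$ large, I obtain $G_{0, \eta_0}(\psi_n) = o_{P_0^*}(1)$. The well-separation of $\psi_0$ then yields $\psi_n \inoutprob \psi_0$.

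For the rate step, substituting the linearization into the identity gives
\[
-(\d{P}_n - P_0)\phi_{\psi_n, \eta_n} = G_{0, \eta_n}(\psi_0) + G_{0, \eta_n}'(\psi_0)(\psi_n - \psi_0) + \Gamma_{0, \eta_n}(\psi_n).
\]
The left-hand side is $O_{P_0^*}(n^{-1/2})$ by (A1). On the right, the first term is $o_{P_0^*}(n^{-1/2})$ by hypothesis, $G_{0,\eta_n}'(\psi_0) = -1 + o_{P_0^*}(1)$ by consistency of $\eta_n$, and $\Gamma_{0,\eta_n}(\psi_n) = o_{P_0^*}(|\psi_n - \psi_0|)$: eventually $\|\eta_n - \eta_0\|_{\s{H}} < \delta$ on a set of probability tending to one, placing us in the neighborhood where the deterministic bound $\sup_{\|\eta - \eta_0\|_{\s{H}}<\delta}|\Gamma_{0,\eta}(\psi)| = o(|\psi - \psi_0|)$ applies, and $\psi_n \inoutprob \psi_0$ from step one then transfers this to the random argument $\psi_n$. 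Rearranging yields $(1 + o_{P_0^*}(1))(\psi_n - \psi_0) = O_{P_0^*}(n^{-1/2})$, hence $\psi_n - \psi_0 = O_{P_0^*}(n^{-1/2})$.

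Finally, from $R_n = \psi_n - \psi_0 - (\d{P}_n - P_0)\phi_{\psi_n, \eta_n} = \psi_n - \psi_0 + G_{0, \eta_n}(\psi_n)$, one further application of the linearization yields
\[
R_n = [1 + G_{0, \eta_n}'(\psi_0)](\psi_n - \psi_0) + G_{0, \eta_n}(\psi_0) + \Gamma_{0, \eta_n}(\psi_n),
\]
and each summand is $o_{P_0^*}(n^{-1/2})$ by the estimates above: the first is $o_{P_0^*}(1)\cdot O_{P_0^*}(n^{-1/2})$, the second is directly hypothesized, and the third is $o_{P_0^*}(1)\cdot O_{P_0^*}(n^{-1/2})$. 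This gives (A3). The main obstacle is the initial consistency step, specifically in handling outer probabilities carefully in the uniform-in-$\psi$ convergence argument for $G_{0,\eta_n} - G_{0,\eta_0}$ and in the transfer of the deterministic little-$o$ statement on $\Gamma_{0,\eta}$ to the random argument $\psi_n$ via localization.
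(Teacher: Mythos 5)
Your proof is correct and takes essentially the same route as the paper's: first consistency of $\psi_n$ via the well-separation of $\psi_0$ combined with the uniform convergence of $G_{0,\eta_n} - G_{0,\eta_0}$ on compacts (your decomposition $G_{0,\eta_n}(\psi_0) + [G_{0,\eta_n}'(\psi_0) + 1](\psi - \psi_0) + [\Gamma_{0,\eta_n}(\psi) - \Gamma_{0,\eta_0}(\psi)]$ is literally the one the paper uses, just noting $G_{0,\eta_0}'(\psi_0) = -1$), then the root-$n$ rate from the same linearization with $\|\d{G}_n\|_{\s{F}} = O_{P_0^*}(1)$, and finally the three-term decomposition of $R_n$. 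The only difference is organizational: the paper factors the consistency step out as a standalone lemma (its Lemma on consistency of estimating-equations estimators) so it can be reused in the bootstrap versions, whereas you carry it out inline.
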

As in Theorem~5.31 of \cite{van2000asymptotic}, the assumption that $\psi_0$ is a well-separated solution of $\psi \mapsto P_0\phi_{\psi,\eta_0}$ is used to establish consistency of $\psi_n$, and the differentiability assumption is used to ensure that $n^{1/2}(\psi_n - \psi_0)$ can be linearized. The requirement that $G_{0, \eta_0}'(\psi_0) = -1$ is not present in Theorem~5.31 of \cite{van2000asymptotic}; this is because we are assuming the estimating function \emph{is} the influence function, which requires proper scaling. Theorem~5.31 also permits that the  ``drift term" $P_0 \phi_{\psi_0, \eta_n}$ contributes to the asymptotic distribution of $n^{1/2} (\psi_n - \psi_0)$, while we assume it is negligible since our goal is to establish asymptotic linearity of $\psi_n$ with influence function $\phi_0$. Negligibility of this term is implied by a sufficient rate of convergence of $\eta_n$ to $\eta_0$ if $\eta\mapsto P_0 \phi_{\psi_0, \eta}$ is differentiable near $\eta_0$ in an appropriate sense with derivative map equal to zero \citep{chernozhukov2018double}. However, this is not always the case, and when it fails, the requirement that $P_0 \phi_{\psi_0, \eta_n} = o_{\prob_0^*}(n^{-1/2})$ requires under-smoothing or otherwise targeting $\eta_n$. Hence, estimating equations do not always sufficiently control the asymptotic bias, unlike the one-step estimator. We will see an example of this in Section~\ref{sec: applications}.

We define the bootstrap estimating equations-based estimator as $\psi_n^* = T(\eta_n^*, \d{P}_n^*)$; i.e., $\psi_n^*$ is the solution to $G_{n,\eta_n^*}^*(\psi) := \d{P}_n^* \phi_{\psi, \eta_n^*} = 0$. As above, it is sufficient that $G_{n,\eta_n^*}^*(\psi_n^*) = o_{\prob_W^*}(n^{-1/2})$.  We provide separate sufficient conditions for~\ref{cond: bootstrap second order} for the empirical and smooth bootstraps. First, for the empirical bootstrap $\hat{P}_n = \d{P}_n$, we have $R_n^* = \psi_n^* - \psi_n + \d{P}_n \phi_{\psi_n^*, \eta_n^*}$, and we have the following result.
\begin{restatable}{lemma}{lemmaempbootstrapestimating}\label{lemma: empirical bootstrap estimating-equations}
    Let $\hat{P}_n = \d{P}_n$ be the empirical bootstrap. Suppose the conditions of \Cref{lemma: classical estimating-equations} hold, condition~\ref{cond: bootstrap limited complexity} holds for $\phi_n^* = \phi_{\psi_n^*, \eta_n^*}$, $\psi_n^*= O_{\prob_W^*}(1)$, $\sup_{|\psi|\leq M} | \Gamma_{0,\eta_n^*}(\psi) - \Gamma_{0,\eta_0}(\psi_0)| = o_{\prob_W^*}(1)$ for every $M > 0$, $\|\eta_n^* - \eta_0\|_{\s{H}} = o_{\prob_W^*}(1)$, and $P_0 \phi_{\psi_0, \eta_n^*}  = o_{\prob_W^*}(1)$. If $P_0 \phi_{\psi_0, \eta_n^*}  = o_{\prob_W^*}(n^{-1/2})$, then~\ref{cond: bootstrap second order} holds for the estimating equations-based estimator. If  $P_0 \phi_{\psi_0, \eta_n^*} - 2P_0 \phi_{\psi_0, \eta_n}  = o_{\prob_W^*}(n^{-1/2})$, then $R_n^* - R_n = o_{\prob_W^*}(n^{-1/2})$ holds for the estimating equations-based estimator.
\end{restatable}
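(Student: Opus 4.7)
The plan is to adapt the proof of \Cref{lemma: classical estimating-equations} so that every ``$\prob_0^*$'' is replaced by ``$\prob_W^*$'' and the linearization arguments are carried out conditionally on the original data, exploiting the empirical-bootstrap identity $\d{P}_n^* - P_0 = n^{-1/2}(\d{G}_n^* + \d{G}_n)$. The proof has three stages: conditional consistency of $\psi_n^*$, a conditional linearization of $\psi_n^* - \psi_0$, and a direct computation of $R_n^*$ and $R_n^* - R_n$ by combining this linearization with its classical analogue.

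First I would show $\psi_n^* - \psi_0 = o_{\prob_W^*}(1)$ via the well-separated property. Writing
\[
    G_{0,\eta_0}(\psi_n^*) = \d{P}_n^* \phi_{\psi_n^*, \eta_n^*} - (\d{P}_n^* - P_0)\phi_{\psi_n^*, \eta_n^*} + \left[G_{0,\eta_0}(\psi_n^*) - G_{0,\eta_n^*}(\psi_n^*)\right],
\]
the first term is $o_{\prob_W^*}(n^{-1/2})$ by hypothesis; the second is $n^{-1/2}(\d{G}_n + \d{G}_n^*)\phi_{\psi_n^*,\eta_n^*} = o_{\prob_W^*}(1)$ since $\phi_{\psi_n^*,\eta_n^*} \in \s{F}$ conditionally with high probability and both $\d{G}_n$ (by \ref{cond: limited complexity}) and $\d{G}_n^*$ (by \ref{cond: bootstrap limited complexity}) are asymptotically tight over $\s{F}$; and the third, after a Taylor-style expansion about $\psi_0$ using $\Gamma_{0,\eta_0}(\psi_0) = 0$ and $G_{0,\eta_0}'(\psi_0) = -1$, is bounded by $|P_0\phi_{\psi_0, \eta_n^*}| + \sup_{|\psi|\leq M}|\Gamma_{0,\eta_n^*}(\psi)| + |G_{0,\eta_n^*}'(\psi_0) + 1|\cdot|\psi_n^* - \psi_0|$, each summand being $o_{\prob_W^*}(1)$ on the event $\{|\psi_n^*| \leq M\}$ by the hypotheses. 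The well-separated property then delivers conditional consistency.

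With consistency established, the differentiability hypotheses give
\[
    G_{0,\eta_n^*}(\psi_n^*) = P_0 \phi_{\psi_0, \eta_n^*} - (\psi_n^* - \psi_0) + o_{\prob_W^*}(|\psi_n^* - \psi_0|),
\]
while $G_{0,\eta_n^*}(\psi_n^*) = -(\d{P}_n^* - P_0)\phi_{\psi_n^*, \eta_n^*} + o_{\prob_W^*}(n^{-1/2})$. Applying a conditional version of Lemma~19.24 of \cite{van2000asymptotic} to $\d{G}_n$ and \Cref{lemma: boot_ran} to $\d{G}_n^*$---both using $\phi_{\psi_n^*,\eta_n^*} \in \s{F}$ conditionally with high probability and $\|\phi_{\psi_n^*,\eta_n^*} - \phi_0\|_{L_2(P_0)} = o_{\prob_W^*}(1)$, which follows from the consistency of $(\psi_n^*, \eta_n^*)$ and continuity of $\phi$---yields $(\d{P}_n^* - P_0)(\phi_{\psi_n^*,\eta_n^*} - \phi_0) = o_{\prob_W^*}(n^{-1/2})$. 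Rearranging gives the linearization
\[
    \psi_n^* - \psi_0 = (\d{P}_n^* - P_0)\phi_0 + P_0\phi_{\psi_0, \eta_n^*} + o_{\prob_W^*}(n^{-1/2}) + o_{\prob_W^*}(|\psi_n^* - \psi_0|),
\]
and either hypothesized bias condition (together with the classical bias $P_0\phi_{\psi_0,\eta_n} = o_{\prob_0^*}(n^{-1/2})$) pins down $\psi_n^* - \psi_0 = O_{\prob_W^*}(n^{-1/2})$, absorbing the $o(|\cdot|)$ remainder.

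Finally, using $\d{P}_n^*\phi_{\psi_n^*,\eta_n^*} = o_{\prob_W^*}(n^{-1/2})$, the bootstrap remainder simplifies to $R_n^* = \psi_n^* - \psi_n + \d{P}_n\phi_{\psi_n^*,\eta_n^*} + o_{\prob_W^*}(n^{-1/2})$. Expanding $\psi_n^* - \psi_n$ with the Step 2 linearization and the analogous classical linearization $\psi_n - \psi_0 = (\d{P}_n - P_0)\phi_0 + P_0\phi_{\psi_0, \eta_n} + o_{\prob_0^*}(n^{-1/2})$, and expanding $\d{P}_n\phi_{\psi_n^*,\eta_n^*} = (\d{P}_n - P_0)\phi_0 + P_0\phi_{\psi_0, \eta_n^*} - (\psi_n^* - \psi_0) + o_{\prob_W^*}(n^{-1/2})$ via a further Lemma~19.24 application and the linearization of $G_{0,\eta_n^*}(\psi_n^*)$, the $(\d{P}_n - P_0)\phi_0$ and $(\d{P}_n^* - P_0)\phi_0$ pieces cancel and one is left with
\[
    R_n^* = P_0\phi_{\psi_0, \eta_n^*} - P_0\phi_{\psi_0, \eta_n} + o_{\prob_W^*}(n^{-1/2}), \qquad R_n = P_0\phi_{\psi_0, \eta_n} + o_{\prob_0^*}(n^{-1/2}).
\]
Under the classical bias assumption the first identity yields $R_n^* = P_0\phi_{\psi_0, \eta_n^*} + o_{\prob_W^*}(n^{-1/2})$, from which \ref{cond: bootstrap second order} follows under $P_0\phi_{\psi_0,\eta_n^*} = o_{\prob_W^*}(n^{-1/2})$; subtracting gives $R_n^* - R_n = P_0\phi_{\psi_0,\eta_n^*} - 2P_0\phi_{\psi_0, \eta_n} + o_{\prob_W^*}(n^{-1/2})$, delivering the second claim. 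I expect the main obstacle to be the conditional Lemma~19.24 step: the classical lemma is stated for a sequence whose randomness comes solely from the original sample, so some care (likely a partition over the events $\{\phi_{\psi_n^*,\eta_n^*} \in \s{F}\}$ and $\{\|\phi_{\psi_n^*,\eta_n^*} - \phi_0\|_{L_2(P_0)} < \delta\}$, combined with asymptotic equicontinuity of $\d{G}_n$ over $\s{F}$) is needed to transfer it to the doubly-random function $\phi_{\psi_n^*,\eta_n^*}$ while controlling outer probabilities under $\prob_W^*$.
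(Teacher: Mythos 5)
Your proof is correct and follows essentially the same route as the paper's: establish conditional consistency of $\psi_n^*$ via the well-separated solution property, linearize $G_{0,\eta_n^*}$ at $\psi_n^*$ using the differentiability hypotheses, derive the $O_{\prob_W^*}(n^{-1/2})$ rate, and then kill the empirical-process remainders via a conditional extension of Lemma~19.24 together with \Cref{lemma: boot_ran}. The only cosmetic difference is in the final bookkeeping: you write out explicit linearizations of $\psi_n^* - \psi_0$, $\psi_n - \psi_0$, and $\d{P}_n\phi_{\psi_n^*,\eta_n^*}$ and substitute, whereas the paper organizes the identity
\[
\psi_n^* - \psi_n - (\d{P}_n^* - \d{P}_n)\phi_{\psi_n^*,\eta_n^*}
= [\psi_n^* - \psi_0 + P_0\phi_{\psi_n^*,\eta_n^*}] - [\psi_n - \psi_0 + P_0\phi_{\psi_n,\eta_n}]
+ (\d{P}_n - P_0)(\phi_{\psi_n^*,\eta_n^*} - \phi_{\psi_n,\eta_n}) + \d{P}_n\phi_{\psi_n,\eta_n} - \d{P}_n^*\phi_{\psi_n^*,\eta_n^*}
\]
and treats each bracket as a ``second-order remainder'' to be bounded separately. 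Algebraically the two are the same; the paper's arrangement has the minor advantage of making the role of each term (remainder, empirical-process term, bias term) visible at a glance, and of deferring the conditional consistency argument to the standalone \Cref{lemma: consistency of ee}, which you reprove inline with a Donsker (rather than Glivenko--Cantelli) tightness argument — that also works, since Donsker implies Glivenko--Cantelli. You correctly flag the one genuinely delicate step, transferring Lemma~19.24 of \cite{van2000asymptotic} to the doubly-random function $\phi_{\psi_n^*,\eta_n^*}$ via $\prob_W^*$-high-probability events and asymptotic equicontinuity of $\d{G}_n$; the paper handles this with the same idea, described tersely as ``a minor modification of Lemma~19.24.''
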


As discussed above, $P_0 \phi_{\psi_0, \eta_n^*}  = o_{\prob_W^*}(n^{-1/2})$ is sometimes implied by a sufficient rate of convergence of $\eta_n^*$ to $\eta_0$. In particular, if $\eta_n^* = \eta_n$, then this condition is implied by $P_0\phi_{\psi_0, \eta_n} = o_{\prob_0^*}(n^{-1/2})$. Hence, similar to \Cref{cor: bootstrap one-step fixed} for the bootstrap one-step estimator with fixed nuisance estimator, the empirical bootstrap estimating equations-based estimator with fixed nuisance parameter is consistent under the same conditions used for asymptotic linearity of the estimating equations-based estimator. \Cref{lemma: empirical bootstrap estimating-equations} also shows that $R_n^* - R_n = o_{\prob_W^*}(n^{-1/2})$ can hold even when $P_0\phi_{\psi_0, \eta_n}$ is not $o_{\prob_0^*}(n^{-1/2})$ or $P_0 \phi_{\psi_0, \eta_n^*}$ is not $o_{\prob_W^*}(n^{-1/2})$, as long as $P_0 \phi_{\psi_0, \eta_n^*}$ is within $o_{\prob_W^*}(n^{-1/2})$ of $2P_0 \phi_{\psi_0, \eta_n}$. This is analogous to discussions about validity of bootstrap percentile intervals above.

Consistency of the empirical bootstrap for estimating equations-based estimators was also studied in \cite{wellner1996bootstrapping} and \cite{cheng2010bootstrap}. \cite{wellner1996bootstrapping} generalized Theorem~3.3.1 of \cite{van1996weak} to weighted bootstraps. \cite{cheng2010bootstrap} focused on the case of a semiparametric model where $\psi_n^*$ solves a profile bootstrap estimating equation. As discussed above, our results can be used in this context, but also permit $\eta_n^*$ to be constructed in a different manner than $\eta_n$, and can be applied in nonparametric models as well. In addition, \Cref{lemma: empirical bootstrap estimating-equations} also addresses consistency of certain bootstrap confidence intervals without conditional weak convergence, which was not studied in these earlier papers.

The next result provides sufficient conditions for~\ref{cond: bootstrap second order} for the bootstrap estimating equations-based estimator that is applicable to non-empirical bootstrap distributions. We denote $\hat\psi_n := T(\hat{\eta}_n, \hat{P}_n)$. By definition, we have $R_n^* = \psi_n^* - \psi_n^\circ + \hat{P}_n \phi_{\psi_n^*, \eta_n^*}$, where $\psi_n^\circ := T(\eta_n, \hat{P}_n)$. Below, we define $\hat{G}_{n, \eta}(\psi) := \hat{P}_n \phi_{\psi, \eta}$.

\begin{restatable}{lemma}{lemmasmobootstrapestimating}\label{lemma: smooth bootstrap estimating-equations}
    Let $\hat{P}_n$ be the smooth bootstrap. Suppose the conditions of \Cref{lemma: classical estimating-equations} hold, condition~\ref{cond: bootstrap limited complexity} holds for $\phi_n^* = \phi_{\psi_n^*, \eta_n^*}$, $\psi_n^*= O_{\prob_W^*}(1)$, $\|\eta_n^* - \eta_0\|_{\s{H}} = o_{\prob_W^*}(1)$, $\sup_{|\psi| \leq M} \left| \Gamma_{0, \hat\eta_n}(\psi) - \Gamma_{0, \eta_0}(\psi) \right| = o_{\prob_0^*}(1)$ for every $M > 0$, there exists a map $\hat{G}_{n, \eta}': \d{R} \to \d{R}$ such that $\hat\Gamma_{n,\eta} : \psi \mapsto \hat{G}_{n, \eta}(\psi) - \hat{G}_{n, \eta}(\hat\psi_n) - \hat{G}_{n, \eta}'(\hat\psi_n) (\psi - \hat\psi_n)$ satisfies $\sup_{|\psi|\leq M} | \hat\Gamma_{n, \eta_n^*}(\psi) - \hat\Gamma_{n, \hat\eta_n}(\hat\psi_n)| = o_{\prob_W^*}(1)$ for every $M > 0$ and $\sup_{\eta: \|\eta - \hat\eta_n\|_{\s{H}} < \delta} |\hat\Gamma_{n, \eta}(\psi) | = o_{\prob_0^*}( \psi - \hat\psi_n)$ for some $\delta > 0$, $\hat{G}_{n, \eta}'$ satisfies $\hat{G}_{n, \eta_n^*}'(\hat\psi_n) + 1 = o_{\prob_W^*}(1)$, there exists a $P_0$-Glivenko Cantelli class $\s{F}$ such that $\phi_{\hat\psi_n, \hat\eta_n}$ is contained in $\s{F}$ with probability tending to one, $\phi_{\psi_n^*, \eta_n^*}$ is contained in $\s{F}$ with conditional probability tending to one, and  $\| \hat{P}_n - P_0\|_{\s{F}} =o_{\prob_W^*}(1)$, $\hat\psi_n= O_{\prob_0^*}(1)$, $P_0 \phi_{\psi_0, \hat\eta_n} = o_{\prob_0^*}(1)$, $\| \hat\eta_n - \eta_0\|_{\s{H}} = o_{\prob_0^*}(1)$,  and $\psi_n^\circ - \hat\psi_n = o_{\prob_0^*}(n^{-1/2})$. If $\hat{P}_n \phi_{\hat\psi_n, \eta_n^*}  = o_{\prob_W^*}(n^{-1/2})$, then~\ref{cond: bootstrap second order} holds for the estimating equations-based estimator. If  $\hat{P}_n \phi_{\hat\psi_n, \eta_n^*} - P_0 \phi_{\psi_0, \eta_n}  = o_{\prob_W^*}(n^{-1/2})$, then $R_n^* - R_n = o_{\prob_W^*}(n^{-1/2})$ holds for the estimating equations-based estimator.
\end{restatable}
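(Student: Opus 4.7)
The plan is to mirror the proof of Lemma~\ref{lemma: classical estimating-equations}, but carry out every step conditionally on the data: $\hat{P}_n$ plays the role of $P_0$ and $\hat\psi_n$ plays the role of $\psi_0$, and the transfer from centering at $\hat\psi_n$ to the required centering at $\psi_n^\circ$ is free courtesy of the hypothesis $\psi_n^\circ - \hat\psi_n = o_{\prob_0^*}(n^{-1/2})$. First I would establish conditional consistency $\psi_n^* - \hat\psi_n = o_{\prob_W^*}(1)$ by the standard M-estimator argument: \ref{cond: bootstrap limited complexity}(b) controls $(\d{P}_n^* - \hat{P}_n)$-fluctuations over $\s{F}$ uniformly, the Glivenko--Cantelli hypothesis controls $(\hat{P}_n - P_0)$-fluctuations, and well-separatedness of $\psi_0$ combined with $\hat\psi_n \to \psi_0$ (which follows by applying Lemma~\ref{lemma: classical estimating-equations} to $\hat\psi_n$ itself) pins down the limit.

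Next I expand the bootstrap estimating equation around $\hat\psi_n$. Writing $\d{P}_n^* \phi_{\psi_n^*, \eta_n^*} = \hat{G}_{n, \eta_n^*}(\psi_n^*) + n^{-1/2}\d{G}_n^* \phi_{\psi_n^*, \eta_n^*}$, for the empirical-process piece I apply Lemma~\ref{lemma: boot_ran} with semimetric $\rho(f,g) = \|f - g\|_{L_2(P_0)}$ and $\phi_\infty = \phi_0$---justified by \ref{cond: bootstrap limited complexity}(a) and by $\|\phi_{\psi_n^*, \eta_n^*} - \phi_0\|_{L_2(P_0)} = o_{\prob_W^*}(1)$ (implied by $\psi_n^* \to \psi_0$, $\eta_n^* \to \eta_0$, and continuity of $(\psi,\eta) \mapsto \phi_{\psi, \eta}$)---to conclude $n^{-1/2}\d{G}_n^* \phi_{\psi_n^*, \eta_n^*} = n^{-1/2}\d{G}_n^*\phi_0 + o_{\prob_W^*}(n^{-1/2}) = O_{\prob_W^*}(n^{-1/2})$. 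For the $\hat{G}$-piece, the $\hat\Gamma$-differentiability hypothesis together with $\hat{G}_{n, \eta_n^*}'(\hat\psi_n) = -1 + o_{\prob_W^*}(1)$ gives
\[
\hat{G}_{n, \eta_n^*}(\psi_n^*) = \hat{P}_n \phi_{\hat\psi_n, \eta_n^*} - (\psi_n^* - \hat\psi_n)\bigl(1 + o_{\prob_W^*}(1)\bigr) + \hat\Gamma_{n, \eta_n^*}(\psi_n^*),
\]
where $\hat\Gamma_{n, \eta_n^*}(\psi_n^*) = o_{\prob_W^*}(|\psi_n^* - \hat\psi_n|)$ because $\|\eta_n^* - \hat\eta_n\|_{\s{H}} \le \|\eta_n^* - \eta_0\|_{\s{H}} + \|\hat\eta_n - \eta_0\|_{\s{H}} = o_{\prob_W^*}(1)$. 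Combining the two pieces yields the linear expansion
\[
\psi_n^* - \hat\psi_n = \hat{P}_n \phi_{\hat\psi_n, \eta_n^*} + O_{\prob_W^*}(n^{-1/2}),
\]
from which the bootstrap $\sqrt n$-rate $\psi_n^* - \hat\psi_n = O_{\prob_W^*}(n^{-1/2})$ follows under either drift hypothesis of the lemma (using in the second case that $P_0 \phi_{\psi_0, \eta_n} = o_{\prob_0^*}(n^{-1/2})$ by hypothesis of the classical lemma, so that $\hat{P}_n \phi_{\hat\psi_n, \eta_n^*} = o_{\prob_W^*}(n^{-1/2})$ a fortiori).

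Finally I assemble $R_n^*$. Substituting $\d{P}_n^* \phi_{\psi_n^*, \eta_n^*} = o_{\prob_W^*}(n^{-1/2})$ into its definition reduces it to $R_n^* = \psi_n^* - \psi_n^\circ + \hat{P}_n \phi_{\psi_n^*, \eta_n^*} + o_{\prob_W^*}(n^{-1/2})$. Re-expanding $\hat{P}_n \phi_{\psi_n^*, \eta_n^*}$ via the same $\hat\Gamma$-differentiability collapses the pair $(\psi_n^* - \hat\psi_n) + \hat{P}_n \phi_{\psi_n^*, \eta_n^*}$ to $\hat{P}_n \phi_{\hat\psi_n, \eta_n^*}$ modulo $o_{\prob_W^*}(|\psi_n^* - \hat\psi_n|) = o_{\prob_W^*}(n^{-1/2})$, and adding the centering discrepancy $\hat\psi_n - \psi_n^\circ = o_{\prob_0^*}(n^{-1/2})$ gives $R_n^* = \hat{P}_n \phi_{\hat\psi_n, \eta_n^*} + o_{\prob_W^*}(n^{-1/2})$, which proves \ref{cond: bootstrap second order} under the first drift hypothesis. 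For the $R_n^* - R_n$ claim, the analogous linearization applied to $R_n$ through Lemma~\ref{lemma: classical estimating-equations} gives $R_n = P_0 \phi_{\psi_0, \eta_n} + o_{\prob_0^*}(n^{-1/2})$, so $R_n^* - R_n = [\hat{P}_n \phi_{\hat\psi_n, \eta_n^*} - P_0 \phi_{\psi_0, \eta_n}] + o_{\prob_W^*}(n^{-1/2})$, which is $o_{\prob_W^*}(n^{-1/2})$ under the weaker drift hypothesis.

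The main obstacle will be bookkeeping of the two probability regimes: several hypotheses live in $\prob_0^*$ (e.g.\ $\psi_n^\circ - \hat\psi_n = o_{\prob_0^*}(n^{-1/2})$, $\|\hat\eta_n - \eta_0\|_{\s{H}} = o_{\prob_0^*}(1)$) while others live in $\prob_W^*$, and each step must invoke the implication $o_{\prob_0^*}(r_n) \Rightarrow o_{\prob_W^*}(r_n)$ on the appropriate event. A secondary subtlety is that the $\hat\Gamma$-differentiability is centered at the random point $\hat\psi_n$ rather than the deterministic $\psi_0$, so translating between the $(P_0, \psi_0)$- and $(\hat{P}_n, \hat\psi_n)$-centered linearizations---in particular, identifying the derivative coefficient $\hat{G}_{n, \eta_n^*}'(\hat\psi_n) \approx -1$ rigorously---must be done carefully to avoid circularity.
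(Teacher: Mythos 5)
Your plan mirrors the paper's proof exactly: establish conditional consistency of $\psi_n^*$ and consistency of $\hat\psi_n$, use the $\hat\Gamma$-differentiability centered at $(\hat{P}_n, \hat\psi_n)$ to linearize the bootstrap estimating equation and extract the $O_{\prob_W^*}(n^{-1/2})$ rate, then decompose $R_n^*$ (resp.\ $R_n^* - R_n$) to isolate $\hat{P}_n\phi_{\hat\psi_n, \eta_n^*}$ (resp.\ $\hat{P}_n\phi_{\hat\psi_n,\eta_n^*} - P_0\phi_{\psi_0,\eta_n}$) up to terms that vanish via the linearization, the centering hypothesis $\psi_n^\circ - \hat\psi_n = o_{\prob_0^*}(n^{-1/2})$, and the defining equation $\d{P}_n^*\phi_{\psi_n^*,\eta_n^*} = o_{\prob_W^*}(n^{-1/2})$. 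One small imprecision: your invocation of Lemma~\ref{lemma: boot_ran} to replace $\d{G}_n^*\phi_{\psi_n^*,\eta_n^*}$ by $\d{G}_n^*\phi_0$ requires an $L_2(P_0)$-continuity of $(\psi,\eta)\mapsto\phi_{\psi,\eta}$ that the lemma never assumes; it is also unnecessary, since the rate argument only needs $\d{G}_n^*\phi_{\psi_n^*,\eta_n^*} = O_{\prob_W^*}(1)$, which follows from condition~\ref{cond: bootstrap limited complexity} alone.
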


\Cref{lemma: smooth bootstrap estimating-equations} requires $\hat{P}_n \phi_{\hat\psi_n, \eta_n^*} = o_{\prob_W^*}(n^{-1/2})$ for~\ref{cond: bootstrap second order}, which as discussed above is sometimes implied by sufficient rates of convergence of $\eta_n^* - \hat\eta_n$. However, as in previous results, $R_n^* - R_n = o_{\prob_W^*}(n^{-1/2})$ can hold even if $R_n^*$ is not $o_{\prob_W^*}(n^{-1/2})$ and/or $R_n$ is not $o_{\prob_0^*}(n^{-1/2})$. The conditions $\|\hat{P}_n -P_0\|_{\s{F}} = o_{\prob_0^*}(1)$ and $\prob_0^* ( \phi_{\hat\psi_n, \hat\eta_n} \in \s{F} ) \to 1$ are used to guarantee conditional consistency of $(\psi_n^*, \eta_n^*)$. As discussed in Section~\ref{sec:bootstrap_dist_condition}, if $\hat{P}_n$ is a smoothing through convolution estimator, then $\|\hat{P}_n - P_0\|_{\s{F}} = o_{\prob_0^*}(1)$ is implied by the conditions of Proposition~\ref{prop: weak convergence of convolution smooth bootstrap}. Lastly, \Cref{lemma: smooth bootstrap estimating-equations} requires $\psi_n^\circ - \hat\psi_n = o_{\prob_0^*}(n^{-1/2})$. If $\hat{P}_n$ is based on the original nuisance estimator $\eta_n$ so that $\hat\eta_n = \eta_n$, then $\psi_n^\circ = \hat\psi_n$. Otherwise, $\psi_n^\circ - \hat\psi_n = o_{\prob_0^*}(n^{-1/2})$ follows if $\prob_0^*(\phi_{\psi_n^\circ, \eta_n} \in \s{F}) \to 1$, $\|\phi_{\psi_n^\circ, \eta_n} - \phi_{\psi_0, \eta_0}\|_{L_2(P_0)} = o_{\prob_0^*}(1)$, and $n^{1/2}(\hat{P}_n - P_0)$ converges weakly to a tight measurable  limit in $\ell^\infty(\s{F})$. For the special case of $\hat{P}_n$ equal to the distribution corresponding to a kernel density estimator, Section~3.2 of \cite{gine2008uniform} and Theorem~10 of \cite{radulovic2009uniform} establish weak convergence of $n^{1/2}(\hat{P}_n - P_0)$ under conditions on the bandwidth and smoothness of functions $f \in \s{F}$.

\section{Applications of the general theory}\label{sec: applications}
\subsection{Average density value}\label{sec: av dens}

We now illustrate the use of our general results for two bootstrap strategies and three estimators of the \emph{average density value} parameter. This example has been used extensively as a test case for semiparametric theory and methods (see, e.g., \citealp{carone2018toward, cai2020highly}, and \citealp{cattaneo2022average}, among others). We suppose that $\s{X} \subseteq \d{R}^d$, and we let $\s{M}$ be the set of probability measures on $\s{X}$ dominated by Lebesgue measure $\lambda$. For $P \in \s{M}$ we let $\eta_P := \sd P / \sd\lambda$ be the Lebesgue density function of $P$. We then define the average density value parameter as $\psi(P) := \int_{\s{X}} \eta_P(x)^2 \sd x$. In this example, the nuisance parameter is the density function $\eta_P \in \s{H}:= \{h \in L_1(\lambda):  h\geq 0, \int_{\s{X}} h(x) \sd x = 1\}$. The nonparametric efficient influence function of $\psi$ is $\phi_P(x) := 2\eta_P(x) - 2\psi(P)$.  For any $\alpha := (\alpha_1, \dotsc, \alpha_d) \in \d{N}^d$ and $x \in \d{R}^d$, we define $|\alpha| := \sum_{i=1}^d \alpha_i$, $\alpha! := \prod_{i=1}^d \alpha_i$, and $x^{\alpha} := \prod_{i=1}^d x_i^{\alpha_i}$. For any suitable function $f: \d{R}^d \to \d{R}$, we denote $D^{\alpha} f := \frac{\partial^{\alpha} f}{\partial x_1^{\alpha_1} \cdots \partial x_d^{\alpha_d} }$. For deterministic sequences $r_n$ and $s_n$, we 
say $r_n \prec s_n$ if $r_n / s_n = o(1)$.

We consider three approaches to constructing an asymptotically linear estimator of $\psi_0$. We let $\eta_n$ be an estimator of the density $\eta_0$. First, we consider the one-step estimator discussed in Section~\ref{sec: one-step}. In this case, $T_1(\eta, P) = \psi(\eta) + P \phi_\eta = 2P \eta - \int \eta^2$, so that the one-step estimator is $\psi_{n, 1} := T_1(\eta_n, \d{P}_n) = 2\d{P}_n \eta_n - \int \eta_n^2$.  Second, we consider the plug-in estimator discussed in Section~\ref{sec: plug-in}. In this case, $T_2(\eta, P) = \int \eta(x)^2 \sd x$, so that the plug-in estimator is given by $\psi_{n, 2} := T_2(\eta_n, \d{P}_n) = \int \eta_n(x)^2 \sd x$. Third, we consider the mean of a nuisance-dependent function discussed in Section~\ref{sec: mean plug-in}. In this case, $g_\eta(x) := \eta(x)$ and $T_3(\eta, P) = P \eta$, so that the  the estimator is given by $\psi_{n, 3} := T_3(\eta_n, \d{P}_n) = \d{P}_n \eta_n$. We note that $\psi_{n, 3}$ can also be viewed as an estimating equations-based estimator (discussed in Section~\ref{sec: estimating-equations}) because it solves $\psi \mapsto \d{P}_n \phi_{\psi, \eta_n} = 2\d{P}_n \eta_n - 2\psi = 0$.

The next result uses Theorem~\ref{thm: classical} and the derivations in Section~\ref{sec: T examples} to provide conditions on $\eta_n$ under which these three estimators are asymptotically linear.

\begin{restatable}{prop}{propclassicalaverage}\label{prop: classical average}
   Suppose that $\eta_n$ falls in a $P_0$-Donsker class with probability tending to one, there exists $M \in (0,\infty)$ such that $ \|\eta_0\|_\infty < M$ and $\|\eta_n\|_\infty < M$ with probability tending to one, and $\norm{\eta_n - \eta_0}_{L_2(\lambda)} = o_{\prob_0^*}(n^{-1/4})$. Then $\psi_{n, 1}$ is asymptotically linear with influence function $\phi_0$.  If in addition $\int \eta_n^2 - \d{P}_n \eta_n = o_{\prob_0^*}(n^{-1/2})$, then $\psi_{n, 2}$ and $\psi_{n, 3}$ are asymptotically linear as well.
\end{restatable}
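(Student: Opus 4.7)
The plan is to verify conditions \ref{cond: limited complexity}--\ref{cond: second order} for each of the three constructions $T_1, T_2, T_3$ and invoke Theorem~\ref{thm: classical}. In all three cases the influence function is $\phi_0(x) = 2\eta_0(x) - 2\psi_0$, and $\phi_n$ has the form $2\eta_n(x) - 2\psi_n$ for the appropriate $\psi_n$; the only structural difference between the three estimators is the scalar ``centering'' used in $\phi_n$ and the form of $R_n$.

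For condition~\ref{cond: limited complexity}, I would construct a single Donsker class accommodating all three. Let $\s{H}_0$ be the Donsker class containing $\eta_n$ with probability tending to one, which we may intersect with the set of functions bounded by $M$ and augment by $\eta_0$ (this preserves the Donsker property). Take $\s{F} := \{2h - 2c : h \in \s{H}_0,\ |c| \leq 3M\}$. The class $\{2h : h\in \s{H}_0\}$ is Donsker, the parametric class of constants is Donsker, and their sum is Donsker with a uniform envelope. Since $|\int \eta_n^2| \leq M$ and $|\d{P}_n \eta_n|\leq M$ eventually, each of the three $\psi_n$ lies in $[-3M, 3M]$ with probability tending to one, so $\phi_n\in \s{F}$ eventually, and clearly $\phi_0\in \s{F}$. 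For condition~\ref{cond: weak consistency}, bound $\|\phi_n - \phi_0\|_{L_2(P_0)} \leq 2\|\eta_n - \eta_0\|_{L_2(P_0)} + 2|\psi_n - \psi_0|$; the first term is controlled by $\|\eta_n - \eta_0\|_{L_2(P_0)}^2 \leq M \|\eta_n - \eta_0\|_{L_2(\lambda)}^2 = \fasterthan(n^{-1/2})$, and consistency of $\psi_n$ follows from $\d{P}_n \eta_n = P_0\eta_n + O_{\prob_0}(n^{-1/2})$ via the Glivenko--Cantelli property of $\s{H}_0$ together with the same $L_2(\lambda)$ rate.

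The real content is the remainder calculation for~\ref{cond: second order}. For the one-step estimator, the derivation in Section~\ref{sec: one-step} gives
\[
    R_{n,1} \;=\; \psi(\eta_n) - \psi(\eta_0) + P_0 \phi_{\eta_n} \;=\; \int \eta_n^2 - \int \eta_0^2 + 2\int \eta_n\eta_0 - 2\int \eta_n^2 \;=\; -\|\eta_n - \eta_0\|_{L_2(\lambda)}^2,
\]
which is $\fasterthan(n^{-1/2})$ by hypothesis. For the plug-in estimator $T_2$, a direct expansion and cancellation yields
\[
    R_{n,2} \;=\; -\|\eta_n - \eta_0\|_{L_2(\lambda)}^2 \;+\; 2\Bigl[\int \eta_n^2 - \d{P}_n \eta_n\Bigr],
\]
and for the empirical mean plug-in estimator $T_3$, using that $\d{P}_n \eta_n$ is a scalar (so annihilated by $\d{P}_n - P_0$), the analogous computation gives
\[
    R_{n,3} \;=\; -\|\eta_n - \eta_0\|_{L_2(\lambda)}^2 \;+\; \Bigl[\int \eta_n^2 - \d{P}_n \eta_n\Bigr].
\]
Both are $\fasterthan(n^{-1/2})$ under the additional hypothesis.

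I do not anticipate any truly hard step; the exercise is primarily bookkeeping. The most informative part is the remainder comparison, which makes transparent why the one-step construction is asymptotically linear under only a nuisance rate condition, whereas the plug-in and empirical mean plug-in estimators additionally require control of the ``score'' term $\int \eta_n^2 - \d{P}_n \eta_n$, corresponding to the bias term $-\d{P}_n \phi_n$ discussed after Theorem~\ref{thm: classical}. This is where targeting or undersmoothing of $\eta_n$ would enter for $\psi_{n,2}$ and $\psi_{n,3}$.
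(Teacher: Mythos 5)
Your proposal is correct and follows essentially the same route as the paper: verify conditions (A1)--(A3) for each construction and invoke Theorem~\ref{thm: classical}, with the remainder identities $R_{n,1} = -\|\eta_n - \eta_0\|_{L_2(\lambda)}^2$ and $R_{n,k} = -\|\eta_n - \eta_0\|_{L_2(\lambda)}^2 + c_k\bigl[\int\eta_n^2 - \d{P}_n\eta_n\bigr]$ for $c_2 = 2$, $c_3 = 1$ matching the paper's computations exactly. The only surface differences are that you build the Donsker class $\s{F}$ explicitly where the paper states the preservation step in words, and that you center $\phi_n$ at $\psi_n$ whereas the paper centers at $\int\eta_n^2 = \psi(\eta_n)$ throughout; both constants are annihilated in $(\d{P}_n - P_0)\phi_n$ and both converge to $\psi_0$, so the choice is immaterial.
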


We now discuss the conditions of Proposition~\ref{prop: classical average} for the specific case where $\eta_n$ is a kernel density estimator (KDE) with bandwidth $h \in \d{R}$ (depending on $n$) and kernel function $K: \d{R}^d \to \d{R}$.  For KDEs, the Donsker condition is satisfied if $\eta_n$ falls in a class of functions with uniformly bounded partial derivatives up to order $\ell > d/2$ and $P_0$ satisfies a tail bound \citep[Example~19.9]{van2000asymptotic}. 
If $\eta_0$ is $m > d/2$ times differentiable,  $\int [D^{\alpha} \eta_0(x)]^2 \sd x < \infty$ for all $\alpha \in \d{N}^d$ with  $|\alpha| = m$ and $m$th order kernels are used, then $\norm{\eta_n - \eta_0}_{L_2(\lambda)} = O_{\prob_0^*}(\{nh^d\}^{-1/2} + h^{m})$. Hence, if $n^{-1/(2d)} \prec h \prec n^{-1/(4m)}$, 
then $\norm{\eta_n - \eta_0}_{L_2(\lambda)} = o_{\prob_0^*}(n^{-1/4})$. These conditions are satisfied if $h$ is selected at the optimal rate $h \propto n^{-1/(2m+d)}$, which yields $\norm{\eta_n - \eta_0}_{L_2(\lambda)} = O_{\prob_0^*}(n^{-m/(2m+d)})$.

Proposition~\ref{prop: classical average} requires the extra condition that $\int \eta_n^2 - \d{P}_n \eta_n = o_{\prob_0^*}(n^{-1/2})$ for asymptotic linearity of the plug-in and empirical mean plug-in estimators. This illustrates the excess bias sometimes incurred by these methods. For $\eta_n$ a KDE, if $\eta_0$ is $m$ times differentiable and $m$th order kernels are used, then $\int \eta_n^2 - \d{P}_n \eta_n = O_{\prob_0^*}(\{nh^d\}^{-1} + h^m)$. Hence, the condition is satisfied if $m > d$ and $n^{-1/(2d)} \prec h \prec n^{-1/(2m)}$. 
This requires that $h$ go to zero faster than the optimal rate $h \propto n^{-1/(2m+d)}$, which is why this method is known as \emph{under-smoothing}. It also requires more smoothness of $\eta_0$. If $\eta_0$ is only assumed to be $m \in (0, d]$ times differentiable, then there is no choice of $h$ that makes $\int \eta_n^2 - \d{P}_n \eta_n = o_{\prob_0^*}(n^{-1/2})$, so in this case it may not be possible to make the bias term for $\psi_{n, 2}$  or $\psi_{n, 3}$ asymptotically negligible using a standard KDE.

We now turn to methods of bootstrapping $\psi_{n,1}$, $\psi_{n,2}$, and $\psi_{n,3}$. We let $\eta_n^*$ be a bootstrap estimator of $\eta_0$, and we consider the bootstrap estimators $\psi_{n, 1}^* = T_1(\eta_n^*, \d{P}_n^*) = 2\d{P}_n^* \eta_n^* - \int \eta_n^{*2}$, $\psi_{n, 2}^* = T_2(\eta_n^*, \d{P}_n^*) = \int \eta_n^{*2}(x) \sd x$, and $\psi_{n, 3}^* = T_3(\eta_n^*, \d{P}_n^*) = \d{P}_n^* \eta_n^*$. The next result uses Theorem~\ref{thm: bootstrap} and the derivations in Section~\ref{sec: T examples} to provide conditions for conditional asymptotic linearity of these three estimators.

\begin{restatable}{prop}{propempbootstrapaverage}\label{prop: empirical bootstrap average}
    Suppose $\hat{P}_n = \d{P}_n$ is the empirical bootstrap and the assumptions of Proposition~\ref{prop: classical average} hold. For $\s{F}$ and $M$ defined in Proposition~\ref{prop: classical average}, suppose $\prob_W^*(\eta_n^* \in \s{F}) \inoutprob 1$, $\prob_W^*(\|\eta_n^*\|_\infty \geq M) = o_{\prob_0^*}(1)$, and $\norm{\eta_n^* - \eta_n}_{L_2(\lambda)} = o_{\prob_W^*}(n^{-1/4})$. Then $\psi_{n, 1}^* = \psi_{n,  1} + (\d{P}_n^* - \d{P}_n)\phi_0 + o_{\prob_W^*}(n^{-1/2})$. If in addition $\int \eta_n^{*2} - \d{P}_n^* \eta_n^* = o_{\prob_W^*}(n^{-1/2})$, then $\psi_{n, 2}^* = \psi_{n,  2} + (\d{P}_n^* - \d{P}_n)\phi_0 + o_{\prob_W^*}(n^{-1/2})$ and $\psi_{n, 3}^* = \psi_{n,  3} + (\d{P}_n^* - \d{P}_n)\phi_0 + o_{\prob_W^*}(n^{-1/2})$. If $\eta_n$ and $\eta_n^*$ are KDEs with the same kernel and bandwidth $h$ such that $n^{-1/(2d)} \prec h$, 
    then $\norm{\eta_n^* - \eta_n}_{L_2(\lambda)} = o_{\prob_W^*}(n^{-1/4})$. 
\end{restatable}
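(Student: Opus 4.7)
The plan is to apply Theorem~\ref{thm: bootstrap} to each $\psi_{n,j}^*$, $j \in \{1,2,3\}$; since $\hat{P}_n = \d{P}_n$, the centering $T(\eta_n, \hat{P}_n)$ in that theorem coincides with $\psi_{n,j}$ and its conclusion reduces to exactly the stated conditional asymptotic linearity. This reduces matters to verifying the bootstrap conditions~\ref{cond: bootstrap limited complexity}--\ref{cond: bootstrap second order}, and separately establishing the KDE rate claim.

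For~\ref{cond: bootstrap limited complexity} I would take the Donsker class $\s{F}$ from Proposition~\ref{prop: classical average}, enlarged if needed to be closed under the affine map $\eta \mapsto 2\eta - 2c$ for $c \in \d{R}$ (which preserves the Donsker property). Part~(b) is then immediate from Lemma~\ref{lemma: empirical bootstrap} since $\hat{P}_n$ is the empirical bootstrap, and part~(a) follows from the conditional containment $\eta_n^* \in \s{F}$ in the hypothesis. Condition~\ref{cond: bootstrap weak consistency} reduces, via the decomposition $\phi_n^* - \phi_0 = 2(\eta_n^* - \eta_0) - 2(\psi_n^* - \psi_0)$ and the bound $\|f\|_{L_2(P_0)} \leq M^{1/2}\|f\|_{L_2(\lambda)}$ from the boundedness of $\eta_0$, to showing $\|\eta_n^* - \eta_0\|_{L_2(\lambda)} = o_{\prob_W^*}(1)$, which follows from the triangle inequality $\|\eta_n^* - \eta_0\|_{L_2(\lambda)} \leq \|\eta_n^* - \eta_n\|_{L_2(\lambda)} + \|\eta_n - \eta_0\|_{L_2(\lambda)}$ and the posited rates.

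The heart of the proof is verifying~\ref{cond: bootstrap second order} via the decompositions of Section~\ref{sec: T examples}. For this parameter, a direct calculation yields the clean identity $\psi(\eta) - \psi(\eta_0) + P_0 \phi_\eta = -\|\eta - \eta_0\|_{L_2(\lambda)}^2$, so the first decomposition in~\eqref{eq: one step Rnstar} for the one-step estimator collapses to
\[
R_n^* = -\|\eta_n^* - \eta_0\|_{L_2(\lambda)}^2 + \|\eta_n - \eta_0\|_{L_2(\lambda)}^2 + 2(\d{P}_n - P_0)(\eta_n^* - \eta_n).
\]
The quadratic terms are $o_{\prob_W^*}(n^{-1/2})$ by squaring the $n^{-1/4}$ rates, and I would split the empirical process term as $n^{-1/2}[\d{G}_n(\eta_n^* - \eta_0) - \d{G}_n(\eta_n - \eta_0)]$ and handle it by asymptotic equicontinuity of $\d{G}_n$ over $\s{F}$, applying Lemma~19.24 of \cite{van2000asymptotic} to the second piece and its conditional analogue to the first. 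For $\psi_{n,2}^*$ and $\psi_{n,3}^*$ the same analysis carries through via~\eqref{eq: plugin Rnstar} and~\eqref{eq: mean plugin Rnstar}, but leaves the additional ``drift'' terms $\d{P}_n \phi_n$ and $\d{P}_n^* \phi_n^*$, which reduce in this model to $2(\int \eta_n^2 - \d{P}_n \eta_n)$ and $2(\int \eta_n^{*2} - \d{P}_n^* \eta_n^*)$; the former is $o_{\prob_0^*}(n^{-1/2})$ by Proposition~\ref{prop: classical average} and the latter is precisely the added hypothesis.

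Finally, the KDE claim is a short conditional variance computation: since $\eta_n^*$ and $\eta_n$ use the same kernel $K$ and bandwidth $h$, one has $\E_W^* \eta_n^*(x) = \eta_n(x)$ pointwise, and Fubini together with a change of variables gives
\[
\E_W^* \|\eta_n^* - \eta_n\|_{L_2(\lambda)}^2 = \int \mathrm{Var}_W(\eta_n^*(x))\sd x = \frac{\int K^2}{nh^d} - \frac{1}{n}\int \eta_n^2 = O_{\prob_0^*}\bigl((nh^d)^{-1}\bigr).
\]
The hypothesis $n^{-1/(2d)} \prec h$ makes $(nh^d)^{-1} = o(n^{-1/2})$, so a conditional Markov inequality delivers $\|\eta_n^* - \eta_n\|_{L_2(\lambda)} = o_{\prob_W^*}(n^{-1/4})$. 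The main technical obstacle I anticipate is the conditional empirical process bound $\d{G}_n(\eta_n^* - \eta_0) = o_{\prob_W^*}(1)$: because $\eta_n^*$ depends on both the original and bootstrap randomness, Lemma~19.24 of \cite{van2000asymptotic} does not apply verbatim, and I would derive it by conditioning on $X_1, \ldots, X_n$ and invoking asymptotic equicontinuity of $\d{G}_n$ over the $L_2(P_0)$-ball around $\eta_0$ that contains $\eta_n^*$ with conditional probability tending to one.
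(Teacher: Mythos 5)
Your proposal is correct and follows essentially the same route as the paper: apply Theorem~\ref{thm: bootstrap} with~\ref{cond: bootstrap limited complexity} from Lemma~\ref{lemma: empirical bootstrap} and the Donsker hypothesis, verify~\ref{cond: bootstrap weak consistency} via the $L_2$ bounds, decompose $R_n^*$ into the two quadratic remainders plus the cross-fitted empirical-process term $(\d{P}_n - P_0)(\eta_n^* - \eta_n)$ (your closed form $-\|\eta_n^* - \eta_0\|_{L_2(\lambda)}^2 + \|\eta_n - \eta_0\|_{L_2(\lambda)}^2$ is algebraically identical to the paper's $-\|\eta_n^* - \eta_n\|_{L_2(\lambda)}^2 - 2\int(\eta_n - \eta_0)(\eta_n^* - \eta_n)$), isolate the extra drift terms $\int\eta_n^2 - \d{P}_n\eta_n$ and $\int\eta_n^{*2} - \d{P}_n^*\eta_n^*$ for $T_2, T_3$, and finish the KDE rate by the same conditional variance computation and Markov's inequality. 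The equicontinuity argument you sketch for $\d{G}_n(\eta_n^* - \eta_0) = o_{\prob_W^*}(1)$ is the right fix and is exactly the level of rigor the paper implicitly uses.
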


Proposition~\ref{prop: empirical bootstrap average} requires that $\eta_n^*$ converge fast enough to $\eta_n$ conditional on the data. If $\eta_n^* = \eta_n$, then this condition holds automatically. Hence, the empirical bootstrap for the one-step estimator with fixed nuisance is consistent if the conditions of Proposition~\ref{prop: classical average} hold, as guaranteed by \Cref{cor: bootstrap one-step fixed}. If $\eta_n^*$ is estimated using the bootstrap sample, some care must be taken to ensure that $\norm{\eta_n^* - \eta_n}_{L_2(\lambda)} = o_{\prob_W^*}(n^{-1/4})$. If the bootstrap nuisance estimator is sensitive to ties in the data, this condition might not hold. However, Proposition~\ref{prop: empirical bootstrap average} demonstrates that for KDEs, fixing the bandwidth at the value selected by the original data, as suggested by \cite{hall2001bootstrapping},  \cite{cattaneo2022average}, and others, yields $\norm{\eta_n^* - \eta_n}_{L_2(\lambda)} = o_{\prob_W^*}(n^{-1/4})$. 

Proposition~\ref{prop: empirical bootstrap average} requires the extra condition  $\int \eta_n^{*2} - \d{P}_n^* \eta_n^* = o_{\prob_W^*}(n^{-1/2})$ for conditional asymptotic linearity of the empirical bootstrap plug-in and empirical mean estimators, which is analogous to the condition required by Proposition~\ref{prop: classical average} for asymptotic linearity of these estimators. If $\eta_n^* = \eta_n$, then this condition reduces to $\int \eta_n^2 - \d{P}_n^* \eta_n = o_{\prob_W^*}(n^{-1/2})$, which does not hold because we can write $n^{1/2}\left(\int \eta_n^2 - \d{P}_n^* \eta_n\right) = n^{1/2}\left(\int \eta_n^2 - \d{P}_n \eta_n\right) - \d{G}_n^* \eta_n$, and $\d{G}_n^* \eta_n$ converges weakly to a non-degenerate limit conditional on the data. Hence, fixing the bootstrap nuisance does not yield a conditionally asymptotically linear plug-in or empirical mean plug-in estimator for the empirical bootstrap for this parameter. If $\eta_0$ is $m > d/2$ times differentiable with $\int[D^{\alpha} \eta_0 (x)]^2 \sd x < \infty$ for all $|\alpha| = m$, $\prob_0^*(\eta_n \in \s{F}) \to 1$, and $\eta_n^*$ is a KDE based on the bootstrap data with deterministic bandwidth $h$ and $m$th order kernel functions, then $\int \eta_n^{*2} - \d{P}_n^* \eta_n^* = O_{\prob_W^*}(\{nh^d\}^{-1} + h^{m})$. Hence, $\int \eta_n^{*2} - \d{P}_n^* \eta_n^* = o_{\prob_W^*}(n^{-1/2})$ if $n^{-1/(2d)} \prec h \prec n^{-1/(2m)}$.
This again requires $m > d$ and under-smoothing the bootstrap nuisance estimator.

The next result provides conditions under which the empirical bootstrap percentile method is asymptotically valid.

\begin{restatable}{prop}{propautobiasempbootstrapaverage}\label{prop: auto bias correction}
    Suppose $\hat{P}_n = \d{P}_n$ is the empirical bootstrap, $\eta_0$ is uniformly bounded and $m$-times continuously differentiable with $\int[D^{\alpha} \eta_0 (x)]^2 \sd x < \infty$ for all $|\alpha| = m$, both $\eta_n$ and $\eta_n^*$ are KDEs with common symmetric $m$th order kernel function and common bandwidth $h$ such that $n^{-1/d} \prec h$.  
    If  $h \prec n^{-1/(4m)}$, 
    then bootstrap percentile  intervals based on $\psi_{n,1}^*$ are asymptotically valid. If $h \prec n^{-1/(2m)}$, 
    then bootstrap percentile intervals based on $\psi_{n,2}^*$ and $\psi_{n,3}^*$ are asymptotically valid. 
\end{restatable}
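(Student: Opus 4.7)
The plan is to apply \Cref{thm: perc bootstrap CI} for each of the three estimators. The moment conditions on $\hat{P}_n = \d{P}_n$ follow from the strong law of large numbers since $\phi_0 = 2\eta_0 - 2\psi_0$ is uniformly bounded. For the empirical-process condition $S_n^* - S_n = o_{\prob_W^*}(n^{-1/2})$, I would show $S_n = o_{\prob_0^*}(n^{-1/2})$ and $S_n^* = o_{\prob_W^*}(n^{-1/2})$ individually. Under $n^{-1/d} \prec h$, the standard bias--variance bound $\|\eta_n - \eta_0\|_{L_2(\lambda)} = O_{\prob_0^*}(h^m + (nh^d)^{-1/2}) = o(1)$, and analogously for $\|\eta_n^* - \eta_n\|_{L_2(\lambda)}$ conditionally, give $L_2(P_0)$-consistency of $\phi_n$ and $\phi_n^*$; combined with the Donsker class of $\ell$-smooth functions (with $\ell > d/2$) containing both KDEs under the assumed smoothness of $\eta_0$, Lemma~19.24 of \cite{van2000asymptotic} and its bootstrap counterpart \Cref{lemma: boot_ran} give the desired empirical-process control.

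The crux is verifying $R_n^* - R_n = o_{\prob_W^*}(n^{-1/2})$, which is where automatic bias correction appears: under the stated conditions, neither $R_n$ nor $R_n^*$ is individually $o(n^{-1/2})$, but the symmetry between $\eta_n$ and $\eta_n^*$ produces cancellations. For $\psi_{n,1}$, direct algebra gives $R_n = -\|\eta_n - \eta_0\|_{L_2(\lambda)}^2$ and $R_n^* = -\|V_n\|_{L_2(\lambda)}^2 + 2(\d{P}_n - \eta_n\lambda)V_n$, where $V_n := \eta_n^* - \eta_n$ and $\eta_n\lambda$ denotes the measure with Lebesgue density $\eta_n$. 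Writing $\bar\eta_n := \E_0\eta_n$, $b_n := \bar\eta_n - \eta_0$ (deterministic bias of order $h^m$), and $U_n := \eta_n - \bar\eta_n$ (stochastic component), the key identity $P_0\eta_n = \d{P}_n\bar\eta_n$, valid for symmetric kernels, yields the reduction $(\d{P}_n - P_0)V_n = (\d{P}_n^* - \d{P}_n)U_n = n^{-1/2}\d{G}_n^* U_n$. Expanding all squared norms and rearranging, establishing $R_n^* - R_n = o_{\prob_W^*}(n^{-1/2})$ reduces to: (a) $\|b_n\|^2 = O(h^{2m}) = o(n^{-1/2})$ under $h \prec n^{-1/(4m)}$; (b) bounding the cross terms $\langle b_n, U_n\rangle$, $\langle V_n, b_n\rangle$, and $\langle V_n, U_n\rangle$ by direct conditional variance calculations using Young's inequality $\|K_h \ast f\|_{L_2(\lambda)} \leq \|f\|_{L_2(\lambda)}$, each giving $o_{\prob_W^*}(n^{-1/2})$ under $h \prec n^{-1/(4m)}$ and $n^{-1/d} \prec h$; (c) $\|U_n\|^2 - \|V_n\|^2 = o_{\prob_W^*}(n^{-1/2})$, since both have leading mean $h^{-d}\|K\|_{L_2}^2/n$ differing by $(\|\eta_n\|^2 - \|\bar\eta_n\|^2)/n = o_{\prob_0^*}(n^{-1/2})$, with fluctuations of matching order under $n^{-1/d} \prec h$; and (d) $\d{G}_n^* U_n = o_{\prob_W^*}(1)$ since its conditional variance $\d{P}_n U_n^2 - (\d{P}_n U_n)^2 = O_{\prob_0^*}((nh^d)^{-1}) = o(1)$.

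For $\psi_{n,2}$ and $\psi_{n,3}$, the decomposition~\eqref{eq: plugin RnstarRn} and its empirical-mean analogue in Section~\ref{sec: mean plug-in} add the uncorrected-bias term $2\d{P}_n\phi_n - \d{P}_n^*\phi_n^*$ to the expression for $R_n^* - R_n$ obtained for $\psi_{n,1}$. Since $\d{P}_n\phi_n = 2(\d{P}_n\eta_n - \int\eta_n^2)$, the task reduces to showing $\d{P}_n^*\eta_n^* - \int\eta_n^{*2} - 2(\d{P}_n\eta_n - \int\eta_n^2) = o_{\prob_W^*}(n^{-1/2})$. Using $P_0\eta_n = \d{P}_n\bar\eta_n$ again, one may write $\d{P}_n\eta_n - \int\eta_n^2 = (\d{P}_n - P_0)U_n - \|\eta_n - \eta_0\|_{L_2(\lambda)}^2 - \int b_n\eta_0 \sd\lambda$ and analogously for the bootstrap; the unmatched component is the first-order bias $\int b_n\eta_0 \sd\lambda = O(h^m)$, which is $o(n^{-1/2})$ precisely under the tighter condition $h \prec n^{-1/(2m)}$ stated in the proposition. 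The remaining empirical-process and second-order differences are controlled by the same machinery as for $\psi_{n,1}$. The principal technical obstacle throughout is the delicate moment analysis for the kernel-dependent quantities $\|U_n\|^2 - \|V_n\|^2$ and $\d{G}_n^* U_n$, which relies essentially on the common kernel and bandwidth shared by $\eta_n$ and $\eta_n^*$ to achieve cancellations without requiring the stronger $n^{-1/(2d)} \prec h$ that individual negligibility of $R_n$ and $R_n^*$ would demand.
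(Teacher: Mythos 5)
Your proposal for the empirical-process term contains a genuine gap, and it is the kind that changes the admissible bandwidths rather than a routine technicality. You plan to show $S_n = o_{\prob_0^*}(n^{-1/2})$ and $S_n^* = o_{\prob_W^*}(n^{-1/2})$ \emph{individually}, invoking a Donsker class of $\ell$-smooth functions containing the KDEs and then Lemma~19.24 of \cite{van2000asymptotic} together with \Cref{lemma: boot_ran}. Two problems. First, the proposition does not posit a Donsker class, and one cannot manufacture one from the stated smoothness of $\eta_0$ alone: the $k$th derivative of $\eta_n$ scales like $h^{-k}$, so as $h \to 0$ the KDE escapes any fixed $C^\ell$-type ball. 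Second, and more decisively, the individual negligibility you seek is actually \emph{false} under the proposition's condition $n^{-1/d} \prec h$. Expanding $S_n = 2(\d{P}_n - P_0)(\eta_n - \eta_0)$ as a V-process, one finds a deterministic diagonal contribution $2 n^{-1}\tau_{f_1} = 2n^{-1}h^{-d}K(0)$ (see \Cref{cor: auto empirical}); this is of order $(nh^d)^{-1}$, which under $n^{-1/d}\prec h$ is $o(1)$ but need \emph{not} be $o(n^{-1/2})$ unless one has the strictly stronger $n^{-1/(2d)}\prec h$. That is precisely why \Cref{prop: empirical bootstrap average} requires $n^{-1/(2d)}\prec h$ and \Cref{prop: auto bias correction} can afford only $n^{-1/d}\prec h$. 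The paper handles this by never separating the two terms: \Cref{thm: auto empirical process} controls $S_n^*-S_n$ directly via V-process moment bounds (\Cref{lemma: V order 1}--\Cref{lemma: V order 3}, \Cref{cor: auto empirical}), and the cancellation of the two $n^{-1}\tau_{f_1}$ contributions is exactly what lets the weaker bandwidth condition through. Any proof that shows each of $S_n, S_n^*$ separately is $o(n^{-1/2})$ cannot be correct on the full bandwidth range claimed.

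Your treatment of $R_n^* - R_n$ is a genuinely different route from the paper's. You verify correctly that $R_n^* = 2(\d{P}_n - \eta_n\lambda)V_n - \|V_n\|_{L_2(\lambda)}^2$ and then decompose through $b_n = \bar\eta_n - \eta_0$ and $U_n = \eta_n - \bar\eta_n$, using the symmetry-of-kernel identity $P_0\eta_n = \d{P}_n\bar\eta_n$, which in turn yields $(\d{P}_n - P_0)V_n = (\d{P}_n^*-\d{P}_n)U_n$ (this identity is correct and is a nice observation). By contrast, the paper works entirely through the canonical V-process expansion with $f_1(x,y)=K_h(x,y)$, $f_2(x,y)=\int K_h(x,z)K_h(y,z)\,\sd z$, centering all empirical and bootstrap-empirical integrals at $P_0$ or $\d{P}_n$ and tracking the deterministic diagonals $n^{-1}\tau_f$ and the bias pieces via \Cref{lemma: h^m tool 1} (giving the $h^m$ and $h^{2m}$ thresholds for $\psi_{n,1}$ versus $\psi_{n,2},\psi_{n,3}$). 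Your bias/stochastic split buys a tidy way to see why the \emph{difference} behaves better than each remainder separately -- the dangerous $\|U_n\|^2$ and $\|V_n\|^2$ pieces share the same leading $n^{-1}h^{-d}$ term and cancel -- but the items you label (b)--(d) are only sketched, and the key observation (c) is doing exactly the cancellation that the paper packages inside \Cref{thm: auto mise}. If you carry your route through, you must still verify (c) and (d) by explicit moment calculations comparable in effort to the paper's Lemmas~\ref{lemma: V order 1}--\ref{lemma: V order 3}, and you must replace the individual-$S_n$ argument with a cancellation argument of the same kind for the empirical-process term.
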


The results of \Cref{prop: auto bias correction} agree with those of \cite{cattaneo2022average}, though they consider a different one-step estimator than ours. The conditions for the one-step estimator in \Cref{prop: auto bias correction} can be satisfied if $m > d/4$, and the conditions for the plug-in and empirical mean plug-in estimators can be satisfied if $m > d/2$. Both of these conditions are weaker than the conditions for (conditional) asymptotic linearity from \Cref{prop: empirical bootstrap average}, and the conditions for the one-step estimator are again weaker than the plug-in estimators. The conditions for the plug-in and empirical mean plug-in estimators still require undersmoothing. For the one-step estimator, if $m \in (d/4, d/2]$, then satisfying $h \prec n^{-1/(4m)}$ 
also requires under-smoothing.

In the common case that $\eta_0$ is assumed to be $m = 2$ times differentiable and a second-order kernel is used, the (empirical bootstrap) one-step estimator is (conditionally) asymptotically linear for $d \leq 3$ if $n^{-1/(2d)} \prec h \prec n^{-1/8}$, 
so that the optimal bandwidth may be used. However, bootstrap percentile intervals based on the one-step estimator are asymptotically valid for $d \leq 7$ as long as $n^{-1/d} \prec h \prec n^{-1/8}$, 
which requires under-smoothing for $4 \leq d \leq 7$, but not for $d \leq 3$. The (empirical bootstrap) plug-in and empirical mean plug-in estimators are (conditionally) asymptotically linear only for $d = 1$ if $n^{-1/2} \prec h \prec n^{-1/4}$, 
which requires under-smoothing. Empirical bootstrap percentile intervals based on the plug-in and empirical mean plug-in estimators are asymptotically valid for $d \leq 3$ if $n^{-1/d} \prec h \prec n^{-1/4}$, 
which again requires under-smoothing.

The next result provides conditions for conditional asymptotic linearity of the three bootstrap estimators when $\hat{P}_n$ is a smooth bootstrap distribution. 
\begin{restatable}{prop}{propsmobootstrapaverage}\label{prop: smooth bootstrap average}
    Suppose $\hat{P}_n$ possesses Lebesgue density function $\hat{\eta}_n$ and the assumptions of Proposition~\ref{prop: uniform donsker} and Proposition~\ref{prop: classical average} hold. For $\s{F}$ and $M$ defined in Proposition~\ref{prop: classical average}, assume that $\s{F}$ is a $\s{M}$-uniform Donsker class such that $\prob_W^*(\eta_n^* \in \s{F}) \inoutprob 1$ and  $\prob_W^*(\|\hat{\eta}_n\|_\infty \geq M) = o_{\prob_0^*}(1)$. If  $\norm{\eta_n^* - \hat{\eta}_n}_{L_2(\lambda)} = o_{\prob_W^*}(n^{-1/4})$ and $\norm{\hat{\eta}_n - \eta_n}_{L_2(\lambda)} = o_{\prob_0^*}(n^{-1/4})$, then $\psi_{n, 1}^* = \psi_{n, 1} + (\d{P}_n^* - \hat{P}_n)\phi_0 + o_{\prob_0^*}(n^{-1/2})$. If in addition $\int \eta_n^{*2} - \d{P}_n^* \eta_n^* = o_{\prob_W^*}(n^{-1/2})$ and $\int \eta_n^2 - \hat{P}_n \eta_n = o_{\prob_0^*}(n^{-1/2})$, then $\psi_{n, 2}^* = \psi_{n, 2} + (\d{P}_n^* - \hat{P}_n)\phi_0 + o_{\prob_0^*}(n^{-1/2})$ and $\psi_{n, 3}^* = \psi_{n, 3} + n^{1/2}(\d{P}_n^* - \hat{P}_n)\phi_0 + o_{\prob_0^*}(n^{-1/2})$.  
\end{restatable}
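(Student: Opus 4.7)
The plan is to apply Theorem~\ref{thm: bootstrap} separately to each of the three representations $T_1$, $T_2$, and $T_3$, verifying the bootstrap conditions~\ref{cond: bootstrap limited complexity}--\ref{cond: bootstrap second order} using the quadratic structure of the average density value functional, and then reconciling the resulting expansion (centered at $T_k(\eta_n,\hat{P}_n)$) with the form centered at $\psi_{n,k}$ given in the statement.

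Verification of condition~\ref{cond: bootstrap limited complexity}(a) is immediate from the hypothesis $\prob_W^*(\eta_n^* \in \s{F}) \inoutprob 1$, since each bootstrap influence function estimator $\phi_n^*$ is affine in $\eta_n^*$ and $\psi_n^*$. Part~(b), conditional weak convergence of $\d{G}_n^*$ to the $P_0$-Brownian bridge over $\s{F}$, follows from Proposition~\ref{prop: uniform donsker} once the metric-convergence display~\eqref{eq: convergence of semimetric} is established; Proposition~\ref{prop: weak convergence of convolution smooth bootstrap} supplies this using uniform boundedness and Lipschitz continuity of $p_0=\eta_0$. Condition~\ref{cond: bootstrap weak consistency} then follows from the triangle inequality
\[
    \|\phi_n^* - \phi_0\|_{L_2(P_0)} \leq C\left(\|\eta_n^* - \hat\eta_n\|_{L_2(\lambda)} + \|\hat\eta_n - \eta_n\|_{L_2(\lambda)} + \|\eta_n - \eta_0\|_{L_2(\lambda)} + |\psi_n^* - \psi_0|\right),
\]
where uniform boundedness of $\eta_0$ and $\eta_n^*$ allows one to pass from $L_2(\lambda)$ to $L_2(P_0)$; the assumed $o_{\prob_W^*}(n^{-1/4})$ and $o_{\prob_0^*}(n^{-1/4})$ rates then finish the step.

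The heart of the proof is condition~\ref{cond: bootstrap second order}, for which I would exploit the exact identity
\[
    \psi(\eta) - \psi(\eta') + P\phi_{\eta} = -\int (\eta - \eta')^2 \sd\lambda,
\]
valid whenever $P$ has Lebesgue density $\eta'$, so that every second-order remainder collapses to a squared $L_2(\lambda)$ distance. For $T_1$, the second decomposition in~\eqref{eq: one step Rnstar} yields $R_n^* = -\int(\eta_n^* - \hat\eta_n)^2\sd\lambda + \int(\eta_n - \hat\eta_n)^2\sd\lambda$, which is $o_{\prob_W^*}(n^{-1/2})$ by the quartic rate hypotheses. For $T_2$ and $T_3$, the analogous decompositions in~\eqref{eq: plugin Rnstar} and~\eqref{eq: mean plugin Rnstar} leave two residual bias pieces, $\int\eta_n^{*2} - \d{P}_n^*\eta_n^*$ and $\int\eta_n^2 - \hat{P}_n\eta_n$, which are precisely the quantities the extra hypotheses declare to be $o_{\prob_W^*}(n^{-1/2})$ and $o_{\prob_0^*}(n^{-1/2})$.

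Finally, Theorem~\ref{thm: bootstrap} produces an expansion centered at $T_k(\eta_n,\hat{P}_n)$. For $k=2$ this coincides with $\psi_{n,2}$, so no further work is needed. For $k=1$ and $k=3$ the centering shift is a multiple of $(\hat{P}_n - \d{P}_n)\eta_n$, which I would split as $[\hat{P}_n\eta_n - \int\eta_n^2] + [\int\eta_n^2 - \d{P}_n\eta_n]$; the first bracket is absorbed by the assumed smooth-bootstrap bias condition and the second by its classical counterpart from Proposition~\ref{prop: classical average}. The main obstacle will be the bookkeeping for the $T_1$ case, where the stated centering at $\psi_{n,1}$ rather than at $T_1(\eta_n,\hat{P}_n)$ tacitly forces the same bias-type control: the decomposition must be arranged so that the residual $(\hat{P}_n - \d{P}_n)\eta_n$ is provably negligible, which requires borrowing the extra bias assumptions explicitly stated only for $T_2$ and $T_3$, or else a direct uniform-equicontinuity argument for $n^{1/2}(\hat{P}_n - \d{P}_n)$ over $\s{F}$ along the lines of \cite{gine2008uniform}.
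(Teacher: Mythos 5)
Your verification of conditions~\ref{cond: bootstrap limited complexity}--\ref{cond: bootstrap second order} is exactly the paper's argument: use $\prob_W^*(\eta_n^*\in\s{F})\inoutprob 1$ together with Proposition~\ref{prop: uniform donsker} for~\ref{cond: bootstrap limited complexity}, a Cauchy--Schwarz/triangle bound from $L_2(\lambda)$ to $L_2(P_0)$ using the uniform density bound $M$ for~\ref{cond: bootstrap weak consistency}, and the identity $\psi(\eta)-\psi(\hat\eta_n)+\hat P_n\phi_\eta = -\|\eta-\hat\eta_n\|_{L_2(\lambda)}^2$ to reduce the remainder to $-\int(\eta_n^*-\hat\eta_n)^2 + \int(\eta_n-\hat\eta_n)^2$ (plus the two bias pieces $\int\eta_n^{*2}-\d{P}_n^*\eta_n^*$ and $\int\eta_n^2-\hat P_n\eta_n$ for $T_2,T_3$) for~\ref{cond: bootstrap second order}. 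So the core of your proposal is correct and is the same route the paper takes.

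Your final paragraph, however, identifies a genuine subtlety that the paper's own proof does not address. The paper stops after verifying~\ref{cond: bootstrap limited complexity}--\ref{cond: bootstrap second order} and implicitly invokes Theorem~\ref{thm: bootstrap}, whose conclusion centers the bootstrap estimator at $T_k(\eta_n,\hat P_n)$, not at $\psi_{n,k}=T_k(\eta_n,\d{P}_n)$; nothing in the displayed proof converts one centering into the other. For $k=2$ these agree automatically since $T_2$ does not depend on $P$. For $k=1$ and $k=3$ the difference is $2(\hat P_n-\d{P}_n)\eta_n$ and $(\hat P_n-\d{P}_n)\eta_n$ respectively, and your proposed split $[\hat P_n\eta_n-\int\eta_n^2]+[\int\eta_n^2-\d{P}_n\eta_n]$ is the right way to control it, using the smooth-bootstrap bias condition $\int\eta_n^2-\hat P_n\eta_n=o_{\prob_0^*}(n^{-1/2})$ together with the classical bias condition from the "if in addition" clause of Proposition~\ref{prop: classical average}. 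You correctly note that neither condition is among the hypotheses for the $T_1$ part of the statement; if the stated centering at $\psi_{n,1}$ rather than $T_1(\eta_n,\hat P_n)$ is taken literally, the first conclusion of the proposition requires hypotheses beyond those listed (or else a direct argument that $n^{1/2}(\hat P_n-\d{P}_n)\eta_n\inoutprob 0$, which does not follow from the quartic rate assumptions alone). You have caught something the paper's proof glosses over, and the cleanest reading consistent with the given proof is that the intended centering in the conclusion is $T_k(\eta_n,\hat P_n)$ rather than $\psi_{n,k}$ for $k=1,3$.
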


Proposition~\ref{prop: smooth bootstrap average} requires that $\eta_n^*$ converge fast enough given the data to the density $\hat{\eta}_n$ used for generating the bootstrap data. If $\eta_n^*$ is an estimator based on the bootstrap sample, then this can again be achieved by many nonparametric estimators under mild smoothness conditions. For example, if $\eta_0$ is $m > d/2$ times differentiable with $\int[D^{\alpha} \eta_0 (x)]^2 \sd x < \infty$ for all $|\alpha| = m$, $\prob_0^*(\hat{\eta}_n \in \s{F}) \to 1$, and $\eta_n^*$ is a kernel density estimator with bandwidth $h^*$ and $m$th order kernel functions, then $\norm{\eta_n^* - \hat{\eta}_n}_{L_2(\lambda)} = O_{\prob_W^*}(\{nh^{*d}\}^{-1/2} + h^{*m})$. Hence, if $n^{1/(2d)} \prec h^* \prec n^{-1/(4m)}$, 
then $\norm{\eta_n^* - \hat{\eta}_n}_{L_2(\lambda)} = o_{\prob_W^*}(n^{-1/4})$. Proposition~\ref{prop: smooth bootstrap average} also requires that $\eta_n^*$ falls in a $\s{P}$-uniform Donsker class.  
Furthermore, Proposition~\ref{prop: smooth bootstrap average} requires that the conditions of Proposition~\ref{prop: uniform donsker} hold. If $\hat{\eta}_n$ is a kernel density estimator, then  Proposition~\ref{prop: weak convergence of convolution smooth bootstrap} can be used to establish the conditions of Proposition~\ref{prop: uniform donsker} and the uniform Donsker condition.

Proposition~\ref{prop: smooth bootstrap average} requires the extra conditions $\int \eta_n^{*2} - \d{P}_n^* \eta_n^* = o_{\prob_W^*}(n^{-1/2})$ and $\int \eta_n^{2} - \hat{P}_n \eta_n = o_{\prob_0^*}(n^{-1/2})$ for consistency of the smooth bootstrap plug-in and empirical mean plug-in estimators. If $\eta_0$ is $m \geq 2$ times differentiable, $\eta_n^*$ and $\hat\eta_n$ are KDEs with bandwidths $h^*$ and $\hat{h}$, respectively, an $m$th order kernel function is used for $\eta_n^*$, $n^{-1/(2d)} \prec h^* \prec n^{-1/(4m)}$, $\hat{h} \prec n^{-1/(2m)}$, and $\prob_0^*(\eta_n \in \s{F}) \to 1$, then $\int \eta_n^{*2} - \d{P}_n^* \eta_n^* = o_{\prob_W^*}(n^{-1/2})$ and $\int \eta_n^{2} - \hat{P}_n \eta_n = o_{\prob_0^*}(n^{-1/2})$.
 Hence, consistency of the smooth bootstrap for the plug-in or empirical mean plug-in estimators with an under-smoothed nuisance estimator also requires under-smoothing the bootstrap sampling distribution.

The next result provides conditions under which the bootstrap percentile method is asymptotically valid when $\hat{P}_n$ is the distribution corresponding to the kernel density estimator $\eta_n$.

\begin{restatable}{prop}{propautobiassmoothbootstrapaverage}\label{prop: auto bias correction smooth}
    Suppose that $\eta_0$ is uniformly bounded and $m$-times continuously differentiable, and for all $|\alpha| = m$, $D^{\alpha} \eta_0$ is uniformly bounded and $\int[D^{\alpha} \eta_0 (x)]^2 \sd x < \infty$. If both $\eta_n$ and $\eta_n^*$ are kernel density estimators with common uniformly bounded symmetric $m$th order kernel function $K$ and common bandwidth $h$, $\hat{P}_n$ is the distribution corresponding to $\eta_n$, and $n^{-1/(2d)} \prec h \prec n^{-1/(4m)}$, 
    then bootstrap percentile intervals based on $\psi_{n,1}^*$, $\psi_{n,2}^*$, and $\psi_{n,3}^*$ are asymptotically valid. 
\end{restatable}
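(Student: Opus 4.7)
The plan is to apply Theorem~\ref{thm: perc bootstrap CI} to each of the three bootstrap estimators. The required ingredients are (i) the moment conditions $\hat{P}_n \phi_0^2 \inoutprob P_0\phi_0^2$ and $(\hat{P}_n - P_0)[\phi_0^2\, 1\{|\phi_0| > M\}] \inoutprob 0$ for every $M > 0$; (ii) $S_n^* - S_n = o_{\prob_W^*}(n^{-1/2})$; and (iii) $R_n^{*(j)} - R_n^{(j)} = o_{\prob_W^*}(n^{-1/2})$ for each $j \in \{1,2,3\}$. Condition (i) is immediate: since $\eta_0$ is bounded, $\phi_0 = 2\eta_0 - 2\psi_0$ is bounded, so both moment conditions reduce to $|\int g(\eta_n - \eta_0)\sd\lambda| \leq \norm{g}_\infty \norm{\eta_n - \eta_0}_{L_1(\lambda)} = o_{\prob_0^*}(1)$ applied to bounded integrands $g$, which holds by standard $L_1$ consistency of the KDE under the stated smoothness. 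For (ii), I would reuse the $P_0$-Donsker class $\s{F}$ of uniformly bounded smooth functions identified in Proposition~\ref{prop: classical average} to conclude $S_n = o_{\prob_0^*}(n^{-1/2})$ from Lemma~19.24 of \cite{van2000asymptotic}; conditional weak convergence $\d{G}_n^* \condinoutdist \d{G}_0$ in $\ell^\infty(\s{F})$ then follows by applying Proposition~\ref{prop: weak convergence of convolution smooth bootstrap} to the convolutional form $\hat{P}_n = \d{P}_n * L_n$, and Lemma~\ref{lemma: boot_ran} delivers $S_n^* = o_{\prob_W^*}(n^{-1/2})$.

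The main work is in (iii). Because $\hat\eta_n = \eta_n$, the second decomposition in \eqref{eq: one step RnstarRn} for the one-step estimator collapses to
\[
R_n^{*(1)} - R_n^{(1)} = \norm{\eta_n - \eta_0}_{L_2(\lambda)}^2 - \norm{\eta_n^* - \eta_n}_{L_2(\lambda)}^2.
\]
The first norm is $O_{\prob_0^*}(h^m + (nh^d)^{-1/2}) = o_{\prob_0^*}(n^{-1/4})$ by the standard KDE rate under $n^{-1/(2d)} \prec h \prec n^{-1/(4m)}$. For the second, conditional on the data $\eta_n^*$ is a KDE of samples from the distribution with density $\eta_n$, so the triangle inequality
\[
\norm{\eta_n^* - \eta_n}_{L_2(\lambda)} \leq \norm{\eta_n^* - K_h * \eta_n}_{L_2(\lambda)} + \norm{K_h * \eta_n - \eta_n}_{L_2(\lambda)}
\]
reduces the problem to a direct conditional variance bound of order $(nh^d)^{-1/2}$ on the first summand and, via Young's inequality applied to $K_h * (\eta_n - \eta_0)$, a deterministic bound of order $\norm{\eta_n - \eta_0}_{L_2(\lambda)} + \norm{K_h * \eta_0 - \eta_0}_{L_2(\lambda)} = O_{\prob_0^*}(h^m + (nh^d)^{-1/2})$ on the second. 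Squaring yields $\norm{\eta_n^* - \eta_n}_{L_2(\lambda)}^2 = o_{\prob_W^*}(n^{-1/2})$.

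For the plug-in and empirical-mean plug-in estimators, parallel calculations using the second decomposition of \eqref{eq: plugin RnstarRn} and its analogue from Section~\ref{sec: mean plug-in} give
\[
R_n^{*(i)} - R_n^{(i)} = \norm{\eta_n - \eta_0}_{L_2(\lambda)}^2 - \norm{\eta_n^* - \eta_n}_{L_2(\lambda)}^2 + c_i(A_n^* - A_n), \qquad i \in \{2, 3\},
\]
where $c_i$ is a fixed constant, $A_n := \int \eta_n^2\sd\lambda - \d{P}_n \eta_n = (\hat{P}_n - \d{P}_n)\eta_n$, and $A_n^* := \int \eta_n^{*2}\sd\lambda - \d{P}_n^* \eta_n^*$. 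The norm terms are handled as in the preceding paragraph. The remaining and most delicate task is showing $A_n^* - A_n = o_{\prob_W^*}(n^{-1/2})$, which is where the automatic bias correction enabled by $\hat{P}_n$ having density exactly $\eta_n$ must be exploited. I would view $A_n = A(\d{P}_n)$ and $A_n^* = A(\d{P}_n^*)$ as V-statistics with symmetric kernel $(x,y) \mapsto h^{-d} f((x-y)/h)$, where $f := K*K - K$ has $m$ vanishing moments (because $K*K$ is itself an $m$-th order kernel, by a direct moment computation via the binomial theorem). Splitting $A_n^* - A_n = [A(\d{P}_n^*) - A(\hat{P}_n)] + [A(\hat{P}_n) - A(\d{P}_n)]$, the first bracket is a centered V-statistic of iid $\hat{P}_n$-samples whose Hoeffding decomposition, combined with the vanishing moments of $f$, yields conditional mean squared error of order $(nh^d)^{-2} + n^{-1}h^{2m}$; the second bracket admits an analogous decomposition under $P_0$ to the same order. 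Under $n^{-1/(2d)} \prec h \prec n^{-1/(4m)}$ each piece is $o_{\prob_W^*}(n^{-1/2})$. This V-statistic comparison will be the main technical obstacle: the cancellation of the leading $O(h^m)$ biases depends crucially on $\hat{P}_n$ being the distribution corresponding to $\eta_n$, so that $A_n$ and $A_n^*$ share the same centering---a cancellation that would fail for Efron's percentile method, where the two quantities appear summed rather than differenced.
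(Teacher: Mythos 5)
Your high-level strategy — invoking Theorem~\ref{thm: perc bootstrap CI}, checking the moment conditions, and reducing to $S_n^* - S_n$ and $R_n^* - R_n$ — matches the paper's. For the $R_n$ part your treatment of the squared $L_2$ norms is valid and is in fact a bit cleaner than the paper's (you observe that under $n^{-1/(2d)} \prec h \prec n^{-1/(4m)}$, both $\norm{\eta_n-\eta_0}_{L_2(\lambda)}^2$ and $\norm{\eta_n^*-\eta_n}_{L_2(\lambda)}^2$ are \emph{individually} $o(n^{-1/2})$, so no cancellation is needed there; the paper's Lemma~\ref{thm: auto smooth mise} handles the difference directly but arrives at the same conclusion). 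However, there are two genuine gaps.

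\textbf{Gap in (ii).} You propose to establish $S_n^* - S_n = o_{\prob_W^*}(n^{-1/2})$ by showing each of $S_n$ and $S_n^*$ is separately negligible, via the Donsker class appealed to in Proposition~\ref{prop: classical average}, Lemma~19.24 of \cite{van2000asymptotic}, Proposition~\ref{prop: weak convergence of convolution smooth bootstrap}, and Lemma~\ref{lemma: boot_ran}. But Proposition~\ref{prop: classical average} \emph{assumes} the Donsker property of $\eta_n$; it does not derive it, and Proposition~\ref{prop: auto bias correction smooth} does not include it in its hypotheses. For a KDE, the Donsker condition (per the discussion following Proposition~\ref{prop: classical average}, citing Example~19.9 of \cite{van2000asymptotic}) requires $\eta_n$ to lie in a class with uniformly bounded partial derivatives up to order $\ell > d/2$, which in turn requires the kernel $K$ to possess such derivatives, and also requires a tail bound on $P_0$. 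The proposition only assumes $K$ is uniformly bounded and symmetric. The paper's Lemma~\ref{thm: auto smooth empirical process} sidesteps this entirely by a direct second-moment calculation of the V-statistic difference $(\d{P}_n^* - \hat{P}_n)(\eta_n^* - \eta_0) - (\d{P}_n - P_0)(\eta_n - \eta_0)$, which needs only $nh^d\to\infty$, $h\to 0$, and bounded continuously differentiable $\eta_0$. Your argument as written does not follow from the stated hypotheses; you would need to replace the Donsker-class step with this kind of moment bound.

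\textbf{Gap in (iii).} Your observation that $f := K*K - K$ has $m$ vanishing moments, that the $O(h^m)$ biases in $A_n$ and $A_n^*$ must cancel (making the distinction from Efron's method), and that this cancellation hinges on $\hat{P}_n$ having density exactly $\eta_n$, is exactly the right insight — this is the content of the paper's Lemma~\ref{thm: auto smooth asym bias}. But the critical step is showing that the bias terms reduce from $O(h^m)$ to $O(h^{2m})$ after subtraction, which requires careful control of the term $\int f_3\sd(P_{0,h}\times P_{0,h}) - \int f_3\sd(P_0\times P_0)$ via a Taylor expansion that pairs the $m$-th order moment structure of $K$ against the $m$-th order smoothness of $\eta_0$. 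Your sketch asserts the conditional MSE is of order $(nh^d)^{-2} + n^{-1}h^{2m}$ without carrying out this cancellation, and the bracketing $[A(\d{P}_n^*) - A(\hat{P}_n)] + [A(\hat{P}_n) - A(\d{P}_n)]$ does not by itself expose where it happens. The paper's proof assembles it from a family of V-statistic moment lemmas (Lemmas~\ref{lemma: sV order 1}--\ref{lemma: sV order 4}, \ref{lemma: norm tool 1}, \ref{lemma: h^m tool 1}); what you have written is a correct roadmap, not yet a proof, and the missing step is precisely the one that makes the result work.
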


To the best of our knowledge, \Cref{prop: auto bias correction smooth} is the first result establishing automatic bias correction of bootstrap confidence intervals using the smooth bootstrap. The bandwidth condition   $n^{-1/(2d)} \prec h \prec n^{-1/(4m)}$ 
in \Cref{prop: auto bias correction smooth} can be satisfied if $m > d /2$. However, the condition is different from the bandwidth condition of \Cref{prop: auto bias correction} for the empirical bootstrap in several interesting ways. For the one-step estimator, the conditions of \Cref{prop: auto bias correction smooth} are the same as those used for (conditional) asymptotic linearity of the one-step estimator. Hence, unlike the empirical bootstrap, it does not appear that the smooth bootstrap produces valid confidence intervals based on the one-step estimator under weaker smoothness or dimension requirements than non-bootstrap Wald intervals. However, for the plug-in and empirical mean plug-in estimators, the requirements of \Cref{prop: auto bias correction smooth} are weaker than those required for (conditional) asymptotic linearity of the estimators because they require that 
$h \prec n^{-1/(4m)}$ rather than $h \prec n^{-1/(2m)}$. 
Hence, both empirical and smooth bootstrap percentile confidence intervals based on the plug-in estimators can be valid if $m > d/2$. However, the conditions for the plug-in estimators based on the smooth bootstrap are satisfied if the optimal bandwidth is used, while the conditions for the empirical bootstrap require under-smoothing. Thus, in this case, the empirical bootstrap is preferable for the one-step estimator, while the smooth bootstrap is preferable for the plug-in and empirical mean plug-in estimators. We emphasize that it is not presently clear whether these conclusions would remain true with other nuisance estimators or smooth bootstrap sampling distributions, or for other parameter mappings.

\subsection{G-computed conditional mean}

The second parameter we will use to illustrate the use of our general results is the G-computed conditional mean. Suppose that $\s{X} = \d{R}\times\{0, 1\} \times \d{R}^d$ and $X = (Y, A, Z)$, where $Y \in \d{R}$ is an outcome of interest, $A \in \{0, 1\}$ is a binary treatment or exposure, and $Z \in \d{R}^d$ is a vector of adjustment covariates. We then define the G-computed conditional mean as $\psi(P) = \E_P[\mu_{P}(Z) \mid A=1]$, where $\mu_{P}(z) := \E_P (Y \mid A=0, Z=z)$. Under the no unobserved confounding causal model, $\psi_0$ corresponds to the mean outcome among treated units (i.e., those with $A = 1$) had they been assigned to receive control $A = 0$ \citep{robins1986new, gill2001causal}. We use this parameter as an example rather than the simpler G-computed mean, $E_P[\mu_P(Z)]$ because the one-step and estimating equations-based estimators are different for the conditional mean, which gives us the chance to illustrate the use of our results for estimating equations-based estimators.

The efficient influence function of $\psi$ at $P$ relative to a nonparametric model is given by 
\[
    \phi_P(y, a, z) = \frac{I(a=0) g_P(z)}{\pi_P \left[1-g_P(z)\right]} \left[y - \mu_{P}(z)\right] + \frac{I(a=1)}{\pi_P} \left[\mu_{P}(z) - \psi_P\right],
\]
where $g_P(z) := P(A=1 \mid Z=z)$ is the propensity score function and $\pi_P := P(A=1)$.  In this example, the nuisance parameter is $\eta_P = (\mu_P, g_P, Q_P)$, where $Q_P$ is the marginal distribution of $Z$ under $P$, and $\psi_P$ and $\pi_P$ are defined through $\eta_P$ as $\psi_P = \int \mu_{P}(z) g_P(z) \pi_P^{-1} \sd Q_P(z)$ and $\pi_P = \int g_P \sd Q_P$.

We consider two approaches to constructing an asymptotically linear estimator of $\psi_0$. We let $\eta_n = (\mu_n, g_n, Q_n)$ be an estimator of the nuisance $\eta_0$, where $Q_n$ is the marginal empirical distribution of $Z$. First, we consider the one-step estimator discussed in Section~\ref{sec: one-step}, which is given by 
\begin{align*}
    \psi_{n, 1} & = \frac{1}{n}\sum_{i=1}^n \left\{ \frac{I(A_i = 0) g_n(Z_i)}{ \pi_n [1-g_n(Z_i)]}[Y_i - \mu_{n}(Z_i)] + \left[ 2 - \frac{\bar\pi_n}{\pi_n}\right] \frac{I(A_i = 1)}{\pi_n} \mu_{n}(Z_i)\right\},
\end{align*}
where $\pi_n :=\int g_n \sd Q_n$ and $\bar\pi_n := \tfrac{1}{n}\sum_{i=1}^n A_i$. Second, we consider the estimating equations-based estimator discussed in Section~\ref{sec: estimating-equations}. We define the estimating function $G_{P,\eta}(\psi) := P \phi_{\psi, \eta}$, and we note that with $\eta = (\mu, g, Q)$ and $\pi := \int g \sd Q$, 
\[ G_{0,\eta}(\psi) = \pi^{-1} P_0 \left[ \frac{(g- g_0)(\mu - \mu_0)}{1 - g} \right] + \frac{\pi_0}{\pi} (\psi_0 - \psi).\]
In particular, $G_0(\psi) = \psi_0 - \psi$, so $\psi_0$ is the unique solution to the population estimating equation, and $G_{0,\eta}'(\psi) = -\pi_0 / \pi$, which approaches $-1$ as $\eta \to \eta_0$. We then define the estimating equations-based estimator $\psi_{n, 2} = T_2(\eta_n, \d{P}_n)$ as the solution to the sample estimating function $G_{n,\eta_n}(\psi) := \d{P}_n \phi_{\psi, \eta_n}$, which it is easy to see equals
\begin{align*}
    \psi_{n, 2} = \frac{1}{n}\sum_{i=1}^n \left\{ \frac{I(A_i = 0) g_n(Z_i)}{\bar\pi_n [1-g_n(Z_i)]} [Y_i - \mu_{n}(Z_i)] + \frac{I(A_i = 1)}{\bar\pi_n} \mu_{n}(Z_i) \right\}.
\end{align*}
We note that if $\pi_n = \bar\pi_n$, then $\psi_{n, 1}=\psi_{n, 2}$. The next result provides conditions under which these two estimators are asymptotically linear using Theorem~\ref{thm: classical}. 
\begin{restatable}{prop}{propclassicalcounterfactual}\label{prop: classical counterfactual}
    If $\mu_n$, $g_n$, and $(y,a,z) \mapsto (1-a)y g_n(z) / [1-g_n(z)]$ fall in $P_0$-Donsker classes with probability tending to 1, $\E_0 (Y^2) < \infty$, there exists constants $0 < a < b < 1$ such that $P_0(g_0(Z) \in (a,b)) = 1$, $P_0(g_n(Z) \in (a,b)) = 1$, and $P_0( |\mu_n(Z)| \leq b) = 1$, $\|g_n - g_0\|_{L_2(P_0)} = o_{\prob_0^*}(1)$, $\|\mu_{n} - \mu_{0}\|_{L_2(P_0)} = o_{\prob_0^*}(1)$, and $P_0 \left\{ (g_n - g_0)(\mu_{n}- \mu_{0}) / (1-g_n)\right\} = o_{\prob_0^*}(n^{-1/2})$, then $\psi_{n, 2}$ is asymptotically linear with influence function $\phi_0$. If in addition  $(\pi_n-\pi_0)(\psi_n - \psi_0) = o_{\prob_0^*}(n^{-1/2})$, then $\psi_{n,1}$ is asymptotically linear with influence function $\phi_0$.
\end{restatable}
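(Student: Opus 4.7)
The plan is to invoke Theorem~\ref{thm: classical} twice: once with the estimating-equations construction for $\psi_{n,2}$ via Lemma~\ref{lemma: classical estimating-equations}, and once with the one-step construction of Section~\ref{sec: one-step} for $\psi_{n,1}$. Conditions~\ref{cond: limited complexity} and~\ref{cond: weak consistency} are shared between the two estimators, while the extra hypothesis $(\pi_n - \pi_0)(\psi_n - \psi_0) = o_{P_0^*}(n^{-1/2})$ enters only in absorbing an additional term that appears in the one-step second-order remainder but not in the estimating-equations one.

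First I would verify~\ref{cond: limited complexity} and~\ref{cond: weak consistency}. Each candidate influence function $\phi_{\psi,\eta}$ is a Lipschitz combination of the bounded Donsker building blocks $\mu_n$, $g_n$, and $(y,a,z)\mapsto (1-a)yg_n(z)/\{1-g_n(z)\}$ together with the scalar nuisances $\psi_n$ and $\pi_n = \int g_n\sd Q_n$. Because $g_n$ is bounded away from $0$ and $1$ and $\mu_n$ is uniformly bounded, the standard preservation properties for sums, products, quotients with denominators bounded away from zero, and Lipschitz transformations of uniformly bounded Donsker classes (Section~2.10 of \citealp{van1996weak}) place $\phi_n$ into a common $P_0$-Donsker class $\s{F}$ with inner probability tending to one, giving~\ref{cond: limited complexity}. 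Condition~\ref{cond: weak consistency} then follows by dominated convergence from $L_2(P_0)$-consistency of $\mu_n$ and $g_n$ and consistency of the scalars $\pi_n,\psi_n$, the latter being a byproduct of the $Z$-estimator argument below.

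For~\ref{cond: second order} on $\psi_{n,2}$ I would apply Lemma~\ref{lemma: classical estimating-equations}. The key simplification is that the population estimating function
\[
    G_{0,\eta}(\psi) = \pi_\eta^{-1} P_0\left\{ \frac{(g-g_0)(\mu-\mu_0)}{1-g}\right\} + \frac{\pi_0}{\pi_\eta}(\psi_0 - \psi)
\]
is affine in $\psi$, so the linearization residual $\Gamma_{0,\eta}$ vanishes identically and the differentiability hypotheses hold trivially, with $G_{0,\eta}'(\psi_0) = -\pi_0/\pi_\eta \to -1$ as $\|\eta - \eta_0\|_{\s{H}}\to 0$. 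Well-separatedness of $\psi_0$ is immediate from $G_{0,\eta_0}(\psi) = \psi_0 - \psi$, and consistency of $\psi_{n,2}$ follows from uniform-in-$\psi$ convergence of $G_{n,\eta_n}$ to $G_{0,\eta_0}$. Finally the drift term reduces to $P_0 \phi_{\psi_0,\eta_n} = \pi_n^{-1} P_0\{(g_n-g_0)(\mu_n-\mu_0)/(1-g_n)\}$, which is $o_{P_0^*}(n^{-1/2})$ by the assumed mixed-rate hypothesis and $\pi_n \geq a > 0$; Theorem~\ref{thm: classical} then delivers asymptotic linearity of $\psi_{n,2}$.

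For $\psi_{n,1}$, I would first identify the displayed formula with the one-step estimator $T_1(\eta_n,\d{P}_n) = \psi(\eta_n) + \d{P}_n\phi_{\eta_n}$ using the nuisance-only plug-in $\psi(\eta_n) := \pi_n^{-1} n^{-1} \sum_i I(A_i=1)\mu_n(Z_i)$, which is a short algebraic check. The same calculation that reduced the drift for $\psi_{n,2}$ then yields
\[
    R_n = \frac{(\pi_n - \pi_0)\{\psi(\eta_n) - \psi_0\}}{\pi_n} - \frac{1}{\pi_n} P_0\left\{ \frac{(g_n-g_0)(\mu_n-\mu_0)}{1-g_n}\right\},
\]
whose second term is again $o_{P_0^*}(n^{-1/2})$. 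Because~\ref{cond: limited complexity} and~\ref{cond: weak consistency} force $\psi_{n,1} - \psi(\eta_n) = \d{P}_n \phi_{\eta_n} = O_{P_0^*}(n^{-1/2})$ and $\pi_n - \pi_0 = o_{P_0^*}(1)$, the added hypothesis $(\pi_n - \pi_0)(\psi_{n,1} - \psi_0) = o_{P_0^*}(n^{-1/2})$ also makes the first term $o_{P_0^*}(n^{-1/2})$. The main obstacle I anticipate is the bookkeeping for~\ref{cond: limited complexity}, since it requires chaining several Donsker-preservation results to handle the ratios involving $g_n$; by contrast, once the affine structure of $G_{0,\eta}$ and the one-step algebraic identity are in hand, both remainder calculations are short.
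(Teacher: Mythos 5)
Your proposal matches the paper's proof in all essential respects: both verify~\ref{cond: limited complexity} via Donsker preservation under Lipschitz transformations and boundedness, both reduce~\ref{cond: weak consistency} to $L_2(P_0)$-consistency of $(\mu_n, g_n)$ plus consistency of the scalars $\pi_n, \psi_n$ through a direct expansion of $\phi_n - \phi_0$, both handle~\ref{cond: second order} for $\psi_{n,2}$ through Lemma~\ref{lemma: classical estimating-equations} using the affine structure of $G_{0,\eta}$ (so $\Gamma_{0,\eta} \equiv 0$ and $G_{0,\eta}'(\psi_0) = -\pi_0/\pi_\eta \to -1$), and both compute the one-step remainder $R_n$ to be a second-order product term plus $(\pi_n - \pi_0)(\psi(\eta_n) - \psi_0)/\pi_n$. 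Your extra remark that the added hypothesis is insensitive to interpreting $\psi_n$ as $\psi(\eta_n)$ or $\psi_{n,1}$ is a small but correct elaboration that the paper does not spell out.
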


We now turn to methods of bootstrapping $\psi_{n, 1}$ and $\psi_{n, 2}$. We define $\eta_n^* = (\mu_n^*, g_n^*, Q_n^*)$ as a bootstrap estimator of $\eta_0$ based on $n$ bootstrap observations, where $Q_n^*$ is the empirical distribution of the bootstrap covariates. We then consider $\psi_{n, 1}^* = T_1(\eta_n^*, \d{P}_n^*)$ and $\psi_{n, 2}^* = T_2(\eta_n^*, \d{P}_n^*)$. The next result provides conditions under which these estimators are conditionally asymptotically linear for the empirical bootstrap. 

\begin{restatable}{prop}{propempbootstrapcounterfactual}\label{prop: empirical bootstrap counterfactual}
    Suppose $\hat{P}_n = \d{P}_n$ is the empirical bootstrap and the conditions of \Cref{prop: classical counterfactual} hold. If $\mu_n^*$, $g_n^*$, and $(y,a,z) \mapsto (1-a)y g_n^*(z) / [1-g_n^*(z)]$ fall in $P_0$-Donsker classes with conditional probability tending to 1, there exist constants $0 < a < b < 1$ such that  $P_W^*(g_n^*(Z) \in (a,b)) = 1$, and $P_W^*( |\mu_n^*(Z)| \leq b) = 1$, $\|g_n^* - g_0\|_{L_2(P_0)} = o_{\prob_W^*}(1)$, $\|\mu_{n}^* - \mu_{0}\|_{L_2(P_0)} = o_{\prob_W^*}(1)$, and $P_0 \left\{ (g_n^* - g_0)(\mu_{n}^*- \mu_{0}) / (1-g_n^*)\right\} = o_{\prob_W^*}(n^{-1/2})$, then $\psi_{n, 2}^*$ is conditionally asymptotically linear with influence function $\phi_0$. If in addition  $(\pi_n^*-\pi_0)(\psi_n^* - \psi_0) = o_{\prob_W^*}(n^{-1/2})$, then $\psi_{n,1}^*$ is conditionally asymptotically linear with influence function $\phi_0$.
\end{restatable}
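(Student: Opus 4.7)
My plan is to apply Theorem~\ref{thm: bootstrap} to each of $\psi_{n,2}^*$ and $\psi_{n,1}^*$ with $\hat{P}_n = \d{P}_n$ by verifying conditions~\ref{cond: bootstrap limited complexity}--\ref{cond: bootstrap second order}. Since the bootstrap hypotheses mirror the classical ones of Proposition~\ref{prop: classical counterfactual}, most of the verification should parallel the classical proof. For~\ref{cond: bootstrap limited complexity}, I would take $\s{F}$ to be the same $P_0$-Donsker class built from the relevant influence functions as in the classical proof; the bootstrap Donsker assumptions on $\mu_n^*$, $g_n^*$, and $(y,a,z) \mapsto (1-a)yg_n^*(z)/[1-g_n^*(z)]$, together with the boundedness $g_n^*(z) \in (a,b)$ and $|\mu_n^*| \leq b$, yield $\prob_W^*(\phi_n^* \in \s{F}) \inoutprob 1$, and Lemma~\ref{lemma: empirical bootstrap} gives $\d{G}_n^* \condinoutdist \d{G}_0$ since $\s{F}$ is $P_0$-Donsker and $\hat{P}_n = \d{P}_n$. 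Condition~\ref{cond: bootstrap weak consistency} then follows from bootstrap $L_2(P_0)$-consistency of $\mu_n^*$ and $g_n^*$, the uniform lower bound on $1-g_n^*$, and a continuous-mapping argument delivering $\psi_{n,i}^* \inoutprob \psi_0$.

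For~\ref{cond: bootstrap second order} applied to the estimating equations-based estimator $\psi_{n,2}^*$, I would invoke Lemma~\ref{lemma: empirical bootstrap estimating-equations}. Its sole non-routine hypothesis, $P_0\phi_{\psi_0,\eta_n^*} = o_{\prob_W^*}(n^{-1/2})$, follows immediately from the closed form
\[
    P_0 \phi_{\psi_0, \eta_n^*} \;=\; G_{0,\eta_n^*}(\psi_0) \;=\; (\pi_n^*)^{-1}\, P_0\!\left[\frac{(g_n^*-g_0)(\mu_n^*-\mu_0)}{1-g_n^*}\right],
\]
read off the text's display for $G_{0,\eta}$, combined with $\pi_n^* \geq a > 0$ and the assumed rate on the mixed product. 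Well-separation of $\psi_0$ and differentiability of $\psi \mapsto G_{0,\eta}(\psi)$ are transparent from the identity $G_{0,\eta}(\psi) = \pi^{-1}P_0[(g-g_0)(\mu-\mu_0)/(1-g)] + (\pi_0/\pi)(\psi_0 - \psi)$ and the uniform bounds, and the remaining regularity hypotheses of the lemma follow from consistency of $\eta_n^*$ in the natural semi-metric induced by these bounds.

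For the one-step estimator $\psi_{n,1}^*$, I would use the first decomposition in~\eqref{eq: one step Rnstar},
\[
    R_n^* = \big[\psi(\eta_n^*) - \psi_0 + P_0\phi_{\eta_n^*}\big] - \big[\psi(\eta_n) - \psi_0 + P_0\phi_{\eta_n}\big] + (\d{P}_n - P_0)(\phi_{\eta_n^*} - \phi_{\eta_n}).
\]
Applying the formula for $G_{0,\eta}$, each bracketed second-order remainder equals $((\pi - \pi_0)/\pi)(\psi(\eta) - \psi_0) + \pi^{-1}P_0[(g-g_0)(\mu-\mu_0)/(1-g)]$, so the additional hypothesis $(\pi_n^*-\pi_0)(\psi_n^*-\psi_0) = o_{\prob_W^*}(n^{-1/2})$, its classical counterpart from Proposition~\ref{prop: classical counterfactual}, and the mixed-product rates render both brackets $o_{\prob_W^*}(n^{-1/2})$. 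The main obstacle will be controlling the empirical-process remainder $(\d{P}_n - P_0)(\phi_{\eta_n^*} - \phi_{\eta_n}) = n^{-1/2}\d{G}_n(\phi_{\eta_n^*} - \phi_{\eta_n})$, which I would handle via the asymptotic equicontinuity of $\{\d{G}_n f : f \in \s{F}\}$ (provided by the $P_0$-Donsker property) after showing $\|\phi_{\eta_n^*} - \phi_{\eta_n}\|_{L_2(P_0)} = o_{\prob_W^*}(1)$. The latter reduces, by the triangle inequality, to the separate $L_2(P_0)$-consistency of $(\mu_n^*,g_n^*)$ and $(\mu_n,g_n)$ to $(\mu_0,g_0)$, with the uniform $(a,b)$-bounds on the propensity scores delivering the Lipschitz-type control of $\eta \mapsto \phi_\eta$ needed to combine these pieces.
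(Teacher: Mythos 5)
Your proposal tracks the paper's own proof essentially step by step: it verifies~\ref{cond: bootstrap limited complexity}--\ref{cond: bootstrap second order} for $\hat{P}_n = \d{P}_n$ using Lemma~\ref{lemma: empirical bootstrap} for the bootstrap Donsker condition, bounds $\|\phi_n^* - \phi_0\|_{L_2(P_0)}$ in parallel with the classical argument, invokes Lemma~\ref{lemma: empirical bootstrap estimating-equations} with the closed-form expression for $P_0\phi_{\psi_0,\eta_n^*}$ to dispose of $\psi_{n,2}^*$, and handles $\psi_{n,1}^*$ via the first decomposition in~\eqref{eq: one step Rnstar} together with asymptotic equicontinuity of $\d{G}_n$ over $\s{F}$ (the same mechanism as the paper's ``minor modification of Lemma 19.24'') to control $(\d{P}_n - P_0)(\phi_n^* - \phi_n)$. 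This is the paper's route; no meaningful divergence.
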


\Cref{prop: empirical bootstrap counterfactual} requires that $\mu_n^*$ and $g_n^*$ converge fast enough to $\mu_0$ and $g_0$, respectively, conditional on the data. 
We also note that, as above, percentile confidence intervals based on the empirical bootstrap might be consistent under weaker conditions than \Cref{prop: empirical bootstrap counterfactual}.

The next result addresses the case where $\hat{P}_n$ is a non-empirical bootstrap sampling distribution. We let $\hat{Q}_n$ be the marginal distribution of $Z$ under $\hat{P}_n$, $\hat{g}_n(z) := \hat{P}_n(A=1 | Z=z)$, $\hat{\mu}_{n}(z) := \E_{\hat{P}_n}(Y | A=0, Z=z)$, and $\hat\sigma_n^2(z) := \mathrm{Var}_{\hat{P}_n}(Y \mid A = 0, Z = z)$, all of which we assume are well-defined. We note that $\hat{Q}_n$ need not be a smooth distribution. We also define $\sigma_0^2(z) :=  \mathrm{Var}_{0}(Y \mid A = 0, Z = z)$. We then have the following result regarding conditional asymptotic linearity of the bootstrap one-step and estimating equations-based estimators when sampling from $\hat{P}_n$.

\begin{restatable}{prop}{propsmobootstrapcounterfactual}\label{prop: smooth bootstrap counterfactual}
    Suppose $P_0^*(\hat{P}_n \in \s{P}) \to 1$, where $\s{P}$ is such that $\lim_{M \to \infty} \sup_{P \in \s{P}} E_P [ Y^2 I( Y^2 > M)] = 0$, $\hat\mu_n \in \s{F}_\mu$ and $\hat{g}_n \in \s{F}_g$ with probability tending to one, where $\s{F}_\mu$ is uniformly bounded, $\s{F}_g$ is uniformly bounded away from zero, and $\s{F}_\mu$ and $\s{F}_g$ possess finite uniform entropy integrals, $\| \hat{g}_n - g_0 \|_{L_2(P_0)}$,  $\| \hat{\mu}_n - \mu_0 \|_{L_2(P_0)}$, $\| \hat\sigma_n^2 - \sigma_0^2 \|_{L_2(P_0)}$ are each  $o_{\prob_0^*}(1)$, and each of the following is $o_{\prob_0^*}(1)$:
    \begin{align}\label{eq:hatQcond}
    \begin{split}
        \sup_{g, \bar{g}} \left| (\hat{Q}_n - Q_0) \left[ \frac{g\bar{g}(1-\hat{g}_n)}{(1-g) (1-\bar{g})}(\hat\sigma_n^2 + \hat\mu_n^2) \right]  \right|, \, \sup_{g, \bar{g}, \mu, \bar\mu}\left|(\hat{Q}_n - Q_0) \left[ \frac{g\bar{g}(1-\hat{g}_n)}{(1-g) (1-\bar{g})} \mu \bar\mu \right] \right|,\\
        \sup_{\mu} \left| (\hat{Q}_n - Q_0) \left[  \mu  \hat{g}_n  \right] \right|,  \, \sup_{\mu ,\bar\mu}\left| (\hat{Q}_n - Q_0) \left[ \mu  \bar\mu \hat{g}_n  \right] \right|,  \,\sup_{\mu, g}\abs{ (\hat{Q}_n - Q_0) \left[\frac{g(1-\hat{g}_n)}{1-g}\mu \right]}, \, (\hat{Q}_n - Q_0) \hat{g}_n. 
    \end{split}
    \end{align}
    where the suprema over $\mu$ and $\bar\mu$ are taken over $\s{F}_{\mu}$ and the suprema over $g$ and $\bar{g}$ are taken over $\s{F}_g$, and $\hat{Q}_n \left\{ (\hat{g}_n - g_n)(\hat\mu_{n} - \mu_n) / (1-g_n)\right\} = o_{\prob_0^*}(n^{-1/2})$.  Suppose also that $\mu_n^* \in \s{F}_\mu$ and $g_n^* \in \s{F}_g$ with conditional probability tending to one,  $\left\| g_n^* - g_0 \right\|_{L_2(P_0)} = o_{\prob_W^*}(1)$, $ \left\| \mu_n^* - \mu_0 \right\|_{L_2(P_0)} = o_{\prob_W^*}(1)$, and $\hat{Q}_n \left\{ (g_n^* - \hat{g}_n)(\mu_{n}^*- \hat\mu_{n}) / (1-g_n^*)\right\} = o_{\prob_W^*}(n^{-1/2})$. If $(\pi_n^* - \hat\pi_n) ( \psi_n^* - \hat\psi_n) =o_{\prob_W^*}(n^{-1/2})$ and $(\pi_n-\hat\pi_n)(\psi_n - \hat\psi_n) = o_{\prob_0^*}(n^{-1/2})$, then $\psi_{n,1}^*$ is conditionally asymptotically linear with influence function $\phi_0$. If the conditions of \Cref{prop: classical counterfactual} hold and $\pi_n^* - \hat\pi_n = o_{\prob_W^*}(1)$, then $\psi_{n,2}^*$ is conditionmally asymptotically linear with influence function $\phi_0$.
\end{restatable}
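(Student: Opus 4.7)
The plan is to derive conditional asymptotic linearity of $\psi_{n,1}^*$ from Theorem~\ref{thm: bootstrap} applied to the one-step construction $T_1$, and to deduce the same for $\psi_{n,2}^*$ via Lemma~\ref{lemma: smooth bootstrap estimating-equations}. The work thus reduces to verifying conditions~\ref{cond: bootstrap limited complexity}--\ref{cond: bootstrap second order} for $\psi_{n,1}^*$, after which the hypotheses of Lemma~\ref{lemma: smooth bootstrap estimating-equations} will follow almost for free since the estimating function $\psi \mapsto P\phi_{\psi, \eta}$ is affine in $\psi$.

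For~\ref{cond: bootstrap limited complexity}, I take $\s{F}$ to be the class of functions $(y,a,z)\mapsto I(a=0)g(z)[y - \mu(z)]/\{\pi[1 - g(z)]\} + I(a=1)[\mu(z) - \psi]/\pi$ indexed by $|\psi| \leq C$, $\mu \in \s{F}_\mu$, $g \in \s{F}_g$, and $\pi$ in a compact interval bounded away from zero. Since products, sums, and Lipschitz transformations of VC-type classes have finite uniform entropy integrals, $\s{F}$ inherits a finite uniform entropy integral and is thus Donsker uniformly over a class $\s{P}$ containing $\hat{P}_n$ with probability tending to one. Uniform square integrability of the envelope is guaranteed by the assumed tail condition on $\s{P}$. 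The semimetric convergence~\eqref{eq: convergence of semimetric} follows because expanding $(\phi - \bar\phi)^2$ for $\phi, \bar\phi \in \s{F}$ yields a finite sum of summands, each matching one of the six patterns in~\eqref{eq:hatQcond}; Proposition~\ref{prop: uniform donsker} then delivers~\ref{cond: bootstrap limited complexity}. For~\ref{cond: bootstrap weak consistency}, the map $(\psi, \mu, g, \pi) \mapsto \phi_{\psi, \mu, g, \pi}$ is Lipschitz in $L_2(P_0)$ on the bounded domain, reducing the task to $|\psi_n^* - \psi_0|, |\pi_n^* - \pi_0|, \|\mu_n^* - \mu_0\|_{L_2(P_0)}, \|g_n^* - g_0\|_{L_2(P_0)} = o_{\prob_W^*}(1)$. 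The last two are assumed; the first two follow from the decompositions $\pi_n^* - \hat\pi_n = (Q_n^* - \hat{Q}_n)g_n^* + \hat{Q}_n(g_n^* - \hat g_n)$ and $\psi_n^* - \hat\psi_n = (\d{P}_n^* - \hat{P}_n)\phi_{\eta_n^*} + A_n^*$, whose summands vanish conditionally by bootstrap Glivenko--Cantelli, Cauchy--Schwarz, and the step-3 estimate below, combined with $\hat\pi_n \to \pi_0$ and $\hat\psi_n \to \psi_0$ in outer probability (which follow from $\hat g_n \to g_0$, $\hat\mu_n \to \mu_0$, and the scalar conditions in~\eqref{eq:hatQcond}).

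For~\ref{cond: bootstrap second order}, I use the second decomposition in~\eqref{eq: one step Rnstar}, giving $R_n^* = A_n^* - B_n^*$ with $A_n^* = \psi(\eta_n^*) - \hat\psi_n + \hat{P}_n \phi_{\eta_n^*}$ and $B_n^* = \psi(\eta_n) - \hat\psi_n + \hat{P}_n \phi_{\eta_n}$. A direct expansion using the algebraic identity $g(1 - g_P)/(1 - g) - g_P = (g - g_P)/(1 - g)$ gives the doubly-robust representation $P\phi_{\psi, \eta} = \pi_\eta^{-1} Q_P[(g - g_P)(\mu_P - \mu)/(1-g)] + \pi_P(\psi_P - \psi)/\pi_\eta$, and hence
\[
A_n^* = \frac{\pi_n^* - \hat\pi_n}{\pi_n^*}(\psi_n^* - \hat\psi_n) + \frac{1}{\pi_n^*}\hat{Q}_n\left[\frac{(g_n^* - \hat{g}_n)(\hat\mu_n - \mu_n^*)}{1 - g_n^*}\right],
\]
with $B_n^*$ analogous with $(\eta_n, \pi_n, \psi_n)$ in place of $(\eta_n^*, \pi_n^*, \psi_n^*)$. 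The two product terms are $o_{\prob_W^*}(n^{-1/2})$ and $o_{\prob_0^*}(n^{-1/2})$ by the cross-product assumptions, and the two doubly-robust terms are at the required rates by the remaining assumptions, yielding~\ref{cond: bootstrap second order}. Theorem~\ref{thm: bootstrap} now gives conditional asymptotic linearity of $\psi_{n,1}^*$ with influence function $\phi_0$.

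For $\psi_{n,2}^*$, I apply Lemma~\ref{lemma: smooth bootstrap estimating-equations}. The Donsker, consistency, and entropy hypotheses coincide with those already verified. Because $\psi \mapsto P\phi_{\psi, \eta}$ is affine with slope $-\pi_P/\pi_\eta$, the remainders $\Gamma_{0,\eta}$ and $\hat\Gamma_{n,\eta}$ vanish identically and $\hat{G}_{n,\eta_n^*}'(\hat\psi_n) + 1 = (\pi_n^* - \hat\pi_n)/\pi_n^* = o_{\prob_W^*}(1)$ by the added assumption. The centering error $\psi_n^\circ - \hat\psi_n$ equals the doubly-robust cross term with $(\eta_n, \hat\eta_n)$ and is thus $o_{\prob_0^*}(n^{-1/2})$ by assumption, while the drift $\hat{P}_n \phi_{\hat\psi_n, \eta_n^*}$ reduces via the same doubly-robust identity to the cross-product condition with $\eta_n^*$ (the $\pi_P(\psi_P - \psi)/\pi_\eta$ piece vanishes at $\psi = \hat\psi_n$) and is therefore $o_{\prob_W^*}(n^{-1/2})$. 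All hypotheses of Lemma~\ref{lemma: smooth bootstrap estimating-equations} are met, yielding conditional asymptotic linearity of $\psi_{n,2}^*$. The main technical obstacle is the bookkeeping in the verification of~\ref{cond: bootstrap limited complexity}: one must enumerate every cross-product arising in $\phi\bar\phi$ for $\phi, \bar\phi \in \s{F}$ and match each to one of the patterns in~\eqref{eq:hatQcond}; the list was tailored for this purpose, but careful accounting is required to confirm that no term is left uncontrolled.
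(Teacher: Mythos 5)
Your proposal is correct and follows essentially the same route as the paper: verify~\ref{cond: bootstrap limited complexity} via Proposition~\ref{prop: uniform donsker} using uniform entropy preservation and the $\hat{Q}_n$--moment conditions in~\eqref{eq:hatQcond}, verify~\ref{cond: bootstrap weak consistency} by Lipschitz bounds reducing to consistency of $(\mu_n^*, g_n^*, \pi_n^*, \psi_n^*)$, use the second decomposition of $R_n^*$ from~\eqref{eq: one step Rnstar} together with the doubly-robust identity to get~\ref{cond: bootstrap second order} for $\psi_{n,1}^*$, and invoke Lemma~\ref{lemma: smooth bootstrap estimating-equations} for $\psi_{n,2}^*$ exploiting that the estimating function is affine so $\Gamma_{0,\eta}$ and $\hat\Gamma_{n,\eta}$ vanish identically. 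The one step you flag but do not carry out---matching each cross-term in $(\hat P_n - P_0)(\phi\bar\phi)$ to a pattern in~\eqref{eq:hatQcond} after exploiting $I(a=0)I(a=1)=0$ and the tower property---is in fact the longest calculation in the paper's proof, and the paper also separately verifies $\|\hat P_n - P_0\|_{\s F} = o_{\prob_0^*}(1)$ for the hypotheses of Lemma~\ref{lemma: smooth bootstrap estimating-equations} by an analogous but distinct tower-property expansion; both steps go through, so your outline is sound.
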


\Cref{prop: smooth bootstrap counterfactual} illustrates that the bootstrap sampling distribution $\hat{P}_n$ can produce valid bootstrap confidence intervals even if it is not globally consistent. In this case, it is sufficient that the propensity score, conditional mean, and conditional variance functions induced by $\hat{P}_n$ be consistent, and that certain means of the marginal distribution of the covariates $\hat{Q}_n$ be consistent. If $\hat{Q}_n = Q_n$ is the empirical distribution of the observed covariates, then the conditions in~\eqref{eq:hatQcond} hold by the assumption that $\s{F}_\mu$ and $\s{F}_g$ possess finite uniform entropy integrals. However, it may be of interest to use something other than the empirical distribution for $\hat{Q}_n$ in order to, for instance, produce unique bootstrap covariate values. We also note that it is possible that stronger notions of consistency of $\hat{P}_n$ have implications for higher-order properties of bootstrap confidence intervals. Finally, we note that some of the conditions of \Cref{prop: smooth bootstrap counterfactual} hold automatically if $\hat\mu_n = \mu_n$ or $\hat{g}_n = g_n$.

\section{Numerical study}\label{sec: simulation}

We conducted a simulation study to assess the finite-sample performance of the methods of inference for the average density value parameter studied in  Section~\ref{sec: av dens}. We set $P_0$ as the standard normal distribution. 
For each sample size $n \in \{50, 100, 200, 300, 400, 500, 1000, 2000, 3000, 4000, 5000\}$, we simulated 1000 datasets of $n$ independent and identically distributed observations from $P_0$. 
For each dataset, we considered the three estimator constructions defined in Section~\ref{sec: applications}: the one-step estimator $\psi_{n,1} = T_1(\eta_n, \d{P}_n)$, the plug-in estimator $\psi_{n,2} = T_2(\eta_n, \d{P}_n)$, and the empirical mean plug-in estimator $\psi_{n,3} = T_3(\eta_n, \d{P}_n)$. For each estimator, we used three different nuisance estimators $\eta_n$: (1) a KDE with Gaussian kernel and bandwidth $h$ selected at the optimal rate $h \propto n^{-1/5}$ for twice-differentiable densities using the method of \cite{sheather1991bandwidth}; (2) a KDE with under-smoothed bandwidth $h / n^{1/10}$; and (3) TMLE using (1) as the initial estimator. Hence, we constructed a total of nine distinct estimators for each dataset.

We considered four bootstrap sampling distributions: the empirical distribution, and three smooth distributions corresponding to the three density estimators defined above. For each dataset, we generated $B = 1000$ bootstrap datasets using these four bootstrap distributions. For each bootstrap dataset, we then considered the same three estimator constructions using the bootstrap data: $\psi_{n,1}^* = T_1(\eta_n^*, \d{P}_n^*)$, $\psi_{n,2}^* = T_2(\eta_n^*, \d{P}_n^*)$, and $\psi_{n,3}^* = T_3(\eta_n^*, \d{P}_n^*)$. We considered two bootstrap nuisance estimators $\eta_n^*$: the same nuisance estimation procedure used for the original data applied to the bootstrap sample, with bandwidth fixed at the value selected using the original data, and using the fixed nuisance estimator obtained from the original data, i.e.\ $\eta_n^* = \eta_n$. Finally, we used all four methods of constructing bootstrap confidence intervals defined in Section~\ref{sec:conf_int} to construct two-sided, equi-tailed 95\% confidence intervals for $\psi_0$ based on each bootstrap sample. For the percentile $t$-method, we used the influence function-based variance estimator. For comparison, we also constructed ordinary Wald-style confidence intervals using the influence function-based variance estimator. We evaluated the performance of these confidence intervals by computing their empirical coverage and average width over the 1000 simulations.

\begin{figure}[!hbtp]
    \centering
    \includegraphics[height=8in, width=\textwidth]{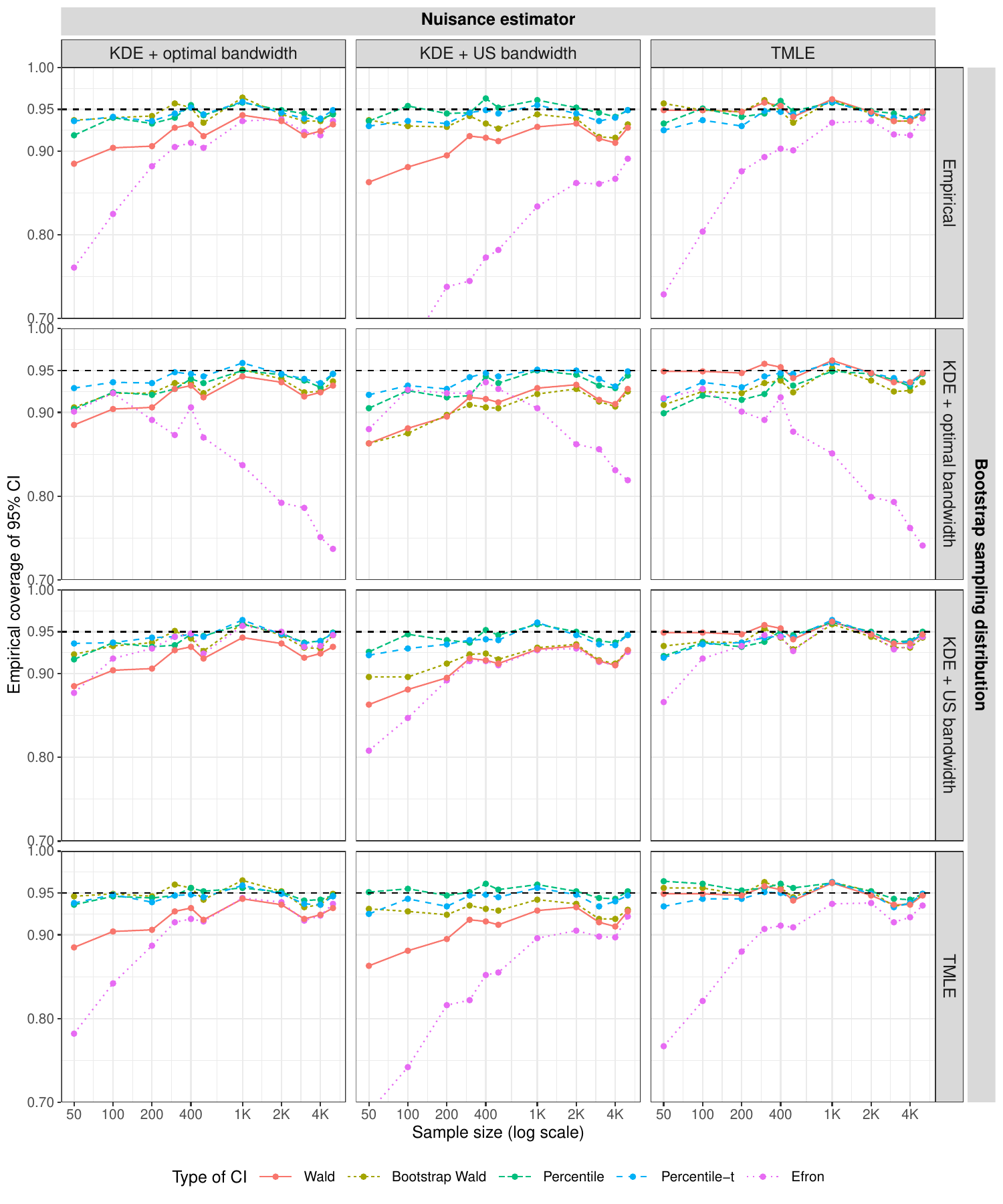}
    \caption{Empirical coverage of 95\% confidence intervals based on the bootstrap one-step estimator when re-estimating the nuisance using the bootstrap sample. ``KDE" stands for kernel density estimator; ``US" stands for under-smoothed; TMLE stands for targeted maximum likelihood estimator.}
    \label{fig: onestep}
\end{figure}
\begin{figure}[!hbtp]
    \centering
    \includegraphics[height=8in, width=\linewidth]{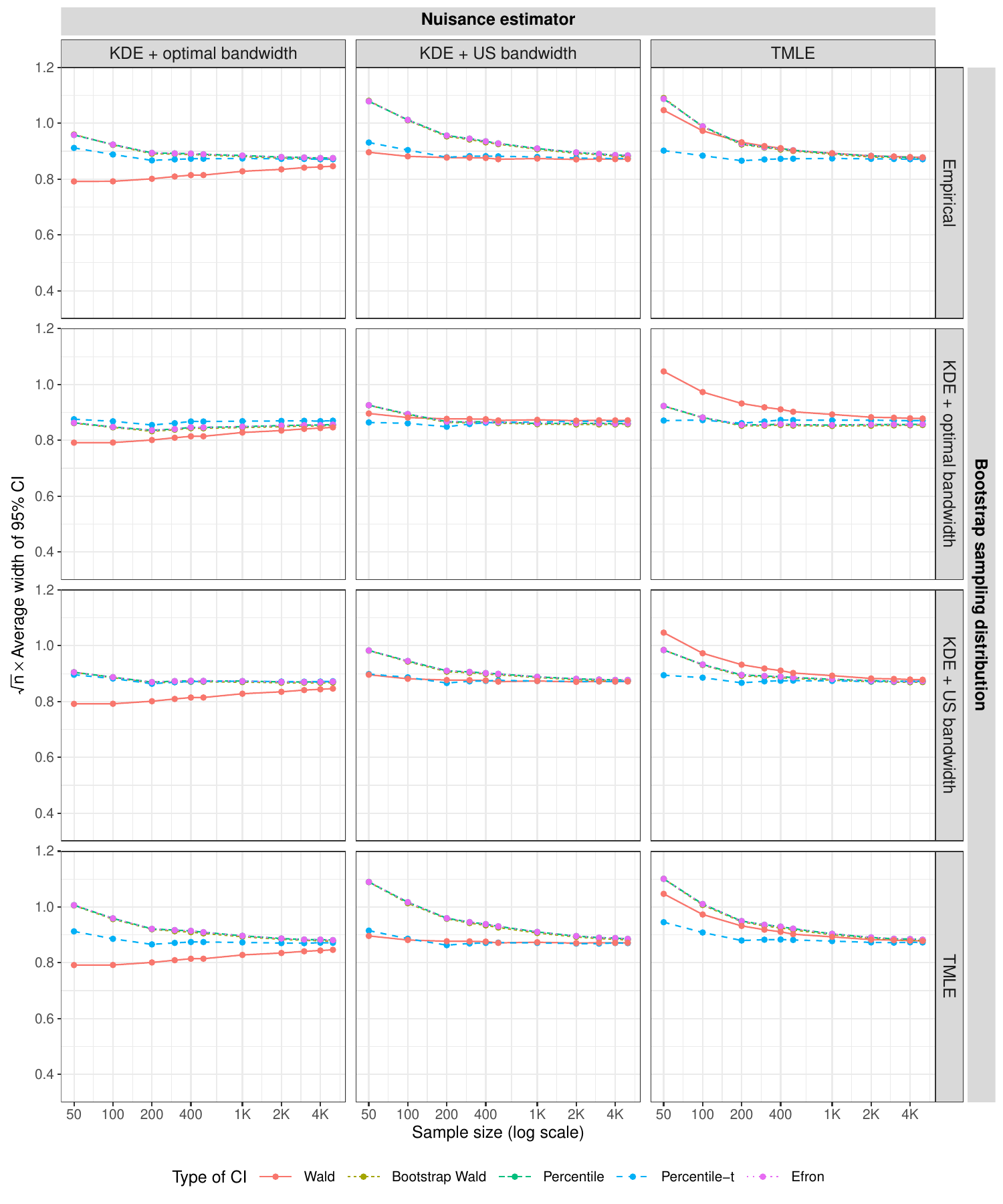}
    \caption{Scaled average width of 95\% confidence intervals based on the bootstrap one-step estimator when re-estimating the nuisance using the bootstrap sample. Abbreviations as in Figure~\ref{fig: onestep}.}
    \label{fig: onestep width}
\end{figure}

We now turn to the results of the simulation study. Figures~\ref{fig: onestep},~\ref{fig: plugin}, and~\ref{fig: mean_plugin} display empirical coverage and Figures~\ref{fig: onestep width},~\ref{fig: plugin width} and~\ref{fig: mean_plugin width} display the average width of 95\% confidence intervals based on the one-step, plug-in, and empirical mean plug-in estimators, respectively, when the bootstrap nuisance was re-estimated using the bootstrap sample. Figures~\ref{fig: fixed onestep} and~\ref{fig: fixed meanplugin} display empirical coverage and Figures~\ref{fig: fixed onestep width} and~\ref{fig: fixed mean_plugin width} display average width based on the one-step and empirical mean plug-in constructions, respectively, when the bootstrap nuisance was fixed. The results for the plug-in construction with the nuisance fixed are not displayed because the coverage in this case was always zero, as discussed in Section~\ref{sec: plug-in}. In each figure, the rows represent the bootstrap sampling distribution and the columns represent the method of construction of the nuisance estimator. For example, the top left panel of Figure~\ref{fig: onestep} shows the empirical coverage rate of confidence intervals based on the one-step estimator where the bootstrap sampling distribution was the empirical distribution $\d{P}_n$ and the nuisance estimator was the KDE with optimal bandwidth. 

We first discuss the results displayed in Figure~\ref{fig: onestep} for the one-step estimator with re-estimated nuisance. Efron's percentile method did not yield valid confidence intervals at large sample sizes when the bootstrap sampling distribution was based on a KDE with optimal bandwidth (second row from the top). This was expected based on the results of Section~\ref{sec:conf_int}. The bias  $T_1(\eta_n, \hat{P}_n) - T_1(\eta_n, \d{P}_n) = (\d{P}_n -\hat{P}_n) \phi_{\eta_n}$ in this case is not $o_{\prob_0^*}(n^{-1/2})$ because $\hat{P}_n$ was not under-smoothed. The coverage of all other confidence intervals for the bootstrap one-step estimator approached 95\% as the sample size increased, which is in line with Propositions~\ref{prop: empirical bootstrap average} and~\ref{prop: smooth bootstrap average}. Efron's percentile confidence intervals had poor coverage for small and moderate sample sizes in some cases, which we hypothesize is due to excess bias in this method, as discussed in Section~\ref{sec:conf_int}.  The coverage of (non-bootstrap) Wald-type confidence intervals approached 95\% in all cases, as expected, though its performance for smaller sample sizes was not always good. An exception was when the nuisance estimator was TMLE (third column from the left), in which case the coverage of the Wald-type estimator was excellent at all sample sizes considered. Otherwise, there was no clear and consistent best nuisance estimator or bootstrap sampling distribution. The average widths scaled by $n^{1/2}$ displayed in Figure~\ref{fig: onestep width} all converge to the same value.

\begin{figure}[!hbtp]
    \centering
    \includegraphics[height=8in, width=\linewidth]{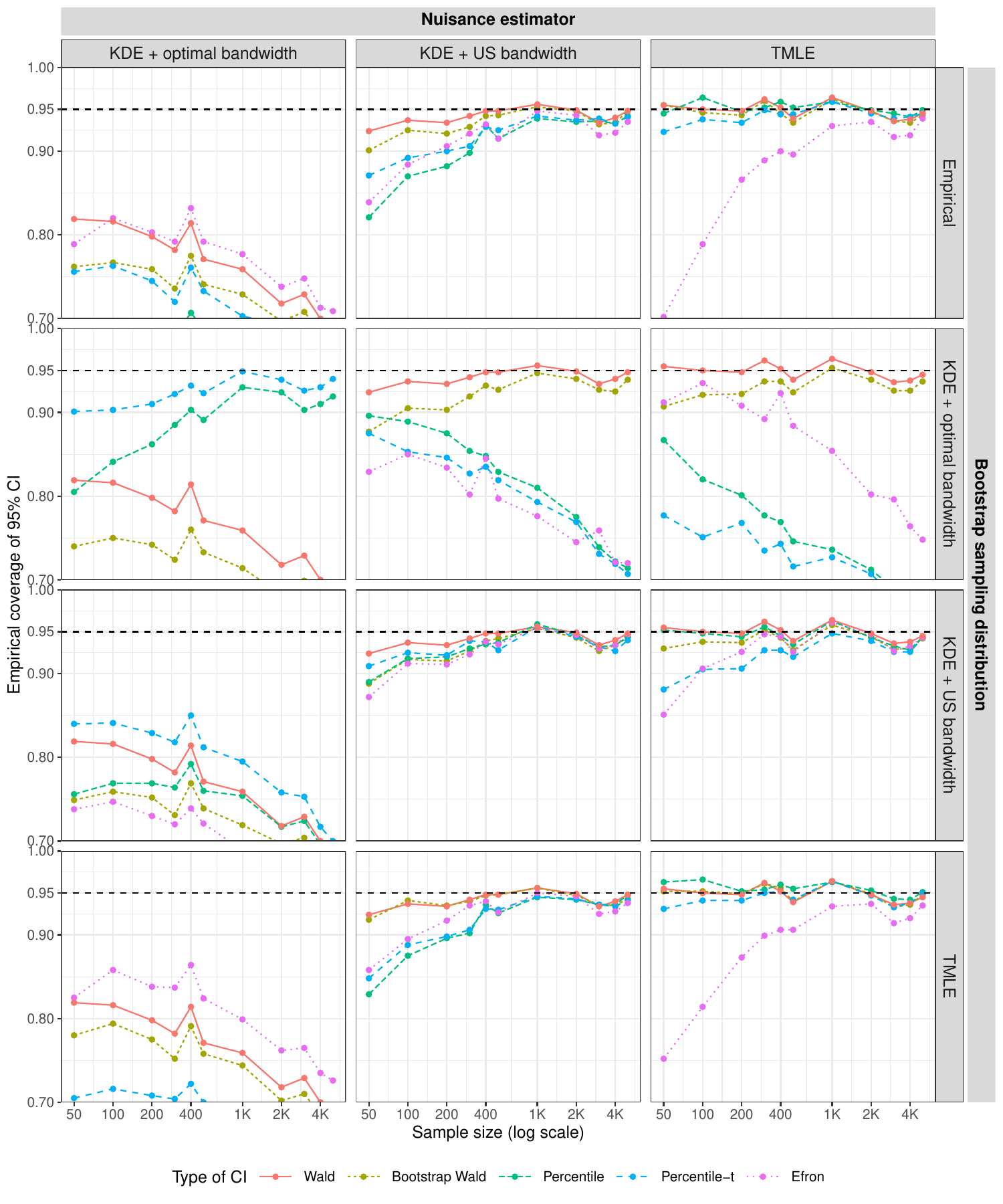}
    \caption{Empirical coverage of 95\% confidence intervals based on the bootstrap plug-in estimator when re-estimating the nuisance using the bootstrap sample. Abbreviations as in Figure~\ref{fig: onestep}.}
    \label{fig: plugin}
\end{figure}

\begin{figure}[!hbtp]
    \centering
    \includegraphics[height=8in, width=\linewidth]{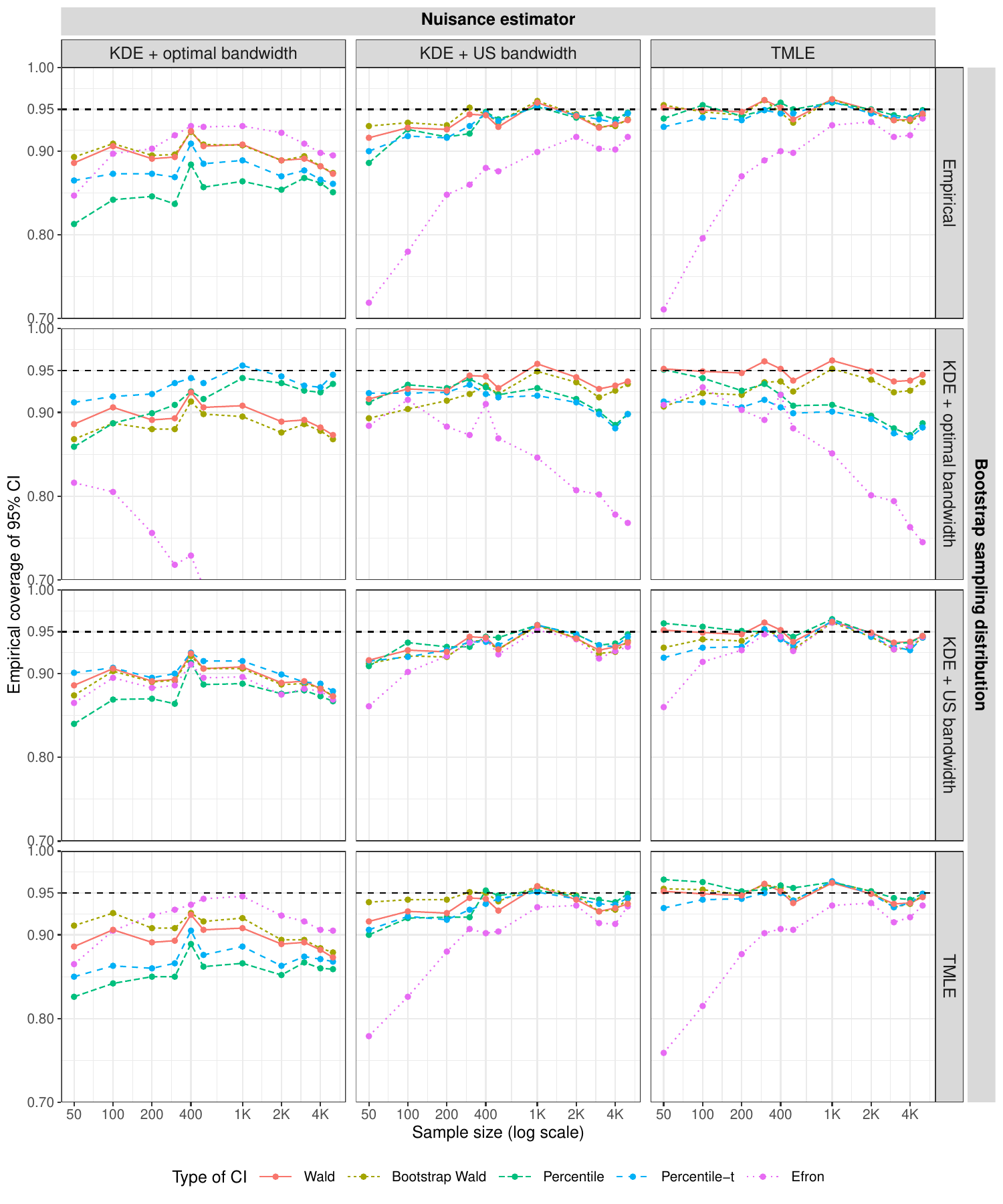}
    \caption{Empirical coverage of 95\% confidence intervals based on the bootstrap empirical mean plug-in estimator when re-estimating the nuisance using the bootstrap sample. Abbreviations as in Figure~\ref{fig: onestep}.}
    \label{fig: mean_plugin}
\end{figure}

\begin{figure}[!hbtp]
    \centering
    \includegraphics[height=8in, width=\linewidth]{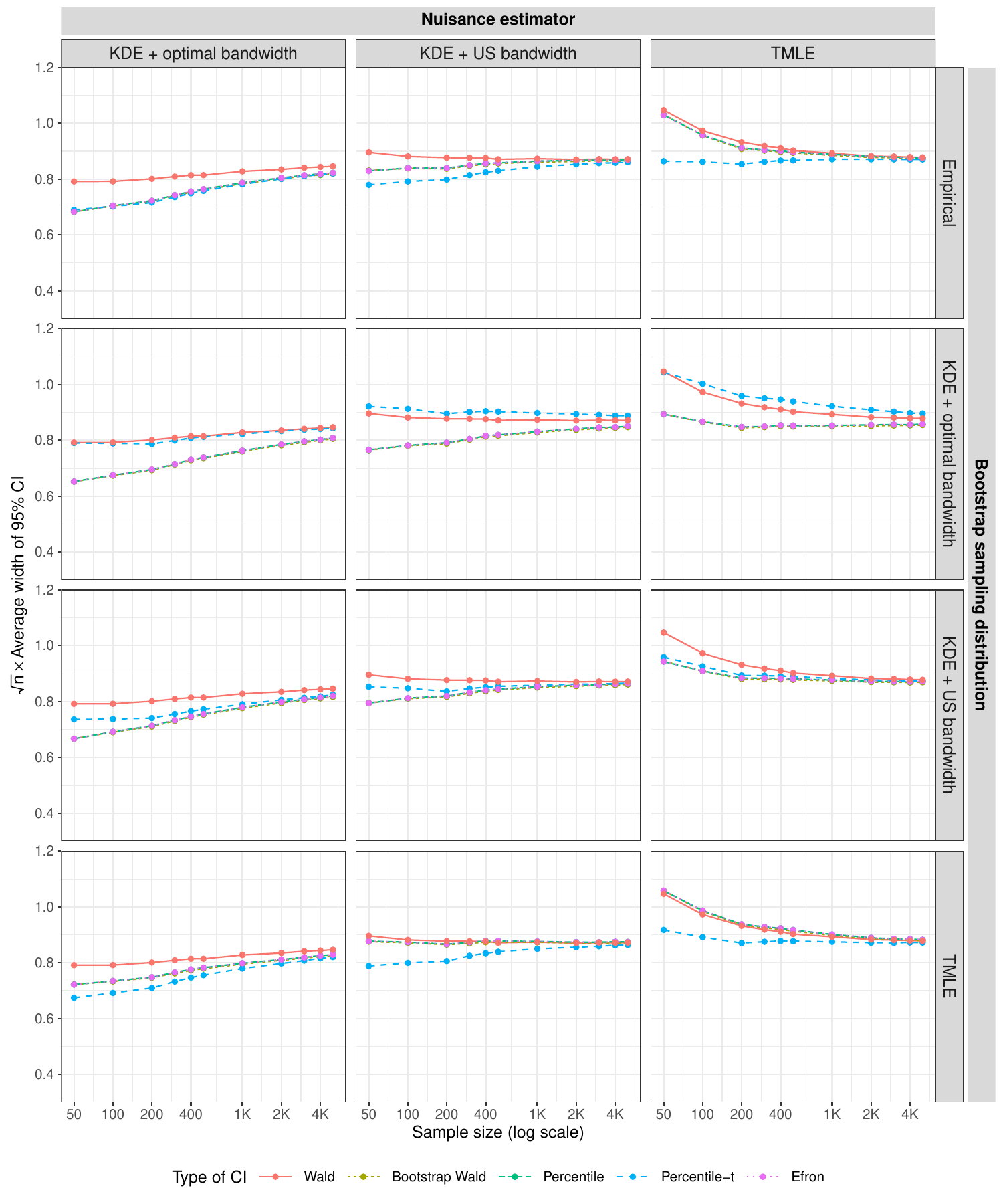}
    \caption{Scaled average width of 95\% confidence intervals based on the bootstrap plug-in estimator when re-estimating the nuisance using the bootstrap sample. Abbreviations as in Figure~\ref{fig: onestep}.}
    \label{fig: plugin width}
\end{figure}
\begin{figure}[!hbtp]
    \centering
    \includegraphics[height=8in, width=\linewidth]{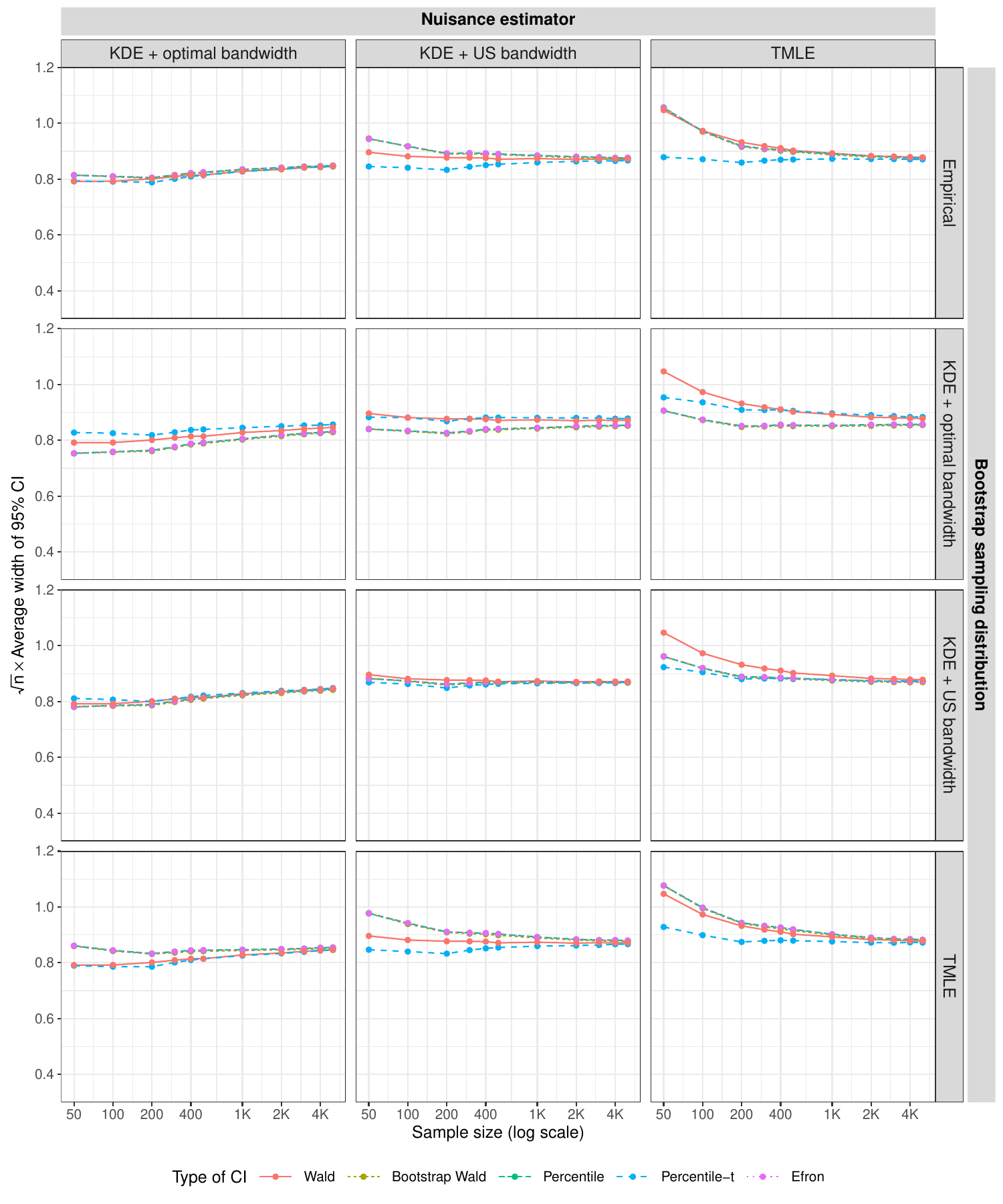}
    \caption{Scaled average width of 95\% confidence intervals based on the bootstrap empirical mean plug-in estimator when re-estimating the nuisance using the bootstrap sample. Abbreviations as in Figure~\ref{fig: onestep}.}
    \label{fig: mean_plugin width}
\end{figure}

We next discuss the results displayed in Figures~\ref{fig: plugin} and~\ref{fig: mean_plugin} for the plug-in and empirical mean plug-in estimators with re-estimated nuisance. Using the KDE with optimal bandwidth as nuisance estimator did not generally yield valid Wald or bootstrap confidence intervals 
because neither estimator is asymptotically linear in this case (first column from the left). However, the percentile and percentile-$t$ confidence intervals had close to nominal coverage in large samples for both estimators when both the nuisance estimator and bootstrap sampling distribution were the KDE with optimal bandwidth (second row from the top and first column from the left). This was expected based on \Cref{prop: auto bias correction smooth}. Besides the bootstrap Wald method, using the KDE with optimal bandwidth as the bootstrap sampling distribution did not yield valid bootstrap confidence intervals in large samples for other nuisance estimators (second row from the top and second and third columns from the left). All other confidence intervals based on the plug-in and empirical mean plug-in estimators with re-estimated nuisance had close to 95\% coverage for large sample sizes, which aligns with Propositions~\ref{prop: classical average},~\ref{prop: empirical bootstrap average}, and~\ref{prop: smooth bootstrap average}. Among the methods with good large-sample coverage, using TMLE as the nuisance estimator and bootstrap sampling distribution along with the Wald, bootstrap Wald, percentile, or percentile $t$-confidence interval yielded the best coverage for small and moderate sample sizes. In many cases, Efron's method  again had poor coverage in small and moderate samples. The average widths scaled by $n^{1/2}$ displayed in Figures~\ref{fig: plugin width} and~\ref{fig: mean_plugin width} appear to again converge to roughly the same value.

\begin{figure}[!hbtp]
    \centering
    \includegraphics[height=8in, width=\linewidth]{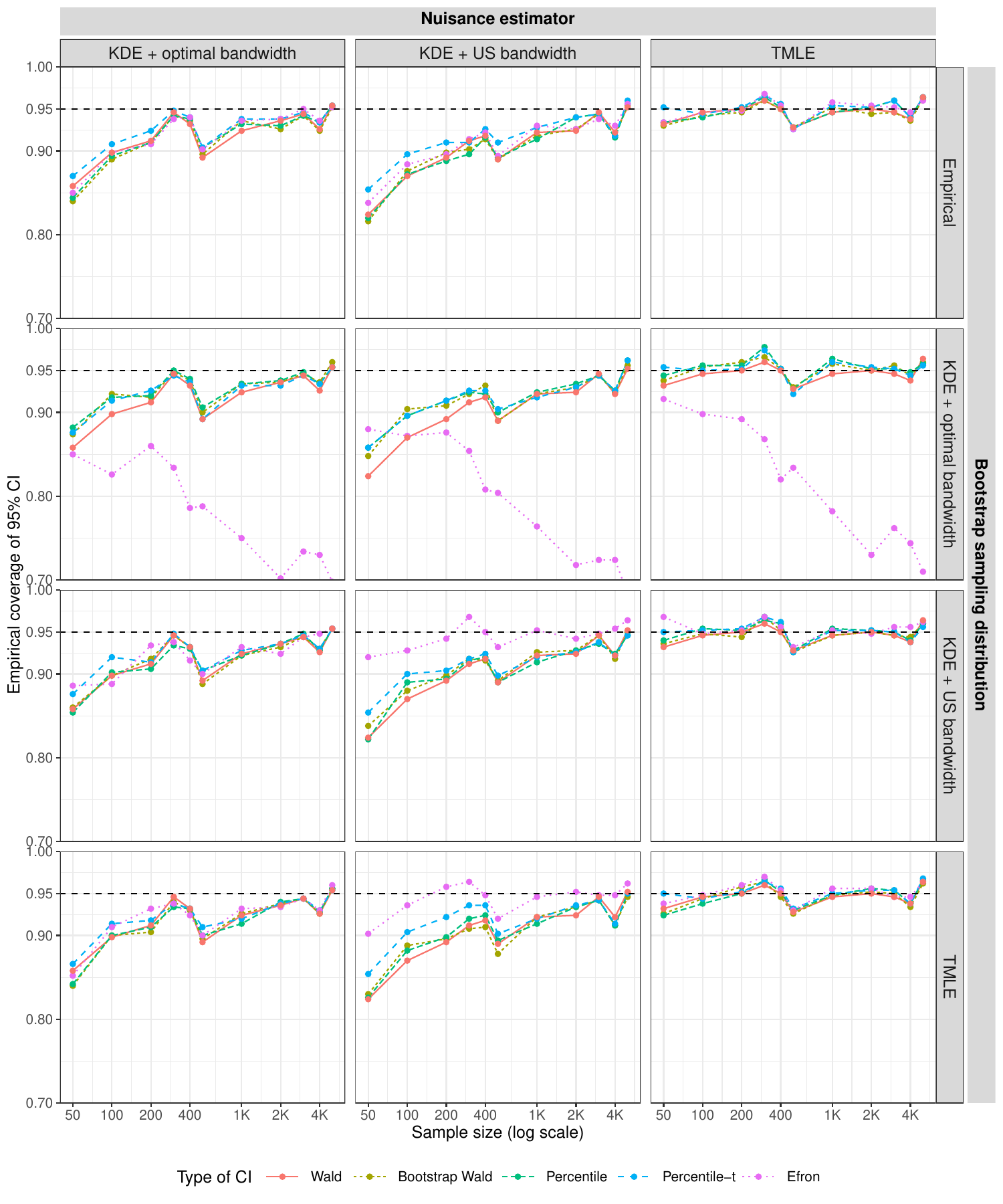}
    \caption{Empirical coverage of 95\% confidence intervals based on the bootstrap one-step estimator when the nuisance was not re-estimated using the bootstrap sample.  Abbreviations  as in Figure~\ref{fig: onestep}.}
    \label{fig: fixed onestep}
\end{figure}

\begin{figure}[!hbtp]
    \centering
    \includegraphics[height=8in, width=\linewidth]{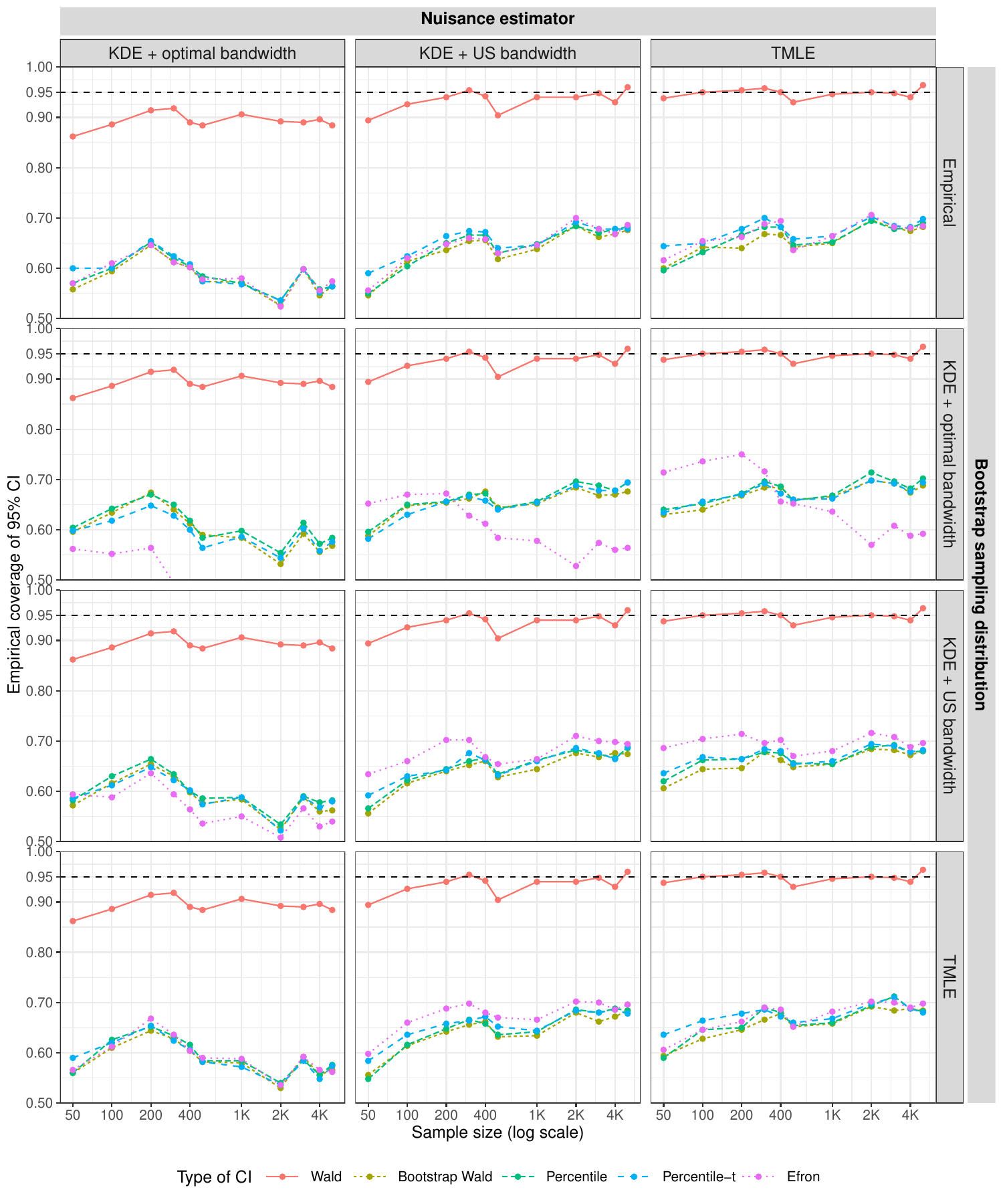}
    \caption{Empirical coverage of 95\% confidence intervals based on the bootstrap empirical mean plug-in estimator when the nuisance was not re-estimated using the bootstrap sample.  Abbreviations  as in Figure~\ref{fig: onestep}.}
    \label{fig: fixed meanplugin}
\end{figure}

\begin{figure}[!hbtp]
    \centering
    \includegraphics[height=8in, width=\linewidth]{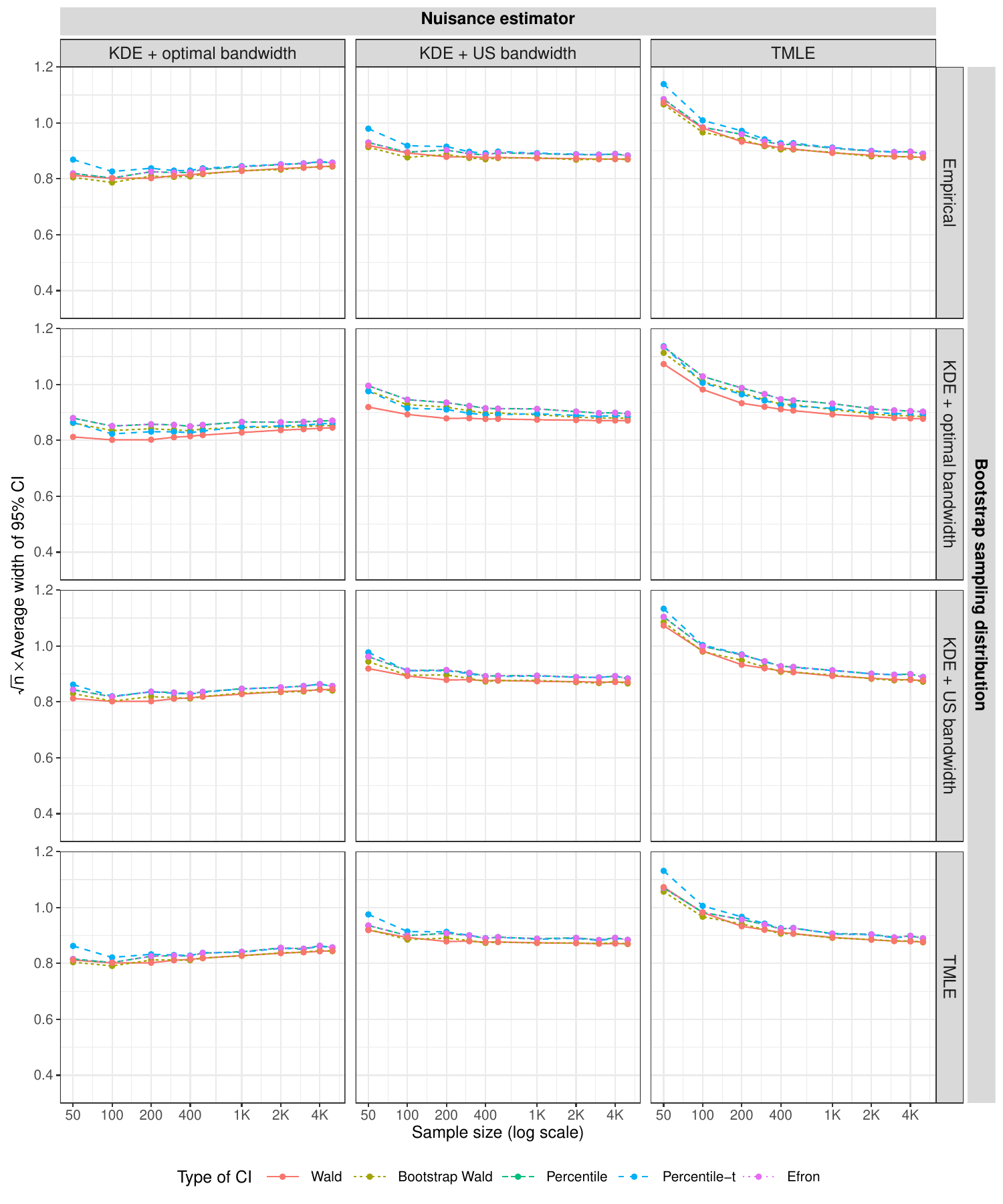}
    \caption{Scaled average width of 95\% confidence intervals based on the bootstrap one-step when the nuisance was not re-estimated using the bootstrap sample.  Abbreviations  as in Figure~\ref{fig: onestep}.}
    \label{fig: fixed onestep width}
\end{figure}

\begin{figure}[!hbtp]
    \centering
    \includegraphics[height=8in, width=\linewidth]{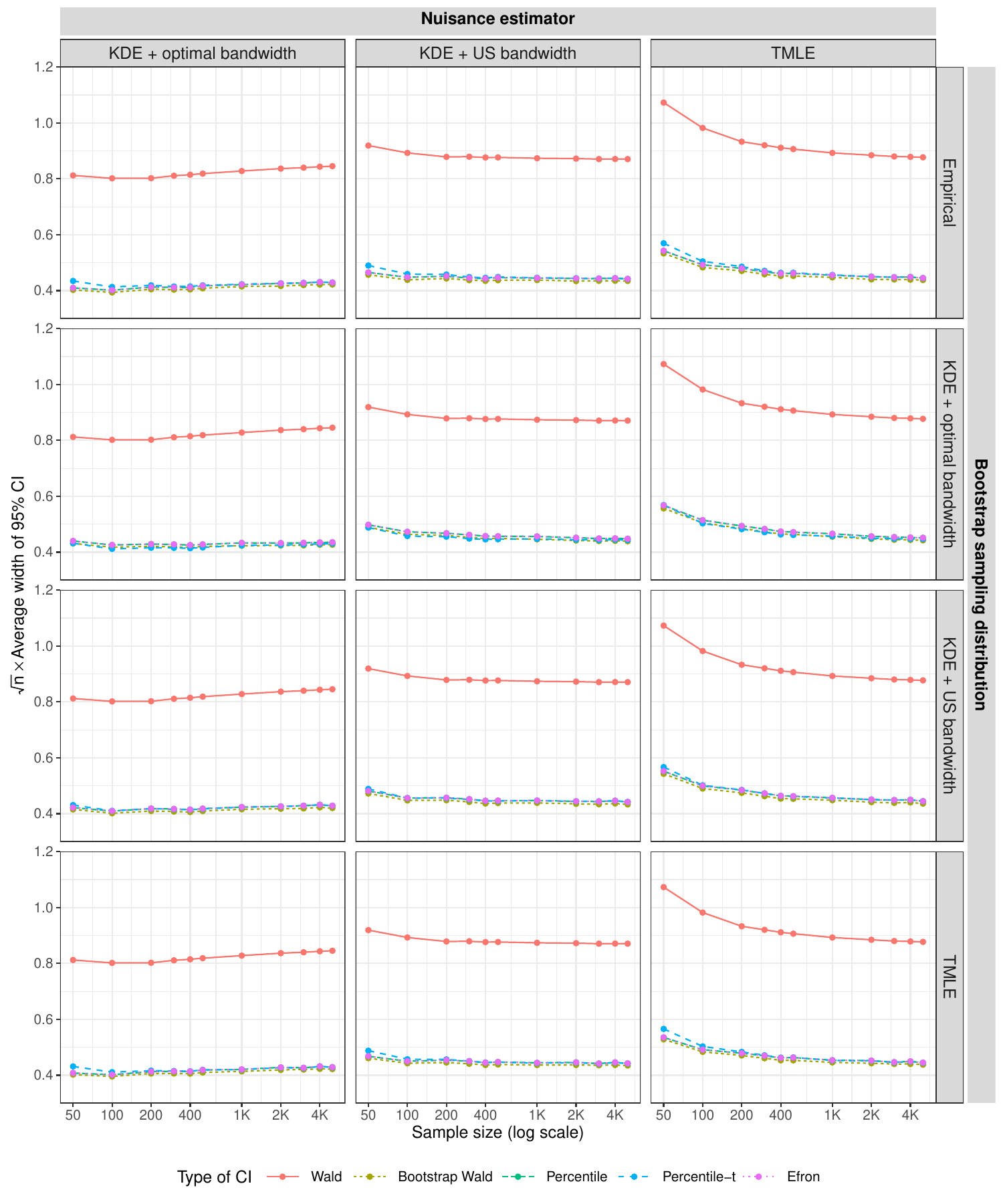}
    \caption{Scaled average width of 95\% confidence intervals based on the bootstrap empirical mean plug-in estimator when the nuisance was not re-estimated using the bootstrap sample.  Notes as in Figure~\ref{fig: onestep}.}
    \label{fig: fixed mean_plugin width}
\end{figure}

We now turn to the results displayed in Figure~\ref{fig: fixed onestep} for the one-step estimator with fixed bootstrap nuisance. Efron's percentile method again did not yield valid coverage in large samples when the bootstrap sampling distribution was based on a KDE with optimal bandwidth (second row from the top). As discussed following Figure~\ref{fig: onestep}, this was expected based on the results of Section~\ref{sec:conf_int}. All other confidence intervals for the bootstrap one-step estimator with fixed bootstrap nuisance estimator had good coverage in large samples, as expected based on \Cref{cor: bootstrap one-step fixed}. Compared to the results in Figure~\ref{fig: onestep} for the bootstrap one-step estimator with re-estimated nuisance, the coverage of bootstrap Wald, percentile, and percentile $t$-confidence intervals were mostly worse when fixing the bootstrap nuisance than when re-estimating it. However, interestingly, the coverage of Efron's percentile intervals was often better. For the empirical bootstrap, percentile $t$-confidence intervals had slightly better coverage in small samples than other bootstrap intervals, but in many other cases the various bootstrap methods had very similar coverage when the nuisance was fixed. For many of the cases considered, bootstrap intervals had better coverage in small and moderate samples than Wald-type intervals, indicating there may still be a benefit of the bootstrap even when fixing the bootstrap nuisance. There was not a substantial difference between the bootstrap sampling distributions, but the KDE with optimal bandwidth had the best overall performance in small and moderate samples. As with the bootstrap one-step estimator with the nuisance re-estimated, TMLE had the best performance among the nuisance estimators considered, with excellent coverage even for small sample sizes. The average widths scaled by $n^{1/2}$ displayed in Figures~\ref{fig: fixed onestep width} appear to again converge to roughly the same value.

Finally, from Figures~\ref{fig: fixed meanplugin} and~\ref{fig: fixed mean_plugin width}, all of the bootstrap confidence intervals for the bootstrap empirical mean plug-in estimator with fixed bootstrap nuisance had poor coverage at all sample sizes because they were on average too narrow. This is in line with the results and discussion of Section~\ref{sec: mean plug-in}.

\section{Conclusion}\label{sec: conclusion}

In this paper, we studied the problem of bootstrapping  asymptotically estimators that rely on a data-adaptive nuisance estimator. We proposed a framework that encompasses many approaches to constructing asymptotically linear estimators and bootstrapping them, and we provided high-level conditions for consistency of the bootstrap in this framework. We also provided more detailed conditions for several bootstrap distributions and estimator constructions. We used our general results to  demonstrate that a wide variety of bootstrap confidence intervals are asymptotically valid in this setting, and our simulation study confirmed this. 
It is our hope that the generality of our framework and theory ensures that there are many potential future applications of our results. 

An important area of future research is establishing rates of convergence for the bootstrap methods considered here. We focused on consistency of the bootstrap because it is an important first step, and because we expect that rates of convergence will require stronger assumptions than we used here. However, understanding how the different components of the original estimator and the bootstrap sampling scheme contribute to the accuracy of bootstrap confidence intervals is crucial for deciding which method to use in practice. For instance, while we showed that the precise behavior of the bootstrap nuisance estimator does not play a role in the first-order asymptotic behavior of the bootstrap estimator as long as our high-level conditions hold, we expect that it plays an important role in the finite-sample accuracy of the bootstrap. This was sometimes, but not always, the case in our numerical studies. Similarly, while we showed that both the empirical and smooth bootstraps can yield asymptotically valid bootstrap confidence intervals, our results did not reveal which approach will yield better finite-sample coverage.
 

\section*{Acknowledgements}

The authors gratefully acknowledge support from the University of Massachusetts Amherst Department of Mathematics and Statistics startup fund and NSF Award 2113171.

\clearpage

\bibliographystyle{apalike2}
\bibliography{reference.bib}

\begin{appendices}

\clearpage

\begin{center}
    \bfseries{\huge Supplementary Material}
\end{center}

\section{Proof of results in Section 2}

\thmclassical*
\begin{proof}[\bfseries{Proof of Theorem~\ref{thm: classical}}]
    Consider the decomposition
    \begin{align*}
        T(\eta_n, \d{P}_n) - T(\eta_0, P_0) = \d{P}_n \phi_0 &+ (\d{P}_n - P_0) (\phi_n - \phi_0) + [T(\eta_n, \d{P}_n) - T(\eta_0, P_0) - (\d{P}_n - P_0) \phi_n].
    \end{align*}
    By conditions~\ref{cond: limited complexity}--\ref{cond: weak consistency} and Lemma~19.24 of \cite{van2000asymptotic}, $(\d{P}_n - P_0) (\phi_n - \phi_0) = o_{\prob_0^*}(n^{-1/2})$. By condition~\ref{cond: second order}, the remainder term is $o_{\prob_0^*}(n^{-1/2})$. The second statement follows by the central limit theorem since $P_0 \phi_0 = 0$ and $P_0 \phi_0^2 < \infty$ by assumption.
\end{proof}

\propvar*
\begin{proof}[\bfseries{Proof of Proposition~\ref{prop: variance}}]
    By adding and subtracting terms, we have
    \[
       \d{P}_n \phi_n^2 - \sigma_0^2 = (\d{P}_n - P_0) \phi_n^2  + P_0 (\phi_n^2 - \phi_0^2).
    \]
    The second term is $o_{\prob_0^*}(1)$ by assumption. For the first term, for any $\varepsilon >0$, we have 
    \begin{align*}
        \prob_0^*\left( \abs{(\d{P}_n - P_0) \phi_n^2} \geq \varepsilon \right) &= \prob_0^*\left( \abs{(\d{P}_n - P_0) \phi_n^2} \geq \epsilon,\, \phi_n^2 \in \s{G} \right) + \prob_0^*\left( \abs{(\d{P}_n - P_0) \phi_n^2} \geq \varepsilon,\, \phi_n^2 \notin \s{G} \right)\\
        & \leq \prob_0^*\left( \sup_{g \in \s{G}} \abs{(\d{P}_n - P_0) g} \geq \varepsilon \right) + \prob_0^*\left( \phi_n^2 \notin \s{G} \right).
    \end{align*}
    Since $\s{G}$ is a $P_0$-Glivenko Cantelli class, the first term is $o(1)$, and since $\phi_n^2 \in \s{G}$ with probability tending to one the second term is $o(1)$. Hence, $(\d{P}_n - P_0) \phi_n^2 = o_{\prob_0^*}(1)$, which completes the proof of the first statement.
    
    By Theorem~2.10.14 of \cite{van1996weak}, since $\sup_{f \in \s{F}}|P_0 f| < \infty$, condition~\ref{cond: limited complexity} implies that $\s{F}^2 = \{f^2: f\in \s{F}\}$ is $P_0$-Glivenko-Cantelli in probability, and since $\prob_0^*(\phi_n^2 \in \s{F}^2) \to 1$, this implies (i). By Cauchy-Schwarz and Minkowski's inequalities, we also have
    \begin{align*}
        \left|P_0 (\phi_n^2 - \phi_0^2) \right| &\leq P_0 | \phi_n^2 - \phi_0^2| \\
        &= P_0 \left| (\phi_n - \phi_0)(\phi_n + \phi_0)\right|\\
        & \leq \norm{\phi_n-\phi_0}_{L_2(P_0)} \norm{\phi_n+\phi_0}_{L_2(P_0)} \\
        & \leq \norm{\phi_n-\phi_0}_{L_2(P_0)} [\norm{\phi_n-\phi_0}_{L_2(P_0)} + 2\norm{\phi_0}_{L_2(P_0)}],
    \end{align*}
    which is $o_{\prob_0^*}(1)$ by~\ref{cond: weak consistency}.
\end{proof}

\section{Proof of results in Section~\ref{sec:theory}}
    

\lemmabootran*
\begin{proof}[\bfseries{Proof of Lemma~\ref{lemma: boot_ran}}]
     A standard way to prove results of this type outside the setting of the bootstrap is the continuous mapping theorem (see, e.g., \citealp[Theorem~19.24]{van2000asymptotic}). However, the continuous mapping theorem might not be applicable in the bootstrap setting as the map $(w_1, \dotsc, w_n) \mapsto h(\d{G}_n^*(x_1, \dotsc, x_n, w_1, \dotsc, w_n))$ might not be measurable given almost $x_1, \dotsc, x_n$ for all $h \in C_b(\ell^\infty(\s{F}))$.

    For any $\tau > 0$, we define $\s{F}_\tau = \{f-f_\infty: f \in \s{F}, \rho(f, f_\infty) < \tau\}$. We then have
    \begin{align*}
        \left\{ | \d{G}_n^* (f_n^* - f_\infty )| \geq \varepsilon \right\}
        &\subseteq \left\{ \| \d{G}_n^* (f_n^* - f_\infty )| \geq \varepsilon,\ \rho(f_n^*, f_\infty) < \tau, \ f_n^* \in \s{F} \right\}\\
        &\qquad\qquad \cup \left\{ \rho(f_n^*, f_\infty) \geq \tau \ \text{or}\ f_n^* \notin \s{F} \right\} \\
        &\subseteq 
        \left\{\| \d{G}_n^* \|_{\s{F}_\tau} \geq \varepsilon \right\} \cup 
        \left\{ \rho(f_n^*, f_\infty)\geq \tau \right\} \cup 
        \left\{ f_n^* \notin \s{F} \right\}
    \end{align*}
    for any $\varepsilon, \tau > 0$. Thus,
    \begin{align*}
        \prob_W^* \left( | \d{G}_n^* (f_n^* - f_\infty)| \geq \varepsilon \right) &\leq \prob^*_W \left( \| \d{G}_n^* \|_{\s{F}_\tau} \geq \varepsilon \right) + \prob^*_W \left( \rho(f_n^*, f_\infty) \geq \tau  \right) + \prob^*_W \left( f_n^*  \notin \s{F} \right).
    \end{align*}
    The second and third terms on the right-hand side conditionally converge to zero in outer probability for any $\tau > 0$ by assumption.
    Hence, the proof is complete if we can show that for all $\varepsilon, \delta,\gamma > 0$ there exists $\tau > 0$ such that $\prob_0^*\left( \prob^*_W \left( \| \d{G}_n^* \|_{\s{F}_\tau} \geq \varepsilon \right) >\delta \right) < \gamma$ for all $n$ large enough.
    
    For each $\varepsilon, \tau > 0$ there exists a sequence of functions $h_m: \ell^\infty(\s{F}) \to \d{R}$ such that $h_m$ is $m$-Lipschitz for all $m$, $1\{ \| z\|_{\s{F}_\tau} \geq \varepsilon\}\leq h_m(z) \leq 1$ for all $z \in \ell^\infty(\s{F})$ and $m$, and $h_m(z)$ monotonically decreases to $1\{ \| z\|_{\s{F}_\tau} \geq \varepsilon\}$ as $m \to \infty$ for each $z \in \ell^\infty(\s{F})$. For instance, $h_m : z \mapsto \min\{ \max(m[\| z\|_{\s{F}_\tau} - \varepsilon] + 1, 0), 1\}$ satisfies these criteria. For any $\varepsilon, \tau > 0$ and $m \in \{1, 2, \dots\}$, we can now write
    \begin{align*}
        \prob^*_W \left( \| \d{G}_n^* \|_{\s{F}_\tau} \geq \varepsilon \right) &= \left[ \E^*_W 1\left\{ \| \d{G}_n^* \|_{\s{F}_\tau} \geq \varepsilon \right\} - \E^*_W h_m( \d{G}_n^* ) \right] + \left[\E^*_W h_m( \d{G}_n^* ) - \E_0 h_m(\d{G}_0) \right] \\
        &\qquad + \left[ \E_0 h_m(\d{G}_0)  -  \E_0 1\left\{ \| \d{G}_0 \|_{\s{F}_\tau} \geq \varepsilon \right\}  \right] + \prob_0 \left( \norm{ \d{G}_0 }_{\s{F}_\tau} \geq \varepsilon \right) \\
        &\leq \left[\E^*_W h_m( \d{G}_n^*) - \E_0 h_m(\d{G}_0) \right]  + \left[ \E_0 h_m(\d{G}_0)  -  \E_0 1\left\{ \| \d{G}_0 \|_{\s{F}_\tau} \geq \varepsilon \right\}  \right] \\
        & \qquad + \prob_0 \left( \norm{ \d{G}_0 }_{\s{F}_\tau} \geq \varepsilon \right),
    \end{align*}
    where the second inequality follows because $\E^*_W 1\left\{ \| \d{G}_n^* \|_{\s{F}_\tau} \geq \varepsilon \right\} \leq \E^*_W h_m\left( \d{G}_n^* \right)$ for all $\varepsilon, \tau >0$ and $m \in \{1,2,\dots\}$ by assumption. For the final term on the right-hand side, since almost all sample paths of $\d{G}_0$ are uniformly continuous in $\ell^\infty(\s{F})$ with respect to $\rho$, 
    for any $\varepsilon, \delta > 0$, we can choose $\tau > 0$ such that $\prob_0 \left( \norm{ \d{G}_0 }_{\s{F}_\tau} \geq \varepsilon \right) <\delta$. For the second term on the right-hand side, by the monotone convergence theorem, $\lim_{m \to \infty} \E_0 h_m(\d{G}_0)  = \E_0 1\{ \| \d{G}_0 \|_{\s{F_\tau}} \geq \varepsilon\}$. Hence, for any $\varepsilon, \delta, \tau >0$, we can choose $m$ such that $\E_0 h_m(\d{G}_0)  -  \E_0 1\left\{ \| \d{G}_0 \|_{\s{F}_\tau} \geq \varepsilon \right\} < \delta$. Finally, for the first  term on the right-hand side, since $h_m$ is bounded and $m$-Lipschitz, $h_m / m \in \BL_1(\ell^\infty(\s{F}))$ for each $m$. Therefore, by the assumed conditional weak convergence of $\d{G}_n^*$ to $\d{G}_0$ in $\ell^\infty(\s{F})$, $\E^*_W \frac{1}{m}h_m\left( \d{G}_n^* \right) \inoutprob \E_0 \frac{1}{m}h_m(\d{G}_0)$ for each $m$, so that $\E^*_W h_m\left( \d{G}_n^* \right) \inoutprob \E_0 h_m(\d{G}_0)$ as well. Hence, for any $\varepsilon, \delta, \gamma, \tau > 0$ and $m \in \{1, 2, \dots\}$, $\prob_0^*\left(\E^*_W h_m\left( \d{G}_n^* \right) - \E_0 h_m(\d{G}_0) > \delta\right) < \gamma$ for all $n$ large enough. This completes the proof. 
    
\end{proof}

\thmbootstrap*
\begin{proof}[\bfseries{Proof of Theorem~\ref{thm: bootstrap}}]
    We let $h$ be an arbitrary element of $\BL_1(\d{R})$. By adding and subtracting terms and the triangle inequality, we have
    \begin{align}\label{eq:thm2proof_bound}
        \begin{split}
            & \abs{ \E_W^* h\left( n^{1/2}\left[ T(\eta_n^*, \d{P}_n^*) - T(\eta_n, \hat{P}_n) \right] \right)  - \E_0 h\left(\d{G}_0 \phi_0\right)} \\
            & \qquad \leq  \E_W^*\abs{ h\left( n^{1/2}\left[T(\eta_n^*, \d{P}_n^*) - T(\eta_n, \hat{P}_n)\right] \right) -  h\left(\d{G}_n^*\phi_0\right)}  + \abs{ \E_W^* h\left(\d{G}_n^* \phi_0\right)  - \E_0h\left(\d{G}_0 \phi_0\right) }.
        \end{split}
    \end{align}

    For the first term on the right-hand side of \eqref{eq:thm2proof_bound}, we note that for any $x_1, x_2 \in \d{R}$, $\abs{h(x_1) - h(x_2)} \leq 2 \wedge |x_1-x_2|$, and for any $\varepsilon > 0$,  $2 \wedge |x_1-x_2| \leq \varepsilon + 2 I(|x_1-x_2| > \varepsilon)$, which implies that for any $\varepsilon > 0$,
    \begin{align}\label{eq:thm2weak}
        \begin{split}
            &\E_W^*\abs{ h\left( n^{1/2}\left[T(\eta_n^*, \d{P}_n^*) - T(\eta_n, \hat{P}_n)\right] \right) -  h\left(\d{G}_n^*\phi_0\right)} \\
            &\qquad \leq \varepsilon + 2 \prob^*_W \left( \left|n^{1/2}\left[T(\eta_n^*, \d{P}_n^*) - T(\eta_n, \hat{P}_n)\right] - \d{G}_n^*\phi_0\right| \geq \varepsilon \right). 
        \end{split}
     \end{align}
    We now write
    \begin{align*}
        n^{1/2}\left[T(\eta_n^*, \d{P}_n^*) - T(\eta_n, \hat{P}_n)\right] - \d{G}_n^*\phi_0 &=  \d{G}_n^* (\phi_n^* - \phi_0)  + n^{1/2}\left[T(\eta_n^*, \d{P}_n^*) - T(\eta_n, \hat{P}_n) - (\d{P}_n^* - \hat{P}_n) \phi_n^* \right]\\
        & = \d{G}_n^* (\phi_n^* - \phi_0) + n^{1/2} R_n^*.
    \end{align*}
    Note that $\d{G}_n^* (\phi_n^* - \phi_0) = o_{\prob_W^*}(1)$ by conditions~\ref{cond: bootstrap limited complexity}--\ref{cond: bootstrap weak consistency} and \Cref{lemma: boot_ran}, and $n^{1/2}R_n^* = o_{\prob_W^*}(1)$ by condition~\ref{cond: bootstrap second order}. This implies the first term on the right-hand side of~\eqref{eq:thm2proof_bound} is $o_{\prob_0^*}(1)$. 
    For the second term on the right hand side of~\eqref{eq:thm2proof_bound}, we define the function $g : \ell^\infty(\s{F}) \to \d{R}$ as $g(z) := h(z(\phi_0))$. For any $z_1, z_2 \in \ell^\infty(\s{F})$ with 
    $\| z_1 - z_2\|_{\s{F}} > 0$, we have
    \[ \frac{|g(z_1) - g(z_2)|}{\| z_1 - z_2\|_{\s{F}}} = \frac{|h(z_1(\phi_0)) - h(z_2(\phi_0))|}{\| z_1 - z_2\|_{\s{F}}} \leq\frac{|z_1(\phi_0) - z_2(\phi_0)|}{\| z_1 - z_2\|_{\s{F}}} \leq 1 \]
     because $h \in \BL_1(\d{R})$ and $\phi_0 \in \s{F}$. Hence, $g \in \BL_1(\ell^\infty(\s{F}))$. Therefore,
     \[\abs{ \E_W^* h\left(\d{G}_n^* \phi_0\right)  - \E_0h\left(\d{G}_0 \phi_0\right) } \leq \sup_{g \in \BL_1(\ell^\infty(\s{F}))}\abs{ \E_W^* g\left(\d{G}_n^* \right)  - \E_0g\left(\d{G}_0\right) } \inoutprob 0\] 
     by condition~\ref{cond: bootstrap limited complexity}. This implies conditional asymptotic linearity of $T(\eta_n^*, \d{P}_n^*)$. Conditional weak convergence of $n^{1/2}\left[T(\eta_n^*, \d{P}_n^*) - T(\eta_n, \hat{P}_n)\right]$ is then implied by \eqref{eq:thm2weak}.
\end{proof}

\lemmasmooth*
\begin{proof}[\bfseries{Proof of \Cref{prop: uniform donsker}}]
    We argue along subsequences using a standard argument structure. By Lemma~1.9.2 of \cite{van1996weak}, every subsequence $\{ n_k : k =1, 2, \dots\}$ has a further subsequence $\{n_{k_l} : l = 1, 2, \dots\}$ such
    \[ 
        \sup_{f, g \in \s{F}} \left| \| f - g\|_{L_2(\hat{P}_{n_{k_l}})} - \| f - g\|_{L_2(P_0)} \right| \stackrel{\text{a.s.*}}{\longrightarrow} 0.
    \]
    By Theorem~4.5 of \cite{anne1992uniform}, we then have $n_{k_l}^{1/2}(\d{P}_{n_{k_l}}^* - \hat{P}_{n_{k_l}}) \underset{W}{\stackrel{\text{a.s.*}}{\leadsto}} \d{G}_0$. Hence, every $\{ n_k \}$ has a further subsequence $\{n_{k_l} \}$ such that 
    \[
        \sup_{h \in \BL_1(\ell^\infty(\s{F}))} \abs{\E_W^* h(\d{G}_{n_{k_l}}^*) - \E_0 h(\d{G}_0)} \stackrel{\text{a.s.*}}{\longrightarrow} 0, 
    \]
    which by Lemma~1.9.2 of~\cite{van1996weak} again implies that 
    \[
        \sup_{h \in \BL_1(\ell^\infty(\s{F}))} \abs{\E_W^* h(\d{G}_{n}^*) - \E_0 h(\d{G}_0)} \inoutprob 0, 
    \]
    i.e., $\d{G}_{n}^* \condinoutdist \d{G}_0$.

\end{proof}

\propconvolution*
\begin{proof}[\bfseries{Proof of Proposition~\ref{prop: weak convergence of convolution smooth bootstrap}}]
    Since $\s{F}$ has uniformly bounded envelope function and finite uniform entropy integral, by Theorem~2.8.3 of \cite{van1996weak}, conditions~(i)--(ii) of Lemma~\ref{prop: uniform donsker} are satisfied. We next show the uniform convergence of semi-metric. For simplicity, we denote $\s{F}_\infty:=\{ f-g : f, g \in \s{F}\}$ and $\s{F}_\infty^2 :=\{ (f-g)^2  : f, g \in \s{F}\}$. Since $|\sqrt{x} - \sqrt{y}| \leq \sqrt{|x-y|}$ for all $x, y \in \d{R}$, we have 
    \begin{align*}
        \sup_{f,g \in \s{F}} \abs{ \norm{f-g}_{L_2(\hat{P}_n)} - \norm{f-g}_{L_2(P_0)}} \leq \sup_{f,g \in \s{F}} \abs{\int (f-g)^2 \sd (\hat{P}_n - P_0)}^{1/2}
    \end{align*}
    Hence, condition~(iii) of Lemma~\ref{prop: uniform donsker} is satisfied if $\|\hat{P}_n - P_0\|_{\s{F}_\infty^2}=o_{\prob_0^*}(1)$.

    We show that $\prob_0^*( \|\hat{P}_n - P_0\|_{\s{F}_\infty^2} \geq \delta ) \to 0$ for all $\delta >0$ using Theorem~2.2 of \cite{gaenssler2000uniform}. To do so, we verify their conditions~(2.2)--(2.5) with $\mu_n = L_n$ and $\nu_n = \d{P}_n$. Since $\s{F}_\infty^2$ is uniformly bounded by the uniform boundedness of $\s{F}$, conditions (2.2) and (2.3) are satisfied, as discussed following Theorem 2.2 of \cite{gaenssler2000uniform}. 
   To show condition (2.4), we firstly note that following inequality holds for all $\varepsilon >0$ and measures $Q$  on $(\s{X}, \s{B})$,
    \[
        N(4\varepsilon, \s{F}_\infty^2, L_1(Q)) \leq N(4\varepsilon, \s{F}_\infty^2, L_2(Q))  \leq N(2\varepsilon, \s{F}_\infty, L_2(Q)) \leq N(\varepsilon, \s{F}, L_2(Q))^2.
    \]
    The first inequality is because $\norm{\,\cdot\,}_{L_1(Q)} \leq \norm{\,\cdot\,}_{L_2(Q)}$ by H{\"{o}}lder's inequality. The second inequality is because for any $f_\infty, g_\infty \in \s{F}_\infty$, we have $Q(f_\infty^2 - g_\infty^2)^2 = Q(f_\infty - g_\infty)^2(f_\infty + g_\infty)^2 \leq 4Q(f_\infty - g_\infty)^2$ by uniform boundedness of $\s{F}$. The third inequality is because for any $f_1, g_1, f_2, g_2 \in \s{F}$, we have $Q((f_1 - g_1) - (f_2 - g_2))^2 \leq Q(f_1 - f_2)^2 + Q(g_1 - g_2)^2 + 2\|f_1-f_2\|_{L_2(Q)}\|g_1-g_2\|_{L_2(Q)}$. Since $\s{F}$ has uniformly integrable $L_2$ entropy, we have 
    $
        \int_0^\infty \sup_{Q} \left\{ \log N(\varepsilon, \s{F}, L_2(Q)) \right\}^{1/2} \sd \varepsilon < \infty,
    $
    where the supremum is taken over all finitely discrete probability measures $Q$ on $(\s{X}, \s{B})$. By exercise 1 in Chapter~2.5 of \cite{van1996weak}, the supremum over finitely discrete $Q$ can be replaced by the supremum over all probability measures $Q$ such that $0< QF^2 < \infty$ without changing the assumption of uniformly integrable $L_2$ entropy of $\s{F}$. So, we have 
    \begin{equation}
        \int_0^\infty \sup_{Q} \left\{ \log N(\varepsilon, \s{F}_\infty^2, L_1(Q)) \right\}^{1/2} \sd \varepsilon < \infty,
        \label{eq: uniform entropy 2}
    \end{equation}
    where the supremum is taken over all probability measures $Q$ on $(\s{X}, \s{B})$. Since $N(\varepsilon, \s{F}_\infty^2, L_1(\hat{P}_n)) \leq \sup_Q N(\varepsilon, \s{F}_\infty^2, L_1(Q))$ almost surely, and the latter is finite for $\varepsilon > 0$ by~\eqref{eq: uniform entropy 2}, $N(\varepsilon, \s{F}_\infty^2, L_1(\hat{P}_n))$ is stochastically bounded for all $\varepsilon > 0$. Furthermore, since $\| \cdot \|_{d_{\tilde\nu_n}^{(1)}} \leq \|\cdot \|_{L_1(\hat{P}_n)}$ for $d_{\tilde\nu_n}^{(1)}$ defined in \cite{gaenssler2000uniform}, this further implies that condition (2.4) of Theorem~2.2 of \cite{gaenssler2000uniform} holds.
    
    Lastly, to show condition (2.5) in Theorem 2.2 of \cite{gaenssler2000uniform} holds, we show that
    \begin{align*}
        & \lim_{n\to \infty}\sup_{f_\infty \in \s{F}_\infty} \abs{P_0*L_n (f_\infty^2) - P_0(f_\infty^2)} \\
        & \qquad = \lim_{n\to \infty}\sup_{f_\infty \in \s{F}_\infty} \abs{\iint f_\infty(x+y)^2 \sd P_0(x) \sd  L_n(y) - \iint  f_\infty(x+y)^2 \sd P_0(x) \sd  \delta_0(y)} = 0.
    \end{align*}
   The function $y \mapsto \int f_\infty^2(x+y) \sd P_0(x)$ is uniformly bounded by 1 since  $\s{F}_\infty^2$ is uniformly bounded by 1. Furthermore, for any $y_1, y_2 \in \s{X}$ and $f_\infty \in \s{F}_\infty$, using the assumption that $p_0$ is Lipschitz, we have 
    \begin{align*}
        &\abs{ \int f_\infty(x+y_1)^2 \sd P_0(x) - \int f_\infty(x+y_2)^2 \sd P_0(x) } \\
        & \qquad = \abs{ \int f_\infty(x+y_1)^2 p_0(x)\sd \lambda(x) - \int f_\infty(x+y_2)^2 p_0(x)\sd \lambda(x) }\\
        & \qquad = \abs{ \int f_\infty(z)^2 [p_0(z-y_1) - p_0(z-y_2)] \sd \lambda(z) }\\
        & \qquad \leq  \gamma\abs{y_1 - y_2} \int f_\infty^2 \sd \lambda\\
        & \qquad \leq  4\gamma \abs{y_1 - y_2} \int F^2 \sd \lambda.
    \end{align*}
    for some $\gamma >0$. Since $ \int F^2 \sd \lambda <\infty$ by assumption, this implies $y \mapsto \int f_\infty^2(x+y) \sd P_0(x)$ is uniformly bounded by 1 and $\gamma'$-Lipschitz for $\gamma' := 4\gamma\int F^2 \sd \lambda$ for all $f_\infty \in \s{F}_\infty$. Thus, the function $\min(\gamma'^{-1}, 1) \times \int f_\infty^2(x+\cdot) \sd P_0(x) \in \BL_1(\d{R})$, which implies that 
    \begin{align*}
        &\sup_{f_\infty \in \s{F}_\infty} \abs{\iint f_\infty(x+y)^2 \sd P_0(x) \sd  L_n(y) - \iint f_\infty(x+y)^2 \sd P_0(x) \sd  \delta_0(y)} \\
        & \qquad \leq \frac{1}{\min(\gamma'^{-1}, 1)} \sup_{h \in \BL_1(\s{X})} \abs{\int h(y) \sd  L_n(y) - \int h(y) \sd  \delta_0(y)},
    \end{align*}
    which goes to $0$ as $n \to \infty$ because $L_n$ converges weakly to $\delta_0$ by assumption. We have now checked all the conditions of Theorem~2.2 of \citealp{gaenssler2000uniform}, so we conclude that $E_0^*\| \hat{P}_n - P_0 \|_{\s{F}_\infty^2}^p \to 0$ for all $p \geq 1$. This demonstrates that condition (iii) of Lemma~\ref{prop: uniform donsker} holds, so the result follows.

\end{proof}

\lemmabootstrapvar*
\begin{proof}[\bfseries{Proof of Lemma~\ref{lemma: bootstrap variance}}]
    By adding and subtracting terms, 
    \begin{equation}\label{eq: bootstrap variance decomposition}
        \d{P}_n^* \phi_n^{*2} - \sigma_0^2 = (\d{P}_n^* - \hat{P}_n) \phi_n^{*2}  + (\hat{P}_n -P_0)\phi_n^{*2} + P_0(\phi_n^{*2}-\phi_0^2).
    \end{equation}
    For the first term on the right-hand side of (\ref{eq: bootstrap variance decomposition}), for any $\varepsilon >0$, we have 
    \begin{align*}
        \prob_W^* \left( \abs{(\d{P}_n^* - \hat{P}_n) \phi_n^{*2}} \geq \varepsilon \right) &= \prob_W^* \left( \abs{(\d{P}_n^* - \hat{P}_n) \phi_n^{*2}} \geq \varepsilon,\, \phi_n^{*2} \in \s{G} \right) \\
        & \qquad + \prob_W^* \left( \abs{(\d{P}_n^* - \hat{P}_n) \phi_n^{*2}} \geq \varepsilon,\, \phi_n^{*2} \notin \s{G} \right)\\
        & \leq \prob_W^* \left( \sup_{g \in \s{G}} \abs{(\d{P}_n^* - \hat{P}_n) g} \geq \varepsilon \right) + \prob_W^* \left( \phi_n^{*2} \notin \s{G} \right)\\
        & = o_{\prob_0^*}(1).
    \end{align*}
    For the second term on the right-hand side of (\ref{eq: bootstrap variance decomposition}), for any $\varepsilon > 0$, we have
    \begin{align*}
        \prob_W^*\left( \abs{(\hat{P}_n - P_0) \phi_n^{*2}} \geq \varepsilon \right) &= \prob_W^* \left( \abs{(\hat{P}_n - P_0) \phi_n^{*2}} \geq \varepsilon,\, \phi_n^{*2} \in \s{G} \right) \\
        & \qquad + \prob_W^* \left( \abs{(\hat{P}_n - P_0) \phi_n^{*2}} \geq \varepsilon,\, \phi_n^{*2} \notin \s{G} \right)\\
        & \leq \prob_W^* \left( \sup_{g \in \s{G}} \abs{(\hat{P}_n - P_0) g} \geq \varepsilon \right) + \prob_W^* \left( \phi_n^{*2} \notin \s{G} \right)\\
        & = o_{\prob_0^*}(1).
    \end{align*}
    Therefore, $\d{P}_n^* \phi_n^* - \sigma_0^2 = o_{\prob_W^*}(1)$ by assumption. Furthermore, $P_0 (\phi_n^{*2} - \phi_0^2) = o_{\prob_W^*}(1)$ is implied by condition~\ref{cond: bootstrap weak consistency}. This is because by Cauchy-Schwarz and Minkowski's inequalities,
    \begin{align*}
        \abs{P_0 (\phi_n^{*2} - \phi_0^2)} &\leq P_0 | \phi_n^{*2} - \phi_0^2| \\
        & = P_0 \abs{(\phi_n^* - \phi_0)(\phi_n^* + \phi_0)} \\
        & \leq \norm{\phi_n^* - \phi_0}_{L_2(P_0)} \norm{\phi_n^* + \phi_0}_{L_2(P_0)} \\
        & \leq \norm{\phi_n^* - \phi_0}_{L_2(P_0)} [\norm{\phi_n^* - \phi_0}_{L_2(P_0)} + 2\norm{\phi_0}_{L_2(P_0)}]\\
        & = o_{\prob_W^*}(1).
    \end{align*}
    Specifically, for the empirical bootstrap $\hat{P}_n = \d{P}_n$, condition~\ref{cond: bootstrap limited complexity}(b) implies $\s{F}$ is $P_0$-Donsker by Theorem~3.6.1 of \cite{van1996weak}. Hence, $\s{G} = \{f^2: f \in \s{F}\}$ is $P_0$-Glivenko-Cantelli in probability by  Lemma~2.10.14 of \cite{van1991differentiable}, which implies (ii) and (iii) by Theorem~2.6 of \cite{gine1990bootstrapping}. Meanwhile, (i) is implied by condition~\ref{cond: bootstrap limited complexity}(a), where $\s{G} = \{f^2: f \in \s{F}\}$.
\end{proof}

\begin{lemma}[Bootstrap Slutsky's Theorem]\label{lemma: slutsky}
    Let $X_n^*$ and $Y_n^*$ be two sequences of real-valued random variables defined on the product probability space $(\s{X}, \s{B}, P_0)^n \times (\s{W}_n, \s{C}_n, Q_n)^n$. Assume that $X_n^* \condinoutdist X$ and $Y_n^* \condinoutprob c$ in $\d{R}$, where $X$ is a tight, Borel measurable limit and $c$ is a constant. We then have (i) $Y_n^* X_n^* \condinoutdist cX$ in $\d{R}$, (ii) $(Y_n^*)^{-1} X_n^* \condinoutdist c^{-1}X$ in $\d{R}$ provided $c \neq 0$, and (iii)  $X_n^* + Y_n^* \condinoutdist X + Y$ in $\d{R}$.
\end{lemma}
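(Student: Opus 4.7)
The plan is to first establish joint conditional weak convergence $(X_n^*, Y_n^*) \condinoutdist (X, c)$ in $\d{R}^2$ together with conditional tightness of $X_n^*$, then derive (i)--(iii) through continuous-mapping-style arguments in the spirit of Lemma~\ref{lemma: boot_ran}. The central tool throughout is the inequality $|h(a) - h(b)| \leq |a-b| \wedge 2 \leq \varepsilon + 2 \cdot 1\{|a-b|>\varepsilon\}$, valid for any $h \in \BL_1(\d{R})$, which converts differences of test-function values into probability bounds controllable by the two hypotheses.

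For the joint conditional weak convergence, I would take $h \in \BL_1(\d{R}^2)$ (with any fixed metric on $\d{R}^2$) and split
\[
    |\E_W^* h(X_n^*, Y_n^*) - \E h(X,c)| \leq \E_W^*|h(X_n^*, Y_n^*) - h(X_n^*, c)| + |\E_W^* h(X_n^*, c) - \E h(X,c)|.
\]
The first piece is bounded by $\varepsilon + 2 \prob_W^*(|Y_n^* - c| > \varepsilon)$, which is $o_{\prob_0^*}(1)$ by $Y_n^* \condinoutprob c$. The second, uniformly in $h$, is bounded by $\sup_{g \in \BL_1(\d{R})} |\E_W^* g(X_n^*) - \E g(X)|$ since $x \mapsto h(x,c)$ inherits $\BL_1(\d{R})$, and this is $o_{\prob_0^*}(1)$ by $X_n^* \condinoutdist X$. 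Conditional tightness of $X_n^*$ follows by applying the same conditional weak convergence to a sequence of Lipschitz bump functions $\chi_M$ equal to $1$ outside $[-M-1, M+1]$ and $0$ on $[-M,M]$: then $\prob_W^*(|X_n^*| > M+1) \leq \E_W^* \chi_M(X_n^*) \inoutprob \E \chi_M(X)$, which can be made arbitrarily small for large $M$ by tightness of $X$.

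For (i) I would decompose $Y_n^* X_n^* = c X_n^* + (Y_n^* - c) X_n^*$. For $h \in \BL_1(\d{R})$, $\E_W^*|h(Y_n^* X_n^*) - h(c X_n^*)|$ is bounded by $\varepsilon + 2 \prob_W^*(|X_n^*| > M) + 2 \prob_W^*(|Y_n^* - c| > \varepsilon / M)$, which can be made arbitrarily small in outer probability by first choosing $M$ via the tightness step and then $n$ large. The remaining $|\E_W^* h(c X_n^*) - \E h(c X)|$ is bounded by $(1 \vee |c|) \sup_{g \in \BL_1(\d{R})} |\E_W^* g(X_n^*) - \E g(X)|$ since $x \mapsto h(cx)/(1 \vee |c|) \in \BL_1(\d{R})$. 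Claim (ii) then reduces to (i) applied with $1/Y_n^*$ in place of $Y_n^*$, once I verify $1/Y_n^* \condinoutprob 1/c$; this holds because on $\{|Y_n^*| \geq |c|/2\}$ we have $|1/Y_n^* - 1/c| \leq (2/c^2)|Y_n^* - c|$, while $\prob_W^*(|Y_n^*| < |c|/2) = o_{\prob_0^*}(1)$ since $c \neq 0$. For (iii)---whose statement evidently contains a typo, the intended limit being $X + c$---no tightness is needed, since $|h(X_n^* + Y_n^*) - h(X_n^* + c)| \leq |Y_n^* - c| \wedge 2$, and $|\E_W^* h(X_n^* + c) - \E h(X + c)|$ is handled via $x \mapsto h(x+c) \in \BL_1(\d{R})$. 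The main obstacle will be the careful coupling of outer expectations with the unbounded product map in (i) and (ii), which must be tamed by truncation via the conditional tightness step while keeping every bounding quantity expressible through $\BL_1$ functions so that outer-probability bounds remain valid.
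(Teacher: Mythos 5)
Your proof is correct and follows essentially the same route as the paper: for part~(i) you use the same two-term decomposition around $h(cX_n^*)$, the same rescaling of $x \mapsto h(cx)$ by $1/(1 \vee |c|)$ to land in $\BL_1(\d{R})$, and the same split $\prob_W^*(|X_n^*||Y_n^*-c| > \varepsilon) \leq \prob_W^*(|X_n^*| > M) + \prob_W^*(|Y_n^*-c| > \varepsilon/M)$ followed by a conditional tightness argument.

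Two small stylistic differences are worth pointing out. First, for conditional tightness the paper uses a monotone sequence $h_m$ decreasing pointwise to the indicator $1\{|x| \geq M\}$, then applies the monotone convergence theorem to control the gap $\E_0 h_m(\d{G}_0) - \E_0 1\{\|\d{G}_0\| \geq \varepsilon\}$; you instead choose a single bump function $\chi_M$ sandwiched between $1\{|x| > M+1\}$ and $1\{|x| > M\}$ so that $\prob_W^*(|X_n^*| > M+1) \leq \E_W^* \chi_M(X_n^*)$ and $\E \chi_M(X) \leq \prob(|X| > M)$ automatically, bypassing the monotone convergence step. Your version is slightly more direct and equally valid. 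Second, the preliminary joint conditional weak convergence $(X_n^*, Y_n^*) \condinoutdist (X, c)$ in $\d{R}^2$ that you set up at the start is never actually invoked in your proofs of~(i)--(iii), which proceed via the direct decompositions instead; it is harmless but redundant. You also correctly observe that the statement of~(iii) has a typo (the limit should be $X + c$, not $X + Y$), and the paper's own proof only writes out~(i), deferring~(ii) and~(iii) to analogous arguments, whereas you supply them; your reduction of~(ii) to~(i) via $1/Y_n^* \condinoutprob 1/c$ (using $\prob_W^*(|Y_n^*| < |c|/2) \leq \prob_W^*(|Y_n^*-c| > |c|/2)$ and the Lipschitz bound on $y \mapsto 1/y$ away from zero) is a clean way to do it.
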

\begin{proof}[\bfseries{Proof of Lemma~\ref{lemma: slutsky}}]
    We only show (i); a similar argument can be applied to yield (ii) and (iii). As in the proof of Lemma \ref{lemma: boot_ran}, a standard way to prove results of this type outside the setting of the
    bootstrap is the continuous mapping theorem (e.g., Lemma~2.8 of \citealp{van2000asymptotic}). However, the continuous mapping theorem might not be applicable in the bootstrap setting as the map $(w_1, \dotsc, w_n) \mapsto h(\d{G}_n^*(x_1, \dotsc, x_n, w_1, \dotsc, w_n))$ might not be measurable given almost every $x_1, \dotsc, x_n$ for all $h \in C_b(\ell^\infty(\s{F}))$.
    
    We let $h$ be an arbitrary element of $\BL_1(\d{R})$. By adding and subtracting terms and the triangle inequality, for any $\varepsilon > 0$,
    \begin{align}
        | \E_W^*h(X_n^* Y_n^*) - \E_0 h(X c) | &\leq | \E_W^*h(X_n^* Y_n^*) - \E_W^* h(X_n^* c) | + | \E_W^*h(X_n^* c) - \E_0 h(X c) | \notag\\
        &\leq \varepsilon + 2 \prob_W^* (|X_n^* Y_n^* - X_n^* c| \geq \varepsilon) + | \E_W^*h(X_n^* c) - \E_0 h(X c) |\label{eq: multiple slutsky},
    \end{align}
    where the second inequality is because $\abs{h(x_1) - h(x_2)} \leq 2 \wedge |x_1-x_2| \leq \varepsilon + 2 I(|x_1-x_2| > \varepsilon)$  for any $x_1, x_2 \in \d{R}$. By defining $g: \d{R} \mapsto \d{R}$ as $g(x)=cx$, we note that $\min(|c|{-1}, 1)\times (h \circ g) \in \BL_1(\d{R})$ because for any $x_1, x_2 \in \d{R}$ and $x_1 \neq x_2$, 
    \begin{align*}
        & \frac{|\min(|c|^{-1}, 1) \times (h \circ g)(x_1) - \min(|c|^{-1}, 1) \times (h \circ g)(x_2)|}{|x_1 - x_2|} \\
        &\qquad = \min(|c|^{-1}, 1) \frac{|(h \circ g)(x_1) - (h \circ g)(x_2)|}{|g(x_1) - g(x_2)|}\frac{|g(x_1) - g(x_2)|}{|x_1 - x_2|}\\
        & \qquad \leq \min(|c| , 1) \frac{|(h \circ g)(x_1) - (h \circ g)(x_2)|}{|g(x_1) - g(x_2)|} \\
        &\qquad \leq 1
    \end{align*}
    because $h \in \BL_1(\d{R})$. Since in addition $|\min(|c|^{-1}, 1)\times (h \circ g)| \leq 1$,  $\min(|c|^{-1}, 1)\times (h \circ g) \in \BL_1(\d{R})$. Hence, for the third term on the right-hand side of \eqref{eq: multiple slutsky}, we have 
    \begin{align*}
        | \E_W^*h(X_n^* c) - \E_0 h(X c) | & = \frac{1}{\min(|c|^{-1}, 1)} \abs{\E_W^*  (\min(|c|^{-1}, 1) \times (h \circ g)) (X_n^*) - \E_0 (\min(|c|^{-1}, 1) \times (h\circ g))(X)} \\
        & \leq \frac{1}{\min(|c|^{-1}, 1)} \sup_{h \in \BL_1(\d{R})} | \E_W^*h(X_n^*) - \E_0 h(X) |,
    \end{align*}
    which converges to 0 in outer probability since $X_n^* \condinoutdist X$ by assumption. For the second term on the right-hand side of \eqref{eq: multiple slutsky}, for any $M>0$, 
    \begin{align*}
        \prob_W^* (|X_n^* Y_n^* - X_n^* c| \geq \varepsilon) & = \prob_W^* (|X_n^* Y_n^* - X_n^* c| \geq \varepsilon, |X_n^*| < M) \\
        & \qquad + \prob_W^* (|X_n^* Y_n^* - X_n^* c| \geq \varepsilon, |X_n^*| \geq M)\\
        & \leq \prob_W^* (|Y_n^* - c| \geq \varepsilon/M) + \prob_W^* (|X_n^*| \geq M).
    \end{align*}
    By assumption, $Y_n^* \condinoutprob c$ so that $\prob_W^* (|Y_n^* - c| \geq \varepsilon/M) = o_{\prob_0^*}(1)$. Hence, the proof is complete if we can show that given any $\varepsilon, \gamma > 0$, there exists $M>0$ such that $\prob_0^*(\prob_W^* (|X_n^*| \geq M) \geq \varepsilon) < \gamma$ for all $n$ large enough. 
    
    For each $\varepsilon, \gamma >0$ there exists a sequence of functions $h_m: \d{R} \to \d{R}$ such that $h_m$ is $m$-Lipschitz for all $m$, $1\{ |x| \geq M\}\leq h_m(z) \leq 1$ for all $x \in \d{R}$ and $m$, and $h_m(z)$ monotonically decreases to $1\{ |x| \geq M\}$ as $m \to \infty$ for each $x \in \d{R}$. For instance, $h_m : x \mapsto \min\{ \max(m[|x| - M] + 1, 0), 1\}$ satisfies these criteria. For any $\varepsilon > 0$ and $m \in \{1, 2, \dots\}$, we can now write
    \begin{align*}
        \prob^*_W \left( |X_n^*| \geq M \right) &= \left[ \E^*_W 1\left\{ |X_n^*| \geq M \right\} - \E^*_W h_m\left( X_n^* \right) \right] + \left[\E^*_W h_m\left( X_n^* \right) - \E_0 h_m(X) \right] \\
        &\qquad + \left[ \E_0 h_m(X)  -  \E_0 1\left\{ |X| \geq M \right\}  \right] + \prob_0 \left( |X| \geq M \right) \\
        &\leq \left[\E^*_W h_m\left( X_n^* \right) - \E_0 h_m(X) \right]  + \left[ \E_0 h_m(X)  -  \E_0 1\left\{ |X| \geq M \right\}  \right] \\
        & \qquad + \prob_0 \left( |X| \geq M \right),
    \end{align*}
   where the second inequality follows because $\E^*_W 1\left\{ |X_n^*| \geq M \right\} \leq \E^*_W h_m\left( X_n^* \right)$ for all $\varepsilon >0$ and $m \in \{1,2,\dots\}$ by assumption. We therefore have 
   \begin{align}\label{eq: slutsky decomposition}
       &\prob_0^*(\prob_W^* (|X_n^*| \geq M) \geq \varepsilon) \notag\\
       & \qquad \leq \prob_0^*\left([\E^*_W h_m\left( X_n^* \right) - \E_0 h_m(X)]  \geq \varepsilon/3 \right) + \prob_0^*\left( [\E_0 h_m(X)  -  \E_0 1\left\{ |X| \geq M \right\}] \geq \varepsilon/3\right)  \notag\\
       & \qquad \qquad + \prob_0^*(  \prob_0 \left( |X| \geq M \right) \geq \varepsilon/3) \notag\\
       & \qquad = \prob_0^*\left([\E^*_W h_m\left( X_n^* \right) - \E_0 h_m(X)]  \geq \varepsilon/3 \right) + 1\left\{ [\E_0 h_m(X)  -  \E_0 1\left\{ |X| \geq M \right\}] \geq \varepsilon/3\right\}  \notag\\
       & \qquad \qquad + 1\left\{  \prob_0 \left( |X| \geq M \right) \geq \varepsilon/3\right\}. 
   \end{align}
   First, since $X$ is tight by assumption, we can choose $M$ suh that the third term on the right-hand side of \eqref{eq: slutsky decomposition} is 0. Next, for this choice of $M$, by the monotone convergence theorem we can choose $m$ large enough so that the second term on the right-hand side of \eqref{eq: slutsky decomposition} is 0. Finally, for these choices of $m$ and $M$, for the first  term on the right-hand side of \eqref{eq: slutsky decomposition}, we have $\min(m^{-1}, 1) \times h_m \in \BL_1(\d{R})$ because $h_m$ is bounded and $m$-Lipschitz. Therefore, by the assumed conditional weak convergence of $X_n^*$ to $X$ in $\d{R}$, $\E^*_W [\min(m^{-1}, 1) \times h_m\left( X_n^* \right)] \inoutprob \E_0 [\min(m^{-1}, 1) \times h_m(X)]$ for each $m$, so that $\E^*_W h_m\left( X_n^* \right) \inoutprob \E_0 h_m(X)$ as well. Hence, the first  term on the right-hand side of \eqref{eq: slutsky decomposition} converges to 0 for fixed $m$ and $\varepsilon$, so we can make it as small as we like for $n$ large enough.  Hence, for any $\varepsilon$, there exist $M>0$ and $m \in \{1, 2, \dots\}$ such that $ \prob_W^* (|X_n^*| \geq M) = o_{\prob_0^*}(1)$ for $n$ large enough.
\end{proof}

We next have a result that generalizes Theorem~23.4 of \cite{van2000asymptotic} to include smooth bootstraps. 
\begin{lemma}[Conditional CLT]\label{thm: clt}
    Let $\phi_0: \d{R}^d \mapsto \d{R}$ satisfy $P_0 \phi_0 = 0$ and $P_0 \phi_0^2 < \infty$. Let $X_1, X_2, \dots$ be IID random vectors with mean $\mu$ and covariance matrix $\Sigma$. If $\hat{P}_n \phi_0^2 \inoutprob P_0 \phi_0^2  $ and $(\hat{P}_n - P_0) [\phi_0^2 1\{|\phi_0| > M\}] \inoutprob 0$ for any $M>0$, then conditionally on every sequence $X_1, X_2, \dots$, in outer probabiliy,
    \[
        n^{1/2} \left( \d{P}_n^* - \hat{P}_n \right)\phi_0  \condinoutdist N(0, P_0 \phi_0^2).
    \]
\end{lemma}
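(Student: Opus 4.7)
The statistic $n^{1/2}(\d{P}_n^* - \hat{P}_n)\phi_0 = n^{-1/2}\sum_{i=1}^n [\phi_0(X_i^*) - \hat{P}_n \phi_0]$ is, conditional on $X_1, X_2, \dots$, a normalized sum of IID centered random variables with conditional variance $V_n := \hat{P}_n\phi_0^2 - (\hat{P}_n\phi_0)^2$. My plan is to apply the Lindeberg--Feller CLT conditionally on the data and to conclude via the bounded-Lipschitz characterization of weak convergence. To handle the outer-probability nature of the hypotheses, I would use the standard subsequence device (Lemma~1.9.2 of \cite{van1996weak}): it suffices to show that along every subsequence there is a further subsequence on which both $\hat{P}_n\phi_0^2 \to P_0\phi_0^2$ and $(\hat{P}_n - P_0)[\phi_0^2 1\{|\phi_0|>M\}] \to 0$ (for each rational $M$) hold almost surely, and then to establish conditional weak convergence a.s.\ along that subsequence; the full $\condinoutdist$ conclusion then follows by Lemma~1.9.2 once more.

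Along such a subsequence I would verify the two Lindeberg--Feller ingredients. First, for the conditional Lindeberg condition $L_n(\varepsilon) := \hat{P}_n[(\phi_0 - \hat{P}_n\phi_0)^2 1\{|\phi_0 - \hat{P}_n\phi_0|>n^{1/2}\varepsilon\}] \to 0$ for every $\varepsilon > 0$, note that $|\hat{P}_n\phi_0| \leq (\hat{P}_n\phi_0^2)^{1/2}$ is eventually uniformly bounded along the subsequence, so that $\{|\phi_0-\hat{P}_n\phi_0|>n^{1/2}\varepsilon\}\subseteq\{|\phi_0|>M\}$ for any fixed $M$ and all $n$ large. Combining this inclusion with $(a-b)^2\leq 2a^2+2b^2$, the tail hypothesis, and $P_0\phi_0^2<\infty$, letting $M\to\infty$ drives $L_n(\varepsilon)$ to zero. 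Second, for variance convergence $V_n \to P_0\phi_0^2$, the first summand converges by assumption, and for the second I would show $\hat{P}_n\phi_0\to 0$ by splitting $\phi_0 = \phi_0 1\{|\phi_0|\leq M\} + \phi_0 1\{|\phi_0|>M\}$: the tail part is dominated by $M^{-1}(\hat{P}_n + P_0)[\phi_0^2 1\{|\phi_0|>M\}]$ and vanishes as $M\to\infty$ by the second hypothesis together with $P_0\phi_0^2<\infty$.

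The main obstacle is the remaining ``bulk'' integral $(\hat{P}_n - P_0)[\phi_0 1\{|\phi_0|\leq M\}]$, which requires a Glivenko--Cantelli-type consistency of $\hat{P}_n$ against bounded functions and is not directly encoded in the stated hypotheses. In the intended applications this comes essentially for free: for the empirical bootstrap it is the strong law of large numbers applied to the single function $\phi_0 1\{|\phi_0|\leq M\}$, and for smooth bootstraps of convolution type it can be extracted from \Cref{prop: weak convergence of convolution smooth bootstrap} specialized to the one-element class $\{\phi_0 1\{|\phi_0|\leq M\}\}$. Once variance convergence and the Lindeberg condition are both established almost surely along the subsequence, the classical Lindeberg--Feller theorem yields $n^{-1/2}\sum_{i=1}^n[\phi_0(X_i^*)-\hat{P}_n\phi_0]\indist N(0, P_0\phi_0^2)$ conditional on the data, which is equivalent to $\sup_{h\in \BL_1(\d{R})}|\E_W^*h(\d{G}_n^*\phi_0) - \E h(N(0,P_0\phi_0^2))| \to 0$ almost surely along the subsequence, and hence the desired $\condinoutdist$ statement holds along the full sequence.
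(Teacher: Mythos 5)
Your proposal follows the same strategy as the paper's proof: condition on the data, verify the Lindeberg--Feller CLT conditions, and conclude. The subsequence device you invoke via Lemma~1.9.2 of \cite{van1996weak} is a clean way to translate the outer-probability hypotheses into the conditional weak-convergence conclusion; the paper instead manipulates outer probabilities directly term by term. Both are standard and interchangeable here, so on this axis the proofs match.

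The ``obstacle'' you flag, however, is a genuine gap and not just an artifact of your route. The paper's proof asserts $\hat{P}_n\phi_0^2 - (\hat{P}_n\phi_0)^2 \inoutprob P_0\phi_0^2$ with no argument for $\hat{P}_n\phi_0 \inoutprob 0$, and your bulk/tail decomposition correctly shows the stated hypotheses do not imply it: the tail $\hat{P}_n[\phi_0 1\{|\phi_0|>M\}]$ is controlled by $M^{-1}\hat{P}_n[\phi_0^2 1\{|\phi_0|>M\}]$ and the hypotheses, but nothing constrains the bulk $(\hat{P}_n - P_0)[\phi_0 1\{|\phi_0|\le M\}]$. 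A concrete counterexample: $P_0 = \mathrm{Uniform}(-1,1)$, $\phi_0$ the identity, and $\hat{P}_n \equiv \mathrm{Uniform}(c-a,c+a)$ with $c^2 + a^2/3 = 1/3$, $c\ne 0$. Then $\hat{P}_n\phi_0^2 = P_0\phi_0^2$ exactly, the tail hypothesis is vacuous for $M$ large, yet $\hat{P}_n\phi_0 = c \ne 0$ and the bootstrap limit is $N(0, a^2/3) \ne N(0, P_0\phi_0^2)$. So the lemma as stated needs an extra hypothesis such as $\hat{P}_n\phi_0 \inoutprob 0$ (or consistency of $\hat{P}_n - P_0$ against bounded truncations of $\phi_0$); it holds automatically in the paper's intended applications, as you note, but should be stated. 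Your treatment, being explicit about this rather than silently assuming it, is more careful than the paper's proof of this lemma.
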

\begin{proof}[\bfseries{Proof of \Cref{thm: clt}}]
    We use the Lindeberg-Feller CLT for triangular arrays (e.g., Proposition 2.27 of \citealp{van2000asymptotic}). Since the bootstrap sample $X_1^*, \dotsc, X_n^*$ is drawn IID from $\hat{P}_n$ given $X_1, \dotsc, X_n$, the conditional mean and variance of $\phi_0(X_i^*)$ are given by
    \begin{align*}
        \E_W \phi_0(X_i^*) &= \int \phi_0(x) \sd \hat{P}_n(x) = \hat{P}_n \phi_0, \text{ and}\\
        \E_W [\phi_0(X_i^*) - \E_W \phi_0(X_i^*)]^2 &= \E_W \phi_0^2(X_i^*) - [\E_W \phi_0(X_i^*)]^2 = \hat{P}_n \phi_0^2 - (\hat{P}_n \phi_0)^2 \inoutprob P_0 \phi_0^2.
    \end{align*}
    We next verify the Lindeberg condition. For any $\varepsilon, \gamma > 0$, there exists $M > 0$ such that $\int \phi_0(x)^2 1\{ |\phi_0(x)| > M\} \sd P_0(x) < \varepsilon/2$ and for all $n$ large enough, $M \leq \gamma \sqrt{n}$. Therefore, 
    \begin{align*}
        & \prob_0\left( \E_W [\phi_0(X_i^*)^2 1\{ |\phi_0(X_i^*) |> \gamma \sqrt{n}\}] \geq \varepsilon \right)\\
        & \qquad = \prob_0\left( \hat{P}_n [\phi_0^2 1\{ |\phi_0| > \gamma \sqrt{n}\}] \geq \varepsilon \right)\\
        & \qquad \leq \prob_0\left( \hat{P}_n [\phi_0^2 1\{ |\phi_0| > M\}] \geq \varepsilon \right)\\
        & \qquad \leq \prob_0\left( |(\hat{P}_n - P_0) [\phi_0^2 1\{ |\phi_0| > M\}]| \geq \varepsilon/2 \right) + \prob_0\left(  P_0 [\phi_0^2 1\{ |\phi_0| > M\}] \geq \varepsilon/2 \right)\\
        & \qquad = \prob_0\left( |(\hat{P}_n - P_0) [\phi_0^2 1\{ |\phi_0| > M\}]| \geq \varepsilon/2 \right) + 1\left\{  P_0 [\phi_0^2 1\{ |\phi_0| > M\}] \geq \varepsilon/2 \right\}\\
        & \qquad = \prob_0\left( |(\hat{P}_n - P_0) [\phi_0^2 1\{ |\phi_0| > M\}]| \geq \varepsilon/2 \right)\\
        & \qquad \longrightarrow 0
    \end{align*}
    
     
     Thus, $\E_W [\phi_0(X_i^*)^2 1\{ |\phi_0(X_i^*)| > \varepsilon \sqrt{n}\}] \inoutprob  0$. Therefore, the conditions of the Lindeberg-Feller CLT holds conditionally on every sequence $X_1, X_2, \dots$, in outer probability, which yields the result.
\end{proof}

\thmprecinterval* 
\begin{proof}[\bfseries{Proof of \Cref{thm: perc bootstrap CI}}]
    We prove the result for the percentile $t$-method; the result for the percentile method follows the same argument setting $\sigma_n = \sigma_n^* = \sigma_0 = 1$ (so that the additional conditions for the percentile $t$-method hold automatically).

    We first show that $\sigma_n^{*-1}(S_n^* + R_n^*) - \sigma_n^{-1}(S_n + R_n) = o_{\prob_W^*}(n^{-1/2})$. By adding and subtracting terms, we note that 
    \begin{align}\label{eq: bias ci decom}
        \frac{S_n^* + R_n^*}{\sigma_n^*} - \frac{S_n + R_n}{\sigma_n} &= \frac{\sigma_n(S_n^* + R_n^*) - \sigma_n(S_n + R_n) + \sigma_n(S_n + R_n) - \sigma_n^*(S_n + R_n)}{\sigma_n\sigma_n^*} \notag \\
        & = \frac{(S_n^* - S_n) + (R_n^* - R_n)}{\sigma_n^*} + \frac{(\sigma_n - \sigma_n^*)(S_n + R_n) }{\sigma_n\sigma_n^*}.
    \end{align}
    Since $\sigma_n^{*} \condinoutprob \sigma_0 > 0$, we can show that $\sigma_n^{*-1} = O_{\prob_W^*}(1)$ using the same logic as the unconditional result. Hence, the above equals $o_{\prob_W^*}(n^{-1/2})O_{\prob_W^*}(1) + o_{\prob_W^*}(n^{-1/2})O_{\prob_W^*}(1)$ by assumption.
    
    For any $t \in \d{R}$, by definition of $S_n$ and $R_n$, we have
    \begin{align*}
        \prob_0^* \left( \frac{T(\eta_n, \d{P}_n) - T(\eta_0, P_0)}{\sigma_n} \leq t \right) &= \prob_0^* \left(\sigma_n^{-1}\d{G}_n \phi_0 + n^{1/2}\sigma_n^{-1} \left[S_n + R_n\right]   \leq n^{1/2} t \right) \\
        &= \prob_0^* \left(\sigma_n^{-1}\d{G}_n \phi_0   \leq T_n\right),
    \end{align*}
    where $T_n := n^{1/2} t -  n^{1/2}\sigma_n^{-1} \left[S_n + R_n\right]$. Thus, by definition of $S_n^*$ and $R_n^*$, 
    \begin{align*}
        \prob_W^* \left( \frac{T(\eta_n^*, \d{P}_n^*) - T(\eta_n, \hat{P}_n)}{\sigma_n^*} \leq t \right) &= \prob_W^* \left(\sigma_n^{*-1}\d{G}_n^* \phi_0 + n^{1/2}\sigma_n^{*-1} \left[S_n^* + R_n^*\right]   \leq n^{1/2} t \right)\\
        &= \prob_W^* \left(\sigma_n^{*-1}\d{G}_n^* \phi_0 + n^{1/2}\sigma_n^{*-1} \left[S_n^* + R_n^*\right]   \leq T_n + n^{1/2}\sigma_n^{-1} \left[S_n + R_n\right] \right) \\
        &= \prob_W^* \left(\sigma_n^{*-1}\d{G}_n^* \phi_0 + A_n^* \leq T_n  \right),
    \end{align*}
    where $A_n^* :=n^{1/2} \left\{\sigma_n^{*-1} \left[S_n^* + R_n^*\right] - \sigma_n^{-1} \left[ S_n + R_n \right]\right\}$. Thus,
    \begin{align*}
    \sup_{t \in \d{R}} &\left|\prob_W^* \left( \frac{T(\eta_n^*, \d{P}_n^*) - T(\eta_n, \hat{P}_n)}{\sigma_n^*} \leq t \right) -  \prob_0^* \left( \frac{T(\eta_n, \d{P}_n) - T(\eta_0, P_0)}{\sigma_n} \leq t \right) \right| \\
    &= \sup_{t \in \d{R}} \left| \prob_W^* \left(\sigma_n^{*-1}\d{G}_n^* \phi_0 + A_n^* \leq t  \right) - \prob_0^* \left(\sigma_n^{-1}\d{G}_n \phi_0   \leq t \right) \right| \\
    &\leq \sup_{t \in \d{R}} \left| \prob_W^* \left(\sigma_n^{*-1}\d{G}_n^* \phi_0 + A_n^* \leq t  \right)  - \prob_0^* \left( \sigma_0^{-1}\d{G}_0 \phi_0 \leq t \right) \right| +  \sup_{\xi \in \d{R}} \left| \prob_0^* \left(\sigma_n^{-1}\d{G}_n \phi_0   \leq t \right)  - \prob_0^* \left( \sigma_0^{-1} \d{G}_0 \phi_0 \leq t \right)  \right|.
    \end{align*}
     By \Cref{thm: clt}, we have $\d{G}_n^*\phi_0 \condinoutdist \d{G}_0 \phi_0$. Since $\sigma_n^{*2} \condinoutprob \sigma_0^2$, $\sigma_n^{*-1} \d{G}_n^*\phi_0 \condinoutdist \sigma_0^{-1}\d{G}_0 \phi_0$ by \Cref{lemma: slutsky}. Since $A_n^* = o_{\prob_W^*}(1)$, it follows that $\sigma_n^{*-1} \d{G}_n^*\phi_0  + A_n^* \condinoutdist \sigma_0^{-1}\d{G}_0 \phi_0$. Hence, by the Portmanteau theorem (e.g., Lemma~1.3.4 of \citealp{van1996weak}), 
     \[ 
         \sup_{t \in \d{R}} \left| \prob_W^* \left(\sigma_n^{*-1}\d{G}_n^* \phi_0 + A_n^* \leq t  \right)  - \prob_0^* \left( \sigma_0^{-1}\d{G}_0 \phi_0 \leq t \right) \right| \inoutprob 0.
     \]
     Similarly, $\sigma_n^{-1} \d{G}_n\phi_0 \leadsto \sigma_0^{-1}\d{G}_0 \phi_0$, so 
     \[ \sup_{t \in \d{R}} \left| \prob_0^* \left(\sigma_n^{-1}\d{G}_n \phi_0   \leq \xi \right)  - \prob_0^* \left( \sigma_0^{-1} \d{G}_0 \phi_0 \leq t \right)  \right| \to 0.\]
     The first statement follows. 

    By definition of $\xi_{n, p}^*,$ $S_n^*$, and $R_n^*$, we have
    \begin{align*}
        \xi_{n, p}^* :=& \inf\left\{ \xi \in \d{R} : \prob_W^* \left( \frac{T(\eta_n^*, \d{P}_n^*) - T(\eta_n, \hat{P}_n)}{\sigma_n^*} \leq \xi \right) \geq p \right\}\\
        =& \inf\left\{ \xi \in \d{R} : \prob_W^* \left(  n^{-1/2}\sigma_n^{*-1}\d{G}_n^* \phi_0 + \sigma_n^{*-1} \left[S_n^* + R_n^*\right] \leq \xi \right) \geq p \right\}.
    \end{align*}
    Since $\sigma_n^{-1} \left[ S_n + R_n \right]$ is a function of the original data, we have
    \begin{align*}
        &n^{1/2} \left( \xi_{n, p}^* - \sigma_n^{-1} \left[ S_n + R_n \right] \right) \\
        &\qquad= \inf\left\{ \xi \in \d{R} : \prob_W^* \left(  n^{-1/2}\sigma_n^{*-1}\d{G}_n^* \phi_0 + \sigma_n^{*-1} \left[S_n^* + R_n^*\right] \leq n^{-1/2}\xi + \sigma_n^{-1} \left[ S_n + R_n \right] \right) \geq p \right\} \\
        &\qquad = \inf\left\{ \xi \in \d{R} : \prob_W^* \left( \sigma_n^{*-1}\d{G}_n^* \phi_0  + A_n^*  \leq \xi\right) \geq p \right\}.
    \end{align*}
    By the above derivations and \citealp[Lemma~21.2]{van2000asymptotic}, for any $p \in (0,1)$, we then have 
    \[ n^{1/2} \left( \xi_{n, p}^* - \sigma_n^{-1} \left[ S_n + R_n \right] \right) \inoutprob \Phi^{-1}(p).\]
    Therefore,
    \begin{align*}
        &\prob_0^* \left( T(\eta_n, \d{P}_n) - \xi_{n, 1-\alpha}^* \sigma_n \leq T(\eta_0, P_0) \leq T(\eta_n, \d{P}_n) - \xi_{n, \beta}^* \sigma_n \right)\\
        & \qquad = \prob_0^* \left( n^{1/2}\xi_{n, \beta}^* \leq n^{1/2}[T(\eta_n, \d{P}_n) - T(\eta_0, P_0)]/\sigma_n \leq n^{1/2}\xi_{n, 1-\alpha}^* \right) \\
        & \qquad = \prob_0^* \left( n^{1/2}\left[\xi_{n, \beta}^* - \sigma_n^{-1}(S_n + R_n)\right] \leq \sigma_n^{-1} \d{G}_n \phi_0 \leq n^{1/2}\left[\xi_{n, 1-\alpha}^* - \sigma_n^{-1}(S_n + R_n)\right]\right)\\
        & \qquad \longrightarrow \prob_0^* \left( \Phi^{-1}(\beta) \leq \sigma_0^{-1}\d{G}_0 \phi_0 \leq \Phi^{-1}(1-\alpha) \right) \\
        &\qquad= 1-\alpha - \beta.
    \end{align*}

\end{proof}

\corprecinterval*
\begin{proof}[\bfseries{Proof of \Cref{cor: perc bootstrap CI}}]
    We note that in the proof of~\Cref{thm: perc bootstrap CI}, the conditions $\hat{P}_n \phi_0^2 \inoutprob P_0 \phi_0^2$ and $(\hat{P}_n - P_0) [\phi_0^2 1\{|\phi_0| > M\}] \inoutprob 0$ were only used to establish that $\d{G}_n^*\phi_0 \condinoutdist \d{G}_0 \phi_0$ using Lemma~\ref{thm: clt}. However, condition~\ref{cond: bootstrap limited complexity} directly implies $\d{G}_n^*\phi_0 \condinoutdist \d{G}_0 \phi_0$, so it suffices to check the remainder of the conditions of  \Cref{thm: perc bootstrap CI}. By conditions~\ref{cond: limited complexity}--\ref{cond: second order}, we have $S_n = o_{\prob_0^*}(n^{-1/2})$ and $R_n = o_{\prob_0^*}(n^{-1/2})$. By conditions~\ref{cond: bootstrap limited complexity}--\ref{cond: bootstrap second order}, we have $S_n^* = o_{\prob_W^*}(n^{-1/2})$ and $R_n^* = o_{\prob_W^*}(n^{-1/2})$. Hence, $S_n^* - S_n = o_{\prob_W^*}(n^{-1/2})$ and $R_n^* - R_n = o_{\prob_W^*}(n^{-1/2})$, which implies the consistency of the bootstrap percentile confidence interval by \Cref{thm: perc bootstrap CI}. For the percentile $t$ interval, $\sigma_n^* \condinoutprob \sigma_0$ and $\sigma_n \inoutprob \sigma_0$ by assumption, which also implies $(S_n + R_n)(\sigma_n^* - \sigma_n) = o_{\prob_W^*}(n^{-1/2})o_{\prob_W^*}(1) = o_{\prob_W^*}(n^{-1/2})$.
\end{proof}

\thmefroninterval* 
\begin{proof}[\bfseries{Proof of \Cref{thm: efron bootstrap CI}}]
    For any $t \in \d{R}$, by definition of $S_n$ and $R_n$, we have
    \begin{align*}
        \prob_0^* \left( -\left[T(\eta_n, \d{P}_n) - T(\eta_0, P_0)\right] \leq t \right) &= \prob_0^* \left(-\d{G}_n \phi_0 -n^{1/2} \left[S_n + R_n\right]   \leq n^{1/2} t \right) = \prob_0^* \left(-\d{G}_n \phi_0   \leq T_n\right),
    \end{align*}
    where $T_n := n^{1/2} t +  n^{1/2}\left[S_n + R_n\right]$. Thus, by definition of $S_n^*$ and $R_n^*$, 
    \begin{align*}
        \prob_W^* \left( T(\eta_n^*, \d{P}_n^*) - T(\eta_n, \d{P}_n)\leq t \right) &= \prob_W^* \left(\d{G}_n^* \phi_0 + n^{1/2}\left[S_n^* + R_n^* + T(\eta_n, \hat{P}_n)  -T(\eta_n, \d{P}_n) \right]   \leq n^{1/2} t \right)\\
        &= \prob_W^* \left(\d{G}_n^* \phi_0 + B_n^* \leq T_n  \right),
    \end{align*}
    where $B_n^* :=n^{1/2} \left[S_n^* + S_n +  R_n^* + R_n  + T(\eta_n, \hat{P}_n)  -T(\eta_n, \d{P}_n)\right]$. Thus,
    \begin{align*}
    \sup_{t \in \d{R}} &\left|\prob_W^* \left( T(\eta_n^*, \d{P}_n^*) - T(\eta_n, \d{P}_n) \leq t \right) -  \prob_0^* \left( -\left[T(\eta_n, \d{P}_n) - T(\eta_0, P_0)\right] \leq t \right) \right| \\
    &= \sup_{t \in \d{R}} \left| \prob_W^* \left(\d{G}_n^* \phi_0 + B_n^* \leq t  \right) - \prob_0^* \left(-\d{G}_n \phi_0   \leq t \right) \right| \\
    &\leq \sup_{t \in \d{R}} \left| \prob_W^* \left(\d{G}_n^* \phi_0 + B_n^* \leq t  \right)  - \prob_0^* \left( \d{G}_0 \phi_0 \leq t \right) \right| +  \sup_{t \in \d{R}} \left| \prob_0^* \left(-\d{G}_n \phi_0   \leq t \right)  - \prob_0^* \left(  \d{G}_0 \phi_0 \leq t \right)  \right|.
    \end{align*}
     By \Cref{thm: clt} and since $B_n^* = o_{\prob_W^*}(1)$ by assumption, we have $\d{G}_n^*\phi_0  + B_n^* \condinoutdist \d{G}_0 \phi_0$. Hence, by the Portmanteau theorem (e.g., Lemma~1.3.4 of \citealp{van1996weak}), 
     \[ 
         \sup_{t \in \d{R}} \left| \prob_W^* \left(\d{G}_n^* \phi_0 + B_n^* \leq t  \right)  - \prob_0^* \left( \d{G}_0 \phi_0 \leq t \right) \right| \inoutprob 0.
     \]
     Similarly, $\d{G}_n\phi_0 \leadsto \d{G}_0 \phi_0$, and since $\d{G}_0\phi_0$ follows a mean-zero normal distribution, $\d{G}_n\phi_0 \leadsto -\d{G}_0 \phi_0$ as well. Therefore,
     \[ \sup_{t \in \d{R}} \left| \prob_0^* \left(\d{G}_n \phi_0   \leq t \right)  - \prob_0^* \left( -\d{G}_0 \phi_0 \leq t \right)  \right| \to 0.\]
     The first statement follows.
    
    By definition of $\zeta_{n, p}^*, S_n^*$ and $R_n^*$, we have 
    \begin{align*}
        \zeta_{n, p}^* :=& \inf\left\{ \zeta \in \d{R}: \prob_W^*\left( T(\eta_n^*, \d{P}_n^*) \leq \zeta \right) \geq p\right\}\\
        = & \inf\left\{ \zeta \in \d{R}: \prob_W^*\left( n^{-1/2} \d{G}_n^* \phi_0 + (S_n^* + R_n^*) + T(\eta_n, \hat{P}_n) \leq \zeta \right) \geq p\right\}.
    \end{align*}
    Since $[S_n + R_n - T(\eta_n, \d{P}_n)]$ is a function of the original data, we have
    \begin{align*}
        &n^{1/2} \left\{ \zeta_{n, p}^* + [S_n + R_n - T(\eta_n, \d{P}_n)] \right\} \\
        &\qquad= \inf\left\{ \zeta \in \d{R} : \prob_W^* \left(  n^{-1/2}\d{G}_n^* \phi_0 + (S_n^* + R_n^*) + T(\eta_n, \hat{P}_n)\leq n^{-1/2}\zeta -  [S_n + R_n - T(\eta_n, \d{P}_n)] \right) \geq p \right\} \\
        &\qquad = \inf\left\{ \zeta \in \d{R} : \prob_W^* \left( \d{G}_n^* \phi_0  + B_n^*  \leq \zeta\right) \geq p \right\},
    \end{align*}
    As argued above, $\d{G}_n^*\phi_0  + B_n^* \condinoutdist \d{G}_0 \phi_0$, so by Lemma~21.2 of \cite{van2000asymptotic}, for any $p \in (0,1)$, we then have 
    \[ 
        n^{1/2} \left\{ \zeta_{n, p}^* + \left[ S_n + R_n - T(\eta_n, \d{P}_n)\right] \right\} \inoutprob \sigma_0\Phi^{-1}(p).
    \]
    Therefore,
    \begin{align*}
        &\prob_0^* \left( \zeta_{n, 1-\alpha}^* \leq T(\eta_0, P_0) \leq \zeta_{n, \beta}^* \right)\\
        & \qquad = \prob_0^* \left( -n^{1/2}[\zeta_{n, \beta}^* - T(\eta_n, \d{P}_n)] \leq n^{1/2}[T(\eta_n, \d{P}_n) - T(\eta_0, P_0)] \leq -n^{1/2}[\zeta_{n, 1-\alpha}^* - T(\eta_n, \d{P}_n)] \right) \\
        & \qquad = \prob_0^* \left( -n^{1/2}[\zeta_{n, \beta}^* - T(\eta_n, \d{P}_n)] \leq \d{G}_n \phi_0 + n^{1/2} (S_n + R_n)\leq -n^{1/2}[\zeta_{n, 1-\alpha}^* - T(\eta_n, \d{P}_n)] \right) \\
        & \qquad = \prob_0^* \left( -n^{1/2}[\zeta_{n, \beta}^*+S_n + R_n - T(\eta_n, \d{P}_n)] \leq \d{G}_n \phi_0 \leq -n^{1/2}[\zeta_{n, 1-\alpha}^* +S_n + R_n- T(\eta_n, \d{P}_n)] \right) \\
        & \qquad \longrightarrow \prob_0^* \left( -\Phi^{-1}(\beta) \leq \sigma_0^{-1}\d{G}_0 \phi_0 \leq -\Phi^{-1}(1-\alpha) \right) \\
        &\qquad= 1-\alpha - \beta,
    \end{align*}
    where the last equality is due to the symmetry of the standard normal distribution. This shows the consistency of Efron’s percentile method.

\end{proof}

\corefroninterval*
\begin{proof}[\bfseries{Proof of \Cref{cor: efron bootstrap CI}}]
    As in the proof of Corollary~\ref{cor: perc bootstrap CI}, we note that in the proof of~\Cref{thm: efron bootstrap CI}, the conditions $\hat{P}_n \phi_0^2 \inoutprob P_0 \phi_0^2$ and $(\hat{P}_n - P_0) [\phi_0^2 1\{|\phi_0| > M\}] \inoutprob 0$ were only used to establish that $\d{G}_n^*\phi_0 \condinoutdist \d{G}_0 \phi_0$ using Lemma~\ref{thm: clt}. However, condition~\ref{cond: bootstrap limited complexity} directly implies $\d{G}_n^*\phi_0 \condinoutdist \d{G}_0 \phi_0$, so it suffices to check the remainder of the conditions of  \Cref{thm: efron bootstrap CI}.

    By conditions~\ref{cond: limited complexity}--\ref{cond: second order}, we have $S_n = o_{\prob_0^*}(n^{-1/2})$ and $R_n = o_{\prob_0^*}(n^{-1/2})$. By conditions~\ref{cond: bootstrap limited complexity}--\ref{cond: bootstrap second order}, we have $S_n^* = o_{\prob_W^*}(n^{-1/2})$ and $R_n^* = o_{\prob_W^*}(n^{-1/2})$. Since $T(\eta_n, \hat{P}_n) - T(\eta_n, \d{P}_n) = o_{\prob_0^*}(n^{-1/2})$ by assumption, we then have $S_n^* + S_n + R_n^* + R_n +T(\eta_n, \hat{P}_n) - T(\eta_n, \d{P}_n) = o_{\prob_W^*}(n^{-1/2})$. Hence, the result follows by~\Cref{thm: efron bootstrap CI}.

\end{proof}

\thmwaldbootinterval*
\begin{proof}[\bfseries{Proof of \Cref{thm: wald bootstrap CI}}]
    Let $h(x) := x^2$. There exists a sequence of functions $h_m: \d{R} \mapsto \d{R}$, $m = 1, 2, \dots$ such that $h_m \in \BL_1(\d{R})$ for all $m$ and $mh_m$ monotonically increases to $h$ as $m\to \infty$. For instance, $h_m(x) := \min\{x^2 / (m \vee 4), 1\}$ satisfies these criteria. We then write
    \begin{align*}
        \E_W^*h\left(T_n^*\right) - \E_0 h(\d{G}_0 \phi_0) & = [\E_W^* h\left(T_n^*\right) - m\E_W^* h_m\left(T_n^*\right)] + [m\E_W^* h_m\left(T_n^*\right) - m\E_0 h_m\left(\d{G}_0 \phi_0\right)] \\
        & \qquad \qquad + [m\E_0h_m\left(\d{G}_0 \phi_0\right)  - \E_0h(\d{G}_0 \phi_0)].
    \end{align*}
    Hence, for any $\varepsilon > 0$, we have 
    \begin{align*}
        &  \prob_0^* \left( \abs{\E_W^*h\left(T_n^*\right) - \E_0 h(\d{G}_0 \phi_0)} \geq \varepsilon\right) \notag \\
        & \qquad \leq  \prob_0^* \left( \abs{\E_W^* h\left(T_n^*\right) - m\E_W^* h_m\left(T_n^*\right)} \geq \varepsilon/3\right) + \prob_0^* \left( \abs{\E_W^* h_m\left(T_n^*\right) - \E_0 h_m\left(\d{G}_0 \phi_0\right)} \geq \varepsilon/(3m) \right) \notag\\
        & \qquad \qquad + \prob_0^* \left( \abs{m\E_0h_m\left(\d{G}_0 \phi_0\right)  - \E_0h(\d{G}_0 \phi_0)} \geq \varepsilon/3 \right) \notag\\
        & \qquad \leq \prob_0^* \left( \abs{\E_W^* h\left(T_n^*\right) - m\E_W^* h_m\left(T_n^*\right)} \geq \varepsilon/3\right) + \prob_0^* \left( \sup_{g \in \BL_1(\d{R})}\abs{\E_W^* g\left(T_n^*\right) - \E_0 g\left(\d{G}_0 \phi_0\right)} \geq \varepsilon/(3m) \right) \notag\\
        & \qquad \qquad + 1 \left\{ \E_0\abs{mh_m\left(\d{G}_0 \phi_0\right)  - h(\d{G}_0 \phi_0)} \geq \varepsilon/3 \right\}.
    \end{align*}
    By conditions~\ref{cond: bootstrap limited complexity}-\ref{cond: bootstrap second order}, we have $ \E_W^* T_n^* \condinoutdist \d{G}_0 \phi_0$, i.e., $\sup_{g \in \BL_1(\d{R})} \abs{ \E_W^* g\left( T_n^* \right) - \E_0 g(\d{G}_0 \phi_0)} \inoutprob 0$. Therefore, for each fixed $m$, the second term on the right-hand side of previous display converges to 0. By the monotone convergence theorem, the third term on the right-hand side of previous display converges to 0 as $m \longrightarrow \infty$. For the first term on the right hand side of previous display, we note that 
    \begin{align*}
        \prob_0^* \left( \abs{\E_W^* h\left(T_n^*\right) - m\E_W^* h_m\left(T_n^*\right)} \geq \varepsilon/3\right) &  = \prob_0^* \left( \abs{\E_W^* T_n^{*2} - \E_W^* \min\{T_n^{*2}, m\}} \geq \varepsilon/3\right)\\
        & = \prob_0^* \left( \abs{\E_W^* (T_n^{*2} - m)1\{T_n^{*2} \geq m \} } \geq \varepsilon/3\right)\\
        & \leq \prob_0^* \left( \abs{\E_W^* T_n^{*2}1\{T_n^{*2} \geq m \} } \geq \varepsilon/3\right)\\
        & \leq \E_0^* \E_W^* T_n^{*2}1\{T_n^{*2} \geq m \} /(\varepsilon/3),
    \end{align*}
    and $\lim_{m\to \infty} \limsup_{n\to\infty}$ of this latter expression is 0 by assumption. Therefore, we have 
    \[
        \limsup_{n\to\infty} \prob_0^* \left( \abs{\E_W^*h\left(T_n^*\right) - \E_0 h(\d{G}_0 \phi_0)} \geq \varepsilon\right) = \lim_{m\to \infty} \limsup_{n\to\infty} \prob_0^* \left( \abs{\E_W^*h\left(T_n^*\right) - \E_0 h(\d{G}_0 \phi_0)} \geq \varepsilon\right) = 0
    \]
    for every $\varepsilon > 0$.  Hence, $\bar{\sigma}_n^2 = \E_W^* T_n^{*2} \inoutprob \E_0 (\d{G}_0 \phi_0)^2 = \sigma_0^2$. By conditions~\ref{cond: limited complexity}--\ref{cond: second order}, we have $n^{1/2}[T(\eta_n, \d{P}_n) - T(\eta_0, P_0)] \leadsto \d{G}_0 \phi_0$. By Slutsky's theorem, we have $n^{1/2} \bar{\sigma}_n^{-1}[T(\eta_n, \d{P}_n) - T(\eta_0, P_0)] \leadsto \sigma_0^{-1} \d{G}_0\phi_0$. We then have 
    \begin{align*}
        & \prob_0 \left( T(\eta_n, \d{P}_n) + z_{\beta} \bar{\sigma}_n n^{-1/2} \leq T(\eta_0, P_0) \leq T(\eta_n, \d{P}_n) + z_{1-\alpha} \bar{\sigma}_n n^{-1/2} \right) \\
        &\qquad = \prob_0 \left( -z_{1-\alpha} \leq n^{1/2}[T(\eta_n, \d{P}_n) - T(\eta_0, P_0)]/\bar{\sigma}_n \leq -z_{\beta} \right)\\
        & \qquad \to \prob_0 \left( -z_{1-\alpha} \leq \d{G}_0 \phi_0/\sigma_0 \leq -z_{\beta} \right)\\
        & \qquad = 1-\alpha - \beta,
    \end{align*}
    and the result follows.

\end{proof}

\section{Proof of results in Section~\ref{sec: T examples}}

We first introduce a Lemma providing conditions under which the (bootstrap) estimating equations-based estimator is (conditionally) consistent, which may be useful in its own right.

\begin{lemma} \label{lemma: consistency of ee}
    If $\psi_0$ is a well-separated solution of $\psi \mapsto G_{0,\eta_0}(\psi)$, $\d{P}_n \phi_{\psi_n, \eta_n} = o_{\prob_0^*}(1)$, $\phi_{\psi_n, \eta_n}$ is contained in a $P_0$-Glivenko Cantelli class with probability tending to one, $\sup_{|\psi| \leq M} \left| P_0( \phi_{\psi, \eta_n} - \phi_{\psi, \eta_0})\right| = o_{\prob_0^*}(1)$ for all $M > 0$, and $\psi_n = O_{\prob_0^*}(1)$, then $\psi_n - \psi_0 = o_{\prob_0^*}(1)$. If $\psi_0$ is a well-separated solution of $\psi \mapsto G_{0,\eta_0}(\psi)$, $\d{P}_n^* \phi_{\psi_n^*, \eta_n^*} = o_{\prob_W^*}(1)$,  $\phi_{\psi_n^*, \eta_n^*}$ is contained in a $P_0$-Glivenko Cantelli class $\s{F}$ with conditional probability tending to one, $\sup_{|\psi| \leq M} \left| P_0( \phi_{\psi, \eta_n^*} - \phi_{\psi, \eta_0})\right| = o_{\prob_W^*}(1)$ for all $M > 0$, $\psi_n^* = O_{\prob_W^*}(1)$, and $\|\hat{P}_n - P_0\|_{\s{F}} = o_{\prob_0^*}(1)$, then $\psi_n^* - \psi_0 = o_{\prob_W^*}(1)$.
\end{lemma}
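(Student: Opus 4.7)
The plan is to prove both halves by the classical $Z$-estimator consistency argument: reduce convergence of the estimator to uniform convergence of the sample estimating function to the population estimating function, then invoke the well-separated-root hypothesis. I will focus on the bootstrap half since the first half is a direct specialization with $\hat P_n = P_0$.

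For the first statement, I would decompose
\begin{align*}
  P_0\phi_{\psi_n,\eta_0}
   \;=\; \bigl[P_0\phi_{\psi_n,\eta_0} - P_0\phi_{\psi_n,\eta_n}\bigr]
       + \bigl[(P_0-\d{P}_n)\phi_{\psi_n,\eta_n}\bigr]
       + \d{P}_n\phi_{\psi_n,\eta_n}.
\end{align*}
The last term is $o_{\prob_0^*}(1)$ by hypothesis. The first term is $o_{\prob_0^*}(1)$ because $\psi_n = O_{\prob_0^*}(1)$ lets us restrict to the compact set $\{|\psi|\le M\}$ for arbitrary $M$, on which the supremum is assumed negligible. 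The middle term is $o_{\prob_0^*}(1)$ by the Glivenko--Cantelli condition combined with a standard ``envelope swap'' (partitioning on the event $\{\phi_{\psi_n,\eta_n}\in\mathcal G\}$ for the Glivenko--Cantelli class $\mathcal G$ and using that this event has probability tending to one). Therefore $P_0\phi_{\psi_n,\eta_0} = o_{\prob_0^*}(1)$, and the well-separated-root assumption (which is equivalent to continuity of the inverse of $\psi\mapsto P_0\phi_{\psi,\eta_0}$ at $0$ on a neighborhood of $\psi_0$) gives $|\psi_n - \psi_0|= o_{\prob_0^*}(1)$.

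For the second statement, the analogous decomposition is
\begin{align*}
  P_0\phi_{\psi_n^*,\eta_0}
  \;=\; \bigl[P_0\phi_{\psi_n^*,\eta_0} - P_0\phi_{\psi_n^*,\eta_n^*}\bigr]
       + \bigl[(P_0-\hat P_n)\phi_{\psi_n^*,\eta_n^*}\bigr]
       + \bigl[(\hat P_n-\d{P}_n^*)\phi_{\psi_n^*,\eta_n^*}\bigr]
       + \d{P}_n^*\phi_{\psi_n^*,\eta_n^*}.
\end{align*}
The outer two terms are handled exactly as above: the first by the conditional version of the supremum hypothesis together with $\psi_n^* = O_{\prob_W^*}(1)$, and the fourth by the assumption $\d{P}_n^*\phi_{\psi_n^*,\eta_n^*}=o_{\prob_W^*}(1)$. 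The second term is $o_{\prob_W^*}(1)$ by the envelope-swap trick applied with $\mathcal F$ in place of $\mathcal G$: on the event $\{\phi_{\psi_n^*,\eta_n^*}\in\mathcal F\}$ (which has conditional probability tending to one) it is bounded by $\|\hat P_n-P_0\|_{\mathcal F}$, which is $o_{\prob_0^*}(1)$ and hence also $o_{\prob_W^*}(1)$.

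The main obstacle is the third term, $(\hat P_n-\d{P}_n^*)\phi_{\psi_n^*,\eta_n^*}$, which requires a bootstrap Glivenko--Cantelli statement: $\|\d{P}_n^*-\hat P_n\|_{\mathcal F} = o_{\prob_W^*}(1)$. The plan is to obtain this by arguing along subsequences (as in the proof of Proposition~\ref{prop: uniform donsker}): by Lemma~1.9.2 of \cite{van1996weak}, every subsequence has a further subsequence along which $\|\hat P_n-P_0\|_{\mathcal F}\to 0$ almost surely, along which the collection $\{\hat P_n\}\cup\{P_0\}$ is contained in a uniformly bounded Glivenko--Cantelli envelope. Since $\mathcal F$ is $P_0$-Glivenko--Cantelli and $\hat P_n\to P_0$ uniformly on $\mathcal F$, a standard bootstrap Glivenko--Cantelli argument (e.g., adapting Theorem~3.6.16 of \cite{van1996weak}) yields $\|\d{P}_n^*-\hat P_n\|_{\mathcal F}\to 0$ conditionally almost surely along the subsequence, which on pulling back via Lemma~1.9.2 gives the claimed conditional convergence in outer probability. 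A second envelope-swap then makes this term $o_{\prob_W^*}(1)$. Combining all four terms and applying the well-separated-root condition to $\psi\mapsto P_0\phi_{\psi,\eta_0}$ yields $|\psi_n^*-\psi_0|=o_{\prob_W^*}(1)$.
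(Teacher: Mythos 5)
Your proof is correct and follows essentially the same approach as the paper: the well-separated-root reduction to $|P_0\phi_{\psi_n,\eta_0}|$ (resp.\ $|P_0\phi_{\psi_n^*,\eta_0}|$), the same term-by-term decomposition (with empirical, bias, and nuisance-difference pieces, plus the additional $(\hat P_n - P_0)$ piece in the bootstrap case), and the same envelope-swap handling of each term. The one substantive difference is that you give explicit attention to the bootstrap Glivenko--Cantelli statement $\|\d{P}_n^* - \hat P_n\|_{\s{F}} = o_{\prob_W^*}(1)$, sketching a subsequence argument; the paper's own proof simply asserts that this follows from the $P_0$-Glivenko--Cantelli condition on $\s{F}$ without elaboration, and your extra care (using that $\|\hat P_n - P_0\|_{\s{F}} = o_{\prob_0^*}(1)$ lets the $P_0$-GC property transfer to sampling from $\hat P_n$) is warranted, since $\d{P}_n^*$ samples from $\hat P_n$ rather than $P_0$.
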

\begin{proof}[\bfseries{Proof of Lemma~\ref{lemma: consistency of ee}}]
By the well-separated assumption, for every $\varepsilon > 0$ there exists $\delta > 0$ such that $|P_0 \phi_{\psi, \eta_0}| \geq \delta$ for every $\psi \in \d{R}$ such that $|\psi - \psi_0| \geq \varepsilon$. Therefore, the event $\{|\psi_n - \psi_0| \geq \varepsilon\}$ is contained in the event $\{ |P_0 \phi_{\psi_n, \eta_0}| \geq \delta \}$. We then write
        \[ 
            P_0 \phi_{\psi_n, \eta_0} = -(\d{P}_n - P_0) \phi_{\psi_n, \eta_n} + \d{P}_n \phi_{\psi_n, \eta_n} - P_0 \left(\phi_{\psi_n, \eta_n} -\phi_{\psi_n, \eta_0}\right).
        \]
        By the assumption that $\phi_{\psi_n, \eta_n}$ is contained in a $P_0$-Glivenko Cantelli class with probability tending to one, $(\d{P}_n - P_0) \phi_{\psi_n, \eta_n} = o_{\prob_0^*}(1)$, and $\d{P}_n \phi_{\psi_n, \eta_n} = o_{\prob_0^*}(1)$ by assumption.  Since $\psi_n = O_{\prob_0^*}(1)$, for all $\gamma >0$ there exists $M$ such that $P_0^*( |\psi_n| > M) < \gamma$ for all $n$ large enough. Thus, 
        \[ 
            P_0\left( \left|  P_0 \left(\phi_{\psi_n, \eta_n} -\phi_{\psi_n, \eta_0} \right)\right| \geq \delta \right) \leq P_0\left(\sup_{|\psi| \leq M} \left|  P_0 \left(\phi_{\psi, \eta_n} -\phi_{\psi, \eta_0} \right)\right| \geq \delta \right) + P_0(|\psi_n| > M),
        \]
        which is less than $2\gamma$ for all $n$ large enough since $\sup_{|\psi| \leq M} \left| P_0( \phi_{\psi, \eta_n} - \phi_{\psi, \eta_0})\right| = o_{\prob_0^*}(1)$ by assumption. Therefore, for any $\varepsilon > 0$,
        \begin{align*}
            &\prob_0^* \left( \left|\psi_n - \psi_0\right| \geq \varepsilon \right) \leq \prob_0^* \left( \left|P_0 \phi_{\psi_n, \eta_0} \right| \geq \delta \right) \longrightarrow 0.
        \end{align*} 
        
        We show that $\psi_n^* \condinoutprob \psi_0$ using a similar method. By the well-separated condition,  the event $\{|\psi_n^* - \psi_0| \geq \varepsilon\}$ is contained in the event $\{ |P_0 \phi_{\psi_n^*, \eta_0}| \geq \delta \}$. We then write
        \[ 
            P_0 \phi_{\psi_n^*, \eta_0} = -(\d{P}_n^* - \hat{P}_n) \phi_{\psi_n^*, \eta_n^*} -(\hat{P}_n - P_0) \phi_{\psi_n^*, \eta_n^*} + \d{P}_n^* \phi_{\psi_n^*, \eta_n^*} - P_0 \left(\phi_{\psi_n^*, \eta_n^*} -\phi_{\psi_n^*, \eta_0}\right).
        \]
        By the assumption that $\phi_{\psi_n^*, \eta_n^*}$ is contained in a $P_0$-Glivenko Cantelli class $\s{F}$ with conditional probability tending to one, $(\d{P}_n^* - \hat{P}_n) \phi_{\psi_n^*, \eta_n^*} = o_{\prob_W^*}(1)$ and $\left|(\hat{P}_n - P_0) \phi_{\psi_n^*, \eta_n^*}\right| \leq \|\hat{P}_n - P_0\|_{\s{F}} = o_{\prob_W^*}(1)$, and $\d{P}_n^* \phi_{\psi_n^*, \eta_n^*} = o_{\prob_W^*}(1)$ by assumption. Since $\psi_n^* = O_{\prob_W^*}(1)$, for all $\gamma >0$ there exists $M$ such that $P_0^*(P_W^*( |\psi_n^*| > M) \geq \gamma/2) = o(1)$. Thus, 
        \begin{align*}
            P_0^* \left( P_W^*\left( \left|  P_0 \left(\phi_{\psi_n^*, \eta_n^*} -\phi_{\psi_n^*, \eta_0} \right)\right| \geq \delta \right) \geq \gamma \right) &\leq P_0^*\left(P_W^*\left(\sup_{|\psi| \leq M} \left|  P_0 \left(\phi_{\psi, \eta_n^*} -\phi_{\psi, \eta_0} \right)\right| \geq \delta \right) > \gamma / 2 \right) \\
            &\qquad+ P_0^*\left(P_W^*(|\psi_n^*| > M) > \gamma/2 \right),
        \end{align*}
        and both terms are $o(1)$. The result follows.
\end{proof}

\lemmaclassicalestimating*
\begin{proof}[\bfseries{Proof of Lemma~\ref{lemma: classical estimating-equations}}]
    We first show that the assumptions imply that $\psi_n \inoutprob \psi_0$. By assumption, $\psi_0$ is a well-separated solution of $\psi \mapsto G_{0, \eta_0}(\psi)$ and $\d{P}_n \phi_{\psi_n, \eta_n} =  o_{\prob_0^*}(1)$. By condition~\ref{cond: limited complexity}, $\phi_n:= \phi_{\psi_n, \eta_n}$ is contained in a $P_0$-Donsker class with probability tending to one, which implies that it is contained in a $P_0$-Glivenko Cantelli class with probability tending to one. By adding and subtracting terms, we can write
    \begin{align*}
        P_0 \left( \phi_{\psi, \eta_n} - \phi_{\psi, \eta_0}\right) &= \Gamma_{0, \eta_n} (\psi) - \Gamma_{0, \eta_0}(\psi)  + P_0 \phi_{\psi_0, \eta_n} + \left[ G_{0, \eta_n}'(\psi_0) + 1 \right] \left(\psi - \psi_0\right) 
    \end{align*} 
    By assumption, $\sup_{|\psi| \leq M} \left| \Gamma_{0, \eta_n} (\psi) - \Gamma_{0, \eta_0}(\psi)\right|=  o_{\prob_0^*}(1)$, $P_0 \phi_{\psi_0, \eta_n} = o_{\prob_0^*}(1)$, and 
    \[ \sup_{| \psi| \leq M} \left| \left[ G_{0, \eta_n}'(\psi_0) + 1 \right] \left(\psi - \psi_0\right) \right| = o_{\prob_0^*}(1)\]
    for every $M > 0$ since $\eta_n \inoutprob \eta_0$. Finally, $\psi_n =  O_{\prob_0^*}(1)$ by assumption. Thus, the conditions of  \Cref{lemma: consistency of ee} are satisfied, so $\psi_n \inoutprob \psi_0$.
    
    We next show that $\psi_n - \psi_0 = O_{\prob_0^*}(n^{-1/2})$ under the assumptions of the lemma. We have
    \[  
        n^{1/2} (\psi_n  - \psi_0) =  \d{G}_n \phi_{\psi_n, \eta_n} - n^{1/2}\d{P}_n\phi_{\psi_n, \eta_n} + n^{1/2} P_0 \phi_{\psi_0, \eta_n}+ n^{1/2}\left[P_0(\phi_{\psi_n, \eta_n} - \phi_{\psi_0, \eta_n}) + (\psi_n - \psi_0) \right] .
    \]
    Now, $\d{G}_n \phi_{\psi_n, \eta_n} = O_{\prob_0^*}(1)$ by~\ref{cond: limited complexity}, and $n^{1/2}\d{P}_n\phi_{\psi_n, \eta_n}$ and $n^{1/2} P_0 \phi_{\psi_0, \eta_n}$ are both $o_{\prob_0^*}(1)$ by assumption. Since $\norm{\eta_n - \eta_0}_{\s{H}} = o_{\prob_0^*}(1)$ by assumption, with probability tending to one it holds that
    \begin{align*}
        \left| P_0(\phi_{\psi_n, \eta_n} - \phi_{\psi_0, \eta_n}) + (\psi_n - \psi_0) \right| &= \left| G_{0,\eta_n}(\psi_n) - G_{0,\eta_n}(\psi_0) + (\psi_n - \psi_0)\right| \\
        &\leq  \left|  \Gamma_{0,\eta_n}(\psi_n)\right| + \left|G_{0,\eta_n}'(\psi_0) + 1 \right| |\psi_n - \psi_0| \\ 
        &\leq \sup_{\eta : \|\eta - \eta_0\|_{\s{H}} < \delta}\left| \Gamma_{0,\eta}(\psi_n)\right|  +  \left|G_{0,\eta_n}'(\psi_0) + 1 \right| \left| \psi_n- \psi_0\right|.
    \end{align*}
    Now since $\|\eta_n - \eta_0\|_{\s{H}} = o_{\prob_0^*}(1)$ and $G_{0,\eta}(\psi_0)$ is continuous in $\eta$ at $\eta_0$ with $G_{0,\eta}(\psi_0) = -1$, $\left|G_{0,\eta_n}'(\psi_0) + 1 \right| = o_{\prob_0^*}(1)$. Since $\psi_n \inoutprob \psi_0$ as established above, we then have $\sup_{\eta : \|\eta - \eta_0\|_{\s{H}} < \delta}\left| \Gamma_{0,\eta}(\psi_n)\right|= o_{\prob_0^*}(|\psi_n - \psi_0|)$ by the assumed differentiability. Hence, we have 
    \begin{align*}
        n^{1/2} |\psi_n  - \psi_0| = O_{\prob_0^*}(1) + n^{1/2}o_{\prob_0^*}(|\psi_n - \psi_0|),
    \end{align*}
    which implies that $\psi_n - \psi_0 = O_{\prob_0^*}(n^{-1/2})$. 

    Now since $G_{0,\eta_n}'(\psi_0) + 1 = o_{\prob_0^*}(1)$, $P_0\phi_{\psi_0, \eta_n} = o_{\prob_0^*}(n^{-1/2})$, and $\psi_n - \psi_0 = O_{\prob_0^*}(n^{-1/2})$, we have
    \begin{align*} 
        \psi_n  - \psi_0 + P_0 \phi_{\psi_n, \eta_n} &= \left[P_0(\phi_{\psi_n, \eta_n} - \phi_{\psi_0, \eta_n})- G_{0,\eta_n}'(\psi_0)(\psi_n - \psi_0)\right] \\
        &\qquad + \left[ G_{0,\eta_n}'(\psi_0) + 1\right] \left(\psi_n - \psi_0\right) + P_0\phi_{\psi_0, \eta_n}  \\
        &= \Gamma_{0, \eta_n}(\psi_n)  + o_{\prob_0^*}(n^{-1/2}).
    \end{align*}
    The first term is $o_{\prob_0^*}(n^{-1/2})$ by the differentiability assumption since $\|\eta_n - \eta_0\|_{\s{H}} = o_{\prob_0^*}(1)$ and $\psi_n - \psi_0 = O_{\prob_0^*}(n^{-1/2})$ as discussed above. The result follows. 
\end{proof}

\lemmaempbootstrapestimating*
\begin{proof}[\bfseries{Proof of Lemma~\ref{lemma: empirical bootstrap estimating-equations}}]
    We first show that the assumptions imply that $\psi_n^* \condinoutprob \psi_0$. By assumption, $\psi_0$ is a well-separated solution of $\psi \mapsto G_{0, \eta_0}(\psi)$ and $\d{P}_n^* \phi_{\psi_n^*, \eta_n^*} =  o_{\prob_0^*}(1)$. By condition~\ref{cond: limited complexity}, $\phi_n^*:= \phi_{\psi_n^*, \eta_n^*}$ is contained in a $P_0$-Donsker class with probability tending to one, which implies that it is contained in a $P_0$-Glivenko Cantelli class with probability tending to one. Adding and subtracting terms, we can write
    \begin{align*}
        P_0 \left( \phi_{\psi, \eta_n^*} - \phi_{\psi, \eta_0}\right) &= \Gamma_{0, \eta_n^*} (\psi) - \Gamma_{0, \eta_0}(\psi)  + P_0 \phi_{\psi_0, \eta_n^*} + \left[ G_{0, \eta_n^*}'(\psi_0) + 1 \right] \left(\psi - \psi_0\right) 
    \end{align*} 
    By assumption, $\sup_{|\psi| \leq M} \left| \Gamma_{0, \eta_n^*} (\psi) - \Gamma_{0, \eta_0}(\psi_0)\right|=  o_{\prob_W^*}(1)$, $P_0 \phi_{\psi_0, \eta_n^*} = o_{\prob_0^*}(1)$, and 
    \[ \sup_{| \psi| \leq M} \left| \left[ G_{0, \eta_n^*}'(\psi_0) + 1 \right] \left(\psi - \psi_0\right) \right| = o_{\prob_0^*}(1)\]
    for every $M > 0$ since $\eta_n^* \condinoutprob \eta_0$. Finally, $\psi_n^* =  O_{\prob_W^*}(1)$ and $\|\d{P}_n - P_0 \|_{\s{F}} = o_{\prob_W^*}(1)$ by condition~\ref{cond: limited complexity}. Thus, the conditions of  \Cref{lemma: consistency of ee} are satisfied, so $\psi_n^* \condinoutprob \psi_0$.
    
    We next show that $|P_0(\phi_{\psi_n^*, \eta_n^*} - \phi_{\psi_0, \eta_n^*}) + (\psi_n^* - \psi_0)| = o_{\prob_W^*}(|\psi_n^* - \psi_0|)$ under the assumptions of the lemma. Since $\|\eta_n^* - \eta_0\|_{\s{H}} = o_{\prob_W^*}(1)$ by assumption, with conditional probability tending to one it holds that
    \begin{align*}
        \left| P_0(\phi_{\psi_n^*, \eta_n^*} - \phi_{\psi_0, \eta_n^*}) + (\psi_n^* - \psi_0) \right| &= \left| G_{0,\eta_n^*}(\psi_n^*) - G_{0,\eta_n^*}(\psi_0) + (\psi_n^* - \psi_0)\right| \\
        &\leq  \left| \Gamma_{0,\eta_n^*}(\psi_n^*)\right| + \left|G_{0,\eta_n^*}'(\psi_0) + 1 \right| |\psi_n^* - \psi_0| \\ 
        &\leq \sup_{\eta : \|\eta - \eta_0\|_{\s{H}} < \delta}\left| \Gamma_{0,\eta}(\psi_n^*)\right|  +  \left|G_{0,\eta_n^*}'(\psi_0) + 1 \right| \left| \psi_n^* - \psi_0\right|.
    \end{align*}
    Now since $\|\eta_n^* - \eta_0\|_{\s{H}} = o_{\prob_W^*}(1)$ and $G_{0,\eta}(\psi_0)$ is continuous in $\eta$ at $\eta_0$ with $G_{0,\eta_0}(\psi_0) = -1$, $\left|G_{0,\eta_n^*}'(\psi_0) + 1 \right| = o_{\prob_W^*}(1)$. Since $\psi_n^* \condinoutprob \psi_0$ as shown above, we also have $\sup_{\eta : \|\eta - \eta_0\|_{\s{H}} < \delta}\left| \Gamma_{0,\eta}(\psi_n^*)\right| = o_{\prob_W^*}(|\psi_n^* - \psi_0|)$. The result follows.

    We now show that $\psi_n^* - \psi_0 = O_{\prob_W^*}(n^{-1/2})$ under the assumptions of the lemma. We have
    \begin{align*}
        n^{1/2} (\psi_n^*  - \psi_0) &=  \d{G}_n^* \phi_{\psi_n^*, \eta_n^*} +  \d{G}_n \phi_{\psi_n^*, \eta_n^*} - n^{1/2}\d{P}_n^*\phi_{\psi_n^*, \eta_n^*} + n^{1/2} P_0 \phi_{\psi_0, \eta_n^*}\\
        & \qquad + n^{1/2}\left[P_0(\phi_{\psi_n^*, \eta_n^*} - \phi_{\psi_0, \eta_n^*}) + (\psi_n^* - \psi_0) \right].
    \end{align*}
    Now, $\d{G}_n^* \phi_{\psi_n^*, \eta_n^*} = O_{\prob_W^*}(1)$ by~\ref{cond: bootstrap limited complexity}, $\d{G}_n \phi_{\psi_n^*, \eta_n^*} = O_{\prob_W^*}(1)$ by~\ref{cond: limited complexity}, and $n^{1/2}\d{P}_n^*\phi_{\psi_n^*, \eta_n^*}$ and $n^{1/2} P_0 \phi_{\psi_0, \eta_n^*}$ are both $o_{\prob_W^*}(1)$ by assumption. We therefore have 
    \begin{align*}
        n^{1/2} |\psi_n^*  - \psi_0| &= O_{\prob_W^*}(1) + n^{1/2}\left|P_0(\phi_{\psi_n^*, \eta_n^*} - \phi_{\psi_0, \eta_n^*}) + (\psi_n^* - \psi_0) \right| \\
        &= O_{\prob_W^*}(1) + n^{1/2}o_{\prob_W^*}(|\psi_n^* - \psi_0|),
    \end{align*}
    which implies that $\psi_n^* - \psi_0 = O_{\prob_W^*}(n^{-1/2})$.
    
    We now show the first statement of \Cref{lemma: empirical bootstrap estimating-equations}. We write
    \begin{align}
        \begin{split}\label{eq: empirical bootstrap ee}
            \psi_n^* - \psi_n - (\d{P}_n^* - \d{P}_n) \phi_{\psi_n^*, \eta_n^*}  & = [\psi_n^* - \psi_0 + P_0 \phi_{\psi_n^*, \eta_n^*}] - [\psi_n - \psi_0 + P_0 \phi_{\psi_n, \eta_n}] \\
            & \qquad  + (\d{P}_n - P_0)(\phi_{\psi_n^*, \eta_n^*} - \phi_{\psi_n, \eta_n} ) + \d{P}_n \phi_{\psi_n, \eta_n} - \d{P}_n^* \phi_{\psi_n^*, \eta_n^*}.
        \end{split}
    \end{align}
   
    By \Cref{lemma: classical estimating-equations}, we have $\psi_n - \psi_0 + P_0 \phi_{\psi_n, \eta_n} = o_{\prob_0^*}(n^{-1/2})$. By the definitions of $\psi_n$ and $\psi_n^*$, we have $\d{P}_n \phi_{\psi_n, \eta_n} = o_{\prob_W^*}(n^{-1/2})$ and $\d{P}_n^* \phi_{\psi_n^*, \eta_n^*} = o_{\prob_W^*}(n^{-1/2})$. By conditions~\ref{cond: limited complexity} and~\ref{cond: weak consistency} and Lemma~19.24 of \cite{van2000asymptotic}, we have  $(\d{P}_n - P_0)(\phi_{\psi_n, \eta_n} - \phi_{\psi_0, \eta_0}) = o_{\prob_0^*}(n^{-1/2})$. Similarly, by \ref{cond: limited complexity}(b), \ref{cond: bootstrap limited complexity}(a), \ref{cond: bootstrap weak consistency}  and a minor modification of Lemma~19.24 \cite{van2000asymptotic}, we have $(\d{P}_n - P_0)(\phi_{\psi_n^*, \eta_n^*} - \phi_{\psi_0, \eta_0}) = o_{\prob_W^*}(n^{-1/2})$. Hence, $(\d{P}_n - P_0)(\phi_{\psi_n^*, \eta_n^*} - \phi_{\psi_n, \eta_n} ) = o_{\prob_W^*}(n^{-1/2})$.  Finally, since $G_{0,\eta_n^*}'(\psi_0) + 1 = o_{\prob_W^*}(1)$, $P_0\phi_{\psi_0, \eta_n^*} = o_{\prob_W^*}(n^{-1/2})$, and $\psi_n^* - \psi_0 = O_{\prob_W^*}(n^{-1/2})$, we have
    \begin{align*} 
        |\psi_n^* - \psi_0 + P_0 \phi_{\psi_n^*, \eta_n^*}| &\leq \left| P_0 (\phi_{\psi_n^*, \eta_n^*} - \phi_{\psi_0, \eta_n^*}) + (\psi_n^* - \psi_0) \right|  + \abs{P_0 \phi_{\psi_0, \eta_n^*}}\\
        & = o_{\prob_W^*}(|\psi_n^* - \psi_0|) + o_{\prob_W^*}(n^{-1/2})\\
        & = o_{\prob_W^*}(n^{-1/2}).
    \end{align*}
    The result follows. 

    We next show the second statement in \Cref{lemma: empirical bootstrap estimating-equations}. Since $R_n = \psi_n - \psi_0 - (\d{P}_n - P_0) \phi_{\psi_n, \eta_n}$ and $R_n^* = \psi_n^* - \psi_n - (\d{P}_n^* - \d{P}_n) \phi_{\psi_n^*, \eta_n^*}$, by adding and subtracting terms, we have 
    \begin{align*}
        R_n^* - R_n & = \left(\psi_n^* - \psi_0 + P_0 \phi_{\psi_n^*, \eta_n^*} - P_0 \phi_{\psi_0, \eta_n^*}\right) -2\left(\psi_n - \psi_0 + P_0 \phi_{\psi_n, \eta_n} - P_0 \phi_{\psi_0, \eta_n} \right) \\
        &  \qquad + (P_0 \phi_{\psi_0, \eta_n^*} - 2 P_0 \phi_{\psi_0, \eta_n}) +  \left(\d{P}_n - P_0\right)\left(\phi_{\psi_n^*, \eta_n^*} - \phi_{\psi_n, \eta_n} \right) + \left(2\d{P}_n \phi_{\psi_n, \eta_n} - \d{P}_n^* \phi_{\psi_n^*, \eta_n^*}\right).
    \end{align*}
    As we showed above, $\psi_n^* - \psi_0 + P_0 \phi_{\psi_n^*, \eta_n^*} - P_0 \phi_{\psi_0, \eta_n^*} = o_{\prob_W^*}(n^{-1/2})$ and $(\d{P}_n - P_0)(\phi_{\psi_n^*, \eta_n^*} - \phi_{\psi_n, \eta_n} ) = o_{\prob_W^*}(n^{-1/2})$. By assumption, $P_0 \phi_{\psi_0, \eta_n^*} - 2 P_0 \phi_{\psi_0, \eta_n} = o_{\prob_W^*}(n^{-1/2})$. By the definitions of $\psi_n^*$ and $\psi_n$, $2\d{P}_n \phi_{\psi_n, \eta_n}$ and  $\d{P}_n^* \phi_{\psi_n^*, \eta_n^*}$ are both $o_{\prob_W^*}(n^{-1/2})$. Therefore,
    \begin{align*}
        R_n^* - R_n & = -2\left(\psi_n - \psi_0 + P_0 \phi_{\psi_n, \eta_n} - P_0 \phi_{\psi_0, \eta_n} \right) + o_{\prob_W^*}(n^{-1/2}) \\
        & = - 2\Gamma_{0,\eta_n}(\psi_n)  - 2\left[ G_{0, \eta_n}'(\psi_0) + 1\right] (\psi_n - \psi_0)  + o_{\prob_W^*}(n^{-1/2}).
    \end{align*}
    The first two terms are $o_{\prob_0^*}(n^{-1/2})$ as discussed above. The result follows. 
    
\end{proof}

\lemmasmobootstrapestimating*
\begin{proof}[\bfseries{Proof of Lemma~\ref{lemma: smooth bootstrap estimating-equations}}]
    By the same argument as used in \Cref{lemma: empirical bootstrap estimating-equations}, and since  $\| \hat{P}_n - P_0\|_{\s{F}} =o_{\prob_W^*}(1)$ by assumption, we can show that the conditions of \Cref{lemma: consistency of ee} are satisfied, so that $\psi_n^* \condinoutprob \psi_0$.
    
    We recall that $\hat\psi_n := T(\hat\eta_n, \hat{P}_n)$. By the same argument as used in \Cref{lemma: empirical bootstrap estimating-equations},  $P_0 \phi_{\hat\psi_n, \eta_0} = o_{\prob_0^*}(1)$ implies that $\hat\psi_n - \psi_0 = o_{\prob_0^*}(1)$. We write
    \[ 
        P_0 \phi_{\hat\psi_n, \eta_0} = \hat{P}_n \phi_{\hat\psi_n, \hat\eta_n} - \left(\hat{P}_n - P_0\right) \left(\phi_{\hat\psi_n, \hat\eta_n}\right) - P_0 \left(\phi_{\hat\psi_n, \hat\eta_n} -\phi_{\hat\psi_n, \eta_0}\right).
    \]
    By the definition of $\hat\psi_n$, we have $\hat{P}_n \phi_{\hat\psi_n, \hat\eta_n} = 0$. Under the assumptions $\| \hat{P}_n - P_0\|_{\s{F}} = o_{\prob_0^*}(1)$ and $\phi_{\hat\psi_n, \hat\eta_n}$ is contained in $\s{F}$ with probability tending to one, we have  $(\hat{P}_n - P_0) \phi_{\hat\psi_n, \hat\eta_n} = o_{\prob_0^*}(1)$. Finally, since $\hat\psi_n = O_{\prob_0^*}(1)$ and $\sup_{|\psi| \leq M} \left| \Gamma_{0, \hat\eta_n}(\psi) - \Gamma_{0, \eta_0}(\psi) \right| = o_{\prob_0^*}(1)$ for every $M > 0$, $P_0 \phi_{\psi_0, \hat\eta_n} = o_{\prob_0^*}(1)$, and $\| \hat\eta_n - \eta_0\|_{\s{H}} = o_{\prob_0^*}(1)$, by the same argument as in \Cref{lemma: classical estimating-equations}, $P_0 \left(\phi_{\hat\psi_n, \hat\eta_n} -\phi_{\hat\psi_n, \eta_0}\right) = o_{\prob_0^*}(1)$. Thus, $P_0 \phi_{\hat\psi_n, \eta_0} = o_{\prob_0^*}(1)$, so that $\hat\psi_n \inoutprob \psi_0$. Hence, we also have $\psi_n^* - \hat\psi_n = (\psi_n^* - \psi_0) - (\hat\psi_n - \psi_0) \condinoutprob 0$.

    We now show that $\hat{P}_n \left(\phi_{\psi_n^*, \eta_n^*} - \phi_{\hat\psi_n, \eta_n^*} \right) + (\psi_n^* - \hat\psi_n) = o_{\prob_W^*}(\psi_n^* - \hat\psi_n)$ under the assumptions of the lemma. With conditional probability tending to one, it holds that
    \begin{align*}
        &\abs{\hat{P}_n \big(\phi_{\psi_n^*, \eta_n^*} - \phi_{\hat\psi_n, \eta_n^*} \big) + (\psi_n^* - \hat\psi_n)} \\
        & \qquad =  \abs{\hat{G}_{n, \eta_n^*}(\psi_n^*) - \hat{G}_{n, \eta_n^*}(\hat\psi_n) + (\psi_n^* - \hat\psi_n)}\\
        & \qquad \leq \abs{\hat{G}_{n, \eta_n^*}(\psi_n^*) - \hat{G}_{n, \eta_n^*}(\hat\psi_n) - \hat{G}_{n, \eta_n^*}'(\hat\psi_n) (\psi_n^* -  \hat\psi_n)} + \abs{\hat{G}_{n, \eta_n^*}'(\hat\psi_n) + 1} \abs{\psi_n^* -  \hat\psi_n}\\
        & \qquad \leq \sup_{\eta: \norm{\eta - \hat\eta_n}_{\s{H}} < \delta}\abs{\hat\Gamma_{n, \eta}(\psi_n^*)} + \abs{\hat{G}_{n, \eta_n^*}'(\hat\psi_n) + 1} \abs{\psi_n^* -  \hat\psi_n}
    \end{align*}
    Since by assumption $\hat{G}_{n,\eta_n^*}'(\psi_n) + 1 = o_{\prob_W^*}(1)$, the second term on the right hand side of previous display is $o_{\prob_W^*}(\psi_n^* - \hat\psi_n)$. The result follows. 
  
    We next show that $\psi_n^* - \hat\psi_n = O_{\prob_W^*}(n^{-1/2})$  under the assumptions of the lemma.  We have 
    \begin{align*}
        n^{1/2}(\psi_n^* - \hat\psi_n) &=  \d{G}_n^*\phi_{\psi_n^*, \eta_n^*} - n^{1/2}\d{P}_n^* \phi_{\psi_n^*, \eta_n^*} + n^{1/2}\hat{P}_n \phi_{\hat\psi_n, \eta_n^*} + n^{1/2}\left[\hat{P}_n \left(\phi_{\psi_n^*, \eta_n^*} - \phi_{\hat\psi_n, \eta_n^*} \right) + \psi_n^* - \hat\psi_n\right].
    \end{align*}
    Now, $\d{G}_n^* \phi_{\psi_n^*, \eta_n^*} = O_{\prob_W^*}(1)$ because \ref{cond: bootstrap limited complexity} holds for $\phi_n^* = \phi_{\psi_n^*, \eta_n^*}$ by assumption, and $n^{1/2}\d{P}_n^*\phi_{\psi_n^*, \eta_n^*}$ and $n^{1/2}\hat{P}_n \phi_{\hat\psi_n, \eta_n^*}$ are both $o_{\prob_W^*}(1)$ by assumption. As discussed above, $\hat{P}_n \left(\phi_{\psi_n^*, \eta_n^*} - \phi_{\hat\psi_n, \eta_n^*} \right) + \psi_n^* - \hat\psi_n = o_{\prob_W^*}(\psi_n^* - \hat\psi_n)$. Hence, we have 
    \begin{align*}
        n^{1/2}|\psi_n^*  - \hat\psi_n| = O_{\prob_W^*}(1) + n^{1/2}o_{\prob_W^*}(|\psi_n^* - \hat\psi_n|),
    \end{align*}
    which implies that $\psi_n^* - \hat\psi_n = O_{\prob_W^*}(n^{-1/2})$. 
    
    We can now show the first statement of \Cref{lemma: smooth bootstrap estimating-equations}. We recall that $\psi_n^\circ:= T(\eta_n, \hat{P}_n)$. We  can write
    \begin{align*}
        R_n^* &=\psi_n^* - \psi_n^\circ - (\d{P}_n^* - \hat{P}_n) \phi_{\psi_n^*, \eta_n^*} \\
        &= \left(\psi_n^* - \hat\psi_n + \hat{P}_n \phi_{\psi_n^*, \eta_n^*} - \hat{P}_n \phi_{\hat\psi_n, \eta_n^*}\right)- \left(\psi_n^\circ - \hat\psi_n\right) + \hat{P}_n \phi_{\hat\psi_n, \eta_n^*}  - \d{P}_n^* \phi_{\psi_n^*, \eta_n^*};
    \end{align*}
    We have $\psi_n^\circ - \hat\psi_n = o_{\prob_0^*}(n^{-1/2})$ and $\hat{P}_n \phi_{\hat\psi_n, \eta_n^*} = o_{\prob_W^*}(n^{-1/2})$ by assumption, $\d{P}_n^* \phi_{\psi_n^*, \eta_n^*} = o_{\prob_W^*}(n^{-1/2})$ by definition of $\psi_n^*$, and 
    \[
        \psi_n^* - \hat\psi_n + \hat{P}_n \phi_{\psi_n^*, \eta_n^*} - \hat{P}_n \phi_{\hat\psi_n, \eta_n^*} = o_{\prob_W^*}\left(\psi_n^* - \hat\psi_n\right) =  o_{\prob_W^*}(n^{-1/2})
    \]
    by the derivations above. Hence, $R_n^* = o_{\prob_W^*}(n^{-1/2})$.

    Lastly, we show the second statement in \Cref{lemma: smooth bootstrap estimating-equations}. Recall that $R_n = \psi_n - \psi_0 - (\d{P}_n - P_0) \phi_{\psi_n, \eta_n}$ and $R_n^* = \psi_n^* - \psi_n^\circ - (\d{P}_n^* - \hat{P}_n) \phi_{\psi_n^*, \eta_n^*}$. By adding and subtracting terms, we can write
    \begin{align*}
        R_n^* - R_n & = \left(\psi_n^* - \hat\psi_n + \hat{P}_n \phi_{\psi_n^*, \eta_n^*} - \hat{P}_n \phi_{\hat\psi_n, \eta_n^*} \right) - \left(\psi_n - \psi_0 + P_0 \phi_{\psi_n, \eta_n} - P_0 \phi_{\psi_0, \eta_n} \right) \\
        & \qquad + \left(\hat{P}_n \phi_{\hat\psi_n, \eta_n^*} - P_0 \phi_{\psi_0, \eta_n} \right) + \left(\d{P}_n \phi_{\psi_n, \eta_n} - \d{P}_n^* \phi_{\psi_n^*, \eta_n^*} \right) + \left(\hat\psi_n - \psi_n^\circ\right).
    \end{align*} 
    We have $\psi_n^* - \hat\psi_n + \hat{P}_n \phi_{\psi_n^*, \eta_n^*} - \hat{P}_n \phi_{\hat\psi_n, \eta_n^*} = o_{\prob_W^*}(n^{-1/2})$ by the derivations above, $\psi_n - \psi_0 + P_0 \phi_{\psi_n, \eta_n} - P_0 \phi_{\psi_0, \eta_n} =o_{\prob_0^*}(n^{-1/2})$ by \Cref{lemma: classical estimating-equations}, $\hat\psi_n - \psi_n^\circ = o_{\prob_W^*}(n^{-1/2})$ and $\hat{P}_n \phi_{\hat\psi_n, \eta_n^*}  - P_0\phi_{\psi_0, \eta_n}= o_{\prob_W^*}(n^{-1/2})$ by assumption, and $\d{P}_n \phi_{\psi_n, \eta_n} - \d{P}_n^* \phi_{\psi_n^*, \eta_n^*} = o_{\prob_W^*}(n^{-1/2})$ by the definitions of $\psi_n$ and $\psi_n^*$. 
\end{proof}

\section{Proof of results in Section~\ref{sec: applications}}

\propclassicalaverage*
\begin{proof}[\bfseries{Proof of \Cref{prop: classical average}}]
    We use \Cref{thm: classical}. Condition~\ref{cond: limited complexity} holds because $\eta_n$ falls in a $P_0$-Donsker class by assumption. For any $\varepsilon >0$, by the triangle and Cauchy–Schwarz inequalities, we have 
    \begin{align*}
        &\prob_0^* \left(\norm{\phi_n - \phi_0}_{L_2(P_0)} \geq \varepsilon \right)  \\
        & \qquad \leq  \prob_0^* \left(2\norm{\eta_n - \eta_0}_{L_2(P_0)} \geq \varepsilon/2 \right) + \prob_0^* \left(2\abs{\int \eta_n^{2} - \int \eta_0^2} \geq \varepsilon/2 \right) \\
        & \qquad \leq \prob_0^* \left(2M\norm{\eta_n - \eta_0}_{L_2(\lambda)} \geq \varepsilon/2 \right) + \prob_0^* \left(2\norm{\eta_n - \eta_0}_{L_2(\lambda)}\norm{\eta_n + \eta_0}_{L_2(\lambda)} \geq \varepsilon/2 \right) \\
        & \qquad \leq \prob_0^* \left(2M\norm{\eta_n - \eta_0}_{L_2(\lambda)} \geq \varepsilon/2 \right) + \prob_0^* \left(4M\norm{\eta_n - \eta_0}_{L_2(\lambda)} \geq \varepsilon/2 \right) + \prob_0^* \left(\norm{\eta_n}_{L_2(\lambda)} \geq M \right).
    \end{align*}
    Each term on the right-hand side converges to 0, which implies condition~\ref{cond: weak consistency}.    
    
    For condition~\ref{cond: second order} for $\psi_{n,1}$, we have 
    \begin{align*}
        T_1(\eta_n, \d{P}_n) - T_1(\eta_0, P_0) - (\d{P}_n - P_0)\phi_n &= \left(2\d{P}_n \eta_n - \int \eta_n^2 \right) - \int \eta_0^2 - 2(\d{P}_n - P_0)\eta_n \\
        &= - \int (\eta_n - \eta_0)^2 \\
        &=- \norm{ \eta_n - \eta_0}_{L_2(\lambda)}^2
    \end{align*}
    which is $o_{\prob_0^*}(n^{1/2})$ by assumption. For $\psi_{n,2}$ and $\psi_{n,3}$, we have
    \begin{align*}
        T_2(\eta_n, \d{P}_n) - T_2(\eta_0, P_0) - (\d{P}_n - P_0)\phi_n &= \int \eta_n^2 - \int \eta_0^2 - 2(\d{P}_n - P_0)\eta_n \\
        & = - \int (\eta_n - \eta_0)^2 + 2 \left[\int \eta_n^2 - \d{P}_n \eta_n\right] \\
        T_3(\eta_n, \d{P}_n) - T_3(\eta_0, P_0) - (\d{P}_n - P_0)\phi_n &= \d{P}_n \eta_n - P_0 \eta_0 - 2(\d{P}_n - P_0)\eta_n \\
        & = - \int (\eta_n - \eta_0)^2 + \left[\int \eta_n^2 - \d{P}_n \eta_n\right], 
    \end{align*}
    which are both $o_{\prob_0^*}(n^{-1/2})$ when $\int \eta_n^2 - \d{P}_n \eta_n = o_{\prob_0^*}(n^{-1/2})$ and  $\norm{ \eta_n - \eta_0}_{L_2(\lambda)}^2 = o_{\prob_0^*}(n^{-1/4})$.
\end{proof}

\propempbootstrapaverage*
\begin{proof}[\bfseries{Proof of \Cref{prop: empirical bootstrap average}}]
     We use \Cref{thm: bootstrap}. Condition~\ref{cond: bootstrap limited complexity} holds by \Cref{lemma: empirical bootstrap} because $\eta_n^*$ is contained in a $P_0$-Donsker class with $\prob_W^*$ probability tending to one. For any $\varepsilon >0$, by the triangle and Cauchy–Schwarz inequalities, we have 
    \begin{align*}
        &\prob_W^* \left(\norm{\phi_n^* - \phi_0}_{L_2(P_0)} \geq \varepsilon \right)  \\
        & \qquad \leq \prob_W^* \left(2\norm{\eta_n^* - \eta_0}_{L_2(P_0)} \geq \varepsilon/2 \right) + \prob_W^* \left(2\abs{\int \eta_n^{*2} - \int \eta_0^2} \geq \varepsilon/2 \right) \\
        & \qquad \leq \prob_W^* \left(2M\norm{\eta_n^* - \eta_0}_{L_2(\lambda)} \geq \varepsilon/2 \right) + \prob_W^* \left(2\norm{\eta_n^* - \eta_0}_{L_2(\lambda)}\norm{\eta_n^* + \eta_0}_{L_2(\lambda)} \geq \varepsilon/2 \right) \\
        & \qquad \leq \prob_W^* \left(2M\norm{\eta_n^* - \eta_0}_{L_2(\lambda)} \geq \varepsilon/2 \right) + \prob_W^* \left(4M\norm{\eta_n^* - \eta_0}_{L_2(\lambda)} \geq \varepsilon/2 \right) + \prob_W^* \left(\norm{\eta_n^*}_{L_2(\lambda)} \geq M \right),
    \end{align*}
   Each term on the right-hand side is $o_{\prob_0^*}(1)$, which implies condition~\ref{cond: bootstrap weak consistency}. 
    
    For condition~\ref{cond: bootstrap second order} for $\psi_{n,1}^*$, we write
    \begin{align*}
        &\abs{T_1(\eta_n^*, \d{P}_n^*) - T_1(\eta_n, \d{P}_n) - (\d{P}_n^* - \d{P}_n)\phi_n^*} \\
        & \qquad = 
        \abs{\left(2\d{P}_n^* \eta_n^* - \int \eta_n^{*2}\right) - \left(2\d{P}_n \eta_n - \int \eta_n^2 \right) - (\d{P}_n^* - \d{P}_n)2\eta_n^*} \\
        & \qquad = \abs{- \int(\eta_n^* - \eta_n)^2 - 2\int \eta_n(\eta_n^* - \eta_n) + 2(\d{P}_n - P_0)(\eta_n^* - \eta_n) + 2P_0(\eta_n^* - \eta_n)}\\
        & \qquad = \abs{- \int(\eta_n^* - \eta_n)^2 - 2\int (\eta_n - \eta_0)(\eta_n^* - \eta_n) + 2(\d{P}_n - P_0)(\eta_n^* - \eta_n)}\\
        & \qquad \leq \norm{\eta_n^* - \eta_n}_{L_2(\lambda)}^2 + 2\norm{\eta_n - \eta_0}_{L_2(\lambda)} \norm{\eta_n^* - \eta_n}_{L_2(\lambda)} + 2n^{-1/2}\abs{\d{G}_n(\eta_n^* - \eta_n)}.
    \end{align*}
    The first two terms are $o_{\prob_W^*}(n^{-1/2})$ by assumption. In addition, $\d{G}_n (\eta_n^* - \eta_0) = o_{\prob_W^*}(1)$ because $\norm{\eta_n^* - \eta_0}_{L_2(P_0)} = o_{\prob_W^*}(1)$ and $\eta_n^*$ is contained in a $P_0$-Donsker class with $\prob_W^*$-probability tending to one. Hence, condition~\ref{cond: bootstrap second order} holds for $\psi_{n,1}^*$, and the conditional asymptotic linearity result for $\psi_{n,1}^*$ follows.
    
    For condition~\ref{cond: bootstrap second order} for $\psi_{n,2}^*$ and $\psi_{n,3}^*$, we have
    \begin{align*}
        T_2(\eta_n^*, \d{P}_n^*) - T_2(\eta_n, \d{P}_n) - (\d{P}_n^* - \d{P}_n)\phi_n^* &= \int \eta_n^{*2} - \int \eta_n^{2} - 2(\d{P}_n^* - \d{P}_n)\eta_n^* \\
        & = - \int (\eta_n^* - \eta_n)^2 +  2 \left(\int \eta_n^{*2} - \d{P}_n^* \eta_n^*\right) - 2\left(\int \eta_n \eta_n^* - \d{P}_n \eta_n^*\right)\\
        T_3(\eta_n^*, \d{P}_n^*) - T_3(\eta_n, \d{P}_n) - (\d{P}_n^* - \d{P}_n)\phi_n^* &= \d{P}_n^* \eta_n^* - \d{P}_n \eta_n - 2(\d{P}_n^* - \d{P}_n)\eta_n^* \\
        & = - \int (\eta_n^* - \eta_n)^2 + \left(\int \eta_n^{*2} - \d{P}_n^* \eta_n^*\right) +\left(\int \eta_n^{2} - \d{P}_n \eta_n\right) \\
        &\qquad - 2\left(\int \eta_n \eta_n^* - \d{P}_n \eta_n^*\right).
    \end{align*}
    Now, $ \int (\eta_n^* - \eta_n)^2$ and $\int \eta_n^{*2} - \d{P}_n^* \eta_n^*$ are both $o_{\prob_W^*}(n^{-1/2})$ by assumption, and $\int \eta_n^2 - \d{P}_n \eta_n = o_{\prob_0^*}(n^{-1/2})$ by assumption. In addition, 
    \begin{align*}
        \abs{\int \eta_n \eta_n^* - \d{P}_n \eta_n^*} =& \abs{\int (\eta_n - \eta_0)(\eta_n^* - \eta_n) - (\d{P}_n - P_0)(\eta_n^* - \eta_n) + \int \eta_n^2 - \d{P}_n \eta_n}\\
        &\leq \norm{\eta_n - \eta_0}_{L_2(\lambda)} \norm{\eta_n^* - \eta_n}_{L_2(\lambda)} + n^{-1/2}\abs{\d{G}_n(\eta_n^* - \eta_n)} + \left|\int \eta_n^2 - \d{P}_n \eta_n\right|,
    \end{align*}
    which is $o_{\prob_W^*}(n^{-1/2})$. Therefore, condition~\ref{cond: bootstrap second order} holds for $\psi_{n,2}^*$ and $\psi_{n,3}^*$.

    We now turn to the setting where $\eta_n$ and $\eta_n^*$ are kernel density estimators using the same bandwidth $h$. We note that for any $x$, since $\hat{P}_n$ is the empirical bootstrap,
    \begin{align*}
        \E_W^* \left[ \eta_n^*(x) \right] &= \E_W^* \left[ \d{P}_n^* h^{-d}K\left(\frac{X_i^* - x}{h}\right) \right] = \d{P}_n h^{-d}K\left(\frac{X_i - x}{h}\right) = \eta_n(x).
    \end{align*} 
    We therefore have 
    \begin{align*}
        \E_W^* \norm{\eta_n^* - \eta_n}_{L_2(\lambda)}^2 & = \int \E_W^* [\eta_n^*(x) - \eta_n(x)]^2 \sd x = \int \mathrm{Var}_W^*(\eta_n^*(x)) \sd x \\
        & = \int \frac{1}{n h^{2d}} \mathrm{Var}_W^* \left[K\left(\frac{X_1^* - x}{h} \right)\right] \sd x \\
        & \leq \frac{1}{n h^{2d}} \int \E_W^* \left[ K\left(\frac{X_1^* - x}{h}\right) \right]^2 \sd x\\
        & = \frac{1}{n h^{2d}} \int \d{P}_n\left[ K\left(\frac{X_i - x}{h}\right) \right]^2 \sd x\\
        & = \frac{1}{n h^d} \int K^2(t) \sd t.
    \end{align*}
    Hence, by Chebyshev's inequality, for any $\varepsilon >0$, we have 
    \begin{align*}
    \prob_W^* \left( n^{1/4}\norm{\eta_n^* - \eta_n}_{L_2(\lambda)} \geq \varepsilon \right) &\leq n^{1/2} \E_W^* \norm{\eta_n^* - \eta_n}_{L_2(\lambda)}^2/\varepsilon^2 \\
    &\leq  \left(n h^{2d}\right)^{-1/2}\varepsilon^{-2}\int K^2(t) \sd t,
    \end{align*}
    which goes to zero since $nh^{2d} \longrightarrow \infty$. Therefore $ \norm{\eta_n^* - \eta_n}_{L_2(\lambda)} = o_{\prob_W^*}(n^{-1/4})$. 
\end{proof}

The proof of \Cref{prop: auto bias correction} relies on technical lemmas that are stated and proved in Appendix~\ref{sec: emp auto bias lemmas}.

\propautobiasempbootstrapaverage*
\begin{proof}[\bfseries{Proof of \Cref{prop: auto bias correction}}]
    Since $\phi_P = 2(\eta_P - \psi_P)$, we then have 
    $S_n^* = (\d{P}_n^* - \d{P}_n)(\phi_n^* - \phi_0) = 2(\d{P}_n^* - \d{P}_n)(\eta_n^* - \eta_0)$ and $S_n = 2(\d{P}_n - P_0)(\eta_n - \eta_0)$. By \Cref{thm: auto empirical process}, we have $ S_n^* - S_n= o_{\prob_W^*}(n^{-1/2})$ under the stated assumptions.
    
    We have $R_n^* = \psi_n^* - \psi_n - (\d{P}_n^* - \d{P}_n)\phi_n^* = \psi_n^* - \psi_n - 2(\d{P}_n^* - \d{P}_n)\eta_n^*$. For the one-step estimator $\psi_{n, 1}^*$, we have $R_n^* = -\int (\eta_n^* - \eta_0)^2 + \int (\eta_n - \eta_0)^2 + (\d{P}_n - P_0)(\phi_n^* - \phi_n)$ and $R_n = -\int(\eta_n - \eta_0)^2 $. Hence
    \[
        R_n^* - R_n = -\int (\eta_n^* - \eta_0)^2 + 2\int (\eta_n - \eta_0)^2 + (\d{P}_n - P_0)(\phi_n^* - \phi_n).
    \]
    If $nh^{4m} \longrightarrow 0$ and $nh^d \longrightarrow \infty$ hold, then $\int (\eta_n^* - \eta_0)^2 - 2\int (\eta_n - \eta_0)^2  = o_{\prob_W^*}(n^{-1/2})$ by \Cref{thm: auto mise}, and  $(\d{P}_n - P_0)(\phi_n^* - \phi_n) = o_{\prob_W^*}(n^{-1/2})$ by \Cref{thm: auto empirical process}. This implies bootstrap percentile confidence intervals based on $\psi_{n, 1}^*$ are asymptotically valid by \Cref{thm: perc bootstrap CI}.

    For the plug-in estimator $\psi_{n, 2}^*$, we have $R_n^* = -\int (\eta_n^* - \eta_0)^2 + \int (\eta_n - \eta_0)^2 - \d{P}_n^*\phi_n^* + \d{P}_n \phi_n + (\d{P}_n - P_0)(\phi_n^* - \phi_n)$ and $R_n = -\int(\eta_n - \eta_0)^2 - \d{P}_n \phi_n$.  Hence, 
    \[
        R_n^* - R_n = -\int (\eta_n^* - \eta_0)^2 + 2\int (\eta_n - \eta_0)^2 - \d{P}_n^*\phi_n^* + 2\d{P}_n \phi_n + (\d{P}_n - P_0)(\phi_n^* - \phi_n).
    \]
    Since $nh^{2m} \longrightarrow 0$ and $nh^d \longrightarrow \infty$ hold, we have $\int (\eta_n^* - \eta_0)^2 - 2\int (\eta_n - \eta_0)^2 = o_{\prob_W^*}(n^{-1/2})$ by \Cref{thm: auto mise}, $\d{P}_n^*\phi_n^* - 2\d{P}_n \phi_n = o_{\prob_W^*}(n^{-1/2})$ by \Cref{thm: auto asym bias} and  $(\d{P}_n - P_0)(\phi_n^* - \phi_n) = o_{\prob_W^*}(n^{-1/2})$ by \Cref{thm: auto empirical process}. This implies bootstrap percentile confidence intervals based on $\psi_{n, 2}^*$ are asymptotically valid by \Cref{thm: perc bootstrap CI}.
    
   For the empirical mean plug-in estimator $\psi_{n, 3}^*$, we have $R_n^* = -\int (\eta_n^* - \eta_0)^2 + \int (\eta_n - \eta_0)^2 - \d{P}_n^*\phi_n^*/2 + \d{P}_n \phi_n/2 + (\d{P}_n - P_0)(\phi_n^* - \phi_n)$ and $R_n = -\int(\eta_n - \eta_0)^2 - \d{P}_n \phi_n/2$.  Hence, 
    \[
        R_n^* - R_n = -\int (\eta_n^* - \eta_0)^2 + 2\int (\eta_n - \eta_0)^2 - \d{P}_n^*\phi_n^* /2+ \d{P}_n \phi_n + (\d{P}_n - P_0)(\phi_n^* - \phi_n).
    \]
    Since $nh^{2m} \longrightarrow 0$ and $nh^d \longrightarrow \infty$ hold, we have $\int (\eta_n^* - \eta_0)^2 = 2\int (\eta_n - \eta_0)^2 + o_{\prob_W^*}(n^{-1/2})$ by \Cref{thm: auto mise}, $\d{P}_n^*\phi_n^* = 2\d{P}_n \phi_n + o_{\prob_W^*}(n^{-1/2})$ by \Cref{thm: auto asym bias} and  $(\d{P}_n - P_0)(\phi_n^* - \phi_n) = o_{\prob_W^*}(n^{-1/2})$ by \Cref{thm: auto empirical process}. This implies bootstrap percentile confidence intervals based on $\psi_{n, 3}^*$ are asymptotically valid by \Cref{thm: perc bootstrap CI}.
\end{proof}

\propsmobootstrapaverage*
\begin{proof}[\bfseries{Proof of Proposition~\ref{prop: smooth bootstrap average}}]
    We use \Cref{thm: bootstrap}. Condition~\ref{cond: bootstrap limited complexity} holds because $\prob_W(\eta_n^* \in \s{F}) \inoutprob 1$ and the conditions of \Cref{prop: uniform donsker} hold by assumption. For any $\varepsilon >0$, by the triangle and Cauchy–Schwarz inequalities, we have
    \begin{align*}
        &\prob_W^* \left(\norm{\phi_n^* - \phi_0}_{L_2(P_0)} \geq \varepsilon \right)  \\
        & \qquad \leq \prob_W^* \left(2\norm{\eta_n^* - \eta_0}_{L_2(P_0)} \geq \varepsilon/2 \right) + \prob_W^* \left(2\abs{\int \eta_n^{*2} - \int \eta_0^2} \geq \varepsilon/2 \right) \\
        & \qquad \leq \prob_W^* \left(2M\norm{\eta_n^* - \eta_0}_{L_2(\lambda)} \geq \varepsilon/2 \right) + \prob_W^* \left(2\norm{\eta_n^* - \eta_0}_{L_2(\lambda)}\norm{\eta_n^* + \eta_0}_{L_2(\lambda)} \geq \varepsilon/2 \right) \\
        & \qquad \leq \prob_W^* \left(2M\norm{\eta_n^* - \eta_0}_{L_2(\lambda)} \geq \varepsilon/2 \right) + \prob_W^* \left(4M\norm{\eta_n^* - \eta_0}_{L_2(\lambda)} \geq \varepsilon/2 \right) + \prob_W^* \left(\norm{\eta_n^*}_{L_2(\lambda)} \geq M \right). 
    \end{align*}
    Since $\norm{\eta_n^* - \eta_0}_{L_2(\lambda)} \leq \norm{\eta_n^* - \hat\eta_n}_{L_2(\lambda)} + \norm{\hat\eta_n - \eta_0}_{L_2(\lambda)}$ and $\norm{\eta_n^*}_{L_2(\lambda)} \leq \norm{\eta_n^*}_{\infty}$, each term on the right-hand side of the previous display is $o_{\prob_0^*}(1)$, which implies condition~\ref{cond: bootstrap weak consistency}.

    For condition~\ref{cond: bootstrap second order}, we have
    \begin{align*}
        T_1(\eta_n^*, \d{P}_n^*) - T_1(\eta_n, \hat{P}_n) - (\d{P}_n^* - \hat{P}_n)\phi_n^* =& - \int (\eta_n^* - \hat{\eta}_n)^2 + \int (\hat{\eta}_n - \eta_n)^2,
    \end{align*}
    which is $o_{\prob_W^*}(n^{-1/2})$ by assumption. Hence, condition~\ref{cond: bootstrap second order} holds for $\psi_{n, 1}^*$. We next have
    \begin{align*}
        T_2(\eta_n^*, \d{P}_n^*) - T_2(\eta_n, \hat{P}_n) - (\d{P}_n^* - \hat{P}_n)\phi_n^* &= \int \eta_n^{*2} - \int \eta_n^{2} - 2(\d{P}_n^* - \hat{P}_n)\eta_n^* \\
        &= - \int (\eta_n^* - \hat{\eta}_n)^2 + \int (\hat{\eta}_n - \eta_n)^2 +2 \left[ \int \eta_n^{*2} - \d{P}_n^* \eta_n^* \right] \\
        &\qquad- 2\left[ \int \eta_n^2 - \hat{P}_n \eta_n \right],\\
        T_3(\eta_n^*, \d{P}_n^*) - T_3(\eta_n, \hat{P}_n) - (\d{P}_n^* - \hat{P}_n)\phi_n^* &= \d{P}_n^* \eta_n^* - \hat{P}_n \eta_n - 2(\d{P}_n^* - \hat{P}_n)\eta_n^* \\
        &= -\int (\eta_n^* - \hat{\eta}_n)^2 + \int (\hat{\eta}_n - \eta_n)^2 + \left[ \int \eta_n^{*2} - \d{P}_n^* \eta_n^* \right] \\
        &\qquad- \left[ \int \eta_n^2 - \hat{P}_n \eta_n \right],
    \end{align*}
    both of which are $o_{\prob_W^*}(n^{-1/2})$ by assumption. Hence, condition~\ref{cond: bootstrap second order} holds for $\psi_{n, 2}^*$ and $\psi_{n, 3}^*$.

\end{proof}

The proof of \Cref{prop: auto bias correction smooth} relies on technical lemmas that are stated and proved in Appendix~\ref{sec: smooth auto bias lemmas}.
\propautobiassmoothbootstrapaverage*
\begin{proof}[\bfseries{Proof of \Cref{prop: auto bias correction smooth}}]
    Since $\phi_P = 2(\eta_P - \psi_P)$, we then have 
    $S_n^* = (\d{P}_n^* - \hat{P}_n)(\phi_n^* - \phi_0) = 2(\d{P}_n^* - \hat{P}_n)(\eta_n^* - \eta_0)$ and $S_n = 2(\d{P}_n - P_0)(\eta_n - \eta_0)$. By \Cref{thm: auto smooth empirical process}, we have $ S_n^* - S_n= o_{\prob_W^*}(n^{-1/2})$ under the stated assumptions.
    
    We have $R_n^* = \psi_n^* - T(\eta_n, \hat{P}_n) - (\d{P}_n^* - \hat{P}_n)\phi_n^* = \psi_n^* - T(\eta_n, \hat{P}_n) - 2(\d{P}_n^* - \hat{P}_n)\eta_n^*$. For the one-step estimator $\psi_{n, 1}^*$, we have $R_n^* = -\int (\eta_n^* - \eta_n)^2$ and $R_n = -\int(\eta_n - \eta_0)^2 $. Hence
    \[
        R_n^* - R_n = -\int (\eta_n^* - \eta_n)^2 + \int (\eta_n - \eta_0)^2
    \]
    If $nh^{4m} \longrightarrow 0$ and $nh^{2d} \longrightarrow \infty$ hold, then $\int (\eta_n^* - \eta_n)^2 - \int (\eta_n - \eta_0)^2  = o_{\prob_W^*}(n^{-1/2})$ by \Cref{thm: auto smooth mise}. This implies bootstrap percentile confidence intervals based on $\psi_{n, 1}^*$ are asymptotically valid by \Cref{thm: perc bootstrap CI}.

    For the plug-in estimator $\psi_{n, 2}^*$, we have $R_n^* = -\int (\eta_n^* - \eta_n)^2 - \d{P}_n^*\phi_n^*$ and $R_n = -\int(\eta_n - \eta_0)^2 - \d{P}_n \phi_n$.  Hence, 
    \[
        R_n^* - R_n = -\int (\eta_n^* - \eta_n)^2 + \int (\eta_n - \eta_0)^2 - \d{P}_n^*\phi_n^* + \d{P}_n \phi_n.
    \]
    Since $nh^{4m} \longrightarrow 0$ and $nh^{2d} \longrightarrow \infty$ hold, we have $\int (\eta_n^* - \eta_n)^2 - \int (\eta_n - \eta_0)^2 = o_{\prob_W^*}(n^{-1/2})$ by \Cref{thm: auto smooth mise}, $\d{P}_n^*\phi_n^* - \d{P}_n \phi_n = o_{\prob_W^*}(n^{-1/2})$ by \Cref{thm: auto smooth asym bias}. This implies bootstrap percentile confidence intervals based on $\psi_{n, 2}^*$ are asymptotically valid by \Cref{thm: perc bootstrap CI}.
    
   For the empirical mean plug-in estimator $\psi_{n, 3}^*$, we have $R_n^* = -\int (\eta_n^* - \eta_n)^2 - \d{P}_n^*\phi_n^*/2$ and $R_n = -\int(\eta_n - \eta_0)^2 - \d{P}_n \phi_n/2$.  Hence, 
    \[
        R_n^* - R_n = -\int (\eta_n^* - \eta_n)^2 + \int (\eta_n - \eta_0)^2 - \d{P}_n^*\phi_n^* /2+ \d{P}_n \phi_n/2.
    \]
    Since $nh^{4m} \longrightarrow 0$ and $nh^{2d} \longrightarrow \infty$ hold, we have $\int (\eta_n^* - \eta_n)^2 - \int (\eta_n - \eta_0)^2 =  o_{\prob_W^*}(n^{-1/2})$ by \Cref{thm: auto smooth mise}, $\d{P}_n^*\phi_n^* - \d{P}_n \phi_n = o_{\prob_W^*}(n^{-1/2})$ by \Cref{thm: auto smooth asym bias}. This implies bootstrap percentile confidence intervals based on $\psi_{n, 3}^*$ are asymptotically valid by \Cref{thm: perc bootstrap CI}.
\end{proof}

\propclassicalcounterfactual*
\begin{proof}[\bfseries{Proof of \Cref{prop: classical counterfactual}}]
    Condition~\ref{cond: limited complexity} holds by the Donsker assumptions, the boundedness assumptions on $\mu_n$ and $g_n$, and $E(Y^2) < \infty$ together with preservation of the Donsker condition under Lipschitz transformations (e.g., Theorem 2.10.6 of \citealp{van1996weak}).
    
    We next address condition~\ref{cond: weak consistency}. By adding and subtracting terms, we have 
    \begin{align*}
        \phi_n(y,a,z) - \phi_0(y,a,z) &= (1-a)\left[y-\mu_{0}(z)\right] \frac{g_n(z)}{1-g_n(z)} \frac{1}{\pi_n\pi_0} \left[\pi_0 - \pi_n\right] + a\left[\mu_{0}(z) - \psi_0\right]\frac{1}{\pi_n\pi_0} \left[\pi_0 - \pi_n\right] \\
        & \qquad + \frac{(1-a)\left[y-\mu_{0}(z)\right]}{\pi_0\left[1-g_n(z)\right]\left[1-g_0(z)\right]} \left[g_n(z) - g_0(z)\right]\\
        & \qquad + \frac{1}{\pi_n}\left[\frac{a}{g_n(z)} - 1 \right]\frac{g_n(z)}{1-g_n(z)} \left[\mu_{n}(z)- \mu_{0}(z)\right] - \frac{a}{\pi_n} \left[ \psi_n - \psi_0\right].
    \end{align*}
    Therefore, by the assumed bounds and the triangle inequality, $\norm{\phi_n - \phi_0}_{L_2(P_0)}$ is bounded up to a constant by
    \begin{align*}
        |\pi_n - \pi_0| +  \norm{g_n - g_0}_{L_2(P_0)} + \norm{\mu_{n}- \mu_{0}}_{L_2(P_0)} + |\psi_n - \psi_0|.
    \end{align*}
    We note that consistency of $\mu_{n}$ and $g_n$ implies consistency of $\pi_n$ and $\psi_n$. Therefore, condition~\ref{cond: weak consistency} holds.

    For condition~\ref{cond: second order} for the one-step estimator $\psi_{n, 1}=T_1(\eta_n, \d{P}_n)$, we have $T_1(\eta_n, \d{P}_n) - T_1(\eta_0, P_0) -  (\d{P}_n - P_0) \phi_n =  \psi_n - \psi_0 + P_0 \phi_n$. Using the law of total expectation, a straightforward calculation shows that
 \begin{align*}
        P_0 \phi_{\psi, \eta} &=  P_0 \left\{ \frac{(g_0 - g)(\mu - \mu_0)}{\pi (1-g)} \right\} + \frac{\pi_0}{\pi} (\psi_0 - \psi).
    \end{align*}
    Therefore,
    \begin{align*}
       \psi_n - \psi_0 + P_0 \phi_n  &= P_0 \left\{ \frac{(g_0 - g_n)(\mu_{n}- \mu_{0})}{\pi_n (1-g_n)}\right\} + \frac{\pi_n-\pi_0}{\pi_n} (\psi_n - \psi_0). 
    \end{align*}
    These terms are both $o_{\prob_0^*}(n^{-1/2})$ by the assumed rates and boundedness conditions. The result for the one-step estimator follows.

    For the estimating equations-based estimator, we use \Cref{lemma: classical estimating-equations}. We have already established that condition~\ref{cond: limited complexity} holds for $\phi_n = \phi_{\psi_n, \eta_n}$. We have that the population estimating equation is $G_{0,\eta_0}(\psi) = \psi - \psi_0$, so $G_{0,\eta_0}(\psi_0) = 0$ and $\inf_{|\psi - \psi_0| > \delta} | G_{0,\eta_0}(\psi)| = \delta$, and hence $\psi_0$ is a well-separated solution of the population estimating equation. Furthermore, it is straightforward to see that the boundedness conditions and the assumption that $E_0(Y^2) < \infty$ imply that $\psi_{n,2} = O_{\prob_0^*}(1)$. Next, defining $G_{0, \eta}'(\psi_0) = -\pi_0\pi^{-1}$, we then have $G_{0, \eta_0}'(\psi_0) = -1$ and $\lim_{\eta \to \eta_0}G_{0, \eta}'(\psi_0) = \lim_{\eta \to \eta_0} -\pi_0 \pi^{-1} = -1$. In addition, 
    \begin{align*}
        \Gamma_{0,\eta}(\psi) &= G_{0, \eta}(\psi) - G_{0, \eta}(\psi_0) - G_{0, \eta}'(\psi_0)(\psi-\psi_0) \\
        &= P_0 \phi_{\eta, \psi} - P_0 \phi_{\eta, \psi_0} + \frac{\pi_0}{\pi}(\psi-\psi_0)\\
        &= P_0 \left\{ \frac{g_0(Z) - g(Z)}{\pi \left[1-g(Z)\right]}\left[\mu(Z) - \mu_{0}(Z)\right] \right\} + \frac{\pi_0}{\pi} (\psi - \psi_0) \\
        & \qquad- P_0 \left\{ \frac{g_0(Z) - g(Z)}{\pi \left[1-g(Z)\right]}\left[\mu(Z) - \mu_{0}(Z)\right] \right\} - \frac{\pi_0}{\pi} (\psi_0 - \psi_0)\\
        & \qquad  + \frac{\pi_0}{\pi} (\psi_0 - \psi) \\
        & = 0.
    \end{align*}
    Therefore, both conditions about $\Gamma_{0,\eta}$ hold. Finally, 
    \begin{align*}
        P_0 \phi_{\psi_0, \eta_n} &=  P_0 \left\{ \frac{g_0 - g_n}{\pi_n \left[1-g_n\right]}\left[\mu_{n}- \mu_{0}\right] \right\} - \frac{\pi_0}{\pi_n} (\psi_0 - \psi_0)\\
        & = P_0 \left\{ \frac{g_0 - g_n}{\pi_n \left[1-g_n\right]}\left[\mu_{n}- \mu_{0}\right] \right\},
    \end{align*}
    which is $o_{\prob_0^*}(n^{-1/2})$ by assumption. Hence, condition~\ref{cond: second order} holds for the estimating equations-based estimator.
\end{proof}

\propempbootstrapcounterfactual*
\begin{proof}[\bfseries{Proof of \Cref{prop: empirical bootstrap counterfactual}}]
    By~\Cref{lemma: empirical bootstrap}, condition~\ref{cond: bootstrap limited complexity} holds by the Donsker assumptions, the boundedness assumptions on $\mu_n^*$ and $g_n^*$, and $E(Y^2) < \infty$ together with preservation of the Donsker condition under Lipschitz transformations.
    
    As in the proof of~\Cref{prop: classical counterfactual}, we can show that $\left\|\phi_n^* - \phi_0\right\|_{L_2(P_0)}$ is bounded up to a constant by 
    \[ 
        \left| \pi_n^* - \pi_0 \right| + \left\| g_n^* - g_0 \right\|_{L_2(P_0)} + \left\| \mu_n^* - \mu_0 \right\|_{L_2(P_0)} + \left| \psi_n^* - \psi_0\right|.
    \]
    Therefore, condition~\ref{cond: bootstrap weak consistency} holds under the conditional weak $L_2(P_0)$ conditional consistency of $g_n^*$ and $\mu_{n}^*$, which also implies weak conditional consistency of $\psi_n^*$ and $\pi_n^*$.
    
    For condition~\ref{cond: bootstrap second order} for the one-step estimator $\psi_{n, 1}^* = T_1(\eta_n^*, \d{P}_n^*)$, as in equation~\eqref{eq: one step Rnstar}, we have 
    \begin{align*}
        R_n^* &= \left[\psi_n^* - \psi_0 + P_0 \phi_n^*\right] - \left[ \psi_n - \psi_0 + P_0 \phi_n\right] + (\d{P}_n - P_0)( \phi_{n}^* - \phi_n)
    \end{align*}
    By the proof of~\Cref{prop: classical counterfactual}, $\psi_n - \psi_0 + P_0 \phi_n = o_{\prob_0^*}(n^{-1/2})$, and 
    \[ n^{1/2}(\d{P}_n - P_0)( \phi_{n}^* - \phi_n) = \d{G}_n\left( \phi_{n}^* - \phi_0\right) -  \d{G}_n \left(\phi_{n}- \phi_0\right),\]
    which is $o_{\prob_W^*}(n^{-1/2})$ by~\Cref{prop: classical counterfactual} and the above. By the derivations in~\Cref{prop: classical counterfactual}, we also have 
    \begin{align*}
         \psi_n^* - \psi_0 + P_0 \phi_n^*&= P_0 \left\{ \frac{\left(g_0 - g_n^*\right)\left(\mu_{n}^* - \mu_{0}\right)}{\pi_n^* \left(1-g_n^*\right)} \right\} + \frac{\pi_n^*-\pi_0}{\pi_n^*} \left(\psi_n^* - \psi_0\right).
    \end{align*}  
    This is $o_{\prob_W^*}(n^{-1/2})$ by assumption.
    
    For condition~\ref{cond: bootstrap second order} for the estimating equations-based estimator $\psi_{n, 2}^*$, we use~\Cref{lemma: empirical bootstrap estimating-equations}. We note that the conditions of~\Cref{lemma: classical estimating-equations} hold by~\Cref{prop: classical counterfactual} and condition~\ref{cond: bootstrap limited complexity} holds by the above. We have $\psi_{n,2}^* =  O_{\prob_W^*}(1)$ by the boundedness conditions and because $E_0(Y^2) < \infty$. As in the proof of~\Cref{prop: classical counterfactual}, we have $\Gamma_{0,\eta}(\psi) = 0$ for all $\eta \in \s{H}$ and $\psi \in \d{R}$, so that $\sup_{|\psi| \leq M} \left| \Gamma_{0,\eta_n^*}(\psi) - \Gamma_{0,\eta_0}(\psi_0)\right| = 0$ for every $M > 0$. Finally,
    \begin{align*}
        P_0 \phi_{\psi_0, \eta_n^*} & =  P_0 \left\{ \frac{\left(g_0- g_n^*\right)\left(\mu_{n} - \mu_{0}\right)}{\pi_n^* \left(1-g_n^*\right)} \right\}.
    \end{align*}
    which is $o_{\prob_W^*}(n^{-1/2})$ by  assumption.
\end{proof}

\propsmobootstrapcounterfactual*

\begin{proof}[\bfseries{Proof of \Cref{prop: smooth bootstrap counterfactual}}]
    We use \Cref{prop: uniform donsker} to show condition~\ref{cond: bootstrap limited complexity} holds. By the assumed classes of $\mu_n^*$ and $g_n^*$, with conditional probability tending to 1, $\phi_n^*$ falls in a class $\s{F}$ with envelope $F(y, a, z) := c|Y| + d$ for some fixed $c, d \in (0, \infty)$. Thus, condition (i) of \Cref{prop: uniform donsker} holds by the assumption that $\lim_{M \to \infty} \sup_{P \in \s{P}} E_P [ Y^2 I( Y^2 > M)] = 0$. Next, by preservation of the finite uniform entropy integral condition under Lipschitz transformations, (see, e.g., Lemma~9.17 of \citealp{kosorok2008introduction}),  $\s{F}$ posesses finite uniform entropy integral. Hence, condition (ii) of \Cref{prop: uniform donsker} is satisfied by Theorem 2.8.3 of \cite{van1996weak}. Next, as noted following \Cref{prop: uniform donsker}, $\sup_{f, g \in \s{F}} \left| (\hat{P}_n - P_0)(fg)\right| = o_{\prob_0^*}(1)$ implies \eqref{eq: convergence of semimetric}. Let $\mu, \bar\mu \in \s{F}_\mu$ and $g, \bar{g} \in \s{F}_g$, and let $\pi, \bar\pi$ be implied by $g, \bar{g}$ and $\psi, \bar\psi$ be implied by $\mu, g$ and $\bar\mu, \bar{g}$. 
    Using the fact that $a (1-a) = 0$ and the tower property, we then have 
    \begin{align*}
        &(\hat{P}_n - P_0)\left( \phi_{\mu,g}\phi_{\bar\mu, \bar{g}}\right)   \\
        &\qquad =  \int \left[ \frac{(1-a)g\bar{g}}{\pi\bar\pi(1-g) (1-\bar{g})}  (y- \mu) ( y- \bar\mu)+ \frac{a}{\pi\bar\pi} (\mu - \psi)(\bar\mu - \bar\psi)  \right] \sd (\hat{P}_n - P_0)\\
        & \qquad =  \int \left[ \frac{(1-a)g\bar{g}}{\pi\bar\pi(1-g) (1-\bar{g})}  \left\{ y^2 - (\mu +  \bar\mu)y + \mu \bar\mu \right\} \right]  \sd (\hat{P}_n - P_0) \\
        &\qquad \qquad + \int \left[ \frac{a}{\pi\bar\pi} (\mu - \psi)(\bar\mu - \bar\psi) \right] \sd (\hat{P}_n - P_0) \\ 
        &\qquad = \frac{1}{\pi\bar\pi} \int \frac{g\bar{g}}{(1-g) (1-\bar{g})}\left\{  (1-\hat{g}_n) \left[ (\hat\sigma_n^2 + \hat\mu_n^2)- (\mu +  \bar\mu)\hat\mu_n  + \mu \bar\mu \right]  \sd \hat{Q}_n \right. \\
        &\qquad \qquad \qquad \qquad \qquad \qquad\qquad -(1-g_0)\left[  (\sigma_0^2 + \mu_0^2)- (\mu +  \bar\mu)\mu_0  + \mu \bar\mu \right]  \sd Q_0 \Big\}\\
        &\qquad \qquad  + \frac{1}{\pi\bar\pi} \int (\mu - \psi)(\bar\mu - \bar\psi)\left[ \hat{g}_n \sd \hat{Q}_n - g_0 \sd Q_0 \right] \\
        &\qquad = \frac{1}{\pi\bar\pi} \int \frac{g\bar{g}}{(1-g) (1-\bar{g})}\left\{  (1-\hat{g}_n) \left[ (\hat\sigma_n^2 + \hat\mu_n^2)- (\mu +  \bar\mu)\hat\mu_n  + \mu \bar\mu \right]  \right. \\
        &\qquad \qquad \qquad \qquad \qquad \qquad\qquad \left. - (1-g_0)\left[ (\sigma_0^2 + \mu_0^2)- (\mu +  \bar\mu)\mu_0  + \mu \bar\mu \right]  \right\} \sd Q_0 \\
        &\qquad \qquad + \frac{1}{\pi\bar\pi} \int \frac{g\bar{g}  (1-\hat{g}_n)}{(1-g) (1-\bar{g})}\left[(\hat\sigma_n^2 + \hat\mu_n^2)- (\mu +  \bar\mu)\hat\mu_n  + \mu \bar\mu \right] \sd (\hat{Q}_n - Q_0)\\
        &\qquad \qquad + \frac{1}{\pi\bar\pi} \int (\mu - \psi)(\bar\mu - \bar\psi)( \hat{g}_n - g_0)  \sd  Q_0 + \frac{1}{\pi\bar\pi} \int (\mu - \psi)(\bar\mu - \bar\psi)\hat{g}_n   \sd (\hat{Q}_n - Q_0). 
    \end{align*}
    By adding and subtracting terms and using the assumed bounds, we have
    \begin{align*}
        \sup_{\mu, \bar\mu ,g, \bar{g}} \left| (\hat{P}_n - P_0)\left( \phi_{\mu,g}\phi_{\bar\mu, \bar{g}}\right) \right| &\lesssim \| \hat{g}_n - g_0 \|_{L_2(Q_0)} + \| \hat{\mu}_n - \mu_0 \|_{L_2(Q_0)} + \| \hat\sigma_n^2 - \sigma_0^2 \|_{L_2(Q_0)} \\
        &\qquad + \sup_{g, \bar{g}} \left| \int \frac{g\bar{g}(1-\hat{g}_n)}{(1-g) (1-\bar{g})}(\hat\sigma_n^2 + \hat\mu_n^2) \sd (\hat{Q}_n - Q_0) \right| \\
        &\qquad +\sup_{g, \bar{g}, \mu} \left| \int \frac{g\bar{g}(1-\hat{g}_n)}{(1-g) (1-\bar{g})} \mu \hat\mu_n   \sd (\hat{Q}_n - Q_0) \right| \\
        &\qquad + \sup_{g, \bar{g}, \mu, \bar\mu}\left| \int \frac{g\bar{g}(1-\hat{g}_n)}{(1-g) (1-\bar{g})} \mu \bar\mu   \sd (\hat{Q}_n - Q_0) \right| \\
        &\qquad +\sup_{\mu} \left| \int  \mu  \hat{g}_n  \sd (\hat{Q}_n - Q_0) \right| +  \sup_{\mu ,\bar\mu}\left| \int  \mu  \bar\mu \hat{g}_n  \sd (\hat{Q}_n - Q_0) \right|.
    \end{align*}
   Since $\hat\mu_n \in \s{F}_\mu$ with probability tending to one, by assumption, each of these terms is $o_{\prob_0^*}(1)$, which implies \eqref{eq: convergence of semimetric}. Hence, the conditions of \Cref{prop: uniform donsker} hold, which implies condition~\ref{cond: bootstrap limited complexity}.

    We next show condition~\ref{cond: bootstrap weak consistency} holds. As in the proof of~\Cref{prop: classical counterfactual}, we can show that $\left\|\phi_n^* - \phi_0\right\|_{L_2(P_0)}$ is bounded up to a constant by 
    \begin{align*}
        & \left| \pi_n^* - \pi_0 \right| + \left\| g_n^* - g_0 \right\|_{L_2(P_0)} + \left\| \mu_n^* - \mu_0 \right\|_{L_2(P_0)} + \left| \psi_n^* - \psi_0\right|.
    \end{align*}
    Each of these terms is $o_{\prob_W^*}(1)$.


    Finally, we  turn to condition~\ref{cond: bootstrap second order}. By the tower property, we can show that 
    \[  P \phi_{\mu, g, \psi} = Q_P \left[ \frac{( g_P- g) (\mu - \mu_P)}{(1-g) \pi} \right] + \frac{\pi_P}{\pi} \left[ \psi(P) - \psi \right]\]
    for $\pi := \int g \, dQ$. Thus, 
    \[ \psi(\eta) - \psi(P) + P \phi_{\eta} = Q_P \left[ \frac{( g_P- g) (\mu - \mu_P)}{(1-g) \pi(\eta)} \right] + \frac{\pi(\eta) - \pi_P}{\pi(\eta)} \left[ \psi(\eta) - \psi(P)\right].\]
    Hence, for the one-step estimator $\psi_{n, 1}^* = T_1(\eta_n^*, \d{P}_n^*)$, as in equation~\eqref{eq: one step Rnstar}, we have 
    \begin{align*}
        R_n^* &= \left[\psi(\eta_n^*) - \psi(\hat\eta_n) + \hat{P}_n \phi_n^*\right] - \left[ \psi(\eta_n) - \psi(\hat\eta_n) + \hat{P}_n \phi_n\right]  \\
        & = \left[ \hat{Q}_n \left\{ \frac{(\hat{g}_n - g_n^*)(\mu_{n}^*- \hat\mu_{n})}{\pi_n^* (1-g_n^*)}\right\} + \frac{\pi_n^*-\hat\pi_n}{\pi_n^*} (\psi_n^* - \hat\psi_n) \right] \\
        & \qquad \qquad - \left[\hat{Q}_n \left\{ \frac{(\hat{g}_n - g_n)(\mu_{n}- \hat\mu_{n})}{\pi_n (1-g_n)}\right\} + \frac{\pi_n-\hat\pi_n}{\pi_n} (\psi_n - \hat\psi_n)\right]. 
    \end{align*}
    Each term is $o_{\prob_W^*}(n^{-1/2})$ by assumption. The result for the one-step estimator follows.


    For condition~\ref{cond: bootstrap second order} for the estimating equations-based estimator $\psi_{n, 2}^*$, we use \Cref{lemma: smooth bootstrap estimating-equations}. Since the conditions of \Cref{prop: classical counterfactual} hold, the conditions of \Cref{lemma: classical estimating-equations} hold as well. We have $\psi_{n,2}^* =  O_{\prob_W^*}(1)$ by the boundedness conditions on $\mu_n^*$, $g_n^*$, and $\hat{g}_n$, and the moment condition for $\hat{P}_n$. A similar argument shows that  $\hat\psi_n = O_{\prob_0^*}(1)$. As in the proof of~\Cref{prop: classical counterfactual}, we have $\Gamma_{0,\eta}(\psi) = 0$ for any $\eta \in \s{H}$ and $\psi \in \d{R}$, so that the condition $\sup_{|\psi| \leq M} | \Gamma_{0, \hat{\eta}_n}(\psi) - \Gamma_{0, \eta_0}(\psi)| = o_{\prob_0^*}(1)$ holds. Next, defining $\hat{G}_{n, \eta}'(\hat\psi_n) = -\hat\pi_n\pi^{-1}$, we then have $\hat{G}_{n, \hat\eta_n^*}'(\hat\psi_n) + 1 = 1 -\hat\pi_n /\pi_n^* = o_{\prob_W^*}(1)$ because $\pi_n^* - \hat\pi_n = o_{\prob_W^*}(1)$ and $g_n^*$ is bounded away from zero. In addition, 
    \begin{align*}
        \hat\Gamma_{n, \eta}(\psi) &= \hat{G}_{n, \eta}(\psi) - \hat{G}_{n, \eta}(\hat\psi_n) - \hat{G}_{n, \eta}'(\hat\psi_n)(\psi- \hat\psi_n) \\
        &= \hat{P}_n \phi_{\eta, \psi} - \hat{P}_n \phi_{\eta, \hat\psi_n} + \frac{\hat\pi_n}{\pi}(\psi- \hat\psi_n)\\
        &= \hat{P}_n \left\{ \frac{\hat{g}_n - g}{\pi \left[1-g\right]}\left[\mu - \hat\mu_{n}\right] \right\} + \frac{\hat\pi_n}{\pi} (\psi - \hat\psi_n) - \hat{P}_n \left\{ \frac{\hat{g}_n - g}{\pi \left[1-g\right]}\left[\mu - \hat\mu_{n}\right] \right\} - \frac{\hat\pi_n}{\pi} (\hat\psi_n - \hat\psi_n)\\
        & \qquad  + \frac{\hat\pi_n}{\pi} (\hat\psi_n- \psi) \\
        & = 0.
    \end{align*}
    Hence, the conditions about $|\hat\Gamma_{n, \eta_n^*}(\psi) - \hat\Gamma_{n, \hat\eta_n}(\hat\psi)|$ and $|\hat\Gamma_{n, \eta}(\psi)|$ hold. Next, $\phi_{\hat\psi_n, \hat\eta_n}$  and $\phi_{\psi_n^*, \hat\eta_n^*}$ fall in a $P_0$-Glivenko Cantelli class with probability to tending one by the finite entropy integral and boundedness assumptions. Next, we verify that $\|\hat{P}_n - P_0\|_{\s{F}} = o_{\prob_0^*}(1)$ holds under the stated assumptions. 
    \begin{align*}
        \int \phi_{\mu, g}(y,a,w) \sd (\hat{P}_n - P_0) &=  \int \left[ \frac{(1-a)g}{\pi(1-g)}  (y- \mu) + \frac{a}{\pi} (\mu - \psi)  \right] \sd (\hat{P}_n - P_0)\\
        & = \frac{1}{\pi} \int \frac{g}{(1-g)}\left\{   (1-\hat{g}_n)(\hat\mu_n - \mu)  \sd \hat{Q}_n -  (1-g_0) (\mu_0 - \mu)  \sd Q_0 \right\}\\
        & \qquad  + \frac{1}{\pi} \int (\mu - \psi)\left[ \hat{g}_n \sd \hat{Q}_n - g_0 \sd Q_0 \right]\\
        & = \frac{1}{\pi} \int \frac{g}{(1-g)}\left\{  (1-\hat{g}_n)(\hat\mu_n - \mu) -  (1-g_0) (\mu_0 - \mu)   \right\} \sd Q_0 \\
        &\qquad +  \frac{1}{\pi} \int \frac{g}{(1-g)} (1-\hat{g}_n)(\hat\mu_n - \mu) \sd (\hat{Q}_n - Q_0) \\
        &\qquad + \frac{1}{\pi} \int (\mu - \psi)\left( \hat{g}_n - g_0 \right) \sd Q_0 + \frac{1}{\pi} \int (\mu - \psi) \hat{g}_n  \sd (\hat{Q}_n - Q_0).
    \end{align*}
    Therefore, adding and subtracting terms and using assumed bounds,
    \begin{align*}
        \sup_{\mu,g} \abs{\int \phi_{\mu, g} \sd (\hat{P}_n - P_0)} & \lesssim \|\hat{g}_n - g_0\|_{L_2(Q_0)} + \| \hat\mu_n - \mu_0\|_{L_2(Q_0)}\\
        & \qquad + \sup_{\mu, g}\abs{\int \frac{g}{(1-g)} (1-\hat{g}_n)(\hat\mu_n - \mu) \sd (\hat{Q}_n - Q_0)} \\
        & \qquad + \sup_{\mu}\abs{\int (\mu - \psi) \hat{g}_n  \sd (\hat{Q}_n - Q_0)}. 
    \end{align*}
    Each of these terms is $o_{\prob_0^*}(1)$ by assumption. Thus, $\|\hat{P}_n - P_0\|_{\s{F}} = o_{\prob_0^*}(1)$. 
    
    Next, we have
    \begin{align*}
        P_0 \phi_{\psi_0, \hat\eta_n} & =  P_0 \left\{ \frac{\left(g_0- \hat{g}_n\right)\left(\hat\mu_{n} - \mu_{0}\right)}{\hat\pi_n \left(1-\hat{g}_n\right)} \right\}.
    \end{align*}
    which is $o_{\prob_0^*}(1)$ by the assumed boundedness of $\hat{g}_n$ away from zero and the consistency of $\hat\mu_n$ and $\hat{g}_n$. 

    Next, we show that $\psi_n^\circ - \hat\psi_n = o_{\prob_0^*}(n^{-1/2})$. Use the definition of the estimating equations-based estimator construction $T_2$, we can see that
    \begin{align*}
        \psi_n^\circ &:= T_2(\eta_n, \hat{P}_n) =  \hat{Q}_n \left\{ \frac{g_n \left(1- \hat{g}_n\right)}{\hat\pi_n (1-g_n)} \left( \hat\mu_n - \mu_n \right) + \frac{\hat{g}_n}{\hat\pi_n} \mu_n \right\},\\
        \hat\psi_n &:= T_2(\hat\eta_n, \hat{P}_n) = \hat{Q}_n \left\{ \frac{\hat{g}_n}{\hat\pi_n} \hat\mu_n \right\}, \\
        \psi_n^\circ - \hat\psi_n &= \frac{1}{\hat\pi_n} \hat{Q}_n \left\{ \frac{g_n - \hat{g}_n}{1-g_n} \left( \hat\mu_n - \mu_n \right) \right\},
    \end{align*}
    which is $o_{\prob_0^*}(n^{-1/2})$ by assumption. Finally, 
    \begin{align*}
        \hat{P}_n \phi_{\hat\psi_n, \eta_n^*} &=  \hat{Q}_n \left\{ \frac{\hat{g}_n - g_n^*}{\pi_n^* \left[1-g_n^*\right]}\left[\mu_{n}^* - \hat\mu_{n}\right] \right\} - \frac{\hat\pi_n}{\pi_n^*} (\hat\psi_n - \hat\psi_n)\\
        & = \hat{Q}_n \left\{ \frac{\hat{g}_n - g_n^*}{\pi_n^* \left[1-g_n^*\right]}\left[\mu_{n}^*- \hat\mu_{n}\right] \right\},
    \end{align*}
    which is $o_{\prob_0^*}(n^{-1/2})$ by assumption as discussed above. Hence, condition~\ref{cond: bootstrap second order} holds for the estimating equations-based estimator. 

\end{proof}

\section{Lemmas supporting the proof of Proposition~\ref{prop: auto bias correction}}\label{sec: emp auto bias lemmas}

We first present several simple algebraic identities that we will use repeatedly.
\begin{lemma}\label{lemma: square decomp}
    For any $\{Z_{ij} \in \d{R} : i,j = 1, 2, \dotsc, n\}$ such that $Z_{ij} = Z_{ji}$ and $Z_{ii} = \tau_f$ for all $i,j$, we have  
    \begin{align}
        \begin{split}\label{eq: auto identity 1}
            \left[ \sum_{i\neq j} Z_{ij}\right]^2 &= 4 \sum_{i \neq j, j \neq k, k \neq i}Z_{ij} Z_{ik} +2  \sum_{i \neq j}Z_{ij}^2  + \sum_{\substack{i\neq j, i\neq k, i\neq l\\ j \neq k, j \neq l, k \neq l}} Z_{ij} Z_{kl}, \text{ and}
        \end{split}\\
        \begin{split}\label{eq: auto identity 2}
            \sum_{i, j, k} Z_{ij}Z_{ik} &  =  2\tau_f \sum_{i\neq j} Z_{ij} + \sum_{i \neq j} Z_{ij}^2 + n\tau_f^2  +  \sum_{i\neq j, i \neq k, j \neq k} Z_{ij}Z_{ik}.
        \end{split}
    \end{align}
\end{lemma}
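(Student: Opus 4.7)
Both identities follow from a straightforward combinatorial case analysis, partitioning the relevant sums according to the multiplicity pattern of the indices and collapsing equivalent cases using the symmetry $Z_{ij} = Z_{ji}$ and the diagonal value $Z_{ii} = \tau_f$.

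For \eqref{eq: auto identity 1}, the plan is to expand the square as
\[
    \left[\sum_{i \neq j} Z_{ij}\right]^2 = \sum_{\substack{i\neq j\\ k\neq l}} Z_{ij} Z_{kl}
\]
and partition the sum on the right-hand side according to the number of distinct values among $\{i,j,k,l\}$. There are three cases. If all four are distinct, we obtain the last term. If exactly two distinct values appear, then because $i \neq j$ and $k \neq l$ we must have either $(i,j)=(k,l)$ or $(i,j)=(l,k)$; in each subcase symmetry yields $Z_{ij}^2$, giving $2\sum_{i \neq j} Z_{ij}^2$. If exactly three distinct values appear, then exactly one of the four pairs $\{(i,k),(i,l),(j,k),(j,l)\}$ is a coincidence, and for each such subcase the resulting sum is $\sum_{i,j,k \text{ distinct}} Z_{ij} Z_{ik}$ after relabeling and applying $Z_{ab}=Z_{ba}$. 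This yields the factor of $4$.

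For \eqref{eq: auto identity 2}, the plan is similar: partition the unrestricted sum $\sum_{i,j,k} Z_{ij} Z_{ik}$ according to which coincidences hold among $\{i=j,\ i=k,\ j=k\}$. The case $i=j=k$ contributes $n\tau_f^2$; the two cases $i=j \neq k$ and $i=k \neq j$ each contribute $\tau_f \sum_{i \neq j} Z_{ij}$ after using $Z_{ii}=\tau_f$, accounting for the $2\tau_f$ factor; the case $j=k \neq i$ contributes $\sum_{i\neq j} Z_{ij}^2$; and the case of three distinct indices contributes the final term.

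There is no real obstacle in either part --- the only care needed is in verifying that the four subcases of exactly three distinct indices in \eqref{eq: auto identity 1} do collapse to the same sum under the symmetry hypothesis. Since this is purely bookkeeping, the proof reduces to writing out the partition and collecting terms.
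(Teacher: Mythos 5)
Your proposal is correct and follows essentially the same argument as the paper: for each identity, expand the product into a double (or triple) sum and partition the index tuples by coincidence pattern, using $Z_{ij}=Z_{ji}$ and $Z_{ii}=\tau_f$ to collapse the subcases. The paper's proof writes out the same seven (respectively five) cases explicitly, so there is no substantive difference between your route and the authors'.
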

\begin{proof}[\bfseries{Proof of \Cref{lemma: square decomp}}]
    We have
    \begin{align*}
        \left[\sum_{i\neq j} Z_{ij}\right]^2 &=  \sum_{\substack{i\neq j, k \neq l}} Z_{ij} Z_{kl} \\
        &=  \sum_{\substack{i\neq j, k \neq l\\i=k, j\neq l}} Z_{ij} Z_{kl} +\sum_{\substack{i\neq j, k \neq l\\i=l, j\neq k}} Z_{ij} Z_{kl} +  \sum_{\substack{i\neq j, k \neq l\\i\neq l, j=k}} Z_{ij} Z_{kl} +  \sum_{\substack{i\neq j, k \neq l\\i\neq k, j= l}} Z_{ij} Z_{kl} \\
        & \qquad +  \sum_{\substack{i\neq j, k \neq l\\i=k, j= l}} Z_{ij} Z_{kl} + \sum_{\substack{i\neq j, k \neq l\\i=l, j= k}} Z_{ij} Z_{kl}  +  \sum_{\substack{i\neq j, i\neq k, i\neq l\\ j \neq k, j \neq l, k \neq l}} Z_{ij} Z_{kl}\\
        &= 4\sum_{i \neq j, j \neq k, k \neq i}Z_{ij} Z_{ik} + 2\sum_{i \neq j}Z_{ij}^2  + \sum_{\substack{i\neq j, i\neq k, i\neq l\\ j \neq k, j \neq l, k \neq l}} Z_{ij} Z_{kl},
    \end{align*}
    which proves~\eqref{eq: auto identity 1}. We also have
    \begin{align*}
        \sum_{i, j, k} Z_{ij}Z_{ik}  &= \sum_{i=j\neq k} Z_{ij}Z_{ik} +  \sum_{i=k \neq j} Z_{ij}Z_{ik} + \sum_{i\neq j=k} Z_{ij}Z_{ik} + \sum_{i=j=k} Z_{ij}Z_{ik}  + \sum_{i\neq j, i \neq k, j \neq k} Z_{ij}Z_{ik} \\
        & =  2\tau_f \sum_{i\neq j} Z_{ij} +\sum_{i \neq j} Z_{ij}^2 + n\tau_f^2 +  \sum_{i\neq j, i \neq k, j \neq k} Z_{ij}Z_{ik},
    \end{align*}
    which proves \eqref{eq: auto identity 2}.
\end{proof}

We next present several non-asymptotic bounds for empirical and bootstrap empirical $V$-processes for a generic function $f: \d{R}^d \times \d{R}^d \mapsto \d{R}$ satisfying $f(x, y) = f(y, x)$ and $f(x, x) = \tau_f \in \d{R}$ for all $x, y \in \d{R}^d$. Here, $f$ may depend on $n$, but is assumed to be deterministic. We use the symbol $\lesssim$ to mean ``less than or equal up to a constant not depending on $n$."

\begin{lemma}\label{lemma: V order 1}
    If $f: \d{R}^d \times \d{R}^d \mapsto \d{R}$ satisfies $f(x, y) = f(y, x)$ and $f(x, x) = \tau_f \in \d{R}$ for all $x, y \in \d{R}^d$, then
    \begin{align*}
        & \E_0  \left\{ \int f \sd [(\d{P}_n - P_0) \times (\d{P}_n - P_0)]  - n^{-1} \tau_f \right\}^2  \lesssim n^{-3} \int \left[\int f(x, y) \sd P_0(x)\right]^2 \sd P_0(y) + n^{-2} \norm{f}_{L_2(P_0 \times P_0)}^2.
    \end{align*}
\end{lemma}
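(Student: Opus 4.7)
The natural strategy is a Hoeffding-type decomposition of the product measure. First I would expand
\[
    \int f\,\sd[(\d{P}_n-P_0)\times(\d{P}_n-P_0)] = \frac{1}{n^2}\sum_{i,j} f(X_i,X_j) - 2\,\d{P}_n\bigl[P_0 f(\cdot,\,\cdot)\bigr] + P_0\otimes P_0\, f,
\]
(using symmetry of $f$ to merge the two cross terms), and introduce the centered projection $\phi(y):=P_0 f(\cdot,y)-P_0\otimes P_0\,f$ together with the degenerate kernel $h(x,y):=f(x,y)-P_0 f(\cdot,y)-P_0 f(x,\cdot)+P_0\otimes P_0\,f$, which satisfies $P_0 h(\cdot,y)=0=P_0 h(x,\cdot)$ for all $x,y$. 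Splitting the double sum into its off-diagonal and diagonal parts and using $f(x,x)=\tau_f$, a direct calculation gives
\[
    \int f\,\sd[(\d{P}_n-P_0)\times(\d{P}_n-P_0)] - \frac{1}{n}\tau_f
    = \frac{1}{n^2}\sum_{i\neq j} h(X_i,X_j) \;-\; \frac{2}{n^2}\sum_{i=1}^n \phi(X_i) \;-\; \frac{1}{n}\,P_0\otimes P_0\, f.
\]

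Next, I would apply $(a+b+c)^2\leq 3(a^2+b^2+c^2)$ and bound the three pieces separately. For the degenerate $U$-statistic piece, expanding $\E_0[\sum_{i\neq j}h(X_i,X_j)]^2$ and invoking the degeneracy $P_0 h(\cdot,y)=P_0 h(x,\cdot)=0$, only the pairs $\{i,j\}=\{k,l\}$ survive, so
\[
    \E_0\Bigl[\frac{1}{n^2}\sum_{i\neq j} h(X_i,X_j)\Bigr]^2 = \frac{2n(n-1)}{n^4}\,\E_0 h(X_1,X_2)^2 \;\lesssim\; n^{-2}\,\|h\|_{L_2(P_0\otimes P_0)}^2 \;\leq\; n^{-2}\,\|f\|_{L_2(P_0\otimes P_0)}^2,
\]
where the last inequality is the $L_2$ contractivity of the Hoeffding projection. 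For the linear piece, since $P_0\phi=0$,
\[
    \E_0\Bigl[\frac{2}{n^2}\sum_i \phi(X_i)\Bigr]^2 = \frac{4}{n^3}\,\E_0\phi(X_1)^2 \;\leq\; \frac{4}{n^3}\int\!\bigl[P_0 f(\cdot,y)\bigr]^2\sd P_0(y).
\]
For the deterministic remainder, Jensen's inequality gives $(P_0\otimes P_0\,f)^2\leq \|f\|_{L_2(P_0\otimes P_0)}^2$, hence $n^{-2}(P_0\otimes P_0\,f)^2\leq n^{-2}\|f\|_{L_2(P_0\otimes P_0)}^2$.

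Summing the three bounds yields the claim. There is no real obstacle here; the only delicate point is the careful bookkeeping when expanding the product measure so that the deterministic subtraction of $n^{-1}\tau_f$ exactly cancels the diagonal contribution $\tau_f$ from the double sum, leaving only the centered linear sum and a lower-order deterministic bias of order $n^{-1}P_0\otimes P_0\,f$. The identities in \Cref{lemma: square decomp} could be used to streamline the expansion of $[\sum_{i\neq j}h(X_i,X_j)]^2$, though the direct pair-matching argument above is equally short.
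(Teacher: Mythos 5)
Your proof is correct and follows essentially the same route as the paper: you split the double sum into diagonal and off-diagonal parts, observe that the off-diagonal kernel $h$ (the paper's $V_0$) is fully degenerate so only matched pairs survive in the second moment, and bound the remaining diagonal contribution (after subtracting $n^{-1}\tau_f$) by the variance of a one-dimensional linear sum plus a deterministic $O(n^{-1})$ bias. The only cosmetic differences are that the paper bounds $\E_0[V_0(X_1,X_2)^2]\leq\E_0[f(X_1,X_2)^2]$ by an explicit expansion rather than citing Hoeffding-projection contractivity, and it leaves the deterministic term in the form $n^{-2}\|f\|_{L_1(P_0\times P_0)}^2$ (which your $n^{-2}\|f\|_{L_2}^2$ bound absorbs anyway).
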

\begin{proof}[\bfseries{Proof of \Cref{lemma: V order 1}}]
    For simplicity, we denote $V_0(x, y) :=  \int f \sd \left[(\delta_{x} - P_0)  \times (\delta_{y} - P_0)\right]$ for any $x, y \in \d{R}^d$. By the law of total expectation, for any $i \neq j$, we note that 
    \begin{align*}
        \E_0 [V_0(X_i, X_j) \mid X_j] & = \E_0 \left[f(X_i, X_j) - \int f(X_i, y) \sd P_0(y)  - \int f(x, X_j) \sd P_0(x) +  \int f \sd (P_0 \times P_0) \mid X_j\right] \\
        &= \int f(x, X_j) \sd P_0(x) - \int f \sd (P_0 \times P_0) - \int f(x, X_j) \sd P_0(x) + \int f \sd (P_0 \times P_0) \\
        & = 0
    \end{align*}
    Hence, $\E_0 [V_0(X_i, X_j)] = 0$ for $i \neq j$ and $\E_0 [V_0(X_i, X_j)V_0(X_i, X_k)] = 0$ for $i \neq j \neq k$.
    
    By definition, we have 
    \begin{align}
    \begin{split}\label{eq: V decomposition 1}
        & \E_0  \left\{ \int f \sd [(\d{P}_n - P_0) \times (\d{P}_n - P_0)] - n^{-1} \tau_f \right\}^2 \\
        & \qquad = \E_0  \left\{  \frac{1}{n^2} \sum_{i = j} V_0(X_i, X_j) + \frac{1}{n^2} \sum_{i \neq j} V_0(X_i, X_j) - n^{-1} \tau_f \right\}^2 \\
        & \qquad \lesssim \E_0  \left\{  \frac{1}{n^2} \sum_{i = j} V_0(X_i, X_j) - n^{-1} \tau_f\right\}^2 + \E_0  \left\{ \frac{1}{n^2} \sum_{i \neq j} V_0(X_i, X_j)\right\}^2.
    \end{split}
    \end{align}
    For the first term on the right-hand side of \eqref{eq: V decomposition 1}, by the symmetry of $f$ and the definition of $\tau_f$,
    \begin{align*}
        \E_0 \left\{ \frac{1}{n^2} \sum_{i = j} V_0(X_i, X_j) - \frac{\tau_f}{n} \right\}^2 & =  \E_0 \left\{- \frac{2}{n} \int f \sd (P_0 \times \d{P}_n )  + \frac{1}{n} \int f \sd (P_0 \times  P_0) \right\}^2\\
        & = \E_0 \left\{ - \frac{2}{n} \int \left[ \int f(x, y) \sd P_0(x) \right] \sd (\d{P}_n - P_0) (y) - \frac{1}{n} \int f \sd (P_0 \times P_0)\right\}^2\\
        & \lesssim \E_0 \left\{ \frac{1}{n} \int \left[ \int f(x, y) \sd P_0(x) \right] \sd (\d{P}_n - P_0) (y)\right\}^2 + \frac{1}{n^2} \norm{f}_{L_1(P_0 \times P_0)}^2\\
        & = \frac{1}{n^3} \mathrm{Var}\left( \left[ \int f(x, X_1) \sd P_0(x) \right] \right)  + \frac{1}{n^2} \norm{f}_{L_1(P_0 \times P_0)}^2\\
        & \leq \frac{1}{n^3}  \int \left[\int f(x, y) \sd P_0(x)\right]^2 \sd P_0(y)  + \frac{1}{n^2} \norm{f}_{L_1(P_0 \times P_0)}^2.
    \end{align*}
     For the second term on the right-hand side of \eqref{eq: V decomposition 1}, by \Cref{lemma: square decomp} and properties of $V_0$,
    \begin{align*}
        &\E_0 \left[ \frac{1}{n^2} \sum_{i \neq j} V_0(X_i, X_j) \right]^2 \\
        &\qquad = \E_0 \left[ \frac{4}{n^4} \sum_{i \neq j, j \neq k, k \neq i}V_0(X_i, X_j) V_0(X_i, X_k) + \frac{2}{n^4} \sum_{i \neq j}V_0(X_i, X_j)^2 \right.\\
        & \qquad\qquad \qquad \left.+ \frac{1}{n^4} \sum_{\substack{i\neq j, i\neq k, i\neq l\\ j \neq k, j \neq l, k \neq l}} V_0(X_i, X_j) V_0(X_k, X_l) \right]\\
        & \qquad= \E_0   \left[ \frac{2}{n^4} \sum_{i \neq j}V_0(X_i, X_j)^2 \right]\\
        & \qquad \leq  \frac{2}{n^2} \E_0  \left[V_0(X_1, X_2)^2 \right]\\
        & \qquad=  \frac{2}{n^2} \E_0  \left[ f(X_1, X_2) - \int f(X_1, y) \sd P_0(y) - \int f(x, X_2) \sd P_0(x) + \int f \sd (P_0\times P_0) \right]^2\\
        & \qquad= \frac{2}{n^2} \E_0  \left[ f(X_1, X_2) - \int f(X_1, y) \sd P_0(y) - \int f(x, X_2) \sd P_0(x)\right]^2 - \frac{2}{n^2} \left[\int f \sd (P_0\times P_0) \right]^2\\
        & \qquad \leq \frac{2}{n^2} \E_0  \left[ f(X_1, X_2) - \int f(X_1, y) \sd P_0(y) - \int f(x, X_2) \sd P_0(x)\right]^2.
    \end{align*}
    For the right hand side of previous display, we note that 
    \begin{align*}
        & \E_0  \left[ f(X_1, X_2) - \int f(X_1, y) \sd P_0(y) - \int f(x, X_2) \sd P_0(x)\right]^2 \\
        & \qquad = \E_0 [f(X_1, X_2)^2] + \E_0 \left[\int f(X_1, y) \sd P_0(y)\right]^2 + \E_0 \left[ \int f(x, X_2) \sd P_0(x)\right]^2 \\
        & \qquad \qquad - 2 \E_0\left[\int f(X_1, y) \sd P_0(y)\right]^2 - 2\E_0 \left[ \int f(x, X_2) \sd P_0(x)\right]^2 \\
        & \qquad \qquad + 2\E_0\left[\int f(X_1, y) \sd P_0(y)\right]\left[ \int f(x, X_2) \sd P_0(x)\right]\\
        & \qquad = \E_0 [f(X_1, X_2)^2] - \E_0\left[\int f(X_1, y) \sd P_0(y)\right]^2 - \E_0\left[ \int f(x, X_2) \sd P_0(x)\right]^2  + 2[(P_0 \times P_0) f]^2\\
        & \qquad \leq \E_0 [f(X_1, X_2)^2],
    \end{align*}
    where the last inequality is because $[(P_0 \times P_0) f]^2 \leq \E_0\left[\int f(X_1, y) \sd P_0(y)\right]^2$ and $[(P_0 \times P_0) f]^2 \leq \E_0\left[\int f(x, X_2) \sd P_0(x)\right]^2$ by the Cauchy–Schwarz inequality. This implies $ \E_0 \left[ n^{-2} \sum_{i \neq j} V_0(X_i, X_j) \right]^2 \lesssim  n^{-2} \norm{f}_{L_2(P_0 \times P_0)}^2$.
\end{proof}

\begin{lemma}\label{lemma: V order 2}
    If $f: \d{R}^d \times \d{R}^d \mapsto \d{R}$ satisfies $f(x, y) = f(y, x)$ and $f(x, x) = \tau_f$ for all $x, y \in \d{R}^d$, then for the empirical bootstrap $\hat{P}_n = \d{P}_n$, $\E_0 \E_W \left[ \int f \sd [(\d{P}_n - P_0) \times (\d{P}_n^* - \d{P}_n)] \right]^2 \lesssim n^{-3} \tau_f^2 + n^{-3} \tau_f \norm{f}_{L_1(P_0\times P_0)} + n^{-2}\norm{f}_{L_2(P_0\times P_0)}^2$.
\end{lemma}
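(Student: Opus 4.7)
The plan is to reduce the two-dimensional $V$-process to a one-dimensional bootstrap empirical process applied to a random function, and then exploit the fact that the bootstrap observations are i.i.d.\ from $\d{P}_n$ conditional on the data. Write $h(y) := \int f(x,y)\sd P_0(x)$ and define the data-dependent function
\[
  g_n(y) := \int f(x,y) \sd(\d{P}_n - P_0)(x) = \tfrac{1}{n}\sum_{i=1}^n f(X_i, y) - h(y).
\]
By Fubini, $\int f \sd[(\d{P}_n-P_0)\times(\d{P}_n^*-\d{P}_n)] = (\d{P}_n^* - \d{P}_n)g_n$. Since $g_n$ is fixed given the data and $X_1^*,\dotsc,X_n^*$ are i.i.d.\ $\d{P}_n$ under $\prob_W$, the inner expectation collapses to a variance of a conditional sample mean: $\E_W[(\d{P}_n^*-\d{P}_n)g_n]^2 = n^{-1}\mathrm{Var}_W[g_n(X_1^*)] \leq n^{-1}\d{P}_n g_n^2$. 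Taking $\E_0$ and using exchangeability, $\E_0 \d{P}_n g_n^2 = \E_0 g_n(X_1)^2$, so the whole quantity is bounded by $n^{-1}\E_0 g_n(X_1)^2$.

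Next I split the sum defining $g_n(X_1)$ into the diagonal contribution $i=1$ and the off-diagonal contribution $i\geq 2$, writing $g_n(X_1) = A_1 + A_2$ with
\[
  A_1 := \tfrac{1}{n}[\tau_f - h(X_1)], \qquad A_2 := \tfrac{1}{n}\sum_{i=2}^n [f(X_i, X_1) - h(X_1)].
\]
The key observation is that the cross term vanishes: conditional on $X_1$, each summand of $A_2$ has mean zero because $\E_0[f(X_i,X_1)\mid X_1] = h(X_1)$ for $i\geq 2$, hence $\E_0[A_1 A_2] = \E_0\{A_1 \E_0[A_2\mid X_1]\} = 0$. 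For $A_1$, expanding the square and applying $|\tau_f\E_0 h(X_1)|\leq |\tau_f|\|f\|_{L_1(P_0\times P_0)}$ together with $\E_0 h(X_1)^2 \leq \|f\|_{L_2(P_0\times P_0)}^2$ (Jensen) gives $\E_0 A_1^2 \lesssim n^{-2}[\tau_f^2 + |\tau_f|\|f\|_{L_1(P_0\times P_0)} + \|f\|_{L_2(P_0\times P_0)}^2]$. For $A_2$, conditioning on $X_1$ yields a variance of a sum of $n-1$ i.i.d.\ mean-zero terms and produces $\E_0 A_2^2 \leq n^{-1}\|f\|_{L_2(P_0\times P_0)}^2$. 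Multiplying the sum $\E_0 A_1^2 + \E_0 A_2^2$ by the outer factor $n^{-1}$ yields the claimed bound (absorbing the subdominant $n^{-3}\|f\|_{L_2(P_0\times P_0)}^2$ term into the $n^{-2}\|f\|_{L_2(P_0\times P_0)}^2$ term).

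The main obstacle is noticing that the diagonal contribution to $g_n(X_1)$ needs to be peeled off and handled separately from the off-diagonal piece, rather than trying to control $g_n$ uniformly. A naive bound treating $g_n(X_1)^2$ as a single term would lose the fact that only the diagonal contributes the $\tau_f^2$ scale (which must carry an extra $n^{-1}$ factor). Everything else is routine: the symmetry and $f(x,x)=\tau_f$ assumptions are only used to identify $\tau_f$ as the diagonal value, and the bootstrap-specific step is a single conditional variance calculation rather than the more intricate combinatorial expansions (using Lemma~\ref{lemma: square decomp}) that were required for Lemma~\ref{lemma: V order 1}.
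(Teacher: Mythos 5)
Your proof is correct, and it takes a genuinely different and cleaner route than the paper's after the common first step. Both proofs begin identically: Fubini gives $\int f\, \sd[(\d{P}_n-P_0)\times(\d{P}_n^*-\d{P}_n)] = (\d{P}_n^*-\d{P}_n)g_n$, and the conditional-variance calculation yields $\E_0\E_W[\cdot]^2 \leq n^{-1}\E_0[\d{P}_n g_n^2]$. From here the paper expands $\d{P}_n g_n^2$ as a triple sum over $(i,j,k)$ plus cross terms and applies \Cref{lemma: square decomp} to split the first sum by index-coincidence patterns, computing the expectation of each piece in turn; you instead observe that exchangeability reduces $\E_0[\d{P}_n g_n^2]$ to $\E_0[g_n(X_1)^2]$, then peel off only the single $i=1$ diagonal coincidence in the sum defining $g_n(X_1)$, use the tower property to kill the cross term, and bound $A_1, A_2$ separately. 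Your route avoids \Cref{lemma: square decomp} entirely and is shorter; the paper's brute-force expansion yields exact constants like $(n-1)(n-2)/n^2$, but these are immediately discarded, so nothing is lost. The exchangeability step you use ($\E_0[g_n(X_j)^2]$ is invariant in $j$ because $g_n(X_j)^2$ pulls back to $g_n(X_1)^2$ under the transposition $(1\,j)$, which preserves the i.i.d.\ law) is valid and is exactly the observation that streamlines the combinatorics. One small point you share with the paper: the bound is stated with $\tau_f$ rather than $|\tau_f|$ in the middle term, which is harmless in the applications but technically requires $\tau_f \geq 0$.
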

\begin{proof}[\bfseries{Proof of \Cref{lemma: V order 2}}]
    For simplicity, we denote $g_n(y) := \int f(x, y) \sd(\d{P}_n - P_0)(x) $. We then have 
    \begin{align*}
        \int f \sd [(\d{P}_n - P_0) \times (\d{P}_n^* - \d{P}_n)] = (\d{P}_n^* - \d{P}_n) g_n = \frac{1}{n} \sum_{i=1}^n [g_n(X_i^*) - \d{P}_n g_n]
    \end{align*}
    Since $X_1^*, \dotsc, X_n^* \iidsim \d{P}_n$, $\E_W [g_n(X_i^*) - \d{P}_n g_n][g_n(X_j^*) - \d{P}_n g_n] = 0$ for any $i \neq j$, so
    \begin{align*}
        \E_W \left[ \frac{1}{n} \sum_{i=1}^n [g_n(X_i^*) - \d{P}_n g_n]\right]^2 & = \E_W \left[ \frac{1}{n^2} \sum_{i,j}^n [g_n(X_i^*) - \d{P}_n g_n][g_n(X_j^*) - \d{P}_n g_n]\right]\\
        & =  \E_W \left[ \frac{1}{n^2} \sum_{i= j}^n [g_n(X_i^*) - \d{P}_n g_n][g_n(X_j^*) - \d{P}_n g_n]\right] \\
        &= \frac{1}{n} \E_W [g_n(X_i^*) - \d{P}_n g_n]^2 \\
        & = \frac{1}{n} \left[ \d{P}_n g_n^2 - (\d{P}_n g_n)^2 \right]\\
        & \leq \frac{1}{n} \d{P}_n g_n^2.
    \end{align*}
    We then have
    \begin{align}
    \begin{split}\label{eq: auto cross}
        \d{P}_n g_n^2 & = \frac{1}{n^3} \sum_{i,j,k} f(X_i, X_j)f(X_i, X_k) - \frac{2}{n^2} \sum_{i, j} f(X_i, X_j) \left[\int f(x, X_j) \sd P_0(x)\right] \\
        & \qquad + \frac{1}{n}\sum_{i=1} \left[\int f(x, X_j) \sd P_0(x)\right]^2.
    \end{split}
    \end{align}
    By \Cref{lemma: square decomp}, the expectation of the first term on the right-hand side of \eqref{eq: auto cross} is
    \begin{align*}
        & \E_0 \left[ \frac{1}{n^3} \sum_{i, j, k} f(X_i, X_j)f(X_i, X_k) \right] \\
        & \quad =  \E_0 \left[ \frac{2\tau_f}{n^3} \sum_{i\neq j} f(X_i, X_j) + \frac{1}{n^3} \sum_{i \neq j} f(X_i, X_j)^2 + \frac{\tau_f^2}{n^2}  + \frac{1}{n^3} \sum_{i\neq j, i \neq k, j \neq k} f(X_i, X_j)f(X_i, X_k) \right]\\
        & \quad= \frac{2(n-1)\tau_f}{n^2} \E_0\left[ f(X_1, X_2) \right]+ \frac{n-1}{n^2} \E_0 \left[f(X_1, X_2)^2 \right]  + \frac{\tau_f^2}{n^2} + \frac{(n-1)(n-2)}{n^2}  \E_0 [f(X_1, X_2)f(X_1, X_3)].
    \end{align*}
    The expectation of the second term on the right-hand side of \eqref{eq: auto cross} is
    \begin{align*}
        & \E_0 \left\{ \frac{2}{n^2} \sum_{i, j} f(X_i, X_j) \left[ \int f(x, X_j) \sd P_0(x) \right] \right\}\\
        & \qquad = \E_0 \left\{ \frac{2}{n^2} \sum_{i= j} f(X_i, X_j) \left[ \int f(x, X_j) \sd P_0(x) \right] \right\}  + \E_0 \left\{ \frac{2}{n^2} \sum_{i\neq j} f(X_i, X_j) \left[ \int f(x, X_j) \sd P_0(x) \right] \right\} \\
        & \qquad = \frac{2\tau_f}{n}\E_0\left[ f(X_1, X_2) \right]+ \frac{2(n-1)}{n} \E_0 \left[ f(X_1, X_2) f(X_1, X_3) \right].
    \end{align*}
    The expectation of the third term on the right-hand side of \eqref{eq: auto cross} is 
    \begin{align*}
        \E_0 \left\{ \frac{1}{n}\sum_{j=1} \left[\int f(x, X_j) \sd P_0(x) \right]^2\right\} = \E_0 \left[ f(X_1, X_2)f(X_1, X_3) \right].
    \end{align*}
    Combining these calculations, we get
    \begin{align*}
        \abs{\frac{1}{n}\E_0 \left( \d{P}_n g_n^2\right)} &= \abs{-\frac{2\tau_f}{n^3} \E_0 \left[f(X_1, X_2)\right] + \frac{n-1}{n^3} \E_0 \left[f(X_1, X_2)^2\right] + \frac{\tau_f^2}{n^3} - \frac{n-2}{n^3} \E_0\left[f(X_1, X_2)f(X_1, X_3)\right]}\\
        & \leq \frac{2\tau_f}{n^3} \E_0 |f(X_1, X_2)| + \frac{1}{n^2} \E_0 \left[f(X_1, X_2)^2\right] + \frac{\tau_f^2}{n^3} + \frac{1}{n^2} \E_0|f(X_1, X_2)f(X_1, X_3)|\\
        & \leq \frac{2\tau_f}{n^3} \E_0 |f(X_1, X_2)| + \frac{2}{n^2} \E_0 \left[f(X_1, X_2)^2\right] + \frac{\tau_f^2}{n^3},
    \end{align*}
    where the last inequality is due to
    \begin{align*}
        \E_0|f(X_1, X_2)f(X_1, X_3)| &= \E_0\left\{ \E_0[|f(X_1, X_2)| 
        \mid X_1] \E_0[|f(X_1, X_3)|\mid X_1]\right\}\\
        &= \E_0\left\{ \left( \E_0[|f(X_1, X_2)| 
        \mid X_1]\right)^2\right\}\\
        & \leq \E_0\left\{ \E_0\left[f(X_1, X_2)^2 \mid X_1\right]\right\}\\
        & = \E_0\left[f(X_1, X_2)^2\right].
    \end{align*}
    The result follows.
\end{proof}

\begin{lemma}\label{lemma: V order 3}
    If $f: \d{R}^d \times \d{R}^d \mapsto \d{R}$ satisfies $f(x, y) = f(y, x)$ and $f(x, x) = \tau_f \in \d{R}$ for all $x, y \in \d{R}^d$, then for the empirical bootstrap $\hat{P}_n = \d{P}_n$,
    \begin{align*}
        & \E_0 \E_W \left\{ \int f \sd [(\d{P}_n^* - \d{P}_n) \times (\d{P}_n^* - \d{P}_n)] - n^{-1} \tau_f\right\}^2 \\
        & \qquad \lesssim n^{-3}\tau_f^2  +  n^{-3}\tau_f\norm{f}_{L_1(P_0 \times P_0)}  + n^{-2}\norm{f}_{L_2(P_0\times P_0)}^2 + n^{-3} \int   \left[ \int f(x, y) \sd P_0(x) \right]^2 P_0(y).
    \end{align*}
\end{lemma}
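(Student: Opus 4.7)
The plan is to exploit the fact that, conditionally on the original data $X_1, \dotsc, X_n$, the bootstrap observations $X_1^*, \dotsc, X_n^*$ are IID draws from $\d{P}_n$. Hence, the conditional structure is identical to that of Lemma~\ref{lemma: V order 1}, with $\d{P}_n$ playing the role of $P_0$. Applying that lemma conditionally and taking outer expectation yields
\begin{align*}
    \E_0\E_W\left\{\int f \sd[(\d{P}_n^* - \d{P}_n) \times (\d{P}_n^* - \d{P}_n)] - n^{-1}\tau_f\right\}^2 &\lesssim n^{-3}\E_0\!\int\!\left[\int f(x,y)\sd \d{P}_n(x)\right]^2 \sd \d{P}_n(y) \\
    &\quad + n^{-2}\E_0 \|f\|_{L_2(\d{P}_n\times\d{P}_n)}^2.
\end{align*}
The problem thereby reduces to bounding these two plug-in $V$-statistic moments in terms of the quantities appearing in the statement.

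For the second moment on the right-hand side, expanding gives $\|f\|_{L_2(\d{P}_n\times\d{P}_n)}^2 = n^{-2}\sum_{i,j} f(X_i,X_j)^2$. Splitting the sum into diagonal indices (contributing $\tau_f^2$) and off-diagonal indices (contributing $\|f\|_{L_2(P_0\times P_0)}^2$ after taking $\E_0$) produces $\E_0\|f\|_{L_2(\d{P}_n\times\d{P}_n)}^2 \leq n^{-1}\tau_f^2 + \|f\|_{L_2(P_0\times P_0)}^2$, so the overall contribution is $n^{-3}\tau_f^2 + n^{-2}\|f\|_{L_2(P_0\times P_0)}^2$, as required.

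For the first term, I will expand
\[
    \int\left[\int f(x,y)\sd \d{P}_n(x)\right]^2 \sd \d{P}_n(y) = \frac{1}{n^3}\sum_{i,j,k} f(X_i,X_k)f(X_j,X_k),
\]
and partition the triple sum according to the equality pattern of $(i,j,k)$, in the same spirit as identity~\eqref{eq: auto identity 2} from Lemma~\ref{lemma: square decomp}. The all-distinct triples yield $\int[\int f(x,y)\sd P_0(x)]^2 \sd P_0(y)$; the $i=j \neq k$ pattern gives $n^{-1}\|f\|_{L_2(P_0\times P_0)}^2$; the two patterns with $k$ equal to $i$ or $j$ use $f(X_k,X_k) = \tau_f$ and give terms bounded by $n^{-1}\tau_f\|f\|_{L_1(P_0\times P_0)}$; and the fully coincident pattern contributes $n^{-2}\tau_f^2$. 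Multiplying through by $n^{-3}$ produces contributions that are each dominated by one of the terms in the claimed bound.

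The main obstacle is merely bookkeeping: tracking all the index patterns, using $f(x,x)=\tau_f$ to simplify diagonal contributions, and applying Cauchy--Schwarz type inequalities (e.g.\ $\E_0|f(X_1,X_2)f(X_1,X_3)| \leq \|f\|_{L_2(P_0\times P_0)}^2$) to absorb cross-terms into the stated $L_1$ and $L_2$ seminorms. No new probabilistic idea beyond the conditional reduction to Lemma~\ref{lemma: V order 1} is needed, so after the combinatorial dust settles, gathering the leading-order terms matches the advertised upper bound.
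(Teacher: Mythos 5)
Your proposal is correct, and it is a genuinely different (and arguably cleaner) route than the paper's. The paper proves Lemma~\ref{lemma: V order 3} from scratch: it defines $V_n(x,y) := \int f \sd[(\delta_x - \d{P}_n)\times(\delta_y - \d{P}_n)]$, splits diagonal from off-diagonal bootstrap pairs, and then further decomposes the diagonal contribution by inserting the reference measure $P_0$ (writing $\d{P}_n \times (\d{P}_n^* - \d{P}_n) = (\d{P}_n - P_0)\times(\d{P}_n^* - \d{P}_n) + P_0 \times (\d{P}_n^* - \d{P}_n)$), which forces it to invoke Lemma~\ref{lemma: V order 2} for the cross-term and then to bound the remaining $P_0$-referenced pieces separately. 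You instead observe that, conditionally on the data, the bootstrap sample is IID from $\d{P}_n$, so the entire argument of Lemma~\ref{lemma: V order 1} applies verbatim with $\d{P}_n$ playing the role of $P_0$; this gives the conditional bound in one step, and the problem reduces to computing $\E_0 \|f\|_{L_2(\d{P}_n\times\d{P}_n)}^2$ and $\E_0\int[\int f\sd\d{P}_n]^2\sd\d{P}_n$, which you handle by the same index-pattern bookkeeping (in the style of identity~\eqref{eq: auto identity 2}) that the paper uses inside Lemma~\ref{lemma: V order 2}. Your bookkeeping is correct: the all-distinct triple gives $\int[\int f\sd P_0]^2\sd P_0$; the $i=j\neq k$ pattern gives $n^{-1}\|f\|_{L_2(P_0\times P_0)}^2$; the two patterns where $k$ coincides with $i$ or $j$ give $n^{-1}\tau_f\|f\|_{L_1(P_0\times P_0)}$ via $f(x,x)=\tau_f$; the fully coincident pattern gives $n^{-2}\tau_f^2$; and after multiplying by $n^{-3}$ every contribution is dominated by a term in the claimed bound. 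What your approach buys is a structural simplification: it makes transparent that Lemma~\ref{lemma: V order 3} is really Lemma~\ref{lemma: V order 1} plus second-moment bounds on plug-in $V$-statistics, and it eliminates the detour through Lemma~\ref{lemma: V order 2}. One minor point worth flagging (it applies equally to the paper's proof): the term $n^{-3}\tau_f\|f\|_{L_1(P_0\times P_0)}$ is only a meaningful upper bound when $\tau_f \geq 0$; since the lemma is applied in \Cref{cor: auto empirical} with $\tau_{f_1}, \tau_{f_2} \geq 0$, this is harmless, but it would be cleaner to write $|\tau_f|$.
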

\begin{proof}[\bfseries{Proof of \Cref{lemma: V order 3}}]
    We define
    \begin{align*}
        V_n(x, y) &:=  \int f \sd \left[(\delta_{x} - \d{P}_n)  \times (\delta_{y} - \d{P}_n)\right] \\
        &= f(x, y) - \frac{1}{n} \sum_{k=1}^n\left[f(x, X_k) + f(y, X_k)\right] + \frac{1}{n^2} \sum_{k,l = 1}^n f(X_k, X_l).
    \end{align*}
    By the law of total expectation, for any $i \neq j$, 
    \begin{align*}
        \E_W \left[V_n(X_i^*, X_j^*) \mid X_j^*\right] & = \E_W \left[f(X_i^*, X_j^*) - \frac{1}{n} \sum_{k=1}^n\left\{f(X_i^*, X_k) + f(X_j^*, X_k)\right\} + \frac{1}{n^2} \sum_{k,l = 1}^n f(X_k, X_l) \mid X_j^*\right]\\
        & = \frac{1}{n}\sum_{i=1}^n f(X_i, X_j^*) - \frac{1}{n^2} \sum_{i, k=1}^n f(X_i, X_k)  - \frac{1}{n} \sum_{k=1}^n f(X_j^*, X_k) + \frac{1}{n^2} \sum_{k,l = 1}^n f(X_k, X_l) \\
        & = 0.
    \end{align*}
    Hence, $\E_W [V_n(X_i^*, X_j^*)] = 0$ for $i \neq j$ and $\E_W [V_n(X_i^*, X_j^*)V_n(X_i^*, X_k^*)] = 0$ for $i \neq j \neq k$.
    
    By definition, we have 
    \begin{align}
    \begin{split}\label{eq: V decomposition 2}
        & \E_0 \E_W \left\{ \int f \sd [(\d{P}_n^* - \d{P}_n) \times (\d{P}_n^* - \d{P}_n)] - \frac{\tau_f}{n} \right\}^2 \\
        & \qquad = \E_0 \E_W \left\{ \frac{1}{n^2} \sum_{i = j} V_n(X_i^*, X_j^*) + \frac{1}{n^2} \sum_{i \neq j} V_n(X_i^*, X_j^*) - \frac{\tau_f}{n} \right\}^2\\
        & \qquad \lesssim  \E_0 \E_W \left\{ \frac{1}{n^2} \sum_{i = j} V_n(X_i^*, X_j^*) - \frac{\tau_f}{n} \right\}^2 + \E_0 \E_W \left\{ \frac{1}{n^2} \sum_{i \neq j} V_n(X_i^*, X_j^*) \right\}^2 \\
    \end{split}
    \end{align}
    For the first term on the right-hand side of \eqref{eq: V decomposition 2}, 
    \begin{align*}
        & \frac{1}{n^2} \sum_{i = j} V_n(X_i^*, X_j^*) - \frac{\tau_f}{n} \\
        &\qquad = \frac{1}{n^2} \sum_{i = 1}^n \int f \sd [(\delta_{X_i^*} - \d{P}_n) \times (\delta_{X_i^*} - \d{P}_n)] - \frac{\tau_f}{n} \\
        &\qquad=  - \frac{2}{n} \int f \sd (\d{P}_n \times \d{P}_n^*)  + \frac{1}{n} \int f \sd (\d{P}_n \times  \d{P}_n) \\
        &\qquad=  - \frac{2}{n} \int f \sd [\d{P}_n \times  (\d{P}_n^* - \d{P}_n)] - \frac{1}{n} \int f \sd (\d{P}_n \times \d{P}_n) \\
        &\qquad=   - \frac{2}{n} \int f \sd [ (\d{P}_n - P_0) \times (\d{P}_n^* - \d{P}_n)] - \frac{2}{n} \int f \sd [P_0 \times (\d{P}_n^* - \d{P}_n)] - \frac{1}{n} \int f \sd (\d{P}_n \times \d{P}_n).
    \end{align*}
    Hence,  
    \begin{align}
    \begin{split}\label{eq: V decomposition 3}
        &\E_0 \E_W \left\{ \frac{1}{n^2} \sum_{i = j} V_n(X_i^*, X_j^*) - \frac{\tau_f}{n} \right\}^2\\
        & \qquad \lesssim \E_0 \E_W \left\{ \frac{1}{n} \int f \sd [ (\d{P}_n - P_0) \times (\d{P}_n^* - \d{P}_n)]\right\}^2 + \E_0 \E_W \left\{\frac{1}{n} \int f \sd [P_0 \times (\d{P}_n^* - \d{P}_n)] \right\}^2 \\
        & \qquad \qquad + \E_0  \left\{\frac{1}{n} \int f \sd (\d{P}_n \times \d{P}_n)\right\}^2.
    \end{split}
    \end{align}
    By \Cref{lemma: V order 2}, the first term on the right-hand side of \eqref{eq: V decomposition 3} is bounded up to a constant by $n^{-5}\tau_f^2 + n^{-5} \tau_f \norm{f}_{L_1(P_0\times P_0)} + n^{-4}\norm{f}_{L_2(P_0\times P_0)}^2$. For the second term on the right-hand side of \eqref{eq: V decomposition 3},
    \begin{align*}
        \E_0 \E_W \left\{\frac{1}{n} \int f \sd [P_0 \times (\d{P}_n^* - \d{P}_n)] \right\}^2 &= \frac{1}{n^3} \E_0 \left[ \mathrm{Var}_W \left( \int f(x, X_1^*) \sd P_0(x) \right) \right]\\
        & \leq \frac{1}{n^3} \E_0 \left[ \frac{1}{n} \sum_{i=1}^n \left( \int f(x, X_i) \sd P_0(x) \right)^2 \right]\\
        & \leq \frac{1}{n^3} \int   \left( \int f(x, y) \sd P_0(x) \right)^2 \sd P_0(y).
    \end{align*}
    For the last term on the right-hand side of \eqref{eq: V decomposition 3}, we have 
    \begin{align*}
        \E_0 \left\{\frac{1}{n} \int f \sd (\d{P}_n \times \d{P}_n) \right\}^2 & =  \frac{1}{n^2} \E_0 \left\{ \frac{1}{n^2}\sum_{i=j}f(X_i, X_j) + \frac{1}{n^2}\sum_{i\neq j}f(X_i, X_j) \right\}^2\\
        & =  \frac{1}{n^2} \E_0 \left\{ \frac{\tau_f}{n} + \frac{1}{n^2}\sum_{i\neq j}f(X_i, X_j) \right\}^2\\
        & = \frac{1}{n^2} \left\{ \frac{\tau_f^2}{n^2} + \frac{2 \tau_f}{n^3}\E_0 \sum_{i\neq j}f(X_i, X_j) + \frac{1}{n^4} \E_0 \left[ \sum_{i\neq j}f(X_i, X_j)\right]^2 \right\}\\
        & \lesssim \frac{\tau_f^2}{n^4}  + \frac{2 \tau_f}{n^3}\E_0 f(X_1, X_2) + \frac{1}{n^6}\E_0 \left[ \sum_{i\neq j}f(X_i, X_j)\right]^2\\
        & \leq \frac{\tau_f^2}{n^4}  + \frac{2 \tau_f}{n^3}\norm{f}_{L_1(P_0 \times P_0)} + \frac{1}{n^6}\E_0 \left[ \sum_{i\neq j}f(X_i, X_j)\right]^2.
    \end{align*}
    By \Cref{lemma: square decomp}, we note that 
    \begin{align*}
        \E_0 \left[ \sum_{i \neq j} f(X_i, X_j) \right]^2 &= \E_0 \left[ 4 \sum_{i \neq j, j \neq k, k \neq i}f(X_i, X_j) f(X_i, X_k) + 2 \sum_{i \neq j}f(X_i, X_j)^2 \right.\\
        & \qquad \qquad \left.+ \sum_{\substack{i\neq j, i\neq k, i\neq l\\ j \neq k, j \neq l, k \neq l}} f(X_i, X_j) f(X_k, X_l) \right]\\
        & \lesssim n^3 \E_0 f(X_1, X_2)f(X_1, X_3) + n^2 \E_0 f(X_1, X_2)^2 + n^4 [\E_0 f(X_1, X_2)]^2\\
        & \leq n^3 \int   \left( \int f(x, y) \sd P_0(x) \right)^2 \sd P_0(y) + n^2\norm{f}_{L_2(P_0 \times P_0)}^2 + n^4 \norm{f}_{L_1(P_0 \times P_0)}^2.
    \end{align*}
    Combining these calculations, we get
    \begin{align*}
        \E_0 \left\{\frac{1}{n} \int f \sd (\d{P}_n \times \d{P}_n) \right\}^2 &\lesssim \frac{\tau_f^2}{n^4}  + \frac{\tau_f}{n^3}\norm{f}_{L_1(P_0 \times P_0)} + \frac{1}{n^2}\norm{f}_{L_1(P_0 \times P_0)}^2 + \frac{1}{n^4} \norm{f}_{L_2(P_0 \times P_0)}^2\\
        & \qquad \qquad + \frac{1}{n^3} \int   \left( \int f(x, y) \sd P_0(x) \right)^2 \sd P_0(y).
    \end{align*}
    Therefore, 
    \begin{align*}
        \E_0 \E_W \left\{ \frac{1}{n^2} \sum_{i = j} V_n(X_i^*, X_j^*) - \frac{\tau_f}{n} \right\}^2 & \lesssim \tau_f^2n^{-4}  + n^{-3} \tau_f\norm{f}_{L_1(P_0 \times P_0)} +  n^{-2}\norm{f}_{L_1(P_0 \times P_0)}^2   \\
        & \qquad + n^{-4}\norm{f}_{L_2(P_0\times P_0)}^2 + n^{-3} \int   \left( \int f(x, y) \sd P_0(x) \right)^2 \sd P_0(y).
    \end{align*}
    For the second term on the right-hand side of \eqref{eq: V decomposition 2}, by \Cref{lemma: square decomp} and the properties of $V_n$ derived above,
    \begin{align*}
        &\E_W \left[ \frac{1}{n^2} \sum_{i \neq j} V_n(X_i^*, X_j^*) \right]^2 \\
        &\qquad =   \E_W \left[ \frac{4}{n^4} \sum_{i \neq j, j \neq k, k \neq i}V_n(X_i^*, X_j^*) V_n(X_i^*, X_k^*) + \frac{2}{n^4} \sum_{i \neq j}V_n(X_i^*, X_j^*)^2 \right.\\
        & \qquad\qquad \qquad \left.+ \frac{1}{n^4} \sum_{\substack{i\neq j, i\neq k, i\neq l\\ j \neq k, j \neq l, k \neq l}} V_n(X_i^*, X_j^*) V_n(X_k^*, X_l^*) \right]\\
        & \qquad=   \E_W \left[ \frac{2}{n^4} \sum_{i \neq j}V_n(X_i^*, X_j^*)^2 \right]\\
        & \qquad\leq  \frac{2}{n^2}   \E_W \left[V_n(X_1^*, X_2^*)^2 \right]\\
        & \qquad =  \frac{2}{n^2}   \E_W \left[ f(X_1^*, X_2^*) - \int f(X_1^*, y) \sd \d{P}_n(y) - \int f(x, X_2^*) \sd \d{P}_n(x) + (\d{P}_n\times \d{P}_n) f\right]^2\\
        & \qquad=  \frac{2}{n^2}   \E_W \left[ f(X_1^*, X_2^*) - \int f(X_1^*, y) \sd \d{P}_n(y) -  \int f(x, X_2^*) \sd \d{P}_n(x) \right]^2 - \frac{2}{n^2} \left[(\d{P}_n\times \d{P}_n) f\right]^2\\
        & \qquad\leq  \frac{2}{n^2}   \E_W \left[ f(X_1^*, X_2^*) - \int f(X_1^*, y) \sd \d{P}_n(y) -  \int f(x, X_2^*) \sd \d{P}_n(x) \right]^2.
    \end{align*}
    Note that 
    \begin{align*}
        &\E_W \left[ f(X_1^*, X_2^*) - \int f(X_1^*, y) \sd \d{P}_n(y) -  \int f(x, X_2^*) \sd \d{P}_n(x) \right]^2 \\
        & \qquad = \E_W \left\{ f(X_1^*, X_2^*)^2 + \left[\int f(X_1^*, y) \sd \d{P}_n(y) \right]^2 + \left[\int f(x, X_2^*) \sd \d{P}_n(x) \right]^2  \right.\\
        & \qquad \qquad\qquad \qquad   - 2f(X_1^*, X_2^*)\left[ \int f(X_1^*, y) \sd \d{P}_n(y) \right]  - 2f(X_1^*, X_2^*)\left[\int f(x, X_2^*) \sd \d{P}_n(x)\right] \\
        & \qquad \qquad \qquad \qquad \left.+ 2\left[ \int f(X_1^*, y) \sd \d{P}_n(y)\right]\left[\int f(x, X_2^*) \sd \d{P}_n(x)\right]\right\}\\
        & \qquad = \E_W \left\{ f(X_1^*, X_2^*)^2 - \left[\int f(X_1^*, y) \sd \d{P}_n(y)\right]^2 - \left[\int f(x, X_2^*) \sd \d{P}_n (x) \right]^2 + 2 \left[ \int f \sd (\d{P}_n \times \d{P}_n)\right]^2 \right\}\\
        & \qquad \leq \int f^2 \sd (\d{P}_n \times \d{P}_n),
    \end{align*}
    where the last inequality is because  $\left[n^{-2}\sum_{i,j} f(X_i, X_j)\right]^2 \leq n^{-1}\sum_{j} \left[n^{-1}\sum_{i} f(X_i, X_j)\right]^2$ by Jensen's inequality. Hence,
    \begin{align*}
        \E_0 \E_W \left[ \frac{1}{n^2} \sum_{i \neq j} V_n(X_i^*, X_j^*) \right]^2 &\leq \frac{2}{n^2}  \E_0 \int f^2 \sd (\d{P}_n \times \d{P}_n)\\
        & = \frac{2}{n^2}  \E_0 \left[ \frac{1}{n^2}\sum_{i=j}f(X_i, X_j)^2 + \frac{1}{n^2}\sum_{i\neq j}f(X_i, X_j)^2 \right]\\
        & \lesssim \frac{\tau_f^2}{n^3} + \frac{1}{n^2} \int f^2 \sd (P_0 \times P_0),
    \end{align*}
    and the result follows.
\end{proof}

In the next few results, we assume that $K : \d{R}^d \to \d{R}$ is a kernel function satisfying  $K(-x) = K(x)$ for all $x \in \d{R}^d$, $\int K(x) \sd x = 1$, and $\int K(x)^2 \sd x < \infty$. Notably, $K$ need not be non-negative or have compact support. We also let $h = h_n > 0$ be a sequence of bandwidths. For simplicity, we assume the bandwidth matrix is $h I_d$, though the results can be extended to more general bandwidth matrices. For some results, we require that $K$ is an $m$th order kernel function, by which we mean that  $\int z^{\alpha} K(z) \sd z = 0$ for all $\alpha$ such that $1 \leq |\alpha| \leq m-1$ and $\int |z^{\alpha} K(z)| \sd z < \infty$ for all $\alpha$ such that $|\alpha| = m$. 

As in the main text, we define $K_h(x, y) := h^{-d}K\left(h^{-1}(x-y) \right)$ for $x, y \in \d{R}^d$. We  define the functions $f_1$ and $f_2$ from $\d{R}^d$ to $\d{R}$, each depending on $K$ and $h$, as $f_1 : (x,y) \mapsto K_h(x, y)$ and $f_2 : (x,y) \mapsto \int K_h(x, z)K_h(y, z) \sd z$. 

\begin{cor}\label{cor: auto empirical}
   If $P_0$ possesses uniformly bounded Lebesgue density function $\eta_0$, then $\int f \sd [(\d{P}_n - P_0) \times (\d{P}_n^* - \d{P}_n)] = o_{\prob_W^*}(n^{-1/2})$ for each $f\in \{f_1, f_2\}$. If in addition $nh^d \longrightarrow \infty$, then $\int f  \sd [(\d{P}_n - P_0) \times (\d{P}_n - P_0)] = n^{-1}\tau_f + o_{\prob_0^*}(n^{-1/2})$ and $\int f  \sd [(\d{P}_n^* - \d{P}_n) \times (\d{P}_n^* - \d{P}_n)] = n^{-1}\tau_f + o_{\prob_W^*}(n^{-1/2})$.
\end{cor}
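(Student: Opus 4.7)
The plan is to apply Lemmas~\ref{lemma: V order 1}, \ref{lemma: V order 2}, and~\ref{lemma: V order 3} to $f \in \{f_1, f_2\}$, and then invoke Markov's inequality to convert the resulting mean-square bounds into the required conditional-outer-probability statements at rate $n^{-1/2}$. The three lemmas give bounds involving the four quantities $\tau_f$, $\|f\|_{L_1(P_0 \times P_0)}$, $\|f\|_{L_2(P_0 \times P_0)}^2$, and $\int [\int f(x,y)\,dP_0(x)]^2\,dP_0(y)$, so the bulk of the proof will consist of verifying that these are of orders $O(h^{-d})$, $O(1)$, $O(h^{-d})$, and $O(1)$, respectively, for both $f_1$ and $f_2$.

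First I will read off $\tau_{f_1} = h^{-d} K(0)$ and $\tau_{f_2} = h^{-d} \int K^2$ directly from the definitions. For the $L_1$ and $L_2$ norms of $f_1$ over $P_0 \times P_0$, I will substitute $u = h^{-1}(x-y)$ and use $\|\eta_0\|_\infty < \infty$ together with $\int \eta_0 = 1$ to reduce the bounds to finite integrals of $|K|$ and $K^2$. To bound $\int[\int f_1(x,y)\,dP_0(x)]^2\,dP_0(y)$, I will observe that by symmetry of $K$ one has $\int f_1(x,y)\,dP_0(x) = (K_h * \eta_0)(y)$ and then apply Young's convolution inequality to control the supremum.

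The main computation is for $f_2$, where I will first rewrite $f_2(x,y) = h^{-d}(K \star K)(h^{-1}(y-x))$ with $(K \star K)(v) := \int K(u)K(u+v)\,du$, and use Young's inequality (via $K \in L^1 \cap L^2$) to obtain $\|K \star K\|_{L_2}^2 \leq \|K\|_{L_1}^2 \|K\|_{L_2}^2$. With this fact, the same change-of-variables arguments used for $f_1$ give the same orders for $f_2$, where the remaining bound on $\int[\int f_2 \,dP_0]^2\,dP_0$ also uses the identity $\int f_2(x,y)\,dP_0(x) = (K_h * K_h * \eta_0)(y)$ and Young's inequality once more.

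Once these orders are established, substituting into the three lemmas gives mean-square bounds that, when multiplied by $n$, are polynomial in $(nh^d)^{-1}$ with each term carrying exponent at least one; hence each is $o(1)$ under $nh^d \to \infty$. Markov's inequality applied separately to the unconditional and conditional second moments then delivers the three conclusions of the corollary. The main obstacle is the convolution bookkeeping for $f_2$; everything else is a routine application of the bounds on $\eta_0$ and the kernel.
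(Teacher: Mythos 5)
Your proposal is correct and follows essentially the same route as the paper: apply Lemmas~\ref{lemma: V order 1}--\ref{lemma: V order 3}, verify that $\tau_f=O(h^{-d})$, $\|f\|_{L_1(P_0\times P_0)}=O(1)$, $\|f\|_{L_2(P_0\times P_0)}^2=O(h^{-d})$, and $\int[\int f\,dP_0]^2\,dP_0=O(1)$ for $f\in\{f_1,f_2\}$, then conclude via Chebyshev under $nh^d\to\infty$. Your use of Young's convolution inequality to handle the $f_2$ norms (writing $f_2(x,y)=h^{-d}(K\star K)(h^{-1}(y-x))$ and $\int f_2(\cdot,y)\,dP_0 = K_h*K_h*\eta_0$) is a cleaner packaging of what the paper obtains by direct multi-variable change-of-variables and Cauchy--Schwarz, but it is the same estimate.
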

\begin{proof}[\bfseries{Proof of \Cref{cor: auto empirical}}]
    We first show that $\norm{f}_{L_1(P_0 \times P_0)} = O(1)$, $\int \left\{\int f(x, y) \sd P_0(x) \right\}^2 \sd P_0(y)  = O(1)$, and $\norm{f}_{L_2(P_0 \times P_0)} = O(h^{-d/2})$ for each $f \in \{f_1, f_2\}$. Note that 
    \begin{align*}
        \norm{f_1}_{L_1(P_0 \times P_0)} &= \iint \frac{1}{h^{d}} \abs{K\left(\frac{x - y}{h}\right)} \sd P_0(x) \sd P_0(y)\\
        & = \iint \abs{K(s)} \eta_0(sh+y) \sd s \sd P_0(y) \\
        & \lesssim  \int \abs{K(s)} \sd s \leq \|K\|_{L_2(\lambda)} < \infty, \text{ and} \\
        \norm{f_2}_{L_1(P_0 \times P_0)} &= \iint \frac{1}{h^{2d}} \abs{\int K\left(\frac{x - z}{h}\right) K\left(\frac{y - z}{h}\right) \sd z} \sd P_0(x) \sd P_0(y)\\
        & \leq \iint   \abs{ K\left(s\right)K\left(t\right)} \left[ \int \eta_0(z + hs)\eta_0(z + ht) \sd z \right] \sd s \sd t   \\
        & \lesssim  \left[ \int   \abs{ K\left(s\right)} \sd s \right]^2 \leq \|K\|_{L_2(\lambda)}^2 < \infty.
    \end{align*}
    Next,
    \begin{align*}
        \int \left[ \int f_1(x, y) \sd P_0(x) \right]^2 \sd P_0(y) &= \int  \left[ \frac{1}{h^d} \int K\left(\frac{x - y}{h}\right) \sd P_0(x)  \right]^2 \sd P_0(y)\\
        & = \int  \left[ \int K\left(s\right) \eta_0(sh+y) \sd s  \right]^2 \sd P_0(y)\\
        & \lesssim \| K\|_{L_2(\lambda)}^2 < \infty, \text{ and} \\
        \int \left[ \int f_2(x, y) \sd P_0(x) \right]^2 \sd P_0(y) &= \int \left[ \frac{1}{h^{2d}} \iint K\left(\frac{x - z}{h}\right) K\left(\frac{y - z}{h}\right) \sd z \sd P_0(x) \right]^2  \sd P_0(y)\\
        & \leq \int \left[ \iint    K\left(s\right)K\left(t\right) \eta_0(y - ht + h s)\sd s \sd t \right]^2 \sd P_0(y)  \\
        & \lesssim \| K\|_{L_2(\lambda)}^2 < \infty.
    \end{align*}
    Similarly,
    \begin{align*}
        \norm{f_1}_{L_2(P_0 \times P_0)}^2 &= \frac{1}{h^{2d}} \iint  K^2\left(\frac{x - y}{h}\right) \sd P_0(x) \sd P_0(y)\\
        & = \frac{1}{h^{d}} \iint  K^2\left(s\right) \eta_0(sh+y) \sd s \sd P_0(y) \\
        & \lesssim \frac{1}{h^{d}}  \| K\|_{L_2(\lambda)}^2, 
    \end{align*}
    and
    \begin{align*}
        \norm{f_2}_{L_2(P_0 \times P_0)}^2 &= \iint \left[ \frac{1}{h^{2d}} \int K \left(\frac{x - z}{h}\right)K \left(\frac{y - z}{h}\right) \sd z \right]^2 \sd P_0(x) \sd P_0(y)\\
        & = \frac{1}{h^{4d}} \iiiint   K \left(\frac{x - z}{h}\right)K \left(\frac{y - z}{h}\right)  K \left(\frac{x - w}{h}\right)K \left(\frac{y - w}{h}\right) \sd z \sd w \sd P_0(x) \sd P_0(y) \\
        & = \frac{1}{h^{2d}} \iiiint   K \left(s\right)K \left(s + \frac{y - x}{h}\right)  K \left(t\right)K \left(t + \frac{y - x}{h}\right) \sd s \sd t \sd P_0(x) \sd P_0(y) \\
        & = \frac{1}{h^{d}} \iiiint   K \left(s\right)K \left(r\right)  K \left(t\right)K \left(t + r - s\right) \eta_0(y - h(r-s))  \sd s \sd t \sd r \sd P_0(y) \\
        &\lesssim  \frac{1}{h^{d}} \iiint   \abs{K \left(s\right)  K \left(r\right)  K \left(t\right) K\left(t + r - s\right)} \sd s \sd t \sd r\\
        & \leq \frac{1}{h^{d}} \iint   \abs{ K \left(s\right) K \left(r\right)  } \left[ \int K^2 \left(t\right) \sd t \right]^{1/2} \left[ \int K^2\left(t + r - s\right) \sd t \right]^{1/2} \sd s \sd r \\
        & = \frac{1}{h^{d}} \left[ \int   \abs{ K \left(s\right)}   \sd s \right]^2 \left[ \int K^2 \left(t\right) \sd t \right]\\
        & \leq \frac{1}{h^{d}} \norm{K}_{L_2(\lambda)}^4,
    \end{align*}
    which implies that $\norm{f}_{L_2(P_0 \times P_0)} = O(h^{-d/2})$ for each $f \in \{f_1, f_2\}$ as claimed.

    We note that $f(x,y) = f(y,x)$ for each $f \in \{f_1, f_2\}$ because $K(-u) = K(u)$ by assumption, and $\tau_{f_1} = h^{-d}K(0)$ and $\tau_{f_2} = h^{-d} \int K^2(u) \sd u$ for all $x \in \d{R}^d$, which are both $O(h^{-d})$. Hence, using the results above, by \Cref{lemma: V order 2}, for each $f \in \{f_1, f_2\}$,
    \begin{align*}
        \E_0 \E_W \left\{\int f \sd [(\d{P}_n - P_0) \times (\d{P}_n^* - \d{P}_n)] \right\}^2 &\lesssim h^{-2d} n^{-3} + h^{-d} n^{-3} \norm{f}_{L_1(P_0\times P_0)} + n^{-2}\norm{f}_{L_2(P_0\times P_0)}^2 \\
        &\lesssim h^{-2d}n^{-3} + h^{-d}n^{-3} + n^{-2}h^{-d} \\
        &\lesssim n^{-1} (nh^d)^{-2} + n^{-1} (nh^d)^{-1},
    \end{align*}
    which is $o(n^{-1})$ if $nh^d \longrightarrow \infty$. This implies that $\int f \sd [(\d{P}_n - P_0) \times (\d{P}_n^* - \d{P}_n)] = o_{\prob_W^*}(n^{-1/2})$ if  $nh^d \longrightarrow \infty$. Since $\int \left[ \int f(x, y) \sd P_0(x) \right]^2 \sd P_0(y) = O(1)$, by \Cref{lemma: V order 1}, we have 
    \begin{align*}
        \E_0 \left\{ \int f \sd [(\d{P}_n - P_0) \times (\d{P}_n - P_0)] - n^{-1} \tau_f \right\}^2 & \lesssim  n^{-3} \int \left[ \int f(x, y) \sd P_0(x) \right]^2 \sd P_0(y)  + n^{-2} \norm{f}_{L_2(P_0 \times P_0)}^2 \\
        & \lesssim n^{-3} + n^{-1} (nh^{d})^{-1},
    \end{align*}
    which is $ o(n^{-1})$ if $nh^d \longrightarrow \infty$. This implies that $\int f \sd [(\d{P}_n - P_0) \times (\d{P}_n - P_0)] = n^{-1} \tau_f + o_{\prob_W^*}(n^{-1/2})$ if $nh^d \longrightarrow \infty$. Since $\int \left[ \int f(x, y) \sd P_0(x) \right]^2 \sd P_0(y) = O(1)$, by \Cref{lemma: V order 3}, we have 
    \begin{align*}
        &\E_0 \E_W \left\{ \int f \sd [(\d{P}_n^* - \d{P}_n) \times (\d{P}_n^* - \d{P}_n)] - n^{-1} \tau_f \right\}^2 \\
        & \qquad \lesssim   n^{-3} \tau_f^2 + n^{-3} \tau_f\norm{f}_{L_1(P_0 \times P_0)} + n^{-2}\norm{f}_{L_2(P_0 \times P_0)}^2 + n^{-3} \int \left[ \int f(x, y) \sd P_0(x) \right]^2 \sd P_0(y)\\
        & \qquad \lesssim  n^{-3}h^{-2d} + n^{-3} h^{-d} + n^{-2}h^{-d} + n^{-3},
    \end{align*}
    which is $o(n^{-1})$ if $nh^d \longrightarrow \infty$. This implies that $\int f \sd [(\d{P}_n^* - \d{P}_n) \times (\d{P}_n^* - \d{P}_n)] = n^{-1} \tau_f + o_{\prob_W^*}(n^{-1/2})$ if $nh^d \longrightarrow \infty$. 
\end{proof}

\begin{lemma}\label{lemma: h^m tool 1}
    Suppose that the density function $\eta_0$ of $P_0$ is $m$-times continuously differentiable with $\int [ (D^{\alpha} \eta_0)( x) ]^2 \sd x < \infty$ for all $\alpha$ such that $|\alpha| = m$, and $K$ is an $m$th order kernel. Then for each $f \in \{f_1, f_2\}$
    \begin{align*}
        \int \left[ \int f(x, y) \sd P_0(y) - \eta_0(x) \right]^2 \sd x = O(h^{2m}).
    \end{align*}
    If in addition $\sup_{x \in \d{R}^d} \left| D^{\alpha} \eta_0(x) \right| < \infty$, then $\sup_{x \in \d{R}^d} \abs{ \int f(x, y) \sd P_0(y) - \eta_0(x)} = O(h^m)$.  
\end{lemma}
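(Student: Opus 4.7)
The plan is to handle $f_1$ and $f_2$ separately, reducing the $f_2$ case to the $f_1$ case via a convolution identity. For $f_1$, the substitution $u = h^{-1}(x - y)$ yields $\int f_1(x, y) \sd P_0(y) = \int K(u) \eta_0(x - hu) \sd u$. I would then apply Taylor's theorem with integral remainder to $\eta_0$ around $x$ to order $m$:
\[
    \eta_0(x - hu) = \eta_0(x) + \sum_{1 \leq |\alpha| < m} \frac{(-hu)^\alpha}{\alpha!} D^\alpha \eta_0(x) + m \sum_{|\alpha| = m} \frac{(-hu)^\alpha}{\alpha!} \int_0^1 (1-t)^{m-1} D^\alpha \eta_0(x - thu) \sd t.
\]
Integrating against $K(u)$ and invoking the $m$th order kernel property ($\int K = 1$ together with $\int u^\alpha K(u) \sd u = 0$ for $1 \leq |\alpha| < m$) annihilates all of the polynomial terms, leaving
\[
    \int f_1(x, y) \sd P_0(y) - \eta_0(x) = h^m \sum_{|\alpha| = m} c_\alpha \int K(u) u^\alpha \int_0^1 (1-t)^{m-1} D^\alpha \eta_0(x - thu) \sd t \sd u
\]
for constants $c_\alpha$ depending only on $m$ and $\alpha$.

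Next I would take the $L_2(\lambda)$ norm in $x$ and apply Minkowski's integral inequality to pull the norm inside the integrals over $u$ and $t$. Translation invariance of Lebesgue measure then gives $\|D^\alpha \eta_0(\cdot - thu)\|_{L_2(\lambda)} = \|D^\alpha \eta_0\|_{L_2(\lambda)}$, which is finite by hypothesis. Since the $m$th order kernel assumption also ensures $\int |u^\alpha K(u)| \sd u < \infty$ for $|\alpha| = m$, I conclude $\|\int f_1(\cdot, y) \sd P_0(y) - \eta_0\|_{L_2(\lambda)} = O(h^m)$; squaring gives the first claim for $f_1$. The uniform bound follows by the identical argument with the supremum norm in place of the $L_2$ norm, invoking $\sup_x |D^\alpha \eta_0(x)| < \infty$.

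For $f_2$, I would observe that, using the symmetry $K(-v) = K(v)$ and Fubini, a direct computation gives $f_2(x, y) = h^{-d}(K * K)(h^{-1}(x - y)) = \bar{K}_h(x, y)$ with $\bar K := K * K$. It therefore suffices to verify that $\bar K$ is itself an $m$th order kernel with finite $m$th absolute moment, at which point the $f_1$ argument applies verbatim with $\bar K$ in place of $K$. Expanding $v^\alpha = ((v-w) + w)^\alpha$ via the multi-index binomial formula and using Fubini, $\int v^\alpha \bar K(v) \sd v$ reduces to products of moments of $K$, which vanish for $1 \leq |\alpha| < m$ and are finite for $|\alpha| = m$; finiteness of $\int |v^\alpha \bar K(v)| \sd v$ for $|\alpha| = m$ follows from the elementary inequality $|v|^m \lesssim |v - w|^m + |w|^m$ combined with the same moment assumptions on $K$.

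The main obstacle is only a minor bookkeeping one, namely verifying that $\bar K$ inherits the $m$th order kernel property together with a finite $m$th absolute moment. The Taylor expansion plus vanishing moments trick for $f_1$ is entirely classical, so no further subtleties are expected.
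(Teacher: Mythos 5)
Your argument is correct, and it diverges from the paper's proof in two small but genuine ways that are worth noting. For $f_1$, the Taylor expansion with integral remainder and the use of the $m$th order kernel property to annihilate the low-order terms is exactly the paper's argument; the difference is that you close the $L_2$ bound by Minkowski's integral inequality and translation invariance of Lebesgue measure, whereas the paper expands the square of the remainder into a double sum over $|\alpha|=|\beta|=m$ and applies Cauchy--Schwarz to each $\int H(x,s,\alpha) H(x,t,\beta)\,dx$ factor. Minkowski is the more streamlined route and buys you the sup-norm bound for free by swapping norms; the paper's version is more pedestrian but has the same content. For $f_2$, you reduce to the $f_1$ case via the convolution identity $f_2(x,y)=h^{-d}(K*K)(h^{-1}(x-y))$ and then verify that $\bar K := K*K$ is itself an $m$th order symmetric kernel with finite $m$th absolute moment. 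The paper does not take this detour: it Taylor-expands $\eta_0(x+h(t-s))$ directly inside the double integral $\iint K(s)K(t)[\eta_0(x+h(t-s))-\eta_0(x)]\,ds\,dt$ and then controls $\iint |(t-s)^\alpha K(s)K(t)|\,ds\,dt$ by a multi-index binomial expansion. Both routes pass through the same binomial bookkeeping --- your verification that $\bar K$ has vanishing intermediate moments and finite $m$th moment is precisely the same expansion of $(u+w)^\alpha$ that the paper uses to bound $(t-s)^\alpha$. One small technical point shared by both arguments (not a gap in your proof specifically) is the implicit use of $\int|u^\beta K(u)|\,du < \infty$ for all intermediate $|\beta| \le m-1$, which does not follow immediately from the stated kernel assumptions ($\int |K|<\infty$ and $\int|z^\alpha K(z)|\,dz<\infty$ for $|\alpha|=m$ only); this is justified by the pointwise bound $|t^\beta| \le 1 + \sum_{j=1}^d |t_j|^m$, and the paper sketches this, so your ``same moment assumptions on $K$'' remark should be read as invoking that bound. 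Your convolution reduction is somewhat more modular and conceptually cleaner, at the cost of having to establish a short lemma about $\bar K$; the paper's direct expansion avoids introducing $\bar K$ but is a bit more notationally heavy.
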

\begin{proof}[\bfseries{Proof of \Cref{lemma: h^m tool 1}}]
    Since $\eta_0$ is $m$-times continuously differentiable, for all $u$, a Taylor expansion with the Laplacian representation of the remainder gives
    \begin{align}
        \begin{split}
            &\eta_0(x + u) - \eta_0(x)  \\
            & \qquad = \sum_{1 \leq |\alpha| \leq m-1} \frac{1}{\alpha!}u^{\alpha} (D^{\alpha}\eta_0)(x)  + \sum_{|\alpha|=m} \frac{m}{\alpha!} u^{\alpha} \int_0^1 (1-r)^{m-1} (D^{\alpha} \eta_0)( x + ru ) \sd r.
        \end{split}
    \end{align}
    Hence, for $f=f_1$ and any $x\in \d{R}^d$, we have
    \begin{align*}
        \int f_1(x, y)\sd P_0(y) - \eta_0(x)  &= \int \frac{1}{h^d} K\left( \frac{x-y}{h} \right) \eta_0(y) \sd y - \eta_0(x)\\
        & = \int K( s ) [\eta_0(x + hs) - \eta_0(x)] \sd s\\
        &  =  \sum_{1 \leq |\alpha| \leq m-1} \frac{1}{\alpha!} h^{|\alpha|} \int s^{\alpha} K(s)   \sd s (D^{\alpha}\eta_0)(x) \\
        &  \qquad + \sum_{|\alpha|=m} \frac{m}{\alpha!} h^{|\alpha|} \iint_0^1 s^{\alpha} K(s)    (1-r)^{m-1} (D^{\alpha} \eta_0)( x + rhs ) \sd r  \sd s.
    \end{align*}  
    Since $K$ is an $m$th order kernel function, the first term on the right hand size is zero. Defining $H(x, s, \alpha) := \int_0^1 (1-r)^{m-1} (D^{\alpha} \eta_0)( x + rhs ) \sd r$, we then have 
    \begin{align*}
        &\int \left[ \int f_1(x, y)\sd P_0(y) - \eta_0(x) \right]^2 \sd x \\
        & \qquad = \int \left[ \sum_{|\alpha|=m} \frac{m}{\alpha!} h^{|\alpha|} \int s^{\alpha} K(s)   H(x, s, \alpha)  \sd s \right]^2  \sd x\\
        & \qquad = \sum_{|\alpha|, |\beta|=m} \frac{m^2}{\alpha! \beta!} h^{|\alpha|}h^{|\beta|} \iiint s^{\alpha}t^{\beta} K(s) K(t)  H(x, s, \alpha)H(x, t, \beta)  \sd s \sd t \sd x\\
        & \qquad \leq \sum_{|\alpha|,|\beta|=m} \frac{m^2}{\alpha! \beta!} h^{2m} \iint \abs{s^{\alpha}t^{\beta} K(s) K(t)} \left[\int H(x, s, \alpha)^2 \sd x\right]^{1/2} \left[\int H(x, t, \beta)^2 \sd x \right]^{1/2}  \sd s \sd t.
    \end{align*}
    By the Cauchy–Schwarz inequality, we have
    \begin{align*}
        \int H(x, s, \alpha)^2 \sd x &= \int \left[ \int_0^1 (1-r)^{m-1} (D^{\alpha} \eta_0)( x + rhs ) \sd r \right]^2 \sd x \\
        & \leq \int \left[ \int_0^1 (1-r)^{2(m-1)}  \sd r \right] \left[\int_0^1 (D^{\alpha} \eta_0)( x + rhs )^2 \sd r \right] \sd x\\
        & \leq  \iint_0^1 (D^{\alpha} \eta_0)( x + rhs )^2 \sd r  \sd x\\
        & = \int [(D^{\alpha} \eta_0)( x )]^2  \sd x,
    \end{align*}
    which is finite by assumption. Hence,
    \begin{align*}
        \int \left[ \int f_1(x, y)\sd P_0(y) - \eta_0(x) \right]^2 \sd x & \lesssim h^{2m} \sum_{|\alpha|, |\beta|=m} \iint \abs{s^{\alpha}t^{\beta} K(s) K(t)}  \sd s \sd t \\
        & = h^{2m}  \left[ \sum_{|\alpha|=m} \int \abs{s^{\alpha} K(s)}  \sd s \right]^2  = O(h^{2m}).
    \end{align*}
    If $D^{\alpha} \eta_0$ is uniformly bounded, then
    \begin{align*}
        & \abs{\int f_1(x, y)\sd P_0(y) - \eta_0(x)}\\
        & \qquad = \abs{\sum_{|\alpha|=m} \frac{m}{\alpha!} h^{|\alpha|} \iint_0^1 s^{\alpha} K(s)    (1-r)^{m-1} (D^{\alpha} \eta_0)( x + rhs ) \sd r  \sd s}\\
        & \qquad \leq \left\|D^{\alpha} \eta_0 \right\|_\infty \sum_{|\alpha|=m} \frac{mh^m}{\alpha!} \int   |s^{\alpha} K(s)|  \left[\int_0^1 (1-r)^{m-1} \sd r \right] \sd s\\
        & \qquad =\left\|D^{\alpha} \eta_0 \right\|_\infty \sum_{|\alpha|=m} \frac{h^m}{\alpha!} \int |s^{\alpha} K(s)| \sd s,
    \end{align*}
    which is independent of $x$ and order of $h^m$. 
    
    For $f=f_2$, we have
     \begin{align*}
        & \int f_2(x, y)\sd P_0(y) - \eta_0(x)  \\
        &\qquad = \iint \frac{1}{h^{2d}} K\left( \frac{x-z}{h} \right)K\left( \frac{y-z}{h} \right)  \sd z \sd P_0(y) - \eta_0(x)\\
        &\qquad= \iint \frac{1}{h^{d}} K\left( s \right)K\left( s + \frac{y-x}{h} \right)  \sd s \sd P_0(y) - \eta_0(x)\\
        &\qquad= \iint K( s )K( t ) \eta_0(x + h(t-s)) \sd s \sd t - \eta_0(x)\\
        & \qquad = \iint K( s )K( t ) [\eta_0(x + h(t-s)) - \eta_0(x)] \sd s \sd t\\
        & \qquad =  \sum_{1 \leq |\alpha| \leq m-1} \frac{1}{\alpha!} h^{|\alpha|} \iint (t-s)^{\alpha} K(s)K(t)   \sd s \sd t (D^{\alpha}\eta_0)(x) \\
        & \qquad \qquad + \sum_{|\alpha|=m} \frac{m}{\alpha!} h^{|\alpha|} \iiint_0^1 (t-s)^{\alpha} K(s)K(t)    (1-r)^{m-1} (D^{\alpha} \eta_0)( x + rh(t-s) ) \sd r  \sd s \sd t,
    \end{align*} 
    where  the first term on the right hand size is zero because $K$ is an $m$th order kernel function. Hence, we have 
    \begin{align*}
        &\int \left[ \int f_2(x, y)\sd P_0(y) - \eta_0(x) \right]^2 \sd x \\
        & \qquad = \int \left[ \sum_{|\alpha|=m} \frac{m}{\alpha!} h^{|\alpha|} \iint (t-s)^{\alpha} K(s)K(t) H(x, s-t, \alpha)  \sd s \sd t \right]^2 \sd x\\
        & \qquad = \sum_{|\alpha|, |\beta| = m} \frac{m^2}{\alpha!\beta!} h^{|\alpha|}h^{|\beta|} \iiiint (t-s)^{\alpha}(t'-s')^{\beta} K(s)K(s')K(t)K(t') \\
        & \qquad \qquad\qquad \qquad \times \left[\int H(x, s-t, \alpha)H(x, s'-t', \beta) \sd x\right]  \sd s \sd t \sd s' \sd t'\\
        & \qquad \lesssim \sum_{|\alpha|, |\beta| = m}  h^{2m} \iiiint \abs{(t-s)^{\alpha}(t'-s')^{\beta} K(s)K(s')K(t)K(t')}\\
        & \qquad \qquad\qquad \qquad \times\left[\int H(x, s-t, \alpha)^2\sd x\right]^{1/2} \left[\int H(x, s'-t'
        , \beta)^2 \sd x\right]^{1/2}  \sd s \sd t \sd s' \sd t' \\
        &\qquad \lesssim h^{2m} \left[ \sum_{|\alpha|=m} \iint \abs{(t-s)^{\alpha} K(s)K(t)} \sd s \sd t \right]^2\\
        & \qquad \lesssim h^{2m} \left[ \sum_{|\alpha|=m} \iint  \abs{\prod_{i=1}^d \left(\sum_{k_i=0}^{\alpha_i}t_i^{k_i}s_i^{\alpha_i-k_i}\right) K(s)K(t)} \sd s \sd t \right]^2\\
        & \qquad \leq h^{2m} \left[ \sum_{|\alpha|=m} \iint  \abs{\sum_{\substack{0\leq \beta \leq \alpha \\ 0\leq \gamma \leq \alpha}} t^{\beta}s^{\gamma} K(s)K(t)} \sd s \sd t \right]^2\\
        & \qquad \leq h^{2m} \left[ \sum_{\substack{0\leq |\beta| \leq m \\ 0 \leq |\gamma| \leq m}} \iint  \abs{ t^{\beta}s^{\gamma} K(s)K(t)} \sd s \sd t \right]^2 \\
        & \qquad = h^{2m} \left[ \sum_{\substack{0\leq|\beta| \leq m}} \int  \abs{ t^{\beta} K(t)} \sd t \right]^4,
    \end{align*}
    which is $O(h^{2m})$ because $\sum_{\substack{|\beta| = m}} \int  \abs{ t^{\beta} K(t)} \sd t < \infty$ by assumption, and for any $0 \leq \beta < m$ there exists $\alpha$ with $|\alpha| = m$ such that
    \begin{align*}
        \int  \abs{ t^{\beta} K(t)} \sd t &= \int_{[-1, 1]^d}  \abs{ t^{\beta} K(t)} \sd t + \int_{\d{R}^d \backslash [-1, 1]^d}  \abs{ t^{\beta} K(t)} \sd t \\
        & \leq \int  \abs{K(t)} \sd t + \int  \abs{ t^{\alpha} K(t)} \sd t,
    \end{align*}
    which is finite by assumption.
    
    If $D^{\alpha} \eta_0$ is uniformly bounded, then
    \begin{align*}
        &\abs{\int f_2(x, y)\sd P_0(y) - \eta_0(x)} \\
        & \qquad \leq \left\| D^\alpha \eta_0 \right\|_\infty \sum_{|\alpha|=m} \frac{m}{\alpha!} h^{|\alpha|} \iint \abs{(t-s)^{\alpha} K(s)K(t)} \sd s \sd t \int_0^1    (1-r)^{m-1}  \sd r  \\
        & \qquad = \left\| D^\alpha \eta_0 \right\|_\infty\sum_{|\alpha|=m} \frac{h^m}{\alpha!} \iint \abs{(t-s)^{\alpha} K(s)K(t)} \sd s \sd t.
    \end{align*}
    We showed the integral in the final expression is finite above, so the expression is independent of $x$ and order of $h^m$. 
\end{proof}

We now consider kernel density estimators $\eta_n$ and $\eta_n^*$ with common bandwidth $h$ and kernel $K$, i.e.\ $\eta_n(x) = n^{-1}\sum_{i=1}^n K_h(x, X_i)$ and $\eta_n^*(x) = n^{-1} \sum_{i=1}^n K_h(x, X_i^*)$. 

\begin{lemma}\label{thm: auto empirical process}
    Suppose that the density function $\eta_0$ of $P_0$ is uniformly bounded and $m$-times continuously differentiable with $\int [ (D^{\alpha} \eta_0)( x) ]^2 \sd x < \infty$ for all $\alpha$ such that $|\alpha| = m$ and $K$ is an $m$-th order kernel function. Then $(\d{P}_n - P_0)(\eta_n^* - \eta_n) = o_{\prob_W^*}(n^{-1/2})$. If in addition $nh^d \longrightarrow \infty$ holds, then $(\d{P}_n^* - \d{P}_n)(\eta_n^* - \eta_0) - (\d{P}_n - P_0)(\eta_n - \eta_0) = o_{\prob_W^*}(n^{-1/2})$.
\end{lemma}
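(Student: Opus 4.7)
The overall strategy is to recast both empirical-process increments as $V$-statistics in the symmetric kernel $f_1(x,y) := K_h(x,y)$, apply \Cref{cor: auto empirical} to neutralize the diagonal ``self'' terms, and control the surviving linear remainders by direct second-moment calculations. For the first assertion, Fubini gives
$(\d{P}_n - P_0)(\eta_n^* - \eta_n) = \int f_1 \sd [(\d{P}_n - P_0) \times (\d{P}_n^* - \d{P}_n)]$,
which is $o_{\prob_W^*}(n^{-1/2})$ directly by \Cref{cor: auto empirical}.

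For the second assertion, write $\bar\eta(x) := \int K_h(x, y) \sd P_0(y)$ and use $\int f_1(\cdot, y)\sd \d{P}_n = \eta_n(y)$ together with symmetry of $f_1$. The same expansion then yields
\begin{align*}
    (\d{P}_n^* - \d{P}_n)(\eta_n^* - \eta_0) &= \int f_1 \sd [(\d{P}_n^* - \d{P}_n) \times (\d{P}_n^* - \d{P}_n)] + (\d{P}_n^* - \d{P}_n)(\eta_n - \eta_0), \\
    (\d{P}_n - P_0)(\eta_n - \eta_0) &= \int f_1 \sd [(\d{P}_n - P_0) \times (\d{P}_n - P_0)] + (\d{P}_n - P_0)(\bar\eta - \eta_0).
\end{align*}
Under $nh^d \to \infty$, both diagonal $V$-integrals equal $n^{-1}\tau_{f_1} + o_{\prob_W^*}(n^{-1/2})$ by \Cref{cor: auto empirical}, and therefore cancel upon subtraction. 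It thus suffices to show $(\d{P}_n^* - \d{P}_n)(\eta_n - \eta_0) = o_{\prob_W^*}(n^{-1/2})$ and $(\d{P}_n - P_0)(\bar\eta - \eta_0) = o_{\prob_0^*}(n^{-1/2})$.

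The deterministic remainder is easy: its variance under $P_0$ is at most $n^{-1}\|\eta_0\|_\infty \int (\bar\eta - \eta_0)^2 \sd \lambda = O(n^{-1} h^{2m})$ by \Cref{lemma: h^m tool 1}, so Chebyshev delivers the bound. For the bootstrap remainder, conditional Chebyshev gives $\prob_W^*(n^{1/2}|(\d{P}_n^* - \d{P}_n)(\eta_n - \eta_0)| \geq \varepsilon) \leq \varepsilon^{-2} \d{P}_n(\eta_n - \eta_0)^2$, so by Markov it suffices to show $\E_0(\eta_n(X_1) - \eta_0(X_1))^2 \to 0$. This is the main obstacle, because evaluating $\eta_n$ at a data point $X_1$ produces a ``self-contribution'' of size $h^{-d}$ that integrated-squared-error bounds do not capture. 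I plan to handle it through the leave-one-out decomposition $\eta_n(X_1) = n^{-1} K_h(X_1, X_1) + (1 - n^{-1}) \tilde\eta_n(X_1)$, where $\tilde\eta_n$ is the KDE built from $X_2, \ldots, X_n$ and is independent of $X_1$: the self-term contributes $O((nh^d)^{-2})$, and conditioning on $X_1$ bounds the mean-squared error of $\tilde\eta_n(X_1) - \eta_0(X_1)$ by $(\bar\eta(X_1) - \eta_0(X_1))^2 + (n-1)^{-1}\mathrm{Var}_{P_0}(K_h(X_1, X_2) \mid X_1)$, which integrates to $O(h^{2m}) + O((nh^d)^{-1})$ via \Cref{lemma: h^m tool 1} and the standard kernel variance estimate. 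All three contributions are $o(1)$ under $nh^d \to \infty$ (with $h \to 0$ tacit from \Cref{lemma: h^m tool 1}), completing the plan.
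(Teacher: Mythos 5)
Your argument for the first assertion coincides with the paper's: both write $(\d{P}_n-P_0)(\eta_n^*-\eta_n)$ as the cross $V$-integral $\int f_1\sd[(\d{P}_n-P_0)\times(\d{P}_n^*-\d{P}_n)]$ and invoke \Cref{cor: auto empirical}. For the second assertion your route is genuinely different in one essential place: how $(\d{P}_n^*-\d{P}_n)(\eta_n-\eta_0)$ is controlled. The paper factors this term through the smoothed density $\eta_{0,h}=\eta_0*K_h$, splitting it into the cross $V$-integral $\int f_1\sd[(\d{P}_n-P_0)\times(\d{P}_n^*-\d{P}_n)]$, which is $o_{\prob_W^*}(n^{-1/2})$ by \Cref{cor: auto empirical} (whose proof absorbs the diagonal self-contribution once and for all inside \Cref{lemma: V order 2}), plus the bootstrap mean $(\d{P}_n^*-\d{P}_n)(\eta_{0,h}-\eta_0)$ of a fixed $L_2$-small function, a routine $n^{-1}O(h^{2m})$ conditional-variance computation. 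You instead bound $(\d{P}_n^*-\d{P}_n)(\eta_n-\eta_0)$ in one shot by its conditional variance $n^{-1}\d{P}_n(\eta_n-\eta_0)^2$ and reduce to showing $\E_0\big[(\eta_n(X_1)-\eta_0(X_1))^2\big]\to0$; because $X_1$ is one of the points defining $\eta_n$, this exposes the $(nh^d)^{-1}K(0)$ self-term that a blind MISE bound would miss, which you correctly isolate and kill via the leave-one-out split. Both proofs close at the same rate $O\big((nh^d)^{-1}+h^{2m}\big)$. The paper's decomposition is more modular — the degeneracy is handled inside the $V$-statistic machinery and never reappears — whereas yours is more self-contained but re-derives that self-correction by hand, and it implicitly requires $K(0)<\infty$, a condition the paper also uses tacitly through $\tau_{f_1}=h^{-d}K(0)$ in \Cref{cor: auto empirical}.
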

\begin{proof}[\bfseries{Proof of \Cref{thm: auto empirical process}}]
    To show the first statement, we note that 
    \begin{align*}
        (\d{P}_n - P_0)(\eta_n^* - \eta_n) & = \int  K_h  \sd [(\d{P}_n^* - \d{P}_n ) \times (\d{P}_n - P_0)],
    \end{align*}
    which is $o_{\prob_W^*}(n^{-1/2})$ by \Cref{cor: auto empirical} with $f = f_1$.

    To show the second statement, by adding and subtracting terms and defining $\eta_{0,h}(x) = \int K_{h}(x, y) \sd P_0(y)$, we have 
    \begin{align*}
        & (\d{P}_n^* - \d{P}_n)(\eta_n^* - \eta_0) - (\d{P}_n - P_0)(\eta_n - \eta_0) \\
        &\qquad = (\d{P}_n^* - \d{P}_n)(\eta_n^* - \eta_n) + (\d{P}_n^* - \d{P}_n)(\eta_n - \eta_{0,h}) + (\d{P}_n^* - \d{P}_n)(\eta_{0,h} - \eta_0) - (\d{P}_n - P_0)(\eta_n - \eta_{0,h})\\
        &\qquad\qquad - (\d{P}_n - P_0)(\eta_{0,h} - \eta_0) \\
        &\qquad = \int K_h  \sd [(\d{P}_n^* - \d{P}_n) \times (\d{P}_n^* - \d{P}_n)] + \int K_h \sd [(\d{P}_n - P_0) \times (\d{P}_n^* - \d{P}_n)]  \\
        &\qquad \qquad +\int [K_h(x, y) - \eta_0(x)] \sd (\d{P}_n^* - \d{P}_n)(x) \sd P_0(y)]  - \int K_h \sd [(\d{P}_n - P_0) \times (\d{P}_n - P_0)] \\
        &\qquad \qquad  -\int [K_h(x, y) - \eta_0(x)] \sd (\d{P}_n - P_0)(x) \sd P_0(y)].
    \end{align*}
    By \Cref{cor: auto empirical} with $f = f_1$, 
    \[ \int K_h \sd [(\d{P}_n - P_0) \times (\d{P}_n^* - \d{P}_n)] = o_{\prob_W^*}(n^{-1/2}),\]
    and if $nh^d \longrightarrow 0$, then 
    \[ 
        \int K_h  \sd [(\d{P}_n^* - \d{P}_n) \times (\d{P}_n^* - \d{P}_n)] - \int K_h \sd [(\d{P}_n - P_0) \times (\d{P}_n - P_0)] = o_{\prob_W^*}(n^{-1/2}). 
    \]
    Finally, we write 
    \begin{align*}
        \int [K_h(x, y) - \eta_0(x)] \sd (\d{P}_n - P_0)(x) \sd P_0(y)] &= (\d{P}_n - P_0)g_h, \text{ and} \\
        \int [K_h(x, y) - \eta_0(x)] \sd (\d{P}_n^* - \d{P}_n)(x) \sd P_0(y)] &= (\d{P}_n^* - \d{P}_n) g_h
    \end{align*}
    for $g_h(x) := \int [K_h(x, y) - \eta_0(x)] \sd P_0(y)$. As in the proofs of \Cref{lemma: V order 2} and \Cref{lemma: V order 3}, we have
    \begin{align*}
        \E_0 \left[ (\d{P}_n - P_0)g_h\right]^2 &\leq n^{-1} \| g_h \|_{L_2(P_0)}^2, \text{ and}\\
        \E_0 \E_W \left[ (\d{P}_n^* - \d{P}_n)g_h\right]^2 &\leq n^{-1} \E_0\| g_h \|_{L_2(\d{P}_n)}^2 = n^{-1} \| g_h \|_{L_2(P_0)}^2.
    \end{align*}
    Since $\int g_h(x)^2 \sd x = O(h^{2m})$ by \Cref{lemma: h^m tool 1}, we have $\| g_h \|_{L_2(P_0)}^2 \longrightarrow 0$ as long as $h \longrightarrow 0$ and $P_0$ possesses uniformly bounded density. This implies that $\int [K_h(x, y) - \eta_0(x)] \sd (\d{P}_n - P_0)(x) \sd P_0(y)] = o_{\prob_0^*}(n^{-1/2})$ and $\int [K_h(x, y) - \eta_0(x)] \sd (\d{P}_n^* - \d{P}_n)(x) \sd P_0(y)] = o_{\prob_W^*}(n^{-1/2})$.
\end{proof}

\begin{lemma}\label{thm: auto mise}
    Suppose that the density function $\eta_0$ of $P_0$ is uniformly bounded and $m$-times continuously differentiable with $\int [ (D^{\alpha} \eta_0)( x) ]^2 \sd x < \infty$ for all $\alpha$ such that $|\alpha| = m$ and $K$ is an $m$-th order kernel function. If $nh^d\longrightarrow \infty$ and $nh^{4m} \longrightarrow 0$, then $2\int[\eta_n(x) - \eta_0(x)]^2 \sd x - \int [\eta_n^*(x) - \eta_0(x)]^2 \sd x = o_{\prob_W^*}(n^{-1/2})$ holds for the empirical bootstrap.
\end{lemma}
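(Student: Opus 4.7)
The plan is to reduce the difference $2\int(\eta_n - \eta_0)^2 - \int(\eta_n^* - \eta_0)^2$ to three blocks: a quadratic block governed by \Cref{cor: auto empirical}, a bias--empirical-process cross block, and a pure bias square. Let $\eta_{0,h}(x) := \int K_h(x,y) \sd P_0(y)$, $b(x) := \eta_{0,h}(x) - \eta_0(x)$, $u_n(x) := \int K_h(x,y) \sd(\d{P}_n - P_0)(y)$, and $u_n^*(x) := \int K_h(x,y) \sd(\d{P}_n^* - \d{P}_n)(y)$, so that $\eta_n - \eta_0 = u_n + b$ and $\eta_n^* - \eta_0 = u_n + u_n^* + b$. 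Expanding the squares and simplifying yields
\begin{equation*}
    2\int (\eta_n - \eta_0)^2 - \int (\eta_n^* - \eta_0)^2 = \left[\int u_n^2 - \int u_n^{*2}\right] - 2\int u_n u_n^* + 2\int (u_n - u_n^*) b + \int b^2.
\end{equation*}

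For the quadratic block, I would apply Fubini with the definition $f_2(y,z) = \int K_h(x,y) K_h(x,z) \sd x$ to rewrite $\int u_n^2$, $\int u_n^{*2}$, and $\int u_n u_n^*$ as integrals of $f_2$ against $(\d{P}_n - P_0) \times (\d{P}_n - P_0)$, $(\d{P}_n^* - \d{P}_n) \times (\d{P}_n^* - \d{P}_n)$, and $(\d{P}_n - P_0) \times (\d{P}_n^* - \d{P}_n)$ respectively. Under $nh^d \to \infty$, \Cref{cor: auto empirical} shows that the first two equal $n^{-1}\tau_{f_2}$ up to $o_{\prob_W^*}(n^{-1/2})$ and the third is itself $o_{\prob_W^*}(n^{-1/2})$. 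The common $n^{-1}\tau_{f_2}$ leading terms therefore cancel in the difference, and the entire quadratic block is $o_{\prob_W^*}(n^{-1/2})$.

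For the cross block, I would set $g_h(y) := \int K_h(x,y) b(x) \sd x$, which by $K(-u) = K(u)$ is the convolution $K_h * b$, and rewrite $\int (u_n - u_n^*) b = (\d{P}_n - P_0) g_h - (\d{P}_n^* - \d{P}_n) g_h$. Young's inequality gives $\|g_h\|_{L_2(\lambda)} \leq \|K\|_{L_1(\lambda)} \|b\|_{L_2(\lambda)}$, \Cref{lemma: h^m tool 1} gives $\|b\|_{L_2(\lambda)}^2 = O(h^{2m})$, and the uniform boundedness of $\eta_0$ promotes this to $P_0 g_h^2 = O(h^{2m})$. Direct second-moment bounds then give $\E_0[((\d{P}_n - P_0) g_h)^2] \leq n^{-1} P_0 g_h^2 = O(n^{-1} h^{2m})$ and, by conditioning on the data, $\E_0 \E_W[((\d{P}_n^* - \d{P}_n) g_h)^2] \leq n^{-1} \E_0 \d{P}_n g_h^2 = O(n^{-1} h^{2m})$. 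Since $h \to 0$, both are $o(n^{-1})$, so this block is also $o_{\prob_W^*}(n^{-1/2})$.

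Finally, $\int b^2 = O(h^{2m})$ by \Cref{lemma: h^m tool 1}, which is $o(n^{-1/2})$ exactly when $nh^{4m} \to 0$; this is where that bandwidth condition enters. The main obstacle is arranging the decomposition so that the leading contributions $n^{-1}\tau_{f_2}$, of order $(nh^d)^{-1}$, to $\int u_n^2$ and $\int u_n^{*2}$ --- each of which is larger than $n^{-1/2}$ unless one assumes the stronger $nh^{2d} \to \infty$ --- cancel exactly. This is the reason for expanding both $\eta_n - \eta_0$ and $\eta_n^* - \eta_0$ around the common stochastic process $u_n$ rather than treating the original and bootstrap sides symmetrically: the dominant terms of $\int u_n^2$ and $\int u_n^{*2}$ coincide, and only their $o_{\prob_W^*}(n^{-1/2})$ remainders survive in the difference.
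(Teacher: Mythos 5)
Your proof is correct and follows essentially the same route as the paper: you expand both $\eta_n-\eta_0$ and $\eta_n^*-\eta_0$ around the common pieces $u_n = \eta_n - \eta_{0,h}$ and $b = \eta_{0,h} - \eta_0$, invoke Corollary~\ref{cor: auto empirical} so that the $n^{-1}\tau_{f_2}$ leading terms in $\int u_n^2$ and $\int u_n^{*2}$ cancel and $\int u_n u_n^*$ is negligible, and control the bias-cross terms through a second-moment bound on $(\d{P}_n - P_0)g_h$ and $(\d{P}_n^* - \d{P}_n)g_h$ with $\|g_h\|_{L_2} = O(h^m)$. The only cosmetic difference is that you obtain $\|g_h\|_{L_2(\lambda)} = O(h^m)$ via the convolution identification $g_h = K_h * b$ and Young's inequality, whereas the paper writes $g_h = \int(f_2 - f_1)(\cdot,y)\sd P_0(y)$ and applies Lemma~\ref{lemma: h^m tool 1} to each of $f_1$ and $f_2$ separately; the two routes give the same bound.
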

\begin{proof}[\bfseries{Proof of \Cref{thm: auto mise}}]
    Recall that $f_1(x, y) := K_h\left(x, y\right)$ and $f_2(x, y) := \int K_h(x, z) K_h(y, z) \sd z$. We denote $\eta_{0, h}(x) :=  \int  K_h\left(x, y\right) \sd P_0(y)$ for simplicity. We then note that
    \begin{align*}
        &\int \eta_n^2 = \int f_2 \sd (\d{P}_n \times \d{P}_n), \  \int \eta_n^{*2} = \int f_2 \sd (\d{P}_n^* \times \d{P}_n^*), \ \int \eta_{0,h}^2 = \int f_2  \sd (P_0 \times P_0) \\
        &\int \eta_n^*\eta_n = \int f_2 \sd (\d{P}_n^* \times \d{P}_n), \ \int \eta_n^*\eta_{0,h} = \int f_2 \sd (\d{P}_n^* \times P_0), \ \int \eta_n\eta_{0,h} = \int f_2 \sd (\d{P}_n \times P_0).
    \end{align*}
    Hence, by adding and subtracting term, we have
    \begin{align*}
        &\int (\eta_n^* - \eta_0)^2  - 2\int(\eta_n - \eta_0)^2 \\
        & \qquad = \int (\eta_n^* - \eta_n + \eta_n - \eta_{0, h} + \eta_{0, h} - \eta_0)^2   - 2\int(\eta_n - \eta_{0, h}+ \eta_{0, h} - \eta_0)^2  \\
        & \qquad = \int \left(\eta_n^* - \eta_n\right)^2 - \int \left(\eta_n - \eta_{0, h}\right)^2 - \int \left(\eta_{0, h} - \eta_0\right)^2+ 2\int \left(\eta_n^* - \eta_n\right)\left(\eta_n - \eta_{0, h}\right)\\
        & \qquad \qquad   + 2\int \left(\eta_n^* - \eta_n\right)\left(\eta_{0, h} - \eta_0\right) - 2\int \left(\eta_n - \eta_{0, h}\right)\left(\eta_{0, h} - \eta_0\right)  \\
        & \qquad = \int f_2 \sd [(\d{P}_n^* - \d{P}_n) \times (\d{P}_n^* - \d{P}_n)] - \int f_2 \sd [(\d{P}_n - P_0) \times (\d{P}_n - P_0)] - \int \left(\eta_{0, h} - \eta_0\right)^2  \\
        & \qquad \qquad  + 2\int f_2 \sd [(\d{P}_n^* - \d{P}_n) \times (\d{P}_n - P_0)] + 2 \int [f_2(x, y) - \eta_{0, h}(x)] \sd (\d{P}_n^* - \d{P}_n)(x) \sd P_0(y) \\
        & \qquad \qquad  - 2\int [f_2(x, y) - \eta_{0, h}(x)]\sd (\d{P}_n - P_0)(x) \sd P_0(y) .
    \end{align*}
    By \Cref{cor: auto empirical}, $2\int f_2 \sd [(\d{P}_n^* - \d{P}_n) \times (\d{P}_n - P_0)] = o_{\prob_W^*}(n^{-1/2})$, and if $nh^d \longrightarrow 0$, then
    \[
        \int f_2 \sd [(\d{P}_n^* - \d{P}_n) \times (\d{P}_n^* - \d{P}_n)] - \int f_2 \sd [(\d{P}_n - P_0) \times (\d{P}_n - P_0)] = o_{\prob_W^*}(n^{-1/2}).
    \]
    By \Cref{lemma: h^m tool 1}, the third term on the right is $O(h^{2m})$, which is $o(n^{-1/2})$ if $nh^{4m} \longrightarrow 0$. Finally, we write 
    \begin{align*}
        \int [f_2(x, y) - \eta_{0, h}(x)] \sd (\d{P}_n - P_0)(x) \sd P_0(y) &= (\d{P}_n - P_0)g_h, \text{ and} \\
        \int [f_2(x, y) - \eta_{0, h}(x)] \sd (\d{P}_n^* - \d{P}_n)(x) \sd P_0(y) &= (\d{P}_n^* - \d{P}_n) g_h
    \end{align*}
    for $g_h(x) := \int [f_2(x, y) - \eta_{0, h}(x)] \sd P_0(y) = \int [f_2(x, y) - f_1(x, y)] \sd P_0(y)$. As in the proofs of \Cref{lemma: V order 2} and \Cref{lemma: V order 3}, we have
    \begin{align*}
        \E_0 \left[ (\d{P}_n - P_0)g_h\right]^2 &\leq n^{-1} \| g_h \|_{L_2(P_0)}^2, \text{ and}\\
        \E_0 \E_W \left[ (\d{P}_n^* - \d{P}_n)g_h\right]^2 &\leq n^{-1} \E_0\| g_h \|_{L_2(\d{P}_n)}^2 = n^{-1} \| g_h \|_{L_2(P_0)}^2.
    \end{align*}
    By \Cref{lemma: h^m tool 1}, we have
    \begin{align*}
        &\int \left\{\int [f_2(x, y) - f_1(x, y)] \sd P_0(y)\right\}^2 \sd x \\
        & \qquad \leq 2 \int \left\{\int f_2(x, y)  \sd P_0(y) - \eta_0(x) \right\}^2 \sd x +  2\int \left\{\int f_1(x, y)  \sd P_0(y) - \eta_0(x) \right\}^2 \sd x = O(h^{2m}).
    \end{align*}
    This implies $\| g_h \|_{L_2(P_0)}^2  = O(h^{2m}) = o(n^{-1/2})$ since $nh^{4m} \longrightarrow 0$ and $P_0$ possesses uniformly bounded density. Hence,  $\int [f_2(x, y) - \eta_{0, h}(x)] \sd (\d{P}_n - P_0)(x) \sd P_0(y) = o_{\prob_0^*}(n^{-3/4})$ and $\int [f_2(x, y) - \eta_{0, h}(x)] \sd (\d{P}_n^* - \d{P}_n)(x) \sd P_0(y) = o_{\prob_W^*}(n^{-3/4})$. 
\end{proof}

\begin{lemma}\label{thm: auto asym bias}
    Suppose that the density function $\eta_0$ of $P_0$ is uniformly bounded and $m$-times continuously differentiable with $\int [ (D^{\alpha} \eta_0)( x) ]^2 \sd x < \infty$ for all $\alpha$ such that $|\alpha| = m$ and $K$ is an $m$-th order kernel function.  If $nh^d \longrightarrow \infty$ and $nh^{2m} \longrightarrow 0$, then $2\d{P}_n\phi_n - \d{P}_n^*\phi_n^* = o_{\prob_W}(n^{-1/2})$ holds for the empirical bootstrap.
\end{lemma}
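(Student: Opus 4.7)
The plan is to reduce the claim to quantities already controlled by \Cref{cor: auto empirical} and \Cref{lemma: h^m tool 1}. First, I would dispatch the empirical mean plug-in case trivially: for $\psi_{n,3}$, $\phi_n = 2(\eta_n - \d{P}_n\eta_n)$ is mean-zero under $\d{P}_n$, so $\d{P}_n \phi_n = 0$ and, conditionally, $\d{P}_n^*\phi_n^* = 0$ as well, making the statement trivially true. The substantive case is $\psi_{n,2}$, for which $\phi_n = 2(\eta_n - \int \eta_n^2)$. Using the kernel representation of $\eta_n$ together with the identity $\int \eta_n^2 = \int f_2 \sd (\d{P}_n \times \d{P}_n)$ for $f_2(x,y) := \int K_h(x,z) K_h(y,z) \sd z$, one obtains the clean representation $\d{P}_n\phi_n = 2\int g \sd(\d{P}_n\times \d{P}_n)$, where $g := f_1 - f_2$, and analogously $\d{P}_n^*\phi_n^* = 2\int g \sd(\d{P}_n^*\times\d{P}_n^*)$.

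Next, writing $\mu_n := \d{P}_n - P_0$ and $\mu_n^* := \d{P}_n^* - \d{P}_n$, I would expand $\d{P}_n = P_0 + \mu_n$ and $\d{P}_n^* = P_0 + \mu_n + \mu_n^*$ bilinearly and collect terms to get
\begin{align*}
2\d{P}_n\phi_n - \d{P}_n^*\phi_n^* &= 2\int g \sd (P_0 \times P_0) + 4(\d{P}_n - P_0)g_h - 4(\d{P}_n^* - \d{P}_n)g_h \\
&\quad + 2\int g \sd (\mu_n\times\mu_n) - 2\int g\sd(\mu_n^*\times\mu_n^*) - 4\int g\sd(\mu_n\times\mu_n^*),
\end{align*}
where $g_h(y):= \int g(x,y) \sd P_0(x)$ and I have used symmetry of $g$ together with the identity $\int g \sd (P_0 \times \mu) = \mu g_h$. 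The key structural observation is that the coefficient of $\int g \sd (P_0 \times \mu_n)$ collapses from $8 - 4 = 4$, producing a genuine bootstrap increment $-4(\d{P}_n^*-\d{P}_n)g_h$ rather than a residual linear-in-$(\d{P}_n - P_0)$ term that would not vanish.

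I would then bound each piece. The three quadratic $V$-statistics fall directly under \Cref{cor: auto empirical} applied to $f_1$ and $f_2$ separately: both $\int g\sd(\mu_n\times\mu_n)$ and $\int g\sd(\mu_n^*\times\mu_n^*)$ equal $n^{-1}(\tau_{f_1} - \tau_{f_2})$ plus negligible remainders, so their difference is $o_{\prob_W^*}(n^{-1/2})$, while $\int g\sd(\mu_n\times\mu_n^*) = o_{\prob_W^*}(n^{-1/2})$. For the linear terms, \Cref{lemma: h^m tool 1} applied to $f_1$ and $f_2$ with the triangle inequality yields $\|g_h\|_{L_2(\lambda)} = O(h^m)$, hence $\|g_h\|_{L_2(P_0)} = O(h^m)$ by uniform boundedness of $\eta_0$; direct variance computations then give $\E_0 \E_W[(\d{P}_n^*-\d{P}_n)g_h]^2 \leq n^{-1}\|g_h\|_{L_2(P_0)}^2 = O(h^{2m}/n)$, and similarly for $(\d{P}_n - P_0)g_h$, so both are $o_{\prob_W^*}(n^{-1/2})$ because $h\to 0$ (forced by $nh^{2m}\to 0$). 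The deterministic term factors as $\int g\sd(P_0\times P_0) = \int \eta_{0,h}(\eta_0 - \eta_{0,h}) \sd \lambda$, which Cauchy--Schwarz and the $L^2$ bias bound $\|\eta_0 - \eta_{0,h}\|_{L_2(\lambda)} = O(h^m)$ from \Cref{lemma: h^m tool 1} show to be $O(h^m) = o(n^{-1/2})$ under $nh^{2m}\to 0$.

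The main obstacle I anticipate is the coefficient bookkeeping in the bilinear expansion: one must verify that after combining all the $4$ and $2$ scalings the $n^{-1}\tau_{f_i}$ leading diagonal contributions cancel exactly and that no rogue $\mu_n$-linear term survives. Once this is in place, all remaining estimates are immediate applications of the two cited lemmas, with the bandwidth condition $nh^{2m} \to 0$ serving the sole role of absorbing the $O(h^m)$ bias and the $O(h^m/n^{1/2})$-rate linear fluctuations into $o(n^{-1/2})$.
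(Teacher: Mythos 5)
Your core argument matches the paper's proof of \Cref{thm: auto asym bias}: the same bilinear expansion of $4\int g\sd(\d{P}_n\times\d{P}_n) - 2\int g\sd(\d{P}_n^*\times\d{P}_n^*)$ with $g = f_1 - f_2$, the same reliance on \Cref{cor: auto empirical} for the three quadratic $V$-statistic terms (with the $n^{-1}\tau_{f_1}$ and $n^{-1}\tau_{f_2}$ leading contributions cancelling between the $\mu_n\times\mu_n$ and $\mu_n^*\times\mu_n^*$ pieces), and the same use of \Cref{lemma: h^m tool 1} to establish $\|g_h\|_{L_2(\lambda)} = O(h^m)$ for the linear fluctuation terms and $\int g\sd(P_0\times P_0) = O(h^m)$ for the deterministic term. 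One small misreading in your opening paragraph: as the $R_n$ formula in the proof of \Cref{prop: auto bias correction} makes clear, the paper takes $\phi_n = 2(\eta_n - \int\eta_n^2)$ for both $\psi_{n,2}$ and $\psi_{n,3}$, so $\d{P}_n\phi_n$ does not vanish in the empirical mean plug-in case; but since both then reduce to the computation you carry out for $\psi_{n,2}$, your proof is complete as written.
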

\begin{proof}[\bfseries{Proof of \Cref{thm: auto asym bias}}]
    Recall that $f_1(x, y) := K_h\left(x, y\right)$ and $f_2(x, y) := \int K_h(x, z) K_h(y, z) \sd z$. We define $f_3 := f_1 - f_2$. We then have 
    \begin{align*}
        \d{P}_n \phi_n = 2\int f_3 \sd (\d{P}_n \times \d{P}_n), \text{  and } \quad \d{P}_n^* \phi_n^* = 2\int f_3 \sd (\d{P}_n^* \times \d{P}_n^*).
    \end{align*}
    Hence, by adding and subtracting terms,
    \begin{align*}
        &\left(\d{P}_n^*\phi_n^* - 2\d{P}_n \phi_n \right) / 2 \\
        & \qquad = \int f_3 \sd [(\d{P}_n^* - \d{P}_n + \d{P}_n - P_0 + P_0) \times (\d{P}_n^* - \d{P}_n + \d{P}_n - P_0 + P_0)] \\
        & \qquad \qquad - 2 \int f_3 \sd [(\d{P}_n - P_0 + P_0) \times (\d{P}_n - P_0 + P_0)]\\
        & \qquad = \int f_3 \sd [(\d{P}_n^* - \d{P}_n) \times (\d{P}_n^* - \d{P}_n)] - \int f_3 \sd [(\d{P}_n - P_0) \times (\d{P}_n - P_0)] - \int f_3 \sd (P_0 \times P_0)\\
        & \qquad \qquad + 2 \int f_3 \sd [(\d{P}_n^* - \d{P}_n) \times (\d{P}_n - P_0)] + 2 \int f_3 \sd [(\d{P}_n^* - \d{P}_n) \times P_0] \\
        & \qquad \qquad - 2 \int f_3 \sd [(\d{P}_n - P_0) \times P_0].
    \end{align*}
    By \Cref{cor: auto empirical}, 
    \[
        \int f_3 \sd [(\d{P}_n^* - \d{P}_n) \times (\d{P}_n - P_0)] = \int f_1  \sd [(\d{P}_n^* - \d{P}_n) \times (\d{P}_n - P_0)] -  \int f_2  \sd [(\d{P}_n^* - \d{P}_n) \times (\d{P}_n - P_0)] = o_{\prob_W^*}(n^{-1/2})
    \]
    and if $nh^d \longrightarrow 0$, then 
    \begin{align*}
        &\int f_3 \sd [(\d{P}_n^* - \d{P}_n) \times (\d{P}_n^* - \d{P}_n)] - \int f_3 \sd [(\d{P}_n - P_0) \times (\d{P}_n - P_0)]\\
        &\qquad= \left\{\int f_1  \sd [(\d{P}_n^* - \d{P}_n) \times (\d{P}_n^* - \d{P}_n)] - \int f_1 \sd [(\d{P}_n - P_0) \times (\d{P}_n - P_0)] \right\} \\
        &\qquad\qquad -\left\{\int f_2  \sd [(\d{P}_n^* - \d{P}_n) \times (\d{P}_n^* - \d{P}_n)] - \int f_2 \sd [(\d{P}_n - P_0) \times (\d{P}_n - P_0)] \right\} \\
        &\qquad = o_{\prob_W^*}(n^{-1/2}).
    \end{align*}
    Finally, we define $g_h(x) := \int f_3(x, y) \sd P_0(y)$. As in the proofs of \Cref{lemma: V order 2} and \Cref{lemma: V order 3}, we have
    \begin{align*}
        \E_0 \left[  (\d{P}_n - P_0) g_h \right]^2 &\leq n^{-1} \norm{g_h}_{L_2(P_0)}^2, \text{ and}\\
        \E_0 \E_W \left[ (\d{P}_n^* - \d{P}_n)g_h\right]^2 &\leq n^{-1} \E_0\| g_h \|_{L_2(\d{P}_n)}^2 = n^{-1} \| g_h \|_{L_2(P_0)}^2.
    \end{align*}
    By \Cref{lemma: h^m tool 1}, we have
    \begin{align}
    \begin{split}\label{eq: auto asym bias last}
        \int g_h(x)^2 \sd x  &= \int \left\{\int [f_1(x, y) - f_2(x, y)] \sd P_0(y)\right\}^2 \sd x \\
        &  \leq 2 \int \left\{\int f_1(x, y)  \sd P_0(y) - \eta_0(x) \right\}^2 \sd x +  2\int \left\{\int f_2(x, y)  \sd P_0(y) - \eta_0(x) \right\}^2 \sd x,
    \end{split}
    \end{align}
    which is $O(h^{2m})$. This implies that $(\d{P}_n - P_0) g_h$ and $(\d{P}_n^* - \d{P}_n)g_h$ are  $o_{\prob_0^*}(n^{-1})$ and  $o_{\prob_W^*}(n^{-1})$, respectively, since $nh^{2m} \longrightarrow 0$ and $P_0$ possesses uniformly bounded density. Finally, \eqref{eq: auto asym bias last} also implies that $\int f_3 \sd (P_0 \times P_0) = O(h^m) = o(n^{-1/2})$ since $nh^{2m} \longrightarrow 0$.
\end{proof}

\section{Lemmas supporting the proof of Proposition~\ref{prop: auto bias correction smooth}}\label{sec: smooth auto bias lemmas}

For any $f: \d{R}^d \times \d{R}^d \mapsto \d{R}$, we define 
\[
    G[f](x, y) := \iint f(s, t) K_h(s, x) K_h(t, y) \sd s \sd t = \iint f(x+hu, y + hv)K(u) K(v) \sd u \sd v.
\]
If $f$ is symmetric, which is the case for $f \in \{f_1, f_2\}$, then $G[f]$ is symmetric as well. If $f(x+c, y+c) = f(x, y)$ holds for any $c \in \d{R}^d$, which is also the case for $f \in \{f_1, f_2\}$, then
\begin{align*}
    G[f](x, x) = \iint f(x+ hu, x + hv) K(u)K(v) \sd u \sd v = \iint f(hu, hv) K(u)K(v) \sd u \sd v,
\end{align*}
which does not depend on $x$. For simplicity, we denote $\tau_{G[f]} = G[f](x, x)$ for any $x \in \d{R}^d$.

\begin{lemma}\label{lemma: norm tool 1}
    Suppose that $P_0$ possesses uniformly bounded density. If $f: \d{R}^d \times \d{R}^d \mapsto \d{R}$ satisfies $f(x+c, y+c) = f(x, y) \in \d{R}$ for all $x, y, c \in \d{R}^d$, and $\int [K(x)]^2 \sd x < \infty$, then $\norm{G[f]}_{L_1(P_0 \times P_0)} \lesssim \norm{f}_{L_1(\lambda \times P_0)}$, $\norm{G[f]}_{L_2(P_0 \times P_0)} \lesssim \norm{f}_{L_2(\lambda \times P_0)}$, and $\int \left[\int G[f](x, y) \sd P_0(x)\right]^2 \sd P_0(y) \lesssim \int \left[\int |f(x, y)| \sd x \right]^2 \sd P_0(y)$, where the constants depend on $K$ and $P_0$.  For $f \in \{f_1, f_2\}$, $\norm{f}_{L_1(\lambda \times P_0)} = O(1)$, $\norm{f}_{L_2(\lambda \times \nu)}^2 = O(\nu(\d{R}^d)h^{-d})$, and $\int \left[\int |f(x, y)| \sd x\right]^2 \sd \nu(y) \lesssim \nu(\d{R}^d)$ for a constant only depending on $K$ and any finite measure $\nu$.
\end{lemma}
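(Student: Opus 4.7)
The plan is to exploit the translation invariance hypothesis $f(x+c,y+c)=f(x,y)$, which forces $f(x,y)=\varphi(x-y)$ for some $\varphi:\d{R}^d\to\d{R}$. Substituting $w=u-v$ in the defining integral for $G[f]$ then yields $G[f](x,y)=\psi(x-y)$, where
\[
    \psi(a) \;=\; \iint \varphi(a + h(u-v))K(u)K(v)\sd u\sd v \;=\; \int \varphi(a+hw)L(w)\sd w,
\]
and $L(w):=\int K(u)K(u+w)\sd u$ is the autocorrelation of $K$. A change of variables shows $\psi$ is a convolution of $\varphi$ with a rescaled copy of $L$, and $\|L\|_1\leq\|K\|_1^2$ via the $L_1$-Young inequality applied to $L=K\ast K$. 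A second application of Young then gives $\|\psi\|_p\leq\|L\|_1\|\varphi\|_p\lesssim\|K\|_1^2\|\varphi\|_p$ for $p\in\{1,2\}$.

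The three bounds on $G[f]$ follow by undoing the outer integrations. For the $L_1$ bound, the substitution $z=x-y$ combined with $\int p_0(z+y)p_0(y)\sd y\leq\|p_0\|_\infty$ gives $\iint|\psi(x-y)|p_0(x)p_0(y)\sd x\sd y\leq\|p_0\|_\infty\|\psi\|_1$, and the identity $\int|\varphi(s-t)|\sd s=\|\varphi\|_1$ (which is constant in $t$) means $\norm{f}_{L_1(\lambda\times P_0)}=\|\varphi\|_1$, closing the estimate. The $L_2$ bound is handled identically using the analogous identity $\iint|\psi(x-y)|^2 p_0(x)p_0(y)\sd x\sd y\leq\|p_0\|_\infty\|\psi\|_2^2$ and $\norm{f}_{L_2(\lambda\times P_0)}^2=\|\varphi\|_2^2$. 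For the third bound, the pointwise estimate $\bigl|\int\psi(x-y)p_0(x)\sd x\bigr|\leq\|p_0\|_\infty\|\psi\|_1$ upgrades to $\int[\int\psi(x-y)p_0(x)\sd x]^2\sd P_0(y)\leq\|p_0\|_\infty^2\|\psi\|_1^2\lesssim\|\varphi\|_1^2=\int[\int|\varphi(x-y)|\sd x]^2\sd P_0(y)$.

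For the concrete claims on $f\in\{f_1,f_2\}$, the same substitution identifies $\varphi(a)=h^{-d}K(a/h)$ in the case $f=f_1$ and $\varphi(a)=h^{-d}L(a/h)$ in the case $f=f_2$. Direct change of variables then yields $\|\varphi\|_1\in\{\|K\|_1,\|L\|_1\}$, both $O(1)$, and $\|\varphi\|_2^2\in\{h^{-d}\|K\|_2^2,\,h^{-d}\|L\|_2^2\}$, both $O(h^{-d})$. Since $\int|f(s,t)|\sd s$ and $\int f(s,t)^2\sd s$ are constant in $t$, integrating against any finite measure $\nu$ just multiplies by $\nu(\d{R}^d)$, producing the advertised scalings. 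The only analytic care I anticipate is ensuring $\|L\|_1<\infty$ and $\|L\|_2\leq\|K\|_1\|K\|_2<\infty$; both follow from $\|K\|_1<\infty$ (implicit in the paper's use of $m$th order kernels with $\int K=1$) via Young's inequality, and no regularity of $K$ or $p_0$ beyond what is assumed enters the argument.
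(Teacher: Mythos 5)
Your proposal is correct, and it takes a route that is organizationally cleaner than, though ultimately equivalent to, the paper's. Both arguments turn on the same two levers: translation invariance to collapse the two arguments of $f$ into a single shift variable, and the uniform bound on $p_0$ to trade $\sd P_0$ for $\sd\lambda$. Where you differ is in the packaging. The paper expands $G[f]$ as a double integral against $K(s)K(t)\sd s\sd t$, pushes absolute values (and, for the $L_2$ bound, Cauchy--Schwarz) through, changes variables, and bounds the resulting $K$-integrals by hand; you instead name the structure, writing $f(x,y)=\varphi(x-y)$ and $G[f](x,y)=\psi(x-y)$ with $\psi=\varphi\ast\tilde L_h$, and then invoke Young's inequality twice. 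Your version has a minor advantage: the paper's displayed chain for the $L_1$ bound uses $\left[\int|K|\right]^2\leq\int K^2$, which is not a valid inequality in general (take the uniform kernel on $[-1,1]$); it is harmless because both sides are finite constants depending only on $K$, and the lemma permits the implied constant to depend on $K$, but your route via $\|L\|_1\leq\|K\|_1^2$ sidesteps the slip entirely. You are right that $\|K\|_1<\infty$ is implicit (the paper treats $\int K = 1$ as standing), and your identification $\varphi_{f_1}(a)=h^{-d}K(a/h)$, $\varphi_{f_2}(a)=h^{-d}L(a/h)$ with $\|\varphi_{f_2}\|_2\leq\|K\|_1\|K\|_2 h^{-d/2}$ reproduces the paper's final $O(1)$ and $O(\nu(\d{R}^d)h^{-d})$ scalings by Fubini on the $\nu$-integral, exactly as you say. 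I see no gap.
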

\begin{proof}[\bfseries{Proof of \Cref{lemma: norm tool 1}}]
    By the property of $f$, the boundedness of the density of $P_0$, and the assumption that $\int K^2 < \infty$,
    \begin{align*}
        \norm{G[f]}_{L_1(P_0 \times P_0)} &= \iint \abs{G[f](x, y)} \sd P_0(x) \sd P_0(y)\\
        & = \iint \abs{\iint f(x+sh, y + th)K(s) K(t) \sd s \sd t} \sd P_0(x) \sd P_0(y)\\
        & \leq \iint \left[ \iint \abs{f(x+sh, y + th)} \sd P_0(x) \sd P_0(y) \right]  \abs{K(s) K(t)} \sd s \sd t \\
        & = \iint \left[ \iint \abs{f(x+sh-th, y)} \sd P_0(x) \sd P_0(y) \right]  \abs{K(s) K(t)} \sd s \sd t \\
        & \lesssim \iint \left[ \iint \abs{f(x', y)} \sd x' \sd P_0(y) \right]  \abs{K(s) K(t)} \sd s \sd t\\
        & = \norm{f}_{L_1(\lambda \times P_0)} \left[ \int |K(s)| \sd s \right]^2\\
        & \leq \norm{f}_{L_1(\lambda \times P_0)} \int [K(s)]^2 \sd s \\
        &\lesssim \norm{f}_{L_1(\lambda \times P_0)}.
    \end{align*}
    Similarly,
    \begin{align*}
        \norm{G[f]}_{L_2(P_0 \times P_0)}^2 &= \iint \abs{G[f](x, y)}^2 \sd P_0(x) \sd P_0(y)\\
        & = \iint \abs{\iint f(x+sh, y + th)K(s) K(t) \sd s \sd t}^2 \sd P_0(x) \sd P_0(y)\\
        & \leq  \iint \left[\iint \abs{f(x+sh, y + th)K(s) K(t)}^2  \sd P_0(x) \sd P_0(y) \right] \sd s \sd t\\
        & =  \iint \left[\iint \abs{f(x+sh-th, y)}^2  \sd P_0(x) \sd P_0(y) \right] \left[K(s) K(t)\right]^2 \sd s \sd t\\
        & \lesssim \iint \left[\iint \abs{f(x', y)}^2  \sd x' \sd P_0(y) \right] \left[ K(s) K(t) \right]^2 \sd s \sd t\\
        & = \norm{f}_{L_2(\lambda \times P_0)}^2 \left[ \int \{K(s)\}^2 \sd s \right]^2\\
        & \lesssim \norm{f}_{L_2(\lambda \times P_0)}^2.
    \end{align*}
    Finally,
    \begin{align*}
        \int \left[\int G[f](x, y) \sd P_0(x)\right]^2 \sd P_0(y) & = \int \left[\iiint f(x+sh, y + th)K(s) K(t) \sd s \sd t \sd P_0(x)\right]^2 \sd P_0(y)\\
        & \leq \iint  \left\{\int \left[\int f(x+sh, y + th)  \sd P_0(x)\right]^2 \sd P_0(y) \right\} \left[K(s) K(t)\right]^2 \sd s \sd t \\
        & = \iint  \left\{\int \left[\int f(x+sh-th, y)  \sd P_0(x)\right]^2 \sd P_0(y) \right\} \left[K(s) K(t)\right]^2 \sd s \sd t \\
        & \lesssim \iint  \left\{\int \left[\int |f(x+sh-th, y)| \sd x\right]^2 \sd P_0(y) \right\} \left[K(s) K(t) \right]^2 \sd s \sd t\\
        & = \left\{\int \left[\int |f(x', y)|  \sd x'\right]^2 \sd P_0(y) \right\}  \left\{ \iint  \left[K(s)K(t)\right]^2 \sd s\sd t \right\}\\
        & \lesssim \int \left[\int |f(x', y)|  \sd x'\right]^2 \sd P_0(y).
    \end{align*}

    Next, we note that 
    \begin{align*}
        \norm{f_1}_{L_1(\lambda \times P_0)} &= \iint \frac{1}{h^{d}} \abs{K\left(\frac{x - y}{h}\right)} \sd x \sd P_0(y)\\
        & = \iint \abs{K(x')}  \sd x' \sd P_0(y) \\
        & =  \int \abs{K(x')} \sd x' , \text{ and} \\
        \norm{f_2}_{L_1(\lambda \times P_0)} &= \iint \frac{1}{h^{2d}} \abs{\int K\left(\frac{x - z}{h}\right) K\left(\frac{y - z}{h}\right) \sd z} \sd x \sd P_0(y)\\
        & \leq \iint   \abs{ K\left(x'\right)K\left(y'\right)} \left[ \int \eta_0(z + hy') \sd z \right] \sd x' \sd y'   \\
        & =  \left[ \int   \abs{ K\left(x'\right)} \sd x' \right]^2.
    \end{align*}
    Both of these are finite  constants depending only on $K$. Next, 
    \begin{align*}
        \int \left[ \int |f_1(x, y)| \sd x \right]^2 \sd \nu(y) &= \int  \left[ \frac{1}{h^d} \int \abs{K\left(\frac{x - y}{h}\right)} \sd x \right]^2 \sd \nu(y)\\
        & = \int  \left[ \int \abs{K\left(x'\right)}  \sd x'  \right]^2 \sd \nu(y)\\
        & =\left[ \int \abs{K\left(x'\right)}  \sd x'  \right]^2 \nu(\d{R}^d), \text{ and} \\
        \int \left[ \int |f_2(x, y)| \sd x \right]^2 \sd \nu(y) & = \int \left[  \int \abs{ \frac{1}{h^{2d}} \int K\left(\frac{x - z}{h}\right) K\left(\frac{y - z}{h}\right) \sd z} \sd x \right]^2  \sd \nu(y)\\
        & \leq  \int \left[  \frac{1}{h^{2d}}\iint \abs{K\left(\frac{x - z}{h}\right) K\left(\frac{y - z}{h}\right)} \sd z \sd x \right]^2  \sd \nu(y)\\
        & = \int \left[ \iint    \abs{K\left(x'\right)K\left(z'\right)} \sd x' \sd z' \right]^2 \sd \nu(y)  \\
        & = \left[ \int \abs{K\left(x'\right)}  \sd x'  \right]^4 \nu(\d{R}^d).
    \end{align*}
    Similarly,
    \begin{align*}
        \norm{f_1}_{L_2(\lambda \times \nu)}^2 &= \frac{1}{h^{2d}} \iint  \left[K\left(\frac{x - y}{h}\right)\right]^2 \sd x \sd \nu(y)\\
        & = \frac{1}{h^{d}} \iint  \left[K\left(x'\right) \right]^2 \sd x' \sd \nu(y) \\
        & = \frac{1}{h^{d}}  \| K\|_{L_2(\lambda)}^2 \nu(\d{R}^d), 
    \end{align*}
    and
    \begin{align*}
        \norm{f_2}_{L_2(\lambda \times \nu)}^2 &= \iint \left[ \frac{1}{h^{2d}} \int K \left(\frac{x - z}{h}\right)K \left(\frac{y - z}{h}\right) \sd z \right]^2 \sd x \sd \nu(y)\\
        & = \frac{1}{h^{4d}} \iiiint   K \left(\frac{x - z}{h}\right)K \left(\frac{y - z}{h}\right)  K \left(\frac{x - w}{h}\right)K \left(\frac{y - w}{h}\right) \sd z \sd w \sd x \sd \nu(y) \\
        & = \frac{1}{h^{2d}} \iiiint   K \left(z'\right)K \left(z' + \frac{y - x}{h}\right)  K \left(w'\right)K \left(w' + \frac{y - x}{h}\right) \sd z' \sd w' \sd x \sd \nu(y) \\
        & = \frac{1}{h^{d}} \iiiint   K \left(z'\right)K \left(x'\right)  K \left(w'\right)K \left(w' + x' - z'\right) \sd z' \sd w' \sd x' \sd \nu(y) \\
        & = \frac{1}{h^{d}} \nu(\d{R}^d)\iiint   K \left(z'\right)  K \left(x'\right)  K \left(w'\right) K\left(w' + x' - z'\right) \sd z' \sd w' \sd x'\\
        & \leq \frac{1}{h^{d}} \nu(\d{R}^d)\iint   \abs{ K \left(z'\right) K \left(x'\right)  } \left[ \int \left\{K \left(w'\right)\right\}^2 \sd w' \right]^{1/2} \left[ \int \left\{ K\left(w' + x' - z'\right)\right\}^2 \sd w' \right]^{1/2} \sd z' \sd x' \\
        & = \frac{1}{h^{d}} \nu(\d{R}^d)\left[ \int   \abs{ K \left(z'\right)}   \sd z' \right]^2 \left[ \int 
        \left\{K \left(w'\right)\right\}^2 \sd w' \right]\\
        & \leq \frac{1}{h^{d}} \nu(\d{R}^d)\norm{K}_{L_2(\lambda)}^4,
    \end{align*}
    which yields the result.
\end{proof}

\begin{cor}\label{lemma: sV order 1}
    If $\hat{P}_n$ is the distribution corresponding to a kernel density estimator with bandwidth $h$ and kernel $K$ satisfying $\int K^2 < \infty$, $P_0$ possesses uniformly bounded density, and $f: \d{R}^d \times \d{R}^d \mapsto \d{R}$ satisfies $f(x, y) = f(y, x)$, $f(x, x) = \tau_f \in \d{R}$ and $f(x+c, y+c) = f(x, y) \in \d{R}$ for all $x, y, c \in \d{R}^d$, then
    \begin{align*}
        \E_0  \left\{ \int f \sd \left[(\hat{P}_n - P_{0, h}) \times (\hat{P}_n - P_{0, h})\right]  -  n^{-1}\tau_{G[f]} \right\}^2 & \lesssim n^{-3} \int \left[\int |f(x, y)| \sd x \right]^2 \sd P_0(y) \\
        & \qquad  + n^{-2} \norm{f}_{L_2(\lambda \times P_0)}^2
    \end{align*}
    for a constant depending on $K$ and $P_0$.
\end{cor}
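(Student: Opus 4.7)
The plan is to reduce this smooth-bootstrap $V$-statistic bound to the empirical-measure $V$-statistic bound already established in Lemma~\ref{lemma: V order 1}, via the auxiliary kernel $G[f]$. The key observation is that when $\hat{P}_n = \d{P}_n \ast L_h$ is the convolution of the empirical measure with the kernel rescaling $L_h$ (so that $\hat{P}_n$ is the distribution of $X_i + hU_i$ for $U_i$ with density $K$), and when $P_{0,h} := P_0 \ast L_h$ plays the corresponding role for $P_0$, then by Fubini's theorem
\[
    \int f \sd (\hat{P}_n \times \hat{P}_n) = \iint G[f](x,y) \sd \d{P}_n(x) \sd \d{P}_n(y),
\]
and analogously for $(\hat{P}_n \times P_{0,h})$, $(P_{0,h} \times \hat{P}_n)$, and $(P_{0,h} \times P_{0,h})$ with $\d{P}_n$ replaced by $P_0$ on the appropriate side. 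Combining these four identities via bilinearity gives the clean reduction
\[
    \int f \sd [(\hat{P}_n - P_{0,h}) \times (\hat{P}_n - P_{0,h})]  = \int G[f] \sd [(\d{P}_n - P_0) \times (\d{P}_n - P_0)].
\]

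Next I would verify that $G[f]$ meets the hypotheses of Lemma~\ref{lemma: V order 1}: the symmetry $G[f](x,y) = G[f](y,x)$ follows from symmetry of $f$, and the "constant diagonal" property $G[f](x,x) = \tau_{G[f]}$ for all $x$ follows from the translation-invariance assumption $f(x+c, y+c) = f(x,y)$, exactly as noted just before the statement. Applying Lemma~\ref{lemma: V order 1} to $G[f]$ then yields
\[
    \E_0 \left\{ \int G[f] \sd [(\d{P}_n - P_0)\times(\d{P}_n - P_0)] - n^{-1} \tau_{G[f]} \right\}^2 \lesssim n^{-3}\!\int\!\left[\int G[f](x,y) \sd P_0(x)\right]^2 \!\sd P_0(y) + n^{-2} \|G[f]\|_{L_2(P_0\times P_0)}^2.
\]

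Finally, I would invoke the two corresponding bounds from Lemma~\ref{lemma: norm tool 1}, namely $\|G[f]\|_{L_2(P_0\times P_0)}^2 \lesssim \|f\|_{L_2(\lambda\times P_0)}^2$ and $\int[\int G[f](x,y)\sd P_0(x)]^2 \sd P_0(y) \lesssim \int[\int |f(x,y)|\sd x]^2 \sd P_0(y)$, to convert both terms on the right-hand side into the stated expressions in $f$. Chaining the reduction identity with these two lemmas immediately yields the desired bound.

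The main obstacle is really bookkeeping rather than any deep estimate: one must set up notation for $P_{0,h}$ (not explicitly defined in the statement but implicit in the corollary), confirm Fubini's theorem applies under the given integrability, and check that $G[f]$ inherits the symmetry and diagonal-invariance needed for Lemma~\ref{lemma: V order 1}. All the analytic work---moment calculations for degenerate $V$-statistics and control of convolution-smoothed norms in terms of unsmoothed ones---has already been packaged in Lemmas~\ref{lemma: V order 1} and~\ref{lemma: norm tool 1}, so this corollary is essentially a two-line composition of those results.
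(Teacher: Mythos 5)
Your proposal is correct and follows essentially the same route as the paper: establish the reduction identity $\int f \sd [(\hat{P}_n - P_{0,h}) \times (\hat{P}_n - P_{0,h})] = \int G[f] \sd [(\d{P}_n - P_{0}) \times (\d{P}_n - P_{0})]$, verify $G[f]$ inherits symmetry and the constant-diagonal property, then apply Lemma~\ref{lemma: V order 1} and Lemma~\ref{lemma: norm tool 1}. The only cosmetic difference is that the paper derives the identity directly from the density relation $(\hat{p}_n - \eta_{0,h})(x) = \int K_h(s,x)\sd(\d{P}_n - P_0)(s)$, whereas you expand it into four convolution identities and recombine by bilinearity.
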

\begin{proof}[\bfseries{Proof of \Cref{lemma: sV order 1}}]
    Recall that $G[f](x, y) := \int f(s, t) K_h(s, x) K_h(t, y) \sd s \sd t$. Under the assumptions on $f$, we have $G[f](x, y) = G[f](y, x)$ and $G[f](x, x) = \tau_{G[f]}$ for all  $x, y\in \d{R}^d$. Since $(\hat{p}_n - \eta_{0, h})(x) = \int K_h(s,x) \sd (\d{P}_n - P_0)(s)$, we can write
    \begin{align*}
        \int f \sd \left[(\hat{P}_n - P_{0, h}) \times (\hat{P}_n - P_{0, h})\right] &= \int G[f] \sd [(\d{P}_n - P_{0}) \times (\d{P}_n - P_{0})].
    \end{align*}
    By \Cref{lemma: V order 1}, we then have 
    \begin{align*}
         \E_0  \left\{ \int f \sd \left[(\hat{P}_n - P_{0, h}) \times (\hat{P}_n - P_{0, h})\right]  -  n^{-1}\tau_{G[f]} \right\}^2  &= \E_0 \left\{ \int G[f] \sd \left[(\d{P}_n - P_{0}) \times (\d{P}_n - P_{0})\right] - n^{-1} \tau_{G[f]} \right\}^2 \\
         &  \lesssim n^{-3} \int \left[\int G[f](x, y) \sd P_0(x)\right]^2 \sd P_0(y)  \\
        & \qquad+ n^{-2} \norm{G[f]}_{L_2(P_0 \times P_0)}^2.
    \end{align*}
    The result follows by \Cref{lemma: norm tool 1}. 
\end{proof}

\begin{lemma}\label{lemma: sV order 2}
    If $P_0$ possesses uniformly bounded and $m$-times continuously differentiable density $\eta_0$ with $\int [ (D^{\alpha} \eta_0)( x) ]^2 \sd x < \infty$ for all $\alpha$ such that $|\alpha| = m$, $\hat{P}_n$ is the distribution corresponding to a kernel density estimator with bandwidth $h$ and kernel $K$ satisfying $\int K^2 < \infty$, and $f: \d{R}^d \times \d{R}^d \mapsto \d{R}$ satisfies $f(x, y) = f(y, x)$ and $f(x, x) = \tau_f$ for all $x, y \in \d{R}^d$, then
    \[
        \E_0 \E_W \left\{ \int f \sd \left[(P_{0, h} - P_0) \times (\d{P}_n^* - \hat{P}_n)\right] \right\}^2 \lesssim n^{-1}h^{2m} \norm{f}_{L_2(\lambda \times P_{0,h})}^2.
    \]
\end{lemma}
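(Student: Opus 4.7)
The plan is to reduce the double integral to a one-dimensional expectation by integrating out one factor, then exploit the conditional IID structure of the bootstrap sample under $\hat{P}_n$, and finally apply Cauchy--Schwarz together with the smoothing bias bound from \Cref{lemma: h^m tool 1}. Throughout, the symmetry $f(x,y)=f(y,x)$ is not actually needed; what matters is that $\hat{P}_n$ has the convolution structure $\d{P}_n \ast L_n$ (with $L_n$ the kernel-smoothing measure) and that $P_0$ is smooth enough for $\eta_{0,h} := \sd P_{0,h}/\sd\lambda$ to be close to $\eta_0$ in $L_2(\lambda)$.

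The first step is to define $g_h(y) := \int f(x,y)\,\sd(P_{0,h} - P_0)(x)$ and write $\int f \sd[(P_{0,h}-P_0) \times (\d{P}_n^* - \hat{P}_n)] = (\d{P}_n^* - \hat{P}_n)g_h$. Conditionally on $X_1,\dotsc,X_n$, the bootstrap observations are IID from $\hat{P}_n$, so the standard variance calculation yields $\E_W [(\d{P}_n^* - \hat{P}_n)g_h]^2 = n^{-1} \mathrm{Var}_W(g_h(X_1^*)) \le n^{-1} \hat{P}_n g_h^2$. Taking outer expectation and using that $\E_0 \hat{P}_n(B) = \E_0[\d{P}_n \ast L_n](B) = P_0 \ast L_n(B) = P_{0,h}(B)$ for every measurable $B$ (by Fubini/Tonelli, since $g_h$ is a fixed, data-free function), we obtain $\E_0 \E_W [(\d{P}_n^* - \hat{P}_n)g_h]^2 \le n^{-1} P_{0,h} g_h^2$.

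The second step is to bound $g_h$ pointwise by Cauchy--Schwarz in $x$:
\[
    g_h(y)^2 = \left[ \int f(x,y)\,[\eta_{0,h}(x) - \eta_0(x)]\,\sd x \right]^2 \le \norm{\eta_{0,h} - \eta_0}_{L_2(\lambda)}^2 \cdot \int f(x,y)^2\,\sd x.
\]
Integrating against $P_{0,h}$ in $y$ then gives
\[
    P_{0,h} g_h^2 \le \norm{\eta_{0,h} - \eta_0}_{L_2(\lambda)}^2 \cdot \norm{f}_{L_2(\lambda \times P_{0,h})}^2.
\]
Since $\eta_{0,h}(x) = \int f_1(x,y)\,\sd P_0(y)$, the assumed $m$-th order property of $K$ (inherited from the surrounding context of Section~\ref{sec: av dens}) together with \Cref{lemma: h^m tool 1} yields $\norm{\eta_{0,h} - \eta_0}_{L_2(\lambda)}^2 = O(h^{2m})$. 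Combining the three displays produces the stated bound.

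The only nontrivial point is verifying $\E_0\hat{P}_n g_h^2 = P_{0,h} g_h^2$; this is immediate from the representation $\hat{P}_n = \d{P}_n \ast L_n$ and Tonelli, since $g_h$ depends only on $h$ and not on the data. Everything else is routine manipulation, so I do not anticipate any real obstacle.
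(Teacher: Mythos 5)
Your proof is correct and takes essentially the same approach as the paper: define $g_h(y) = \int f(x,y)\,\sd(P_{0,h}-P_0)(x)$ (the paper calls it $f^\circ$), use the conditional-IID bound $\E_W[(\d{P}_n^*-\hat{P}_n)g_h]^2 \le n^{-1}\hat{P}_n g_h^2$, take $\E_0$ via $\E_0\hat{P}_n = P_{0,h}$, apply Cauchy--Schwarz in $x$, and invoke \Cref{lemma: h^m tool 1} for $\norm{\eta_{0,h}-\eta_0}_{L_2(\lambda)}^2 = O(h^{2m})$. Your remarks that the symmetry of $f$ is not actually used and that the $m$th-order kernel assumption is implicit in the lemma statement (but needed for \Cref{lemma: h^m tool 1}) are both accurate.
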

\begin{proof}[\bfseries{Proof of \Cref{lemma: sV order 2}}]
    We denote $f^\circ(y) := \int f(x, y) \sd(P_{0, h} - P_0)(x) $. We then have 
    \begin{align*}
        \int f \sd [(P_{0, h} - P_0) \times (\d{P}_n^* - \hat{P}_n)] = (\d{P}_n^* - \hat{P}_n) f^\circ = \frac{1}{n} \sum_{i=1}^n \left[f^\circ(X_i^*) - \hat{P}_n f^\circ\right].
    \end{align*}
    Since $X_1^*, \dotsc, X_n^* \iidsim \hat{P}_n$, $\E_W \left[f^\circ(X_i^*) - \hat{P}_n f^\circ\right]\left[f^\circ(X_j^*) - \hat{P}_n f^\circ\right] = 0$ for any $i \neq j$, so
    \begin{align*}
        \E_W \left\{ \frac{1}{n} \sum_{i=1}^n \left[f^\circ(X_i^*) - \hat{P}_n f^\circ\right]\right\}^2 & = \E_W \left\{ \frac{1}{n^2} \sum_{i,j = 1}^n \left[f^\circ(X_i^*) - \hat{P}_n f^\circ\right]\left[f^\circ(X_j^*) - \hat{P}_n f^\circ\right]\right\}\\
        & =  \E_W \left\{ \frac{1}{n^2} \sum_{i = 1}^n \left[f^\circ(X_i^*) - \hat{P}_n f^\circ\right]^2\right\} \\
        &= \frac{1}{n} \E_W \left[f^\circ(X_i^*) - \hat{P}_n f^\circ\right]^2 \\
        & = \frac{1}{n} \left[ \hat{P}_n (f^{\circ})^2 - (\hat{P}_n f^\circ)^2 \right]\\
        & \leq \frac{1}{n} \hat{P}_n (f^{\circ})^2.
    \end{align*}
    By \Cref{lemma: h^m tool 1}, we have 
    \begin{align*}
        \E_0 \left[ n^{-1} \hat{P}_n (f^{\circ})^2 \right] &= n^{-1} P_{0, h} (f^{\circ})^2 \\
        & = n^{-1} \int \left[ \int f(x, y) [\eta_{0,h}(x) - \eta_0(x)] \sd x \right]^2 \sd P_{0, h}(y)\\
        & \leq n^{-1} \int \left[ \int |f(x, y)| \abs{\eta_{0,h}(x) - \eta_0(x)} \sd x \right]^2 \sd P_{0, h}(y)\\
        & \leq n^{-1} \iint |f(x, y)|^2 \sd x \sd P_{0, h}(y)  \int [\eta_{0,h}(x) - \eta_0(x)]^2 \sd x \\
        & \lesssim n^{-1}h^{2m} \norm{f}_{L_2(\lambda \times P_{0, h})}^2.
    \end{align*}

\end{proof}

\begin{lemma}\label{lemma: sV order 3}
    If $\hat{P}_n$ is the distribution corresponding to a kernel density estimator with bandwidth $h$ and uniformly bounded kernel $K$, $P_0$ possesses uniformly bounded density, then for any fixed $f: \d{R}^d \times \d{R}^d \mapsto \d{R}$,  
    \begin{align*}
        & \E_0 \E_W \left\{ \int f \sd [ (\hat{P}_n - P_{0, h}) \times (\d{P}_n^* - \hat{P}_n)]\right\}^2  \lesssim n^{-1}\left[(nh^d)^{-1} + (nh^d)^{-2} \right]\int \left[ \int |f(x, y)| \sd x\right]^2 \sd P_{0, h}(y),
    \end{align*}
    where the constant in the bound depends on $P_0$ and $K$.
\end{lemma}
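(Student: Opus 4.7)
Write $\hat\eta_n(x) = n^{-1} \sum_i K_h(x, X_i)$ for the density of $\hat{P}_n$, so that $\hat\eta_n(x) - \eta_{0,h}(x) = n^{-1} \sum_i W_i(x)$ with $W_i(x) := K_h(x, X_i) - \eta_{0,h}(x)$ mean-zero. My first step is to reduce the double integral to a one-dimensional bootstrap empirical process by noting that for each fixed $y$, $\int f(x,y) \sd(\hat{P}_n - P_{0,h})(x) = n^{-1} \sum_i U_i(y)$ with $U_i(y) := \tilde f(X_i, y) - \bar f(y)$, where $\tilde f(u,y) := \int f(x,y) K_h(x,u) \sd x$ and $\bar f(y) := E_0 \tilde f(X_1, y)$. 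Setting $g_n := n^{-1} \sum_i U_i$, the quantity of interest is $I := (\d{P}_n^* - \hat{P}_n) g_n$, and conditional on the data, $X_1^*, \dots, X_n^* \iidsim \hat{P}_n$ yields $E_W I^2 \leq n^{-1} \hat{P}_n g_n^2$.

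The main work is then to bound $E_0[\hat{P}_n g_n^2] = E_0 \int g_n(y)^2 \hat\eta_n(y) \sd y$ by writing out the triple sum
\[
    g_n(y)^2 \hat\eta_n(y) = \frac{1}{n^3} \sum_{i,j,k} U_i(y) U_k(y) K_h(y, X_j).
\]
Since each $U_i$ is mean-zero and depends only on $X_i$, only the index patterns $i = k = j$ and $i = k \neq j$ contribute nonzero terms after taking $E_0$, giving
\[
    E_0[g_n(y)^2 \hat\eta_n(y)] = \frac{1}{n^2} E_0[U_1(y)^2 K_h(y, X_1)] + \frac{n-1}{n^2} E_0[U_1(y)^2] \eta_{0,h}(y).
\]
These two contributions will produce the $(nh^d)^{-2}$ and $(nh^d)^{-1}$ terms respectively.

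The key estimate is $E_0[\tilde f(X_1, y)^2] \leq \|\eta_0\|_\infty \|K\|_2^2 h^{-d} F(y)^2$ where $F(y) := \int |f(x,y)| \sd x$, obtained by Cauchy--Schwarz on $|\tilde f(u,y)| \leq \int |f(x,y)|^{1/2} \cdot |f(x,y)|^{1/2} |K_h(x,u)| \sd x$ followed by $\int K_h(x, u)^2 \eta_0(u) \sd u \leq \|\eta_0\|_\infty \|K\|_2^2 h^{-d}$. This directly gives $\int E_0[U_1(y)^2] \eta_{0,h}(y) \sd y \leq \|\eta_0\|_\infty \|K\|_2^2 h^{-d} \int F(y)^2 \sd P_{0,h}(y)$, producing the $(nh^d)^{-1}$ contribution. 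For the other term, the main obstacle is that $\int E_0[U_1(y)^2 K_h(y, X_1)] \sd y = \iint U_1(u,y)^2 K_h(y,u) \sd P_0(u) \sd y$ integrates against Lebesgue measure in $y$, not against $P_{0,h}$. I resolve this by splitting $U_1(u,y)^2 \leq 2 \tilde f(u,y)^2 + 2 \bar f(y)^2$: the $\bar f(y)^2$ piece integrates $K_h(y, u) \sd P_0(u) = \eta_{0,h}(y) \sd y$, converting $\sd y$ into $\sd P_{0,h}(y)$, while for the $\tilde f(u,y)^2$ piece the pointwise bound $|\tilde f(u,y)| \leq \|K\|_\infty h^{-d} F(y)$ is uniform in $u$, so one can pull it out and again use $\int K_h(y, u) \eta_0(u) \sd u = \eta_{0,h}(y)$. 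This yields $\int E_0[U_1(y)^2 K_h(y, X_1)] \sd y \lesssim h^{-2d} \int F(y)^2 \sd P_{0,h}(y)$, producing the $(nh^d)^{-2}$ contribution. Combining the two estimates with the outer $n^{-1}$ factor gives the claimed bound.
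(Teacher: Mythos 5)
Your proof is correct and is essentially the paper's argument, just with the bookkeeping reorganized: the paper first splits $\sd\hat{P}_n = \sd P_{0,h} + \sd(\hat{P}_n - P_{0,h})$ inside $\hat{P}_n g_n^2$ and treats each piece separately, whereas you expand $g_n(y)^2\hat\eta_n(y)$ directly as a triple sum and observe that only $i=k$ survives — these correspond to the same two contributions. Both proofs then extract $(nh^d)^{-1}$ via a Cauchy–Schwarz / variance argument and $(nh^d)^{-2}$ from the uniform bound $|K_h|\lesssim h^{-d}$ on the fully diagonal term.
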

\begin{proof}[\bfseries{Proof of \Cref{lemma: sV order 3}}]
    We note that 
    \begin{align*}
        \E_W \left\{ \int f \sd [ (\hat{P}_n - P_{0, h}) \times (\d{P}_n^* - \hat{P}_n)]\right\}^2 &= \E_W \left\{ \int \left[ \int f(x, z) \sd (\hat{P}_n - P_{0, h})(x) \right] \sd (\d{P}_n^* - \hat{P}_n)(z) \right\}^2\\
        & = \frac{1}{n} \mathrm{Var}_W  \left( \int f(x, Z) \sd (\hat{P}_n - P_{0, h})(x) \right)\\
        & \leq \frac{1}{n}\E_W \left[  \int f(x, Z) \sd (\hat{P}_n - P_{0, h})(x) \right]^2\\
        & = \frac{1}{n}\int \left[  \int f(x, z) \sd (\hat{P}_n - P_{0, h})(x) \right]^2 \sd \hat{P}_n(z)\\
        & = \frac{1}{n}\int \left[  \int f(x, z) \sd (\hat{P}_n - P_{0, h})(x) \right]^2 \sd  P_{0, h}(z) \\
        &\qquad +  \frac{1}{n}\int \left[  \int f(x, z) \sd (\hat{P}_n - P_{0, h})(x) \right]^2 \sd  (\hat{P}_n -P_{0, h})(z).
    \end{align*}
    We bound the two terms in the final expression separately. We can write
    \begin{align*}
        &\E_0 \left\{ \int \left[  \int f(x, z) \sd (\hat{P}_n - P_{0, h})(x) \right]^2 \sd P_{0, h}(z) \right\}\\
        & \qquad =  \E_0 \left\{ \iiint f(x, z)f(y, z) \sd (\hat{P}_n - P_{0, h})(x) \sd (\hat{P}_n - P_{0, h})(y)  \sd P_{0, h}(z) \right\}\\
        & \qquad =  \E_0 \left\{ \iiint f(x, z)f(y, z)  (\hat{p}_n - \eta_{0, h})(x)  (\hat{p}_n - \eta_{0, h})(y) \sd x \sd y  \sd P_{0, h}(z) \right\}\\
        & \qquad \leq    \iiint |f(x, z)f(y, z)| \left\{\E_0 \left[ \hat{p}_n(x) - \eta_{0, h}(x)\right]^2\right\}^{1/2}  \left\{\E_0 \left[ \hat{p}_n(y) - \eta_{0, h}(y)\right]^2\right\}^{1/2}  \sd x \sd y  \sd P_{0, h}(z) 
    \end{align*}
    Since $\eta_0$ is uniformly bounded and $\int K^2 < \infty$, 
    \begin{align*}
        \E_0 \left[ \hat{p}_n(x) - \eta_{0, h}(x)\right]^2 &= \E_0 \left[ \int K_h(s,x) \sd (\d{P}_n - P_0)(s)\right]^2\\
        & = \frac{1}{n} \mathrm{Var}_0 \left[K_h(X, x)\right]\\
        & \leq \frac{1}{nh^{2d}} \E_0\left[ \left\{ K\left(\frac{X-x}{h}\right) \right\}^2 \right]\\
        & = \frac{1}{nh^{2d}}\int \left\{ K\left(\frac{t-x}{h}\right) \right\}^2 \eta_0(t)\sd t\\
        & = \frac{1}{nh^d} \int \left\{ K\left(u\right) \right\}^2 \eta_0(x + hu)\sd u\\
        & \lesssim \frac{1}{nh^d}
    \end{align*}
    for each $x \in \d{R}^d$. Thus,
    \begin{align*}
        \E_0 \left\{ \int \left[  \int f(x, z) \sd (\hat{P}_n - P_{0, h})(x) \right]^2 \sd P_{0, h}(z) \right\}  &\lesssim \frac{1}{nh^d} \iiint |f(x, z)f(y, z)|  \sd x \sd y  \sd P_{0, h}(z) \\
        &= \frac{1}{nh^d} \int  \left[ \int |f(x, z)|  \sd x\right]^2  \sd P_{0, h}(z).
    \end{align*}
    Next, we note that for each $x,y,z \in \d{R}^d$, since $X_1, \dotsc, X_n$ are IID,
    \begin{align*}
        &\E_0 \left\{ \left[\hat{p}_n(x) - \eta_{0,h}(x) \right]\left[\hat{p}_n(y) - \eta_{0,h}(y) \right]\left[\hat{p}_n(z) - \eta_{0,h}(z) \right] \right\}\\
        &\qquad = \E_0 \left[\int K_h(s,x) \sd (\d{P}_n - P_0)(s)\int K_h(t,y) \sd (\d{P}_n - P_0)(t)\int K_h(u,z) \sd (\d{P}_n - P_0)(u)\right]\\
        &\qquad = \E_0 \left\{ \frac{1}{n^3} \sum_{i,j,k}\left[K_h(X_i, x) - \eta_{0,h}(x)\right]\left[K_h(X_j, y) - \eta_{0,h}(y)\right]\left[K_h(X_k, z) -\eta_{0,h}(z)\right] \right\}\\
        & \qquad = \E_0 \left\{ \frac{1}{n^3} \sum_{i=1}^n\left[K_h(X_i, x) - \eta_{0,h}(x)\right]\left[K_h(X_i, y) - \eta_{0,h}(y)\right]\left[K_h(X_i, z) -\eta_{0,h}(z)\right] \right\}\\
        & \qquad = \frac{1}{n^2} \E_0\left\{ \left[K_h(X, x) - \eta_{0,h}(x)\right]\left[K_h(X, y) - \eta_{0,h}(y)\right]\left[K_h(X, z) -\eta_{0,h}(z)\right] \right\}.
    \end{align*}
    Hence, 
    \begin{align*}
        &\E_0\left\{ \int \left[  \int f(x, z) \sd (\hat{P}_n - P_{0, h})(x) \right]^2 \sd (\hat{P}_n - P_{0, h})(z) \right\}\\
        & \qquad =  \E_0 \left\{ \iiint f(x, z)f(y, z) \left[\hat{p}_n(x) - \eta_{0,h}(x) \right]\left[\hat{p}_n(y) - \eta_{0,h}(y) \right]\left[\hat{p}_n(z) - \eta_{0,h}(z) \right] \sd x \sd y  \sd z \right\}\\
        & \qquad = \frac{1}{n^2} \E_0 \iiint f(x, z)f(y, z)  \left[K_h(X, x) - \eta_{0,h}(x)\right]\left[K_h(X, y) -  \eta_{0,h}(y)\right] \left[K_h(X, z) -  \eta_{0,h}(z)\right] \sd x \sd y \sd z\\
        & \qquad = \frac{1}{n^2} \E_0 \int \left\{ \int f(x, z) \left[K_h(X, x) -  \eta_{0,h}(x)\right] \sd x\right\}^2 \left[K_h(X, z) -  \eta_{0,h}(z)\right]  \sd z.
    \end{align*}
    Since $K$ is uniformly bounded, we have $|K_h(x, y) -\eta_{0,h}(x)| \lesssim h^{-d}$ for any $x, y \in \d{R}^d$. Therefore, 
    \begin{align*}
        & \abs{\frac{1}{n^2} \E_0 \int \left\{ \int f(x, z) \left[K_h(X, x) -  \eta_{0,h}(x)\right] \sd x\right\}^2 \left[K_h(X, z) - \eta_{0,h}(z)\right]  \sd z}\\
        & \qquad \leq \abs{\frac{1}{n^2} \E_0 \int \left\{ \int f(x, z) \left[K_h(X, x) - \eta_{0,h}(x)\right] \sd x\right\}^2 K_h(X, z) \sd z}\\
        & \qquad \qquad + \abs{\frac{1}{n^2} \E_0 \int \left\{ \int f(x, z) \left[K_h(X, x) - \eta_{0,h}(x)\right] \sd x\right\}^2  \eta_{0,h}(z) \sd z} \\
        & \qquad \lesssim \frac{1}{n^2h^{2d}} \abs{\E_0 \int \left\{ \int |f(x, z)| \sd x\right\}^2 K_h(X, z) \sd z} + \frac{1}{n^2h^{2d}} \abs{ \int \left\{ \int |f(x, z)| \sd x\right\}^2   \eta_{0,h}(z) \sd z}\\
        & \qquad =  \frac{2}{n^2h^{2d}}\int \left\{ \int |f(x, z)| \sd x\right\}^2  \sd P_{0, h}(z).
    \end{align*}
    Putting the pieces together completes the result.
\end{proof}

\begin{lemma}\label{lemma: sV order 3.5}
    If $\hat{P}_n$ is the distribution corresponding to a kernel density estimator with bandwidth $h$ and uniformly bounded kernel $K$, $P_0$ possesses a Lebesgue density function, and $f: \d{R}^d \times \d{R}^d \mapsto \d{R}$ satisfies $f(x, y) = f(y, x)$ for all $x, y \in \d{R}^d$ then
    \begin{align*}
        & \E_0 \E_W \left\{ \int f \sd [ (\d{P}_n - P_0) \times (\d{P}_n^* - \hat{P}_n)]\right\}^2  \lesssim \frac{1}{n^2} \norm{f}_{L_2(P_0 \times P_{0, h})}^2 +  \frac{1}{n^3h^{d}} \norm{f}_{L_2(\lambda\times P_0)}^2,
    \end{align*}
    where the constant in the bound depends on $K$ only.    
\end{lemma}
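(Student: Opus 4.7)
The plan is to conditionalize on the original data and exploit the conditional IID structure of the bootstrap sample, then bound the resulting $\E_0$ expectation by a careful case analysis of coincidences among the indices.

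First, I would rewrite the inner integral by letting $g_n(z) := \int f(x,z)\,\sd(\d{P}_n - P_0)(x) = \frac{1}{n}\sum_{i=1}^n \tilde f(X_i, z)$, where $\tilde f(X_i, z) := f(X_i, z) - \bar f(z)$ and $\bar f(z) := \int f(x,z)\,\sd P_0(x)$. Then $\int f\,\sd[(\d{P}_n - P_0)\times(\d{P}_n^* - \hat{P}_n)] = (\d{P}_n^* - \hat{P}_n) g_n$. Given the original data, $g_n$ is deterministic and $X_1^*,\dotsc,X_n^*$ are IID from $\hat{P}_n$, so exactly as in the proof of Lemma~\ref{lemma: sV order 3} I get
\[
    \E_W \left[(\d{P}_n^* - \hat{P}_n) g_n\right]^2 \;=\; \frac{1}{n}\,\mathrm{Var}_W(g_n(X_1^*)) \;\leq\; \frac{1}{n}\,\hat{P}_n g_n^2 \;=\; \frac{1}{n}\int g_n(z)^2 \hat{p}_n(z)\,\sd z,
\]
with $\hat{p}_n(z) = \frac{1}{n}\sum_{k=1}^n K_h(z, X_k)$.

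Second, I would expand the triple sum and take $\E_0$. Writing
\[
    \int g_n(z)^2 \hat{p}_n(z)\,\sd z \;=\; \frac{1}{n^3}\sum_{i,j,k} \int \tilde f(X_i, z)\tilde f(X_j, z) K_h(z, X_k)\,\sd z,
\]
the mean-zero property $\E_0 \tilde f(X, z) = 0$ (for each fixed $z$) kills every case where some index is isolated. Concretely, the cases $i\neq j$ with $k\notin\{i,j\}$, and $i\neq j$ with $k\in\{i,j\}$ (the non-equal index factors out with mean zero), all contribute $0$ after taking $\E_0$. The only surviving cases are $i = j \neq k$ and $i = j = k$. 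The diagonal case $i=j\neq k$ contributes $\frac{n-1}{n^2}\int \E_0[\tilde f(X,z)^2]\,\eta_{0,h}(z)\,\sd z \leq \frac{1}{n}\iint f(x,z)^2 \sd P_0(x)\sd P_{0,h}(z) = \frac{1}{n}\|f\|_{L_2(P_0\times P_{0,h})}^2$, which after the prefactor $\frac{1}{n}$ yields the first term in the bound.

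Third, the case $i=j=k$ gives $\frac{1}{n^2}\int \E_0[\tilde f(X,z)^2 K_h(z,X)]\sd z$. I would use uniform boundedness of $K$ to get $|K_h(z,X)| \leq \|K\|_\infty h^{-d}$, and the variance bound $\E_0 \tilde f(X,z)^2 \leq \E_0 f(X,z)^2$ (applied after taking absolute values inside, noting $\tilde f^2 \geq 0$ even though $K_h$ may be signed), to obtain
\[
    \left| \int \E_0[\tilde f(X,z)^2 K_h(z,X)]\,\sd z \right| \;\leq\; \|K\|_\infty h^{-d}\iint f(x,z)^2 \sd P_0(x)\,\sd z \;=\; \|K\|_\infty h^{-d} \|f\|_{L_2(\lambda\times P_0)}^2,
\]
where the last equality uses the symmetry $f(x,z) = f(z,x)$ to swap the roles of the two coordinates. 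Combining this with the prefactor $\frac{1}{n}$ produces the $\frac{1}{n^3 h^d}\|f\|_{L_2(\lambda\times P_0)}^2$ term and completes the bound.

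The main obstacle will be the $i=j=k$ case, where one must control a non-mean-zero expectation involving the potentially signed kernel $K_h$ together with $\tilde f^2$; the cross term $-2\bar f(z) f(X,z) K_h(z,X)$ from expanding $\tilde f^2$ must be absorbed. The cleanest way is the crude sup-norm bound on $K_h$ together with $\tilde f(X,z)^2 \leq f(X,z)^2 + 2|f(X,z)\bar f(z)| + \bar f(z)^2$ and Cauchy--Schwarz, or simply to absorb the centering into $|K_h(z,X)| \leq \|K\|_\infty h^{-d}$ acting as a pointwise majorant, after which the symmetry of $f$ converts the resulting $\iint f^2 \sd P_0\times\sd\lambda$ into $\|f\|_{L_2(\lambda\times P_0)}^2$.
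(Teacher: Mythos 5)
Your proof is correct and reaches the claimed bound with the right ingredients (conditional variance bound, mean-zero cancellations, uniform boundedness of $K$, symmetry of $f$). The organizational route differs slightly from the paper's. The paper, following the template from Lemma~\ref{lemma: sV order 3}, first decomposes $\hat{P}_n g_n^2 = P_{0,h}g_n^2 + (\hat{P}_n - P_{0,h})g_n^2$; the $P_{0,h}$ piece is handled by a direct variance calculation ($\E_0 g_n(z)^2 = \tfrac{1}{n}\mathrm{Var}_0 f(X,z)$), while the $(\hat{P}_n - P_{0,h})$ piece is expanded as a triple sum in which only $i=j=k$ survives because the centered kernel factor $K_h(\cdot,X_k)-\eta_{0,h}$ is also mean-zero. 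You instead skip the measure decomposition and expand $\E_0[\hat{P}_n g_n^2]$ directly as a single triple sum over $i,j,k$; the two surviving coincidence patterns $\{i=j\neq k\}$ and $\{i=j=k\}$ then reproduce the two terms of the bound. These are algebraically equivalent (collecting terms in the paper's two pieces gives exactly your two cases), but your version consolidates the cancellations into one case analysis, which is arguably more self-contained and slightly cleaner. One small remark: the digression at the end about the cross term $-2\bar f(z)f(X,z)K_h(z,X)$ in $\tilde f^2$ is unnecessary — as you correctly conclude, the crude majorant $|K_h|\leq\|K\|_\infty h^{-d}$ followed by $\E_0[\tilde f^2]\leq\E_0[f^2]$ handles the $i=j=k$ term without needing to expand $\tilde f^2$ at all.
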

\begin{proof}[\bfseries{Proof of \Cref{lemma: sV order 3.5}}]
    As in the proof of~\Cref{lemma: sV order 3}, we have
    \begin{align*}
        \E_W \left\{ \int f \sd [ (\d{P}_n - P_0) \times (\d{P}_n^* - \hat{P}_n)]\right\}^2
         & \leq \frac{1}{n}\int \left[  \int f(x, z) \sd (\d{P}_n - P_0)(x) \right]^2 \sd  P_{0, h}(z) \\
         &\qquad + \frac{1}{n}\int \left[  \int f(x, z) \sd (\d{P}_n - P_0)(x) \right]^2 \sd (\hat{P}_n-P_{0, h})(z).
    \end{align*}
    We first bound the first term. We have
    \begin{align*}
        \E_0 \left\{ \int \left[  \int f(x, z) \sd (\d{P}_n - P_0)(x) \right]^2 \sd P_{0, h}(z) \right\} & = \int \E_0 \left[  \int f(x, z) \sd (\d{P}_n - P_0)(x) \right]^2 \sd P_{0, h}(z) \\
        & = \frac{1}{n} \int \mathrm{Var}_0 ( f(X, z) ) \sd P_{0, h}(z) \\
        & \leq \frac{1}{n} \int \E_0\left[ f(X, z)\right]^2 \sd P_{0, h}(z) \\
        & = \frac{1}{n} \norm{f}_{L_2(P_0 \times P_{0,h})}^2.
    \end{align*}
    Next, we note that 
     \begin{align*}
        &\E_0\left\{ \int \left[  \int f(x, z) \sd (\d{P}_n - P_0)(x) \right]^2 \sd (\hat{P}_n - P_{0, h})(z) \right\} \\
        & \qquad = \E_0\left\{ \int \int f(x, z) \sd (\d{P}_n - P_0)(x)\int f(y, z) \sd (\d{P}_n - P_0)(y)\int K_h(w, z) \sd (\d{P}_n - P_0)(w) \sd z \right\}\\
        &\qquad = \int \E_0 \left\{ \frac{1}{n^3} \sum_{i,j,k} \left[f(X_i, z) - P_0 f(\cdot, z)\right]\left[f(X_j, z) - P_0 f(\cdot, z)\right]\left[K_h(X_k, z) - \eta_{0,h}(z)\right]\right\} \sd z \\
        & \qquad = \int \E_0 \left\{ \frac{1}{n^3} \sum_{i=1}^n \left[f(X_i, z) - P_0 f(\cdot, z)\right]^2 \left[K_h(X_i, z) - \eta_{0,h}(z)\right]\right\} \sd z\\
        & \qquad = \frac{1}{n^2} \int  \E_0 \left\{ \left[f(X, z) - P_0 f(\cdot, z)\right]^2 \left[K_h(X, z) -\eta_{0,h}(z)\right]\right\} \sd z .
    \end{align*}
    Since $K$ is uniformly bounded, we have $\left| K_h(x, z) - \eta_{0,h}(z) \right| \lesssim h^{-d}$ for all $x, z \in \d{R}^d$. Thus,
    \begin{align*}
    &\left| \E_0\left\{ \int \left[  \int f(x, z) \sd (\d{P}_n - P_0)(x) \right]^2 \sd (\hat{P}_n - P_{0, h})(z) \right\} \right| \\
        & \qquad \lesssim \frac{1}{n^2 h^d}  \int \E_0  \left[f(X, z) - P_0 f(\cdot, z)\right]^2 \sd z\\
        & \qquad \leq \frac{1}{n^2h^d}   \int \E_0\left[ f(X, z)\right]^2 \sd z\\
        & \qquad = \frac{1}{n^2h^d}  \norm{f}_{L_2(\lambda \times P_0)}^2.
    \end{align*}
   Putting the two bounds together yields the result.  
\end{proof}

\begin{lemma}\label{lemma: sV order 4}
    If $\hat{P}_n$ is the distribution corresponding to a kernel density estimator with bandwidth $h$ and uniformly bounded kernel $K$ satisfying $\int K^2 < \infty$, $P_0$ possesses uniformly bounded density, and $f: \d{R}^d \times \d{R}^d \mapsto \d{R}$ satisfies $f(x, y) = f(y, x)$, $f(x, x) = \tau_f \in \d{R}$ and $f(x+c, y+c) = f(x, y)$ for all $x, y, c \in \d{R}^d$, then
    \begin{align*}
        & \E_0 \E_W \left\{ \int f \sd [(\d{P}_n^* - \hat{P}_n) \times (\d{P}_n^* - \hat{P}_n)] - n^{-1}\tau_f\right\}^2 \\
        & \qquad \lesssim \frac{\tau_{G[f]}^2}{n^4} + \frac{ \tau_{G[f^2]}}{n^3} + \frac{\tau_{G[f]}}{n^3}\norm{f}_{L_1(\lambda \times P_0)} + \frac{1}{n^2} \norm{f}_{L_2(\lambda \times P_0)}^2 + \frac{1}{n^3} \int   \left[ \int |f(x, y)| \sd x \right]^2 \sd P_0(y)\\
        & \qquad \qquad   + \frac{1}{n^3} \left[1 + (nh^d)^{-1} + (nh^d)^{-2} \right] \int   \left( \int |f(x, y)| \sd x \right)^2 \sd P_{0, h}(y)
    \end{align*}
\end{lemma}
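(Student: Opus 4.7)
The plan is to mirror the structure of the proof of Lemma~\ref{lemma: V order 3} for the empirical bootstrap, modified to account for the fact that $X_1^*,\dots,X_n^*$ are IID draws from $\hat{P}_n$ (not $\d{P}_n$) conditional on the data. Define
\[
V_n^{\dagger}(x,y) \ := \ \int f \sd\!\left[(\delta_x - \hat{P}_n)\times(\delta_y - \hat{P}_n)\right],
\]
which satisfies the key centering property $\E_W[V_n^{\dagger}(X_i^*,X_j^*)\mid X_j^*]=0$ whenever $i\neq j$, since $X_i^*\mid \hat{P}_n\sim\hat{P}_n$. Writing
\[
\int f \sd[(\d{P}_n^* - \hat{P}_n)\times(\d{P}_n^* - \hat{P}_n)] - n^{-1}\tau_f
= \Big[\tfrac{1}{n^2}\sum_{i=j} V_n^{\dagger}(X_i^*,X_j^*) - n^{-1}\tau_f\Big] + \tfrac{1}{n^2}\sum_{i\neq j}V_n^{\dagger}(X_i^*,X_j^*),
\]
I would control the two pieces separately, as in Lemma~\ref{lemma: V order 3}.

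For the diagonal piece, I would use $V_n^{\dagger}(X_i^*,X_i^*) = \tau_f - 2\int f(X_i^*,y)\sd\hat{P}_n(y) + \int f\sd(\hat{P}_n\times\hat{P}_n)$ to obtain
\[
\tfrac{1}{n^2}\sum_{i=j} V_n^{\dagger}(X_i^*,X_j^*) - n^{-1}\tau_f
= -\tfrac{2}{n}\!\int f\sd[\hat{P}_n\times(\d{P}_n^*-\hat{P}_n)] - \tfrac{1}{n}\!\int f\sd(\hat{P}_n\times\hat{P}_n).
\]
I would further decompose $\hat{P}_n = (\hat{P}_n-P_{0,h})+(P_{0,h}-P_0)+P_0$ in the first term, which breaks it into three summands handled respectively by \Cref{lemma: sV order 3}, \Cref{lemma: sV order 2}, and a direct $L^2$ bootstrap calculation analogous to the $P_0$-product term in Lemma~\ref{lemma: V order 3}. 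For $\int f\sd(\hat{P}_n\times\hat{P}_n)$, I would write $\hat{P}_n=(\hat{P}_n-P_{0,h})+P_{0,h}$, use the identity from the proof of \Cref{lemma: sV order 1} that $\int f\sd[(\hat{P}_n-P_{0,h})\times(\hat{P}_n-P_{0,h})] = \int G[f]\sd[(\d{P}_n-P_0)\times(\d{P}_n-P_0)]$, apply Lemma~\ref{lemma: V order 1} to this transformed $V$-statistic in $G[f]$, and translate the resulting bounds via \Cref{lemma: norm tool 1}.

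For the off-diagonal piece, by the mean-zero property and the independence argument used in equation~\eqref{eq: V decomposition 1} and \eqref{eq: V decomposition 2}, the cross products with three distinct indices vanish, yielding
\[
\E_W\!\Big[\tfrac{1}{n^2}\sum_{i\neq j}V_n^{\dagger}(X_i^*,X_j^*)\Big]^2
\ \leq\ \tfrac{2}{n^2}\,\E_W V_n^{\dagger}(X_1^*,X_2^*)^2
\ \leq\ \tfrac{2}{n^2}\int f^2\sd(\hat{P}_n\times\hat{P}_n),
\]
where the second inequality follows by expanding the square and using Cauchy--Schwarz on the cross terms as in the corresponding step of Lemma~\ref{lemma: V order 3}. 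Taking expectation and using $\int f^2\sd(\hat{P}_n\times\hat{P}_n) = \int G[f^2]\sd(\d{P}_n\times\d{P}_n)$, followed by the diagonal/off-diagonal decomposition of this $\d{P}_n$-$V$-statistic and \Cref{lemma: norm tool 1}, delivers terms of the form $n^{-3}\tau_{G[f^2]}$ and $n^{-2}\norm{f}_{L_2(\lambda\times P_0)}^2$.

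The main obstacle I anticipate is the bookkeeping for the diagonal piece: specifically bounding $\E_0[n^{-1}\int f\sd(\hat{P}_n\times\hat{P}_n)]^2$, because after the $\hat{P}_n=(\hat{P}_n-P_{0,h})+P_{0,h}$ split the $P_{0,h}\times P_{0,h}$ cross term is deterministic but nonzero, and its second moment must be absorbed into $\tau_{G[f]}^2/n^4$ via the identity $\int f\sd(P_{0,h}\times P_{0,h}) = \int G[f]\sd(P_0\times P_0)$; the remaining mixed and pure empirical-$V$ pieces in $G[f]$ must be expanded using the same identities appearing in the proof of Lemma~\ref{lemma: V order 3}, then converted back to norms of $f$ through \Cref{lemma: norm tool 1}. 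Collecting all contributions and using $\tau_{G[f]} \gtrsim n^{-1}$ negligibility where appropriate should yield exactly the claimed bound.
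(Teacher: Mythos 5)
Your overall structure is correct and mirrors the paper's proof closely: defining the conditional degenerate kernel $V_n^{\dagger}(x,y) = \int f\,\sd[(\delta_x - \hat{P}_n)\times(\delta_y-\hat{P}_n)]$, splitting into diagonal and off-diagonal pieces, using the mean-zero property of $V_n^\dagger$ under the bootstrap conditional law for the off-diagonal piece, and converting $\int f^2\,\sd(\hat{P}_n\times\hat{P}_n) = \int G[f^2]\,\sd(\d{P}_n\times\d{P}_n)$ to produce the $\tau_{G[f^2]}/n^3$ and $n^{-2}\norm{f}_{L_2(\lambda\times P_0)}^2$ contributions; all of this matches the paper. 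Your proposed handling of $\int f\,\sd(\hat{P}_n\times\hat{P}_n)$ via the smoothing identity and \Cref{lemma: V order 1}/\Cref{lemma: norm tool 1} is a slight detour from the paper (which writes it as $n^{-2}\sum_{i,j}G[f](X_i,X_j)$ and attacks it directly with \Cref{lemma: square decomp} and \Cref{lemma: norm tool 1}), but it should reach the same bound after tracking the deterministic $\tau_{G[f]}/n$ term.

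However, there is a genuine gap in the diagonal piece. You propose decomposing $\hat{P}_n = (\hat{P}_n-P_{0,h})+(P_{0,h}-P_0)+P_0$ in the term $n^{-1}\int f\,\sd[\hat{P}_n\times(\d{P}_n^*-\hat{P}_n)]$ and handling the $(P_{0,h}-P_0)$-piece with \Cref{lemma: sV order 2}. This does not work under the hypotheses of \Cref{lemma: sV order 4}: that lemma only assumes $P_0$ has a uniformly bounded density, whereas \Cref{lemma: sV order 2} requires $\eta_0$ to be $m$-times continuously differentiable with $\int [(D^\alpha\eta_0)(x)]^2\,\sd x<\infty$ for $|\alpha|=m$, so you would be invoking an unavailable hypothesis. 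The resulting bound $n^{-1}h^{2m}\norm{f}_{L_2(\lambda\times P_{0,h})}^2$ is also not one of the terms in the claimed bound of \Cref{lemma: sV order 4}, so even granting the extra smoothness you would not land on the stated inequality. The paper avoids the problem entirely by using the two-way split $\hat{P}_n=(\hat{P}_n-P_{0,h})+P_{0,h}$ (with no detour through $P_0$ at this stage) and bounding the $P_{0,h}$-piece directly by a conditional variance computation, yielding
$n^{-3}\int\bigl[\int f(x,y)\,\sd P_{0,h}(x)\bigr]^2\sd P_{0,h}(y)\lesssim n^{-3}\int\bigl[\int|f(x,y)|\,\sd x\bigr]^2\sd P_{0,h}(y)$,
using only that $K$ is uniformly bounded; this produces exactly the $``1"$ inside the bracket $\bigl[1+(nh^d)^{-1}+(nh^d)^{-2}\bigr]$ in the stated bound. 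If you insist on the three-way split, the $(P_{0,h}-P_0)$-piece should likewise be handled by a direct conditional-variance computation, using $\sup_x|\eta_{0,h}(x)-\eta_0(x)|=O(1)$ under boundedness alone rather than reaching for \Cref{lemma: sV order 2}.
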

\begin{proof}[\bfseries{Proof of \Cref{lemma: sV order 4}}]
    We define $V_n(x, y) :=  \int f \sd \left[(\delta_{x} - \hat{P}_n)  \times (\delta_{y} - \hat{P}_n)\right]$.  We then have 
    \begin{align}
        & \E_0 \E_W \left\{ \int f \sd [(\d{P}_n^* - \hat{P}_n) \times (\d{P}_n^* - \hat{P}_n)] - \frac{\tau_f}{n} \right\}^2 \notag \\
        & \qquad = \E_0 \E_W \left\{ \frac{1}{n^2} \sum_{i = 1}^n V_n(X_i^*, X_i^*) + \frac{1}{n^2} \sum_{i \neq j} V_n(X_i^*, X_j^*) - \frac{\tau_f}{n} \right\}^2 \notag \\
        & \qquad \leq  2\E_0 \E_W \left\{ \frac{1}{n^2} \sum_{i = 1}^n V_n(X_i^*, X_i^*) - \frac{\tau_f}{n} \right\}^2 + 2\E_0 \E_W \left\{ \frac{1}{n^2} \sum_{i \neq j} V_n(X_i^*, X_j^*) \right\}^2. \label{eq: sV decomposition 2}
    \end{align}
    For the first term on the right-hand side of \eqref{eq: sV decomposition 2}, by definition of $\tau_f$, symmetry of $f$, and adding and subtracting terms,
    \begin{align*}
        & \frac{1}{n^2} \sum_{i = 1}^n V_n(X_i^*, X_i^*) - \frac{\tau_f}{n} \\
        &\qquad = \frac{1}{n^2} \sum_{i = 1}^n \int f \sd [(\delta_{X_i^*} - \hat{P}_n) \times (\delta_{X_i^*} - \hat{P}_n)] - \frac{\tau_f}{n} \\
        &\qquad=  - \frac{2}{n} \int f \sd (\hat{P}_n \times \d{P}_n^*)  + \frac{1}{n} \int f \sd (\hat{P}_n \times  \hat{P}_n) \\
        &\qquad=  - \frac{2}{n} \int f \sd [\hat{P}_n \times  (\d{P}_n^* - \hat{P}_n)] - \frac{1}{n} \int f \sd (\hat{P}_n \times \hat{P}_n) \\
        &\qquad=   - \frac{2}{n} \int f \sd [ (\hat{P}_n - P_{0, h}) \times (\d{P}_n^* - \hat{P}_n)] - \frac{2}{n} \int f \sd [P_{0, h} \times (\d{P}_n^* - \hat{P}_n)] - \frac{1}{n} \int f \sd (\hat{P}_n \times \hat{P}_n).
    \end{align*}
    Hence,  
    \begin{align}
        &\E_0 \E_W \left\{ \frac{1}{n^2} \sum_{i = j} V_n(X_i^*, X_j^*) - \frac{\tau_f}{n} \right\}^2\notag\\
        & \qquad \lesssim \E_0 \E_W \left\{ \frac{1}{n} \int f \sd [ (\hat{P}_n - P_{0, h}) \times (\d{P}_n^* - \hat{P}_n)]\right\}^2 + \E_0 \E_W \left\{\frac{1}{n} \int f \sd [P_{0, h} \times (\d{P}_n^* - \hat{P}_n)] \right\}^2 \label{eq: sV decomposition 3}\\
        & \qquad \qquad + \E_0  \left\{\frac{1}{n} \int f \sd (\hat{P}_n \times \hat{P}_n)\right\}^2. \notag
    \end{align}
    By \Cref{lemma: sV order 3}, the first term on the right-hand side of \eqref{eq: sV decomposition 3} is bounded up to a constant by 
    \[n^{-3} \left[ (nh^d)^{-1} + (nh^d)^{-2}\right] \int \left[ \int |f(x, y)| \sd x\right]^2 \sd P_{0, h}(y).\] 
    For the second term on the right-hand side of \eqref{eq: sV decomposition 3},
    \begin{align*}
        \E_0 \E_W \left\{\frac{1}{n} \int f \sd [P_{0, h} \times (\d{P}_n^* - \hat{P}_n)] \right\}^2 &= \frac{1}{n^3} \E_0 \left[ \mathrm{Var}_W \left( \int f(x, X_1^*) \sd P_{0, h}(x) \right) \right]\\
        & \leq \frac{1}{n^3} \E_0 \left[ \int \left( \int f(x, y) \sd P_{0, h}(x) \right)^2 \sd \hat{P}_n(y) \right]\\
        & = \frac{1}{n^3} \int   \left( \int f(x, y) \sd P_{0, h}(x) \right)^2 \sd P_{0, h}(y)\\
        & \lesssim \frac{1}{n^3} \int   \left( \int |f(x, y)| \sd x \right)^2 \sd P_{0, h}(y),
    \end{align*}
    where the last inequality is because $K$ is uniformly bounded. For the last term on the right-hand side of \eqref{eq: sV decomposition 3}, we have 
    \begin{align*}
        \E_0 \left\{\frac{1}{n} \int f \sd (\hat{P}_n \times \hat{P}_n) \right\}^2 & =  \E_0 \left\{\frac{1}{n^3} \sum_{i, j} G[f](X_i, X_j) \right\}^2\\
        & = \frac{1}{n^2} \E_0 \left\{ \frac{1}{n^2}\sum_{i=1}^n G[f](X_i, X_i) + \frac{1}{n^2}\sum_{i\neq j}G[f](X_i, X_j) \right\}^2\\
        & =  \frac{1}{n^2} \E_0 \left\{ \frac{\tau_{G[f]}}{n} + \frac{1}{n^2}\sum_{i\neq j}G[f](X_i, X_j) \right\}^2\\
        & = \frac{1}{n^2} \left\{ \frac{\tau_{G[f]}^2}{n^2} + \frac{2 \tau_{G[f]}}{n^3}\sum_{i\neq j} \E_0 \left[G[f](X_i, X_j)\right] + \frac{1}{n^4} \E_0 \left[ \sum_{i\neq j}G[f](X_i, X_j)\right]^2 \right\}\\
        & \lesssim \frac{\tau_{G[f]}^2}{n^4}  + \frac{\tau_{G[f]}}{n^3}\E_0 \left[ G[f](X_1, X_2)\right] + \frac{1}{n^6}\E_0 \left[ \sum_{i\neq j}G[f](X_i, X_j)\right]^2.
    \end{align*}
    By~\Cref{lemma: norm tool 1}, we have $\left|\E_0 \left[ G[f](X_1, X_2)\right] \right| \lesssim \| f\|_{L_1(\lambda \times P_0)}$. By \Cref{lemma: square decomp} and \Cref{lemma: norm tool 1}, we note that 
    \begin{align*}
        \E_0 \left[ \sum_{i \neq j} G[f](X_i, X_j) \right]^2 &= \E_0 \left[ 4 \sum_{i \neq j, j \neq k, k \neq i}G[f](X_i, X_j) G[f](X_i, X_k) + 2 \sum_{i \neq j}\left[G[f](X_i, X_j)\right]^2 \right.\\
        & \qquad \qquad \left.+ \sum_{\substack{i\neq j, i\neq k, i\neq l\\ j \neq k, j \neq l, k \neq l}} G[f](X_i, X_j) G[f](X_k, X_l) \right]\\
        & \lesssim n^3 \E_0 \left[ G[f](X_1, X_2)G[f](X_1, X_3)\right] + n^2 \E_0\left[ G[f](X_1, X_2)\right]^2 + n^4 [\E_0 G[f](X_1, X_2)]^2\\
        & \leq n^3 \int   \left( \int G[f](x, y) \sd P_0(x) \right)^2 \sd P_0(y) + n^2\norm{G[f]}_{L_2(P_0 \times P_0)}^2 + n^4 \norm{G[f]}_{L_1(P_0 \times P_0)}^2\\
        & \lesssim n^3 \int   \left( \int |f(x, y)| \sd x \right)^2 \sd P_0(y) + n^2\norm{f}_{L_2(\lambda \times P_0)}^2 + n^4 \norm{f}_{L_1(\lambda \times P_0)}^2.
    \end{align*}
    Combining these calculations, we get
    \begin{align*}
        \E_0 \left\{\frac{1}{n} \int f \sd (\hat{P}_n \times \hat{P}_n) \right\}^2 &\lesssim \frac{\tau_{G[f]}^2}{n^4}  + \frac{\tau_{G[f]}}{n^3}\norm{f}_{L_1(\lambda \times P_0)} + \frac{1}{n^2}\norm{f}_{L_1(\lambda \times P_0)}^2 + \frac{1}{n^4} \norm{f}_{L_2(\lambda \times P_0)}^2\\
        & \qquad \qquad + \frac{1}{n^3} \int   \left( \int |f(x, y)| \sd x \right)^2 \sd P_0(y).
    \end{align*}
    Therefore, 
    \begin{align*}
        \E_0 \E_W \left\{ \frac{1}{n^2} \sum_{i = j} V_n(X_i^*, X_j^*) - \frac{\tau_f}{n} \right\}^2 & \lesssim \frac{\tau_{G[f]}^2}{n^4}  + \frac{\tau_{G[f]}}{n^3}\norm{f}_{L_1(\lambda \times P_0)} + \frac{1}{n^2}\norm{f}_{L_1(\lambda \times P_0)}^2 + \frac{1}{n^4} \norm{f}_{L_2(\lambda \times P_0)}^2\\
        & \qquad  + \frac{1}{n^3} \int   \left( \int |f(x, y)| \sd x \right)^2 \sd P_0(y)\\
        & \qquad + \frac{1}{n^3} \left[1 + (nh^d)^{-1} + (nh^d)^{-2}\right]\int   \left( \int |f(x, y)| \sd x \right)^2 \sd P_{0, h}(y).
    \end{align*}
    
    For the second term on the right-hand side of \eqref{eq: sV decomposition 2}, we note that for any $i \neq j$, 
    \begin{align*}
        & \E_W \left[V_n(X_i^*, X_j^*) \mid X_j^*\right] \\
        & \qquad = \E_W \left[f(X_i^*, X_j^*) - \int f(X_i^*, y) \sd \hat{P}_n(y) -  \int f(x, X_j^*) \sd \hat{P}_n(x) + (\hat{P}_n \times \hat{P}_n) f \mid X_j^*\right]\\
        & \qquad = \int f(x, X_j^*) \sd \hat{P}_n(x) - (\hat{P}_n \times \hat{P}_n) f  -  \int f(x, X_j^*) \sd \hat{P}_n(x) + (\hat{P}_n \times \hat{P}_n) f = 0
    \end{align*}
    Hence, by  the law of total expectation, $\E_W [V_n(X_i^*, X_j^*)] = 0$ for all $i \neq j$, and by symmetry of $f$, which implies symmetry of $V_n$, $\E_W [V_n(X_i^*, X_j^*)V_n(X_i^*, X_k^*)] = 0$ for all $i \neq j \neq k$. Thus, by \Cref{lemma: square decomp},
    \begin{align*}
        &\E_W \left[ \frac{1}{n^2} \sum_{i \neq j} V_n(X_i^*, X_j^*) \right]^2 \\
        &\qquad =   \E_W \left[ \frac{4}{n^4} \sum_{i \neq j, j \neq k, k \neq i}V_n(X_i^*, X_j^*) V_n(X_i^*, X_k^*) + \frac{2}{n^4} \sum_{i \neq j}\left\{V_n(X_i^*, X_j^*)\right\}^2 \right.\\
        & \qquad\qquad \qquad \left.+ \frac{1}{n^4} \sum_{\substack{i\neq j, i\neq k, i\neq l\\ j \neq k, j \neq l, k \neq l}} V_n(X_i^*, X_j^*) V_n(X_k^*, X_l^*) \right]\\
        & \qquad=   \E_W \left[ \frac{2}{n^4} \sum_{i \neq j} \left\{V_n(X_i^*, X_j^*)\right\}^2 \right]\\
        & \qquad\lesssim  \frac{1}{n^2}   \E_W \left[V_n(X_1^*, X_2^*) \right]^2\\
        & \qquad =  \frac{1}{n^2}   \E_W \left[ f(X_1^*, X_2^*) - \int f(X_1^*, y) \sd \hat{P}_n(y) - \int f(x, X_2^*) \sd \hat{P}_n(x) + (\hat{P}_n\times \hat{P}_n) f\right]^2\\
        & \qquad=  \frac{1}{n^2}   \mathrm{Var}_W \left[ f(X_1^*, X_2^*) - \int f(X_1^*, y) \sd \hat{P}_n(y) -  \int f(x, X_2^*) \sd \hat{P}_n(x) \right] \\
        & \qquad\leq  \frac{1}{n^2}   \E_W \left[ f(X_1^*, X_2^*) - \int f(X_1^*, y) \sd \hat{P}_n(y) -  \int f(x, X_2^*) \sd \hat{P}_n(x) \right]^2 \\
        & \qquad =  \frac{1}{n^2}\E_W \left\{ f(X_1^*, X_2^*)^2 + \left[\int f(X_1^*, y) \sd \hat{P}_n(y) \right]^2 + \left[\int f(x, X_2^*) \sd \hat{P}_n(x) \right]^2  \right.\\
        & \qquad \qquad\qquad \qquad   - 2f(X_1^*, X_2^*)\left[ \int f(X_1^*, y) \sd \hat{P}_n(y) \right]  - 2f(X_1^*, X_2^*)\left[\int f(x, X_2^*) \sd \hat{P}_n(x)\right] \\
        & \qquad \qquad \qquad \qquad \left.+ 2\left[ \int f(X_1^*, y) \sd \hat{P}_n(y)\right]\left[\int f(x, X_2^*) \sd \hat{P}_n(x)\right]\right\}\\
        & \qquad =  \frac{1}{n^2}\E_W \left\{ f(X_1^*, X_2^*)^2 - 2\left[\int f(X_1^*, y) \sd \hat{P}_n(y)\right]^2  + 2 \left[ \int f \sd (\hat{P}_n \times \hat{P}_n)\right]^2 \right\}\\
        & \qquad \leq  \frac{1}{n^2}\int f^2 \sd (\hat{P}_n \times \hat{P}_n),
    \end{align*}
    where the last inequality is because  $[\int f \sd (\hat{P}_n \times \hat{P}_n) ]^2 \leq \int  [\int f \sd \hat{P}_n]^2 \sd \hat{P}_n$ by Jensen's inequality. Hence, by the assumption that $f(x + c, y + c) = f(x,y)$, 
    \begin{align*}
        \E_0 \E_W \left[ \frac{1}{n^2} \sum_{i \neq j} V_n(X_i^*, X_j^*) \right]^2 &\lesssim \frac{1}{n^2}  \E_0 \left[ \int f^2 \sd (\hat{P}_n \times \hat{P}_n) \right]\\
        & = \frac{1}{n^2}  \E_0 \left[ \frac{1}{n^2h^{2d}} \sum_{i, j} \iint \left\{f(s, t)\right\}^2 K \left(\frac{s - X_i}{h} \right) K \left(\frac{t-X_j}{h} \right) \sd s \sd t\right]\\
        & = \frac{1}{n^2}  \E_0 \left[ \frac{1}{n^2} \sum_{i, j} \iint f(X_i + hs', X_j + ht')^2 K (s') K (t') \sd s' \sd t'\right]\\
        & = \frac{1}{n^2}  \E_0 \left[ \frac{1}{n^2} \sum_{i = 1}^n \iint \left\{f(X_i + hs', X_i + ht')\right\}^2 K (s') K (t') \sd s' \sd t'\right] \\
        & \qquad \qquad + \frac{1}{n^2}  \E_0 \left[ \frac{1}{n^2} \sum_{i \neq j} \iint \left\{f(X_i + hs', X_j + ht')\right\}^2 K (s') K (t') \sd s' \sd t'\right]\\
        & = \frac{1}{n^3} \iint \{f(hs', ht')\}^2 K (s') K (t') \sd s' \sd t' \\
        & \qquad \qquad + \frac{n-1}{n^3}  \iint \E_0 \left[ f(X_i + hs' - ht', X_j)^2 \right] K (s') K (t') \sd s' \sd t'\\
        & \lesssim \frac{1}{n^3} \iint \{f(hs', ht')\}^2 K (s') K (t') \sd s' \sd t' + \frac{1}{n^2}  \norm{f}_{L_2(\lambda \times P_0)}^2\\
        & = \frac{1}{n^3} \tau_{G[f^2]} + \frac{1}{n^2}  \norm{f}_{L_2(\lambda \times P_0)}^2,
    \end{align*}
    where the last equality is because $f(x+c, y+c) = f(x, y)$ for all $x, y, c \in \d{R}^d$ and the definition of $\tau_{G[f]}$. The result follows.
\end{proof}

\begin{cor}\label{cor: auto smooth empirical}
   If $\hat{P}_n$ is the distribution corresponding to a kernel density estimator with bandwidth $h$ and uniformly bounded kernel $K$ satisfying $\int K^2 < \infty$, $P_0$ possesses uniformly bounded density, and $nh^d \longrightarrow \infty$, then for each $f\in \{f_1, f_2\}$, we have $\int f \sd [(\hat{P}_n - P_{0, h}) \times (\d{P}_n^* - \hat{P}_n)] = o_{\prob_W^*}(n^{-1/2})$, $\int f  \sd [(\d{P}_n - P_0) \times (\d{P}_n - P_0)] = n^{-1}\tau_f + o_{\prob_0^*}(n^{-1/2})$, and $\int f  \sd [(\d{P}_n^* - \hat{P}_n) \times (\d{P}_n^* - \hat{P}_n)] = n^{-1}\tau_f + o_{\prob_W^*}(n^{-1/2})$.
\end{cor}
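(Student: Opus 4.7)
The plan is to reduce each of the three statements to an application of one of the preceding variance-bound lemmas (\Cref{lemma: V order 1}, \Cref{lemma: sV order 3}, \Cref{lemma: sV order 4}), combined with Markov's inequality (applied conditionally where appropriate). The structure mirrors the proof of \Cref{cor: auto empirical} in the empirical-bootstrap case: bound the relevant (conditional) second moment by a sum of explicit terms involving $\tau_f$, $\tau_{G[f]}$, $\tau_{G[f^2]}$, $\|f\|_{L_1(\lambda\times P_0)}$, $\|f\|_{L_2(\lambda\times P_0)}^2$, and the two mixed integrals $\int [\int|f(x,y)|\sd x]^2\sd P_0(y)$ and $\int [\int|f(x,y)|\sd x]^2\sd P_{0,h}(y)$, then read off the orders from \Cref{lemma: norm tool 1} together with the hypothesis $nh^d\to\infty$.

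For statement~(1), I would apply \Cref{lemma: sV order 3} directly to obtain
\[
\E_0\E_W\left\{\int f\sd[(\hat{P}_n-P_{0,h})\times(\d{P}_n^*-\hat{P}_n)]\right\}^2 \lesssim n^{-1}\left[(nh^d)^{-1}+(nh^d)^{-2}\right]\int\left[\int|f(x,y)|\sd x\right]^2\sd P_{0,h}(y).
\]
By \Cref{lemma: norm tool 1} (with $\nu=P_{0,h}$), the last factor is bounded by a constant depending only on $K$, so the right-hand side is $o(n^{-1})$ under $nh^d\to\infty$, and conditional Markov's inequality yields the claim.

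For statement~(2), I would apply \Cref{lemma: V order 1}. Using \Cref{lemma: norm tool 1} with $\nu=P_0$ and the uniform boundedness of the density of $P_0$ to upgrade $\|f\|_{L_2(\lambda\times P_0)}^2=O(h^{-d})$ to $\|f\|_{L_2(P_0\times P_0)}^2=O(h^{-d})$, and similarly $\int[\int f(x,y)\sd P_0(x)]^2\sd P_0(y)\le\|\eta_0\|_\infty^2\int[\int|f(x,y)|\sd x]^2\sd P_0(y)=O(1)$, the bound becomes $O(n^{-3})+O(n^{-2}h^{-d})=O(n^{-1}(nh^d)^{-1})$, hence $o(n^{-1})$. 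Markov's inequality then finishes the argument.

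For statement~(3), I would invoke \Cref{lemma: sV order 4}. This requires computing $\tau_{G[f]}$ and $\tau_{G[f^2]}$ in addition to the norms from \Cref{lemma: norm tool 1}. A change of variables gives $\tau_{G[f_1]}=h^{-d}\iint K(u-v)K(u)K(v)\sd u\sd v = O(h^{-d})$, $\tau_{G[f_2]}=O(h^{-d})$, and $\tau_{G[f^2]}=\iint f(hs,ht)^2K(s)K(t)\sd s\sd t=O(h^{-2d})$ for either choice of $f$. Plugging these, together with $\|f\|_{L_1(\lambda\times P_0)}=O(1)$, $\|f\|_{L_2(\lambda\times P_0)}^2=O(h^{-d})$, and the $O(1)$ bounds on the two mixed integrals, into \Cref{lemma: sV order 4} yields contributions of orders $h^{-2d}n^{-4}$, $h^{-2d}n^{-3}$, $h^{-d}n^{-3}$, $h^{-d}n^{-2}$, $n^{-3}$, and $n^{-3}[1+(nh^d)^{-1}+(nh^d)^{-2}]$. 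Under $nh^d\to\infty$ each of these is $o(n^{-1})$, with $h^{-d}n^{-2}=n^{-1}(nh^d)^{-1}$ the dominant term. A final application of conditional Markov's inequality completes the proof.

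I do not anticipate any significant obstacle: this is essentially bookkeeping, matching the orders in the lemma bounds against $(nh^d)^{-1}$. The most error-prone step is the change-of-variables computation of $\tau_{G[f_1]}$, $\tau_{G[f_2]}$, and $\tau_{G[f^2]}$, which relies on the translation invariance $f(x+c,y+c)=f(x,y)$; everything else follows mechanically from \Cref{lemma: norm tool 1} and Markov's inequality.
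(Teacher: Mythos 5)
Your proposal is correct and takes essentially the same route as the paper: apply \Cref{lemma: sV order 3}, \Cref{lemma: V order 1}, and \Cref{lemma: sV order 4} respectively to bound the (conditional) second moments, read off the orders of $\tau_f$, $\tau_{G[f]}$, $\tau_{G[f^2]}$ and the norms from \Cref{lemma: norm tool 1}, use the bounded density of $P_0$ to pass from $L_2(\lambda\times P_0)$ to $L_2(P_0\times P_0)$ in the second statement, and close with conditional Markov. The only cosmetic difference is that you write out the change-of-variables formula for $\tau_{G[f_1]}$ explicitly, which the paper asserts without showing.
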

\begin{proof}[\bfseries{Proof of \Cref{cor: auto smooth empirical}}]
    We note that  $\int \left\{\int |f(x, y)| \sd x \right\}^2 \sd P_{0,h}(y)  = O(1)$  for each $f \in \{f_1, f_2\}$ by \Cref{lemma: norm tool 1} since $P_{0,h}(\d{R}^d) = 1$. Hence, by \Cref{lemma: sV order 3}, we then have 
    \begin{align*}
        \E_0 \E_W \left\{ \int f \sd [ (\hat{P}_n - P_{0, h}) \times (\d{P}_n^* - \hat{P}_n)]\right\}^2  &\lesssim n^{-1}\left[(nh^d)^{-1} + (nh^d)^{-2} \right]\int \left[ \int |f(x, y)| \sd x\right]^2 \sd P_{0, h}(y)\\
        & \lesssim n^{-1}\left[(nh^d)^{-1} + (nh^d)^{-2} \right],
    \end{align*}
    which is $o(n^{-1})$ if $nh^d \longrightarrow \infty$. This shows the first statement.

    Next, we note that  $\int \left\{\int |f(x, y)| \sd x \right\}^2 \sd P_{0}(y)  = O(1)$  and $\norm{f}_{L_2(\lambda \times P_0)} = O(h^{-d/2})$ for each $f \in \{f_1, f_2\}$ by \Cref{lemma: norm tool 1}. Hence, since $P_0$ possesses uniformly bounded Lebesgue density function, by \Cref{lemma: V order 1}, we have 
    \begin{align*}
        \E_0 \left\{ \int f \sd [(\d{P}_n - P_0) \times (\d{P}_n - P_0)] - n^{-1} \tau_f \right\}^2 & \lesssim  n^{-3} \int \left[ \int f(x, y) \sd P_0(x) \right]^2 \sd P_0(y)  + n^{-2} \norm{f}_{L_2(P_0 \times P_0)}^2 \\
        & \leq  n^{-3} \int \left[ \int |f(x, y)| \sd x \right]^2 \sd P_0(y)  + n^{-2} \norm{f}_{L_2(\lambda \times P_0)}^2 \\
        & \lesssim n^{-3} + n^{-1} (nh^{d})^{-1},
    \end{align*}
    which is $o(n^{-1})$ if $nh^d \longrightarrow \infty$. This implies that $\int f \sd [(\d{P}_n - P_0) \times (\d{P}_n - P_0)] = n^{-1} \tau_f + o_{\prob_W^*}(n^{-1/2})$ if $nh^d \longrightarrow \infty$. 
    
    Finally, since for each $f \in \{ f_1, f_2 \}$, we have $f(x, y) = f(y, x)$, $f(x, x) = \tau_f$ and $f(x+c, y+c) = f(x, y)$ for all $x, y, c \in \d{R}^d$, under the stated conditions, by \Cref{lemma: sV order 4}, 
    \begin{align*}
        & \E_0 \E_W \left\{ \int f \sd [(\d{P}_n^* - \hat{P}_n) \times (\d{P}_n^* - \hat{P}_n)] - n^{-1}\tau_f\right\}^2 \\
        & \qquad \lesssim \frac{\tau_{G[f]}^2}{n^4} + \frac{ \tau_{G[f^2]}}{n^3} + \frac{\tau_{G[f]}}{n^3}\norm{f}_{L_1(\lambda \times P_0)} + \frac{1}{n^2} \norm{f}_{L_2(\lambda \times P_0)}^2 + \frac{1}{n^3} \int   \left[ \int |f(x, y)| \sd x \right]^2 \sd P_0(y)\\
        & \qquad \qquad   + \frac{1}{n^3} \left[1 + (nh^d)^{-1} + (nh^d)^{-2} \right] \int   \left( \int |f(x, y)| \sd x \right)^2 \sd P_{0, h}(y).
    \end{align*}
    By \Cref{lemma: norm tool 1}, we have $\norm{f}_{L_1(\lambda \times P_0)} = O(1)$, $\norm{f}_{L_2(\lambda \times P_0)} = O(h^{-d/2})$, and $\int \left\{\int |f(x, y)| \sd x \right\}^2 \sd P_0(y)  = O(1)$  for each $f \in \{f_1, f_2\}$. We also note that $\tau_{G[f]} :=\iint f(hu, hv)K(u) K(v) \sd u \sd v \lesssim h^{-d}$ and $\tau_{G[f^2]} :=\iint \{f(hu, hv)\}^2 K(u) K(v) \sd u \sd v \lesssim h^{-2d}$ for each $f \in \{f_1, f_2\}$, where the constants only depend on $K$. Thus,
    \begin{align*}
        \E_0 \E_W \left\{ \int f \sd [(\d{P}_n^* - \hat{P}_n) \times (\d{P}_n^* - \hat{P}_n)] - n^{-1}\tau_f\right\}^2 &\lesssim \frac{h^{-2d}}{n^4} +\frac{h^{-2d}}{n^3} + \frac{h^{-d}}{n^3} + \frac{h^{-d}}{n^2} + \frac{1}{n^3} + \frac{h^{-2d}}{n^5} \\
        &= n^{-2} (nh^{d})^{-2} + n^{-1}(nh^{d})^{-2}+  n^{-2} (nh^{d})^{-1} \\
        &\qquad +  n^{-1} (nh^{d})^{-1} + n^{-3} +  n^{-3} (nh^{d})^{-2},
    \end{align*}
    which is again $o(n^{-1})$ if $nh^{d} \longrightarrow \infty$.

\end{proof}

\begin{lemma}\label{thm: auto smooth empirical process}
   If $P_0$ possesses uniformly bounded and continuously differentiable density function $\eta_0$ with $\int [ (D^{\alpha} \eta_0)( x) ]^2 \sd x < \infty$ for all $\alpha$ such that $|\alpha| = 1$, $\hat{P}_n$ is a kernel density estimator with uniformly bounded kernel function $K$ satisfying $\int K^2 < \infty$ and bandwidth $h$ satisfying $nh^d \longrightarrow \infty$ and $h \longrightarrow 0$, $\eta_n$ is the density corresponding to $\hat{P}_n$, and $\eta_n^*$ is a kernel density estimator based on the bootstrap data with the same kernel and bandwidth, then $(\d{P}_n^* - \hat{P}_n)(\eta_n^* - \eta_n) - (\d{P}_n - P_0)(\eta_n - \eta_0) = o_{\prob_W^*}(n^{-1/2})$.
\end{lemma}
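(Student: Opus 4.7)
The plan is to unfold both factors using Fubini so that $(\d{P}_n^* - \hat{P}_n)(\eta_n^* - \eta_n)$ and $(\d{P}_n - P_0)(\eta_n - \eta_0)$ become $V$-processes in the kernel $f_1 = K_h$ plus tractable remainders, and then to mirror the proof of \Cref{thm: auto empirical process} in the smooth-bootstrap setting. Since $\eta_n^*(x) - \eta_n(x) = \int K_h(x,y)\sd(\d{P}_n^* - \d{P}_n)(y)$, Fubini gives
\[
    (\d{P}_n^* - \hat{P}_n)(\eta_n^* - \eta_n) = \int f_1 \sd \left[(\d{P}_n^* - \hat{P}_n)\times(\d{P}_n^* - \d{P}_n)\right],
\]
and writing $\eta_n - \eta_0 = K_h * (\d{P}_n - P_0) + (\eta_{0,h} - \eta_0)$ with $\eta_{0,h} := K_h * \eta_0$ yields
\[
    (\d{P}_n - P_0)(\eta_n - \eta_0) = \int f_1 \sd \left[(\d{P}_n - P_0)\times(\d{P}_n - P_0)\right] + (\d{P}_n - P_0)(\eta_{0,h} - \eta_0).
\]
The smoothing bias $(\d{P}_n - P_0)(\eta_{0,h} - \eta_0)$ has variance under $P_0$ at most $n^{-1}\norm{\eta_0}_{\infty}\norm{\eta_{0,h} - \eta_0}_{L_2(\lambda)}^2 = O(n^{-1}h^2)$ by \Cref{lemma: h^m tool 1} with $m=1$ applied to $f_1$, so it is $o_{\prob_0^*}(n^{-1/2})$ by Chebyshev.

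Next I would split $\d{P}_n^* - \d{P}_n = (\d{P}_n^* - \hat{P}_n) + (\hat{P}_n - \d{P}_n)$. By \Cref{cor: auto smooth empirical}, the diagonal $V$-process $\int f_1 \sd[(\d{P}_n^* - \hat{P}_n)\times(\d{P}_n^* - \hat{P}_n)]$ equals $n^{-1}\tau_{f_1} + o_{\prob_W^*}(n^{-1/2})$ and $\int f_1 \sd[(\d{P}_n - P_0)\times(\d{P}_n - P_0)]$ equals $n^{-1}\tau_{f_1} + o_{\prob_0^*}(n^{-1/2})$, so these two diagonal pieces cancel modulo $o_{\prob_W^*}(n^{-1/2})$. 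It then remains to control the cross term $\int f_1 \sd[(\d{P}_n^* - \hat{P}_n)\times(\hat{P}_n - \d{P}_n)]$, for which I further decompose $\hat{P}_n - \d{P}_n = (\hat{P}_n - P_{0,h}) + (P_{0,h} - P_0) - (\d{P}_n - P_0)$ and treat the three resulting pieces separately. The piece involving $\hat{P}_n - P_{0,h}$ is $o_{\prob_W^*}(n^{-1/2})$ by \Cref{cor: auto smooth empirical}, and the piece involving $\d{P}_n - P_0$ is $o_{\prob_W^*}(n^{-1/2})$ by \Cref{lemma: sV order 3.5} with $f = K_h$, using that $\norm{K_h}_{L_2(\lambda\times P_0)}^2 = O(h^{-d})$ by \Cref{lemma: norm tool 1} and $\norm{K_h}_{L_2(P_0\times P_{0,h})}^2 = O(h^{-d})$ by the analogous change of variables, so that the lemma's bound reduces to $O(n^{-1}(nh^d)^{-1}) = o(n^{-1})$ under $nh^d\to\infty$.

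For the remaining piece $\int f_1 \sd[(\d{P}_n^* - \hat{P}_n)\times(P_{0,h} - P_0)]$, I would rewrite it as $(\d{P}_n^* - \hat{P}_n)\xi_n$ with the deterministic function $\xi_n := K_h * (\eta_{0,h} - \eta_0) = (K_h * K_h * \eta_0) - (K_h * \eta_0)$. Conditional on the original data,
\[
    \mathrm{Var}_W\bigl((\d{P}_n^* - \hat{P}_n)\xi_n\bigr) \leq n^{-1}\hat{P}_n\xi_n^2,
\]
and taking $E_0$ bounds this by $n^{-1}\norm{\eta_{0,h}}_{\infty}\norm{\xi_n}_{L_2(\lambda)}^2$. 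The triangle inequality and \Cref{lemma: h^m tool 1} applied with $m=1$ to both $f_1$ and $f_2$ give $\norm{\xi_n}_{L_2(\lambda)} \leq \norm{K_h * K_h * \eta_0 - \eta_0}_{L_2(\lambda)} + \norm{K_h * \eta_0 - \eta_0}_{L_2(\lambda)} = O(h)$, so the expected conditional variance is $O(n^{-1}h^2) = o(n^{-1})$, and Markov's inequality applied conditionally on the data and then under $P_0$ yields $o_{\prob_W^*}(n^{-1/2})$. This piece is the main obstacle: the off-the-shelf \Cref{lemma: sV order 2} only yields a bound of order $n^{-1}h^{2m}\norm{K_h}_{L_2(\lambda\times P_{0,h})}^2 = O(n^{-1}h^{2-d})$, which fails to be $o(n^{-1})$ once $d \geq 2$; exploiting the double-smoothing identity $K_h * (\eta_{0,h} - \eta_0) = (K_h*K_h*\eta_0 - \eta_0) - (K_h*\eta_0 - \eta_0)$ routes through the two $m=1$ bias bounds of \Cref{lemma: h^m tool 1} and delivers the required rate under only $nh^d\to\infty$ and $h\to 0$.
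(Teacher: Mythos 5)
Your proof is correct, and it takes a genuinely different route that, in fact, repairs a gap in the paper's own argument. The paper's proof (which, incidentally, works with $\eta_n^* - \eta_0$ rather than the $\eta_n^* - \eta_n$ of the lemma statement --- the two are equivalent up to the term $(\d{P}_n^* - \hat{P}_n)(\eta_n - \eta_0) = o_{\prob_W^*}(n^{-1/2})$, which holds since $\hat{P}_n(\eta_n - \eta_0)^2 = o_{\prob_0^*}(1)$) decomposes the difference through $\eta_{n,h}$ and $\eta_{0,h}$, and then identifies $(\d{P}_n^* - \hat{P}_n)(\eta_{n,h} - \eta_{0,h})$ with $\int K_h \sd[(\d{P}_n^* - \hat{P}_n)\times(\hat{P}_n - P_{0,h})]$. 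But since $\eta_{n,h}(x) - \eta_{0,h}(x) = \int K_h(x,y)\sd(\hat{P}_n - P_0)(y)$ (not $\sd(\hat{P}_n - P_{0,h})$), the correct identification is $\int K_h \sd[(\d{P}_n^* - \hat{P}_n)\times(\hat{P}_n - P_0)]$, which leaves the cross term $\int K_h \sd[(\d{P}_n^* - \hat{P}_n)\times(P_{0,h} - P_0)]$ unaccounted for. Your decomposition --- starting from $(\d{P}_n^* - \hat{P}_n)(\eta_n^* - \eta_n) = \int f_1 \sd[(\d{P}_n^* - \hat{P}_n)\times(\d{P}_n^* - \d{P}_n)]$, splitting $\d{P}_n^* - \d{P}_n$ and then $\hat{P}_n - \d{P}_n$ --- surfaces exactly this term, and your treatment of it is the right one: the off-the-shelf bound from \Cref{lemma: sV order 2} gives $O(n^{-1}h^{2-d})$, which fails for $d\ge 2$, but rewriting $\int K_h(\cdot,y)\sd(P_{0,h}-P_0)(y) = (K_h*K_h*\eta_0 - \eta_0) - (K_h*\eta_0 - \eta_0)$ and applying the $m=1$ bias bound of \Cref{lemma: h^m tool 1} to both summands yields an $L_2(\lambda)$ norm of order $h$, hence a conditional variance of order $n^{-1}h^2 = o(n^{-1})$ under $h\to 0$ alone. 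You also correctly use \Cref{lemma: sV order 3.5} for the $(\d{P}_n - P_0)$ cross term, and \Cref{cor: auto smooth empirical} for the diagonal pieces (which cancel through the shared $n^{-1}\tau_{f_1}$) and the $(\hat{P}_n - P_{0,h})$ cross term. This is a cleaner and more complete argument than the published one.
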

\begin{proof}[\bfseries{Proof of \Cref{thm: auto smooth empirical process}}]
    As above, we define $\eta_{0,h}(x) := \int K_{h}(x, y) \sd P_0(y)$ and $\eta_{n,h}(x) := \int K_{h}(x, y) \sd \hat{P}_n(y)$. By adding and subtracting terms, we then have
    \begin{align*}
        & (\d{P}_n^* - \hat{P}_n)(\eta_n^* - \eta_0) - (\d{P}_n - P_0)(\eta_n - \eta_0) \\
        &\qquad = (\d{P}_n^* - \hat{P}_n)(\eta_n^* - \eta_{n, h}) + (\d{P}_n^* - \hat{P}_n)(\eta_{n, h} - \eta_{0, h}) + (\d{P}_n^* - \hat{P}_n)(\eta_{0,h} - \eta_0) \\
        &\qquad\qquad - (\d{P}_n - P_0)(\eta_n - \eta_{0,h}) - (\d{P}_n - P_0)(\eta_{0,h} - \eta_0) \\
        &\qquad = \int K_h  \sd [(\d{P}_n^* - \hat{P}_n) \times (\d{P}_n^* - \hat{P}_n)] + \int K_h \sd [ (\d{P}_n^* - \hat{P}_n) \times (\hat{P}_n - P_{0, h})]  \\
        &\qquad \qquad +\iint [K_h(x, y) - \eta_0(x)] \sd (\d{P}_n^* - \hat{P}_n)(x) \sd P_0(y)]  - \int K_h \sd [(\d{P}_n - P_0) \times (\d{P}_n - P_0)] \\
        &\qquad \qquad  -\iint [K_h(x, y) - \eta_0(x)] \sd (\d{P}_n - P_0)(x) \sd P_0(y)]
    \end{align*}
    By \Cref{cor: auto smooth empirical}, if $nh^d \longrightarrow \infty$, then
    \[
        \int K_h \sd [ (\d{P}_n^* - \hat{P}_n) \times (\hat{P}_n - P_{0, h})] = o_{\prob_W^*}(n^{-1/2})
    \]
    and 
    \begin{align*}
        &\int K_h  \sd [(\d{P}_n^* - \hat{P}_n)  \times (\d{P}_n^* - \hat{P}_n)] - \int K_h \sd [(\d{P}_n - P_0) \times (\d{P}_n - P_0)]\\
        &\qquad= \left[n^{-1} \tau_{f_1} + o_{\prob_W^*}(n^{-1/2})\right] - \left[n^{-1} \tau_{f_1} +  o_{\prob_0^*}(n^{-1/2})\right] \\
        &\qquad = o_{\prob_W^*}(n^{-1/2}).
    \end{align*}
    Finally, we write 
    \begin{align*}
        \iint [K_h(x, y) - \eta_0(x)] \sd (\d{P}_n^* - \hat{P}_n)(x) \sd P_0(y)] &= (\d{P}_n^* - \hat{P}_n) g_h, \text{ and} \\
        \iint [K_h(x, y) - \eta_0(x)] \sd (\d{P}_n - P_0)(x) \sd P_0(y)] &= (\d{P}_n - P_0)g_h
    \end{align*}
    for $g_h(x) := \int [K_h(x, y) - \eta_0(x)] \sd P_0(y)$. As in the proofs of \Cref{lemma: sV order 2} and \Cref{lemma: sV order 4}, we have $\E_0 \left[ (\d{P}_n - P_0)g_h\right]^2 \leq n^{-1} \| g_h \|_{L_2(P_0)}^2$. By \Cref{lemma: h^m tool 1}, $\int g_h(x)^2 \sd x = O(h^{2})$, so $\| g_h \|_{L_2(P_0)}^2 = o(1)$ as long as $h \longrightarrow 0$ since $P_0$ has uniformly bounded density. This implies that $\iint [K_h(x, y) - \eta_0(x)] \sd (\d{P}_n - P_0)(x) \sd P_0(y) = o_{\prob_0^*}(n^{-1/2})$. Furthermore,
    \begin{align*}
        \E_0 \E_W \left[ (\d{P}_n^* - \hat{P}_n)g_h\right]^2 &\leq n^{-1} \E_0\| g_h \|_{L_2(\hat{P}_n)}^2 \\
        &= n^{-1} \int \left[g_h(x)\right]^2 \sd P_{0, h}(x) \\
        &= n^{-1} \iint \left[g_h(x)\right]^2 K_h(x,y) \eta_0(y) \sd y \sd x  \\
        &=  n^{-1} \iint \left[g_h(x)\right]^2 K(u) \eta_0(x + hu) \sd u \sd x \\
        &\lesssim n^{-1} \int \left[g_h(x)\right]^2 \sd x.
    \end{align*}
    By \Cref{lemma: h^m tool 1}, $\int g_h(x)^2 \sd x = O(h^{2}) = o(1)$. Thus, $\int [K_h(x, y) - \eta_0(x)] \sd (\d{P}_n^* - \hat{P}_n)(x) \sd P_0(y)] = o_{\prob_W^*}(n^{-1/2})$.
\end{proof}

\begin{lemma}\label{thm: auto smooth mise}
    If $P_0$ possesses uniformly bounded and $m$-times continuously differentiable density $\eta_0$ with $\int [ (D^{\alpha} \eta_0)( x) ]^2 \sd x < \infty$ for all $\alpha$ such that $|\alpha| = m$, $\hat{P}_n$ is a kernel density estimator with uniformly bounded $m$th order kernel function $K$ satisfying $\int K^2 < \infty$ and bandwidth $h$ satisfying $nh^{2d} \longrightarrow \infty$ and $nh^{4m} \longrightarrow 0$, $\eta_n$ is the density corresponding to $\hat{P}_n$, and $\eta_n^*$ is a kernel density estimator based on the bootstrap data with the same kernel and bandwidth, then $\int[\eta_n(x) - \eta_0(x)]^2 \sd x - \int [\eta_n^*(x) - \eta_n(x)]^2 \sd x = o_{\prob_W^*}(n^{-1/2})$.
\end{lemma}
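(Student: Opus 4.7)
The plan is to decompose both squared distances around their respective conditional expectations and match the resulting pieces. Set $\eta_{0,h}(x) := \int K_h(x,y)\,\sd P_0(y) = \E_0\eta_n(x)$ and $\eta_{n,h}(x) := \int K_h(x,y)\,\sd \hat P_n(y) = \E_W\eta_n^*(x)$, and expand
\begin{align*}
\int(\eta_n-\eta_0)^2\,\sd x &= \int(\eta_n-\eta_{0,h})^2\,\sd x + 2\int(\eta_n-\eta_{0,h})(\eta_{0,h}-\eta_0)\,\sd x + \int(\eta_{0,h}-\eta_0)^2\,\sd x,\\
\int(\eta_n^*-\eta_n)^2\,\sd x &= \int(\eta_n^*-\eta_{n,h})^2\,\sd x + 2\int(\eta_n^*-\eta_{n,h})(\eta_{n,h}-\eta_n)\,\sd x + \int(\eta_{n,h}-\eta_n)^2\,\sd x.
\end{align*}
Showing that each pair matches up to $o_{\prob_W^*}(n^{-1/2})$ gives the conclusion.

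For the variance pieces, Fubini yields $\int(\eta_n-\eta_{0,h})^2\,\sd x = \int f_2\,\sd[(\d{P}_n-P_0)\times(\d{P}_n-P_0)]$ and $\int(\eta_n^*-\eta_{n,h})^2\,\sd x = \int f_2\,\sd[(\d{P}_n^*-\hat P_n)\times(\d{P}_n^*-\hat P_n)]$; since $f_2$ is symmetric, translation-invariant, and has constant diagonal $\tau_{f_2} = h^{-d}\|K\|_2^2$, \Cref{cor: auto smooth empirical} shows both equal $n^{-1}\tau_{f_2}$ up to $o_{\prob_W^*}(n^{-1/2})$, so they cancel. The deterministic bias $\int(\eta_{0,h}-\eta_0)^2\,\sd x = O(h^{2m}) = o(n^{-1/2})$ by \Cref{lemma: h^m tool 1}. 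The deterministic cross term rewrites as $\int g_h\,\sd(\d{P}_n-P_0)$ with $g_h := K_h*(\eta_{0,h}-\eta_0)$; Young's inequality gives $\|g_h\|_2 \le \|K\|_1\|\eta_{0,h}-\eta_0\|_2 = O(h^m)$, so Chebyshev yields $o_{\prob_0^*}(n^{-1/2})$. The bootstrap cross term equals $\int f_2\,\sd[(\d{P}_n^*-\hat P_n)\times(\hat P_n-\d{P}_n)]$, and writing $\hat P_n - \d{P}_n = (\hat P_n - P_{0,h}) + (P_{0,h} - P_0) - (\d{P}_n - P_0)$ reduces it to the three bounds furnished by \Cref{lemma: sV order 3}, \Cref{lemma: sV order 2}, and \Cref{lemma: sV order 3.5}, each $o_{\prob_W^*}(n^{-1/2})$ under $nh^{2d}\to\infty$ and $nh^{4m}\to 0$ (the rate $h^{2m-d}\to 0$ needed in the second follows from dividing the two hypotheses).

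The main obstacle is the random bias $\int(\eta_{n,h}-\eta_n)^2\,\sd x$. Collapsing the double smoothing shows $\eta_{n,h} - \eta_n = M_h * \sd\d{P}_n$ with $M_h(u) := h^{-d}M(u/h)$ and $M := K*K - K$, which yields the V-statistic representation $\int G\,\sd(\d{P}_n\times\d{P}_n)$ with $G(y,z) := (M_h*M_h)(y-z)$. Since the integral is non-negative, it suffices to control its mean via Markov; I would compute
\[
\E_0\int G\,\sd(\d{P}_n\times\d{P}_n) = n^{-1}h^{-d}\|M\|_2^2 + (1-n^{-1})\int(M_h*M_h)(w)\psi(w)\,\sd w,
\]
with $\psi(w) := \int\eta_0(y)\eta_0(y+w)\,\sd y$. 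Because $K$ is an $m$th-order kernel, $\int u^\alpha M\,\sd u = 0$ for $0 \le |\alpha| \le m-1$, and applying Taylor's theorem iteratively -- first to $(M_h*\psi)(s)$, giving $O(h^m)$, then once more inside $\int M_h(s)(M_h*\psi)(s)\,\sd s$ -- the vanishing moments of $M$ are exploited twice, landing at $O(h^{2m})$; this step requires $\|D^{2m}\psi\|_\infty \le \|D^m\eta_0\|_2^2 < \infty$, which holds by assumption. Hence $\E_0\int(\eta_{n,h}-\eta_n)^2\,\sd x = O(n^{-1}h^{-d} + h^{2m}) = o(n^{-1/2})$. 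A single Taylor pass would produce only $O(h^m)$ and fail, which is the key subtlety; moreover, the strengthening $nh^{2d}\to\infty$ in the hypothesis -- beyond the $nh^d\to\infty$ used elsewhere -- is precisely what forces $n^{-1}h^{-d} = o(n^{-1/2})$.
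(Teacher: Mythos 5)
Your decomposition, your cancellation of the variance terms via \Cref{cor: auto smooth empirical}, your treatment of the deterministic bias and both cross terms (including the three-way splitting of $\hat P_n - \d{P}_n$ and the appeal to \Cref{lemma: sV order 3}, \Cref{lemma: sV order 2}, and \Cref{lemma: sV order 3.5}) all coincide with the paper's proof. The one place you diverge is the random bias $\int(\eta_{n,h}-\eta_n)^2\sd x$. You set up the same V-statistic identity and the same diagonal/off-diagonal split of its expectation; indeed your quantity $\int(M_h*M_h)\psi$ is identically the paper's $\int[\E_0 f_3(x,X)]^2\sd x = \|M_h*\eta_0\|_{L_2(\lambda)}^2$. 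But where the paper bounds this by a triangle inequality --- $\|M_h*\eta_0\|_2 \le \|\E_0 f_2(\cdot,X) - \eta_0\|_2 + \|\E_0 f_1(\cdot,X) - \eta_0\|_2 = O(h^m)$ by \Cref{lemma: h^m tool 1}, then squares --- you carry out an explicit two-step Taylor expansion exploiting the vanishing moments of $M := K*K - K$ twice, which requires establishing $\psi = \eta_0 * \check\eta_0$ has bounded $2m$th-order derivatives. Your bound $\sup_{|\gamma|=2m}\|D^\gamma\psi\|_\infty \le \sup_{|\alpha|=m}\|D^\alpha\eta_0\|_2^2$ is correct via the identity $D^{\alpha+\beta}\psi = \pm (D^\alpha\eta_0)*(D^\beta\eta_0)$ and Cauchy--Schwarz, but justifying that $\psi$ is classically $2m$-times differentiable (and that the two-step Taylor is valid) involves Sobolev-space considerations that the paper's argument avoids entirely. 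The ``key subtlety'' you flag --- that a single Taylor pass only yields $O(h^m)$ --- disappears once one notices the quantity is literally the square of an $L_2$ norm that is itself $O(h^m)$, which is what the paper does. Both arguments reach $O(n^{-1}h^{-d} + h^{2m})$; yours buys nothing extra while demanding a bit more care in its justification.
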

\begin{proof}[\bfseries{Proof of \Cref{thm: auto smooth mise}}]
    Recall that $f_1(x, y) := K_h\left(x, y\right)$ and $f_2(x, y) := \int K_h(x, z) K_h(y, z) \sd z$. As above, we denote $\eta_{0, h}(x) :=  \int  K_h\left(x, y\right) \sd P_0(y)$ and $\eta_{n, h}(x) = \int K_{h}(x, y) \sd \hat{P}_n(y)$. We then note that
    \begin{align*}
        &\int \eta_n^2 = \int f_2 \sd (\d{P}_n \times \d{P}_n), \  \int (\eta_n^{*})^2 = \int f_2 \sd (\d{P}_n^* \times \d{P}_n^*), \ \int \eta_{0,h}^2 = \int f_2  \sd (P_0 \times P_0) \\
        &\int \eta_n^*\eta_n = \int f_2 \sd (\d{P}_n^* \times \d{P}_n), \ \int \eta_n^*\eta_{0,h} = \int f_2 \sd (\d{P}_n^* \times P_0), \ \int \eta_n\eta_{0,h} = \int f_2 \sd (\d{P}_n \times P_0) \\
        &\int \eta_{n,h}^2 = \int f_2 \sd (\hat{P}_n \times \hat{P}_n), \ \int \eta_n^*\eta_{n,h} = \int f_2 \sd (\d{P}_n^* \times \hat{P}_n), \ \int \eta_n\eta_{n,h} = \int f_2 \sd (\d{P}_n \times \hat{P}_n).
    \end{align*}
    Hence, by adding and subtracting term, we have
    \begin{align*}
        &\int (\eta_n^* - \eta_n)^2  - \int(\eta_n - \eta_0)^2 \\
        & \qquad = \int (\eta_n^* - \eta_{n, h} + \eta_{n, h} - \eta_n)^2   - \int(\eta_n - \eta_{0, h}+ \eta_{0, h} - \eta_0)^2  \\
        & \qquad = \int \left(\eta_n^* - \eta_{n, h}\right)^2 + \int \left(\eta_{n, h} - \eta_n \right)^2 + 2\int \left(\eta_n^* - \eta_{n, h}\right) \left(\eta_{n, h} - \eta_n \right) \\
        & \qquad \qquad - \int \left(\eta_n - \eta_{0, h}\right)^2 - \int \left(\eta_{0, h} - \eta_0\right)^2  - 2\int \left(\eta_n - \eta_{0, h}\right)\left(\eta_{0, h} - \eta_0\right)  \\
        & \qquad = \left\{ \int f_2 \sd [(\d{P}_n^* - \hat{P}_n) \times (\d{P}_n^* - \hat{P}_n)] - \int f_2 \sd [(\d{P}_n - P_0) \times (\d{P}_n - P_0)] \right\}   \\
        & \qquad \qquad + \int \left(\eta_{n, h} - \eta_n \right)^2 - \int \left(\eta_{0, h} - \eta_0\right)^2  \\
        & \qquad \qquad + 2\int f_2\sd [(\d{P}_n^* - \hat{P}_n) \times (\hat{P}_n - \d{P}_n)]  - 2\int (f_2 - f_1)\sd [(\d{P}_n - P_0) \times P_0].
    \end{align*}
    By \Cref{cor: auto smooth empirical}, we have 
    \[
        \int f_2 \sd [ (\d{P}_n^* - \hat{P}_n) \times (\d{P}_n^* - \hat{P}_n)] - \int f_2 \sd [ (\d{P}_n - P_0) \times (\d{P}_n - P_0)] = o_{\prob_W^*}(n^{-1/2}).
    \]
    By \Cref{lemma: h^m tool 1}, $\int \left(\eta_{0, h} - \eta_0\right)^2 = O(h^{2m})$, which is $o(n^{-1/2})$ since $nh^{4m} \longrightarrow 0$. Next, we note that 
    \begin{align*}
        \eta_{n, h}(x) - \eta_n(x)  &= \int K_h(x, z) \sd \hat{P}_n(z) - \int K_h(x, y) \sd \d{P}_n(y)\\
        & = \int K_h(x, z) \eta_n(z) \sd z - \int K_h(x, y) \sd \d{P}_n(y)\\
        & = \int \left[ \int K_h(x, z) K_h(y, z) \sd z \right] \sd \d{P}_n(y) - \int K_h(x, y) \sd \d{P}_n(y)\\
        & = \int \left[ f_2(x, y) - f_1(x, y)\right] \sd \d{P}_n(y)\\
        & = \int f_3(x, y) \sd \d{P}_n(y),
    \end{align*}
    where $f_3 = f_2 - f_1$. Therefore, 
    \begin{align*}
        \E_0 \int (\eta_{n, h} - \eta_n)^2 &= \E_0 \int \left[ \int f_3(x, y) \sd \d{P}_n(y)\right]^2  \sd x\\
        & = \E_0 \int  \left\{ \frac{1}{n^2} \sum_{i,j} f_3(x, X_i)f_3(x, X_j) \right\} \sd x\\
        & = \E_0 \int  \left\{ \frac{1}{n^2} \sum_{i=1}^n f_3(x, X_i)f_3(x, X_i) \right\} \sd x + \E_0 \int  \left\{ \frac{1}{n^2} \sum_{i\neq j} f_3(x, X_i)f_3(x, X_j) \right\} \sd x\\
        & =  \frac{1}{n} \int    \E_0 \left[f_3^2(x, X)\right] \sd x + \frac{n-1}{n}  \int  \left\{ \E_0 \left[f_3(x, X)\right] \right\}^2 \sd x\\
        & =  \frac{1}{n} \norm{f_3}_{L_2(\lambda \times P_0)}^2 + \frac{n-1}{n}  \int  \left\{ \E_0 \left[f_3(x, X) \right] \right\}^2 \sd x.
    \end{align*}
    By \Cref{lemma: norm tool 1}, we have $\norm{f_3}_{L_2(\lambda \times P_0)} \leq \norm{f_1}_{L_2(\lambda \times P_0)} + \norm{f_2}_{L_2(\lambda \times P_0)} = O(h^{-d/2})$. By \Cref{lemma: h^m tool 1}, we have
    \[
        \int  \left\{ \E_0 \left[ f_3(x, X)\right] \right\}^2 \sd x \leq 2\int  \left\{ \E_0 \left[f_1(x, X) \right]  - \eta_0(x) \right\}^2 \sd x + 2\int  \left\{ \E_0 \left[f_2(x, X)\right] - \eta_0(x) \right\}^2 \sd x = O(h^{2m}).
    \]
    Hence, $\int (\eta_{n, h} - \eta_n)^2 = O_{\prob_0^*}( \{nh^d\}^{-1} + h^{2m})$, which is $o_{\prob_0^*}(n^{-1/2})$ under the conditions $nh^{4m} \longrightarrow 0$ and $nh^{2d} \longrightarrow \infty$.

    Next, we have
    \begin{align*}
        &\int f_2\sd [(\d{P}_n^* - \hat{P}_n) \times (\hat{P}_n - \d{P}_n)] \\
        &\qquad = \int f_2\sd [(\d{P}_n^* - \hat{P}_n) \times (\hat{P}_n - P_{0, h})] + \int f_2\sd [(\d{P}_n^* - \hat{P}_n) \times (P_{0, h} - P_0)] \\
        & \qquad \qquad - \int f_2\sd [(\d{P}_n^* - \hat{P}_n) \times (\d{P}_n - P_0)].
    \end{align*}
    By \Cref{lemma: sV order 3}, we have 
    \begin{align*}
        & \E_0 \E_W \left\{ \int f_2 \sd [ (\hat{P}_n - P_{0, h}) \times (\d{P}_n^* - \hat{P}_n)]\right\}^2  \lesssim n^{-1}\left[(nh^d)^{-1} + (nh^d)^{-2} \right]\int \left[ \int |f_2(x, y)| \sd x\right]^2 \sd P_{0, h}(y),
    \end{align*}
    which is $o(n^{-1})$ under the condition $nh^d \longrightarrow \infty$ by \Cref{lemma: norm tool 1}. By \Cref{lemma: sV order 2}, we have 
    \begin{align*}
        \E_0 \E_W \left[ \int f_2 \sd [(P_{0, h} - P_0) \times (\d{P}_n^* - \hat{P}_n)] \right]^2 \lesssim n^{-1}h^{2m} \| f_2 \|_{L_2(\lambda \times P_{0,h})}^2,
    \end{align*}
    which is $O(n^{-1} h^{2m -d})$ by \Cref{lemma: norm tool 1}, which is $o(n^{-1})$ by assumption. By \Cref{lemma: norm tool 1} and \Cref{lemma: sV order 3.5}, we have 
    \begin{align*}
        \E_0 \E_W \left\{ \int f_2 \sd [ (\d{P}_n - P_0) \times (\d{P}_n^* - \hat{P}_n)]\right\}^2  & \lesssim \frac{1}{n^2} \norm{f_2}_{L_2(P_0 \times P_{0, h})}^2 +  \frac{1}{n^3h^{d}} \norm{f_2}_{L_2(\lambda\times P_0)}^2\\
        & \lesssim \frac{1}{n^2} \norm{f_2}_{L_2(P_0 \times P_{0, h})}^2 +  \frac{1}{n^3h^{2d}}\\
        & \lesssim \frac{1}{n^2h^{d}} +  \frac{1}{n^3h^{2d}},
    \end{align*}
    where the last inequality is because 
    \begin{align*}
        \iint f_2^2(x, z) \sd P_0(x) \sd P_{0, h}(z) &= \iint \left[ \int K_h(x, s)K_h(z, s) \sd s \right]^2 \sd P_0(x) \sd P_{0, h}(z)\\
        & = \iint \left[ \iint K_h(x, s)K_h(z, s)K_h(x, t)K_h(z, t) \sd s \sd t\right] \sd P_0(x) \sd P_{0, h}(z)\\
        & = \frac{1}{h^{4d}} \iiiint K\left(\frac{x-s}{h}\right)K\left(\frac{z-s}{h}\right)K\left(\frac{x-t}{h}\right)  \\
        & \qquad \qquad \times K\left(\frac{z-t}{h}\right) \sd s \sd t \sd P_0(x) \sd P_{0, h}(z) \\
        & = \frac{1}{h^{2d}}\left| \iiiint K\left(s'\right)K\left(s' + \frac{z-x}{h}\right)K\left(t'\right) \right. \\
        & \qquad \qquad \left. \times K\left(t' + \frac{z-x}{h}\right) \sd s' \sd t' \sd P_0(x) \sd P_{0, h}(z) \right|\\
        & = \frac{1}{h^{d}}\left| \iiiint K\left(s'\right)K\left(x'\right)K\left(t'\right) \right.\\
        & \qquad \qquad \left. \times K\left(t' + x' - s'\right) \eta_0(z - (x' -s')h ) \sd s' \sd t' \sd x' \sd P_{0, h}(z) \right|\\
        & \lesssim \frac{1}{h^{d}} \iiiint \left|K\left(s'\right)K\left(x'\right)K\left(t'\right) K\left(t' + x' - s'\right)\right| \sd s' \sd t' \sd x' \sd P_{0, h}(z) \\
        & \leq \frac{1}{h^{d}} \left[ \int K^2(x) \sd x\right]^2.
    \end{align*}
    This implies that $\E_0 \E_W \left\{ \int f_2 \sd [ (\d{P}_n - P_0) \times (\d{P}_n^* - \hat{P}_n)]\right\}^2  = o(n^{-1})$  under the assumption $nh^d \longrightarrow \infty$. Hence, $\int f_2\sd [(\d{P}_n^* - \hat{P}_n) \times (\hat{P}_n - \d{P}_n)] = o_{\prob_W^*}(n^{-1/2})$ if $nh^d \longrightarrow \infty$.

    Finally, by \Cref{lemma: h^m tool 1}, we have
    \begin{align*}
        &\E_0 \left[ \int (f_2 - f_1)\sd [(\d{P}_n - P_0) \times P_0] \right]^2 \\
        & \qquad \leq \frac{1}{n} \E_0 \left[ \int \left\{f_2(X, y) - f_1(X, y) \right\} \sd P_0(y) \right]^2\\
        & \qquad \leq \frac{2}{n} \E_0 \left[ \int\left\{ f_1(X, y) - \eta_0(X)\right\} \sd P_0(y) \right]^2 + \frac{2}{n} \E_0 \left[ \int\left\{ f_2(X, y) - \eta_0(X) \right\} \sd P_0(y) \right]^2\\
        & \qquad = O(n^{-1}h^{2m}),
    \end{align*}
    which is $o(n^{-1})$. 
\end{proof}

\begin{lemma}\label{thm: auto smooth asym bias}
     If $P_0$ possesses uniformly bounded and $m$-times continuously differentiable density $\eta_0$ with $\int [ (D^{\alpha} \eta_0)( x) ]^2 \sd x < \infty$ for all $\alpha$ such that $|\alpha| = m$, $\hat{P}_n$ is a kernel density estimator with uniformly bounded $m$th order kernel function $K$ satisfying $\int K^2 < \infty$ and bandwidth $h$ satisfying $nh^{2d} \longrightarrow \infty$ and $nh^{4m} \longrightarrow 0$, $\eta_n$ is the density corresponding to $\hat{P}_n$, and $\eta_n^*$ is a kernel density estimator based on the bootstrap data with the same kernel and bandwidth, then $\d{P}_n\phi_n - \d{P}_n^*\phi_n^* = o_{\prob_W}(n^{-1/2})$.
\end{lemma}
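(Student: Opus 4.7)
My plan is to closely follow the proof of the empirical-bootstrap analog \Cref{thm: auto asym bias}, replacing each $V$-statistic estimate with its smooth-bootstrap counterpart from \Cref{cor: auto smooth empirical} and \Cref{lemma: sV order 1}--\Cref{lemma: sV order 4}, and controlling deterministic biases via \Cref{lemma: h^m tool 1} together with the norm computations in \Cref{lemma: norm tool 1}. The starting point is that the influence function at density $\eta$ is $\phi_\eta(x) = 2\eta(x) - 2\int \eta^2$, so $\d{P}_n\phi_n = 2\d{P}_n\eta_n - 2\int\eta_n^2 = 2\int f_3 \sd(\d{P}_n\times\d{P}_n)$ and analogously $\d{P}_n^*\phi_n^* = 2\int f_3\sd(\d{P}_n^*\times\d{P}_n^*)$, where $f_3 := f_1 - f_2$. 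Hence the claim reduces to showing $\int f_3\sd(\d{P}_n\times\d{P}_n) - \int f_3\sd(\d{P}_n^*\times\d{P}_n^*) = o_{\prob_W^*}(n^{-1/2})$.

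I would center $\d{P}_n$ at $P_0$ and $\d{P}_n^*$ at $\hat{P}_n$, and further split $\hat{P}_n - P_0 = (\hat{P}_n - P_{0,h}) + (P_{0,h} - P_0)$. Writing $B := \d{P}_n - P_0$, $A_1 := \d{P}_n^* - \hat{P}_n$, and $A_2 := \hat{P}_n - P_0$, the symmetry of $f_3$ yields
\begin{align*}
&\int f_3\sd(\d{P}_n\times\d{P}_n) - \int f_3\sd(\d{P}_n^*\times\d{P}_n^*) \\
&\qquad = \bigl[\int f_3\sd(B\times B) - \int f_3\sd(A_1\times A_1)\bigr] - \int f_3\sd(A_2\times A_2) \\
&\qquad\quad - 2\int f_3\sd(A_1\times A_2) + 2\int f_3\sd[(\d{P}_n - \d{P}_n^*)\times P_0].
\end{align*}
The two pure squared-empirical pieces each contribute a leading $n^{-1}\tau_{f_3}$ term that cancels: \Cref{lemma: V order 1} applied to $f_3$, together with the bounds $\norm{f_3}_{L_2(P_0\times P_0)}^2 = O(h^{-d})$ and $\int[\int |f_3(x,y)|\sd x]^2\sd P_0(y) = O(1)$ from \Cref{lemma: norm tool 1}, gives $\int f_3\sd(B\times B) = n^{-1}\tau_{f_3} + o_{\prob_0^*}(n^{-1/2})$, and \Cref{lemma: sV order 4} gives the symmetric statement $\int f_3\sd(A_1\times A_1) = n^{-1}\tau_{f_3} + o_{\prob_W^*}(n^{-1/2})$; both use $nh^{2d}\to\infty$. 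The mixed piece $\int f_3\sd(A_1\times A_2)$ splits further as $\int f_3\sd[A_1\times(\hat{P}_n - P_{0,h})]$, bounded by \Cref{lemma: sV order 3}, plus $\int f_3\sd[A_1\times(P_{0,h} - P_0)]$, bounded by \Cref{lemma: sV order 2}; since the joint assumption $nh^{2d}\to\infty$ and $nh^{4m}\to 0$ forces $m > d/2$, both contributions are $o_{\prob_W^*}(n^{-1/2})$.

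The self-square $\int f_3\sd(A_2\times A_2)$ decomposes into a diagonal, a cross, and a deterministic piece. The diagonal $\int f_3\sd[(\hat{P}_n - P_{0,h})\times(\hat{P}_n - P_{0,h})]$ equals $\int G[f_3]\sd[(\d{P}_n - P_0)\times(\d{P}_n - P_0)]$ and is controlled by \Cref{lemma: sV order 1} using the direct computation $\tau_{G[f_3]} = O(h^{-d})$, so that its leading term is $O((nh^d)^{-1}) = o(n^{-1/2})$ under $nh^{2d}\to\infty$. The cross term $\int f_3\sd[(\hat{P}_n - P_{0,h})\times(P_{0,h} - P_0)]$ rewrites as $(\d{P}_n - P_0)(K_h\star\nu_h)$ with $\nu_h(x) := \int f_3(x,y)(\eta_{0,h} - \eta_0)(y)\sd y$ satisfying $\norm{\nu_h}_\infty = O(h^m)$ (since $\int|f_3(x,y)|\sd y = O(1)$ uniformly by \Cref{lemma: norm tool 1}); a standard second-moment bound then gives $O_{\prob_0^*}(n^{-1/2}h^m) = o_{\prob_0^*}(n^{-1/2})$. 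Finally, the linear-in-$P_0$ piece $2(\d{P}_n - \d{P}_n^*)g_0$ with $g_0(x) := \int f_3(x,y)\sd P_0(y) = \eta_{0,h}(x) - (K_h\star\eta_{0,h})(x)$ uses $\norm{g_0}_\infty = O(h^m)$ (from \Cref{lemma: h^m tool 1}); splitting $\d{P}_n - \d{P}_n^* = (\d{P}_n - P_0) - (\d{P}_n^* - \hat{P}_n) - (\hat{P}_n - P_0)$ and handling each summand by standard moment bounds (together with the identity $(\hat{P}_n - P_0)g_0 = (\d{P}_n - P_0)(K_h\star g_0) + P_0[(K_h\star g_0) - g_0]$) yields $o_{\prob_W^*}(n^{-1/2})$.

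The main obstacle is the remaining deterministic piece $\int f_3\sd[(P_{0,h} - P_0)\times(P_{0,h} - P_0)]$. A naive Cauchy--Schwarz bound using only $\norm{\eta_{0,h} - \eta_0}_\infty = O(h^m)$ yields $O(h^m)$, which is $o(n^{-1/2})$ only under the stronger condition $nh^{2m}\to 0$, not the stated $nh^{4m}\to 0$. To sharpen this to $O(h^{2m})$, I would rewrite the piece algebraically as $\int (L-I)^2\eta_0\cdot\eta_{0,h}\sd x - \int(K_h\star q_h)^2\sd x$ with $L := K_h\star$ and $q_h := \eta_{0,h} - \eta_0$, and then invoke Parseval: because $\norm{\hat{K}}_\infty \le \norm{K}_{L_1(\lambda)} < \infty$, both integrals are dominated by $\int|\hat{K}(h\xi) - 1|^2|\hat\eta_0(\xi)|^2\sd\xi = \norm{q_h}_{L_2(\lambda)}^2$, which is $O(h^{2m})$ by \Cref{lemma: h^m tool 1}. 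This bounded-operator argument on the Fourier side trades one factor of $h^m$ for $h^{2m}$ and exactly matches the stated bandwidth condition $nh^{4m}\to 0$.
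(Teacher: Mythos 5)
Your argument follows the same high-level route as the paper's proof: both expand $\int f_3\sd(\d{P}_n^*\times\d{P}_n^*) - \int f_3\sd(\d{P}_n\times\d{P}_n)$ into a sum of bilinear terms, both cancel the $n^{-1}\tau_{f_3}$ contributions from the two pure-empirical squares via \Cref{lemma: V order 1} (or \Cref{cor: auto smooth empirical}) and \Cref{lemma: sV order 4}, and both invoke \Cref{lemma: sV order 1}--\Cref{lemma: sV order 3.5} for the remaining cross terms and \Cref{lemma: h^m tool 1} for the deterministic biases. The decompositions are regrouped differently — you keep $\hat{P}_n - P_0$ intact and split it later, while the paper immediately writes $\hat{P}_n - P_0 = (\hat{P}_n - P_{0,h}) + (P_{0,h} - P_0)$ and groups the deterministic piece as $\int f_3\sd(P_{0,h}\times P_{0,h}) - \int f_3\sd(P_0\times P_0)$ — but this is cosmetic; the same terms get covered.

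Two substantive comments. First, the Fourier/Parseval detour for $\int f_3\sd[(P_{0,h}-P_0)\times(P_{0,h}-P_0)]$ is not needed: writing this piece as $\int\nu_h q_h\sd\lambda$ with $\nu_h = Lq_h - L^2 q_h$, the Young inequality $\|Lu\|_{L_2(\lambda)}\le\|K\|_{L_1(\lambda)}\|u\|_{L_2(\lambda)}$ gives $\|\nu_h\|_{L_2(\lambda)}\lesssim\|q_h\|_{L_2(\lambda)}$, so Cauchy--Schwarz alone yields $O(\|q_h\|_{L_2(\lambda)}^2) = O(h^{2m})$. This is exactly the paper's route; your worry that the "naive" bound only gives $O(h^m)$ comes from using $\|q_h\|_\infty$ in place of the $L_2$ norm. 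Your Parseval argument is valid, but it is a heavier tool than needed and must still be translated to match the exact algebraic form of the deterministic piece. Second, and this is a genuine gap as written: you repeatedly invoke sup-norm bounds $\|\nu_h\|_\infty = O(h^m)$ and $\|g_0\|_\infty = O(h^m)$. These hold by the second clause of \Cref{lemma: h^m tool 1} only under the extra hypothesis $\sup_x|D^\alpha\eta_0(x)|<\infty$, which is \emph{not} an assumption of the lemma you are proving; that lemma only assumes $\int[D^\alpha\eta_0(x)]^2\sd x<\infty$. The fix is easy and matches the paper: all the second-moment bounds you sketch go through with the $L_2(\lambda)$ norms $\|\nu_h\|_{L_2(\lambda)} = O(h^m)$ and $\|g_0\|_{L_2(\lambda)} = O(h^m)$ (using the first clause of \Cref{lemma: h^m tool 1} and Young's inequality, together with boundedness of $\eta_0$ to pass between $L_2(\lambda)$ and $L_2(P_0)$ or $L_2(P_{0,h})$). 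You should replace every $\|\cdot\|_\infty$ claim by the corresponding $\|\cdot\|_{L_2(\lambda)}$ claim.
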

\begin{proof}[\bfseries{Proof of \Cref{thm: auto smooth asym bias}}]
    Recall that $f_1(x, y) := K_h\left(x, y\right)$ and $f_2(x, y) := \int K_h(x, z) K_h(y, z) \sd z$. We define $f_3 := f_1 - f_2$. We then have $\d{P}_n \phi_n = 2\int f_3 \sd (\d{P}_n \times \d{P}_n)$ and $\d{P}_n^* \phi_n^* = 2\int f_3 \sd (\d{P}_n^* \times \d{P}_n^*).$ Hence, by adding and subtracting terms and the symmetry of $f_3$,
    \begin{align*}
        &\left(\d{P}_n^*\phi_n^* - \d{P}_n \phi_n \right) / 2 \\
        & \qquad = \int f_3 \sd [(\d{P}_n^* - \hat{P}_n + \hat{P}_n - P_{0, h} + P_{0, h}) \times (\d{P}_n^* - \hat{P}_n + \hat{P}_n - P_{0, h} + P_{0, h})] \\
        & \qquad \qquad - \int f_3 \sd [(\d{P}_n - P_0 + P_0) \times (\d{P}_n - P_0 + P_0)]\\
        & \qquad = \left\{ \int f_3 \sd [(\d{P}_n^* - \hat{P}_n) \times (\d{P}_n^* - \hat{P}_n)] - \int f_3 \sd [(\d{P}_n - P_0) \times (\d{P}_n - P_0)] \right\}  \\
        & \qquad \qquad + \int f_3 \sd [(\hat{P}_n - P_{0, h}) \times (\hat{P}_n - P_{0, h})] + \left\{ \int f_3 \sd [P_{0, h} \times P_{0, h}] - \int f_3 \sd [ P_0 \times P_0] \right\}   \\
        & \qquad \qquad + 2 \int f_3 \sd [(\d{P}_n^* - \hat{P}_n) \times (\hat{P}_n - P_{0, h})] + 2 \int f_3 \sd [(\d{P}_n^* - \hat{P}_n) \times (P_{0, h} - P_0)] \\
        & \qquad \qquad + 2 \int f_3 \sd [(\d{P}_n^* - \hat{P}_n) \times P_0] + 2 \int f_3 \sd [(\hat{P}_n - P_{0, h}) \times P_{0, h}] - 2 \int f_3 \sd [(\d{P}_n - P_0) \times P_0].
    \end{align*}
    We show each term on the right hand side of previous display is $o_{\prob_W^*}(n^{-1/2})$. First, we can use the same logic as in the proof of \Cref{thm: auto smooth mise} to show that
    \begin{gather*}
        \int f_3 \sd [ (\d{P}_n^* - \hat{P}_n) \times (\d{P}_n^* - \hat{P}_n)] - \int f_3 \sd [ (\d{P}_n - P_0) \times (\d{P}_n - P_0)] = o_{\prob_W^*}(n^{-1/2}),\\
        \int f_3 \sd [(\d{P}_n^* - \hat{P}_n) \times (\hat{P}_n - P_{0, h})] = o_{\prob_W^*}(n^{-1/2}), \\
        \int f_3 \sd [(\d{P}_n^* - \hat{P}_n) \times (P_{0, h} - P_0)] = o_{\prob_W^*}(n^{-1/2}), \text{ and}\\
        \int f_3 \sd [(\d{P}_n - P_0) \times P_0] = o_{\prob_0^*}(n^{-1/2}).
    \end{gather*}
    By \Cref{lemma: norm tool 1} and \Cref{lemma: sV order 1}, we have
    \begin{align*}
        &\E_0  \left\{ \int f \sd [(\hat{P}_n - P_{0, h}) \times (\hat{P}_n - P_{0, h})]  -  n^{-1}\tau_{G[f]} \right\}^2  \\
        &\qquad \lesssim n^{-3} \int \left[\int |f(x, y)| \sd x \right]^2 \sd P_0(y) + n^{-2} \norm{f}_{L_2(\lambda \times P_0)}^2\\
        & \qquad = O(n^{-3}) + O(n^{-2} h^{-d})
    \end{align*}
    for $f \in \{f_1, f_2\}$, which is $o(n^{-1})$ because $nh^d \longrightarrow \infty$. Hence, 
    \begin{align*}
        & \int f_3 \sd [(\hat{P}_n - P_{0, h}) \times (\hat{P}_n - P_{0, h})] \\
        & \qquad = \int f_1 \sd [(\hat{P}_n - P_{0, h}) \times (\hat{P}_n - P_{0, h})] - \int f_2 \sd [(\hat{P}_n - P_{0, h}) \times (\hat{P}_n - P_{0, h})] \\
        & \qquad = n^{-1} \left(\tau_{G[f_1]} + \tau_{G[f_2]}\right)+ o_{\prob_0^*}(n^{-1/2}).
    \end{align*}
    Furthermore, we have $\tau_{G[f]} = O(h^{-d})$ for $f \in \{f_1, f_2\}$ as shown above. Since  $nh^{2d} \longrightarrow \infty$, we then have $n^{-1}(\tau_{G[f_1]} + \tau_{G[f_2]}) = O(\{nh^d\}^{-1})= O( n^{-1/2} \{ nh^{2d}\}^{-1/2}) = o(n^{-1/2})$. 

    Next, we note that 
    \begin{align*}
        &\abs{\int f_3 \sd (P_{0, h} \times P_{0, h}) - \int f_3 \sd ( P_0 \times P_0)} \\
        & \quad = \abs{\iint \left[ \iint f_3(s, t) K_h(x, s) K_h(y, t) \sd s \sd t \right] \sd P_0(x) \sd P_0(y) - \int  f_3 \sd (P_0 \times P_0) }\\
        & \quad =\abs{ \iint \left[ \iint f_3(x+sh, y+th) K(s) K(t) \sd s \sd t \right] \sd P_0(x) \sd P_0(y) - \int  f_3 \sd (P_0 \times P_0) }\\
        & \quad = \abs{\iiiint f_3(x, y+t'h-s'h)  K(s') K(t') \sd P_0(x) \sd P_0(y)  \sd s' \sd t'- \int  f_3 \sd (P_0 \times P_0)}\\
        & \quad =\abs{\iiiint f_3(x, y')  K(s') K(t') \sd P_0(x) \eta_0(y' + s'h - t'h) \sd y'  \sd s' \sd t'- \int  f_3 \sd (P_0 \times P_0)}\\
        & \quad = \abs{\int \left[ \int f_3(x, y) \sd P_0(x)\right] \left[ \iint \left\{\eta_0(y + sh- th) - \eta_0(y)\right\} K(s) K(t) \sd s \sd t \right] \sd y}\\
        & \quad \leq \left\{ \int \left[ \int f_3(x, y) \sd P_0(x)\right]^2  \sd y \right\}^{1/2} \left\{\int \left[ \iint \left\{\eta_0(y + sh- th) - \eta_0(y)\right\} K(s) K(t) \sd s \sd t \right]^2 \sd P_0(y)\right\}^{1/2}.
    \end{align*}
    We can write
    \begin{align*}
        &\int  \left[ \int f_3(x, y) \sd P_0(y) \right]^2  \sd x \\
        & \qquad \leq  2 \int  \left[ \int f_1(x, y)\sd P_0(y) - \eta_0(x) \right]^2  \sd x + 2 \int  \left[ \int f_2(x, y)\sd P_0(y) - \eta_0(x)  \right]^2  \sd x,
    \end{align*}
    which is $O(h^{2m})$ by \Cref{lemma: h^m tool 1}. Since $\eta_0$ is $m$-times continuously differentiable, for all $u$, a Taylor expansion with the Laplacian representation  of the remainder gives
    \begin{align*}
        &\iint \left\{\eta_0(y + sh- th) - \eta_0(y)\right\} K(s) K(t) \sd s \sd t\\
        & \quad = \iint \left\{ \sum_{1 \leq |\alpha| \leq m-1} \frac{1}{\alpha!}(sh- th)^{\alpha} (D^{\alpha}\eta_0)(y) \right\} K(s) K(t) \sd s \sd t \\
        & \quad\qquad + \iint  \left\{ \sum_{|\alpha|=m} \frac{mh^m}{\alpha!} (s- t)^{\alpha} \int_0^1 (1-r)^{m-1} (D^{\alpha} \eta_0)( y + rh[s -t] ) \sd r \right\} K(s) K(t) \sd s \sd t\\
        &\quad = \sum_{1 \leq |\alpha| \leq m-1} \frac{1}{\alpha!} (D^{\alpha}\eta_0)(y) \iint  (sh- th)^{\alpha}  K(s) K(t) \sd s \sd t \\
        & \quad\qquad + \sum_{|\alpha|=m} \frac{mh^m}{\alpha!} \iint  (s- t)^{\alpha} \int_0^1 (1-r)^{m-1} (D^{\alpha} \eta_0)( y + rh[s -t] ) \sd r K(s) K(t)  \sd s \sd t.
    \end{align*}
    Since $K$ is an $m$th order kernel function, $\iint  (sh- th)^{\alpha}  K(s) K(t) \sd s \sd t = 0$ for all $\alpha$ such that  $1 \leq |\alpha| \leq m-1$. Defining $H(y, u, \alpha) := \int_0^1 (1-r)^{m-1} (D^{\alpha} \eta_0)( y + rhu ) \sd r$, we then have 
    \begin{align*}
        & \int \left[ \iint \left\{\eta_0(y + sh- th) - \eta_0(y)\right\} K(s) K(t) \sd s \sd t \right]^2 \sd P_0(y)\\
        &\quad = \int \left[\sum_{|\alpha|=m} \frac{mh^m}{\alpha!} \iint  (s- t)^{\alpha} H(y, s-t, \alpha)  K(s) K(t) \sd s \sd t \right]^2 \sd P_0(y)\\
        &\quad = \int \sum_{|\alpha|, |\beta|=m} \frac{m^2h^{2m}}{\alpha!\beta!} \iiiint    (s- t)^{\alpha}(s'- t')^{\beta} H(y, s-t, \alpha)H(y, s'-t', \beta) \\
        & \qquad \qquad \times K(s) K(t)K(s') K(t')  \sd s \sd t \sd s' \sd t' \sd P_0(y).
    \end{align*}
    The same method as in the proof of \Cref{lemma: h^m tool 1} can be used to show that this is $O(h^
{2m})$.
    Hence, $\int f_3 \sd (P_{0, h} \times P_{0, h}) - \int f_3 \sd ( P_0 \times P_0) = O(h^{2m})$, which is $o(n^{-1/2})$ if $nh^{4m} \longrightarrow 0$.

    We next consider the term $\int f_3 \sd [(\d{P}_n^* - \hat{P}_n) \times P_0]$. As in the proofs of \Cref{lemma: V order 2} and \Cref{lemma: V order 3},
    \begin{align*}
        &\E_0 \E_W \left\{\int \left[ \int f_3(x, y) \sd P_0(y) \right] \sd (\d{P}_n^* - \hat{P}_n)(x) \right\}^2 \\
        &\qquad \leq \E_0 \left\{ \frac{1}{n} \int  \left[ \int f_3(x, y) \sd P_0(y) \right]^2  \sd \hat{P}_n(x) \right\}\\
        & \qquad =  \frac{1}{n} \int  \left[ \int f_3(x, y) \sd P_0(y) \right]^2  \sd P_{0, h}(x)\\
        & \qquad \lesssim  \frac{1}{n} \int  \left[ \int f_3(x, y) \sd P_0(y) \right]^2  \sd x,
    \end{align*}
    where the last inequality is because $P_{0,h}$ possesses uniformly bounded density. The last expression is $O(n^{-1}h^{2m})$ by \Cref{lemma: h^m tool 1} as shown above. Hence, $\int f_3 \sd [(\d{P}_n^* - \hat{P}_n) \times P_{0}] =O_{\prob_W^*}(n^{-1/2}h^m) = o_{\prob_W^*}(n^{-1/2})$.

    Finally, we consider the term $\int f_3 \sd [(\hat{P}_n - P_{0, h}) \times P_{0, h}]$. We note that 
    \begin{align*}
        \int f_3 \sd [(\hat{P}_n - P_{0, h}) \times P_{0, h}] & = \iiiint  f_3(s, t) K_h(x, s) K_h(y, t) \sd s \sd t \sd P_0(y) \sd (\d{P}_n - P_0)(x).
    \end{align*}
    Hence, 
    \begin{align*}
        &\E_0 \left\{ \int f_3 \sd [(\hat{P}_n - P_{0, h}) \times P_{0, h}] \right\}^2\\
        &\qquad \leq \frac{1}{n} \int \left[ \iiint  f_3(s, t) K_h(x, s) K_h(y, t) \sd s \sd t \sd P_0(y) \right]^2 \sd P_0(x) \\
        & \qquad = \frac{1}{n} \int \left[ \iiint  f_3(x+s'h, y+t'h) K(s') K(t') \sd s' \sd t' \sd P_0(y) \right]^2 \sd P_0(x) \\
        & \qquad \leq \frac{1}{n} \iiint  \left[ \int f_3(x+s'h-t'h, y) \sd P_0(y)  \right]^2 K(s') K(t')\sd s' \sd t' \sd P_0(x) \\
        & \qquad \lesssim \frac{1}{n} \int  \left[ \int f_3(x, y) \sd P_0(y)  \right]^2 \sd x.
    \end{align*}
    As above, the last expression is $O(n^{-1} h^{2m})$, so that $ \int f_3 \sd [(\hat{P}_n - P_{0, h}) \times P_{0, h}] = O_{\prob_0^*}(n^{-1/2}h^m) = o_{\prob_0^*}(n^{-1/2})$.
    
\end{proof}

\end{appendices}

\end{document}